\newtheorem{theorem}{Theorem}[section]
\newtheorem{proposition}[theorem]{Proposition}
\newtheorem{lemma}[theorem]{Lemma}
\newtheorem{corollary}[theorem]{Corollary}
\theoremstyle{definition}
\newtheorem{definition}[theorem]{Definition}
\newtheorem{remark}[theorem]{Remark}
\newtheorem{notation}[theorem]{Notation}
\DeclareMathAlphabet{\mathcalligra}{T1}{calligra}{m}{n}
\DeclareFontShape{T1}{calligra}{m}{n}{<->s*[2.2]callig15}{}
\newcommand{\slow}{w}
\newcommand{\altslow}{v}
\newcommand{\bigslow}{\vec{W}}
\newcommand{\altbigslow}{\vec{V}}
\newcommand{\Tboot}{T_{(Boot)}}
\newcommand{\Trandatasize}[1]{\mathring{\updelta}}
\newcommand{\Psiep}{\mathring{\upalpha}}
\newcommand{\TrandatasizeWithFactor}{\mathring{\updelta}_*}
\newcommand{\TranminusdatasizeWithFactor}{\mathring{\updelta}_*}
\newcommand{\gt}{\underline{g}}
\newcommand{\gtinverse}{\underline{g}^{-1}}
\newcommand{\gsphere}{g \mkern-8.5mu / }
\newcommand{\ginversesphere}{\gsphere^{-1}}
\newcommand{\gtancomp}{\upsilon}
\newcommand{\mytr}{{\mbox{\upshape{tr}}_{\mkern-2mu \gsphere}}}
\newcommand{\D}{\mathscr{D}}
\newcommand{\angD}{ {\nabla \mkern-14mu / \,} }
\newcommand{\angdiv}{\mbox{\upshape{div} $\mkern-17mu /$\,}}
\newcommand{\angLap}{ {\Delta \mkern-12mu / \, } }
\newcommand{\GdVar}{\upgamma}
\newcommand{\BadVar}{\underline{\upgamma}}
\newcommand{\Fullset}{\mathscr{Z}}
\newcommand{\Tanset}{\mathscr{P}}
\newcommand{\Singletan}{P}
\newcommand{\angdiff}{ {{d \mkern-9mu /} }}
\newcommand{\angdiffuparg}[1]{ {d \mkern-9mu /}^{#1} }
\newcommand{\Lie}{\mathcal{L}}
\newcommand{\SigmatLie}{\underline{\mathcal{L}}}
\newcommand{\angLie}{ { \mathcal{L} \mkern-10mu / } }
\newcommand{\Lgeo}{L_{(Geo)}}
\newcommand{\Lunit}{L}
\newcommand{\covL}{H}
\newcommand{\uLgood}{\breve{\underline{L}}}
\newcommand{\Radunit}{X}
\newcommand{\Rad}{\breve{X}}
\newcommand{\NonRadialRad}{\Xi}
\newcommand{\XiCoordComp}{\upxi}
\newcommand{\CoordAng}{\Theta}
\newcommand{\Timenormal}{N}
\newcommand{\Mult}{T}
\newcommand{\GeoAng}{Y}
\newcommand{\GeoAngFlatRadComponent}{\uprho}
\newcommand{\angpi}{ { \pi \mkern-10mu / }}
\newcommand{\angk}{ { {k \mkern-10mu /} \, } }
\newcommand{\angkdoublearg}[2]{ {{k \mkern-10mu /}_{#1 #2} \, } }
\newcommand{\angG}{ {{G \mkern-12mu /} \, }}
\newcommand{\angGarg}[1]{ {{G \mkern-12mu /}_{\mkern 1mu #1} \, }}
\newcommand{\angGprime}{ {{ {G'} \mkern-16mu /} \, \, }}
\newcommand{\angGprimearg}[1]{ {{ {G'} \mkern-16mu /}_{\mkern 1mu #1} \, }}
\newcommand{\angGmixedarg}[2]{ {{G \mkern-12mu /}_{#1}^{\ #2} \, }}
\newcommand{\angGnospacemixedarg}[2]{ {{G \mkern-12mu /}_{#1}^{\mkern 2mu #2}}}
\newcommand{\angxi}{ { {\xi \mkern-9mu /}  \, }}
\newcommand{\angxiarg}[1]{ {{\xi \mkern-9mu /}_{#1}  \, }}
\newcommand{\deform}[1]{{^{(#1)} \mkern-1mu \pi}}
\newcommand{\deformarg}[3]{{^{(#1)} \mkern-1mu \pi_{#2 #3}}}
\newcommand{\angdeformarg}[3]{{^{(#1)} \mkern-2mu \angpi_{#2 #3}}}
\newcommand{\Lineproject}{{\Pi \mkern-12mu / } \, }
\newcommand{\Sigmatproject}{\underline{\Pi}}
\newcommand{\vol}{\varpi}
\newcommand{\tvol}{\underline{\varpi}}
\newcommand{\conevol}{\overline{\varpi}}
\newcommand{\spherevol}{\uplambda_{{g \mkern-8.5mu /}}}
\newcommand{\argspherevol}[1]{\uplambda_{{g \mkern-8.5mu /}#1}}
\newcommand{\enzero}{\mathbb{E}_{(Fast)}}
\newcommand{\flzero}{\mathbb{F}_{(Fast)}}
\newcommand{\coercivespacetime}{\mathbb{K}}
\newcommand{\slowen}{\mathbb{E}_{(Slow)}}
\newcommand{\slowfl}{\mathbb{F}_{(Slow)}}
\newcommand{\totmax}[1]{\mathbb{Q}_{#1}}
\newcommand{\totTanmax}[1]{\mathbb{Q}_{#1}}
\newcommand{\slowtotTanmax}[1]{\mathbb{W}_{#1}}
\newcommand{\coerciveTanspacetimemax}[1]{\mathbb{K}_{#1}}
\newcommand{\upchifullmodarg}[1]{{^{(#1)} \mkern-4mu \mathscr{X}}}
\newcommand{\waveinhom}{\mathfrak{F}}
\newcommand{\Vplus}[2]{{^{(+)} \mkern-.5mu  \mathcal{V}_{#1}^{#2}}}
\newcommand{\Vminus}[2]{{^{(-)} \mkern-.5mu  \mathcal{V}_{#1}^{#2}}}
\newcommand{\Sigmaplus}[3]{{^{(+)} \mkern-.5mu  \Sigma_{#1;#2}^{#3}}}
\newcommand{\Sigmaminus}[3]{{^{(-)} \mkern-.5mu  \Sigma_{#1;#2}^{#3}}}
\newcommand{\Contwo}{B}
\newcommand{\LateTimeLUnitMu}{\upkappa}
\newcommand{\smoothfunction}{\mathrm{f}}
\newcommand{\slowbasicenergyerror}{\mathfrak{W}}
\newcommand{\basicenergyerrorarg}[2]{{^{(#1)} \mkern-.5mu \mathfrak{P}}_{(#2)}}
\newcommand{\enmomtensor}{Q}
\newcommand{\myarray}[2][]{\left(
		\begin{array}{lr}
    	 #1 \\
    	 #2 
     \end{array} \right)}
\newcommand{\threemyarray}[3][]{\left(
		\begin{array}{lr}
    	 #1 \\
    	 #2 \\
    	 #3
     \end{array} \right)}
\newcommand{\mydiam}{{\mkern-1mu \scaleobj{.75}{\blacklozenge}}}
\numberwithin{equation}{subsection}
\begin{document}
\title{Shock formation for $2D$ quasilinear wave systems featuring multiple speeds: 
Blowup for the fastest wave, with non-trivial interactions up to the singularity}
\author[JS]{Jared Speck$^{* \dagger}$}

\thanks{$^{\dagger}$JS gratefully acknowledges support from NSF grant \# DMS-1162211,
from NSF CAREER grant \# DMS-1454419,
from a Sloan Research Fellowship provided by the Alfred P. Sloan foundation,
and from a Solomon Buchsbaum grant administered by the Massachusetts Institute of Technology.
}

\thanks{$^{*}$Massachusetts Institute of Technology, Cambridge, MA, USA.
\texttt{jspeck@math.mit.edu}}

\begin{abstract}
We prove a stable shock formation result for a large class of systems
of quasilinear wave equations in two spatial dimensions. We give a precise
description of the dynamics all the way up to the singularity.
Our main theorem applies to systems of two wave equations featuring two distinct wave speeds
and various quasilinear and semilinear nonlinearities,
while the solutions under study 
are (non-symmetric) perturbations of simple outgoing plane symmetric waves.
The two waves are allowed to interact all the way up to the singularity.
Our approach is robust and could be used 
to prove shock formation results for other related systems 
with many unknowns and multiple speeds, 
in various solution regimes,
and in higher spatial dimensions.
However, a fundamental aspect of our framework
is that it applies only to solutions in which the ``fastest wave'' forms a shock
while the remaining solution variables do not, even though they can be non-zero
at the fastest wave's singularity.

Our approach is based on an extended version of the geometric vectorfield
method developed by D.~Christodoulou in his study of shock formation for scalar wave equations,
as well as the framework that we developed in our joint work with J.~Luk, 
in which we proved a shock formation result
for a quasilinear wave-transport system featuring a single wave operator.
A key new difficulty that we encounter is that 
the geometric vectorfields that we use to commute the equations are, 
by necessity, adapted to the wave operator of the (shock-forming) fast wave and therefore
exhibit very poor commutation properties with the slow wave operator,
much worse than their commutation properties with a transport operator.
In fact, commuting the vectorfields all the way through the slow wave operator 
would create uncontrollable error terms. To overcome this difficulty, we rely on 
a first-order reformulation of the slow wave equation, 
which, though somewhat limiting in the precision it affords,
allows us to avoid uncontrollable commutator terms.

\bigskip

\noindent \textbf{Keywords}:
characteristics;
eikonal equation;
eikonal function;
genuinely nonlinear strictly hyperbolic systems;
null condition;
null hypersurface;
singularity formation;
strong null condition;
vectorfield method;
wave breaking
\bigskip

\noindent \textbf{Mathematics Subject Classification (2010)} Primary: 35L67; Secondary: 35L05, 35L52, 35L72
\end{abstract}

\maketitle

\centerline{\today}

\tableofcontents
\setcounter{tocdepth}{2}

\newpage

\section{Introduction}
\label{S:INTRO}
Our main goal in this paper is to develop flexible techniques
that advance the theory of shock formation  
in initially regular solutions to quasilinear hyperbolic PDE systems featuring \emph{multiple speeds} of propagation.
Our techniques apply in \emph{more than one spatial dimension},
a setting in which one is forced to complement the method
of characteristics with an exceptionally technical ingredient: energy estimates that hold all the way up to the shock singularity. 
We recall that shock singularities are tied to the intersection of a family of characteristics
and are such that the (initially smooth) solution remains bounded but some derivative
of it blows up in finite time, a phenomenon also known as wave breaking.
Our approach has robust features and could be used to prove shock formation
for a large class of systems; see 
Subsect.\ \ref{SS:EXTENDINGRESULTORELATEDSYSTEMS} for discussion on various types of
systems that could be treated with our approach.
However, for convenience, we study in detail 
only systems of pure wave\footnote{For quasilinear
wave equations, either the first- or second-order 
Cartesian coordinate partial derivatives of the solution
blow up in finite time, depending on whether the quasilinear terms are of type 
$\Phi \cdot \partial^2 \Phi$ or $\partial \Phi \cdot \partial^2 \Phi$.} 
type in the present article.

Specifically, our main result is a sharp proof of finite-time shock formation
for an open set of nearly plane symmetric solutions to equations \eqref{E:FASTWAVE}-\eqref{E:SLOWWAVE},
which form a system of two quasilinear wave equations in two spatial dimensions
featuring two distinct (dynamic) wave speeds; see Theorem~\ref{T:ROUGHMAINTHM} on pg.~\pageref{T:ROUGHMAINTHM}
for a rough summary of our results,
Subsect.\ \ref{SS:SYSTEMSUNDERSTUDY} for our assumptions on the nonlinearities,
and Theorem~\ref{T:MAINTHEOREM} in Sect.\ \ref{S:MAINTHEOREM} 
for the precise statements. Here we provide a very
rough summary.

\begin{theorem}[\textbf{Main theorem} (very rough statement)]	
	\label{T:VERYROUGHMAINTHM}
	In two spatial dimensions, 
	under suitable assumptions on the nonlinearities\footnote{To ensure that shocks form,
	we make a genuine nonlinearity-type assumption,
	which results in the presence of Riccati-type terms that drive the blowup; 
	see Remark~\ref{R:GENUINELYNONLINEAR}.}
	(described in Subsect.\ \ref{SS:SYSTEMSUNDERSTUDY}),
	there exists an open\footnote{By open, we mean open with respect to a suitable Sobolev topology.} 
	set of regular, approximately plane symmetric\footnote{By plane symmetric 
	initial data, we mean data that are functions of the Cartesian coordinate $x^1$.} 
	initial data for the quasilinear wave equation system \eqref{E:FASTWAVE}-\eqref{E:SLOWWAVE}
	such that the solution variables exhibit the following behavior: 
	some first-order Cartesian coordinate partial derivative
	of one of the solution variables blows up in finite time while
	the first-order Cartesian coordinate partial derivatives of the other solution variable
	remain uniformly bounded, all the way up to the singularity 
	in the first solution variable's derivatives.
\end{theorem}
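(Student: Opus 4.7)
The plan is to adapt the geometric vectorfield framework developed by Christodoulou for quasilinear wave equations, and extended in the author's joint work with Luk, to the present two-speed setting via a first-order reformulation of the slow wave. First I would introduce an eikonal function $u$ associated with the acoustical metric $g$ of the fast wave operator, so that the level sets $\{u = \text{const}\}$ are $g$-null hypersurfaces emanating outward from the initial profile of the plane-symmetric background. From $u$ one constructs the adapted null frame $\{L, \underline{L}, Y\}$ together with the inverse foliation density $\mu$ and the rescaled fields $\breve{X} = \mu X$ and $\breve{\underline{L}}$; shock formation for the fast wave will correspond to $\mu \to 0$ in finite time along some integral curve of $L$, while the Cartesian derivatives of the slow wave are to remain uniformly bounded in the same spacetime region.

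The main body of the argument would be a bootstrap on a maximal time interval $[0, \Tboot)$ controlling high-order $L, \breve{X}, Y$-derivatives of both waves in terms of the data-size parameters $\Lunitandpsidatasize$ and $\Angdatasize$ and the shock-driving size. The building blocks are: (i) transport equations for $\mu$ and the geometric connection coefficients, closed at $L^\infty$ by integration along integral curves of $L$; (ii) energy estimates for the fast wave produced by the multiplier $\breve{\underline{L}}$, in which the top-order norms degenerate by a power of $\mu_\star^{-1}$ and are then descended through a Gronwall-type hierarchy to recover nondegenerate bounds at lower orders; and (iii) analogous energy estimates for the slow wave generated by a timelike multiplier adapted to its own acoustical metric $\gt$. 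The decisive new difficulty is that the commutators $L, \breve{X}, Y$ are built from the fast wave's eikonal function and hence are tuned to $g$, not to $\gt$; commuting them twice through $\Box_{\gt}$ produces top-order error terms whose deformation tensors cannot be controlled. To sidestep this, I would recast the slow wave equation as a first-order symmetric hyperbolic system for its first Cartesian derivatives, allowing only a single commutator to pass through $\Box_{\gt}$ at top order and trading one derivative of regularity for the removal of the uncontrollable commutator terms.

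Once the bootstrap assumptions are recovered, Sobolev embedding on the codimension-two tori carried by the $u$-foliation upgrades the integrated estimates to pointwise bounds at mid order, verifying the $L^\infty$ assumptions on the lowest-order derivatives and on $\mu$ and $L \mu$. The genuine nonlinearity hypothesis then forces the Riccati-type term in the transport equation for $L \mu$ to drive $\mu$ to zero in finite time along some characteristic, producing the blowup of a first-order Cartesian coordinate derivative of the fast wave; simultaneously, because the slow-wave energies do not carry any $\mu$-weight, the first-order Cartesian derivatives of the slow wave remain uniformly bounded up to and including the fast-wave singularity.

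The hard part will be closing the top-order estimates in the presence of the fast-slow coupling. The first-order reformulation introduces slow-wave source terms into the commuted fast-wave equation, so the energy-flux identities produce cross-terms in which top-order slow-wave norms pair against $\mu$-weighted top-order fast-wave norms; the corresponding Gronwall hierarchy must be arranged so that the coupled system closes at a rate no worse than the expected $\mu_\star^{-1}$ degeneracy for the fast wave alone. Quantitatively controlling the deformation tensors of $L$, $\breve{X}$, $Y$ when contracted with $\gt^{-1}$ rather than $g^{-1}$, and showing that the resulting commutator errors are dominated by the coercive contributions furnished by the two distinct multipliers, is the technical heart of the analysis that the rest of the paper must carry out.
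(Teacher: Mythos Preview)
Your overall architecture is right: eikonal function for $g$, the frame $\{L,\breve{X},Y\}$, bootstrap on $\upmu$, degenerate top-order energies with a descent hierarchy, and a first-order reformulation of the slow wave. But two points are misidentified in a way that would prevent the argument from closing.

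First, your claim that ``the slow-wave energies do not carry any $\upmu$-weight'' is wrong and misses the actual mechanism. The slow-wave energy on $\Sigma_t^u$ \emph{does} carry a degenerate $\upmu$ weight (it is essentially $\int_{\Sigma_t^u}\upmu|\bigslow|^2\,d\underline{\varpi}$); the non-degenerate control comes instead from the null flux on the $g$-characteristics $\mathcal{P}_u^t$, which is coercive without any $\upmu$ factor. The reason this null flux is coercive in \emph{all} derivatives of $\bigslow$ is precisely the speed separation hypothesis: the $g$-null hypersurfaces are strictly $h$-spacelike, so the standard energy-momentum current for the slow wave, contracted with $\partial_t$ and with the $g$-null conormal, is positive definite. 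You never articulate this, and without it there is no way to get non-$\upmu$-weighted $L^2$ control of $\bigslow$ to feed into the Gronwall loop.

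Second, your description of the first-order reformulation as ``allowing only a single commutator to pass through $\Box_h$ at top order'' is off. The point is to avoid commuting through the second-order slow operator entirely. One rewrites the slow equation as a first-order system in $(\slow,\partial_\alpha\slow)$ with principal part $\upmu\partial_\alpha$, and then commutes the geometric vectorfields through the \emph{$\upmu$-weighted first-order} operators $\upmu\partial_\alpha$; these have good commutation properties with $L,\breve{X},Y$ (as already exploited in the wave-transport work). There is no residual commutation with $\Box_h$, and no deformation-tensor analysis against $h^{-1}$ is needed; the errors one actually sees are of transport type and land in the harmless class.
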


Our proof of Theorem~\ref{T:MAINTHEOREM} is not by contradiction
but is instead based on giving a \emph{complete description of the dynamics},
all the way up to the first singularity, 
which is tied to the intersection of a family of outgoing\footnote{Throughout, ``outgoing'' roughly means right-moving,
that is, along the positive $x^1$ axis, as is shown in Figure~\ref{F:FRAME} on pg.~\pageref{F:FRAME}.} 
approximately plane symmetric characteristics;\footnote{The characteristics that intersect are co-dimension one and have the topology $\mathbb{I} \times \mathbb{T}$, where 
$\mathbb{I}$ is an interval of time and $\mathbb{T}$ is the standard torus.}
see Figure~\ref{F:FRAME} on pg.~\pageref{F:FRAME} for a picture of the setup, in
which those characteristics are about to intersect to form a shock.

It is important to note that our approach here 
relies on our assumption that the shock-forming wave variable satisfies 
a scalar equation whose principal part depends \emph{only on the shock-forming variable itself};
see equation \eqref{E:FASTWAVE}.\footnote{In contrast, the principal part of the wave equation \eqref{E:SLOWWAVE} of the non-shock-forming solution variable is allowed to depend on both solution variables.}
Especially for this reason (and for others as well),
one would need new ideas to prove shock formation results in more than one spatial dimension
for more general second-order quasilinear hyperbolic systems,
where the coupling of all of the unknowns can occur in the principal part
of all of the equations.
These more complicated kinds of systems arise, 
for example, in the study of elasticity and crystal optics,
where the unknowns are the scalar components $\Phi^A$ 
of the ``array-valued'' solution $\Phi$
and the evolution equations can be written in the form\footnote{The principal coefficients $H_{AB}^{\alpha \beta}(\partial \Phi)$ 
must, of course, verify appropriate technical assumptions to ensure
the hyperbolicity (and local well-posedness) of the system.}
$H_{AB}^{\alpha \beta}(\partial \Phi)
\partial_{\alpha} \partial_{\beta} \Phi^B
= 
0
$
(with implied summation over $\alpha$, $\beta$, and $B$).

Prior to this paper, the only constructive proof of shock formation
for a quasilinear hyperbolic system in more than one spatial dimension featuring multiple speeds
was our work \cites{jLjS2016a,jLjS2016b}, joint with J.~Luk,
in which we studied a system of
quasilinear wave equations featuring a \emph{single wave operator}
coupled to a quasilinear transport equation.\footnote{More precisely, the equations 
studied in \cites{jLjS2016a,jLjS2016b} were a formulation of the compressible
Euler equations with vorticity.} 
As we explain below, 
the following basic features of the system studied in \cites{jLjS2016a,jLjS2016b}
played a crucial role in the analysis:
\textbf{i)} there was precisely one wave operator in the system
and 
\textbf{ii)} transport operators are \emph{first-order}.
Thus, the main contribution of the present article
is that we allow for additional (second-order)
quasilinear wave operators in the system.
This requires new geometric and analytic ideas since,
as we explain two paragraphs below,
a naive approach to analyzing the additional
wave operators would lead to commutator error terms
that are uncontrollable near the shock.

The phenomenon of shock formation is ubiquitous in the study of quasilinear hyperbolic PDEs 
in the sense that many systems without special structure\footnote{Some systems in one spatial dimension with special structure,
such as \emph{totally linearly degenerate} quasilinear hyperbolic systems, are not expected to admit any shock-forming solutions
that arise from smooth initial data.} 
are known,
in the case of one spatial dimension,
to admit shock-forming solutions, at least for some regular initial conditions.
In fact, the theory of solutions to quasilinear hyperbolic PDE systems
in one spatial dimension is rather advanced in that it
incorporates the formation of shocks starting from regular initial conditions
as well as their subsequent interactions, at least for solutions
with small total variation. Indeed, a key reason behind the advanced status 
of the $1D$ theory is the availability of estimates within the class of
functions of bounded variation; readers can consult the monograph \cite{cD2010} for a detailed account of the one-dimensional theory.
In more than one spatial dimension, the theory of solutions to quasilinear hyperbolic PDEs 
(without symmetry assumptions) is much less developed,
owing in part to the fact that bounded variation estimates for hyperbolic systems 
typically fail in this setting \cite{jR1986}.
In fact, in more than one spatial dimension,
there are very few works even on the formation
of a shock\footnote{One of course needs to make assumptions on the structure of the nonlinearities
in order to ensure that shocks can form.} 
starting from smooth initial conditions, let alone
the subsequent behavior of the shock wave\footnote{We note, however, that
Majda has solved \cites{aM1981,aM1983a,aM1983b}, in appropriate Sobolev spaces, 
the \emph{shock front problem}. That is, he proved
a local existence result starting from an initial discontinuity 
given across a smooth hypersurface contained in the Cauchy hypersurface.
The data must verify suitable jump conditions, 
entropy conditions, and higher-order compatibility conditions.
Moreover, as we describe in Subsect.\ \ref{SS:PRIORWORKSANDSUMMARY}, 
Christodoulou recently solved \cite{dC2017}
the restricted shock development problem.}
or the interaction of multiple shock waves; our work here concerns the first of these problems.
Specifically our result builds on the body of work 
\cites{sA1995,sA1999a,sA1999b,dC2007,jS2016b,dCsM2014,gHsKjSwW2016,sM2016,sMpY2017,jLjS2016a,jLjS2016b}
on shock formation in more than one spatial dimension,
the new feature being that in the present article,
we have treated wave systems with multiple wave speeds
such that all solution variables are allowed to be non-zero at the first singularity.
All prior shock formation results in more than one spatial dimension, 
with the exception of the aforementioned works \cites{jLjS2016a,jLjS2016b}, 
concern scalar quasilinear wave equations, 
which enjoy the following fundamental property: 
there is only one wave speed,
which is tied to the characteristics of the principal part of the equation. 
As we mentioned earlier, the methods of \cites{jLjS2016a,jLjS2016b}
yield similar shock formation results for wave-transport systems in which
there is precisely one wave operator.

As we mentioned above,
many quasilinear hyperbolic systems of mathematical and physical interest
have a principal part that is more complicated
than that of the scalar wave equations treated in
\cites{sA1995,sA1999a,sA1999b,dC2007,jS2016b,dCsM2014,gHsKjSwW2016,sM2016,sMpY2017}
and the wave-transport systems 
treated in \cites{jLjS2016a,jLjS2016b},
which feature precisely one wave operator.
It is of interest to understand whether or not 
shock formation also occurs in solutions to such more complicated systems 
in more than one spatial dimension. 
We now explain why our proof of shock formation for quasilinear wave systems with multiple wave speeds,
though they are not the most general type of second-order hyperbolic systems of interest,
requires new ideas compared to \cites{jLjS2016a,jLjS2016b}.
Like all prior works on shock formation in more than one spatial dimension, 
our approach in \cites{jLjS2016a,jLjS2016b} 
was fundamentally based on the construction of geometric
vectorfields adapted to the single wave operator. 
The following idea, originating in \cites{sA1995,sA1999a,sA1999b,dC2007},
lied at the heart of our analysis of \cites{jLjS2016a,jLjS2016b}: because the vectorfields were adapted to the wave operator, 
we were able to commute them \emph{all the way through it} while generating only controllable error terms.
Moreover, commuting all the way through the wave operator seems like an essential aspect of
the proof since the special cancellations that one relies on to control error terms
seem to be visible only under a covariant second-order formulation of the wave equation;
see also Remark~\ref{R:NEEDSECONDORDER}.
To handle the presence of a transport operator in the system of \cites{jLjS2016a,jLjS2016b},
we relied on the following key insight: upon introducing a geometric weight,\footnote{Specifically, the weight is 
the inverse foliation density
$\upmu$, which we describe later on in detail (see Def.~\ref{D:FIRSTUPMU}).}
one can commute the same geometric vectorfields
through an essentially arbitrary,\footnote{More precisely, the transport operator must
be transversal to the null hypersurfaces corresponding to the wave operator.} 
solution-dependent, \emph{first-order} (transport) operator;
thanks to the weight, one encounters only error terms that can be controlled
all the way up to the shock. However, as we explain at the end of Subsect.\ \ref{SS:PRIORWORKSANDSUMMARY},
it seems that commuting the geometric vectorfields through
a typical second-order differential operator,
such as a wave operator
with a speed different from the one to which the vectorfields are adapted,
leads to uncontrollable error terms;
see the end of Subsect.\ \ref{SS:PRIORWORKSANDSUMMARY}
for further discussion on this point.
For this reason, our treatment of systems featuring two wave operators 
with strictly different speeds\footnote{By strictly different speeds, we mean that the characteristics corresponding 
to the two wave operators are strictly separated; see Subsubsect.\ \ref{SSS:WAVESPEEDASSUMPTIONS} for our precise assumptions.}
is based on the following fundamental strategy,
which we discuss in more detail in Subsect.\ \ref{SS:PRIORWORKSANDSUMMARY}:
\begin{quote}
We use a \emph{first-order reformulation} of one of the wave equations
(corresponding to the solution variable that does \emph{not} form a shock),
which, though somewhat limiting in the precision it affords,
\emph{allows us to commute the equations with the geometric vectorfields while 
avoiding uncontrollable error terms}.
\end{quote}

For the solutions under consideration,
the ``fast wave variable,'' denoted by $\Psi$ throughout, is the one that forms a shock.
That is, $\Psi$ remains bounded but some of its first-order partial derivatives with respect to the 
Cartesian coordinates blow up. In contrast, the ``slow wave variable,''
denoted by $\slow$ throughout, is more regular in that its 
first-order partial derivatives with respect to the 
Cartesian coordinates remain \emph{uniformly bounded}, all the way up to the shock.
We can draw an analogy to the little that is known about
shock formation in elasticity, a 
second-order hyperbolic theory in which longitudinal waves
propagate at a faster speed than transverse waves \cite{sTZ1998}.
In spherical symmetry, there are no transverse elastic waves, 
and the equations of elasticity reduce to a single scalar quasilinear wave equation that 
governs the propagation of longitudinal waves.
In this setting, under an appropriate genuinely nonlinear assumption,
John proved \cite{fJ1984} a small-data finite-time shock formation result.
Thus, for elastic waves in the simplified setting of spherical symmetry,
it is precisely the fastest wave that forms a shock
(while the ``transverse wave part'' of the solution remains trivial).
In our work here, the slow moving wave $\slow$ is allowed to be non-zero at the singularity.
For this reason, our proof of the more regular behavior of $\slow$ 
is non-trivial and relies on our first-order formulation
of the evolution equations for $\slow$.
As will become clear, one would likely need new ideas to treat data 
such that the slow wave variable forms a shock.
This is what one expects to happen, for example, 
in various solution regimes 
for the Euler-Einstein equations of cosmology 
and for the Euler-Maxwell equations of plasma dynamics,
where one expects the slow-moving sound waves to be able to drive finite-time shock formation
in the ``fluid part'' of the system (for appropriate data). Roughly, the reason that one would need new ideas 
to prove shock formation for the slow wave is that in the present article, 
to close the energy estimates, we crucially rely on the fact that the characteristic hypersurfaces 
of the fast wave operator 
(which are also known as null hypersurfaces in view of their connection to the Lorentzian notion of a null vectorfield)
are \emph{spacelike} relative to the slow wave operator;
indeed, this essentially defines what it means for the slow wave operator to be ``slow.''
We denote these fast wave characteristic hypersurfaces
by $\mathcal{P}_u^t$ when they are truncated at time $t$;
see Figure~\ref{F:FRAME} on pg.~\pageref{F:FRAME} for a picture of the $\mathcal{P}_u^t$. 
Analytically, the fact that the $\mathcal{P}_u^t$ are spacelike
relative to the slow wave operator is reflected in the coerciveness estimate
\eqref{E:SLOWNULLFLUXCOERCIVENESS}, which shows that energy
for the slow wave variable $\slow$ along $\mathcal{P}_u^t$ is positive definite in $\slow$ and \emph{all} of its
partial derivatives with respect to the Cartesian coordinates and \emph{does not feature a degenerate $\upmu$ weight}.
This non-degenerate coerciveness along $\mathcal{P}_u^t$
appears to be essential for closing the energy estimates.
We remark that in one spatial dimension, 
one can rely exclusively on the method of characteristics (without energy estimates)
and thus there are many results in which the slow wave can blow up, 
\cites{fJ1974,pL1964,dCdRP2016,aRfS2008} to name just a few.

\begin{remark}[\textbf{We do not use a first-order formulation for the shock-forming wave equation}]
\label{R:NEEDSECONDORDER}
As we describe in
Subsect.\ \ref{SS:CONTEXTANDPRIORRESULTS},
it does not seem possible to treat the wave equation for $\Psi$ using
a first-order formulation in the spirit of the one
that we use for $\slow$; such a formulation of 
the evolution equations for $\Psi$ would 
not exhibit the special geometric cancellations 
found in our second-order formulation of it
(see equation \eqref{E:FASTWAVE} and the discussion below it).
\end{remark}

\subsection{A more precise summary of the main results}
\label{SS:SUMMARYOFMAINRESULTS}
In this paper, we consider data for the system of wave equations given on 
the union of a portion of a null hypersurface $\mathcal{P}_0$
and a portion of the two-dimensional spacelike Cauchy hypersurface
$\Sigma_0 := \lbrace 0 \rbrace \times \Sigma \simeq \Sigma$, where
the ``space manifold'' $\Sigma$ is
\begin{align} \label{E:SPACEMANIFOLD}
\Sigma
:= 
\mathbb{R} \times \mathbb{T}.
\end{align}
Here and throughout,
$\mathbb{T}$ is the standard one-dimensional torus 
(that is, the interval $[0,1]$ with the endpoints identified and equipped with a standard smooth orientation)
while ``null'' means null with respect to the Lorentzian metric $g$
corresponding to the wave equation for the shock-forming variable $\Psi$.
In fact, 
\begin{quote}
\emph{We tailor all geometric constructions to the fast wave metric $g$}. 
\end{quote}
See Figure~\ref{F:FRAME} on pg.~\pageref{F:FRAME} for a picture of
the setup. We allow for non-trivial data on $\mathcal{P}_0$ because 
this setup would be convenient, in principle, for proving that, at least for some of our solutions,
$\Psi$ and $\slow$ are both non-zero at the singularity
(roughly because one could place non-zero data on $\mathcal{P}_0$ near the shock). 
However, we do not explicitly
exhibit any data such that both solution variables are 
guaranteed to be non-zero at the singularity.
Our assumption of two spatial dimensions
is for technical convenience only;
similar results could be proved in three or more spatial dimensions.
This assumption allows us to avoid the technical issue of deriving
elliptic estimates for the foliations that we use in our analysis, 
which are needed in three or more spatial dimensions. The elliptic 
estimates would be somewhat lengthy 
to derive but have been well-understood in the context of shock formation starting from
\cite{dC2007}. Our proof of shock formation could also be 
extended to different spatial topologies, though changing
the spatial topology might alter the kinds of data to which
our methods apply.\footnote{The formation of the shock
is local in nature. Thus, given any spatial manifold, our approach
could be used to exhibit an open set of data on it such that the
solution forms a shock in finite time. Roughly, there is no
obstacle to proving large-data shock formation on general manifolds.} 

We recall that the shock-forming variable $\Psi$
corresponds to the ``fast speed'' while
the less singular variable $\slow$ corresponds to the ``slow speed.''
We will study solutions with initial data
that are (non-symmetric) perturbations of
the initial data corresponding to
simple outgoing plane symmetric waves 
with $\slow \equiv 0$.
We discuss the notion of a simple outgoing plane symmetric solution
in more detail in Subsubsect.\ \ref{SSS:NEARLYSIMPLEWAVES}, in which, for illustration,
we provide a proof of our main results for plane symmetric solutions.
By plane symmetric solutions, we mean solutions that depend only on the Cartesian coordinates $t$ and $x^1$.
In the context of our main results, 
the factor of $\mathbb{T}$ in the space manifold  
$
\Sigma
= 
\mathbb{R} \times \mathbb{T}
$
corresponds to perturbations away from plane symmetry.
The advantage of studying (asymmetric) perturbations of simple outgoing plane symmetric waves
is that it allows us to focus our attention on the singularity formation
without having to confront additional evolutionary phenomena such 
as dispersion, which is exhibited, for example, in the \emph{initial}
evolutionary phase of small-data solutions 
with initial data given on $\mathbb{R}^2$.
We studied similar nearly plane symmetric solution regimes in our joint work \cite{jSgHjLwW2016}
on shock formation for scalar quasilinear wave equations as well as our
joint work \cite{jLjS2016b} on the compressible Euler equations with vorticity,
which we further describe below.

We now give a slightly more precise statement our main results;
see Theorem~\ref{T:MAINTHEOREM} for the precise statements.

\begin{theorem}[\textbf{Main theorem} (slightly more precise statement)]	
	\label{T:ROUGHMAINTHM}
	Under suitable assumptions on the nonlinearities
	(described in Subsect.\ \ref{SS:SYSTEMSUNDERSTUDY}),
	there exists an open set of regular data (belonging to an appropriate Sobolev space)
	for the system \eqref{E:FASTWAVE}-\eqref{E:SLOWWAVE}
	such that $\max_{\alpha=0,1,2} |\partial_{\alpha} \Psi|$ blows up in finite time due to the intersection 
	the ``fast'' characteristics $\mathcal{P}_u$,
	while $|\Psi|$, $|\slow|$, and $\max_{\alpha=0,1,2} |\partial_{\alpha} \slow|$ 
	remain uniformly bounded, 
	all the way up to the singularity.
	More precisely, we allow the data for $\Psi$ to be large or small, 
	but they must be close to the data 
	corresponding to a \underline{simple outgoing plane wave}.
	We furthermore assume that the data for $\slow$ are relatively small
	compared to the data for $\Psi$.
\end{theorem}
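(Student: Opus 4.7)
\subsection*{Proof proposal for Theorem~\ref{T:ROUGHMAINTHM}}

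The plan is to adapt the Christodoulou geometric vectorfield method to the system \eqref{E:FASTWAVE}--\eqref{E:SLOWWAVE}, treating the fast wave equation for $\Psi$ via a sharp second-order analysis and treating the slow wave equation for $\slow$ via a first-order reformulation that avoids disastrous commutator losses. First I would fix an eikonal function $u$ associated with the \emph{fast} wave metric $g$, defined so that $g^{\alpha\beta}\partial_{\alpha}u\partial_{\beta}u=0$ with $u\big|_{\Sigma_0}$ equal to (minus) the Cartesian coordinate $x^1$ in the plane-symmetric background; the level sets $\mathcal{P}_u$ are then the null hypersurfaces whose intersection will drive the singularity. From $u$ I would construct the geometric frame $\{\Lunit,\Rad,\GeoAng\}$ adapted to $g$, introduce the inverse foliation density $\upmu := -1/(g^{\alpha\beta}\partial_{\alpha}t\,\partial_{\beta}u)$, and set up the commutator vectorfields $\mathscr{Z} \in \{\Lunit, \Rad, \GeoAng\}$ and $\mathscr{P} \in \{\Lunit,\GeoAng\}$ (tangent to $\mathcal{P}_u$). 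The shock will correspond to $\upmu\downarrow 0$ in finite time, while all geometric quantities expressed in the rescaled frame remain regular.

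Second, I would perform the ODE analysis along the plane-symmetric background (or rather, along the transport equation satisfied by $\upmu$ and by $\Rad\Psi$). Under the genuine-nonlinearity assumption, $\Rad\Psi$ satisfies a Riccati-type evolution along $\Lunit$ and the factor driving the blowup is essentially $\upmu^{-1}$ times a signed quantity, so one obtains the sharp prediction that $\upmu$ hits zero in finite time $T_{(Lifespan)}$ while $\Psi$ itself (and $\Lunit\Psi$, $\angdiff\Psi$) stay bounded; consequently some Cartesian derivative $\partial_{\alpha}\Psi\sim \upmu^{-1}\Rad\Psi$ must blow up at $\upmu=0$. Third, I would set up a bootstrap on a truncated region $\mathcal{M}_{T_{(Boot)},U_0}$, with smallness assumptions on $L^\infty$-type low-order quantities measuring the deviation from the plane-symmetric simple wave, together with smallness of $\slow$ and its derivatives.

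The analytical heart of the argument is the top-order energy estimate hierarchy. For $\Psi$, I would commute the covariant wave equation $\myspacetimetr_g\D^2\Psi =(\text{lower order})$ with products of geometric vectorfields and run the multiplier method with the timelike vectorfield $T := (1+2\upmu)\Lunit+2\Rad$, exploiting the cancellations of Christodoulou's framework; this produces energies $\totmax{N}(t,u)$ with $\upmu$-degenerate coercivity that are controlled by a Gronwall argument against powers of $\upmu_\star^{-1}(t,u) := \min \upmu$, losing a controlled power at top order. For $\slow$, where the natural vectorfields are \emph{not} adapted to the slow metric, I would \emph{never commute the second-order slow wave operator} with $\mathscr{Z}$; instead I would rewrite the slow equation as a first-order symmetric-hyperbolic system in the variables $(\slow,\partial\slow)$, and commute that system with $\mathscr{Z}$. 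The crucial structural input is the coerciveness estimate \eqref{E:SLOWNULLFLUXCOERCIVENESS}: since $\mathcal{P}_u^t$ is spacelike for the slow metric, the slow-wave energy flux through $\mathcal{P}_u^t$ controls all Cartesian derivatives of $\slow$ \emph{without $\upmu$ weights}, and the error terms produced by commuting $\mathscr{Z}$ through a first-order operator can be absorbed (analogously to the transport treatment in \cites{jLjS2016a,jLjS2016b}) using the $\upmu$-weighted fast energies, together with the smallness of $\slow$ relative to $\Psi$.

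Finally I would close the bootstrap: the $\Psi$ energies are bounded in terms of powers of $\upmu_\star^{-1}$, the $\slow$ energies are bounded uniformly (no $\upmu$ weights), and Sobolev embedding on the sharp geometric foliation yields the low-order $L^\infty$ bounds that improve the bootstrap constants. Combining the improved estimates with the ODE analysis of $\upmu$ along $\Lunit$, I conclude that $\upmu_\star \to 0$ in finite time, that $\Psi$, $\Lunit\Psi$, $\angdiff\Psi$, $\slow$, and all $\partial_\alpha\slow$ remain bounded, and that $\max_\alpha|\partial_\alpha\Psi|$ blows up precisely where $\upmu$ vanishes. The main obstacle, as emphasized in the introduction, is controlling the top-order commutator terms produced by the interaction between the $g$-adapted vectorfields $\mathscr{Z}$ and the \emph{slow} principal operator: the first-order reformulation is exactly what renders these terms controllable, but one must still verify that the reduction in precision (losing one derivative in the hierarchy for $\slow$) is compatible with the derivative counts forced by the $\upmu$-degenerate top-order estimates for $\Psi$, which is a delicate bookkeeping issue that drives the choice of the top regularity parameter $N$.
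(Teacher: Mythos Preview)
Your proposal follows essentially the same architecture as the paper: eikonal function for $g$, rescaled frame $\{\Lunit,\Rad,\GeoAng\}$, inverse foliation density $\upmu$, bootstrap with $L^{\infty}$ assumptions, multiplier $T=(1+2\upmu)\Lunit+2\Rad$ for $\Psi$, first-order symmetric-hyperbolic reformulation for $\slow$, and a top-order energy hierarchy degenerating in powers of $\upmu_{\star}^{-1}$ followed by a descent to non-degenerate low-order bounds that close the bootstrap via Sobolev embedding. The strategic outline is correct.

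There is one genuine misconception you should correct. You assert that ``the $\slow$ energies are bounded uniformly (no $\upmu$ weights).'' This is false at high orders. Once you commute the first-order slow system with $\Tanset^N$, the inhomogeneities contain (through the $\partial\Psi$-dependent source terms on the right-hand side of the slow equation, and through commutators with $\upmu\partial_\alpha$) quantities controlled by the \emph{fast} wave energies $\totTanmax{[1,N]}$. Since those blow up like $\upmu_\star^{-11.8}$ at top order, the Gronwall argument for $\slowtotTanmax{\leq N}$ inherits exactly the same rate; see the model inequality \eqref{E:SLOWWAVEMODELENERGYINEQUALITY} and the paper's hierarchy \eqref{E:INTROTOPENERGY}--\eqref{E:INTROLOWESTENERGY}, where $\mathbb{H}^{(Slow)}_{18}$ and $\mathbb{H}^{(Fast)}_{18}$ carry the same singular bound. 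The point you correctly identified---that the $\mathcal{P}_u^t$-flux for $\slow$ is coercive without $\upmu$ weights---does not prevent this; it only means you can absorb the non-$\upmu$-weighted error products without paying an \emph{additional} factor of $\upmu_\star^{-1}$. The uniform boundedness of $\slow$ and $\partial\slow$ in $L^\infty$ comes instead from the \emph{low-order} part of the descent (orders $\leq 12$ in the paper), where both fast and slow energies are non-degenerate, followed by Sobolev embedding. Your remark about ``losing one derivative in the hierarchy for $\slow$'' is also off: the first-order reformulation does not cost a derivative; rather, it trades away the precise second-order structure (which is unavailable for the slow operator anyway) for commutator terms that are merely first-order and hence controllable by the $\upmu$-weighted frame identities.
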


\begin{remark}[\textbf{The need for geometric coordinates}]
\label{R:NEEDFORGEOMETRICCOORDS}
Although Theorem~\ref{T:ROUGHMAINTHM} refers to the
Cartesian coordinates, as in 
all of the prior proofs of shock formation in more than one spatial dimension,
we need very sharp estimates, tied to a system of geometric coordinates,
to close our proof; 
the Cartesian coordinates are 
not adequate for measuring the regularity and boundedness 
properties of the solutions nor for tracking the intersection of characteristics.
\end{remark}
 
\subsection{Paper outline and remarks}
\label{SS:PAPEROUTLINE}
\begin{itemize}
	\item In the rest of Sect.\ \ref{S:INTRO}, we place our main result in context,
		construct some of the basic geometric objects that we use in our analysis,
		and provide an overview of the proof.
	\item In the present paper, 
		we often cite identities and estimates proved in the work \cite{jSgHjLwW2016},
		which yielded similar shock formation results for scalar wave equations
		(see, however, Remark~\ref{R:NEWPARAMETER}).
	\item In Sect.\ \ref{S:GEOMETRICSETUP}, we construct the remaining 
		geo-analytic objects that we use in our proof and exhibit their main
		properties.
	\item In Sect.\ \ref{S:NORMSANDBOOTSTRAP}, we define the norms that we use
		in our analysis, state our assumptions on the data,
		and formulate bootstrap assumptions.
	\item In Sect.\ \ref{S:ENERGIES}, we define our energies and provide the
		energy identities that we use in our $L^2$ analysis.
	\item In Sects.\ \ref{S:PRELIMINARYPOINTWISE}-\ref{S:POINTWISEESTIMATES},
		we derive a priori $L^{\infty}$ and pointwise estimates for the solution.
	\item In Sect.\ \ref{S:ENERGYESTIMATES}, we derive a priori energy estimates.
		This is the most important and technically demanding section of the paper
		and it relies on all of the geometric constructions and estimates
		from the prior sections.
	\item In Sect.\ \ref{S:MAINTHEOREM}, we provide our main theorem.
		This section is short because the estimates of the preceding sections
		essentially allow us to quote the proof of \cite{jSgHjLwW2016}*{Theorem~15.1}.
\end{itemize}

\subsection{Further context and prior results}
\label{SS:CONTEXTANDPRIORRESULTS}
In his foundational work \cite{bR1860} in which he invented the Riemann invariants,
Riemann initiated the rigorous study of finite-time shock formation in 
initially regular solutions to quasilinear hyperbolic systems
in one spatial dimension, specifically the compressible Euler equations of fluid mechanics.
An abundance of shock formation results in one spatial dimension
were proved in the aftermath of Riemann's work,
with important contributions 
by Lax \cite{pL1964},
John \cite{fJ1974},
Klainerman--Majda \cite{sKaM1980},
and many others, continuing up until the present day
\cite{dCdRP2016}. 
The first constructive results on finite-time shock formation
in more than one spatial dimension (without symmetry assumptions)
were proved by Alinhac \cites{sA1995,sA1999a,sA1999b}, who 
treated scalar quasilinear wave equations in two and three spatial dimensions
that fail to satisfy the null condition. In the case of the 
quasilinear wave equations of irrotational relativistic fluid mechanics (which are scalar equations),
Alinhac's results were remarkably sharpened by Christodoulou \cite{dC2007},
whose fully geometric framework subsequently led to further shock formation results for scalar
quasilinear wave equations \cites{jS2016b,dCsM2014,sMpY2017,jSgHjLwW2016} 
as well as the compressible Euler equations with vorticity \cites{jLjS2016a,jLjS2016b}.
In Subsect.\ \ref{SS:PRIORWORKSANDSUMMARY},
we describe some of these works in more detail.

As we mentioned at the beginning, our main goal in this paper is to develop techniques
for studying shock formation in solutions to quasilinear hyperbolic systems 
in more than one spatial dimension
whose principal part exhibits multiple speeds of propagation.
In any attempt to carry out such a program,
one must grapple with following difficulty: the known approaches
to proving shock formation in more than one spatial dimension for scalar wave equations
are based on geo-analytic constructions that are fully adapted to 
the principal part of the scalar equation
(which corresponds to a dynamic Lorentzian metric), 
or, more precisely, to a family of characteristic
hypersurfaces corresponding to the Lorentzian metric.
One might say that in prior works, all coordinate/gauge freedoms 
for the domain were exhausted in order to
understand the intersection of the characteristics corresponding to the scalar equation
and the relationship of the intersection to the blowup of the solution's derivatives.
Therefore, those works left open the question of how to prove shock formation for systems featuring multiple 
unknowns and a more complicated principal part with distinct speeds of propagation;
roughly speaking, to treat such more complicated systems,
one has to understand how geometric objects adapted to one 
part of the principal operator (that is, to one wave speed) interact with 
remaining part of the principal operator
(corresponding to the other speeds).
For systems with appropriate structure, one could skirt this difficulty by
considering only a subset of initial conditions that lead to 
the following behavior: only one solution variable is non-zero at the first singularity.
For such solutions, the analysis effectively reduces to the study of a scalar equation.
A simple but important example of this approach is given by
John's aforementioned study \cite{fJ1984} of shock formation in spherically symmetric solutions to the equations of elasticity,
in which case the equations of motion, 
which generally have a complicated principal part with multiple speeds of propagation, 
reduce to a spherically symmetric \emph{scalar} quasilinear wave equation.
However, such a drastically simplified setup is mathematically and physically unsatisfying
in that it is typically not stable against nontrivial perturbations with very large spatial support, 
no matter how small they might be.

In our joint works \cites{jLjS2016a,jLjS2016b}, 
we proved the first shock formation result for a system with more than 
one speed in which all solution variables can be active (non-zero) 
at the first singularity. Specifically, the system (which is actually a new formulation of the compressible Euler equations) 
featured one wave operator and one transport operator,
and we showed that for suitable data, a family of outgoing \emph{wave characteristics} 
intersect in finite time and cause a singularity in the 
first-order Cartesian partial derivatives of the wave variables but the not transport variable.\footnote{In \cites{jLjS2016a,jLjS2016b},
the transport variable was the specific vorticity, defined to be vorticity divided by density.}
With the result \cite{jLjS2016b} in mind,
one might say the main new contribution of this article is 
to upgrade the framework established
in \cite{jLjS2016b} in order to accommodate an additional \emph{second-order}
quasilinear hyperbolic scalar equation 
(see Subsect.\ \ref{SS:EXTENDINGRESULTORELATEDSYSTEMS} for remarks on how to extend our result to additional systems). 


As we mentioned earlier, our main results apply to initial data such that
the ``fast wave'' variable $\Psi$
(corresponding to the strictly faster of the two speeds of propagation)
is the one that forms a shock. 
That is, $\Psi$ remains bounded but some first-order Cartesian coordinate partial derivative $\partial_{\alpha} \Psi$ 
blows up in finite time. 
Like all prior works on shock formation, 
in the equations that we study in this article,
the blowup of $\partial \Psi$
is driven by the presence of Riccati-type self-interaction 
inhomogeneous terms
$\partial \Psi \cdot \partial \Psi$ in the wave equation for $\Psi$.
As we have already stressed, the ``slow wave'' variable $\slow$ exhibits much less singular behavior,
even though its wave equation is also allowed to contain
(when expressed relative to standard Cartesian coordinates)
Riccati-type self-interaction terms $\partial \slow \cdot \partial \slow$.
Our ability to track the different behaviors of $\Psi$ and $\slow$
requires new geometric and analytic insights, 
notably the advantages that arise from
our first-order reformulation of the slow wave equation.

We now outline why, for the solutions under study, 
$\partial \Psi$ blows up while $\partial \slow$ does not,
even though the wave equations for $\Psi$ and $\slow$ can have a similar structure.\footnote{In fact, the wave equation
for $\slow$ is allowed to be even more complicated than that of $\Psi$ since we allow the principal operator
for $\slow$ to depend on $\Psi$, $\slow$, and $\partial \slow$, while the principal operator for
$\Psi$ is allowed to depend only on $\Psi$;
see \eqref{E:FASTWAVE}-\eqref{E:SLOWWAVE}.}
The main idea is that we consider initial data that lead to 
\emph{nearly simple outgoing plane wave solutions} in which
$\Psi$ has small initial dependence 
(in appropriate norms)
on the Cartesian torus coordinate $x^2$ and in which 
$\slow$ and $\partial \slow$ are initially relatively small and remain so throughout the evolution.
By an ``outgoing simple wave,'' we roughly mean a solution such that
$\slow \equiv 0$
and such that the dynamics of $\Psi$ 
are dominated by a right-moving (along the $x^1$ axis) wave, 
as opposed to a combination of right- and left-moving waves.
Our results show that for (non-symmetric) 
perturbations of such solutions,
$\partial \Psi$ blows up before the small terms $\partial \slow \cdot \partial \slow$ 
are able to drive the blowup of $\partial \slow$.
In particular, in the solution regime under consideration,
$\slow$ and $\partial \slow$ remain small in $L^{\infty}$ all the way up to the 
first singularity in $\partial \Psi$.
This is a subtle effect in that the wave equation for $\slow$ is allowed
to contain source terms (see RHS~\eqref{E:SLOWWAVE})
that are linear (but not quadratic or higher order!) 
in the tensorial component of $\partial \Psi$ that 
blows up.
Therefore, our work entails the study of 
non-trivial interactions between a wave that 
forms a singularity with another wave that, 
as we must prove, exhibits less singular behavior,
even though it is coupled to the ``singular part'' of the singular wave.

The set of initial data that we treat in our main results
is motivated in part by John's aforementioned work
\cite{fJ1974}, in which he proved a blowup result
for first-order quasilinear hyperbolic systems 
with multiple speeds
in one spatial dimension 
whose small-data solutions behave like simple waves\footnote{That is, in \cite{fJ1974},
one non-zero solution component is dominant near the singularity.} 
near the singularity.
John's result was recently sharpened 
\cite{dCdRP2016} by Christodoulou--Perez using
extensions of the framework developed by Christodoulou in \cite{dC2007}.
In Subsubsect.\ \ref{SSS:NEARLYSIMPLEWAVES}, to illustrate some of the main ideas,
we provide a proof of our main results in the special case that the initial data have exact plane
symmetry, that is, for data that are \emph{independent} of the Cartesian coordinate $x^2$. We caution, however, 
that the assumption of plane symmetry represents a drastic simplification of the full problem;
in plane symmetry, we are able to avoid deriving energy estimates, which is the main technical
difficulty that one encounters in the general case.
As we will explain, our analytic approach, in particular our approach to energy estimates, 
is based on geometric decompositions adapted to the characteristics corresponding to principal part of 
$\Psi$'s wave equation together with a first-order reformulation of the 
wave equation for $\slow$ that is compatible with these geometric decompositions.
This allows us to track the distinct behavior of the two waves all the way up to the shock.
As we mentioned above, one would likely need new ideas to treat data 
such that slow wave $\partial \slow$ is expected to form the first singularity and, 
at the same time, to interact with the fast wave $\Psi$ and its derivatives.

\subsection{Additional details concerning the most relevant prior works}
\label{SS:PRIORWORKSANDSUMMARY}
Our work here builds upon the outstanding contributions of 
Alinhac \cites{sA1995,sA1999a,sA1999b},
who was the first to prove small-data shock formation 
for solutions to quasilinear wave equations in more than one spatial dimension. 
Specifically, Alinhac studied scalar quasilinear wave equations
of the form 
\begin{align} \label{E:ALWAVE}
	(g^{-1})^{\alpha \beta}(\partial \Phi) \partial_{\alpha} \partial_{\beta} \Phi = 0,
\end{align}
on $\mathbb{R}^{1+n}$ for $n=2,3$.
For all such equations that fail to satisfy the null condition,
he identified a set of small, regular, compactly supported initial data 
such that the corresponding solution forms a shock in finite time.
The set of initial data to which his main results apply
were such that the constant-time hypersurface of first blowup
contains \underline{exactly one point} at which $\partial^2 \Phi$ blows up.\footnote{For equations of type
\eqref{E:ALWAVE}, a shock singularity 
is such that $\Phi$ and $\partial \Phi$ remain bounded while $\partial^2 \Phi$ blows up.}

The most important ingredient in Alinhac's approach was a dynamically constructed system of 
geometric coordinates tied to an eikonal function
(see Subsubsect.\ \ref{SSS:GEOMETRICINGREDIENTS} for a precise definition), 
whose level sets are $g$-null hypersurfaces (that is, characteristics). 
The main idea behind his approach was as follows: 
show that relative to the geometric coordinates, 
the solution remains regular up to the singularity, except possibly at the very
high (geometric) derivative levels. This enables one to approach the problem of shock formation from a more traditional
perspective in which one derives long-time-existence-type estimates.
It turns out that this approach, while viable, is 
extremely technically demanding to implement. The reason is that the best estimates known
allow for the possibility that the high-order geometric energies blow up, which 
makes it difficult (though possible)
to prove that the solution remains regular relative to the geometric
coordinates at the lower derivative levels.
After one has obtained regular estimates 
(at the lower derivative levels)
relative to the geometric coordinates,
one can easily carry out the rest of the proof, 
namely deriving the singularity formation
relative to the Cartesian coordinates, which one obtains by
showing that the geometric coordinates degenerate relative to the Cartesian ones.
Roughly, both the formation of the shock and the blowup of the solution's Cartesian coordinate partial derivatives
are caused by the intersection of the level sets of the eikonal 
function. The shock-generating initial conditions 
identified by Alinhac form a set of ``non-degenerate'' initial data, which 
can be thought of as generic inside the set of all smooth, small, compactly supported initial data.
Although Alinhac's use of an eikonal function allowed him to provide
a sharp description of the first singularity,
his approach to deriving energy estimates 
was based on a Nash--Moser iteration scheme featuring a free boundary.
His iteration scheme fundamentally relied on his non-degeneracy assumptions
on the data, which led to solutions whose first singularity is \emph{isolated} 
in the constant-time hypersurface of first blowup. His reliance on a Nash--Moser scheme is tied to the fact that
the regularity theory for the eikonal function is very difficult at the top order.

Our work also builds upon Christodoulou's groundbreaking sharpening \cite{dC2007}
of Alinhac's results for the subclass of (scalar) Euler-Lagrange wave equations
corresponding to the irrotational relativistic Euler equations in three spatial dimensions.
For these wave equations, Christodoulou proved the following main results:
\textbf{i)} He showed that for solutions generated by small,\footnote{In particular,
unlike Alinhac's proof, Christodoulou's yields global information about solutions corresponding to
an open set of data that contains
the trivial data in its interior.} 
regular, compactly supported data, shocks are the only possible singularities. 
The formation of a shock exactly corresponds to
the intersection of the characteristics, or equivalently, the 
vanishing of the inverse foliation density of the characteristics,
denoted by $\upmu$ (see Def.~\ref{D:FIRSTUPMU}). Put differently, Christodoulou proved that 
if the characteristics never intersect, then the solution exists globally.\footnote{More precisely, he
studied the solution only in a region that is trapped in between an inner null cone and an outer null cone.}
\textbf{ii)} He exhibited an open set of data for which shocks do in fact form. Unlike Alinhac's data, Christodoulou's
do not have to lead to a solution with an isolated first singularity.
Most importantly, \textbf{iii)} Christodoulou gave a complete description of a portion
of the maximal development\footnote{Roughly, the maximal development 
is the largest possible classical solution that is uniquely determined by the data. Readers can consult
\cites{jSb2016,wW2013} for further discussion.} 
of the data, all the way up to the boundary.
Although for brevity we have not given a complete description of the maximal development in this article,
it is likely that our sharp estimates could be used to obtain such a description,
by invoking arguments along the lines of \cite{dC2007}*{Chapter~15}.
Like Alinhac's approach and the approach of the present article, Christodoulou's framework
was based on an eikonal function.
However, unlike Alinhac,
Christodoulou did not use Nash--Moser estimates when deriving energy estimates.
Instead, he used a sharper, more geometric approach that required the full strength of his framework.
In particular, to avoid derivative loss,
Christodoulou derived sharp information regarding the tensorial regularity properties 
of eikonal functions in the context of shock formation, 
in which their level sets intersect. In this endeavor, he was undoubtedly aided by 
the experience he gained from his celebrated joint proof with Klainerman of the stability of Minkowski spacetime \cite{dCsK1993}.
In that work, the authors also had to deeply understand the high-order regularity properties of eikonal functions,
though in a less degenerate context in which they remain regular.
Christodoulou's sharp description of the maximal development, though
of interest in itself, is also important for another reason:
it is an essential ingredient for properly setting up the shock development problem, 
which is the problem of weakly continuing the solution
to the relativistic Euler equations past the first singularity under
appropriate selection criteria in the form of jump conditions; see
\cite{dC2007} for further discussion. 
We note that the shock development problem was recently solved in spherical symmetry \cite{dCaL2016}.
Moreover, in a recent breakthrough work \cite{dC2017}, 
for the non-relativistic compressible Euler equations without symmetry assumptions,
Christodoulou solved the shock development problem in a restricted case
(known as the restricted shock development problem)
such that the jump in entropy across the shock hypersurface was ignored.

Christodoulou's sharp, geometric approach has led to further advancements on shock formation
in solutions starting from smooth initial conditions, 
including extensions of his results to larger classes of equations and new types of initial conditions
\cites{jS2016b,dCsM2014,gHsKjSwW2016,sM2016,sMpY2017,jLjS2016a,jLjS2016b};
see the survey article \cite{gHsKjSwW2016} for an in-depth discussion of some of these results.
However, a crucial feature of both Alinhac's and Christodoulou's frameworks is that they are tailored
precisely to a single family of characteristics -- the family whose intersection
corresponds to a shock singularity. Thus, all prior works left open the question of whether or not
these approaches can be adapted to systems featuring multiple speeds of propagation.
As we mentioned above, first affirmative result in this direction was provided by our
joint works \cites{jLjS2016a,jLjS2016b} with J.~Luk,
in which we discovered some remarkable geo-analytic structures in the compressible Euler equations
with vorticity. Inspired by these structures,
we developed an extended version of Christodoulou's framework,
and we used it to prove shock formation for solutions to
a quasilinear \emph{wave-transport} system. More precisely, 
the wave-transport system that we studied in \cites{jLjS2016a,jLjS2016b}
was a new formulation of the compressible Euler equations, 
where the velocity components and density satisfied a system of covariant wave equations, 
\emph{all with the same covariant wave operator $\square_g$}
(corresponding to a single Lorentzian metric $g$),
and the vorticity satisfied a (first-order) transport equation.
There were two speeds in the system: the speed of sound, corresponding to sound wave propagation,
and the speed associated to the transporting of vorticity. 
A particularly remarkable aspect of the equations studied in \cites{jLjS2016a,jLjS2016b}, which is central to the proofs, 
is that the inhomogeneous terms had a good null structure
that did not interfere with the shock formation processes.
The null structures, which are fully nonlinear in nature, 
are a tensorial generalization of the good null structure enjoyed by the
standard null form $\mathcal{Q}^g(\partial \Psi,\partial \Psi)$,
which is an admissible term in our systems \eqref{E:FASTWAVE}-\eqref{E:SLOWWAVE}
(see just below those equations for further discussion on this point).

In our works \cites{jLjS2016a,jLjS2016b}, to control the 
wave-transport solution's derivatives, 
we followed the approach of \cite{dC2007} 
(itself inspired by the Christodoulou--Klainerman proof \cite{dCsK1993} of the stability of Minkowski spacetime)
and constructed a family of dynamic geometric objects, including geometric vectorfields, 
adapted to the characteristics\footnote{More precisely, as we describe in 
Subsect.\ \ref{SS:OVERVIEWOFPROOF}, the vectorfields were adapted to an eikonal function corresponding to $g$,
whose level sets are $g$-null hypersurfaces.} 
of $g$. A seemingly unavoidable aspect of our approach in \cites{jLjS2016a,jLjS2016b} 
was that, due to the coupled nature of the system, we were forced to
commute the transport equation with the \emph{same geometric vectorfields}
in order to obtain estimates for the solution's derivatives.
In general, one might expect to encounter crippling error terms
from this procedure, since the geometric vectorfields are not
adapted to the transport operator. What allowed our proof to go through
are the following facts: 
\textbf{i)} transport operators are first-order
and \textbf{ii)} the operators $\upmu \partial_{\alpha}$
exhibit good commutation properties with the geometric vectorfields,
where $\partial_{\alpha}$ is a Cartesian coordinate partial derivative vectorfield
and $\upmu > 0$, mentioned above, is the inverse foliation density of the wave characteristics
(which we rigorously define in Subsubsect.\ \ref{SSS:GEOMETRICINGREDIENTS} 
since $\upmu$ plays a critical role in the present work as well).
Therefore, since the transport operator was just a 
(solution-dependent) linear combination of the $\partial_{\alpha}$,
upon multiplying the transport equation by $\upmu$
and commuting it with the geometric vectorfields,
we were able to completely avoid the worst imaginable commutator error terms, 
which enabled us to close the proof. 

We now stress that our approach in \cites{jLjS2016a,jLjS2016b}
does not allow one to commute the geometric vectorfields through
typical second-order operators
$\upmu \partial_{\alpha} \partial_{\beta}$; this would typically generate
crippling commutator error terms\footnote{Using a weight with a different power of $\upmu$, such as
$\upmu^2 \partial_{\alpha} \partial_{\beta}$, also seems to lead to insurmountable difficulties.} 
featuring a factor of $1/\upmu$,
which blows up as $\upmu \to 0$ and obstructs the goal of deriving regular estimates.
In particular, in itself, the approach of 
\cites{jLjS2016a,jLjS2016b}
does not manifestly allow one to couple an additional
quasilinear wave equation with a ``new'' metric $h$ that is different from $g$.
Here it makes sense to clarify the following point: the crippling error terms
do \emph{not} arise when commuting the geometric vectorfields through $\square_g$
(since the vectorfields are adapted to the characteristics of $g$), but they \emph{do} arise
when commuting them through a typical second-order differential operator.
Thus, the works \cites{jLjS2016a,jLjS2016b} left open the question of how to 
prove shock formation for solutions to second-order quasilinear 
systems featuring more than one wave operator.
As we have mentioned, in the present article, we prove the first shock formation results 
for systems of this type. The following key idea, mentioned earlier,
lies at the heart of our approach here.
\begin{quote}
\emph{It is possible to formulate the wave equation for the 
non-shock-forming slow variable as a first-order system
that can be treated using an extension of the approach 
of \cite{jLjS2016b}}; see equations \eqref{E:SLOW0EVOLUTION}-\eqref{E:SYMMETRYOFMIXEDPARTIALS}.
\end{quote}



\subsection{Remarks on the nonlinear terms and extending the results to related systems}
\label{SS:EXTENDINGRESULTORELATEDSYSTEMS}
The formation of shocks exhibited by Theorem~\ref{T:ROUGHMAINTHM}
is of course tied to our structural assumption on the nonlinearities,
which we precisely describe in Subsect.\ \ref{SS:SYSTEMSUNDERSTUDY}.
As we mentioned earlier,
the blowup of $\Psi$ is driven by the presence of
a Riccati-type interaction term in its wave equation,
which is captured by our assumption \eqref{E:NONVANISHINGNONLINEARCOEFFICIENT} below.
For this reason, the wave equation of $\Psi$ can be caricatured as\footnote{Throughout, if $V$ is a vectorfield
and $f$ is a scalar function, 
then $V f := V^{\alpha} \partial_{\alpha} f$ 
denotes the $V$-directional derivative of $f.$}
$\Lunit_{(Flat)} \partial_1 \Psi \sim (\partial_1 \Psi)^2 + \mbox{\upshape Error}$,
where $\Lunit_{(Flat)} := \partial_t + \partial_1$ and
$\mbox{\upshape Error}$ depends on $\Psi$, $\slow$, and their derivatives
(and in particular $\mbox{\upshape Error}$ contains the quasilinear interaction terms).
Although this caricature wave equation suggests that $\partial_1 \Psi$
should blow up in finite time along the integral curves of 
$\Lunit_{(Flat)}$, this is not how our proof works.
It seems that in order to close the energy estimates and to show that
error terms do not interfere with the blowup, one needs to derive
very sharp estimates tailored to the family of characteristics
corresponding to $\Psi$, which are in turn influenced by $\Psi$
in view of the quasilinear nature\footnote{The metric $g$ in the wave equation
for $\Psi$ is such that $g=g(\Psi)$. In particular,
$g$ does not depend on $\slow$ and thus the characteristics corresponding
to $g$ are not directly influenced by $\slow$.} 
of the equation. 
We also stress that, as in prior shock formation results, 
our proof is more sensitive to perturbations
of the equations than typical proofs of global existence.
This is not surprising in view of the fact that adding terms
of the form, say $\pm (\partial_1 \Psi)^3$, to the RHS of the
above caricature equation can drastically alter the global behavior
of its solutions. In contrast, since $\slow$ and $\partial_{\alpha} \slow$
remain bounded up to the shock, our approach is able to accommodate
essentially arbitrary semilinear terms comprised of products of these variables;
see Subsect.\ \ref{SS:SYSTEMSUNDERSTUDY} for our precise assumptions on the nonlinearities.

For convenience, we have chosen not to treat the most general
type of system to which our approach applies.
Our approach is flexible in the sense
that it could be used to treat systems featuring additional
wave equations, transport equations, or
symmetric hyperbolic equations. However, the following assumptions play a critical role in our analysis.
\begin{itemize}
	\item The Lorentzian metric $g$ corresponding to the principal part of the wave equation
		of $\Psi$ depends only on $\Psi$. That is, in the wave equation \eqref{E:FASTWAVE} below,
		$g = g(\Psi)$.
		More generally, we could allow for 
		$g = g(\Psi_1,\cdots,\Psi_m)$, as long
		as the \emph{same metric} $g$ corresponds to the principal part of the 
		wave equation of $\Psi_i$ for $1 \leq i \leq m$.
		This assumption is needed to control the top-order derivatives of the 
		eikonal function corresponding to $g$
		(see the discussion of modified quantities in Subsubsect.\ \ref{SSS:ENERGYESTIMATES}
		for more details on this point).
	\item The shock-forming variable $\Psi$ corresponds to 
		the fastest speed (in the strict sense)
		in the system. This assumption implies that the null hypersurfaces $\mathcal{P}_u$
		corresponding to the metric $g(\Psi)$ are \emph{spacelike}
		from the perspective of the principal parts of the remaining equations 
		in the system. This is important because our proof requires the availability 
		of positive definite energies for the slow wave solution variables
		along $g$-null hypersurfaces. In the present article, the positive definiteness of the energies
		for the slow wave $\slow$ along these hypersurfaces is guaranteed by 
		the estimates in equation \eqref{E:SLOWNULLFLUXCOERCIVENESS}.
	\item See Remark~\ref{R:DIFFERENTFASTEQUATION} below for a discussion of other types of
		``fast'' wave equations for which we could prove a stable shock formation result.
\end{itemize}

\begin{remark}
In Subsect.\ \ref{SS:SYSTEMSUNDERSTUDY} 
we will make further assumptions on the nonlinearities
and quantify the assumption that $\Psi$ is the fast wave.
\end{remark}

\subsection{Basic notational and index conventions}
\label{SS:NOTATIONANDINDEXCONVENTIONS}
We now summarize some of our notation. Some of the concepts referred to here
are defined later in the article.
Throughout, $\lbrace x^{\alpha} \rbrace_{\alpha =0,1,2}$
denote the standard Cartesian coordinates
on the spacetime $\mathbb{R} \times \Sigma$,
where $x^0 \in \mathbb{R}$ is the time variable and 
$(x^1,x^2) \in \Sigma = \mathbb{R} \times \mathbb{T}$ are the space variables,
chosen such that $\partial_2$ is positively oriented.
We denote the corresponding partial derivative vectorfields by
$
\displaystyle
\partial_{\alpha}
=:
\frac{\partial}{\partial x^{\alpha}}
$
(which are globally defined and smooth even though $x^2$ is only locally defined),
and we often use the alternate notation $t := x^0$ and $\partial_t := \partial_0$.

\begin{itemize}
	\item Lowercase Greek spacetime indices 
	$\alpha$, $\beta$, etc.\
	correspond to the Cartesian spacetime coordinates 
	and vary over $0,1,2$.
	Lowercase Latin spatial indices
	$a$,$b$, etc.\ 
	correspond to the Cartesian spatial coordinates and vary over $1,2$.
	We use tilded indices such as $\widetilde{\alpha}$ in the same way that
	we use their non-tilded counterparts.
	All lowercase Greek indices are lowered and raised with the fast wave spacetime metric
	$g$ and its inverse $g^{-1}$, and \emph{not with the Minkowski metric}.
\item We use Einstein's summation convention in that repeated indices are summed over their respective ranges.
\item We sometimes use $\cdot$ to denote the natural contraction between two tensors
		(and thus raising or lowering indices with a metric is not relevant for this contraction). 
		For example, if $\xi$ is a spacetime one-form and $V$ is a 
		spacetime vectorfield,
		then $\xi \cdot V := \xi_{\alpha} V^{\alpha}$.
\item If $\xi$ is a one-form and $V$ is a vectorfield, then
	$\xi_V := \xi_{\alpha} V^{\alpha}$. 
	Similarly, if $W$ is a vectorfield, then
	$W_V := W_{\alpha} V^{\alpha} = g(W,V)$.
\item If $\xi$ is an $\ell_{t,u}$-tangent one-form
	(as defined in Subsect.\ \ref{SS:PROJECTIONTENSORFIELDANDPROJECTEDLIEDERIVATIVES}),
	then $\xi^{\#}$ denotes its $\gsphere$-dual vectorfield,
	where $\gsphere$ is the Riemannian metric induced on $\ell_{t,u}$ by $g$.
	Similarly, if $\xi$ is a symmetric type $\binom{0}{2}$ $\ell_{t,u}$-tangent tensor, 
	then $\xi^{\#}$ denotes the type $\binom{1}{1}$ $\ell_{t,u}$-tangent tensor formed by raising one index with $\ginversesphere$
	and $\xi^{\# \#}$ denotes the type $\binom{2}{0}$ $\ell_{t,u}$-tangent tensor formed by raising both indices with $\ginversesphere$.
\item If $\xi$ is an $\ell_{t,u}$-tangent tensor, then 
	the norm $|\xi|$ is defined relative to the Riemannian metric $\gsphere$, 
	as we make precise in Def.~\ref{D:POINTWISENORM}.
\item Unless otherwise indicated, 
	all quantities in our estimates that are not explicitly under
	an integral are viewed as functions of 
	the geometric coordinates $(t,u,\vartheta)$
	of Def.~\ref{D:GEOMETRICCOORDINATES}.
	Unless otherwise indicated, 
	integrands have the functional dependence 
	established below in
	Def.~\ref{D:NONDEGENERATEVOLUMEFORMS}.
\item If $Q_1$ and $Q_2$ are two operators, then
	$[Q_1,Q_2] = Q_1 Q_2 - Q_2 Q_1$ denotes their commutator.
\item $A \lesssim B$ means that there exists $C > 0$ such that $A \leq C B$.
\item $A \approx B$ means that $A \lesssim B$ and $B \lesssim A$.
\item $A = \mathcal{O}(B)$ means that $|A| \lesssim |B|$.
\item Constants such as $C$ and $c$ are free to vary from line to line.
	\textbf{These constants, as well as implicit constants, 
	are allowed to depend in an increasing, 
	continuous fashion on the data-size parameters 
	$\mathring{\updelta}$
	and 
	$\TranminusdatasizeWithFactor^{-1}$
	from
	Subsect.\ \ref{SS:DATAASSUMPTIONS}.
	However, the constants can be chosen to be 
	independent of the parameters 
	$\Psiep$,
	$\mathring{\upepsilon}$,
	and $\varepsilon$ whenever the following conditions hold:
	\textbf{i)}
	$\mathring{\upepsilon}$
	and $\varepsilon$
	are sufficiently small relative to 
	$1$,
	sufficiently small relative to
	$\mathring{\updelta}^{-1}$,
	and sufficiently small relative to $\TranminusdatasizeWithFactor$,
	and 
	\textbf{ii)}
	$\Psiep$ is sufficiently small relative to $1$},
	in the sense described in Subsect.\ \ref{SS:SMALLNESSASSUMPTIONS}.
\item Constants $C_{\mydiam}$ are also allowed to vary from line to line, but
		unlike $C$ and $c$, the $C_{\mydiam}$ are
		\textbf{universal 
		in that, as long as $\Psiep$, $\mathring{\upepsilon}$, and $\varepsilon$ are sufficiently small relative to $1$,
		they do not depend on
		$\varepsilon$,
		$\mathring{\upepsilon}$,
		$\mathring{\updelta}$,
		or $\TranminusdatasizeWithFactor$}.
\item $A = \mathcal{O}_{\mydiam}(B)$ means that $|A| \leq C_{\mydiam} |B|$,
	with $C_{\mydiam}$ as above.
\item For example, $\TranminusdatasizeWithFactor^{-2} = \mathcal{O}(1)$,
		$2 + \Psiep + \Psiep^2 = \mathcal{O}_{\mydiam}(1)$,
		$\Psiep \varepsilon = \mathcal{O}(\varepsilon)$,
		$C_{\mydiam} \Psiep^2 =  \mathcal{O}_{\mydiam}(\Psiep)$,
		and $C \Psiep = \mathcal{O}(1)$; some of these examples are non-optimal.
\item $\lfloor \cdot \rfloor$
	and $\lceil \cdot \rceil$
	respectively denote the standard floor and ceiling functions. 
\end{itemize}

\subsection{The systems under study}
\label{SS:SYSTEMSUNDERSTUDY}
\subsubsection{Statement of the equations}
\label{SSS:STATEMENTOFEQUATIONS}
For notational convenience, we introduce the following array associated to the slow wave:
\begin{align} \label{E:SLOWWAVEVARIABLES}
	\bigslow
	& := (\slow,\slow_0,\slow_1,\slow_2),
	&
	\slow_{\alpha} 
	&:= \partial_{\alpha} w,
	&& (\alpha = 0,1,2),
\end{align}
where we again stress that $\partial_{\alpha}$ denotes a Cartesian coordinate partial derivative vectorfield.

\begin{remark}[\textbf{Remark on the pointwise norm of the array $\bigslow$}]
	\label{R:POINTWISENORMOFSLOWVARIABLEARRAY}
	Throughout the article, we view $\bigslow$ 
	to be an array of scalar functions without tensorial structure. Thus, 
	there should be no danger of confusing the definition $| \bigslow |^2 := \slow^2 + \sum_{\alpha = 0}^2 \slow_{\alpha}^2$
	with the definition \eqref{E:POINTWISENORM} below for the pointwise norm 
	$|\cdot|$
	of an $\ell_{t,u}$-tangent tensor.
\end{remark}

\begin{center}
	{\large \underline{\textbf{The system of wave equations under study}}}
\end{center}
Our main results concern the following system of two wave equations:
\begin{subequations}
\begin{align}
	\square_{g(\Psi)} \Psi 
	& = 
		\mathfrak{M}(\Psi,\bigslow) \mathcal{Q}^g(\partial \Psi,\partial \Psi)
		+
		\mathfrak{N}_1^{\alpha}(\Psi,\bigslow) \partial_{\alpha} \Psi
		+ 
		\mathfrak{N}_2(\Psi,\bigslow),
		\label{E:FASTWAVE} \\
	(h^{-1})^{\alpha \beta}(\Psi,\bigslow)  \partial_{\alpha} \partial_{\beta} \slow 
	& = 
		\widetilde{\mathfrak{M}}(\Psi,\bigslow)  \mathcal{Q}^g(\partial \Psi,\partial \Psi)
		+
		\widetilde{\mathfrak{N}}_1^{\alpha}(\Psi,\bigslow) \partial_{\alpha} \Psi
		+ 
		\widetilde{\mathfrak{N}}_2(\Psi,\bigslow).
		\label{E:SLOWWAVE}
	\end{align}
\end{subequations}

Above and throughout, 
$g$ and $h$ are, by assumption, Lorentzian metrics for small values of their arguments
(see below for our precise assumptions),
$\square_{g(\Psi)}$
is the covariant wave operator\footnote{Relative to arbitrary coordinates,
$\square_g f
= 
\frac{1}{\mbox{$\sqrt{|\mbox{\upshape det} g|}$}}
\partial_{\alpha}\left(\sqrt{|\mbox{\upshape det} g|} (g^{-1})^{\alpha \beta} \partial_{\beta} f \right)$.
\label{FN:COVWAVEOPARBITRARYCOORDS}}  
of $g(\Psi)$,
$\mathfrak{M}$,
$\mathfrak{N}_1^{\alpha}$, 
$\cdots$, and $\widetilde{\mathfrak{N}}_2$ 
are smooth nonlinear terms described below,
and
\begin{align} \label{E:STANDARDNULLFORM}
	 \mathcal{Q}^g(\partial \Psi,\partial \Psi)
	 := (g^{-1})^{\alpha \beta}(\Psi)	\partial_{\alpha} \Psi \partial_{\beta} \Psi
\end{align}
is the standard null form associated to $g$. 
It is important for our proof that the wave operator
of the shock-forming variable 
$\Psi$ is covariant, the reason being that the geometric
vectorfields that we construct exhibit good commutation properties with\footnote{More precisely, they exhibit good
commutation properties with $\upmu \square_g$, where we define $\upmu$ in Def.~\ref{D:FIRSTUPMU}.} 
the operator $\square_g$.
We stress that
$\mathcal{Q}^g(\partial \Psi,\partial \Psi)$ is,
from the point of view of closing our estimates,
the only allowable term on RHSs~\eqref{E:FASTWAVE}-\eqref{E:SLOWWAVE}
that is quadratic in $\partial \Psi$. The reason is that 
$\mathcal{Q}^g(\partial \Psi,\partial \Psi)$ has the following special nonlinear structure:
it is \emph{linear} in the tensorial component of $\partial \Psi$ that blows up;
see \eqref{E:UPMUTIMESNULLFORMSSCHEMATIC} for the geo-analytic statement of this fact.
More precisely, upon decomposing $\mathcal{Q}^g(\partial \Psi,\partial \Psi)$ relative to an appropriate frame,
we find find that it is linear in a derivative of $\Psi$ in 
a direction that is transversal to the $g$-characteristics.
The key point is that it is precisely the transversal derivative of $\Psi$ that blows up,
while the derivatives of $\Psi$ in directions tangential to the $g$-characteristics remain
uniformly bounded\footnote{Except possibly at the high derivative levels, due to the degenerate
high-order energy estimates that we derive; see Subsubsect.\ \ref{SSS:ENERGYESTIMATES}.} 
all the way up to the singularity.
For this reason, such terms have only a negligible effect on the dynamics
all the way up to the shock, at least compared to the Riccati-type term 
that is quadratic in the transversal derivatives of $\Psi$ and that
drives the singularity formation. Note that this Riccati-type term
becomes visible only if we expand the expression $\square_{g(\Psi)} \Psi$ on LHS~\eqref{E:FASTWAVE}
relative to Cartesian coordinates.
We refer readers to \cite{jLjS2016a} for further discussion of these issues,
noting only that the good structure of 
$\mathcal{Q}^g(\partial \Psi,\partial \Psi)$ is referred to as 
\emph{the strong null condition} (relative to $g$) in \cite{jLjS2016a}.
Note that inhomogeneous terms that are quadratic or higher-order in $\partial \Psi$
typically have the following property:
they are at least quadratic in the derivatives of $\Psi$ in directions transversal to the $g$-characteristics.
Such terms are too singular to be included on RHSs \eqref{E:FASTWAVE}-\eqref{E:SLOWWAVE}
within our framework and in fact, might introduce instabilities that 
prevent a shock from forming or, alternatively, 
that generate a completely different kind of blowup.
In particular, like all prior works on shock formation for wave equations, our proof is unstable against
the addition of cubic terms $(\partial \Psi)^3$ to the equations, and similarly for
terms that are higher-order in $\partial \Psi$.

\begin{remark}[\textbf{Extending the result to a different type of fast wave equation}]
	\label{R:DIFFERENTFASTEQUATION}
	Instead of studying equation \eqref{E:FASTWAVE}, we could alternatively 
	prove a shock-formation result for ``fast''
	non-covariant quasilinear wave equations of the form
	\begin{align} \label{E:NONCOVWAVEEQUATION}
	(g^{-1})^{\alpha \beta}(\partial \Phi)  \partial_{\alpha} \partial_{\beta} \Phi
		= 
		\mathfrak{N}(\Phi,\partial \Phi,\slow),
	\end{align}
	where 
	$\mathfrak{N}(\cdot)$ is a smooth function of its arguments such that
	$
	\displaystyle
	\mathfrak{N}(\Phi,\partial \Phi,0) 
	= 
	0
	$
	for $\nu = 0,1,2$.
	As is explained in \cite{jS2016b}, 
	to treat equations of type \eqref{E:NONCOVWAVEEQUATION},
	one could first differentiate equation\footnote{More generally, 
	our approach could be extended
	to allow for $(g^{-1})^{\alpha \beta}= (g^{-1})^{\alpha \beta}(\Phi,\partial \Phi)$
	in equation \eqref{E:NONCOVWAVEEQUATION}.} 
	\eqref{E:NONCOVWAVEEQUATION}
	with the Cartesian coordinate partial derivatives $\partial_{\nu}$
	to obtain a system of type
	\eqref{E:FASTWAVE}-\eqref{E:SLOWWAVE}
	in the unknowns 
	$\Phi$,
	$\vec{\Psi}$,
	and
	$\bigslow$ that obey the semilinear inhomogeneous term assumptions stated in Subsubsect.\ \ref{SSS:ASSUMPTIONSONREMAINNGNONLINEARITIES},
	where $\vec{\Psi} := (\Psi_0,\Psi_1,\Psi_2) := (\partial_0 \Phi, \partial_1 \Phi, \partial_2 \Phi)$
	and $g = g(\vec{\Psi})$. More precisely, in \cite{jS2016b}, we showed that the 
	scalar functions $\Psi_{\nu}$ satisfy a system of covariant wave equations of type
	\eqref{E:FASTWAVE}, where the terms that are quadratic in 
	$\partial \vec{\Psi}$ exhibit the same kind of good null structure as the 
	standard $g$-null form \eqref{E:STANDARDNULLFORM}.
	The assumption
	$
	\displaystyle
	\mathfrak{N}(\Phi,\partial \Phi,0) 
	= 
	0
	$
	guarantees that the system admits simple outgoing plane wave solutions in which $\slow \equiv 0$
	(see Subsubsect.\ \ref{SSS:ASSUMPTIONSONREMAINNGNONLINEARITIES} for further discussion). This assumption is convenient for our analysis. It
	could be weakened to allow for a larger class of semilinear terms $\mathfrak{N}$ such that
	the system no longer admits exact simple outgoing plane wave solutions. However, 
	compared to our main theorem,
	we generally would have to make different assumptions on the initial data 
	(adapted to $\mathfrak{N}$)
	to guarantee that a shock forms in finite time;
	see the discussion below equation 
	\eqref{E:SOMENONINEARITIESARELINEAR} for related remarks.
\end{remark}

\subsubsection{Assumptions on the remaining nonlinearities}
\label{SSS:ASSUMPTIONSONREMAINNGNONLINEARITIES}
We assume that relative to the Cartesian coordinates,
the nonlinearities 
$g_{\alpha \beta}(\cdot)$, 
$h_{\alpha \beta}(\cdot)$,
$\mathfrak{M}(\cdot)$, 
$\mathfrak{N}_1^{\alpha}(\cdot)$,
$\cdots$, 
$\widetilde{\mathfrak{N}}_2(\cdot)$ 
in the system \eqref{E:FASTWAVE}-\eqref{E:SLOWWAVE}
are given smooth functions of their arguments (for $|\Psi|$ and $|\bigslow|$ sufficiently small)
and that
\begin{align}  \label{E:SOMENONINEARITIESARELINEAR}
	\mathfrak{N}_1^{\alpha}(\Psi,0)
	& =
	\mathfrak{N}_2(\Psi,0)
	=
	\widetilde{\mathfrak{N}}_1^{\alpha}(\Psi,0)
	=
	\widetilde{\mathfrak{N}}_2(\Psi,0)
	= 0,
	&& (\alpha = 0,1,2).
\end{align}
That is, we assume that the semilinear terms in \eqref{E:SOMENONINEARITIESARELINEAR} 
vanish when $\bigslow = 0$.
The assumptions \eqref{E:SOMENONINEARITIESARELINEAR}
are such that the system
\eqref{E:FASTWAVE}-\eqref{E:SLOWWAVE}
admits simple outgoing plane wave solutions in which
$\slow \equiv 0$, $\Psi = \Psi(t,x^1)$, and
$\Psi$ is a ``right-moving'' wave, as opposed to 
being a combination of left- and right-moving waves;
see Subsect.\ \ref{SS:EXISTENCEOFDATA} for further discussion on this point.
Our main theorem concerns perturbations (without symmetry assumptions)
of these simple outgoing plane waves. Our results could be extended to allow
for additional kinds of semilinear terms on
RHSs \eqref{E:FASTWAVE}-\eqref{E:SLOWWAVE}, such as a Klein-Gordon term 
(i.e., a constant multiple of $\Psi$ on RHS~\eqref{E:FASTWAVE}) 
or products with the schematic structure $\Psi \partial \Psi$. However, in the presence of
these semilinear terms, the equations no longer admit 
simple outgoing plane wave solutions (aside from the trivial zero solution).
Consequently, our assumptions on the initial data 
(see Subsects.\ \ref{SS:DATAASSUMPTIONS} and \ref{SS:SMALLNESSASSUMPTIONS})
that lead to shock formation
would generally have to be adjusted to accommodate such new types of semilinear terms.\footnote{A good model equation
for understanding the subtleties in this analysis is
the inhomogeneous Burgers' equation $\partial_t \Psi + \Psi \partial_1 \Psi = \Psi^2$.
Roughly, for data such that $\Psi$ is initially small while $\partial_1 \Psi$ is initially large
in some region, the solution is such that $\partial_1 \Psi$ blows up along the characteristics
while $\Psi$ remains bounded (at least up to the first singularity in $\partial_1 \Psi$), 
much like in the case of the homogeneous Burgers' equation. However,
unlike the homogeneous Burger's equation,
the inhomogeneous equation also admits the $T$-parameterized family of ODE-type blowup 
solutions 
$
\displaystyle
\Psi_T(t) := \frac{1}{T-t}
$, whose singularity is at the level of $\Psi$ itself. \label{FN:INHOMOGENEOUSBURGERS}}
This would lengthen the article and obscure the new ideas that we aim to highlight here;
for this reason, we limit our study to semilinear terms that verify
\eqref{E:SOMENONINEARITIESARELINEAR}.

Regarding the fast wave metric $g$, we assume that 
\begin{align} \label{E:LITTLEGDECOMPOSED}
	g_{\alpha \beta} 
	= g_{\alpha \beta}(\Psi)
	& := m_{\alpha \beta} 
		+ g_{\alpha \beta}^{(Small)}(\Psi),
	&& (\alpha, \beta = 0,1,2),
\end{align}
where 
$m_{\alpha \beta} = \mbox{diag}(-1,1,1)$ is the standard Minkowski metric on $\mathbb{R} \times \Sigma$
(where $\Sigma$ is defined in \eqref{E:SPACEMANIFOLD})
and the Cartesian components $g_{\alpha \beta}^{(Small)}(\Psi)$ are given smooth functions of $\Psi$
such that
\begin{align} \label{E:METRICPERTURBATIONFUNCTION}
	g_{\alpha \beta}^{(Small)}(\Psi = 0)
	& = 0.
\end{align}
We also introduce the scalar functions
\begin{align} \label{E:BIGGDEF}
	G_{\alpha \beta}
	= G_{\alpha \beta}(\Psi)
	& := \frac{d}{d \Psi} g_{\alpha \beta}(\Psi),
	&&
	G_{\alpha \beta}'
	= G_{\alpha \beta}'(\Psi)
	:= \frac{d^2}{d \Psi^2} g_{\alpha \beta}(\Psi),
\end{align}
which appear throughout our analysis.
In order to ensure that shocks can form in solutions, 
including plane symmetric ones that depend only on $t$ and $x^1$,
we assume that
\begin{align} \label{E:NONVANISHINGNONLINEARCOEFFICIENT}
	G_{\alpha \beta}(\Psi = 0) \Lunit_{(Flat)}^{\alpha} \Lunit_{(Flat)}^{\beta} \neq 0,
\end{align}
where
\begin{align} \label{E:LFLAT}
	\Lunit_{(Flat)} : = \partial_t + \partial_1.
\end{align}
As is explained in \cite{jSgHjLwW2016},
these assumptions are essentially equivalent to the assumption
that the null condition 
\emph{fails to hold} for plane symmetric solutions
to the wave equation for $\Psi$.
Roughly, these assumptions 
ensure that for the solutions under study, the coefficient
of the main terms driving the blowup is non-zero;
as will become clear,
the main term is the first product on RHS~\eqref{E:UPMUFIRSTTRANSPORT}.

\begin{remark}[\textbf{Genuinely nonlinear systems}]
\label{R:GENUINELYNONLINEAR}
	Our assumption that the vectorfield \eqref{E:LFLAT}
	verifies \eqref{E:NONVANISHINGNONLINEARCOEFFICIENT}
	is similar to the well-known genuine nonlinearity condition for first-order strictly hyperbolic systems.
	In particular, for plane symmetric solutions with $\Psi$ sufficiently small, 
	the assumption \eqref{E:NONVANISHINGNONLINEARCOEFFICIENT}
	ensures that there are quadratic 
	Riccati-type terms in the fast wave equation \eqref{E:FASTWAVE}, 
	which become visible if one expands the LHS relative to the Cartesian coordinates.
	The Riccati-type terms provide essentially the same blowup-mechanism
	as the one that drives the blowup in solutions to $2 \times 2$
	genuinely nonlinear strictly hyperbolic systems, 
	which Lax studied in his well-known work \cite{pL1964}.
\end{remark}

As we mentioned above, a fundamental aspect of our proof is 
that we reformulate the slow wave equation \eqref{E:SLOWWAVE}
as a first-order system, 
which allows us to avoid certain top-order commutator error terms that we would have no means to control.
Specifically, we study the following first-order system 
which, under the assumption \eqref{E:ZEROZEROISMINUSONE} below, is easily seen to be a consequence of \eqref{E:SLOWWAVE},
$(i,j =1,2)$:
\begin{subequations}
\begin{align}
	\partial_t \slow_0
	& = 
		(h^{-1})^{ab}(\Psi,\bigslow) \partial_a \slow_b
		+ 
		2 (h^{-1})^{0a}(\Psi,\bigslow) \partial_a \slow_0
			\label{E:SLOW0EVOLUTION} \\
		& \ \
		- 
		\widetilde{\mathfrak{M}}(\Psi,\bigslow)  \mathcal{Q}^g(\partial \Psi,\partial \Psi)
		-
		\widetilde{\mathfrak{N}}_1^{\alpha}(\Psi,\bigslow) \partial_{\alpha} \Psi
		- 
		\widetilde{\mathfrak{N}}_2(\Psi,\bigslow),
		 \notag \\
	\partial_t \slow_i
	& = \partial_i \slow_0,
		\label{E:SLOWIEVOLUTION} \\
	\partial_t \slow
	& = \slow_0,
	\label{E:SLOWEVOLUTION}
		\\
	\partial_i \slow_j
	& = \partial_j \slow_i.
		\label{E:SYMMETRYOFMIXEDPARTIALS}
\end{align}
\end{subequations}
Note that \eqref{E:SYMMETRYOFMIXEDPARTIALS} can be viewed as a constraint 
representing the symmetry of the mixed partial derivatives of $\slow$ with respect to the Cartesian coordinates.
It is easy to check that the constraint \eqref{E:SYMMETRYOFMIXEDPARTIALS}, if verified at time $0$,
is propagated by the flow of equation \eqref{E:SLOWIEVOLUTION}.

\subsubsection{Assumptions tied to the wave speeds}
\label{SSS:WAVESPEEDASSUMPTIONS}
We now quantify our assumption that $\Psi$ is the fast wave and
$\slow$ is the slow wave.
\begin{center}
	\underline{\textbf{Assumption on the wave speeds}}
\end{center}

We assume that the following holds for non-zero vectors $V$
whenever $|\Psi| + |\bigslow|$ is sufficiently small:
\begin{align} \label{E:VECTORSHCAUSALIMPLIESGTIMELIKE}
	h_{\alpha \beta}
	V^{\alpha} V^{\beta} \leq 0
	\implies 
	g_{\alpha \beta} V^{\alpha} V^{\beta} < 0.
\end{align}

Note that \eqref{E:VECTORSHCAUSALIMPLIESGTIMELIKE} is equivalent
to the following implication, valid for non-zero co-vectors $\omega$:
\begin{align} \label{E:VECTORSGCAUSALIMPLIESHTIMELIKE}
	(g^{-1})^{\alpha \beta}
	\omega_{\alpha} \omega_{\beta} \leq 0
	\implies 
	(h^{-1})^{\alpha \beta}
	\omega_{\alpha} \omega_{\beta} < 0.
\end{align}

From \eqref{E:VECTORSGCAUSALIMPLIESHTIMELIKE} and the fact that
$(g^{-1})^{\alpha \beta}(\Psi = 0) 
= (m^{-1})^{\alpha \beta} 
= \mbox{\upshape diag}(-1,1,1)$
(see \eqref{E:LITTLEGDECOMPOSED}-\eqref{E:METRICPERTURBATIONFUNCTION}),
it follows that $(h^{-1})^{00}(\Psi,\bigslow) < 0$ 
whenever $|\Psi|$ and $|\bigslow|$ are sufficiently small.
For convenience, we rescale the metrics and equations by a positive conformal factor
so that the following holds
relative to the Cartesian coordinates:
\begin{align} \label{E:ZEROZEROISMINUSONE}
	(g^{-1})^{00}(\Psi)
	& = (h^{-1})^{00}(\Psi,\bigslow) 
	\equiv - 1.
\end{align}
The identities assumed in \eqref{E:ZEROZEROISMINUSONE} 
simplify many calculations but are in no way essential.
Note that in view of the definition of a covariant wave operator,
rescaling the metric $g$ introduces an additional semilinear inhomogeneous null form term
of the form $\mathfrak{M}(\Psi) \mathcal{Q}^g(\partial \Psi,\partial \Psi)$
on RHS~\eqref{E:FASTWAVE}. 
It turns out that due to its good null structure, 
this term does not have a substantial influence of the dynamics of the solutions
that we study in our main theorem. Note also that this new term already falls 
under the scope of the allowable terms 
on RHS~\eqref{E:FASTWAVE}.

%

\subsection{Overview of the proof of the main result}
\label{SS:OVERVIEWOFPROOF}
In this subsection, we provide an overview of the proof of 
our main result, Theorem~\ref{T:MAINTHEOREM}.
Our basic geometric setup is similar to the one 
pioneered by Christodoulou in his study of shock formation
in irrotational relativistic fluid mechanics \cite{dC2007}.

\subsubsection{Basic geometric ingredients}
\label{SSS:GEOMETRICINGREDIENTS}
As in all prior works on shock formation in more than one spatial dimension, 
to follow the solution all the way to the singularity in $\max_{\alpha=0,1,2} |\partial_{\alpha} \Psi|$,
we construct an eikonal function adapted to the metric $g(\Psi)$.
\begin{definition}[\textbf{Eikonal function}]
\label{D:INTROEIKONAL}
The eikonal function $u$ 
solves the eikonal equation initial value problem
\begin{subequations}
\begin{align} \label{E:INTROEIKONAL}
	(g^{-1})^{\alpha \beta}(\Psi)
	\partial_{\alpha} u \partial_{\beta} u
	& = 0, 
	\qquad \partial_t u > 0,
		\\
	u|_{\Sigma_0}
	& = 1 - x^1,
	\label{E:INTROEIKONALINITIALVALUE}
\end{align}
\end{subequations}
where $\Sigma_0 \simeq \mathbb{R} \times \mathbb{T}$ is the hypersurface of constant Cartesian time $0$.
\end{definition}

Our choice of initial conditions in \eqref{E:INTROEIKONALINITIALVALUE}
is adapted to the approximate plane symmetry of the data that we will consider.
The level sets of $u$ are $g$-null hypersurfaces, 
which we denote by $\mathcal{P}_u$ (see Def.~\ref{D:HYPERSURFACESANDCONICALREGIONS})
and which we often refer to as the \emph{characteristics}.
See Figure~\ref{F:FRAME} on pg.~\pageref{F:FRAME} 
for a depiction of the characteristics, where the characteristics
$\mathcal{P}_u^t$ in the figure have been truncated at time $t$.
We clarify that even though the system
\eqref{E:FASTWAVE}
+
\eqref{E:SLOW0EVOLUTION}-\eqref{E:SYMMETRYOFMIXEDPARTIALS}
features multiple speeds of propagation,
we study only the characteristic family $\lbrace \mathcal{P}_u \rbrace_{u \in [0,1]}$ in detail since,
for the data under consideration,
the intersection of distinct members of this family corresponds to the formation of a shock.

Using $u$, we will construct a collection of geometric
objects that can be used to derive sharp information about the solution.
The most important of these is the \emph{inverse foliation density} $\upmu$. Its vanishing
corresponds to the intersection of the characteristics
and, as it turns out 
(see Subsubsect.\ \ref{SSS:FORMATIONOFSHOCK}),
the formation of a singularity in $\max_{\alpha=0,1,2} |\partial_{\alpha} \Psi|$.

\begin{definition}[\textbf{Inverse foliation density}]
\label{D:FIRSTUPMU}
We define $\upmu > 0$ as follows:
\begin{align} \label{E:FIRSTUPMU}
	\upmu 
	& := \frac{-1}{(g^{-1})^{\alpha \beta}(\Psi) \partial_{\alpha} t \partial_{\beta} u},
\end{align}
where $t$ is the Cartesian time coordinate.
\end{definition}
Note that by \eqref{E:LITTLEGDECOMPOSED}-\eqref{E:METRICPERTURBATIONFUNCTION}
and the initial conditions \eqref{E:INTROEIKONALINITIALVALUE} for $u$,
we have $\upmu|_{\Sigma_0} = 1 + \mathcal{O}_{\mydiam}(\Psi)$
(see Subsect.\ \ref{SS:NOTATIONANDINDEXCONVENTIONS} regarding our use of the notation $\mathcal{O}_{\mydiam}(\cdot)$).
Thus, for the data that we consider in this article, in which
$|\Psi|$ is initially small, it follows that $\upmu$ is initially near unity.
In short, our main goal in the article is to exhibit 
an open set of data such that $\upmu$ vanishes in finite time,
to show that its vanishing is tied to 
the blowup of $\max_{\alpha=0,1,2} |\partial_{\alpha} \Psi|$,
and to show that $|\slow|$ and 
 $\max_{\alpha=0,1,2} |\partial_{\alpha} \slow|$
remain bounded.

The following spacetime subsets are tied to $u$ and
play a fundamental role in our analysis.
\begin{definition} [\textbf{Subsets of spacetime}]
\label{D:HYPERSURFACESANDCONICALREGIONS}
We define the following subsets of spacetime:
\begin{subequations}
\begin{align}
	\Sigma_{t'} & := \lbrace (t,x^1,x^2) \in \mathbb{R} \times \mathbb{R} \times \mathbb{T}  
		\ | \ t = t' \rbrace, 
		\label{E:SIGMAT} \\
	\Sigma_{t'}^{u'} & := \lbrace (t,x^1,x^2) \in \mathbb{R} \times \mathbb{R} \times \mathbb{T} 
		 \ | \ t = t', \ 0 \leq u(t,x^1,x^2) \leq u' \rbrace, 
		\label{E:SIGMATU} 
		\\
	\mathcal{P}_{u'}
	& := 
		\lbrace (t,x^1,x^2) \in \mathbb{R} \times \mathbb{R} \times \mathbb{T} 
			\ | \ u(t,x^1,x^2) = u' 
		\rbrace, 
		\label{E:PU} \\
	\mathcal{P}_{u'}^{t'} 
	& := 
		\lbrace (t,x^1,x^2) \in \mathbb{R} \times \mathbb{R} \times \mathbb{T} 
			\ | \ 0 \leq t \leq t', \ u(t,x^1,x^2) = u' 
		\rbrace, 
		\label{E:PUT} \\
	\ell_{t',u'} 
		&:= \mathcal{P}_{u'}^{t'} \cap \Sigma_{t'}^{u'}
		= \lbrace (t,x^1,x^2) \in \mathbb{R} \times \mathbb{R} \times \mathbb{T} 
			\ | \ t = t', \ u(t,x^1,x^2) = u' \rbrace, 
			\label{E:LTU} \\
	\mathcal{M}_{t',u'} & := \cup_{u \in [0,u']} \mathcal{P}_u^{t'} \cap \lbrace (t,x^1,x^2) 
		\in \mathbb{R} \times \mathbb{R} \times \mathbb{T}  \ | \ 0 \leq t < t' \rbrace.
		\label{E:MTUDEF}
\end{align}
\end{subequations}
\end{definition}
We refer to the $\Sigma_t$ and $\Sigma_t^u$ as ``constant time slices,'' 
the $\mathcal{P}_u$ and
$\mathcal{P}_u^t$ as ``characteristics'' or ``null hypersurfaces,''
and the $\ell_{t,u}$ as ``tori.'' 
Note that $\mathcal{M}_{t,u}$ is ``open-at-the-top'' by construction.

To study the solution, we complement $t$ and $u$ with a 
\emph{geometric torus coordinate} $\vartheta$ 
to form a geometric coordinate system $(t,u,\vartheta)$
with corresponding partial derivative vectorfields
$
\displaystyle
\left\lbrace
	\frac{\partial}{\partial t}, \frac{\partial}{\partial u}, \CoordAng := \frac{\partial}{\partial \vartheta}
\right\rbrace
$.
To differentiate the equations and obtain estimates for the solution's derivatives,
we also construct a related vectorfield frame 
\begin{align} \label{E:INTROGOODFRAME}
	\lbrace \Lunit, \Rad, \GeoAng \rbrace,
\end{align}
which spans the tangent space at each point where $\upmu > 0$.
The vectorfield $\Lunit$ verifies
$
\displaystyle
\Lunit = \frac{\partial}{\partial t}
$
and is null with respect to $g$, while
$\Rad$ and $\GeoAng$ are, respectively, replacements for 
$
\displaystyle
\frac{\partial}{\partial u}
$
and
$
\CoordAng
$
with better regularity properties that are needed 
to close the top-order energy estimates; see Subsect.\ \ref{SS:EIKONALFUNCTIONANDRELATED} 
for the details behind the construction of $\vartheta$ and the vectorfields.
We will prove that for the solutions under study, 
the vectorfields $\Lunit$ and $\GeoAng$ remain close to their background values,
which are respectively $\partial_t + \partial_1$ and $\partial_2$. In contrast, 
$\Rad$ behaves like $- \upmu \partial_1$
and thus shrinks as the shock forms. Moreover, 
$
\displaystyle
\Radunit 
:= 
\frac{1}{\upmu} \Rad
$
remains close to $- \partial_1$ all the way up to the shock.
See Figure~\ref{F:FRAME} on pg.~\pageref{F:FRAME} 
for a schematic depiction
of the vectorfields $\lbrace \Lunit, \Rad, \GeoAng \rbrace$
and note in particular that $|\Rad|$ is smaller in the region where $\upmu$ is small.
Note also that we have displayed the (outgoing) characteristics $\mathcal{P}_u$ of $g(\Psi)$ in the figure
but we have not displayed any characteristics of $h$ since they do not play a role in our analysis.
Moreover, $\Lunit$ and $\GeoAng$ are tangential to the characteristics $\mathcal{P}_u$
while $\Rad$ is transversal to them.
A key aspect of our proof is that
we will be able to derive \emph{uniform bounds}
for the $\Lunit$, $\GeoAng$, and $\Rad$ derivatives of the solution all the way up to the shock,
except near the top derivative level; as we describe in the discussion surrounding
\eqref{E:INTROTOPENERGY}-\eqref{E:INTROLOWESTENERGY},
our high-order geometric energies are allowed to blow up as the shock forms.
The fact that we can derive non-singular estimates for the low-level
$\Rad$ derivatives of the solution is fundamentally tied to the fact that $|\Rad|$
shrinks like $\upmu$ as $\upmu \to 0$. Note that this is compatible with the formation
of a singularity in $\max_{\alpha=0,1,2} |\partial_{\alpha} \Psi|$.
More precisely, 
our main theorem yields that
$
\displaystyle
|\Rad \Psi|
\gtrsim 1
$
near points where $\upmu$ is small and thus the derivative of $\Psi$
with respect to the order-unity vectorfield
$
\displaystyle
\Radunit 
:=
\frac{1}{\upmu}
\Rad
$
blows up precisely when $\upmu$ vanishes;
see Subsubsect.\ \ref{SSS:FORMATIONOFSHOCK} 
for a more detailed overview of this aspect of the proof.

\begin{center}
\begin{overpic}[scale=.35]{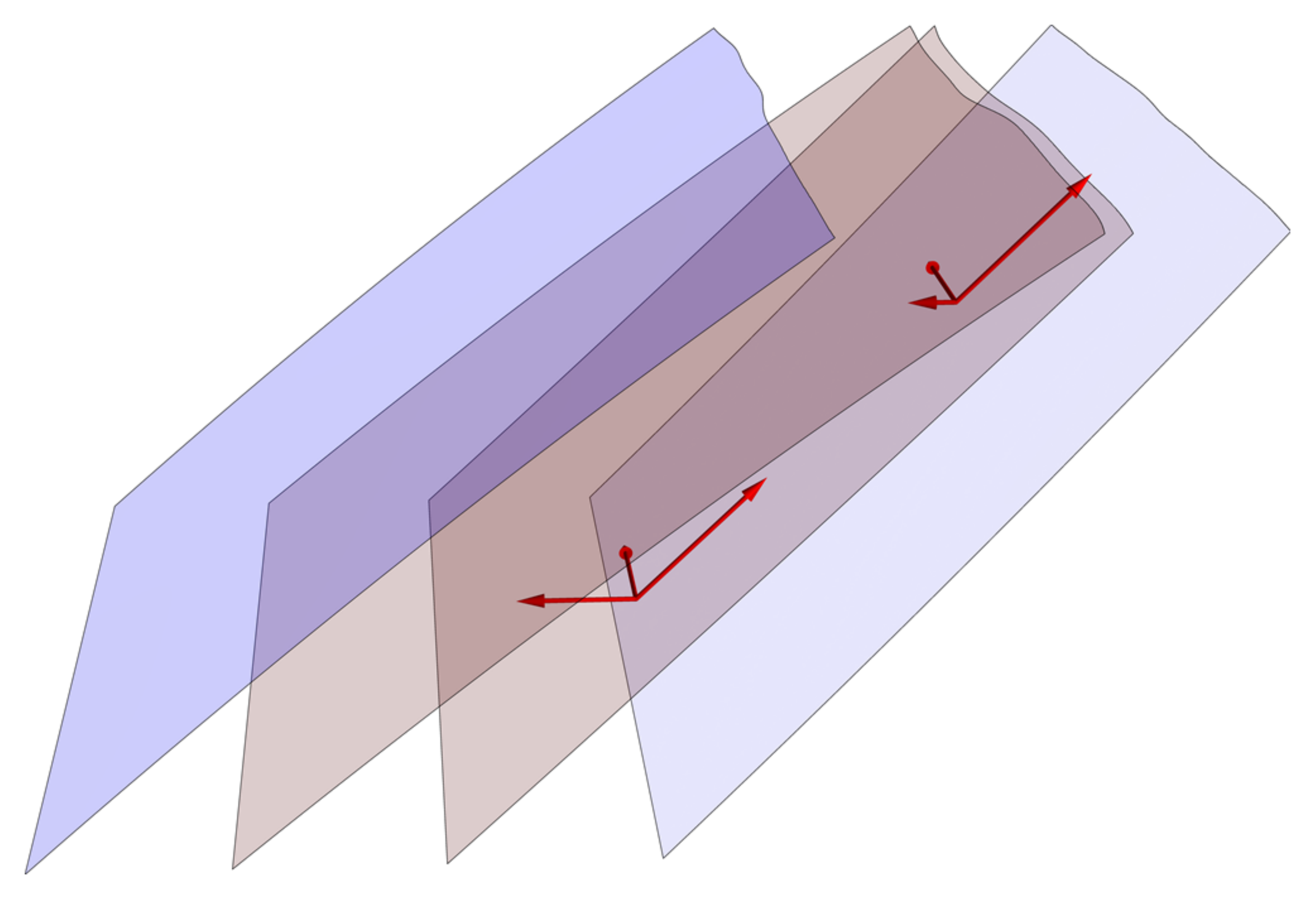} 
\put (83,55.5) {\large$\displaystyle \Lunit$}
\put (65.2,44.2) {\large$\displaystyle \Rad$}
\put (69.5,49.5) {\large$\displaystyle \GeoAng$}
\put (59,31) {\large$\displaystyle \Lunit$}
\put (35.8,21.5) {\large$\displaystyle \Rad$}
\put (46.2,28) {\large$\displaystyle \GeoAng$}
\put (51,13) {\large$\displaystyle \mathcal{P}_0^t$}
\put (37,13) {\large$\displaystyle \mathcal{P}_u^t$}
\put (7,13) {\large$\displaystyle \mathcal{P}_1^t$}
\put (22,26) {\large$\displaystyle \upmu \approx 1$}
\put (64,68) {\large$\displaystyle \upmu \ \mbox{\upshape small}$}
\end{overpic}
\captionof{figure}{The vectorfield frame from \eqref{E:INTROGOODFRAME} at two distinct points in $\mathcal{P}_u$}
\label{F:FRAME}
\end{center}

\subsubsection{The spacetime regions under study}
\label{SSS:SPACETIMEREGION}
For convenience, we study only the future portion of the solution 
that is completely determined by the data lying in the subset
$\Sigma_0^{U_0} \subset \Sigma_0$
of thickness $U_0$
and on a portion of the characteristic
$\mathcal{P}_0$,
where
\begin{align} \label{E:FIXEDPARAMETER}
0 < U_0 \leq 1 
\end{align}
is a parameter, fixed until Theorem~\ref{T:MAINTHEOREM}; see Figure~\ref{F:REGION}.
We will study spacetime regions such that $0 \leq u \leq U_0$,
where $u$ is the eikonal function from Def.~\ref{D:INTROEIKONAL}.
We have introduced the parameter $U_0$ because one would need to allow $U_0$ to vary 
in order to study the behavior of the solution up to the boundary of the maximal development,
as Christodoulou did in \cite{dC2007}*{Chapter 15}.
For brevity, we do not pursue this issue in the present article.

\begin{center}
\begin{overpic}[scale=.2]{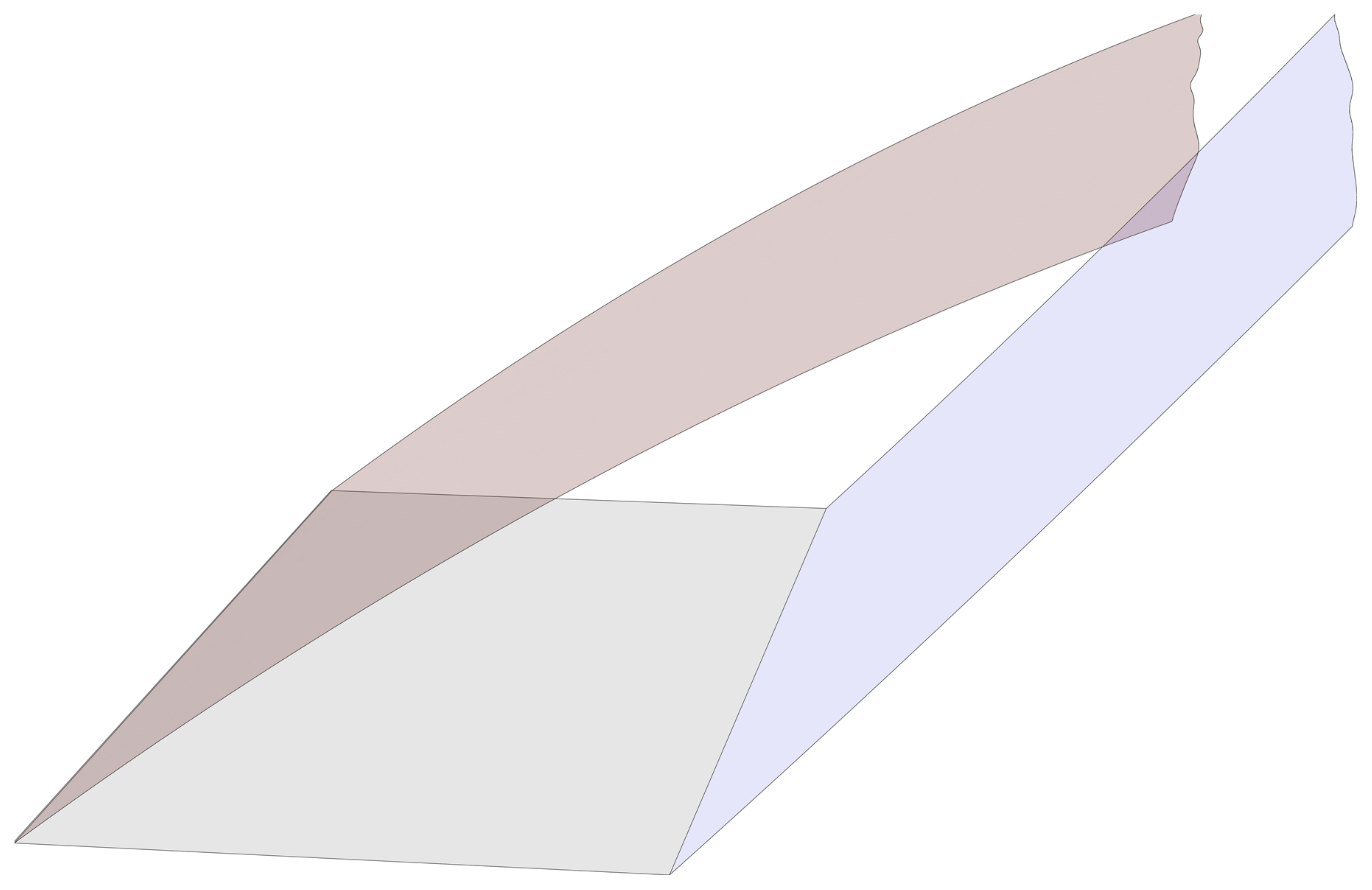} 
\put (37,33) {\large$\displaystyle \mathcal{P}_{U_0}^t$}
\put (74,33) {\large$\displaystyle \mathcal{P}_0^t$}
\put (18,5) {\large$\displaystyle \mbox{``interesting'' data}$}
\put (77,38) {\large \rotatebox{45}{$\displaystyle \mbox{very small data}$}}
\put (35,18) {\large$\Sigma_0^{U_0}$}
\put (31,10) {\large$\displaystyle U_0$}
\put (-.8,16) {\large$\displaystyle x^2 \in \mathbb{T}$}
\put (24,-3.2) {\large$\displaystyle x^1 \in \mathbb{R}$}
\thicklines
\put (-1.1,3){\vector(.9,1){22}}
\put (.5,1.8){\vector(100,-4.5){48}}
\put (10.5,13.9){\line(.9,1){2}}
\put (53.5,11.9){\line(.43,1){1}}
\put (11.5,15){\line(100,-4.5){42.5}}
\end{overpic}
\captionof{figure}{The spacetime region under study}
\label{F:REGION}
\end{center}

In our analysis, we will restrict our attention to times $t$
verifying $0 \leq t < 2 \TranminusdatasizeWithFactor^{-1}$, where
$\TranminusdatasizeWithFactor > 0$ is the data-dependent parameter
defined by
\begin{align} \label{E:INTROCRITICALBLOWUPTIMEFACTOR}
		\TranminusdatasizeWithFactor
		& := \frac{1}{2} 
		\sup_{\Sigma_0^1} 
		\left[G_{\Lunit \Lunit} \Rad \Psi \right]_-.
\end{align}
The quantity \eqref{E:INTROCRITICALBLOWUPTIMEFACTOR} is essentially the main term in the transport equation
for $\upmu$ (see \eqref{E:UPMUFIRSTTRANSPORT}) that drives $\upmu$ to $0$ in finite time.
In \eqref{E:INTROCRITICALBLOWUPTIMEFACTOR},
$G_{\Lunit \Lunit} := G_{\alpha \beta} \Lunit^{\alpha} \Lunit^{\beta}$,
where $G_{\alpha \beta}$ is defined in \eqref{E:BIGGDEF}
and $\Lunit$ is the $g$-null vectorfield mentioned in Subsubsect.\ \ref{SSS:GEOMETRICINGREDIENTS}
(see Def.~\ref{D:LUNITDEF} for the precise definition).
In our analysis, we take into account only the portion
of the data lying in the subset $\mathcal{P}_0^{2 \TranminusdatasizeWithFactor^{-1}}$
of the characteristic $\mathcal{P}_0$ since, by domain
of dependence considerations, only this portion can influence
the solution in the regions under study.
The parameter $\TranminusdatasizeWithFactor$ is important because
under certain assumptions described below, the time of first shock formation is
a small perturbation\footnote{For $\Psiep$ and $\mathring{\upepsilon}$ sufficiently small, the time of first shock formation
is $\lbrace 1 + \mathcal{O}_{\mydiam}(\Psiep) + \mathcal{O}(\mathring{\upepsilon}) \rbrace \TranminusdatasizeWithFactor^{-1}$,
where $\Psiep$ and $\mathring{\upepsilon}$ are the data-size parameters 
described in Subsubsect.\ \ref{SSS:DATAASSUMPTIONSANDBOOTSTRAPASSUMPTIONS}.} 
of $\TranminusdatasizeWithFactor^{-1}$.
We will clarify the connection between $\TranminusdatasizeWithFactor$ and the time of first shock formation in
Subsubsect.\ \ref{SSS:FORMATIONOFSHOCK}. 
Moreover, in view of the above remarks, we see that to close our bootstrap argument 
(which we briefly overview in Subsubsect.\ \ref{SSS:DATAASSUMPTIONSANDBOOTSTRAPASSUMPTIONS}), 
it is sufficient to control the solution
for times up to $2 \TranminusdatasizeWithFactor^{-1}$, 
which is plenty of time for the shock to form.

\subsubsection{A model problem: shock formation for nearly simple outgoing waves under the assumption of plane symmetry}
\label{SSS:NEARLYSIMPLEWAVES}
In this subsubsection, we illustrate some of the main ideas behind
our analysis by sketching a proof of our main results for
plane symmetric solutions, that is, solutions that depend only on
$t$ and $x^1$. For such solutions, we are able to 
rely exclusively on the method of characteristics
when deriving estimates. In particular, we can avoid energy estimates,
which drastically simplifies the proof. Our analysis in this subsubsection 
can be viewed as a sharpening of the approach of John \cite{fJ1974},
in the spirit of the recent work \cite{dCdRP2016}.

For convenience, we consider only the case in which the fast wave metric perturbation function
from \eqref{E:LITTLEGDECOMPOSED} takes the simple form
\begin{align} \label{E:MODELMETRICPERT}
g_{\alpha \beta}^{(Small)}(\Psi) 
= 
\left\lbrace
	(1 + \Psi)^2 - 1 
\right\rbrace
\delta_{\alpha}^1 \delta_{\beta}^1,
\end{align}
where $\delta_{\alpha}^{\beta}$ is the standard Kronecker delta.
Moreover, in this subsubsection only, we use, in addition to the vectorfield 
$\Lunit$, the vectorfield $\uLgood$ defined by
\begin{align} \label{E:ULGOODDEF}
	\uLgood 
	& := 
	\upmu \Lunit
	+ 
	2 \Rad.
\end{align}
It is easy to check that $g(\uLgood,\uLgood) = 0$ 
(that is, that $\uLgood$ is $g$-null)
and that the following relations hold
(these relations follow easily from Lemma~\ref{L:BASICPROPERTIESOFFRAME}):
\begin{align} \label{E:NULLVECGEOMETRICCOMP}
	\Lunit t 
	= 1,
		\qquad
	\Lunit u 
	= 0,
		\qquad
	\uLgood t
	= \upmu,
		\qquad
	\uLgood u
	= 2.
\end{align}
From the point of view of the estimates derived in this subsubsection, the vectorfield
$\uLgood$ plays a role similar to the one played by the
$\mathcal{P}_u$-transversal
vectorfield $\Rad$ that we use in the rest of the paper.
The advantage of $\uLgood$ in this subsubsection is that it is $g$-null
and thus the principal part of the fast wave equation takes a simple
form in plane symmetry when expressed in terms of $\Lunit$ and $\uLgood$ derivatives;
see equations \eqref{E:MODELFAST}-\eqref{E:SWITCHEDORDERMODELFAST}

As in the bulk of the paper, we will focus our attention here on nearly simple outgoing waves.
By a simple outgoing (that is, right-moving) plane wave, 
we mean a solution such that $\Lunit \Psi \equiv 0$ and $\slow \equiv 0$.
Due to our assumptions \eqref{E:SOMENONINEARITIESARELINEAR}
on the semilinear inhomogeneous terms on 
and RHSs~\eqref{E:FASTWAVE}-\eqref{E:SLOWWAVE},
the systems that we study in this paper admit simple plane wave solutions.

In the present subsubsection, we will consider 
plane symmetric initial data verifying 
a set of size assumptions.
Our assumptions involve the four parameters
$\Psiep$,
$\mathring{\upepsilon}$,
$\mathring{\updelta}$,
and
$\TrandatasizeWithFactor$,
which in this subsubsection only have slightly different (but analogous)
definitions than they do in the rest of the paper.
Specifically, we assume that the initial data for $\Psi$ and $\slow$ are 
given along $\Sigma_0^1$, which corresponds to the portion of $\Sigma_0$ with $0 \leq u \leq 1$,
as well as $\mathcal{P}_0^{2 \TrandatasizeWithFactor^{-1}}$, which 
is the portion of the level set $\lbrace u = 0 \rbrace$ with
$0 \leq t \leq 2 \TrandatasizeWithFactor^{-1}$,
where we define $\TrandatasizeWithFactor$ just below.
Note that in plane symmetry, $\Sigma_0^1$ can be identified with 
an orientation-reversed version of
the unit interval $[0,1]$ of $x^1$ values.
We assume the following size conditions, where all functions
on the LHSs of the inequalities are assumed to be continuous with respect
to the geometric coordinates $(t,u)$:
\begin{subequations}
\begin{align}
	\uLgood \Psi|_{\Sigma_0^1}
	& = f(u),
		\label{E:PLANESYMMETRYTHEONELARGEDATUM} \\
	\left\|
		\Psi
	\right\|_{L^{\infty}(\Sigma_0^1)}
	& \leq \Psiep,
		\label{E:PSIITSLFPLANESYMMETRYSIGMA01} \\
	\left\|
		\Lunit \Psi
	\right\|_{L^{\infty}(\Sigma_0^1)}
	& \leq \mathring{\upepsilon},
		\label{E:PLANESYMMETRYLUNITPSISMALLSIGMA01} \\
	\left\| 
		\slow
	\right\|_{L^{\infty}(\Sigma_0^1)},
		\,
	\left\| 
		\slow_0
	\right\|_{L^{\infty}(\Sigma_0^1)},
		\,
	\left\| 
		\slow_1
	\right\|_{L^{\infty}(\Sigma_0^1)}
	& \leq \mathring{\upepsilon},
		\label{E:PLANESYMMETRYWSMALLSIGMA01} \\
	\left\|
		\Psi
	\right\|_{L^{\infty}(\mathcal{P}_0^{2 \TrandatasizeWithFactor^{-1}})},
		\,
	\left\|
		\Lunit \Psi
	\right\|_{L^{\infty}(\mathcal{P}_0^{2 \TrandatasizeWithFactor^{-1}})}
	& \leq \mathring{\upepsilon},
		\label{E:PLANESYMMETRYALLPSISMALLPO} \\
	\left\| 
		\slow 
	\right\|_{L^{\infty}(\mathcal{P}_0^{2 \TrandatasizeWithFactor^{-1}})},
		\,
	\left\| 
		\slow_0
	\right\|_{L^{\infty}(\mathcal{P}_0^{2 \TrandatasizeWithFactor^{-1}})},
		\,
	\left\| 
		\slow_1
	\right\|_{L^{\infty}(\mathcal{P}_0^{2 \TrandatasizeWithFactor^{-1}})}
	& \leq \mathring{\upepsilon},
	\label{E:PLANESYMMETRYALLWSMALLPO}
\end{align}
\end{subequations}
where $f(u)$ is a continuous function and\footnote{In \eqref{E:MODELCASEBLOWUPTIMEPARAMETER}
and throughout, $[p]_- := |\min \lbrace p, 0 \rbrace|$.}
\begin{align} \label{E:MODELCASEBLOWUPTIMEPARAMETER}
	\TrandatasizeWithFactor
	& := \sup_{u \in [0,1]} [f(u)]_-.
\end{align}
Above,
$\Psiep > 0$ is a parameter that, for our subsequent bootstrap argument to close, must be small in an absolute sense,
while
$\mathring{\upepsilon} \geq 0$ is a parameter that must be 
small in an absolute sense,
small relative to
$\TrandatasizeWithFactor$, and small relative to
$\mathring{\updelta}^{-1}$,
where
\begin{align} 
\mathring{\updelta}
&:= \sup_{u \in [0,1]} |f(u)|.
\end{align}
In the remainder of this subsubsection, we will assume that
$\TrandatasizeWithFactor > 0$ and $\mathring{\updelta} > 0$.
When $\mathring{\upepsilon} = 0$, the corresponding solution
is a simple outgoing plane wave; see the end of this subsubsection for further discussion
of this point. 
It is straightforward to see that there exist data that verify the above size assumptions.
See Subsect.\ \ref{SS:EXISTENCEOFDATA} for further discussion on this point
in the context of our main theorem.

For convenience, in this subsubsection,
we study only the following specific example of 
a system of type
\eqref{E:FASTWAVE}
+
\eqref{E:SLOW0EVOLUTION}-\eqref{E:SYMMETRYOFMIXEDPARTIALS}
in one spatial dimension,
where the metric perturbation function
is given by \eqref{E:MODELMETRICPERT}
and, to simplify the discussion, 
we have chosen relatively simple semilinear terms: 
\begin{subequations}
\begin{align}
	\Lunit \uLgood \Psi
	& = 
		\Lunit \Psi \cdot \uLgood \Psi
		+
		\slow_0 \cdot \uLgood \Psi
		+ 
		\upmu \slow_0 \cdot \Psi,
			\label{E:MODELFAST} \\
		\uLgood \Lunit \Psi
		& = 
		\Lunit \Psi \cdot \uLgood \Psi
		+
		\upmu (\Lunit \Psi)^2
		+
		\slow_0 \cdot \uLgood \Psi
		+ 
		\upmu \slow_0 \cdot \Psi,
			\label{E:SWITCHEDORDERMODELFAST} \\
	\upmu \partial_t \slow_0
	& = 
		\frac{1}{4} \upmu \partial_1 \slow_1 
		+
		\Lunit \Psi \cdot \uLgood \Psi
		+
		\upmu \slow_0 \cdot \Psi,
			\label{E:MODELSLOW0} \\
	\upmu \partial_t \slow_1
	& 
	= \upmu \partial_1 \slow_0 .
	\label{E:MODELSLOW1}
\end{align}
\end{subequations}
We now make some remarks on the structure of equations \eqref{E:MODELFAST}-\eqref{E:MODELSLOW1}.
We have multiplied the equations by the inverse foliation density $\upmu$, 
which will help clarify certain aspects of the analysis.\footnote{Away from plane symmetry,
it is critically important to multiply the wave equations by $\upmu$ before commuting them
with appropriate vectorfields; the factor of $\upmu$ leads to important cancellations. In contrast,
our arguments in this subsubsection do not involve commuting the equations.}
The forms of LHSs
\eqref{E:MODELFAST}-\eqref{E:SWITCHEDORDERMODELFAST}
are a consequence of Prop.~\ref{P:GEOMETRICWAVEOPERATORFRAMEDECOMPOSED}.
In \eqref{E:MODELSLOW0}, the factor of
$
\displaystyle
\frac{1}{4}
$
accounts for our assumption that $\slow$ is the slow wave.
Note that equations \eqref{E:MODELSLOW0}-\eqref{E:MODELSLOW1}
are semilinear while for the general class of equations that we consider,
the analogous equations are typically quasilinear.
These facts play very little role in the discussion in this subsubsection.
In particular, in proving the main theorem of the paper,
we use that $\slow$ is the slow wave 
mainly when deriving energy estimates,
which we can avoid in this subsubsection
by integrating along characteristics.
That is, in this subsubsection, it is not
fundamentally important that $\slow$ is the slow wave.

To facilitate our analysis via integrating along characteristics, 
we now replace \eqref{E:MODELSLOW0}-\eqref{E:MODELSLOW1} with the following
equations,\footnote{Equations \eqref{E:REVAMPEDMODELSLOW0}-\eqref{E:REVAMPEDMODELSLOW1}
are evolution equations for the Riemann invariants of the subsystem
\eqref{E:MODELSLOW0}-\eqref{E:MODELSLOW1}.} 
which are equivalent up
to harmless constant factors on the right-hand sides:
\begin{subequations}
\begin{align}
	\upmu 
	(2 \partial_t + \partial_1)
	(\slow_0 - \frac{1}{2} \slow_1)
	& = \Lunit \Psi \cdot \uLgood \Psi
			+
			\upmu \slow_0 \cdot \Psi,
			\label{E:REVAMPEDMODELSLOW0} \\
	\upmu 
	(2 \partial_t - \partial_1)
	(\slow_0 + \frac{1}{2} \slow_1)
	& = 
		\Lunit \Psi \cdot \uLgood \Psi
		+
		\upmu \slow_0 \cdot \Psi.
	\label{E:REVAMPEDMODELSLOW1}
\end{align}
\end{subequations}

In our subsequent analysis, we will rely on the following
relations, which are simple consequences of Lemma~\ref{L:CARTESIANVECTORFIELDSINTERMSOFGEOMETRICONES},
our assumption that the metric perturbation is given by \eqref{E:MODELMETRICPERT},
the normalization condition $g(\Radunit,\Radunit) = 1$ (see \eqref{E:RADIALVECTORFIELDSLENGTHS}),
our assumption of plane symmetry,
and the fact that (under these assumptions) the vectorfield $\GeoAng$
verifies $\GeoAng = \partial_2$:
\begin{align} \label{E:PLANESYMMETRICCHOV}
	2 \partial_t
	& = 
	\Lunit
	+
	\frac{1}{\upmu} 
	\uLgood,
	\qquad
	2 \partial_1
	=
	(1 + \Psi)
	\left\lbrace
		\Lunit
		-
		\frac{1}{\upmu} 
		\uLgood
	\right\rbrace.
\end{align}
Using \eqref{E:PLANESYMMETRICCHOV},
we can replace
\eqref{E:REVAMPEDMODELSLOW0}-\eqref{E:REVAMPEDMODELSLOW1}
with the following equations, which are again equivalent
to \eqref{E:MODELSLOW0}-\eqref{E:MODELSLOW1}
up to harmless constant factors on the right-hand sides:
\begin{subequations}
\begin{align}
	\left\lbrace
		(1 - \Psi) \uLgood 
		+ 
		\upmu (3 + \Psi) \Lunit
	\right\rbrace
	(\slow_0 - \frac{1}{2} \slow_1)
	& = 
			\Lunit \Psi \cdot \uLgood \Psi
			+
			\upmu \slow_0 \cdot \Psi,
			\label{E:AGAINREVAMPEDMODELSLOW0} \\
	\left\lbrace
		(3 + \Psi) \uLgood 
		+ 
		\upmu (1 - \Psi) \Lunit
	\right\rbrace
	(\slow_0 + \frac{1}{2} \slow_1)
	& = 
		\Lunit \Psi \cdot \uLgood \Psi
		+
		\upmu \slow_0 \cdot \Psi.
	\label{E:AGAINREVAMPEDMODELSLOW1}
\end{align}
\end{subequations}
In plane symmetry, the most important aspect of
LHSs \eqref{E:AGAINREVAMPEDMODELSLOW0}-\eqref{E:AGAINREVAMPEDMODELSLOW1}
are that for $|\Psi|$ small,
the vectorfields
$(1 - \Psi) \uLgood 
		+ 
\upmu (3 + \Psi) \Lunit$
and
$
(3 + \Psi) \uLgood 
+ 
\upmu (1 - \Psi) \Lunit
$ 
are transversal to the $\mathcal{P}_u$, a simple fact that
follows from the
identities $\uLgood u = 2$ 
and
$\Lunit u = 0$
(see \eqref{E:NULLVECGEOMETRICCOMP}).

We now note that $\upmu$ (which is defined in \eqref{E:FIRSTUPMU})
verifies an evolution equation that we can schematically express as follows
(see \eqref{E:UPMUFIRSTTRANSPORT} for the precise formula):
\begin{align} \label{E:UPMUSIMPLEPLANEWAVESCHEMATICEVOLUTION}
	\Lunit \upmu
	& = 
		\uLgood \Psi
		+ 
		\upmu \Lunit \Psi.
\end{align}
In total, 
we will study the system 
\eqref{E:MODELFAST}-\eqref{E:SWITCHEDORDERMODELFAST} 
+ 
\eqref{E:AGAINREVAMPEDMODELSLOW0}-\eqref{E:AGAINREVAMPEDMODELSLOW1}
+
\eqref{E:UPMUSIMPLEPLANEWAVESCHEMATICEVOLUTION}
and sketch a proof that whenever $\mathring{\upepsilon}$
is sufficiently small
(in a manner that is allowed to depend on $\mathring{\updelta}$ and $\TranminusdatasizeWithFactor$)
and $\Psiep$ is small relative to $1$,
a shock forms in $\Psi$ in finite time.

In our analysis, we will rely on the geometric coordinates $(t,u)$.
To facilitate our analysis, we find it convenient
to make the following bootstrap assumptions for
$(t,u) \in [0, \Tboot) \times [0,1]$,
where $0 < \Tboot \leq 2 \TranminusdatasizeWithFactor^{-1}$
is a bootstrap time:
\begin{subequations}
\begin{align}
	|\Psi|
	& \leq \Psiep^{1/2},	
		\label{E:INTROPSIITSELFBOOTSTRAP} \\
	|\Lunit \Psi|,
		\,
	|\slow_0|,
		\,
	|\slow_1|
	& \leq \mathring{\upepsilon}^{1/2},
		\label{E:INTROSMALLBOOT} \\
	\left|
		\uLgood \Psi(t,u)
		-
		f(u)
	\right|
	& \leq		
		\mathring{\upepsilon}^{1/2},
		\label{E:INTROTRANSVERSALBOOT} \\
	\upmu(t,u)
	& \leq 
			1 + 2|f(u)| \TranminusdatasizeWithFactor^{-1} 
		+ \Psiep^{1/2}
		+
		\mathring{\upepsilon}^{1/2}.
		\label{E:INTROUPMUBOOT}
\end{align}
\end{subequations}
We also assume that for $(t,u) \in [0, \Tboot) \times [0,1]$, we have
\begin{align} \label{E:INTRONOSHOCKBOOT}
\upmu(t,u) > 0,
\end{align}
which is tantamount to the assumption that a shock has not
yet formed on $[0,\Tboot) \times [0,1]$,
though it allows for the possibility that a shock forms 
exactly at time $\Tboot$. 
By standard local well-posedness,
if the data verify the size assumptions \eqref{E:PLANESYMMETRYTHEONELARGEDATUM}-\eqref{E:PLANESYMMETRYALLWSMALLPO},
if $\Psiep$ and
$\mathring{\upepsilon}$ are sufficiently small in the manner described above, 
and if $\Tboot > 0$ is sufficiently small,
then there exists a classical solution for 
$(t,u) \in [0, \Tboot) \times [0,1]$ such that the bootstrap assumptions are verified in this region.
Using the identities \eqref{E:PLANESYMMETRICCHOV},
we see that if the bootstrap assumptions are not saturated and if $\upmu$ remains
uniformly positive on $[0,\Tboot) \times [0,1]$, then the solution and its
$\partial_t$ and $\partial_1$ derivatives 
remain uniformly bounded in magnitude on $[0,\Tboot) \times [0,1]$. 
It is a standard result that under these 
conditions, the solution can be classically continued past the time $\Tboot$.
Thus, in order to prove that a shock forms,
it suffices to \textbf{i)} justify the bootstrap assumptions
by deriving a strict improvement of them, 
a task that we accomplish by showing that they hold with 
$\mathring{\upepsilon}^{1/2}$
replaced by $C \mathring{\upepsilon}$ (where $\mathring{\upepsilon}$ is chosen to be sufficiently small)
and with $\Psiep^{1/2}$ replaced by $\Psiep + C \mathring{\upepsilon}$
(where $\Psiep$ is also chosen to be sufficiently small);
\textbf{ii)} to show that $\upmu$ can vanish in finite time; 
and \textbf{iii)} to show that the 
vanishing of $\upmu$ leads to the blowup of 
$\max \lbrace |\partial_t \Psi|, |\partial_1 \Psi| \rbrace$.
Note that by \eqref{E:INTROSMALLBOOT},
our proof \textbf{i)} implies that $|\slow_0|$ and $|\slow_1|$
remain bounded. 

We now explain how to improve the bootstrap assumptions,
starting with \eqref{E:INTROSMALLBOOT}.
To this end, we find it convenient to introduce
(see \eqref{E:MTUDEF} for the definition of $\mathcal{M}_{\Tboot;u}$)
\begin{align}
q(u) 
:= 
\sup_{\mathcal{M}_{\Tboot;u}}
\left\lbrace
|\Lunit \Psi|
+
|\slow_0|
+
|\slow_1|
\right\rbrace.
\end{align}
In the rest of the proof, 
we silently rely on \eqref{E:NULLVECGEOMETRICCOMP}, 
which allows us to think of
$
\displaystyle
\Lunit 
= \frac{d}{dt}
$
along the integral curves of $\Lunit$ and
$
\displaystyle
\uLgood
= 2 \frac{d}{du}
$
along the integral curves of $\uLgood$.
Similarly, we have
that
$
\displaystyle
(1 - \Psi) \uLgood 
+ 
\upmu (3 + \Psi) \Lunit
= 2 (1 - \Psi) \frac{d}{du}
$
along the integral curves of
$(1 - \Psi) \uLgood 
+ 
\upmu (3 + \Psi) \Lunit
$
and
$
\displaystyle
(3 + \Psi) \uLgood 
+ 
\upmu (1 - \Psi) \Lunit
= 2(3 + \Psi) \frac{d}{du}
$
along the integral curves of
$
(3 + \Psi) \uLgood 
+ 
\upmu (1 - \Psi) \Lunit
$.
Using these observations, we
integrate equations
\eqref{E:SWITCHEDORDERMODELFAST} 
and
\eqref{E:AGAINREVAMPEDMODELSLOW0}-\eqref{E:AGAINREVAMPEDMODELSLOW1}
and use \eqref{E:NULLVECGEOMETRICCOMP},
the bootstrap assumptions,
and the small-data assumptions 
\eqref{E:PLANESYMMETRYLUNITPSISMALLSIGMA01},
\eqref{E:PLANESYMMETRYWSMALLSIGMA01},
\eqref{E:PLANESYMMETRYALLPSISMALLPO},
and 
\eqref{E:PLANESYMMETRYALLWSMALLPO}
to obtain
\begin{align} \label{E:LITTLEQGRONWALLREADY}
	q(u)
	& \leq
	C \mathring{\upepsilon}
	+
	C
	\int_{u'=0}^u
		q(u')
	\, du',
\end{align}
where here and throughout the paper, all constants $C$ 
are allowed to depend on
$\mathring{\updelta}$ and $\TranminusdatasizeWithFactor$,
and similarly for implicit constants hidden in the notations $\lesssim$ and $\mathcal{O}$;
see Subsect.\ \ref{SS:NOTATIONANDINDEXCONVENTIONS} for a precise description 
of the way in which we allow constants to depend on the various parameters in the bulk of the paper.
From \eqref{E:LITTLEQGRONWALLREADY} and Gronwall's inequality, we conclude that
$
\sup_{u \in [0,1]} q(u) \lesssim
\mathring{\upepsilon}
$.
Next, using the already obtained bound $|\Lunit \Psi| \lesssim \mathring{\upepsilon}$,
the fundamental theorem of calculus, 
and the data-size assumptions \eqref{E:PSIITSLFPLANESYMMETRYSIGMA01} for $\Psi$,
we deduce that for $(t,u) \in [0, \Tboot) \times [0,1]$, we have
\begin{align}
	\left|
		\Psi
	\right|
	(t,u)
	& \leq \Psiep
	+
	\int_{s=0}^t
		|\Lunit \Psi|(s,u)
	\, ds
		\\
	& \leq 
		\Psiep
		+
		C \TranminusdatasizeWithFactor^{-1} \mathring{\upepsilon}
	\leq \Psiep + C \mathring{\upepsilon}.
		\notag
\end{align}
We have thus 
derived the desired improvements of the bootstrap assumptions 
\eqref{E:INTROPSIITSELFBOOTSTRAP}-\eqref{E:INTROSMALLBOOT}
(whenever $\Psiep$ and $\mathring{\upepsilon}$ are sufficiently small).

Next, using the previously obtained estimates, the bootstrap assumptions, 
the evolution equation \eqref{E:MODELFAST},
the fundamental theorem of calculus,
and the data assumption \eqref{E:PLANESYMMETRYTHEONELARGEDATUM},
we obtain
\begin{align} \label{E:ULGOODPSIBOOTSTRAPIMPROVED}
	\left|
		\uLgood \Psi(t,u)
		-
		f(u)
	\right|
	\leq
		\int_{s=0}^t
			|\Lunit \uLgood \Psi(s,u)|
		\, ds
	&
	\leq
	C
	\int_{s=0}^t
		\mathring{\upepsilon}
	\, ds
		\\
	& \leq 
	 		C \TranminusdatasizeWithFactor^{-1} \mathring{\upepsilon}
	 \leq  C \mathring{\upepsilon},
	 \notag
\end{align}
which yields an improvement of the bootstrap assumption \eqref{E:INTROTRANSVERSALBOOT}.
Similarly, from the previously obtained estimates, the bootstrap assumptions, 
equation \eqref{E:UPMUSIMPLEPLANEWAVESCHEMATICEVOLUTION},
and the fact that (by construction) $\upmu|_{t=0} = 1 + \mathcal{O}_{\mydiam}(\Psi)$,
we deduce that
\begin{align} \label{E:UPMUSIMPLEPLANEWAVESCHEMATIC}
	\upmu(t,u)
	& = 
		1
		+
		f(u) t
		+
		\mathcal{O}_{\mydiam}(\Psiep)
		+ 
		\mathcal{O}(\mathring{\upepsilon}),
\end{align}
where the implicit constants in $\mathcal{O}_{\mydiam}(\cdot)$ do not depend on
$\TranminusdatasizeWithFactor$ or $f(u)$.
We have therefore improved the bootstrap assumption \eqref{E:INTROUPMUBOOT}
(whenever $\Psiep$ and $\mathring{\upepsilon}$ are sufficiently small),
which completes our proof of the improvement of the bootstrap assumptions.

We now show that a shock forms in finite time.
We start by setting
\[
\upmu_{\star}(t)
:= \min_{u \in [0,1]} \upmu(t,u).
\]
From 
definition \eqref{E:MODELCASEBLOWUPTIMEPARAMETER}
and
\eqref{E:UPMUSIMPLEPLANEWAVESCHEMATIC},
we find that
\begin{align} \label{E:SHOCKWILLFORMUPMUSIMPLEPLANEWAVESCHEMATIC}
	\upmu_{\star}(t)
	& = 
		1
		-
		\TranminusdatasizeWithFactor t
		+
		\mathcal{O}_{\mydiam}(\Psiep)
		+ 
		\mathcal{O}(\mathring{\upepsilon}).
\end{align}
From \eqref{E:SHOCKWILLFORMUPMUSIMPLEPLANEWAVESCHEMATIC},
we easily infer that
$\upmu_{\star}(t)$ vanishes at the time
$
\displaystyle
T_{(Shock)}
=
\TranminusdatasizeWithFactor^{-1}
\left\lbrace
	1
	+
	\mathcal{O}_{\mydiam}(\Psiep)
	+ 
	\mathcal{O}(\mathring{\upepsilon})
\right\rbrace
$.
Finally, from 
\eqref{E:MODELCASEBLOWUPTIMEPARAMETER},
\eqref{E:PLANESYMMETRICCHOV},
and the bounds 
$|\Psi| \leq \Psiep + C \mathring{\upepsilon}$,
$|\Lunit \Psi| \leq C \mathring{\upepsilon}$,
and \eqref{E:ULGOODPSIBOOTSTRAPIMPROVED},
we see that
if $\Psiep$ and $\mathring{\upepsilon}$ are sufficiently small,
then as $t \uparrow T_{(Shock)}$,
$
\sup_{u \in [0,1]}
|\partial_t \Psi(t,u)|
$ 
and
$
\sup_{u \in [0,1]}
|\partial_1 \Psi(t,u)|
$ 
are equal to non-zero, bounded functions times
$
\displaystyle
\frac{1}{\upmu_{\star}(t)}
$.
Hence,
$
\sup_{u \in [0,1]}
|\partial_t \Psi(t,u)|
$ 
and
$
\sup_{u \in [0,1]}
|\partial_1 \Psi(t,u)|
$ 
blow up precisely at time $T_{(Shock)}$.

We close this subsubsection by highlighting that there exist initial data,
compactly supported in $\Sigma_0^1$,
such that $\mathring{\upepsilon} = 0$ and such that the shock formation 
argument given above goes through; see Subsect.\ \ref{SS:EXISTENCEOFDATA}
for further discussion. The corresponding solutions are simple outgoing plane waves. This clarifies
why in perturbing these simple waves, 
we can consider initial data such that
$\mathring{\upepsilon}$ is positive but small relative
to the other relevant quantities in the problem,
as the above bootstrap argument required.

\subsubsection{The full problem without symmetry assumptions: data-size assumptions, bootstrap assumptions, and $L^{\infty}$ estimates}
\label{SSS:DATAASSUMPTIONSANDBOOTSTRAPASSUMPTIONS}
In our main theorem (Theorem~\ref{T:MAINTHEOREM}),
we study (non-symmetric) perturbations of the 
plane symmetric nearly simple outgoing wave solutions studied in Subsubsect.\ \ref{SSS:NEARLYSIMPLEWAVES}. 
We now outline the size assumptions that we make on the data in proving our main theorem.
Our assumptions are similar in spirit to our data assumptions from Subsubsect.\ \ref{SSS:NEARLYSIMPLEWAVES}
but are more complicated in view of the additional spatial direction
and the necessity of deriving energy estimates away from plane symmetry;
see Subsect.\ \ref{SS:DATAASSUMPTIONS} for a precise statement of our assumptions on the data.
We study solutions such that the interesting, relatively large
portion of the data lies in $\Sigma_0^{U_0}$
when $U_0$ is near $1$
while the data on $\mathcal{P}_0^{2 \TranminusdatasizeWithFactor^{-1}}$
are very small; see Figure~\ref{F:REGION} on pg.~\pageref{F:REGION}.
Here and in the remainder of the article, $\TranminusdatasizeWithFactor > 0$ 
denotes the data-dependent parameter defined in \eqref{E:INTROCRITICALBLOWUPTIMEFACTOR}.
We consider data for $\Psi$ such that along $\Sigma_0^{U_0}$, 
$\Psi$ itself is initially of small $L^{\infty}$ size $\Psiep$,
the $\mathcal{P}_u$-tangential derivatives of $\Psi$ 
(that is, its $\Lunit$ and $\GeoAng$ derivatives)
up to top order
are of a relatively small size $\mathring{\upepsilon}$ in appropriate norms, while 
the pure $\Rad$ derivatives such as $\Rad \Psi$ and $\Rad \Rad \Psi$
are of a relatively large\footnote{$\mathring{\updelta}$ is allowed to be small in an absolute sense.} 
size $\mathring{\updelta}$. We assume that all mixed tangential-transversal derivatives such as
$\Lunit \Rad \Psi$ are also of a relatively small size $\mathring{\upepsilon}$.
Our size assumptions are such that the energies we use to control the solution
are all initially of small size $\mathcal{O}(\mathring{\upepsilon}^2)$;
see Subsubsect.\ \ref{SSS:ENERGYESTIMATES} for further discussion on this point.
These size assumptions are similar to the ones made
in \cites{jSgHjLwW2016,jLjS2016b} and correspond to data close to that of the
nearly simple outgoing plane waves studied in Subsubsect.\ \ref{SSS:NEARLYSIMPLEWAVES}.
Roughly, the relative largeness of $\mathring{\updelta}$ is tied to a Riccati-type blowup
of $\max_{\alpha=0,1,2} |\partial_{\alpha} \Psi|$.
We assume that the slow wave variable array $\bigslow$ 
and all of its derivatives 
up to top order in \emph{all directions}\footnote{Actually, in our proof, we do not
need estimates for more than two $\Rad$ derivatives of $\bigslow$.}
are initially of small size $\mathring{\upepsilon}$.
Finally, we assume that along $\mathcal{P}_0^{2 \TranminusdatasizeWithFactor^{-1}}$, 
the derivatives of $\Psi$ and $\bigslow$ up to top order 
in \emph{all directions}
are of small size $\mathring{\upepsilon}$.
The case $\mathring{\upepsilon} = 0$ corresponds to a simple outgoing (that is, right-moving) plane wave.
See Subsect.\ \ref{SS:EXISTENCEOFDATA} for a proof sketch of the existence of data
that verify our size assumptions and for discussion on why 
their existence is tied to our structural assumptions \eqref{E:SOMENONINEARITIESARELINEAR}
on the semilinear inhomogeneous terms.

\begin{remark}[\textbf{On the parameter} $\Psiep$]
	\label{R:NEWPARAMETER}
	In \cite{jSgHjLwW2016},	
	the $L^{\infty}$ smallness of $\Psi$ itself (un-differentiated) and the smallness of its $\mathcal{P}_u$-tangential derivatives
	were captured by the smallness of $\mathring{\upepsilon}$.
	That is, the parameter $\Psiep$ was not featured in the work \cite{jSgHjLwW2016}. 
	For this reason, in \cite{jSgHjLwW2016}, $\mathring{\upepsilon}$ did not vanish
	for non-trivial simple outgoing plane wave solutions, 
	which is different than in the present article. 
	In this article, we have decided that it is better to introduce $\Psiep$ so that 
	\textbf{i)} our results here apply in particular to non-trivial simple outgoing plane wave solutions
	and 
	\textbf{ii)} the existence of an open set of initial data (without symmetry assumptions)
	that verify our size assumptions 
	follows as an easy consequence of the fact that our shock formation results apply to some simple outgoing plane wave solutions
	(see Subsect.\ \ref{SS:EXISTENCEOFDATA} for further discussion).
\end{remark}

The data-size assumptions described in the previous paragraph correspond to a pair of waves
in which one wave (namely $\Psi$) is nearly simple and outgoing while the other (namely $\bigslow$)
is uniformly small. A key point of our proof is showing how to propagate 
various aspects of the $\Psiep$-$\mathring{\updelta}$-$\mathring{\upepsilon}$
hierarchy all the way up to the shock, much like 
in Subsubsect.\ \ref{SSS:NEARLYSIMPLEWAVES}.
As in Subsubsect.\ \ref{SSS:NEARLYSIMPLEWAVES}, 
to propagate the $\Psiep$-$\mathring{\updelta}$-$\mathring{\upepsilon}$ hierarchy,
we find it convenient to make $L^{\infty}$ bootstrap assumptions for 
$\Psi$, $\bigslow$, and their geometric derivatives
on a bootstrap time interval of the form 
$[0,\Tboot)$, on which $\upmu > 0$ and on which the solution exists classically.
In view of the remarks made below \eqref{E:INTROCRITICALBLOWUPTIMEFACTOR},
we can assume that $\Tboot \leq 2 \TranminusdatasizeWithFactor^{-1}$.
Our ``fundamental'' bootstrap\footnote{To close our estimates, we also find it convenient to make additional ``auxiliary'' bootstrap
assumptions; see Subsects.\ \ref{SS:AUXILIARYBOOTSTRAP} and \ref{SS:BOOTSTRAPFORHIGHERTRANSVERSAL}.} 
assumptions are (see Subsect.\ \ref{SS:PSIBOOTSTRAP})
\begin{align} \label{E:INTROPSIFUNDAMENTALC0BOUNDBOOTSTRAP}
	\left\| 
		\Tanset^{[1,10]} \Psi 
	\right\|_{L^{\infty}(\Sigma_t^u)},
		\,
	\left\| 
		\Tanset^{\leq 10} \bigslow
	\right\|_{L^{\infty}(\Sigma_t^u)}
	& \leq \varepsilon,
\end{align}
where $\Tanset^{[1,M]}$ denotes an arbitrary differential operator of order in between $1$ and $M$
corresponding to repeated differentiation with respect to the $\mathcal{P}_u$-tangential
vectorfields $\Tanset = \lbrace \Lunit, \GeoAng \rbrace$, 
$\Tanset^{\leq M}$ is defined similarly but allows for the possibility of zero differentiations,
and
$\varepsilon$ is a small bootstrap parameter that, 
at the end of the paper, by virtue of a priori energy estimates and Sobolev embedding,
will have been shown to verify
$\varepsilon \lesssim \mathring{\upepsilon}$.

\begin{remark}
	Note that the uniform boundedness of 
	$\max_{\alpha=0,1,2} |\partial_{\alpha} \slow|$ up to the shock
	is already accounted
	for in the bootstrap assumption \eqref{E:INTROPSIFUNDAMENTALC0BOUNDBOOTSTRAP}.
	Note furthermore that the same is \emph{not} true for $\max_{\alpha=0,1,2} |\partial_{\alpha} \Psi|$,
	which blows up at the shock.
\end{remark}

Using \eqref{E:INTROPSIFUNDAMENTALC0BOUNDBOOTSTRAP}
and our data-size assumptions,
we can derive $L^{\infty}$ estimates for
$\Psi$ and the low-order pure transversal
and mixed transversal-tangential
derivatives of $\Psi$ and $\bigslow$ on the bootstrap region,
and for various derivatives of $\upmu$ and the Cartesian component functions 
$\lbrace \Lunit^a \rbrace_{a=1,2}$
for times up to as large as $2 \TranminusdatasizeWithFactor^{-1}$. 
Roughly, this is the content of  
Sects.\ \ref{S:PRELIMINARYPOINTWISE} and ~\ref{S:LINFINITYESTIMATESFORHIGHERTRANSVERSAL}.
The analysis is similar in spirit to that of Subsubsect.\ \ref{SSS:NEARLYSIMPLEWAVES}
but is much more involved. We need the estimates for the derivatives of $\upmu$ and $\lbrace \Lunit^a \rbrace_{a=1,2}$
because these quantities arise as error terms when we commute the equations with the 
vectorfields $\lbrace \Lunit, \Rad, \GeoAng \rbrace$.

\subsubsection{Proof sketch of the formation of the shock and the blowup of $\partial \Psi$}
\label{SSS:FORMATIONOFSHOCK}
Given the $L^{\infty}$ estimates described in Subsubsect.\ \ref{SSS:DATAASSUMPTIONSANDBOOTSTRAPASSUMPTIONS},
the proofs that $\upmu \to 0$ in finite time and that $\max_{\alpha=0,1,2} |\partial_{\alpha} \Psi|$
blows up are not much more difficult they were in Subsubsect.\ \ref{SSS:NEARLYSIMPLEWAVES}.
We now sketch the proofs.
First, one derives (essentially as a consequence of the eikonal equation \eqref{E:INTROEIKONAL})
the following transport equation for $\upmu$
(see Lemma~\ref{L:UPMUANDLUNITIFIRSTTRANSPORT}):
\begin{align} \label{E:SCHEMATICUPMUTRANSPORT}
	\Lunit \upmu(t,u,\vartheta) 
	& = \frac{1}{2} [G_{\Lunit \Lunit} \Rad \Psi](t,u,\vartheta)
		+ \upmu \mathcal{O}(\Singletan \Psi)(t,u,\vartheta).
\end{align}
In \eqref{E:SCHEMATICUPMUTRANSPORT} and throughout, 
$\Singletan$ schematically denotes a differentiation in a
direction tangential to the characteristics $\mathcal{P}_u$.
Note that equation \eqref{E:SCHEMATICUPMUTRANSPORT} does not involve the slow wave $\bigslow$.
Using bootstrap assumptions and $L^{\infty}$ estimates of the type described in
Subsubsect.\ \ref{SSS:DATAASSUMPTIONSANDBOOTSTRAPASSUMPTIONS}, it is easy to show
that $\upmu \mathcal{O}(\Singletan \Psi)(t,u,\vartheta) = \mathcal{O}(\varepsilon)$
and that 
$[G_{\Lunit \Lunit} \Rad \Psi](t,u,\vartheta) 
= 
[G_{\Lunit \Lunit} \Rad \Psi](0,u,\vartheta)
+ 
\mathcal{O}(\varepsilon)
$.
Inserting these estimates into \eqref{E:SCHEMATICUPMUTRANSPORT}, we find that
\begin{align} \label{E:ANOTHERSCHEMATICUPMUTRANSPORT}
	\Lunit \upmu(t,u,\vartheta) 
	& = \frac{1}{2} [G_{\Lunit \Lunit} \Rad \Psi](0,u,\vartheta)
		+ \mathcal{O}(\varepsilon).
\end{align}
From \eqref{E:ANOTHERSCHEMATICUPMUTRANSPORT},
definition \eqref{E:CRITICALBLOWUPTIMEFACTOR} and the fact that
$\varepsilon$ is controlled by $\mathring{\upepsilon}$, 
we see that there exists $(u_*,\vartheta_*) \in [0,1] \times \mathbb{T}$ such that
\begin{align} \label{E:MOSTNEGATIVEANOTHERSCHEMATICUPMUTRANSPORT}
	\Lunit \upmu(t,u_*,\vartheta_*) 
	& = - \TranminusdatasizeWithFactor
		+ \mathcal{O}(\mathring{\upepsilon}).
\end{align}
Recalling that 
$
\displaystyle
\Lunit 
= 
\frac{\partial}{\partial t}
$
and that
$
\upmu|_{t=0}
= 1
+ \mathcal{O}_{\mydiam}(\Psi)
= 1 + \mathcal{O}_{\mydiam}(\Psiep)
$
(see Subsect.\ \ref{SS:NOTATIONANDINDEXCONVENTIONS} regarding the notation),
we see that 
if 
$\Psiep$ and
$\mathring{\upepsilon}$ are sufficiently small,
then 
$\upmu$
vanishes for the first time when
$t = \lbrace 1 + \mathcal{O}_{\mydiam}(\Psiep) + \mathcal{O}(\mathring{\upepsilon}) \rbrace \TranminusdatasizeWithFactor^{-1}$.
Moreover, \underline{$\upmu$ vanishes linearly} in that $\Lunit \upmu$ is \emph{strictly negative}
at the vanishing points; 
as we describe below in Subsubsect.\ \ref{SSS:ENERGYESTIMATES},
\emph{these are crucially important facts for our energy estimates}.
Finally, we note that the above argument also yields that
$|\Rad \Psi| \gtrsim 1$ at the points where $\upmu$ vanishes and thus
$
\displaystyle
\Radunit \Psi 
:= \frac{1}{\upmu} \Rad \Psi
$
blows up like 
$
\displaystyle
\frac{1}{\upmu}
$
at such points.
Since we are also able to show that the Cartesian components $\Radunit^{\alpha}$ remain close
to $- \delta_1^{\alpha}$ throughout the evolution,
it follows that $\max_{\alpha=0,1,2} |\partial_{\alpha} \Psi|$ 
blows up when $\upmu$ vanishes.

\subsubsection{Overview of the energy estimates}
\label{SSS:ENERGYESTIMATES}
Energy estimates are by far the most difficult aspect of the proof.
For reasons to be explained, to close our energy estimates, we must
commute the evolution equations up to $18$ times with
the elements the $\mathcal{P}_u$-tangential commutation set
$\Tanset = \lbrace \Lunit, \GeoAng \rbrace$
and derive energy estimates for the differentiated quantities.
The starting point for these energy estimates is 
energy identities for $\Psi$ and $\bigslow$, which we obtain
by applying the divergence theorem
on the regions depicted in Figure~\ref{F:SOLIDREGION}.
We provide the details behind these energy identities in
Sect.\ \ref{S:ENERGIES}; here we focus mainly on outlining
how to derive a priori energy estimates based on the energy identities.
\begin{center}
\begin{overpic}[scale=.2]{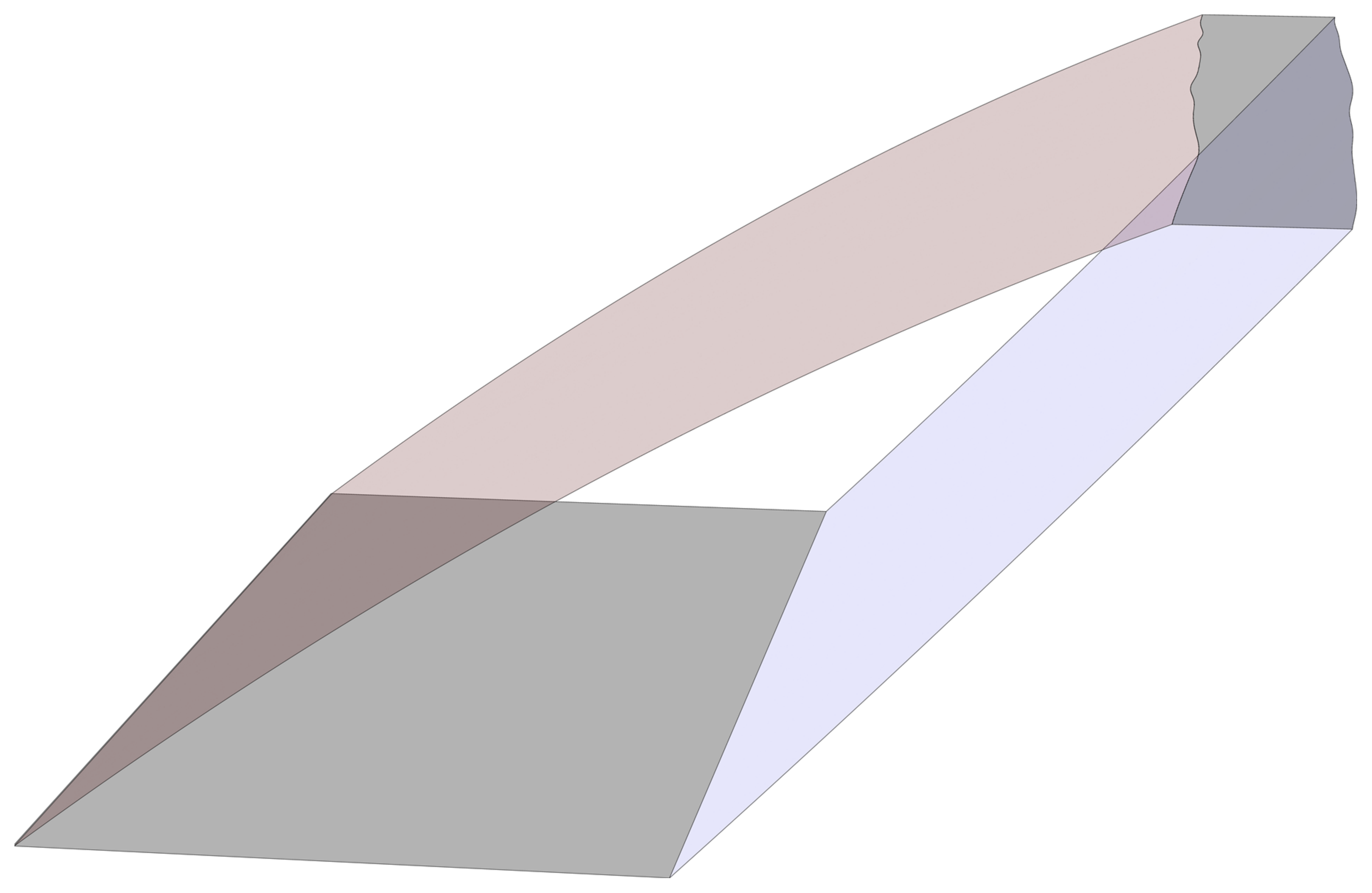} 
\put (54,32.5) {\large$\displaystyle \mathcal{M}_{t,u}$}
\put (38,33) {\large$\displaystyle \mathcal{P}_u^t$}
\put (74,34) {\large$\displaystyle \mathcal{P}_0^t$}
\put (59,15) {}
\put (32,17) {\large$\displaystyle \Sigma_0^u$}
\put (48.5,13) {\large$\displaystyle \ell_{0,0}$}
\put (12,13) {\large$\displaystyle \ell_{0,u}$}
\put (90,60.5) {\large$\displaystyle \Sigma_t^u$}
\put (93.5,56) {\large$\displaystyle \ell_{t,0}$}
\put (87.2,56) {\large$\displaystyle \ell_{t,u}$}
\put (-.6,16) {\large$\displaystyle x^2 \in \mathbb{T}$}
\put (22,-3) {\large$\displaystyle x^1 \in \mathbb{R}$}
\thicklines
\put (-.9,3){\vector(.9,1){22}}
\put (.7,1.8){\vector(100,-4.5){48}}
\end{overpic}
\captionof{figure}{The energy estimate region}
\label{F:SOLIDREGION}
\end{center}

We start by describing our energy-null flux quantities.
In Subsubsect.\ \ref{SSS:ENERGYESTIMATES} only, 
we denote the energy-null flux quantity\footnote{In our detailed analysis, 
when constructing $L^2$-controlling quantities,
we separately define energies along $\Sigma_t^u$, null fluxes along $\mathcal{P}_u^t$, and spacetime
integrals over $\mathcal{M}_{t,u}$. Here, to shorten our explanation of the main ideas, we have grouped them together.} 
for $\Psi$ by $\mathbb{H}^{(Fast)}$
and the one for $\bigslow$ by $\mathbb{H}^{(Slow)}$.
Schematically, $\mathbb{H}^{(Fast)}$ and $\mathbb{H}^{(Slow)}$
have the following strength
(see Sect.\ \ref{S:ENERGIES} for precise statements concerning the energies and their
coerciveness), where the integrals are with respect to the geometric
coordinates:\footnote{In our schematic overview of the proof,
we use the notation $A \sim B$ to imprecisely indicate that
$A$ is well-approximated by $B$.}
\begin{subequations}
\begin{align}
	\mathbb{H}^{(Fast)}(t,u)
	& \sim
			\int_{\Sigma_t^u}
				\left\lbrace
					\upmu (\Lunit \Psi)^2
					+ 
					(\Rad \Psi)^2
					+
					\upmu (\GeoAng \Psi)^2
				\right\rbrace
			\,d \vartheta  du' 
			+
			\int_{\mathcal{P}_u^t}
				\left\lbrace
					(\Lunit \Psi)^2
					+ 
					\upmu
					(\GeoAng \Psi)^2
				\right\rbrace
			\, d \vartheta dt'
				\label{E:INTROPSIENERGY}
				\\
		& \ \
			+
			\int_{\mathcal{M}_{t,u}}
				[\Lunit \upmu]_-
				(\GeoAng \Psi)^2
			\, d \vartheta d u' dt',
		 \notag \\
	\mathbb{H}^{(Slow)}(t,u)
	& \sim \int_{\Sigma_t^u}
				\upmu |\bigslow|^2
			\,d \vartheta  du' 
			+
			\int_{\mathcal{P}_u^t}
				|\bigslow|^2
			\, d \vartheta dt'.
			\label{E:INTROSLOWWAVEENERGY}
\end{align}
\end{subequations}
On RHS~\eqref{E:INTROPSIENERGY} and throughout,
$
\displaystyle
f_- 
:= \max \lbrace -f, 0 \rbrace
$.

A crucially important feature of the above energies is that some of the integrals on RHSs \eqref{E:INTROPSIENERGY}-\eqref{E:INTROSLOWWAVEENERGY}
are $\upmu$-weighted
and thus become weak near the shock, that is, in regions where $\upmu$ is near $0$. 
It turns out that the $\upmu$-weighted integrals are not able to suitably 
control all of the error terms that arise in the energy identities. The reason is that
we encounter some error terms that \emph{lack} $\upmu$ weights and are therefore relatively strong.
However, it is also true that \emph{all} elements of
$\lbrace \Lunit \Psi, \Rad \Psi, \GeoAng \Psi, \bigslow \rbrace$
are controlled by one of $\mathbb{H}^{(Fast)}$ or $\mathbb{H}^{(Slow)}$
\emph{without a $\upmu$ weight}
and thus, in total, we are able to control all error integrals.
Let us further comment on the coerciveness of the spacetime integral
$
\displaystyle
\int_{\mathcal{M}_{t,u}}
				[\Lunit \upmu]_-
				(\GeoAng \Psi)^2
			\, d \vartheta d u' dt'
$
featured on RHS~\eqref{E:INTROPSIENERGY}.
The key point is that $\Lunit \upmu$ is quantitatively negative 
(and thus $[\Lunit \upmu]_-$ is quantitatively positive)
in the difficult region where $\upmu$ is small,
which leads to the coerciveness of the integral.
The reasons behind this were outlined in Subsubsect.\ \ref{SSS:FORMATIONOFSHOCK}.
In all prior works on shock formation in more than one spatial dimension,
similar spacetime integrals were exploited to close the energy estimates.
The idea to exploit such a spacetime integral seems to have originated in
the works \cites{sA1999a,dC2007}.

We now let $\mathbb{H}^{(Fast)}_N$ denote an energy corresponding to commuting the wave equation
for $\Psi$ with a string of vectorfields $\Tanset^N$ consisting of
precisely $N$ factors of elements the $\mathcal{P}_u$-tangential commutation set
$\Tanset = \lbrace \Lunit, \GeoAng \rbrace$.
We let $\mathbb{H}^{(Slow)}_N$ be an analogous energy for $\bigslow$. 
As we alluded to above, in our detailed proof, we will have $1 \leq N \leq 18$
for $\mathbb{H}^{(Fast)}_N$ and
$N \leq 18$ for $\mathbb{H}^{(Slow)}_N$.
For such $N$ values, our initial data are such that all energies are of initially
small size $\mathcal{O}(\mathring{\upepsilon}^2)$,
where $\mathring{\upepsilon}$ is the smallness parameter from Subsubsect.\ \ref{SSS:DATAASSUMPTIONSANDBOOTSTRAPASSUMPTIONS}.
In particular, our energies \emph{completely vanish} for simple outgoing plane wave solutions
(in which $\Lunit \Psi = \GeoAng \Psi = \bigslow = 0$).
We stress that we avoid using the energy $\mathbb{H}^{(Fast)}_0$,
which involves the $L^2$ norm of the pure transversal derivative $\Rad \Psi$ and is therefore allowed to be
of a relatively large size $\mathcal{O}(\mathring{\updelta}^2)$; 
as we mentioned earlier, in order to close our proof, 
we do not need to control $\Rad \Psi$ in $L^2$, but rather only in $L^{\infty}$. 
We also need to control $\Rad \Rad \Psi$ and $\Rad \Rad \Rad \Psi$ 
in $L^{\infty}$, for reasons that
we clarify starting in Sect.\ \ref{S:LINFINITYESTIMATESFORHIGHERTRANSVERSAL}. We can obtain these $L^{\infty}$ estimates
by treating the wave equation \eqref{E:FASTWAVE} 
like a transport equation of the form $\Lunit \Rad \Psi = \cdots$ 
(i.e., a transport equation in $\Rad \Psi$),
where the source terms $\cdots$ are controlled by our energies,
and by commuting this equation up to two times with $\Rad$;
see Props.~\ref{P:IMPROVEMENTOFAUX} and \ref{P:IMPROVEMENTOFHIGHERTRANSVERSALBOOTSTRAP}
for detailed proofs.

We now provide a few more details about
how to derive the energy identities that form the starting point of our $L^2$-type analysis.
To obtain the relevant energy identities for $\Psi$,
we commute $\upmu$ times\footnote{To avoid uncontrollable error terms, it 
is essential that we first multiply the equations by $\upmu$ before commuting them.}
the wave equation \eqref{E:FASTWAVE}
with $\Tanset^N$, multiply by $\Mult \Tanset^N \Psi$, 
and then integrate by parts
over $\mathcal{M}_{t,u}$.
Here,
 \begin{align} \label{E:MULT}
\Mult := (1 + 2 \upmu) \Lunit + 2 \Rad
\end{align}
is a ``multiplier vectorfield'' 
with appropriately chosen $\upmu$ weights.
Similarly, 
to obtain the relevant energy identities for $\bigslow$,
we multiply equations 
\eqref{E:SLOW0EVOLUTION}-\eqref{E:SYMMETRYOFMIXEDPARTIALS}
with $\upmu$, commute them
with $\Tanset^N$, multiply by an appropriate quantity,
and then integrate by parts
over $\mathcal{M}_{t,u}$;
see Sect.\ \ref{S:ENERGIES} for the details
behind the integration by parts and
Sect.\ \ref{S:POINTWISEESTIMATES} for the details
behind the pointwise estimates for the semilinear inhomogeneous
terms on RHSs
\eqref{E:FASTWAVE}
and
\eqref{E:SLOW0EVOLUTION}
and for pointwise estimates for
the error terms that we generate upon commuting the equations.
In total, we can use these energy identities and pointwise estimates
to obtain a system of integral inequalities
of the following type, where here we only schematically display a few representative terms:
\begin{subequations}
\begin{align}
	\mathbb{H}^{(Fast)}_N(t,u)
	& \leq 
	  C \mathring{\upepsilon}^2
		+
		\int_{\mathcal{M}_{t,u}}
			(\Rad \Psi)
			(\Rad \GeoAng^N \Psi)
			\GeoAng^N \mytr \upchi
		\, d \vartheta d u' dt'
			\label{E:PSIMODELENERGYINEQUALITY} \\
	& \ \
		+
		\int_{\mathcal{M}_{t,u}}
			(\Lunit \GeoAng^N \Psi)
			\GeoAng^N \bigslow
		\, d \vartheta d u' dt'
		+ 
		\cdots,
			\notag \\
	\mathbb{H}^{(Slow)}_N(t,u)
	& \leq 
		C \mathring{\upepsilon}^2
		+
		\int_{\mathcal{M}_{t,u}}
			|\GeoAng^N \bigslow|^2
		\, d \vartheta d u' dt'
		+ 
		\int_{\mathcal{M}_{t,u}}
			(\GeoAng^N \bigslow)
			(\Rad \GeoAng^N \Psi)
		\, d \vartheta d u' dt'
		+
		\cdots.
		\label{E:SLOWWAVEMODELENERGYINEQUALITY}
\end{align}
\end{subequations}

The $C \mathring{\upepsilon}^2$ terms on
RHSs \eqref{E:PSIMODELENERGYINEQUALITY}-\eqref{E:SLOWWAVEMODELENERGYINEQUALITY}
are generated by the data.
The tensorfield $\upchi$ on RHS~\eqref{E:PSIMODELENERGYINEQUALITY}
is the null second fundamental form 
of the co-dimension-two tori
$\ell_{t,u}$. It is
a symmetric type $\binom{0}{2}$
tensorfield with components
$\upchi_{\CoordAng \CoordAng}
= g(\D_{\CoordAng} \Lunit, \CoordAng)$
(see \eqref{E:CHIUSEFULID} and recall that
$
\displaystyle
\CoordAng = \frac{\partial}{\partial \vartheta}
$),
where $\D$ is the Levi--Civita connection of $g$.
Moreover, $\mytr \upchi$ is the trace of $\upchi$
with respect to the Riemannian metric $\gsphere$ induced on $\ell_{t,u}$ by $g$.
Geometrically, $\mytr \upchi$ is the null mean curvature of the $g$-null hypersurfaces
$\mathcal{P}_u$. Analytically, we have $\Lunit^{\alpha} \sim \partial u$ 
(see \eqref{E:LGEOEQUATION} and \eqref{E:LUNITDEF})
and thus $\mytr \upchi \sim \partial^2 u$, where $u$
is the eikonal function. From the point of view of counting derivatives, 
one might expect to see terms such as
$\GeoAng^N \mytr \upchi \sim \partial^{N+2} u$ on the RHS of the $N$-times commuted fast wave equation
since the Cartesian components $\Singletan^{\alpha}$ of the elements  
$
\Singletan \in \Tanset
= \lbrace \Lunit, \GeoAng \rbrace
$
depend on $\partial u$; roughly, terms such as
$\GeoAng^N \mytr \upchi$ can arise
when one commutes operators of the form $\Tanset^N$ through\footnote{As we mentioned above, in practice, we commute through
$\upmu \square_g$ since the $\upmu$-weighted wave operator exhibits better commutation properties
with the elements of $\lbrace \Lunit, \Rad, \GeoAng \rbrace $.} 
the expression $\square_g \Psi$ and the maximum number of derivatives falls on the components\footnote{In practice, 
we mostly rely on
geometric decomposition formulas when 
decomposing the error terms in the commuted equations, 
rather than working with Cartesian components.}
$\Singletan^{\alpha} \sim \partial u$.
Thus, the presence of $\GeoAng^N \mytr \upchi$ on RHS~\eqref{E:PSIMODELENERGYINEQUALITY}
signifies that the energy estimates for the wave variables are coupled 
to $L^2$ estimates for the derivatives of the eikonal function. This is a fundamental
difficulty that one faces whenever one works with vectorfields constructed from 
an eikonal function adapted to the characteristics.

We already stress here that a naive treatment of the term
$\GeoAng^N \mytr \upchi$ in the energy estimates would result in the loss
of a derivative that would preclude closure of the estimates. This is because
crude estimates for $\GeoAng^N \mytr \upchi \sim \partial^{N+2} u$,
based on the eikonal equation \eqref{E:INTROEIKONAL},
lead to an $L^2$ estimate for $\partial^{N+2} u$ that depends on
$\partial^{N+2} \Psi$, which is one more derivative of $\Psi$ than is controlled by
$	\mathbb{H}^{(Fast)}_N$.
However, the term $\GeoAng^N \mytr \upchi$ has a special tensorial structure, 
and we can avoid the loss of a derivative through a procedure that we describe below.
As we explain below, we use this procedure only at the top order\footnote{More precisely, 
in treating the most difficult top-order error terms involving $\GeoAng^N \mytr \upchi$,
we use this procedure only at the top order. We also encounter less degenerate top-order
error terms with factors that behave like $\upmu \GeoAng^N \mytr \upchi$ 
(see Lemma~\ref{L:LESSDEGENERATEENERGYESTIMATEINTEGRALS} and point (5) of Subsect.\ \ref{SS:OFTENUSEDESTIMATES}),
and for these terms, thanks to the helpful factor of $\upmu$, 
it is permissible to use the procedure at all derivative levels.} 
because it leads to a rather degenerate top-order energy estimate,
and we need improved estimates below top order to close our proof.
The improved estimates are possible because below top order, 
one does not need to worry about the loss of a derivative.
In fact, as we further explain below,
to obtain the improved estimates,
it is important that one \emph{should allow the loss of a derivative} in the estimates for
$\GeoAng^N \mytr \upchi$ below top order.
Taken together, these are unusual and technically challenging
features of the study of shock formation
that were first found in the works 
\cites{sA1995,sA1999a,sA1999b,dC2007}
of Alinhac\footnote{Actually, Alinhac's approach allowed for some loss of differentiability 
stemming from his use of an eikonal function. As we mentioned 
in Subsect.\ \ref{SS:PRIORWORKSANDSUMMARY}, to overcome this difficulty, 
he used Nash--Moser estimates. In contrast, Christodoulou used an approach that avoided the derivative loss altogether,
which is the approach we take in the present article.} 
and Christodoulou.

We now provide some additional details on how we derive the a priori energy estimates.
In the usual fashion, we must control RHSs~\eqref{E:PSIMODELENERGYINEQUALITY} and
\eqref{E:SLOWWAVEMODELENERGYINEQUALITY} in terms of $\mathbb{H}^{(Fast)}_M$
and $\mathbb{H}^{(Slow)}_M$ (for suitable $M$)
so that we can use a version of Gronwall's inequality.
After a rather difficult Gronwall estimate, one obtains 
a hierarchy of energy estimates 
holding up to the shock, which we now explain.
The estimates feature the quantity
\[
	\upmu_{\star}(t,u) := \min \left\lbrace 1 , \min_{\Sigma_t^u} \upmu \right\rbrace,
\]
which essentially measures the worst case smallness for $\upmu$ along $\Sigma_t^u$.
As in all prior works on shock formation in more than one spatial dimension,
our proof allows for the possibility that the high-order energies 
might blow up like negative powers of $\upmu_{\star}$,
and, at the same time, guarantees that the energies 
become successively less singular as one reduces the number of derivatives.
Moreover, one eventually reaches a level at which the energies 
remain uniformly bounded, all the way up to the shock; 
these non-degenerate energy estimates are what allows one to improve,
via Sobolev embedding, the $L^{\infty}$ bootstrap assumptions 
(see Subsubsect.\ \ref{SSS:DATAASSUMPTIONSANDBOOTSTRAPASSUMPTIONS})
that are crucial for all aspects of the proof.
The hierarchy of energy estimates that we derive can be modeled as follows:
\begin{subequations}
\begin{align}
	\mathbb{H}_{18}^{(Fast)}(t,u),
		\,
	\mathbb{H}_{18}^{(Slow)}(t,u)	
	& \lesssim \mathring{\upepsilon}^2 \upmu_{\star}^{-11.8}(t,u),
		\label{E:INTROTOPENERGY} \\
	\mathbb{H}_{17}^{(Fast)}(t,u),
		\,
	\mathbb{H}_{17}^{(Slow)}(t,u)	
	& \lesssim \mathring{\upepsilon}^2 \upmu_{\star}^{-9.8}(t,u),
		\label{E:INTROJUSTBELOWTOPENERGY} \\
	& \cdots,
		\\
	\mathbb{H}_{13}^{(Fast)}(t,u),
		\,
	\mathbb{H}_{13}^{(Slow)}(t,u)
	& \lesssim \mathring{\upepsilon}^2 \upmu_{\star}^{-1.8}(t,u),
		\\
	\mathbb{H}_{12}^{(Fast)}(t,u),
		\,
	\mathbb{H}_{12}^{(Slow)}(t,u)
	& \lesssim \mathring{\upepsilon}^2,
		\label{E:FIRSTNONDEGENERATEENERGY} \\
	& \cdots 
		\\
	\mathbb{H}_1^{(Fast)}(t,u),
		\,
	\mathbb{H}_1^{(Slow)}(t,u)
	& \lesssim \mathring{\upepsilon}^2.
		\label{E:INTROLOWESTENERGY}
\end{align}
\end{subequations}
The estimates \eqref{E:INTROTOPENERGY}-\eqref{E:INTROLOWESTENERGY}
capture in spirit the energy estimates that we prove in this article; 
we refer the reader to Prop.~\ref{P:MAINAPRIORIENERGY} for the precise
statements. The precise numerology behind the hierarchy \eqref{E:INTROTOPENERGY}-\eqref{E:INTROLOWESTENERGY} 
is intricate, but here are the main ideas:
\textbf{i)} The top-order blowup-exponent of $11.8$ found on RHS~\eqref{E:INTROTOPENERGY}
is tied to certain universal structural constants in the equations 
(such as the constant $A$ appearing in \eqref{E:INTROHARDESTERRRORINTEGRAL} below)
and is close to optimal by our approach; for example, if one considered data belonging to the Sobolev space $H^{100}$,
then our approach would only allow us to conclude 
that the energy controlling $100$ derivatives of $\Psi$
might blow up at the rate $\upmu_{\star}^{-11.8}(t,u)$.
\textbf{ii)} The fact that the estimates become less singular by precisely two powers of 
$\upmu_{\star}$ at each step in the descent seems to be fundamental. The root of this 
phenomenon is the following: an integration in time of $\upmu_{\star}^{-B}(t,u)$ reduces the strength
of the singularity by one degree to $\upmu_{\star}^{1-B}(t,u)$; see \eqref{E:INTEGRATEMUSTARINTIMELESSSINGULAR}.
\textbf{iii)} To control error terms, it is convenient for the solutions 
to be such that slightly more than half of the energies are uniformly bounded up to the shock. 
Then, when we are bounding 
error term products in $L^2$, we can exploit the fact that
all but at-most-one factor in the product 
is uniformly bounded in $L^{\infty}$ up to the shock.

We now discuss some of the main ideas
behind deriving the energy estimate hierarchy
\eqref{E:INTROTOPENERGY}-\eqref{E:INTROLOWESTENERGY}.
By far, the most difficult integrals to estimate
are the ones on RHS~\eqref{E:PSIMODELENERGYINEQUALITY} 
involving $\GeoAng^N \mytr \upchi$
and some related ones that we have not displayed
but that create similar difficulties.
In the case of the scalar wave equations treated in \cite{jSgHjLwW2016},
these difficult integrals were handled via extensions 
of techniques developed in \cite{dC2007}.
In the present article, 
in treating these integrals,
we encounter some new terms
stemming from interactions between the fast wave, the eikonal function, and the slow wave; 
see the proof outline of Prop.~\ref{P:KEYPOINTWISEESTIMATE} for further discussion on this point.
Later in this section, we will say a few words about these difficult integrals, 
but we will not discuss them in detail here since the most challenging 
aspects of these integrals were handled in \cite{jSgHjLwW2016}. Instead, in this subsubsection, 
we focus on describing the influence of the $\GeoAng^N \bigslow$-involving integrals 
from RHSs \eqref{E:PSIMODELENERGYINEQUALITY}-\eqref{E:SLOWWAVEMODELENERGYINEQUALITY}
on the a priori estimates \eqref{E:INTROTOPENERGY}-\eqref{E:INTROLOWESTENERGY}. 
Roughly, these integrals account for the self-interactions of the slow wave
and the interaction of the slow wave with the fast wave up to the shock,
which are the main new kinds of interactions accounted for in this paper;
the remaining error integrals on RHS~\eqref{E:PSIMODELENERGYINEQUALITY}
involve self-interactions of $\Psi$
and the interaction of $\Psi$ with the eikonal function,
which were handled in \cite{jSgHjLwW2016},
as well as interactions of $\bigslow$ with the below-top-order derivatives of the eikonal function,
which are relatively easy to treat (see Lemma~\ref{L:STANDARDPSISPACETIMEINTEGRALS}).
Our main goal at present is the following:
\begin{quote}
\emph{We will sketch why the $\GeoAng^N \bigslow$-involving integrals
from RHSs \eqref{E:PSIMODELENERGYINEQUALITY}-\eqref{E:SLOWWAVEMODELENERGYINEQUALITY}
create only harmless exponential growth in
the energies}, which is allowable within our approach in view of our
sufficiently good guess about the time of first shock formation
(see the discussion following equation \eqref{E:INTROCRITICALBLOWUPTIMEFACTOR})
and our assumed smallness of $\mathring{\upepsilon}$.
\end{quote}
In particular, if the $\GeoAng^N \bigslow$-involving integrals were 
the only types of error integrals
that one encountered in the energy estimates, 
then at all derivatives levels, the energies would remain uniformly bounded 
by $\lesssim \mathring{\upepsilon}^2$ up to the shock.
This shows that the interaction between the two waves is in some sense weak near the shock,
\emph{even though the RHS of the slow wave equation \eqref{E:SLOWWAVE}
contains $\partial \Psi$ source terms that blow up at the shock.} 
The weakness of the interaction is very much a ``PDE effect'' 
(it is not easily modeled by ODE inequalities)
that is detectable only because our energies
\eqref{E:INTROPSIENERGY}-\eqref{E:INTROSLOWWAVEENERGY}
contain non-$\upmu$-degenerate $\mathcal{P}_u^t$ integrals
and spacetime integrals.

To proceed with our sketch, we let
\[
\overline{\mathbb{H}}_N^{(Fast)}(t,u)
:=
\sup_{(t',u') \in [0,t] \times [0,u]}\mathbb{H}^{(Fast)}_N(t',u'),
\qquad
\overline{\mathbb{H}}_N^{(Slow)}(t,u)
:=
\sup_{(t',u') \in [0,t] \times [0,u]} \mathbb{H}^{(Slow)}_N(t',u').
\]
We then note the following simple consequence of
\eqref{E:INTROPSIENERGY}-\eqref{E:INTROSLOWWAVEENERGY},
\eqref{E:PSIMODELENERGYINEQUALITY}-\eqref{E:SLOWWAVEMODELENERGYINEQUALITY},
and Young's inequality,
where we are ignoring the 
$\GeoAng^N \mytr \upchi$-involving integral
on RHS~\eqref{E:PSIMODELENERGYINEQUALITY}:
\begin{align}
	\overline{\mathbb{H}}_N^{(Fast)}(t,u)
	& \leq
	  C \mathring{\upepsilon}^2
		+
		C
		\int_{u'=0}^u
		  \overline{\mathbb{H}}_N^{(Fast)}(t,u')
		\, d u'
		+
		C
		\int_{u'=0}^u
		  \overline{\mathbb{H}}_N^{(Slow)}(t,u')
		\, d u'
		+ 
		\cdots,
			\label{E:GRONWALLREADYPSIMODELENERGYINEQUALITY} \\
	\overline{\mathbb{H}}_N^{(Slow)}(t,u)
	& \leq
		C \mathring{\upepsilon}^2
		+
		C
		\int_{u'=0}^u
		  \overline{\mathbb{H}}_N^{(Fast)}(t,u')
		\, d u'
		+
		C
		\int_{t'=0}^t
		  \overline{\mathbb{H}}_N^{(Slow)}(t',u)
		\, d t'
		+
		\cdots.
		\label{E:GRONWALLREADYSLOWWAVEMODELENERGYINEQUALITY}
\end{align}
Then from \eqref{E:GRONWALLREADYPSIMODELENERGYINEQUALITY}-\eqref{E:GRONWALLREADYSLOWWAVEMODELENERGYINEQUALITY}
and Gronwall's inequality in $t$ and $u$, we conclude that as long as the solution
exists classically, we have the following estimates 
for $(t,u) \in [0,2 \TranminusdatasizeWithFactor^{-1}] \times [0,1]$:
\begin{align} \label{E:INTROEASYTERMSTOPENERGYGRONWALLED}
	\overline{\mathbb{H}}_N^{(Fast)}(t,u)
	& \leq C \mathring{\upepsilon}^2,
	&&
	\overline{\mathbb{H}}_N^{(Slow)}(t,u)
	\leq C \mathring{\upepsilon}^2,
\end{align}
where, as we have mentioned, constants $C$ are allowed to depend on 
$\TranminusdatasizeWithFactor^{-1}$,
the approximate time of first shock formation
(see \eqref{E:INTROCRITICALBLOWUPTIMEFACTOR} and the discussion below it).

As we mentioned above, in reality, we are not able to prove
the non-degenerate estimate \eqref{E:INTROEASYTERMSTOPENERGYGRONWALLED} for large $N$ because the
$\GeoAng^N \mytr \upchi$-involving integral on
RHS~\eqref{E:PSIMODELENERGYINEQUALITY} leads to a much worse
a priori energy estimate in the top-order case
$N=18$. This phenomenon is explained in detail in \cite{jSgHjLwW2016} in the case of 
a homogeneous scalar covariant wave equation $\square_{g(\Psi)} \Psi = 0$; here we only describe the 
changes in the analysis of $\GeoAng^N \mytr \upchi$ compared to \cite{jSgHjLwW2016},
the new feature being the presence of the semilinear coupling terms on RHS~\eqref{E:FASTWAVE}.
Let us first describe the estimate.
Specifically, 
due to the difficult regularity theory of the eikonal function,\footnote{Recall that 
$\GeoAng^N \mytr \upchi \sim \partial^{N+2} u$.}
the following term is in fact present
on RHS~\eqref{E:GRONWALLREADYPSIMODELENERGYINEQUALITY} in the case $N=18$
(see below for more details):
\begin{align} \label{E:INTROHARDESTERRRORINTEGRAL}
		A
		\int_{t'=0}^t
			\left(
			\sup_{\Sigma_{t'}^u}
			\left|
				\frac{\Lunit \upmu}{\upmu}
			\right|
			\right)
			\mathbb{H}^{(Fast)}_{18}(t',u)
		\, dt'
			+ 
			\cdots,
\end{align}
where $A$ is a universal positive constant that is
\textbf{independent of the structure of the nonlinearities and the number of times that the equations are commuted}
and $\cdots$ denotes similar or less degenerate error terms.
By itself, the error term \eqref{E:INTROHARDESTERRRORINTEGRAL}
would change the a priori estimate 
in a way that can roughly be described as follows:
\begin{align} \label{E:INTROALLTERMSTOPENERGYGRONWALLED}
	\overline{\mathbb{H}}^{(Fast)}_{18}(t,u)
	& \leq C \mathring{\upepsilon}^2 \upmu_{\star}^{-A}(t,u),
	&&
	\overline{\mathbb{H}}^{(Slow)}_{18}(t,u)
	\leq C \mathring{\upepsilon}^2 \upmu_{\star}^{-A}(t,u).
\end{align}
The factor of $A$ on RHS~\eqref{E:INTROALLTERMSTOPENERGYGRONWALLED} is 
partially responsible for the magnitude 
of the top-order blowup-exponent $11.8$ on RHS~\eqref{E:INTROTOPENERGY},
though we stress that in a detailed proof, one encounters other types of
degenerate error integrals that further enlarge the blowup-exponents.
Throughout the paper, 
we indicate the ``important'' structural constants that substantially contribute to the blowup-exponents
by drawing boxes around them
(see, for example, the RHS of the estimates of Prop.~\ref{P:TANGENTIALENERGYINTEGRALINEQUALITIES}).
The derivation of \eqref{E:INTROALLTERMSTOPENERGYGRONWALLED} 
as a consequence of the 
presence of the error term \eqref{E:INTROHARDESTERRRORINTEGRAL}
is based on a difficult Gronwall estimate that requires
having sharp information about the way that $\upmu \to 0$
as well as the behavior of $\Lunit \upmu$.
Roughly, one can show 
(see Subsubsect.\ \ref{SSS:FORMATIONOFSHOCK} for a discussion of the main ideas)
that
\begin{align} \label{E:UPMUSTARSCHEMATIC}
	\upmu_{\star}(t,u)
	\sim 1 - \TranminusdatasizeWithFactor t,
	\qquad 
	\sup_{\Sigma_{t'}^u} |\Lunit \upmu| \sim \TranminusdatasizeWithFactor,
\end{align}
from which one can obtain \eqref{E:INTROALLTERMSTOPENERGYGRONWALLED} by 
Gronwall's inequality (where it is important that the same factor $\TranminusdatasizeWithFactor$
defined in \eqref{E:INTROCRITICALBLOWUPTIMEFACTOR}
appears in both expressions in \eqref{E:UPMUSTARSCHEMATIC}).

We now explain the origin of the difficult error integral \eqref{E:INTROHARDESTERRRORINTEGRAL}
and its connection to the following aforementioned difficulty: that of avoiding a loss of a derivative 
at the top order when bounding the error term $\GeoAng^N \mytr \upchi$ in $L^2$.
To proceed, we first note that
using geometric decompositions,\footnote{By this, we essentially mean Raychaudhuri's 
identity for the component $\mbox{\upshape Ric}_{\Lunit \Lunit}$
of the Ricci curvature of the metric $g(\Psi)$. \label{FN:RAYCH}} 
one obtains the following evolution equation
for $\GeoAng^N \mytr \upchi$, expressed here in schematic form
(see the proof outline of Prop.~\ref{P:KEYPOINTWISEESTIMATE} for further discussion):
\begin{align} \label{E:TRCHISCHEMATICEVOLUTION}
	\Lunit \GeoAng^N \mytr \upchi
	& = \Lunit \Tanset^{N+1} \Psi
		+ \angLap \Tanset^N \Psi
		+ l.o.t.,
\end{align}
where $\angLap$ is the covariant Laplacian induced on $\ell_{t,u}$
by $g(\Psi)$ and $l.o.t.$ are lower-order (in the sense of the number of derivatives involved) terms 
that do \emph{not} involve the slow wave variable $\bigslow$. 
In the top-order case $N=18$, equation \eqref{E:TRCHISCHEMATICEVOLUTION}
is not useful in its current form because the RHS involves one more
derivative of $\Psi$ than we can control by 
commuting equation \eqref{E:FASTWAVE} $18$ times and deriving 
energy estimates.
To overcome this difficulty, we follow the following strategy,
whose blueprint originates in the proof of the stability of Minkowski spacetime \cite{dCsK1993}
and that was later used in the context of low-regularity well-posedness 
for wave equations \cite{sKiR2003} and finally in the context of 
shock formation \cite{dC2007}:
one can decompose the fast wave equation \eqref{E:FASTWAVE} 
using equation \eqref{E:LONOUTSIDEGEOMETRICWAVEOPERATORFRAMEDECOMPOSED}
and then algebraically replace $\upmu \angLap \Tanset^N \Psi$ 
(note the crucial factor of $\upmu$ and see \eqref{E:LONOUTSIDEGEOMETRICWAVEOPERATORFRAMEDECOMPOSED})
with 
$\Lunit \Rad \Tanset^N \Psi + \Lunit (\upmu \Tanset^{N+1} \Psi) + \cdots$ 
(written in schematic form, where $\cdots$ denotes terms depending on $\leq N + 1$ derivatives of $\Psi$)
plus the influence of the semilinear inhomogeneous terms on RHS~\eqref{E:FASTWAVE}, that is,
plus $\GeoAng^N (\upmu \times \mbox{RHS~\eqref{E:FASTWAVE}})$.
We can then bring these perfect $\Lunit$-derivative terms over to 
LHS~\eqref{E:TRCHISCHEMATICEVOLUTION} to obtain an evolution
equation for a ``modified'' version of $\GeoAng^N \mytr \upchi$,
denoted here by {\textsf Modified},\footnote{In Prop.~\ref{P:KEYPOINTWISEESTIMATE},
we denote this modified quantity by $\upchifullmodarg{\GeoAng^N}$.}
which can be written in the following schematic form:\footnote{More precisely,
as we describe in our proof outline of Prop.~\ref{P:KEYPOINTWISEESTIMATE},
to control {\textsf Modified}, we first multiply its evolution equation by an integrating
factor denoted by $\iota$.}
\begin{align} \label{E:MODIFIEDTRCHISCHEMATICEVOLUTION}
	\Lunit 
	\left\lbrace
		\overbrace{
		\upmu \GeoAng^N \mytr \upchi
		+
		\Rad \Tanset^N \Psi
		+
		\upmu \Tanset^{N+1} \Psi
		}^{\mbox{\textsf Modified}}
	\right\rbrace
	& = 
		\GeoAng^N (\upmu \times \mbox{RHS~\eqref{E:FASTWAVE}})
		+ \cdots;
\end{align}
see \eqref{E:TOPORDERMODIFIEDTRCHI} for the precise definition of the modified quantity.
We stress that if we had allowed $g = g(\Psi,\slow)$ instead of $g=g(\Psi)$, 
then our proof of \eqref{E:MODIFIEDTRCHISCHEMATICEVOLUTION} 
would have broken down in the sense that typically, we would not  
have been able to derive an analogous evolution equation
featuring terms with allowable regularity on the RHS.

To handle the first integral on RHS~\eqref{E:PSIMODELENERGYINEQUALITY},
we now algebraically replace
$
\displaystyle
\GeoAng^N \mytr \upchi
=
\frac{1}{\upmu}\mbox{\textsf Modified}
- 
\frac{1}{\upmu} \Rad \Tanset^N \Psi
- 
\Tanset^{N+1} \Psi
$,
which leads to three error integrals.
After a bit of additional work, one finds that the integral corresponding to 
the second term
$
\displaystyle
- 
\frac{1}{\upmu} \Rad \Tanset^N \Psi
$
leads to the integral in \eqref{E:INTROHARDESTERRRORINTEGRAL}.
The error integral corresponding to the first term
$
\displaystyle
\frac{1}{\upmu}\mbox{\textsf{Modified}}
$
leads to a similar but more difficult error integral that we treat
in inequality \eqref{E:FIRSTSTEPDIFFICULTINTEGRALBOUND} and the discussion
just below it
(see also the proof outline of Prop.\ \ref{P:KEYPOINTWISEESTIMATE}). 
Note that in view of RHS~\eqref{E:MODIFIEDTRCHISCHEMATICEVOLUTION},
the $L^2$ estimates for $\textsf{Modified}$
are coupled to the semilinear inhomogeneous terms on the right-hand side of the fast wave equation \eqref{E:FASTWAVE},
which involve the slow wave variable $\bigslow$. That is, our reliance on a modified version of $\mytr \upchi$
leads to the coupling of the slow wave variable
to the top-order estimates for the null mean curvature of the $\mathcal{P}_u$.
However, due to the presence of the factor of $\upmu$ on RHS~\eqref{E:MODIFIEDTRCHISCHEMATICEVOLUTION}, 
the coupling terms are weak and are among the easier error terms to treat
(see the proof outline of Lemma~\ref{L:DIFFICULTTERML2BOUND}).
The error integral corresponding to 
the third term
$- \Tanset^{N+1} \Psi$ 
from the above algebraic decomposition
is much easier to treat 
and is handled in Lemma~\ref{L:STANDARDPSISPACETIMEINTEGRALS}.

We have now sketched why the top-order quantity
$\mathbb{H}^{(Fast)}_{18}$ from \eqref{E:INTROTOPENERGY}
can blow up like $\mathring{\upepsilon}^2 \upmu_{\star}^{-11.8}$
as $\upmu_{\star} \to 0$.
To understand why the same can occur for 
$\mathbb{H}^{(Slow)}_{18}$, we simply consider the 
integral
$
\displaystyle
\int_{u'=0}^u
		  \overline{\mathbb{H}}^{(Fast)}_{18}(t,u')
		\, d u'
$
on RHS~\eqref{E:GRONWALLREADYSLOWWAVEMODELENERGYINEQUALITY} (in the case $N=18$);
the integration with respect to $u'$ does not ameliorate the strength 
of the singularity and thus
$\mathbb{H}^{(Slow)}_{18}$ can blow up at the same rate as
$\mathbb{H}^{(Fast)}_{18}$.

It remains for us to explain why, in the energy hierarchy
\eqref{E:INTROTOPENERGY}-\eqref{E:INTROLOWESTENERGY},
the energy estimates become successively less singular
with respect to powers of $\upmu_{\star}^{-1}$ at each stage in the descent.
The main ideas are the same as in all prior works on shock formation in more than one spatial dimension.
Specifically, at each level of derivatives,
the strength of the singularity is driven by
the $\GeoAng^N \mytr \upchi$-involving integral on
RHS~\eqref{E:PSIMODELENERGYINEQUALITY} and a few other integrals similar to it.
The key point is that below top order, we can estimate these integrals 
in a more direct fashion by 
integrating the RHS of
the evolution equation \eqref{E:TRCHISCHEMATICEVOLUTION} with respect to time
(recall that 
$\displaystyle
\Lunit = \frac{\partial}{\partial t}
$)
to obtain an estimate for
$\| \GeoAng^N \mytr \upchi \|_{L^2(\Sigma_t^u)}$. Such an approach
involves the loss of one derivative (which is permissible below top order)
and thus couples the below-top-order energy estimates to the top-order one.
The gain is that the resulting error integrals are less singular
with respect to powers of $\upmu_{\star}^{-1}$ compared to the top-order integral \eqref{E:INTROHARDESTERRRORINTEGRAL}.
We now provide a few more details in the just-below-top-order case $N=17$
to illustrate the main ideas behind this ``descent scheme.'' The main idea
is that the strength of the singularity is reduced with each integration in time
due to the following estimates,\footnote{The estimates stated in \eqref{E:INTEGRATEMUSTARINTIMELESSSINGULAR}
are a quasilinear version
of the model estimates $\int_{s=t}^1 s^{-B} \, ds \lesssim t^{1 - B}$
and $\int_{s=t}^1 s^{-9/10} \, ds \lesssim 1$,
where $B>1$ and $0 < t < 1$ in the model estimates
and $t=0$ represents the time of first vanishing of $\upmu_\star$.} 
valid for constants $B > 1$
(see Prop.~\ref{P:MUINVERSEINTEGRALESTIMATES} for the precise statements):
\begin{align} \label{E:INTEGRATEMUSTARINTIMELESSSINGULAR}
	\int_{t'=0}^t
		\frac{1}{\upmu_{\star}^B(t',u)}
	\, dt'
	\lesssim 
	\upmu_{\star}^{1-B}(t',u),
	\qquad
	\int_{t'=0}^t
		\frac{1}{\upmu_{\star}^{9/10}(t',u)}
	\, dt'
	\lesssim 
	1,
\end{align}
which follow from having sharp information about the
way in which $\upmu_{\star} \to 0$ in time
(see \eqref{E:UPMUSTARSCHEMATIC}).
We now explain the role that the estimates
\eqref{E:INTEGRATEMUSTARINTIMELESSSINGULAR} play in the descent scheme.
Using the above strategy and
\eqref{E:GRONWALLREADYPSIMODELENERGYINEQUALITY},
we obtain
\begin{align} \label{E:JUSTBELOWTOPORDER}
	\overline{\mathbb{H}}^{(Fast)}_{17}(t,u)
	& \leq 
	  C \mathring{\upepsilon}^2
		+
		\int_{t'=0}^t
		  \frac{1}{\upmu_{\star}^{1/2}(t',u)}
		  \sqrt{\overline{\mathbb{H}}^{(Fast)}_{17}}(t',u)
		  \int_{s=0}^{t'}
		  	\frac{1}{\upmu_{\star}^{1/2}(s,u)}
		  	\sqrt{\overline{\mathbb{H}}^{(Fast)}_{18}}(s,u)
		  \, ds
		\, d t'
	+ \cdots,
\end{align}
where $\cdots$ denotes easier error terms,
the $\sqrt{\overline{\mathbb{H}}^{(Fast)}_{18}}(s,u)$ term corresponds to the loss of one derivative
that one encounters in estimating $\| \GeoAng^{17} \mytr \upchi \|_{L^2(\Sigma_{t'}^u)}$,
and the factors of 
$
\displaystyle
\frac{1}{\upmu_{\star}^{1/2}}
$
arise from the $\upmu$-weights found in the energies
\eqref{E:INTROPSIENERGY}
along $\Sigma_t^u$ hypersurfaces.
In reality, when deriving a priori estimates, one must treat
$\overline{\mathbb{H}}^{(Fast)}_{18}$, $\overline{\mathbb{H}}^{(Fast)}_{17}$, $\cdots$ as 
unknowns in a coupled system of integral inequalities
for which one derives a priori estimates via a complicated Gronwall argument.
Here, instead of providing the lengthy technical details behind the Gronwall argument
(which, as we describe in our proof outline of Prop.~\ref{P:MAINAPRIORIENERGY},
was essentially carried out already in the proof of \cite{jSgHjLwW2016}*{Proposition 14.1}),
to illustrate the main ideas,
we demonstrate only the \emph{consistency} of the integral inequality \eqref{E:JUSTBELOWTOPORDER}
with the less singular estimate \eqref{E:INTROJUSTBELOWTOPENERGY}
(less singular compared to \eqref{E:INTROTOPENERGY}, that is).
Specifically, inserting the estimates
\eqref{E:INTROTOPENERGY}-\eqref{E:INTROJUSTBELOWTOPENERGY}
into the double time integral on RHS~\eqref{E:JUSTBELOWTOPORDER}
and using the first of \eqref{E:INTEGRATEMUSTARINTIMELESSSINGULAR} two times,
we obtain
\begin{align} \label{E:CONSISTENCYJUSTBELOWTOPORDER}
	\overline{\mathbb{H}}^{(Fast)}_{17}(t,u)
	& \leq 
	  C \mathring{\upepsilon}^2
		+
		\mathring{\upepsilon}^2
		\int_{t'=0}^t
		  \frac{1}{\upmu_{\star}^{5.4}(t',u)}
		  \int_{s=0}^{t'}
		  	\frac{1}{\upmu_{\star}^{6.4}(s,u)}
		  \, ds
		\, d t'
	+ \cdots
		\\
	& \leq 
	  C \mathring{\upepsilon}^2
		+
		\mathring{\upepsilon}^2
		\int_{t'=0}^t
		  \frac{1}{\upmu_{\star}^{10.8}(t',u)}
		 \, d t'
	+ \cdots
		\notag
			\\
	& \leq 
	  C \mathring{\upepsilon}^2
		+
		\mathring{\upepsilon}^2
		\frac{1}{\upmu_{\star}^{9.8}(t,u)}
		+
		\cdots.
		\notag
\end{align}
Thus, the strength of the singularity on RHS~\eqref{E:CONSISTENCYJUSTBELOWTOPORDER}
is at least consistent with the estimate \eqref{E:INTROJUSTBELOWTOPENERGY}
that one aims to prove. 
Subsequent to obtaining the estimate \eqref{E:INTROJUSTBELOWTOPENERGY}, one can continue 
the descent scheme, with the energies becoming successively less singular at each step in the descent.
Eventually, one reaches a level \eqref{E:FIRSTNONDEGENERATEENERGY} at which, 
thanks to the second estimate in \eqref{E:INTEGRATEMUSTARINTIMELESSSINGULAR},
one can show that the energies remain bounded all the way up to the shock.
Finally, from the non-degenerate energy estimates
\eqref{E:FIRSTNONDEGENERATEENERGY}-\eqref{E:INTROLOWESTENERGY}
and Sobolev embedding (see Lemma~\ref{L:SOBOLEV} and Cor.\ \ref{C:IMPROVEDFUNDAMENTALLINFTYBOOTSTRAPASSUMPTIONS}),
one can recover the non-degenerate
$L^{\infty}$ estimates described in Subsubsect.\ \ref{SSS:DATAASSUMPTIONSANDBOOTSTRAPASSUMPTIONS},
which, in a detailed proof, one needs to control various error terms
and to show that $\upmu$ vanishes in finite time
(as we outlined in Subsubsect.\ \ref{SSS:FORMATIONOFSHOCK}).

\section{The remaining ingredients in the geometric setup}
\label{S:GEOMETRICSETUP}
When outlining our proof in Subsect.\ \ref{SS:OVERVIEWOFPROOF}, we defined some basic
geometric objects that we use in studying the solution.
In this section,
we construct most of the remaining such objects,
exhibit their basic properties,
and give rigorous definitions of most of the
quantities that we informally referred to in Sect.\ \ref{S:INTRO}.

\subsection{Additional constructions related to the eikonal function}
\label{SS:EIKONALFUNCTIONANDRELATED}
We recall that we constructed the eikonal function in
Def.~\ref{D:INTROEIKONAL}.
To supplement the coordinates $t$ and $u$,
we now construct a local coordinate function on the tori $\ell_{t,u}$
(which are defined in Def.~\ref{D:HYPERSURFACESANDCONICALREGIONS}).
We remark that the coordinate $\vartheta$ plays only a minor role in our analysis.

\begin{definition}[\textbf{Geometric torus coordinate}]
\label{D:GEOMETRICTORUSCOORDINATE}
We define the geometric torus coordinate $\vartheta$
to be the solution to the following 
initial value problem for a transport equation:
\begin{align} \label{E:APPENDIXGEOMETRICTORUSCOORD}
	(g^{-1})^{\alpha \beta}
	\partial_{\alpha} u \partial_{\beta} \vartheta
	& = 0,
		\\
	\vartheta|_{\Sigma_0}
	& = x^2, \label{E:APPENDIXGEOMETRICTORUSCOORDINITIALCOND}
\end{align}
where $x^2$ is the (locally defined) Cartesian coordinate function on $\mathbb{T}$.
\end{definition}

\begin{definition}[\textbf{Geometric coordinates and partial derivatives}]
	\label{D:GEOMETRICCOORDINATES}
	We refer to $(t,u,\vartheta)$ as the geometric coordinates,
	where $t$ is the Cartesian time coordinate.
	We denote the corresponding geometric coordinate partial derivative vectorfields by
	\begin{align} \label{E:GEOCOORDPARTDERIVVECTORFIELDS}
		\left\lbrace 
			\frac{\partial}{\partial t}, 
			\frac{\partial}{\partial u},
			\CoordAng := \frac{\partial}{\partial \vartheta}
		\right\rbrace.
	\end{align}
\end{definition}

\begin{remark}[\textbf{Remarks on} $\CoordAng$]
	\label{R:REMARKSONCOORDANG}
	Note that $\CoordAng$ is positively oriented and globally defined
	even though $\vartheta$ is only locally defined along $\ell_{t,u}$.
\end{remark}

\subsection{Important vectorfields, the rescaled frame, and the unit frame}
\label{SS:FRAMEANDRELATEDVECTORFIELDS}
In this subsection, we construct some vectorfields that we use in our analysis
and exhibit their basic properties.
We start by defining the gradient vectorfield of the eikonal function:
\begin{align} \label{E:LGEOEQUATION}
	\Lgeo^{\nu} 
	& := - (g^{-1})^{\nu \alpha} \partial_{\alpha} u.
\end{align}
It is straightforward to see that $\Lgeo$ is future-directed\footnote{By a future-directed
vectorfield $V$, we mean that $V^0 > 0$, where $V^0$ is the ``$0$'' Cartesian component of $V$.
Similarly, by a future-directed one-form $\xi$, we mean that its $g$-dual vectorfield,
which has the Cartesian components
$(g^{-1})^{\nu \alpha} \xi_{\alpha}$, is future-directed.
We analogously define past-directed vectorfields and one-forms
by replacing ``$V^0 > 0$'' with ``$V^0 < 0$,'' etc.} 
and $g$-null:
\begin{align}  \label{E:LGEOISNULL}
g(\Lgeo,\Lgeo) 
:= g_{\alpha \beta} \Lgeo^{\alpha} \Lgeo^{\beta}
= 0.
\end{align}
Moreover, by differentiating 
the eikonal equation \eqref{E:INTROEIKONAL}
with $\D^{\nu} := (g^{-1})^{\nu \alpha} \D_{\alpha}$, 
where $\D$ is the Levi--Civita connection of $g$,
and using that $\D_{\alpha} \D_{\beta} u = \D_{\beta} \D_{\alpha} u$,
we infer that $\Lgeo$ is geodesic:
\begin{align} \label{E:LGEOISGEODESIC}
	\D_{\Lgeo} \Lgeo & = 0.
\end{align}
In addition, it is straightforward to see that
$\Lgeo$ is $g$-orthogonal to the characteristics $\mathcal{P}_u$.
Hence, the $\mathcal{P}_u$ have $g$-null normals,
which justifies our use of the terminology
\emph{null hypersurfaces} in referring to them.

It is convenient to work with a rescaled version of
$\Lgeo$ that we denote by $\Lunit$.  
Our proof will show that the Cartesian component functions 
$\lbrace \Lunit^{\alpha} \rbrace_{\alpha = 0,1,2}$ 
remain uniformly bounded up to the shock.

\begin{definition}[\textbf{Rescaled null vectorfield}]
	\label{D:LUNITDEF}
	Let $\upmu$ be the inverse foliation density from Def.\ \ref{D:FIRSTUPMU}.
	We define 
	\begin{align} \label{E:LUNITDEF}
		\Lunit
		& := \upmu \Lgeo.
	\end{align}
\end{definition}
Note that $\Lunit$ is $g$-null since $\Lgeo$ is.
We also note that by \eqref{E:APPENDIXGEOMETRICTORUSCOORD}, we have
$\Lunit \vartheta = 0$.

We now define the vectorfields $\Radunit$, 
$\Rad$, 
and $\Timenormal$,
which are transversal to the characteristics
$\mathcal{P}_u$. For our subsequent analysis, 
it is important that $\Rad$ is rescaled by a factor of $\upmu$.

\begin{definition}[$\Radunit$, $\Rad$, \textbf{and} $\Timenormal$]
	\label{D:RADANDXIDEFS}
	We define $\Radunit$ to be the unique vectorfield
	that is $\Sigma_t$-tangent, $g$-orthogonal 
	to the $\ell_{t,u}$, and normalized by
	\begin{align} \label{E:GLUNITRADUNITISMINUSONE}
		g(\Lunit,\Radunit) = -1.
	\end{align}
	We define
	\begin{align} \label{E:RADDEF}
		\Rad := \upmu \Radunit.
	\end{align}
		We define 
	\begin{align} \label{E:TIMENORMAL}
		\Timenormal 
		& := \Lunit + \Radunit.
	\end{align}
\end{definition}

In our analysis, we find it convenient to use the following two vectorfield frames.

\begin{definition}[\textbf{Two frames}]
	\label{D:RESCALEDFRAME}
	We define, respectively, the rescaled frame and the non-rescaled frame as follows:
	\begin{subequations}
	\begin{align} \label{E:RESCALEDFRAME}
		& \lbrace \Lunit, \Rad, \CoordAng \rbrace,
		&& \mbox{Rescaled frame},	\\
		&\lbrace \Lunit, \Radunit, \CoordAng \rbrace,
		&& \mbox{Non-rescaled frame}.
			\label{E:UNITFRAME}
	\end{align}
	\end{subequations}
\end{definition}

We now exhibit some basic properties of the above vectorfields.

\begin{lemma}\cite{jSgHjLwW2016}*{Lemma~2.1; \textbf{Basic properties of}  $\Lunit$, $\Radunit$, $\Rad$, \textbf{and} $\Timenormal$}
\label{L:BASICPROPERTIESOFFRAME}
The following identities hold:
\begin{subequations}
\begin{align} \label{E:LUNITOFUANDT}
	\Lunit u & = 0, \qquad \Lunit t = \Lunit^0 = 1,
		\\
	\Rad u & = 1,
	\qquad \Rad t = \Rad^0 = 0, \label{E:RADOFUANDT}
\end{align}
\end{subequations}

\begin{subequations}
\begin{align} \label{E:RADIALVECTORFIELDSLENGTHS}
	g(\Radunit,\Radunit)
	& = 1,
	\qquad
	g(\Rad,\Rad)
	= \upmu^2,
		\\
	g(\Lunit,\Radunit)
	& = -1,
	\qquad
	g(\Lunit,\Rad) = -\upmu.
	\label{E:LRADIALVECTORFIELDSNORMALIZATIONS}
\end{align}
\end{subequations}

Moreover, relative to the geometric coordinates, we have
\begin{align} \label{E:LISDDT}
	\Lunit = \frac{\partial}{\partial t}.
\end{align}

In addition, there exists an $\ell_{t,u}$-tangent vectorfield
$\NonRadialRad = \XiCoordComp \CoordAng$ 
(where $\XiCoordComp$ is a scalar function)
such that
\begin{align} \label{E:RADSPLITINTOPARTTILAUANDXI}
	\Rad 
	& = \frac{\partial}{\partial u} - \NonRadialRad
	= \frac{\partial}{\partial u} - \XiCoordComp \CoordAng.
\end{align}

The vectorfield $\Timenormal$ defined in \eqref{E:TIMENORMAL} is future-directed, $g$-orthogonal
to $\Sigma_t$ and is normalized by
\begin{align} \label{E:TIMENORMALUNITLENGTH}
	g(\Timenormal,\Timenormal) 
	& = - 1.
\end{align}
Moreover, relative to Cartesian coordinates, we have
(for $\nu = 0,1,2$):
\begin{align} \label{E:TIMENORMALRECTANGULAR}
	\Timenormal^{\nu} = - (g^{-1})^{0 \nu}.
\end{align}

	Finally, the following identities hold relative to the Cartesian coordinates 
	(for $\nu = 0,1,2$):
	\begin{align}  \label{E:DOWNSTAIRSUPSTAIRSSRADUNITPLUSLUNITISAFUNCTIONOFPSI} 
		\Radunit_{\nu} 
		& = - \Lunit_{\nu} - \delta_{\nu}^0,
		\qquad
		\Radunit^{\nu}
		= - \Lunit^{\nu}
			- (g^{-1})^{0\nu},
	\end{align}
	where $\delta_{\nu}^0$ is the standard Kronecker delta.
\end{lemma}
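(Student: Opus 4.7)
The plan is to verify each identity by direct computation, starting from the definitions and the eikonal equation, with the one genuinely nontrivial step being the derivation of $g(\Radunit,\Radunit)=1$ from the three defining conditions of $\Radunit$. Most of the claims then follow by algebraic manipulation.

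First I would dispatch the identities \eqref{E:LUNITOFUANDT}--\eqref{E:RADOFUANDT}. From \eqref{E:LUNITDEF}, \eqref{E:LGEOEQUATION}, and the eikonal equation \eqref{E:INTROEIKONAL}, I get $\Lunit u = \upmu \Lgeo^\alpha \partial_\alpha u = -\upmu (g^{-1})^{\alpha \beta} \partial_\alpha u \partial_\beta u = 0$, while $\Lunit t = \upmu \Lgeo^0 = -\upmu (g^{-1})^{0\alpha}\partial_\alpha u = 1$ by \eqref{E:FIRSTUPMU}. Since $\Radunit$ is $\Sigma_t$-tangent by definition, $\Rad t = \upmu \Radunit t = 0$; for $\Rad u$, lowering the index on $\Lgeo$ gives $\partial_\alpha u = -g_{\alpha\beta}\Lgeo^\beta$, hence $\Radunit u = -g(\Radunit,\Lgeo) = -\upmu^{-1}g(\Radunit,\Lunit) = \upmu^{-1}$, and so $\Rad u = 1$. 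The same computation yields $\Lunit\vartheta = 0$ from \eqref{E:APPENDIXGEOMETRICTORUSCOORD}, which combined with the above proves \eqref{E:LISDDT}. For \eqref{E:RADSPLITINTOPARTTILAUANDXI}, observe that in geometric coordinates any vectorfield is $V = (Vt)\partial_t + (Vu)\partial_u + (V\vartheta)\CoordAng$; applied to $\Rad$ this yields $\Rad = \partial_u + (\Rad\vartheta)\CoordAng$, and we define $\XiCoordComp := -\Rad\vartheta$, noting that $\NonRadialRad := \XiCoordComp \CoordAng$ is $\ell_{t,u}$-tangent since $\CoordAng t = \CoordAng u = 0$.

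The key step is \eqref{E:RADIALVECTORFIELDSLENGTHS}. I would introduce the candidate unit normal $\widehat{T}^\nu := -(g^{-1})^{0\nu}$, which using \eqref{E:ZEROZEROISMINUSONE} satisfies $g(\widehat{T},\widehat{T}) = -1$ and, for any $\Sigma_t$-tangent $V$, $g(\widehat{T},V) = -V^0 = -Vt = 0$; it is also future-directed since $\widehat{T}^0 = 1$. Now decompose $\Lunit = \widehat{T} + V$ with $V \in T\Sigma_t$, which is forced by $\Lunit^0 = 1 = \widehat{T}^0$. Since $\Lgeo$ is $g$-normal to the level sets $\mathcal{P}_u$ of $u$ and $\ell_{t,u}\subset \mathcal{P}_u$, we have $g(\Lunit,Y) = 0$ for every $Y \in T\ell_{t,u}$, and together with $g(\widehat{T},Y) = 0$ this gives $g(V,Y) = 0$, so $V$ lies in the same one-dimensional subspace of $T\Sigma_t$ as $\Radunit$. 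The nullity $g(\Lunit,\Lunit) = 0$ then forces $g(V,V) = 1$. Writing $\Radunit = \lambda V$, the normalization $g(\Lunit,\Radunit) = -1$ becomes $\lambda g(V,V) = \lambda = -1$, i.e.\ $\Radunit = -V$, hence $g(\Radunit,\Radunit) = g(V,V) = 1$. Multiplying by $\upmu$ yields $g(\Rad,\Rad) = \upmu^2$ and $g(\Lunit,\Rad) = -\upmu$.

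The remaining claims follow. Since $\Radunit = -V$, we get $\Timenormal = \Lunit + \Radunit = \widehat{T}$, which immediately gives \eqref{E:TIMENORMALUNITLENGTH} and \eqref{E:TIMENORMALRECTANGULAR} (future-directedness, $\Sigma_t$-orthogonality, and the Cartesian component formula). For \eqref{E:DOWNSTAIRSUPSTAIRSSRADUNITPLUSLUNITISAFUNCTIONOFPSI}, I would read off $\Radunit^\nu = \Timenormal^\nu - \Lunit^\nu = -(g^{-1})^{0\nu} - \Lunit^\nu$ from \eqref{E:TIMENORMAL} and \eqref{E:TIMENORMALRECTANGULAR}, and then lower the index via $\Timenormal_\nu = g_{\nu\alpha}\Timenormal^\alpha = -g_{\nu\alpha}(g^{-1})^{0\alpha} = -\delta_\nu^0$ to conclude $\Radunit_\nu = -\delta_\nu^0 - \Lunit_\nu$. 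The main conceptual obstacle is the decomposition argument pinning down $g(\Radunit,\Radunit)$; once that is in hand, every other identity in the lemma is either a direct coordinate computation or an immediate algebraic consequence of the definitions.
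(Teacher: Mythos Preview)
Your proof is correct. The paper does not supply its own argument here but simply cites \cite{jSgHjLwW2016}*{Lemma~2.1}; your approach, building $\widehat{T}^\nu := -(g^{-1})^{0\nu}$ first and then decomposing $\Lunit = \widehat{T} + V$ to pin down $\Radunit = -V$ and hence $g(\Radunit,\Radunit)=1$, is the standard route and matches what one finds in the cited reference.
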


\subsection{Projection tensorfields, \texorpdfstring{$G_{(Frame)}$}{frame components}, and projected Lie derivatives}
\label{SS:PROJECTIONTENSORFIELDANDPROJECTEDLIEDERIVATIVES}
Many of our constructions involve projections onto 
$\Sigma_t$ and $\ell_{t,u}$.
\begin{definition}[\textbf{Projection tensorfields}]
We define the $\Sigma_t$-projection tensorfield\footnote{In \eqref{E:SIGMATPROJECTION},
we have corrected a sign error that occurred in
\cite{jSgHjLwW2016}*{Definition 2.8}.}  
$\Sigmatproject$
and the $\ell_{t,u}$-projection tensorfield
$\Lineproject$ relative to Cartesian coordinates as follows:
\begin{subequations}
\begin{align} 
	\Sigmatproject_{\nu}^{\ \mu} 
	&:=	\delta_{\nu}^{\ \mu}
			+ 
			\Timenormal_{\nu} \Timenormal^{\mu} 
		= \delta_{\nu}^{\ \mu}
			- \delta_{\nu}^{\ 0} \Lunit^{\mu}
			- \delta_{\nu}^{\ 0} \Radunit^{\mu},
			\label{E:SIGMATPROJECTION} \\
	\Lineproject_{\nu}^{\ \mu} 
	&:=	\delta_{\nu}^{\ \mu}
			+ \Radunit_{\nu} \Lunit^{\mu} 
			+ \Lunit_{\nu} (\Lunit^{\mu} + \Radunit^{\mu}) 
		= \delta_{\nu}^{\ \mu}
			- \delta_{\nu}^{\ 0} \Lunit^{\mu} 
			+  \Lunit_{\nu} \Radunit^{\mu},
			\label{E:LINEPROJECTION}
	\end{align}
\end{subequations}
where the second equalities in \eqref{E:SIGMATPROJECTION}-\eqref{E:LINEPROJECTION} follow from
\eqref{E:TIMENORMAL} and \eqref{E:DOWNSTAIRSUPSTAIRSSRADUNITPLUSLUNITISAFUNCTIONOFPSI}.
\end{definition}

\begin{definition}[\textbf{Projections of tensorfields}]
Given any spacetime tensorfield $\xi$,
we define its $\Sigma_t$ projection $\Sigmatproject \xi$
and its $\ell_{t,u}$ projection $\Lineproject \xi$
as follows:
\begin{subequations}
\begin{align} 
(\Sigmatproject \xi)_{\nu_1 \cdots \nu_n}^{\mu_1 \cdots \mu_m}
& :=
	\Sigmatproject_{\widetilde{\mu}_1}^{\ \mu_1} \cdots \Sigmatproject_{\widetilde{\mu}_m}^{\ \mu_m}
	\Sigmatproject_{\nu_1}^{\ \widetilde{\nu}_1} \cdots \Sigmatproject_{\nu_n}^{\ \widetilde{\nu}_n} 
	\xi_{\widetilde{\nu}_1 \cdots \widetilde{\nu}_n}^{\widetilde{\mu}_1 \cdots \widetilde{\mu}_m},
		\\
(\Lineproject \xi)_{\nu_1 \cdots \nu_n}^{\mu_1 \cdots \mu_m}
& := 
	\Lineproject_{\widetilde{\mu}_1}^{\ \mu_1} \cdots \Lineproject_{\widetilde{\mu}_m}^{\ \mu_m}
	\Lineproject_{\nu_1}^{\ \widetilde{\nu}_1} \cdots \Lineproject_{\nu_n}^{\ \widetilde{\nu}_n} 
	\xi_{\widetilde{\nu}_1 \cdots \widetilde{\nu}_n}^{\widetilde{\mu}_1 \cdots \widetilde{\mu}_m}.
	\label{E:STUPROJECTIONOFATENSOR}
\end{align}
\end{subequations}
\end{definition}
We say that a spacetime tensorfield $\xi$ is $\Sigma_t$-tangent 
(respectively $\ell_{t,u}$-tangent)
if $\Sigmatproject \xi = \xi$
(respectively if $\Lineproject \xi = \xi$).
Alternatively, we say that $\xi$ is a
$\Sigma_t$ tensor (respectively $\ell_{t,u}$ tensor).

\begin{definition}[\textbf{$\ell_{t,u}$ projection notation}]
	\label{D:STUSLASHPROJECTIONNOTATION}
	If $\xi$ is a spacetime tensor, then we define
	\begin{align} \label{E:TENSORSTUPROJECTED}
		\angxi := \Lineproject \xi.
	\end{align}

	If $\xi$ is a symmetric type $\binom{0}{2}$ spacetime tensor and $V$ is a spacetime
	vector, then we define
	\begin{align} \label{E:TENSORVECTORANDSTUPROJECTED}
		\angxiarg{V} 
		& := \Lineproject (\xi_V),
	\end{align}
	where $\xi_V$ is the spacetime co-vector with 
	Cartesian components $\xi_{\alpha \nu} V^{\alpha}$, $(\nu = 0,1,2)$.
\end{definition}

We often refer to the following arrays of $\ell_{t,u}$-tangent tensorfields in our analysis.

\begin{definition}[\textbf{Components of $G$ and $G'$ relative to the non-rescaled frame}]
	\label{D:GFRAMEARRAYS}
	We define
	\[
		G_{(Frame)} := \left(G_{\Lunit \Lunit}, G_{\Lunit \Radunit}, G_{\Radunit \Radunit}, \angGarg{\Lunit}, \angGarg{\Radunit}, \angG \right)
	\]
	to be the array of components of the tensorfield $G$ defined in \eqref{E:BIGGDEF} relative 
	to the non-rescaled frame \eqref{E:UNITFRAME}.
	Similarly, we define $G_{(Frame)}'$ to be the analogous array for the tensorfield 
	$G_{(Frame)}'$ defined in \eqref{E:BIGGDEF}.
\end{definition}

Throughout, 
$\Lie_V \xi$
denotes the Lie derivative of the tensorfield $\xi$ with respect to $V$.
If $V$ and $W$ are both vectorfields,
then we often use the standard Lie bracket notation
$[V,W] := \Lie_V W$.
In our analysis, we often differentiate various quantities with the projected Lie derivatives
featured in the following definition.

\begin{definition}[$\ell_{t,u}$ \textbf{and} $\Sigma_t$-\textbf{projected Lie derivatives}]
\label{D:PROJECTEDLIE}
Given a tensorfield $\xi$
and a vectorfield $V$,
we define the $\Sigma_t$-projected Lie derivative
$\SigmatLie_V \xi$ of $\xi$
and the $\ell_{t,u}$-projected Lie derivative
$\angLie_V \xi$ of $\xi$ as follows:
\begin{align} 
	\SigmatLie_V \xi
	& := \Sigmatproject \Lie_V \xi,
		\qquad
	\angLie_V \xi
	:= \Lineproject \Lie_V \xi.
	\label{E:PROJECTIONS}
\end{align}
\end{definition}

\subsection{First and second fundamental forms, 
the trace of a tensorfield,
covariant differential operators, and the geometric torus differential}

\begin{definition}[\textbf{First fundamental forms}] \label{D:FIRSTFUND}
	We define the first fundamental form $\gt$ of $\Sigma_t$ and the 
	first fundamental form $\gsphere$ of $\ell_{t,u}$ as follows:
	\begin{align}
		\gt
		& := \Sigmatproject g,
		&&
		\gsphere
		:= \Lineproject g.
		\label{E:GTANDGSPHERESPHEREDEF}
	\end{align}

	We define the corresponding inverse first fundamental forms by raising the 
	indices with $g^{-1}$:
	\begin{align}
		(\gt^{-1})^{\mu \nu}
		& := (g^{-1})^{\mu \alpha} (g^{-1})^{\nu \beta} \gt_{\alpha \beta},
		&& 
		(\gsphere^{-1})^{\mu \nu}
		:= (g^{-1})^{\mu \alpha} (g^{-1})^{\nu \beta} \gsphere_{\alpha \beta}.
		\label{E:GGTINVERSEANDGSPHEREINVERSEDEF}
	\end{align}
\end{definition}

\begin{definition}[\textbf{$\gsphere$-trace of a tensorfield}]
If $\xi$ is a type $\binom{0}{2}$ $\ell_{t,u}$-tangent tensor,
then $\mytr \xi := (\ginversesphere)^{\alpha \beta} \xi_{\alpha \beta}$ 
denotes its $\gsphere$-trace.
\end{definition}

\begin{definition}[\textbf{Differential operators associated to the metrics}] 
\label{D:CONNECTIONS}
	We use the following notation for various differential operators associated 
	to the spacetime metric $g$ and the Riemannian metric
	$\gsphere$ induced on the $\ell_{t,u}$.
	\begin{itemize}
		\item $\D$ denotes the Levi--Civita connection of the spacetime metric $g$.
		\item $\angD$ denotes the Levi--Civita connection of $\gsphere$.
		\item If $\xi$ is an $\ell_{t,u}$-tangent one-form,
			then $\angdiv \xi$ is the scalar-valued function
			$\angdiv \xi := \ginversesphere \cdot \angD \xi$.
		\item Similarly, if $V$ is an $\ell_{t,u}$-tangent vectorfield,
			then $\angdiv V := \ginversesphere \cdot \angD V_{\flat}$,
			where $V_{\flat}$ is the one-form $\gsphere$-dual to $V$.
		\item If $\xi$ is a symmetric type $\binom{0}{2}$ 
		 $\ell_{t,u}$-tangent tensorfield, then
		 $\angdiv \xi$ is the $\ell_{t,u}$-tangent 
		 one-form $\angdiv \xi := \ginversesphere \cdot \angD \xi$,
		 where the two contraction indices in $\angD \xi$
		 correspond to the operator $\angD$ and the first index of $\xi$.
		\end{itemize}
\end{definition}

\begin{definition}[\textbf{Covariant wave operator and Laplacian}] 
	\label{D:WAVEOPERATORSANDLAPLACIANS}
	We use the following standard notation.
	\begin{itemize}
		\item $\square_g := (g^{-1})^{\alpha \beta} \D_{\alpha \beta}^2$ denotes the covariant wave operator 
			corresponding to the spacetime metric $g$.
		\item $\angLap := \ginversesphere \cdot \angD^2$ denotes the covariant Laplacian 
			corresponding to $\gsphere$. 
	\end{itemize}
\end{definition}

\begin{definition}[\textbf{Geometric torus differential}]
	\label{D:ANGULARDIFFERENTIAL}
	If $f$ is a scalar function on $\ell_{t,u}$, 
	then $\angdiff f := \angD f = \Lineproject \D f$,
	where $\D f$ is the gradient one-form associated to $f$.
\end{definition}

Def.~\ref{D:ANGULARDIFFERENTIAL} allows us to avoid potentially confusing notation
such as $\angD \Lunit^i$ by instead using
$\angdiff \Lunit^i$; the latter notation clarifies that
$\Lunit^i$ is to be viewed as a scalar Cartesian component function
under differentiation.

\begin{definition}[\textbf{Second fundamental forms}]
We define the second fundamental form $k$ of $\Sigma_t$, 
which is a symmetric type $\binom{0}{2}$ $\Sigma_t$-tangent tensorfield,
by
\begin{align} \label{E:SECONDFUNDSIGMATDEF}
	k 
	&:= \frac{1}{2} \SigmatLie_{\Timenormal} \gt.
\end{align}

	We define the null second fundamental form $\upchi$, 
	which is a symmetric type $\binom{0}{2}$ $\ell_{t,u}$-tangent tensorfield,
	by
\begin{align} \label{E:CHIDEF}
	\upchi
	& := \frac{1}{2} \angLie_{\Lunit} \gsphere.
\end{align}

\end{definition}

\subsection{Identities for various tensorfields}
\label{SS:EXPRESSIONSFORCONNECTIONCOEFFICIETNS}
We now provide some identities for various $\ell_{t,u}$ tensorfields.

\begin{lemma}\cite{jSgHjLwW2016}*{Lemma 2.3; \textbf{Alternate expressions for the second fundamental forms}}
We have the following identities:
\begin{subequations}
\begin{align} 
	\upchi_{\CoordAng \CoordAng}
	& = g(\D_{\CoordAng} \Lunit, \CoordAng),
		\label{E:CHIUSEFULID} 
	&&
	\angkdoublearg{\Radunit}{\CoordAng}
	= g(\D_{\CoordAng} \Lunit, \Radunit).
\end{align}
\end{subequations}
\end{lemma}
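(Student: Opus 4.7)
The plan is to verify both identities by short direct computations that exploit three simple observations: (i) in geometric coordinates $\Lunit = \partial/\partial t$ and $\CoordAng = \partial/\partial \vartheta$ (by \eqref{E:LISDDT} and Def.~\ref{D:GEOMETRICCOORDINATES}), so $[\Lunit, \CoordAng] = 0$; (ii) $\Radunit$ and $\CoordAng$ are both $\Sigma_t$-tangent, hence so is their commutator $[\Radunit, \CoordAng]$; and (iii) the outer projections in $\angLie$ and $\SigmatLie$ drop out when we evaluate on inputs that are already tangent to $\ell_{t,u}$ or $\Sigma_t$ respectively, and $\gsphere = \Lineproject g$ (resp.\ $\gt = \Sigmatproject g$) agrees with $g$ on such inputs. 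Together with torsion-freeness of $\D$ and metric compatibility, these facts convert Lie derivatives of $g$ into connection-derivatives of frame vectorfields.

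For the first identity, I start from $\upchi = \tfrac{1}{2}\angLie_\Lunit \gsphere$. By the observations above,
\[
2\upchi_{\CoordAng\CoordAng} = (\Lie_\Lunit g)(\CoordAng, \CoordAng) = \Lunit\bigl(g(\CoordAng, \CoordAng)\bigr) - 2 g([\Lunit, \CoordAng], \CoordAng) = \Lunit\bigl(g(\CoordAng, \CoordAng)\bigr).
\]
Metric compatibility of $\D$ then gives $\Lunit(g(\CoordAng, \CoordAng)) = 2 g(\D_\Lunit \CoordAng, \CoordAng)$, and the torsion-free identity $\D_\Lunit \CoordAng - \D_\CoordAng \Lunit = [\Lunit, \CoordAng] = 0$ allows one to replace $\D_\Lunit \CoordAng$ by $\D_\CoordAng \Lunit$, which yields $\upchi_{\CoordAng\CoordAng} = g(\D_\CoordAng \Lunit, \CoordAng)$.

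For the second identity, I start from $k = \tfrac{1}{2}\SigmatLie_\Timenormal \gt$, and since $\Radunit$, $\CoordAng$ are $\Sigma_t$-tangent,
\[
2\angkdoublearg{\Radunit}{\CoordAng} = (\Lie_\Timenormal g)(\Radunit, \CoordAng) = g(\D_\Radunit \Timenormal, \CoordAng) + g(\D_\CoordAng \Timenormal, \Radunit).
\]
Using $g(\Timenormal, \Radunit) = g(\Timenormal, \CoordAng) = 0$ and metric compatibility, each summand equals $-g(\Timenormal, \D_\Radunit \CoordAng)$ and $-g(\Timenormal, \D_\CoordAng \Radunit)$ respectively, so their difference is $-g(\Timenormal, [\Radunit, \CoordAng])$, which vanishes because $[\Radunit, \CoordAng]$ is $\Sigma_t$-tangent and $\Timenormal \perp_g \Sigma_t$. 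Hence both summands are equal, giving $\angkdoublearg{\Radunit}{\CoordAng} = g(\D_\CoordAng \Timenormal, \Radunit)$. Decomposing $\Timenormal = \Lunit + \Radunit$ and noting that $g(\D_\CoordAng \Radunit, \Radunit) = \tfrac{1}{2}\CoordAng(g(\Radunit, \Radunit)) = \tfrac{1}{2}\CoordAng(1) = 0$ by \eqref{E:RADIALVECTORFIELDSLENGTHS}, one concludes $\angkdoublearg{\Radunit}{\CoordAng} = g(\D_\CoordAng \Lunit, \Radunit)$. I do not anticipate any essential obstacle: the argument is purely a matter of unwinding the definitions of the projected Lie derivatives and using the elementary frame relations recorded in Lemma~\ref{L:BASICPROPERTIESOFFRAME}.
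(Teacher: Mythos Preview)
Your proof is correct. The paper does not supply its own proof of this lemma; it is imported verbatim from \cite{jSgHjLwW2016}*{Lemma~2.3}, so there is no in-paper argument to compare against. Your direct computation---using the coordinate commutation $[\Lunit,\CoordAng]=0$, the deformation-tensor identity $(\Lie_V g)(X,Y)=g(\D_X V,Y)+g(\D_Y V,X)$, the $\Sigma_t$-tangency of $[\Radunit,\CoordAng]$ together with $\Timenormal\perp_g\Sigma_t$, and the unit-length relation $g(\Radunit,\Radunit)=1$---is the standard verification and matches what one would find in the cited source.
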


In the next lemma, we decompose some $\ell_{t,u}$-tangent tensorfields that arise in our analysis.

\begin{lemma}\cite{jSgHjLwW2016}*{Lemma 2.13; \textbf{Expressions for} $\upzeta$ and $\angk$}
	Let $\upzeta$ be the $\ell_{t,u}$ one-form defined by
	\begin{align} \label{E:ZETADEF}
		\upzeta_{\CoordAng}
		& := \angkdoublearg{\Radunit}{\CoordAng}.
	\end{align}
	We have the following identities for
	the $\ell_{t,u}$ tensorfields 
	$\angk$ and $\upzeta$:
	\begin{subequations}
	\begin{align}
	\upzeta
	& = 
		\upmu^{-1} \upzeta^{(Trans-\Psi)}
		+ 
		\upzeta^{(Tan-\Psi)},
		 \label{E:ZETADECOMPOSED} \\
	\angk 
	& 
	= \upmu^{-1} \angk^{(Trans-\Psi)}
	+ 
	\angk^{(Tan-\Psi)},
		\label{E:ANGKDECOMPOSED}
	\end{align}
\end{subequations}
where
\begin{subequations}
	\begin{align}
		\upzeta^{(Trans-\Psi)} 
		& :=
			- \frac{1}{2} \angGarg{\Lunit} \Rad \Psi,
			\label{E:ZETATRANSVERSAL} \\
		\angk^{(Trans-\Psi)} 
		& := \frac{1}{2} \angG \Rad \Psi,
			\label{E:KABTRANSVERSAL}
	\end{align}
\end{subequations}
and
\begin{subequations}
\begin{align}
	\upzeta^{(Tan-\Psi)}
	& := \frac{1}{2} \angGarg{\Radunit} \Lunit \Psi
			- \frac{1}{2} G_{\Lunit \Radunit} \angdiff \Psi
			- \frac{1}{2} G_{\Radunit \Radunit} \angdiff \Psi,
		\label{E:ZETAGOOD} \\
	\angk^{(Tan-\Psi)} 
	& := \frac{1}{2} \angG \Lunit \Psi
			- \frac{1}{2} \angGarg{\Lunit} \otimes \angdiff \Psi
			- \frac{1}{2} \angdiff \Psi \otimes \angGarg{\Lunit} 
			- \frac{1}{2} \angGarg{\Radunit} \otimes \angdiff \Psi
			- \frac{1}{2} \angdiff \Psi \otimes \angGarg{\Radunit}.
			\label{E:KABGOOD}
\end{align}
\end{subequations}
	
\end{lemma}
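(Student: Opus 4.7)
Both identities are local computations of Levi-Civita connection coefficients for the metric $g = g(\Psi)$, using the formulas $\angk_{\CoordAng \CoordAng} = g(\D_{\CoordAng} \Timenormal, \CoordAng)$ (standard identity for the second fundamental form of $\Sigma_t$ with unit normal $\Timenormal$, which follows from \eqref{E:SECONDFUNDSIGMATDEF} and $g(\Timenormal, \CoordAng) = 0$) together with the identity $\angk_{\Radunit \CoordAng} = g(\D_{\CoordAng} \Lunit, \Radunit)$ from \eqref{E:CHIUSEFULID}. I would expand each of these in Cartesian coordinates using the Christoffel symbols of $g(\Psi)$, which take the explicit form
\[
\Gamma_{\nu \alpha \beta} = \frac{1}{2}\left(G_{\nu \beta} \partial_\alpha \Psi + G_{\nu \alpha} \partial_\beta \Psi - G_{\alpha \beta} \partial_\nu \Psi\right)
\]
by the chain rule $\partial_\alpha g_{\beta \gamma} = G_{\beta \gamma} \partial_\alpha \Psi$ applied to definition \eqref{E:BIGGDEF}. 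In every such computation, transversal derivatives appear only in the combination $\Radunit \Psi = \upmu^{-1} \Rad \Psi$, producing the $\upmu^{-1}$-weighted $(Trans\text{-}\Psi)$ pieces; all other contributions involve only tangential derivatives $\Lunit \Psi$ and $\angdiff \Psi$, and form the $(Tan\text{-}\Psi)$ pieces.

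For $\angk_{\CoordAng \CoordAng}$, the Christoffel sum contracted against $\CoordAng^\nu \CoordAng^\alpha \Timenormal^\beta$ telescopes (two of its three terms are equal and opposite by the symmetry in $(\nu, \alpha)$) to $\frac{1}{2} \angG_{\CoordAng \CoordAng} \Timenormal \Psi = \frac{1}{2} \angG_{\CoordAng \CoordAng} \Lunit \Psi + \frac{1}{2} \upmu^{-1} \angG_{\CoordAng \CoordAng} \Rad \Psi$. The remaining piece $\CoordAng_\nu \CoordAng^\alpha \partial_\alpha \Timenormal^\nu$ is handled by inserting $\Timenormal^\nu = -(g^{-1})^{0\nu}$ from \eqref{E:TIMENORMALRECTANGULAR} and using the chain-rule identity $\partial_\alpha (g^{-1})^{\mu \nu} = -(g^{-1})^{\mu \rho} (g^{-1})^{\nu \sigma} G_{\rho \sigma} \partial_\alpha \Psi$; it collapses to $-(\angG_{\CoordAng \Lunit} + \angG_{\CoordAng \Radunit}) \CoordAng \Psi$, which matches the symmetric pairing $-\frac{1}{2}[\angGarg{\Lunit} \otimes \angdiff \Psi + \angdiff \Psi \otimes \angGarg{\Lunit} + \angGarg{\Radunit} \otimes \angdiff \Psi + \angdiff \Psi \otimes \angGarg{\Radunit}]$ evaluated on $(\CoordAng, \CoordAng)$. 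Adding the two pieces reproduces the claimed decomposition of $\angk$.

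For $\upzeta_{\CoordAng} = g(\D_{\CoordAng} \Lunit, \Radunit)$, the Christoffel contraction $\Gamma_{\nu \alpha \beta} \Radunit^\nu \CoordAng^\alpha \Lunit^\beta$ yields $\frac{1}{2}\{G_{\Lunit \Radunit} \CoordAng \Psi + (\angGarg{\Radunit})_{\CoordAng} \Lunit \Psi - \upmu^{-1} (\angGarg{\Lunit})_{\CoordAng} \Rad \Psi\}$, which already gives the desired $\upmu^{-1}$ term together with the $\frac{1}{2} \angGarg{\Radunit} \Lunit \Psi$ contribution. The remaining piece $\Radunit_\nu \CoordAng^\alpha \partial_\alpha \Lunit^\nu$ is simplified by substituting $\Radunit_\nu = -\Lunit_\nu - \delta_\nu^0$ from \eqref{E:DOWNSTAIRSUPSTAIRSSRADUNITPLUSLUNITISAFUNCTIONOFPSI}, using $\CoordAng \Lunit^0 = \CoordAng(1) = 0$ from \eqref{E:LUNITOFUANDT}, reducing it to $-\Lunit_\nu \CoordAng \Lunit^\nu$; then differentiating the null condition $g(\Lunit, \Lunit) = 0$ (see \eqref{E:LGEOISNULL}) along $\CoordAng$ produces $2 \Lunit_\nu \CoordAng \Lunit^\nu + G_{\Lunit \Lunit} \CoordAng \Psi = 0$, so this piece contributes $\frac{1}{2} G_{\Lunit \Lunit} \CoordAng \Psi$.

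\textbf{Main obstacle.} Combining these contributions for $\upzeta_{\CoordAng}$ gives the right answer \emph{modulo} the identity $G_{\Lunit \Lunit} + 2 G_{\Lunit \Radunit} + G_{\Radunit \Radunit} = G_{\Timenormal \Timenormal} = 0$, which is the only non-trivial cancellation needed to convert $\frac{1}{2}(G_{\Lunit \Lunit} + G_{\Lunit \Radunit}) \CoordAng \Psi$ into the claimed form $-\frac{1}{2}(G_{\Lunit \Radunit} + G_{\Radunit \Radunit}) \CoordAng \Psi$. This is where the conformal normalization \eqref{E:ZEROZEROISMINUSONE} plays a role: differentiating $(g^{-1})^{00} \equiv -1$ in $\Psi$ and applying the formula $\frac{d}{d\Psi} (g^{-1})^{\alpha \beta} = -(g^{-1})^{\alpha \mu} (g^{-1})^{\beta \nu} G_{\mu \nu}$ with $\alpha = \beta = 0$ yields $\Timenormal^\mu \Timenormal^\nu G_{\mu \nu} = G_{\Timenormal \Timenormal} = 0$. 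This vanishing, together with the two expansions above, closes the identification. The computation is otherwise routine; the only place care is required is in recognizing that $G_{\Timenormal \Timenormal} = 0$ is precisely what is needed, and then invoking \eqref{E:ZEROZEROISMINUSONE} to supply it.
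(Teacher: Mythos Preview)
Your proof is correct. The paper does not prove this lemma; it simply imports the statement by citation from \cite{jSgHjLwW2016}*{Lemma 2.13}, so there is no in-paper argument to compare against. Your direct Christoffel-symbol computation---including the identification that the normalization $(g^{-1})^{00} \equiv -1$ forces $G_{\Timenormal\Timenormal} = G_{\Lunit\Lunit} + 2 G_{\Lunit\Radunit} + G_{\Radunit\Radunit} = 0$, which is exactly the cancellation needed to match the stated form of $\upzeta^{(Tan-\Psi)}$---is the standard route and is fully justified.
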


\subsection{Metric decompositions}
\label{SS:METRICEXPRESSIONS}

\begin{lemma}\cite{jSgHjLwW2016}*{Lemma 2.4; \textbf{Expressions for $g$ and $g^{-1}$ in terms of the non-rescaled frame}}
\label{L:METRICDECOMPOSEDRELATIVETOTHEUNITFRAME}
We have the following identities:
\begin{subequations}
\begin{align}
	g_{\mu \nu} 
	& = - \Lunit_{\mu} \Lunit_{\nu}
			- (
					\Lunit_{\mu} \Radunit_{\nu} 
					+ 
					\Radunit_{\mu} \Lunit_{\nu}
				)
			+ \gsphere_{\mu \nu} 
			\label{E:METRICFRAMEDECOMPLUNITRADUNITFRAME},
			\\
	(g^{-1})^{\mu \nu} 
	& = 
			- \Lunit^{\mu} \Lunit^{\nu}
			- (
					\Lunit^{\mu} \Radunit^{\nu} 
					+ \Radunit^{\mu} \Lunit^{\nu}
				)
			+ (\ginversesphere)^{\mu \nu}.
			\label{E:GINVERSEFRAMEWITHRECTCOORDINATESFORGSPHEREINVERSE}
\end{align}
\end{subequations}

\end{lemma}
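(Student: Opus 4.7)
The plan is to obtain both identities by performing two successive orthogonal decompositions, tailored to the nested inclusions $\ell_{t,u} \subset \Sigma_t \subset \mathbb{R}\times \Sigma$. First I would split using the future-directed $g$-unit normal $\Timenormal$ to $\Sigma_t$, then split the resulting $\Sigma_t$-tangent piece using the $\gt$-unit normal $\Radunit$ to $\ell_{t,u}$ inside $\Sigma_t$, and finally expand $\Timenormal = \Lunit + \Radunit$.

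For the first step, the standard Lorentzian ADM-type decomposition gives
\begin{align*}
g_{\mu\nu} = -\Timenormal_\mu \Timenormal_\nu + \gt_{\mu\nu}.
\end{align*}
I would verify this pointwise against the frame: contracting with $\Timenormal^\nu$ yields $\Timenormal_\mu$ on the left (index lowering) and, on the right, $-(-1)\Timenormal_\mu + 0 = \Timenormal_\mu$ using \eqref{E:TIMENORMALUNITLENGTH} together with the fact that $\gt$ annihilates $\Timenormal$ (since $\Sigmatproject \Timenormal = 0$ by \eqref{E:SIGMATPROJECTION}); while for any $\Sigma_t$-tangent vector $V$, both sides reduce to $V_\mu$ because $\Timenormal \cdot V = 0$ and $\gt$ agrees with $g$ on $\Sigma_t$-tangent vectors by \eqref{E:GTANDGSPHERESPHEREDEF}. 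For the second step, since $\Radunit$ is $\Sigma_t$-tangent, $g$-orthogonal to $\ell_{t,u}$, and satisfies $g(\Radunit,\Radunit)=1$ by \eqref{E:RADIALVECTORFIELDSLENGTHS}, the same orthogonal decomposition applied within the Riemannian manifold $(\Sigma_t,\gt)$ gives
\begin{align*}
\gt_{\mu\nu} = \Radunit_\mu \Radunit_\nu + \gsphere_{\mu\nu}.
\end{align*}
Substituting and using $\Timenormal = \Lunit + \Radunit$ from \eqref{E:TIMENORMAL} yields
\begin{align*}
g_{\mu\nu}
&= -(\Lunit_\mu + \Radunit_\mu)(\Lunit_\nu + \Radunit_\nu) + \Radunit_\mu \Radunit_\nu + \gsphere_{\mu\nu} \\
&= -\Lunit_\mu \Lunit_\nu - (\Lunit_\mu \Radunit_\nu + \Radunit_\mu \Lunit_\nu) + \gsphere_{\mu\nu},
\end{align*}
which is \eqref{E:METRICFRAMEDECOMPLUNITRADUNITFRAME}. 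As a sanity check one can contract against $\Lunit^\mu, \Radunit^\mu$, and $\ell_{t,u}$-tangent vectors and recover $\Lunit_\nu$, $\Radunit_\nu$, and the tangential covector, respectively, using the pairings \eqref{E:RADIALVECTORFIELDSLENGTHS}--\eqref{E:LRADIALVECTORFIELDSNORMALIZATIONS} together with the fact that $\gsphere$ annihilates $\Lunit$ and $\Radunit$.

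For the inverse identity \eqref{E:GINVERSEFRAMEWITHRECTCOORDINATESFORGSPHEREINVERSE}, I would simply raise both free indices of \eqref{E:METRICFRAMEDECOMPLUNITRADUNITFRAME} with $g^{-1}$. The left-hand side becomes $(g^{-1})^{\mu\alpha}(g^{-1})^{\nu\beta} g_{\alpha\beta} = (g^{-1})^{\mu\nu}$, while on the right, by convention of index raising, $(g^{-1})^{\mu\alpha}\Lunit_\alpha = \Lunit^\mu$ and likewise for $\Radunit$, and by the very definition \eqref{E:GGTINVERSEANDGSPHEREINVERSEDEF}, $(g^{-1})^{\mu\alpha}(g^{-1})^{\nu\beta}\gsphere_{\alpha\beta} = (\ginversesphere)^{\mu\nu}$. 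This immediately produces \eqref{E:GINVERSEFRAMEWITHRECTCOORDINATESFORGSPHEREINVERSE}.

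There is really no substantive obstacle here: the lemma is an elementary consequence of the pairings already recorded in Lemma~\ref{L:BASICPROPERTIESOFFRAME}, and the only point worth any attention is the bookkeeping that raising the indices of $\gsphere$ with the \emph{full} spacetime inverse $g^{-1}$ (rather than with some intrinsic $\ell_{t,u}$ inverse) produces the same tensor $\ginversesphere$ appearing in the statement -- a triviality given Def.~\ref{D:FIRSTFUND}.
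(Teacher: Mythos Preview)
Your proof is correct. The paper does not give its own proof of this lemma (it is cited from \cite{jSgHjLwW2016}), and your two-step orthogonal decomposition via $\Timenormal$ and then $\Radunit$, followed by the substitution $\Timenormal = \Lunit + \Radunit$ and index-raising for the inverse, is the standard and natural argument.
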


The following scalar function captures the ``$\ell_{t,u}$ part of $g$''.

\begin{definition}[\textbf{The metric component} $\gtancomp$]
\label{D:METRICANGULARCOMPONENT}
We define the scalar function $\gtancomp > 0$ by
\begin{align} \label{E:METRICANGULARCOMPONENT}
	\gtancomp^2
	& :=  g(\CoordAng,\CoordAng) = \gsphere(\CoordAng,\CoordAng).
\end{align}
\end{definition}

%

\subsection{The change of variables map}
\label{SS:CHOV}
In this subsection, we define the change of variables map between the geometric and Cartesian coordinates
and illustrate some of its basic properties.

\begin{definition}\label{D:CHOVMAP}
	We define $\Upsilon: [0,T) \times [0,U_0] \times \mathbb{T} \rightarrow \mathcal{M}_{T,U_0}$,
	$\Upsilon(t,u,\vartheta) := (t,x^1,x^2)$,
	to be the change of variables map from geometric to Cartesian coordinates.
\end{definition}

\begin{lemma}\cite{jSgHjLwW2016}*{Lemma 2.7; \textbf{Basic properties of the change of variables map}}
\label{L:CHOV}
	We have the following expression for the Jacobian of $\Upsilon$:
	\begin{align} \label{E:CHOV}
		\frac{\partial \Upsilon}{\partial (t,u,\vartheta)}
		& :=
		\frac{\partial (x^0,x^1,x^2)}{\partial (t,u,\vartheta)}
		= 
		\left(
		\begin{array}{ccc}
			1 & 0 & 0  \\
			\Lunit^1 & \Rad^1 + \NonRadialRad^1 & \CoordAng^1 \\
			\Lunit^2 & \Rad^2 + \NonRadialRad^2 & \CoordAng^2 \\
		\end{array}
		\right).
	\end{align}
Moreover, we have\footnote{There was a typo in \cite{jSgHjLwW2016}*{Lemma 2.7} in that the
	minus sign was missing from the RHS of the analog of \eqref{E:JACOBIAN}.}
\begin{align} \label{E:JACOBIAN}
	\mbox{\upshape{det}}
		\frac{\partial (x^0,x^1,x^2)}{\partial (t,u,\vartheta)}
	= 
		\mbox{\upshape{det}} \frac{\partial (x^1,x^2)}{\partial (u,\vartheta)}
	= - \upmu (\mbox{\upshape{det}} \gt_{ij})^{-1/2} \gtancomp,
\end{align}
where $\gtancomp$ is the metric component from Def.~\ref{D:METRICANGULARCOMPONENT}
and $(\mbox{\upshape{det}} \gt_{ij})^{-1/2}$ is a smooth function of $\Psi$ in a neighborhood of $0$ 
with $(\mbox{\upshape{det}} \gt_{ij})^{-1/2}(\Psi=0) = 1$. In \eqref{E:JACOBIAN},
$\gt$ is viewed as the Riemannian metric on $\Sigma_t^{U_0}$ defined by \eqref{E:GTANDGSPHERESPHEREDEF}
and $\mbox{\upshape{det}} \gt_{ij}$ is the determinant of the corresponding $2 \times 2$ matrix
of components of $\gt$ relative to the Cartesian spatial coordinates.
\end{lemma}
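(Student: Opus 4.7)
\subsection*{Proof proposal for Lemma~\ref{L:CHOV}}

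The first identity is a direct computation from the definitions. Since $x^0 = t$, the top row of the Jacobian matrix has $\partial x^0/\partial t = 1$ and $\partial x^0/\partial u = \partial x^0/\partial \vartheta = 0$. For the remaining rows, I would use the identifications of the geometric coordinate vectorfields: by \eqref{E:LISDDT}, $\partial/\partial t = \Lunit$, so $\partial x^i/\partial t = \Lunit x^i = \Lunit^i$; by Def.~\ref{D:GEOMETRICTORUSCOORDINATE} and Remark~\ref{R:REMARKSONCOORDANG}, $\partial/\partial \vartheta = \CoordAng$, so $\partial x^i/\partial \vartheta = \CoordAng^i$; and by \eqref{E:RADSPLITINTOPARTTILAUANDXI}, $\partial/\partial u = \Rad + \NonRadialRad$, so $\partial x^i/\partial u = \Rad^i + \NonRadialRad^i$. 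Assembling these yields the asserted matrix.

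For the determinant, I would first expand along the top row to get
\begin{align*}
	\mathrm{det} \frac{\partial \Upsilon}{\partial (t,u,\vartheta)}
	= \mathrm{det} \frac{\partial (x^1,x^2)}{\partial (u,\vartheta)}
	= (\Rad^1 + \NonRadialRad^1) \CoordAng^2 - (\Rad^2 + \NonRadialRad^2) \CoordAng^1.
\end{align*}
Since $\NonRadialRad = \XiCoordComp \CoordAng$ is proportional to $\CoordAng$, the $\NonRadialRad$ terms cancel, and using $\Rad = \upmu \Radunit$ the expression reduces to $\upmu (\Radunit^1 \CoordAng^2 - \Radunit^2 \CoordAng^1)$. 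So it suffices to identify this last determinant with $-(\mathrm{det}\,\gt_{ij})^{-1/2} \gtancomp$.

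The key step, and the one requiring some care, is the following intrinsic interpretation. On the two-dimensional Riemannian manifold $(\Sigma_t,\gt)$, the Riemannian area form is $(\mathrm{det}\,\gt_{ij})^{1/2}\, dx^1 \wedge dx^2$ in the Cartesian spatial coordinates. Evaluated on the pair $(\Radunit,\CoordAng)$, this area form equals $(\mathrm{det}\,\gt_{ij})^{1/2}(\Radunit^1 \CoordAng^2 - \Radunit^2 \CoordAng^1)$. On the other hand, $\Radunit$ is $\gt$-orthogonal to $\ell_{t,u}$ and hence to $\CoordAng$, and is of unit length by \eqref{E:RADIALVECTORFIELDSLENGTHS}, while $|\CoordAng|_{\gsphere} = \gtancomp$ by \eqref{E:METRICANGULARCOMPONENT}; hence the area form applied to $(\Radunit,\CoordAng)$ equals $\pm \gtancomp$. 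The sign is the main (but minor) obstacle: I would fix it by comparing orientations at the initial hypersurface, where the initial condition \eqref{E:INTROEIKONALINITIALVALUE} gives $u = 1 - x^1$ on $\Sigma_0$, so that $\Rad u = 1$ forces $\Rad$ (and hence $\Radunit$) to point in the direction of decreasing $x^1$, while $\CoordAng|_{\Sigma_0} = \partial_2$ is positively oriented in $x^2$; thus $(\Radunit,\CoordAng)$ is oppositely oriented to $(\partial_1, \partial_2)$, pinning down the minus sign. Since $\upmu > 0$ and the frame remains non-degenerate on the region under study, the sign is preserved by continuity.

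Finally, for the smoothness assertion on $(\mathrm{det}\,\gt_{ij})^{-1/2}$, I would appeal to the decomposition \eqref{E:LITTLEGDECOMPOSED}--\eqref{E:METRICPERTURBATIONFUNCTION} together with $\gt_{ij} = g_{ij} + \Timenormal_i \Timenormal_j$ (from \eqref{E:GTANDGSPHERESPHEREDEF}, \eqref{E:SIGMATPROJECTION}) and $\Timenormal_\nu = g_{\nu\alpha}\Timenormal^\alpha$ with $\Timenormal^\nu = -(g^{-1})^{0\nu}$ from \eqref{E:TIMENORMALRECTANGULAR}. Each Cartesian component of $\gt_{ij}$ is a smooth function of $\Psi$ alone, and at $\Psi=0$ the metric reduces to the Minkowski metric, so $\Timenormal|_{\Psi=0} = \partial_t$, $\Timenormal_i|_{\Psi=0} = 0$, and $\gt_{ij}|_{\Psi=0} = \delta_{ij}$, giving $\mathrm{det}\,\gt_{ij}|_{\Psi=0} = 1$. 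Smoothness of $(\mathrm{det}\,\gt_{ij})^{-1/2}$ in $\Psi$ in a neighborhood of $0$ then follows by continuity.
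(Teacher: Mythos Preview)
Your proof is correct. The paper does not actually prove this lemma but rather cites it from \cite{jSgHjLwW2016}*{Lemma~2.7}, so there is no in-paper proof to compare against; your argument via the Riemannian area form of $\gt$ applied to the orthogonal pair $(\Radunit,\CoordAng)$, with the sign pinned down at $\Sigma_0$ and propagated by continuity, is the natural one and matches what one expects the cited proof to contain.
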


\subsection{Commutation vectorfields and a basic vectorfield commutation identity}
\label{SS:COMMUTATIONVECTORFIELDS}
In this subsection, we define the commutation vectorfields
that we use when commuting equations to obtain
estimates for the solution's derivatives.

\begin{definition}[\textbf{The vectorfields} $\GeoAng_{(Flat)}$ \textbf{and} $\GeoAng$]
\label{D:ANGULARVECTORFIELDS}
We define the Cartesian components of
the $\Sigma_t$-tangent vectorfields 
$\GeoAng_{(Flat)}$ and $\GeoAng$ 
as follows ($i=1,2$):
\begin{align}
	\GeoAng_{(Flat)}^i
	&: = \delta_2^i,
		\label{E:GEOANGEUCLIDEAN} \\
	\GeoAng^i
	& :=
	\Lineproject_a^{\ i} \GeoAng_{(Flat)}^a
		= \Lineproject_2^{\ i},
		\label{E:GEOANGDEF}
\end{align}
where $\Lineproject$ is the
$\ell_{t,u}$ projection tensorfield 
defined in \eqref{E:LINEPROJECTION}.
\end{definition}

To derive estimates for the solution's derivatives, we commute the equations
with the elements of the following two sets of vectorfields.
\begin{definition}[\textbf{Commutation vectorfields}]
	\label{D:COMMUTATIONVECTORFIELDS}
	We define the commutation set $\Fullset$ as follows:
	\begin{subequations}
	\begin{align} \label{E:COMMUTATIONVECTORFIELDS}
		\Fullset
		:= \lbrace \Lunit, \Rad, \GeoAng \rbrace,
	\end{align}
	where $\Lunit$, $\Rad$, and $\GeoAng$ are respectively defined by
	\eqref{E:LUNITDEF}, \eqref{E:RADDEF}, and \eqref{E:GEOANGDEF}.
	
	We define the $\mathcal{P}_u$-tangent commutation set $\Tanset$ as follows:
	\begin{align} \label{E:TANGENTIALCOMMUTATIONVECTORFIELDS}
		\Tanset
		:= \lbrace \Lunit, \GeoAng \rbrace.
	\end{align}
	\end{subequations}
\end{definition}

We use the following commutation identity throughout our analysis.

\begin{lemma}\cite{jSgHjLwW2016}*{Lemma 2.10; $\Lunit$, $\Rad$, $\GeoAng$ \textbf{commute with} $\angdiff$}
\label{L:LANDRADCOMMUTEWITHANGDIFF}
For scalar functions $f$ and $V \in \lbrace \Lunit, \Rad, \GeoAng \rbrace$,
we have
\begin{align} \label{E:ANGLIECOMMUTESWITHANGDIFF}
	\angLie_V \angdiff f
	& = \angdiff V f.
\end{align}
\end{lemma}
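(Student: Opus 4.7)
The plan is to verify the identity pointwise by testing both sides on the geometric coordinate vectorfield $\CoordAng = \partial/\partial \vartheta$, which spans the one-dimensional tangent space to each torus $\ell_{t,u}$. The starting observation is that for a scalar function $f$, the defining equality $\angdiff f = \Lineproject df$ yields $(\angdiff f)(\CoordAng) = (df)(\CoordAng) = \CoordAng f$, since $\CoordAng$ is already $\ell_{t,u}$-tangent. Consequently the right-hand side of \eqref{E:ANGLIECOMMUTESWITHANGDIFF} satisfies $(\angdiff Vf)(\CoordAng) = \CoordAng V f$, so it suffices to prove $(\angLie_V \angdiff f)(\CoordAng) = \CoordAng Vf$.

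For the left-hand side, I would apply the definition $\angLie_V = \Lineproject \Lie_V$ together with $\Lineproject \CoordAng = \CoordAng$ to obtain $(\angLie_V \angdiff f)(\CoordAng) = (\Lie_V \angdiff f)(\CoordAng)$, then invoke the standard Cartan-type formula $(\Lie_V \omega)(W) = V(\omega(W)) - \omega([V,W])$ to rewrite
\begin{align*}
(\Lie_V \angdiff f)(\CoordAng)
= V(\CoordAng f) - (\angdiff f)([V,\CoordAng]).
\end{align*}
Since the elementary commutator identity gives $V(\CoordAng f) - \CoordAng(Vf) = [V,\CoordAng] f$, the desired identity will follow once we establish
\begin{align*}
(\angdiff f)([V,\CoordAng]) = [V,\CoordAng] f.
\end{align*}
This in turn holds whenever $[V,\CoordAng]$ is $\ell_{t,u}$-tangent, because for any $\ell_{t,u}$-tangent vectorfield $W$ we have $(\angdiff f)(W) = (df)(\Lineproject W) = (df)(W) = W f$.

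The remaining step, therefore, is the verification that $[V,\CoordAng]$ is $\ell_{t,u}$-tangent for each $V \in \lbrace \Lunit, \Rad, \GeoAng \rbrace$. Using Lemma~\ref{L:BASICPROPERTIESOFFRAME}, in geometric coordinates $\Lunit = \partial/\partial t$, so $[\Lunit,\CoordAng] = 0$ trivially. The same lemma expresses $\Rad = \partial/\partial u - \XiCoordComp \CoordAng$ with $\XiCoordComp$ a scalar function on spacetime, hence $[\Rad,\CoordAng] = (\CoordAng \XiCoordComp) \CoordAng$, which is manifestly $\ell_{t,u}$-tangent. Finally, $\GeoAng$ is itself $\ell_{t,u}$-tangent by Definition~\ref{D:ANGULARVECTORFIELDS}, and since $\ell_{t,u}$ is one-dimensional we may write $\GeoAng = c\, \CoordAng$ for a scalar $c$, so $[\GeoAng,\CoordAng] = -(\CoordAng c)\CoordAng$ is also $\ell_{t,u}$-tangent.

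There is no genuine obstacle here beyond keeping the projection bookkeeping straight; the underlying geometric content is simply that each of $\Lunit$, $\Rad$, $\GeoAng$ preserves the foliation by tori $\ell_{t,u}$, which is exactly the structural property that allows $\Lineproject$ to commute through $\Lie_V$ when applied to the exterior derivative of a scalar.
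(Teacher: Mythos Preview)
Your proof is correct and follows the natural approach. The paper itself does not give a proof of this lemma; it merely cites \cite{jSgHjLwW2016}*{Lemma~2.10}, and your argument --- reducing the identity to the $\ell_{t,u}$-tangency of $[V,\CoordAng]$ via the Cartan formula --- is essentially the standard verification one would give for such a statement.
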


The following quantities are convenient to study because
for the solutions that we study in our main theorem, 
they are small.
\begin{definition}[\textbf{Perturbed part of various vectorfields}]
\label{D:PERTURBEDPART}
For $i=1,2$, we define the following scalar functions:
\begin{align} \label{E:PERTURBEDPART}
	\Lunit_{(Small)}^i
	& := \Lunit^i
		- \delta_1^i,
	\qquad
	\Radunit_{(Small)}^i
	:= \Radunit^i
		+ \delta_1^i,
	\qquad
	\GeoAng_{(Small)}^i
	:= \GeoAng^i - \delta_2^i.
\end{align}
The vectorfields 
$\Lunit$,
$\Radunit$,
and
$\GeoAng$
in \eqref{E:PERTURBEDPART}
are defined in
Defs.~\ref{D:LUNITDEF},
\ref{D:RADANDXIDEFS},
and \ref{D:ANGULARVECTORFIELDS}.
\end{definition}

In the next lemma, we characterize the
discrepancy between $\GeoAng_{(Flat)}$ and $\GeoAng$.

\begin{lemma}\cite{jSgHjLwW2016}*{Lemma 2.8; \textbf{Decomposition of} $\GeoAng_{(Flat)}$}
\label{L:GEOANGDECOMPOSITION}
We can decompose $\GeoAng_{(Flat)}$ into an $\ell_{t,u}$-tangent vectorfield
and a vectorfield parallel to $\Radunit$ as follows:
there exists a scalar function $\GeoAngFlatRadComponent$ such that
\begin{subequations}
\begin{align} \label{E:GEOANGINTERMSOFEUCLIDEANANGANDRADUNIT}
	\GeoAng_{(Flat)}^i
	& = \GeoAng^i 
		+ \GeoAngFlatRadComponent \Radunit^i,
			\\
	\GeoAng_{(Small)}^i
	& = - \GeoAngFlatRadComponent \Radunit^i.
	\label{E:GEOANGSMALLINTERMSOFRADUNIT}
\end{align}
\end{subequations}
Moreover, we have\footnote{In the last term in equation \eqref{E:FLATYDERIVATIVERADIALCOMPONENT},
we have corrected a sign error that occurred in \cite{jSgHjLwW2016}*{Equation (2.55)}.}
\begin{align} \label{E:FLATYDERIVATIVERADIALCOMPONENT}
	\GeoAngFlatRadComponent 
	= g(\GeoAng_{(Flat)},\Radunit)
	= g_{ab} \GeoAng_{(Flat)}^a \Radunit^b
	= g_{2a} \Radunit^a
	= g_{21}^{(Small)} \Radunit^1 + g_{22} \Radunit_{(Small)}^2.
\end{align}
\end{lemma}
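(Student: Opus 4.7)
The plan is to establish the decomposition by exploiting the orthogonal splitting of $\Sigma_t$ into its $\ell_{t,u}$-tangent subspace and the line spanned by $\Radunit$, then to identify the coefficient $\GeoAngFlatRadComponent$ by taking an inner product with $\Radunit$, and finally to unwind the Cartesian components using the $m + g^{(Small)}$ decomposition \eqref{E:LITTLEGDECOMPOSED} together with Def.~\ref{D:PERTURBEDPART}.

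First I would observe that $\GeoAng_{(Flat)}$ is a $\Sigma_t$-tangent vectorfield (its $0$-component vanishes by \eqref{E:GEOANGEUCLIDEAN}). Since $\Radunit$ is $\Sigma_t$-tangent, $g$-orthogonal to $\ell_{t,u}$ by Def.~\ref{D:RADANDXIDEFS}, and unit-length by \eqref{E:RADIALVECTORFIELDSLENGTHS}, while $\gsphere = \Lineproject g$ gives the induced Riemannian metric on $\ell_{t,u}$, the pair $\Radunit$ together with any $\ell_{t,u}$-tangent basis spans the tangent space of $\Sigma_t$ $g$-orthogonally. Consequently $\GeoAng_{(Flat)}$ must split uniquely as an $\ell_{t,u}$-tangent vector plus a scalar multiple of $\Radunit$. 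But by definition $\GeoAng^i = \Lineproject_a{}^i \GeoAng_{(Flat)}^a$ is precisely the $\ell_{t,u}$-tangential projection of $\GeoAng_{(Flat)}$, so the remainder $\GeoAng_{(Flat)} - \GeoAng$ is parallel to $\Radunit$, which produces the scalar $\GeoAngFlatRadComponent$ in \eqref{E:GEOANGINTERMSOFEUCLIDEANANGANDRADUNIT}.

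Next I would identify $\GeoAngFlatRadComponent$ by contracting \eqref{E:GEOANGINTERMSOFEUCLIDEANANGANDRADUNIT} with $\Radunit$ in the metric $g$: this kills the $\GeoAng$-term since $g(\GeoAng,\Radunit) = 0$, and the $\Radunit$-term contributes $\GeoAngFlatRadComponent \cdot g(\Radunit,\Radunit) = \GeoAngFlatRadComponent$ by \eqref{E:RADIALVECTORFIELDSLENGTHS}. This gives $\GeoAngFlatRadComponent = g(\GeoAng_{(Flat)},\Radunit)$. Since both vectorfields are $\Sigma_t$-tangent, the sum $g_{\alpha\beta}\GeoAng_{(Flat)}^\alpha \Radunit^\beta$ collapses to spatial indices, yielding $g_{ab}\GeoAng_{(Flat)}^a \Radunit^b$, and then \eqref{E:GEOANGEUCLIDEAN} reduces this to $g_{2a}\Radunit^a = g_{21}\Radunit^1 + g_{22}\Radunit^2$. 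To reach the final expression in \eqref{E:FLATYDERIVATIVERADIALCOMPONENT}, I would substitute $g_{21} = m_{21} + g_{21}^{(Small)} = g_{21}^{(Small)}$ (since $m_{21} = 0$) and use Def.~\ref{D:PERTURBEDPART} to write $\Radunit^2 = \Radunit_{(Small)}^2 - \delta_1^2 = \Radunit_{(Small)}^2$, giving the stated form. The identity \eqref{E:GEOANGSMALLINTERMSOFRADUNIT} is then immediate: $\GeoAng_{(Small)}^i := \GeoAng^i - \delta_2^i = \GeoAng^i - \GeoAng_{(Flat)}^i = -\GeoAngFlatRadComponent \Radunit^i$ by \eqref{E:GEOANGINTERMSOFEUCLIDEANANGANDRADUNIT}.

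There is no real obstacle here; the only subtle point is ensuring consistency of the $\Radunit^2$-versus-$\Radunit_{(Small)}^2$ substitution, which turns on remembering that $\delta_1^2 = 0$ in two spatial dimensions with the indexing chosen. Everything else follows directly from the frame normalizations in Lemma~\ref{L:BASICPROPERTIESOFFRAME} and the definitions in Subsect.~\ref{SS:FRAMEANDRELATEDVECTORFIELDS}. In particular the proof does not require any nontrivial geo-analytic input beyond the orthogonality $g(\Lineproject V, \Radunit) = 0$ for $\Sigma_t$-tangent $V$ and the unit normalization of $\Radunit$.
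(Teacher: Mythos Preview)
Your proof is correct and follows the natural approach: the orthogonal splitting of $\Sigma_t$-tangent vectors into an $\ell_{t,u}$-tangent part plus a multiple of $\Radunit$, combined with the fact that $\GeoAng$ is by definition the $\ell_{t,u}$-projection of $\GeoAng_{(Flat)}$, immediately gives the decomposition, and your computation of $\GeoAngFlatRadComponent$ via the inner product with $\Radunit$ is exactly right. The paper does not supply its own proof here (it cites \cite{jSgHjLwW2016}), but your argument is precisely the standard one that underlies the cited result.
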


\subsection{Deformation tensors}
\label{SS:BASICVECTORFIELDCOMMUTATOR}
In this subsection, we provide the standard definition of the deformation tensor
of a vectorfield.

\begin{definition}[\textbf{Deformation tensor of a vectorfield} $V$]
\label{D:DEFORMATIONTENSOR}
If $V$ is a spacetime vectorfield,
then its deformation tensor $\deform{V}$
(relative to the spacetime metric $g$)
is the symmetric type $\binom{0}{2}$ spacetime tensorfield
\begin{align} \label{E:DEFORMATIONTENSOR}
	\deformarg{V}{\alpha}{\beta}
	:= \Lie_V g_{\alpha \beta}
	= \D_{\alpha} V_{\beta} 
		+
		\D_{\beta} V_{\alpha},
\end{align}
where the last equality in \eqref{E:DEFORMATIONTENSOR} is a well-known consequence of 
the torsion-free property of the connection $\D$.
\end{definition}

\subsection{Transport equations for the eikonal function quantities}
\label{SS:TRANSPORTEQUATIONSFOREIKONALFUNCTION}
In this subsection, we provide the main evolution equations that we use to control
$\upmu$, the Cartesian component functions $\lbrace \Lunit_{(Small)}^a \rbrace_{a=1,2}$,
the $\ell_{t,u}$-tensor $\upchi$,
and their derivatives, except at the top derivative level.
We sometimes refer to these tensorfields as the \emph{eikonal function quantities}
since they are constructed out of the eikonal function.
To control their top-order derivatives, one needs to rely on
the modified quantities described in Subsubsect.\ \ref{SSS:ENERGYESTIMATES}.

\begin{lemma}\cite{jSgHjLwW2016}*{Lemma~2.12; \textbf{The transport equations verified by} $\upmu$ \textbf{and} $\Lunit_{(Small)}^i$}
\label{L:UPMUANDLUNITIFIRSTTRANSPORT}
The inverse foliation density $\upmu$ defined in \eqref{E:FIRSTUPMU} verifies the following transport equation:
\begin{align} \label{E:UPMUFIRSTTRANSPORT}
	\Lunit \upmu 
	& 
		=
		\frac{1}{2} G_{\Lunit \Lunit} \Rad \Psi
		- \frac{1}{2} \upmu G_{\Lunit \Lunit} \Lunit \Psi
		- \upmu G_{\Lunit \Radunit} \Lunit \Psi.
\end{align}

Moreover, the scalar-valued Cartesian component functions $\Lunit_{(Small)}^i$,
($i=1,2$), defined in \eqref{E:PERTURBEDPART},
verify the following transport equation:
\begin{align}
	\Lunit \Lunit_{(Small)}^i
	&  = - \frac{1}{2} G_{\Lunit \Lunit} (\Lunit \Psi) \Lunit^i
			- \frac{1}{2} G_{\Lunit \Lunit} (\Lunit \Psi) (g^{-1})^{0i}
			- \angGmixedarg{\Lunit}{\#} \cdot (\angdiff x^i) (\Lunit \Psi) 
			+ \frac{1}{2} G_{\Lunit \Lunit} (\angdiffuparg{\#} \Psi) \cdot \angdiff x^i.
				\label{E:LLUNITI} 
\end{align}

\end{lemma}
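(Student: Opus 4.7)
My plan is to derive both transport equations from the single master identity $\D_\Lunit \Lunit = \upmu^{-1}(\Lunit \upmu)\Lunit$, which follows from the fact that $\Lgeo$ is geodesic (see \eqref{E:LGEOISGEODESIC}) combined with $\Lunit = \upmu \Lgeo$ (Def.~\ref{D:LUNITDEF}):
\begin{align*}
\D_\Lunit \Lunit = \upmu \D_\Lgeo (\upmu \Lgeo) = \upmu (\Lgeo \upmu) \Lgeo + \upmu^2 \D_\Lgeo \Lgeo = (\Lgeo \upmu) \Lunit.
\end{align*}
Expanding $\D_\Lunit \Lunit$ in Cartesian coordinates gives $\Lunit \Lunit^\mu + \Gamma^\mu_{\alpha\beta} \Lunit^\alpha \Lunit^\beta = \upmu^{-1}(\Lunit \upmu) \Lunit^\mu$. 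Setting $\mu = 0$ and using $\Lunit^0 = 1$ (from \eqref{E:LUNITOFUANDT}), so $\Lunit \Lunit^0 = 0$, yields the scalar equation
\begin{align*}
\Lunit \upmu = \upmu\, \Gamma^0_{\alpha\beta} \Lunit^\alpha \Lunit^\beta,
\end{align*}
while setting $\mu = i \in \{1,2\}$ and using $\Lunit \Lunit^i = \Lunit \Lunit^i_{(Small)}$ yields
\begin{align*}
\Lunit \Lunit^i_{(Small)} = \upmu^{-1}(\Lunit \upmu) \Lunit^i - \Gamma^i_{\alpha\beta} \Lunit^\alpha \Lunit^\beta.
\end{align*}

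The remaining task is to compute $\Gamma^\mu_{\alpha\beta} \Lunit^\alpha \Lunit^\beta$ and express the result in terms of the framed quantities appearing on the RHS of \eqref{E:UPMUFIRSTTRANSPORT}-\eqref{E:LLUNITI}. Using $g = g(\Psi)$, hence $\partial_\alpha g_{\sigma\gamma} = G_{\sigma\gamma}\partial_\alpha \Psi$ by \eqref{E:BIGGDEF}, the standard formula yields
\begin{align*}
\Gamma^\mu_{\alpha\beta}\Lunit^\alpha\Lunit^\beta = (g^{-1})^{\mu\sigma} G_{\sigma\beta}\Lunit^\beta (\Lunit\Psi) - \tfrac{1}{2}(g^{-1})^{\mu\sigma} G_{\Lunit\Lunit}\, \partial_\sigma \Psi.
\end{align*}
I then substitute the frame decomposition \eqref{E:GINVERSEFRAMEWITHRECTCOORDINATESFORGSPHEREINVERSE}, $(g^{-1})^{\mu\sigma} = -\Lunit^\mu\Lunit^\sigma - \Lunit^\mu\Radunit^\sigma - \Radunit^\mu\Lunit^\sigma + (\ginversesphere)^{\mu\sigma}$.

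For the $\upmu$ equation ($\mu = 0$), I use $(g^{-1})^{0\sigma} = -\Timenormal^\sigma = -(\Lunit^\sigma + \Radunit^\sigma)$ from \eqref{E:TIMENORMALRECTANGULAR} and \eqref{E:TIMENORMAL}, so the $\ell_{t,u}$-tangent piece drops out. The two remaining terms contract with $G_{\sigma\beta}\Lunit^\beta$ and $\partial_\sigma \Psi$ to produce $G_{\Lunit \Lunit}, G_{\Lunit \Radunit}, \Lunit \Psi,$ and $\Radunit \Psi$; converting the single $\Radunit \Psi$ term via $\Rad \Psi = \upmu \Radunit \Psi$ (from \eqref{E:RADDEF}) gives exactly \eqref{E:UPMUFIRSTTRANSPORT}. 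For the $\Lunit^i_{(Small)}$ equation, I substitute the same frame decomposition: the $-\Lunit^i\Lunit^\sigma - \Lunit^i\Radunit^\sigma - \Radunit^i \Lunit^\sigma$ contributions combine with the $\upmu^{-1}(\Lunit\upmu)\Lunit^i$ term (after substituting the already-proven formula for $\Lunit \upmu$), and a short cancellation eliminates the $\Rad\Psi$-involving contributions together with the factor $\Radunit^i$, leaving behind terms of the form $-\tfrac{1}{2}G_{\Lunit\Lunit}(\Lunit\Psi)\Lunit^i$ and $-\tfrac{1}{2}G_{\Lunit\Lunit}(\Lunit\Psi)(g^{-1})^{0i}$; the two $(\ginversesphere)^{i\sigma}$ contributions are then identified with $-\angGmixedarg{\Lunit}{\#}\cdot \angdiff x^i\,(\Lunit\Psi)$ and $\tfrac{1}{2}G_{\Lunit\Lunit}(\angdiffuparg{\#}\Psi)\cdot \angdiff x^i$ by recognizing that $(\angdiff x^i)_a = \Lineproject_a^{\ i}$ and that $\ginversesphere$ annihilates the non-$\ell_{t,u}$ pieces of any co-vector.

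The main obstacle is purely bookkeeping: tracking the several cancellations between the Christoffel contribution and the $\upmu^{-1}(\Lunit\upmu)\Lunit^i$ source, and identifying the surviving $\ginversesphere$-contracted terms with the intrinsic $\ell_{t,u}$ operations $\angGmixedarg{\Lunit}{\#}\cdot \angdiff x^i$ and $(\angdiffuparg{\#}\Psi)\cdot \angdiff x^i$. No genuinely new idea is required beyond the two inputs (a) the geodesic identity for $\Lgeo$ giving the master equation $\D_\Lunit\Lunit = \upmu^{-1}(\Lunit\upmu)\Lunit$, and (b) the frame decomposition \eqref{E:GINVERSEFRAMEWITHRECTCOORDINATESFORGSPHEREINVERSE}, together with the fact that $g = g(\Psi)$ so that all Christoffel symbols reduce to products of $G_{\cdot\cdot}$ with first derivatives of $\Psi$.
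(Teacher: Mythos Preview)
Your proof is correct. The paper does not supply its own proof of this lemma---it is cited directly from \cite{jSgHjLwW2016}*{Lemma~2.12}---so there is nothing in the present paper to compare against. Your approach (the geodesic identity $\D_\Lunit\Lunit = \upmu^{-1}(\Lunit\upmu)\Lunit$ in Cartesian components, the Christoffel computation using $\partial_\alpha g_{\sigma\gamma} = G_{\sigma\gamma}\partial_\alpha\Psi$, and the frame decomposition \eqref{E:GINVERSEFRAMEWITHRECTCOORDINATESFORGSPHEREINVERSE}) is the standard route and is essentially what the cited reference does. The algebra checks out: in particular your observation that the $\Radunit\Psi$ and $\Radunit^i$ contributions cancel in the $\Lunit^i_{(Small)}$ equation is correct, and the surviving $\frac{1}{2}G_{\Lunit\Lunit}(\Lunit\Psi)\Radunit^i$ term is exactly $-\frac{1}{2}G_{\Lunit\Lunit}(\Lunit\Psi)\Lunit^i - \frac{1}{2}G_{\Lunit\Lunit}(\Lunit\Psi)(g^{-1})^{0i}$ via \eqref{E:DOWNSTAIRSUPSTAIRSSRADUNITPLUSLUNITISAFUNCTIONOFPSI}.
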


The next lemma provides a useful identity for $\Rad \Lunit_{(Small)}^i$.

\begin{lemma}\cite{jSgHjLwW2016}*{Lemma~2.14; \textbf{Formula for} $\Rad \Lunit_{(Small)}^i$}
\label{L:RADLUNITI}
We have the following identity for the scalar-valued functions $\Lunit_{(Small)}^i$, ($i=1,2$):
\begin{align}
	\Rad \Lunit_{(Small)}^i
	& = \left\lbrace
				- \frac{1}{2} G_{\Lunit \Lunit} \Rad \Psi
				+ \frac{1}{2} \upmu G_{\Lunit \Lunit} \Lunit \Psi
				+ \upmu G_{\Lunit \Radunit} \Lunit \Psi
				+ \frac{1}{2} \upmu G_{\Radunit \Radunit} \Lunit \Psi
			\right\rbrace
			\Lunit^i
			\label{E:RADLUNITI} 	\\
	& \ \ 
			+
			\left\lbrace
				- \frac{1}{2} G_{\Lunit \Lunit} \Rad \Psi
				+ \frac{1}{2} \upmu G_{\Lunit \Lunit} \Lunit \Psi
				+ \upmu G_{\Lunit \Radunit} \Lunit \Psi
				+ \frac{1}{2} \upmu G_{\Radunit \Radunit} \Lunit \Psi
			\right\rbrace
			(g^{-1})^{0i}
			\notag \\
	& \ \ 
		 - \left\lbrace
					\angGmixedarg{\Lunit}{\#} \Rad \Psi
					+ \frac{1}{2} \upmu G_{\Radunit \Radunit} \angdiffuparg{\#} \Psi
				\right\rbrace
				\cdot
				\angdiff x^i
			+ (\angdiffuparg{\#} \upmu) \cdot \angdiff x^i.
			\notag
\end{align}
\end{lemma}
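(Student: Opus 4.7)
The plan is to derive \eqref{E:RADLUNITI} by exploiting the fact that $\Lunit^{i}$ and $\Rad^{i}$ are the actions of $\Lunit$ and $\Rad$ on the Cartesian coordinate function $x^i$. Since $\delta_{1}^{i}$ is constant, we have $\Rad \Lunit_{(Small)}^{i} = \Rad \Lunit^{i} = \Rad \Lunit x^i$, and the commutator identity gives
\begin{align*}
\Rad \Lunit_{(Small)}^i = \Lunit \Rad x^i + [\Rad,\Lunit]x^i = \Lunit \Rad^i + [\Rad,\Lunit]x^i.
\end{align*}
First I would compute $\Lunit \Rad^i$ by writing $\Rad^i = \upmu \Radunit^i$ and applying the Leibniz rule, obtaining $(\Lunit \upmu)\Radunit^i + \upmu \Lunit \Radunit^i$. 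Using \eqref{E:DOWNSTAIRSUPSTAIRSSRADUNITPLUSLUNITISAFUNCTIONOFPSI} to replace $\Radunit^i = -\Lunit^i - (g^{-1})^{0i}$, this splits into a piece proportional to $\Lunit^i$ and a piece proportional to $(g^{-1})^{0i}$, which is already the form of the first two lines of the claimed formula. The factor $\Lunit\upmu$ is then rewritten via the transport equation \eqref{E:UPMUFIRSTTRANSPORT}, which produces exactly the $G_{\Lunit\Lunit}\Rad\Psi$, $\upmu G_{\Lunit\Lunit}\Lunit\Psi$, and $\upmu G_{\Lunit\Radunit}\Lunit\Psi$ coefficients appearing in the lemma.

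Next I would handle $\upmu \Lunit \Radunit^i$. Using \eqref{E:DOWNSTAIRSUPSTAIRSSRADUNITPLUSLUNITISAFUNCTIONOFPSI} again, $\Lunit \Radunit^i = -\Lunit \Lunit^i - \Lunit[(g^{-1})^{0i}]$. The first term $\Lunit \Lunit^i = \Lunit \Lunit_{(Small)}^i$ is given by \eqref{E:LLUNITI}. The second term is computed via the chain rule $\Lunit[(g^{-1})^{\mu\nu}] = -(g^{-1})^{\mu\alpha}(g^{-1})^{\nu\beta}G_{\alpha\beta}\Lunit\Psi$, then decomposed using the frame expansion \eqref{E:GINVERSEFRAMEWITHRECTCOORDINATESFORGSPHEREINVERSE} of $g^{-1}$; the relevant contractions produce the $\upmu G_{\Radunit\Radunit}\Lunit\Psi$ contribution (this is where that term originates) together with $\upmu G_{\Lunit\Radunit}\Lunit\Psi$-type pieces that combine with those already present. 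All $\angG$-type contributions from this decomposition naturally yield terms proportional to $\angdiff x^i$, contributing to the third and fourth lines of \eqref{E:RADLUNITI}.

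For the commutator $[\Rad,\Lunit]x^i$, I would use the torsion-free property of $\D$ to write $[\Rad,\Lunit]^i = (\D_{\Rad}\Lunit)^i - (\D_{\Lunit}\Rad)^i$, and then expand these covariant derivatives in the non-rescaled frame $\{\Lunit,\Radunit,\CoordAng\}$. The frame components of $\D_\bullet\Lunit$ and $\D_\bullet\Radunit$ are encoded by the null second fundamental form $\upchi$ (via \eqref{E:CHIUSEFULID}), the torsion $\upzeta$ (via \eqref{E:ZETADEF}), $\angk$, and $\Lunit\upmu$ (accounting for the rescaling $\Rad = \upmu\Radunit$). Using the decomposition identities \eqref{E:ZETADECOMPOSED}--\eqref{E:KABGOOD} to split $\upzeta$ and $\angk$ into $\upmu^{-1}\upzeta^{(Trans-\Psi)}$ and $\upzeta^{(Tan-\Psi)}$ pieces, the transversal parts contribute a term $-\angGmixedarg{\Lunit}{\#}\Rad\Psi \cdot \angdiff x^i$, while other pieces generate $\frac{1}{2}\upmu G_{\Radunit\Radunit}\angdiffuparg{\#}\Psi\cdot\angdiff x^i$ as well as the distinguished term $(\angdiffuparg{\#}\upmu)\cdot\angdiff x^i$ (which arises from the $\upmu$-rescaling in $\Rad = \upmu\Radunit$ when one expands $\D_\Lunit(\upmu\Radunit)$).

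The main obstacle will be the bookkeeping: several contributions to the $\Lunit^i$ and $(g^{-1})^{0i}$ components appear in multiple places (from $\Lunit\upmu$, from the chain-rule derivative of $(g^{-1})^{0i}$, and from the commutator), and careful cancellation using \eqref{E:METRICFRAMEDECOMPLUNITRADUNITFRAME}--\eqref{E:GINVERSEFRAMEWITHRECTCOORDINATESFORGSPHEREINVERSE} is needed to confirm that the final totals match the symmetric structure displayed in \eqref{E:RADLUNITI} (namely that the coefficients of $\Lunit^i$ and $(g^{-1})^{0i}$ coincide). I expect no conceptual difficulty beyond this algebraic consolidation, since all constituent identities have already been established.
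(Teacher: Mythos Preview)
The paper does not give a proof of this lemma; it is cited directly from \cite{jSgHjLwW2016}*{Lemma~2.14} and stated without argument. So there is no ``paper's own proof'' to compare against here.

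Your strategy is sound and will produce the identity. A few remarks on the details. The commutator $[\Rad,\Lunit]$ is in fact $\ell_{t,u}$-tangent (since $[\Rad,\Lunit]t = [\Rad,\Lunit]u = 0$ by \eqref{E:LUNITOFUANDT}--\eqref{E:RADOFUANDT}), and one computes directly from the frame relations that
\[
[\Rad,\Lunit] = (\angdiffuparg{\#}\upmu) + 2\upmu\,\upzeta^{\#},
\]
which upon inserting \eqref{E:ZETADECOMPOSED}--\eqref{E:ZETAGOOD} yields the $(\angdiffuparg{\#}\upmu)\cdot\angdiff x^i$ and $-\angGmixedarg{\Lunit}{\#}\Rad\Psi\cdot\angdiff x^i$ terms, plus additional $\upmu\,\upzeta^{(Tan-\Psi)}$ pieces. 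Your attribution of the $\angdiff\upmu$ term to ``expanding $\D_\Lunit(\upmu\Radunit)$'' is slightly off: it actually emerges from the $\D_\Rad\Lunit$ side, via $\CoordAng(g(\Lunit,\Rad)) = -\CoordAng\upmu$ when you unwind $g(\D_\Rad\Lunit,\CoordAng)$. This is a bookkeeping detail that would resolve itself in the computation; the point is that the commutator carries more $\angdiff x^i$-type terms than appear in \eqref{E:RADLUNITI}, and these must cancel against contributions from $\Lunit\Rad^i$ (specifically from $-\upmu\Lunit[(g^{-1})^{0i}]$ decomposed via \eqref{E:GINVERSEFRAMEWITHRECTCOORDINATESFORGSPHEREINVERSE} and from $-\upmu\Lunit\Lunit^i$ via \eqref{E:LLUNITI}). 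Your anticipation that ``careful cancellation'' is the main obstacle is exactly right.
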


\subsection{Useful expressions for the null second fundamental form}
The identities provided by the following lemma are convenient for deriving estimates for
$\upchi$ and related quantities.

\begin{lemma}\cite{jSgHjLwW2016}*{Lemma 2.15; \textbf{Identities involving} $\upchi$}
	\label{L:IDFORCHI}
	We have the following identities:
	\begin{subequations}
	\begin{align} \label{E:CHIINTERMSOFOTHERVARIABLES}
	\upchi 
	& = g_{ab} (\angdiff \Lunit^a) \otimes \angdiff x^b
		+ \frac{1}{2} \angG \Lunit \Psi,
			\\
	\mytr \upchi 
	& = g_{ab} \ginversesphere \cdot \left\lbrace (\angdiff \Lunit^a) \otimes \angdiff x^b \right\rbrace
		+ \frac{1}{2} \ginversesphere \cdot \angG \Lunit \Psi,
			 \label{E:TRCHIINTERMSOFOTHERVARIABLES}
			\\
	\Lunit \ln \gtancomp
	& = \mytr \upchi,
	\label{E:LDERIVATIVEOFVOLUMEFORMFACTOR}
\end{align}
\end{subequations}
where 
$\upchi$ is the $\ell_{t,u}$-tangent tensorfield defined by \eqref{E:CHIDEF}
and $\gtancomp$ is the metric component from Def.~\ref{D:METRICANGULARCOMPONENT}.
\end{lemma}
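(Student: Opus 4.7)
My plan is to verify the three identities in order, exploiting the fact that in two spatial dimensions each torus $\ell_{t,u}$ is one-dimensional, so that any symmetric type $\binom{0}{2}$ $\ell_{t,u}$-tangent tensor is determined by its single component on the pair $(\CoordAng,\CoordAng)$. Thus for \eqref{E:CHIINTERMSOFOTHERVARIABLES} and \eqref{E:TRCHIINTERMSOFOTHERVARIABLES} it suffices to verify the scalar identity obtained by contracting with $\CoordAng \otimes \CoordAng$, and \eqref{E:LDERIVATIVEOFVOLUMEFORMFACTOR} then follows from a direct computation of this scalar together with the definition of $\gtancomp$.

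For \eqref{E:CHIINTERMSOFOTHERVARIABLES}, I would start from the already-established formula $\upchi_{\CoordAng\CoordAng}=g(\D_{\CoordAng}\Lunit,\CoordAng)$ in \eqref{E:CHIUSEFULID} and expand $\D_{\CoordAng}\Lunit$ in Cartesian coordinates as
\begin{align*}
\D_{\CoordAng}\Lunit
=(\CoordAng \Lunit^{\alpha})\partial_{\alpha}+\Lunit^{\alpha}\CoordAng^{\beta}\Gamma_{\beta\alpha}^{\ \ \gamma}\partial_{\gamma}.
\end{align*}
Using $g_{\alpha\beta}=g_{\alpha\beta}(\Psi)$ and \eqref{E:BIGGDEF}, the Christoffel symbols read
$\Gamma_{\beta\alpha\gamma}=\frac{1}{2}(G_{\alpha\gamma}\partial_{\beta}\Psi+G_{\beta\gamma}\partial_{\alpha}\Psi-G_{\beta\alpha}\partial_{\gamma}\Psi)$, so contraction with $\Lunit^{\alpha}\CoordAng^{\beta}\CoordAng^{\delta}$ yields
$\frac{1}{2}G_{\Lunit\CoordAng}(\CoordAng\Psi)+\frac{1}{2}G_{\CoordAng\CoordAng}(\Lunit\Psi)-\frac{1}{2}G_{\CoordAng\Lunit}(\CoordAng\Psi)=\frac{1}{2}G_{\CoordAng\CoordAng}\Lunit\Psi$, the two $(\CoordAng\Psi)$ terms canceling by symmetry of $G$. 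For the remaining ``linear'' piece $g_{\alpha\beta}(\CoordAng \Lunit^{\alpha})\CoordAng^{\beta}$, the facts $\Lunit^{0}=1$ (so $\CoordAng\Lunit^{0}=0$) from \eqref{E:LUNITOFUANDT} and $\CoordAng^{0}=0$ reduce the sum to $g_{ab}(\CoordAng \Lunit^{a})\CoordAng^{b}$, which is exactly $\bigl[g_{ab}(\angdiff \Lunit^{a})\otimes \angdiff x^{b}\bigr](\CoordAng,\CoordAng)$. Combined with $\angG(\CoordAng,\CoordAng)=G_{\CoordAng\CoordAng}$, this confirms \eqref{E:CHIINTERMSOFOTHERVARIABLES} on $(\CoordAng,\CoordAng)$, which, as noted above, is enough.

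Identity \eqref{E:TRCHIINTERMSOFOTHERVARIABLES} then follows by contracting both sides of \eqref{E:CHIINTERMSOFOTHERVARIABLES} with $\ginversesphere$ and using linearity of the $\gsphere$-trace. For \eqref{E:LDERIVATIVEOFVOLUMEFORMFACTOR}, I would compute $\upchi_{\CoordAng\CoordAng}$ directly from the definition \eqref{E:CHIDEF}: since $\Lunit=\partial/\partial t$ and $\CoordAng=\partial/\partial \vartheta$ are commuting coordinate vectorfields,
\begin{align*}
\upchi_{\CoordAng\CoordAng}=\tfrac{1}{2}(\angLie_{\Lunit}\gsphere)(\CoordAng,\CoordAng)=\tfrac{1}{2}\Lunit\bigl(\gsphere(\CoordAng,\CoordAng)\bigr)=\tfrac{1}{2}\Lunit(\gtancomp^{2})=\gtancomp\,\Lunit\gtancomp.
\end{align*}
Since $\ginversesphere^{\vartheta\vartheta}=\gtancomp^{-2}$, tracing gives $\mytr\upchi=\gtancomp^{-1}\Lunit\gtancomp=\Lunit\ln\gtancomp$.

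Conceptually the argument is routine; the main obstacle is purely bookkeeping in the Christoffel-symbol expansion for \eqref{E:CHIINTERMSOFOTHERVARIABLES}, where one must carefully track Cartesian versus geometric indices and invoke the symmetry of $G$ at the precise moment that produces the desired cancellation of the $(\CoordAng\Psi)$-terms. The remaining identities are essentially immediate consequences once the first one has been secured.
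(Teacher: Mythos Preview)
Your proposal is correct and complete. The paper itself does not supply a proof of this lemma but simply cites \cite{jSgHjLwW2016}*{Lemma~2.15}; your direct verification---exploiting the one-dimensionality of $\ell_{t,u}$ to reduce the tensorial identity \eqref{E:CHIINTERMSOFOTHERVARIABLES} to a single scalar computation on $(\CoordAng,\CoordAng)$, and then reading off \eqref{E:TRCHIINTERMSOFOTHERVARIABLES} and \eqref{E:LDERIVATIVEOFVOLUMEFORMFACTOR} essentially by inspection---is exactly the natural route and stands on its own.
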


\subsection{Arrays of unknowns and schematic notation}
\label{SS:ARRAYS}
We often use the following arrays for convenient shorthand notation
when analyzing quantities that are tied to the fast wave variable
and the eikonal function quantities.

\begin{definition}[\textbf{Shorthand notation for the fast wave variable and the eikonal function quantities}]
\label{D:ABBREIVATEDVARIABLES}
We define the following arrays $\GdVar$ and $\BadVar$ of scalar functions:
\begin{subequations}
\begin{align}
	\GdVar 
	& := \left(\Psi, \Lunit_{(Small)}^1, \Lunit_{(Small)}^2 \right),
			\label{E:GOODABBREIVATEDVARIABLES} \\
	\BadVar 
	& := \left(\Psi, \upmu - 1, \Lunit_{(Small)}^1, \Lunit_{(Small)}^2 \right).
	\label{E:BADABBREIVATEDVARIABLES}
\end{align}
\end{subequations}
\end{definition}

\begin{notation}[\textbf{Schematic functional dependence}]
\label{N:SCHEMATICTENSORFIELDPRODUCTS}
In the remainder of the article, we use the notation
$\smoothfunction(\xi_{(1)},\xi_{(2)},\cdots,\xi_{(m)})$ to schematically depict
an expression (often tensorial and involving contractions)
that depends smoothly on the $\ell_{t,u}$-tangent tensorfields $\xi_{(1)}, \xi_{(2)}, \cdots, \xi_{(m)}$.
Note that in general, $\smoothfunction(0) \neq 0$.
\end{notation}

\begin{notation}[\textbf{The meaning of the symbol} $\Singletan$]
	Throughout, $\Singletan$ schematically denotes a 
	differential operator that is tangent to the characteristics 
	$\mathcal{P}_u$, such as $\Lunit$, $\GeoAng$, or $\angdiff$.
	For example, $\Singletan f$ might denote
	$\angdiff f$ or $\Lunit f$. We use such notation when
	the precise details of $\Singletan$ are not important.
\end{notation}

Many of the geometric tensorfields that we have defined can be expressed as functions
of $\GdVar$, $\BadVar$, $\ginversesphere$, and $\lbrace \angdiff x^a \rbrace_{a=1,2}$.
When deriving estimates for these tensorfields, it often will suffice for us to have only
crude information about the functional dependence. The next lemma is the main result 
in this direction.

\begin{lemma}[\textbf{Schematic structure of various scalar functions and tensorfields}]
	\label{L:SCHEMATICDEPENDENCEOFMANYTENSORFIELDS}
	We have the following schematic relations for scalar functions:
	\begin{subequations}
	\begin{align} \label{E:SCALARSDEPENDINGONGOODVARIABLES}
		g_{\alpha \beta},
		(g^{-1})^{\alpha \beta},
		(\gt^{-1})^{\alpha \beta},
		\gsphere_{\alpha \beta},
		(\ginversesphere)^{\alpha \beta},
		G_{\alpha \beta},
		G_{\alpha \beta}',
		\Lineproject_{\beta}^{\ \alpha},
		\Lunit^{\alpha}, 
		\Radunit^{\alpha},
		\GeoAng^{\alpha}
		& = \smoothfunction(\GdVar),
			\\
		G_{\Lunit \Lunit},
		G_{\Lunit \Radunit},
		G_{\Radunit \Radunit},
		G_{\Lunit \Lunit}',
		G_{\Lunit \Radunit}',
		G_{\Radunit \Radunit}'
		& = \smoothfunction(\GdVar),
			\label{E:GFRAMESCALARSDEPENDINGONGOODVARIABLES} \\
		g_{\alpha \beta}^{(Small)},
		\GeoAng_{(Small)}^{\alpha},
		\Radunit_{(Small)}^{\alpha},
		\GeoAngFlatRadComponent
		& = \smoothfunction(\GdVar) \GdVar,
			\label{E:LINEARLYSMALLSCALARSDEPENDINGONGOODVARIABLES}
			\\
		\Rad^{\alpha} 
		& = \smoothfunction(\BadVar).
			\label{E:SCALARSDEPENDINGONBADVARIABLES}
	\end{align}
	\end{subequations}

	Moreover, we have the following schematic relations for $\ell_{t,u}$-tangent tensorfields:
	\begin{subequations}
	\begin{align}
		\gsphere,
		\angGarg{\Lunit},
		\angGarg{\Radunit},
		\angG,
		\angGprimearg{\Lunit},
		\angGprimearg{\Radunit},
		\angGprime
		& = \smoothfunction(\GdVar,\angdiff x^1,\angdiff x^2),
			\label{E:TENSORSDEPENDINGONGOODVARIABLES} \\
	\GeoAng
	& = \smoothfunction(\GdVar,\ginversesphere,\angdiff x^1,\angdiff x^2),
			\label{E:TENSORSDEPENDINGONGOODVARIABLESANDGINVERSESPHERE}
			\\
	\upzeta^{(Tan-\Psi)},
	\angk^{(Tan-\Psi)} 
	& = \smoothfunction(\GdVar,\angdiff x^1,\angdiff x^2) \Singletan \Psi,
		\label{E:TENSORSDEPENDINGONGOODVARIABLESGOODPSIDERIVATIVES}
			\\
	\upzeta^{(Trans-\Psi)},
	\angk^{(Trans-\Psi)}
	& = \smoothfunction(\GdVar,\angdiff x^1,\angdiff x^2) \Rad \Psi,
		\label{E:TENSORSDEPENDINGONGOODVARIABLESBADDERIVATIVES} \\
	\upchi 
	& = \smoothfunction(\GdVar,\angdiff x^1,\angdiff x^2) \Singletan \GdVar,
		\label{E:TENSORSDEPENDINGONGOODVARIABLESGOODDERIVATIVES}
			\\
	\mytr \upchi 
	& = \smoothfunction(\GdVar,\ginversesphere,\angdiff x^1,\angdiff x^2) \Singletan \GdVar.
	\label{E:TENSORSDEPENDINGONGOODVARIABLESGOODDERIVATIVESANDGINVERSESPHERE}
  \end{align}
\end{subequations}
	
Finally, the null form $\mathcal{Q}^g(\partial \Psi, \partial \Psi)$ defined in \eqref{E:STANDARDNULLFORM},
upon being multiplied by $\upmu$,
has the following schematic structure:
\begin{align} \label{E:UPMUTIMESNULLFORMSSCHEMATIC}
	\upmu \mathcal{Q}^g(\partial \Psi, \partial \Psi)
	& =
	\smoothfunction(\BadVar,\Rad \Psi,\Singletan \Psi) 
	\Singletan \Psi.
\end{align}
\end{lemma}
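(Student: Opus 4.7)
The proof is essentially a systematic unpacking of the definitions, trading each constructed object for an explicit expression in the basic variables. The plan is to proceed in three stages: first handle the scalar relations \eqref{E:SCALARSDEPENDINGONGOODVARIABLES}--\eqref{E:SCALARSDEPENDINGONBADVARIABLES}, then the $\ell_{t,u}$-tangent tensorfield relations \eqref{E:TENSORSDEPENDINGONGOODVARIABLES}--\eqref{E:TENSORSDEPENDINGONGOODVARIABLESGOODDERIVATIVESANDGINVERSESPHERE}, and finally the special structure \eqref{E:UPMUTIMESNULLFORMSSCHEMATIC} of the $\upmu$-rescaled null form.

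For the scalar relations, I start from the assumed smooth dependence $g_{\alpha\beta} = g_{\alpha\beta}(\Psi)$, which gives \eqref{E:GFRAMESCALARSDEPENDINGONGOODVARIABLES} for $G_{\alpha\beta}$ and $G_{\alpha\beta}'$ directly from \eqref{E:BIGGDEF}. Smoothness of $(g^{-1})^{\alpha\beta}$ in $\Psi$ follows for $\Psi$ small by Cramer's rule, and the spatial and tangential inverse components $(\gt^{-1})^{\alpha\beta}$, $(\ginversesphere)^{\alpha\beta}$ then reduce to algebraic combinations of $(g^{-1})^{\alpha\beta}$ and the frame vectors via \eqref{E:GGTINVERSEANDGSPHEREINVERSEDEF} and \eqref{E:GINVERSEFRAMEWITHRECTCOORDINATESFORGSPHEREINVERSE}. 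For the frame components, I use $\Lunit^0 = 1$ together with $\Lunit^i = \delta_1^i + \Lunit_{(Small)}^i$, so $\Lunit^\alpha = \smoothfunction(\GdVar)$; then \eqref{E:DOWNSTAIRSUPSTAIRSSRADUNITPLUSLUNITISAFUNCTIONOFPSI} gives $\Radunit^\alpha$ as a function of $\Lunit^\alpha$ and $(g^{-1})^{0\alpha}$, hence $\smoothfunction(\GdVar)$; the projection tensor \eqref{E:LINEPROJECTION} and formula \eqref{E:GEOANGDEF} for $\GeoAng^\alpha$ are then built from these. The ``linearly small'' relations \eqref{E:LINEARLYSMALLSCALARSDEPENDINGONGOODVARIABLES} follow by noting that $g^{(Small)}$, $\Radunit_{(Small)}$, $\GeoAng_{(Small)}$, $\GeoAngFlatRadComponent$ all vanish when $\GdVar = 0$ (using \eqref{E:METRICPERTURBATIONFUNCTION}, \eqref{E:DOWNSTAIRSUPSTAIRSSRADUNITPLUSLUNITISAFUNCTIONOFPSI}, \eqref{E:GEOANGSMALLINTERMSOFRADUNIT}, and \eqref{E:FLATYDERIVATIVERADIALCOMPONENT}), so Taylor expansion in $\GdVar$ produces the claimed factor of $\GdVar$. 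The only place where $\upmu$ appears is $\Rad = \upmu \Radunit$, yielding \eqref{E:SCALARSDEPENDINGONBADVARIABLES}.

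For the $\ell_{t,u}$-tangent tensorfields, I use the same idea but keep track of where $\ginversesphere$ and the one-forms $\angdiff x^1$, $\angdiff x^2$ enter. Relative to the frame $\{\Lunit,\Radunit,\CoordAng\}$, any $\ell_{t,u}$-tangent tensorfield can be recovered from its scalar components times tensor products of $\angdiff x^a$ and $\ginversesphere$, so \eqref{E:TENSORSDEPENDINGONGOODVARIABLES} follows by writing $\gsphere$ via \eqref{E:METRICFRAMEDECOMPLUNITRADUNITFRAME}--\eqref{E:GINVERSEFRAMEWITHRECTCOORDINATESFORGSPHEREINVERSE} and the various $G$-projections by contraction with $\Lineproject$. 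The dependence of $\GeoAng$ on $\ginversesphere$ in \eqref{E:TENSORSDEPENDINGONGOODVARIABLESANDGINVERSESPHERE} traces back to \eqref{E:GEOANGDEF}. Then \eqref{E:TENSORSDEPENDINGONGOODVARIABLESGOODPSIDERIVATIVES} and \eqref{E:TENSORSDEPENDINGONGOODVARIABLESBADDERIVATIVES} are immediate from the explicit formulas \eqref{E:ZETATRANSVERSAL}--\eqref{E:KABGOOD}, while \eqref{E:TENSORSDEPENDINGONGOODVARIABLESGOODDERIVATIVES}--\eqref{E:TENSORSDEPENDINGONGOODVARIABLESGOODDERIVATIVESANDGINVERSESPHERE} follow from the identities \eqref{E:CHIINTERMSOFOTHERVARIABLES}--\eqref{E:TRCHIINTERMSOFOTHERVARIABLES}, using that $\angdiff \Lunit^a$ is a $\Singletan$-derivative of $\Lunit^a = \delta_1^a + \Lunit_{(Small)}^a$ and hence a $\Singletan$-derivative of $\GdVar$.

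The most substantive point, and the only one requiring a small observation beyond bookkeeping, is \eqref{E:UPMUTIMESNULLFORMSSCHEMATIC}. The plan is to substitute the frame decomposition \eqref{E:GINVERSEFRAMEWITHRECTCOORDINATESFORGSPHEREINVERSE} of $g^{-1}$ into \eqref{E:STANDARDNULLFORM} and then rewrite $\upmu \Radunit = \Rad$, yielding the schematic identity
\begin{align*}
\upmu \mathcal{Q}^g(\partial \Psi,\partial \Psi)
& = -\upmu (\Lunit \Psi)^2 - 2 (\Lunit \Psi)(\Rad \Psi) + \upmu |\angdiff \Psi|_{\gsphere}^2.
\end{align*}
Every term on the right has at least one $\Singletan \Psi$ factor (either $\Lunit \Psi$ or $\angdiff \Psi$), and the remaining factor in the $(\Lunit\Psi)(\Rad\Psi)$ term is $\Rad\Psi$, while the scalar/tensor coefficients are functions of $\BadVar$ via $(\ginversesphere)^{\mu\nu} = \smoothfunction(\BadVar)$ and $\upmu - 1 \in \BadVar$. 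This produces the claimed form $\smoothfunction(\BadVar,\Rad\Psi,\Singletan\Psi)\,\Singletan\Psi$. The main obstacle is simply the organizational one of verifying that all previously derived identities assemble consistently with Notation~\ref{N:SCHEMATICTENSORFIELDPRODUCTS}; once the frame decomposition of $g^{-1}$ is applied and $\upmu$ is absorbed into $\Rad$, the critical structural point (linearity in $\Rad\Psi$) is visible, and there is no genuine analytic difficulty.
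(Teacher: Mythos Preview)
Your proposal is correct and takes essentially the same approach as the paper. The paper's proof cites \cite{jSgHjLwW2016}*{Lemma 2.19} for all relations except \eqref{E:UPMUTIMESNULLFORMSSCHEMATIC}, and for that one uses exactly the frame decomposition of $g^{-1}$ that you carry out explicitly (writing $(\ginversesphere)^{\mu\nu}$ as $\frac{1}{g_{ab}\GeoAng^a\GeoAng^b}\GeoAng^\mu\GeoAng^\nu$), so your treatment of the null form is the same argument in slightly more detail; your handling of the remaining relations amounts to supplying the details that the paper outsources to the cited reference.
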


\begin{proof}
	All relations were proved in 
	\cite{jSgHjLwW2016}*{Lemma 2.19} except for
	\eqref{E:UPMUTIMESNULLFORMSSCHEMATIC}.
	\eqref{E:UPMUTIMESNULLFORMSSCHEMATIC} follows from the identity
	$
	g^{-1} 
	= - \Lunit \otimes \Lunit
		- \Lunit \otimes \Radunit
		- \Radunit \otimes \Lunit
		+ \frac{1}{g_{ab} \GeoAng^a \GeoAng^b} \GeoAng \otimes \GeoAng
	$
	(which follows easily from \eqref{E:GINVERSEFRAMEWITHRECTCOORDINATESFORGSPHEREINVERSE})
	and the other schematic relations provided by the lemma.
	
\end{proof}

\subsection{Frame decomposition of the wave operator}
\label{SS:FRAMEDCOMPOFBOX}

In the following proposition, 
we decompose $\upmu \square_{g(\Psi)} f$ relative to the rescaled frame \eqref{E:RESCALEDFRAME}.

\begin{proposition}\cite{jSgHjLwW2016}*{Proposition~2.16; \textbf{Frame decomposition of $\upmu \square_{g(\Psi)} f$}}
	\label{P:GEOMETRICWAVEOPERATORFRAMEDECOMPOSED}
	Let $f$ be a scalar function.
	Then relative to the rescaled frame $\lbrace \Lunit, \Rad, \CoordAng \rbrace$,
	$\upmu \square_{g(\Psi)} f$ can be expressed in either of the following two forms:
	\begin{subequations}
	\begin{align} \label{E:LONOUTSIDEGEOMETRICWAVEOPERATORFRAMEDECOMPOSED}
		\upmu \square_{g(\Psi)} f 
			& = - \Lunit(\upmu \Lunit f + 2 \Rad f)
				+ \upmu \angLap f
				- \mytr \upchi \Rad f
				- \upmu \mytr \angk \Lunit f
				- 2 \upmu \upzeta^{\#} \cdot \angdiff f,
					\\
			& = - (\upmu \Lunit + 2 \Rad) (\Lunit f) 
				+ \upmu \angLap f
				- \mytr \upchi \Rad f
				- \upomega \Lunit f
				+ 2 \upmu \upzeta^{\#} \cdot \angdiff f
				+ 2 (\angdiffuparg{\#} \upmu) \cdot \angdiff f,
				\label{E:LONINSIDEGEOMETRICWAVEOPERATORFRAMEDECOMPOSED}
	\end{align}
	\end{subequations}
	where the $\ell_{t,u}$-tangent tensorfields
	$\upchi$,
	$\upzeta$,
	and
	$\angk$
	can be expressed via
	\eqref{E:CHIINTERMSOFOTHERVARIABLES},
	\eqref{E:ZETADECOMPOSED},
	and
	\eqref{E:ANGKDECOMPOSED}.
\end{proposition}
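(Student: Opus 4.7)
The plan is to start from the coordinate-free definition $\square_g f = (g^{-1})^{\alpha\beta}\D_\alpha \D_\beta f$, insert the frame decomposition \eqref{E:GINVERSEFRAMEWITHRECTCOORDINATESFORGSPHEREINVERSE} of $g^{-1}$, and reorganize the resulting terms using the identifications of the connection coefficients with the $\ell_{t,u}$-tangent tensorfields $\upchi$, $\angk$, and $\upzeta$. Concretely, after multiplying by $\upmu$ and using $\upmu \Radunit = \Rad$, one arrives at the identity
\begin{align*}
\upmu \square_g f
& = -\upmu (\D^2 f)(\Lunit,\Lunit) - 2 (\D^2 f)(\Lunit,\Rad) + \upmu (\ginversesphere)^{\alpha\beta} (\D^2 f)_{\alpha\beta},
\end{align*}
where the symmetry of the Hessian has been used to combine the two $\Lunit$--$\Rad$ cross-terms.

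Next I would expand each Hessian entry via $(\D^2 f)(V,W) = V(Wf) - (\D_V W)f$ and identify each $\D_V W$ piece with the already-named frame connection coefficients. The angular contraction $\upmu(\ginversesphere)^{\alpha\beta}(\D^2 f)_{\alpha\beta}$ reproduces $\upmu \angLap f$ up to a term in which the normal component of $\angD_A \GeoAng_B$ contracts with $\D_{\Lunit}$-- and $\D_{\Radunit}$--type derivatives of $f$; the normal parts are precisely $\upchi$ (contributing $-\mytr \upchi\, \Rad f$ after using $\Rad = \upmu \Radunit$ and the definition \eqref{E:CHIUSEFULID}) and the mixed tensor $\angk$ (contributing $-\upmu \mytr \angk\, \Lunit f$ via \eqref{E:KABTRANSVERSAL}--\eqref{E:KABGOOD} together with $g(\D_\CoordAng \Lunit, \Radunit)$-type identities). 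The cross-term $-2(\D^2 f)(\Lunit,\Rad)$ yields $-2\Lunit\Rad f$ plus $2(\D_\Lunit \Rad)f$; combining the latter with $-\upmu(\D^2 f)(\Lunit,\Lunit)$ and using the facts that $\Lunit$ is null with $\D_\Lgeo \Lgeo = 0$ (so that $\D_\Lunit \Lunit = (\Lunit \ln \upmu)\Lunit$) absorbs the remaining first-order pieces into $-\Lunit(\upmu \Lunit f + 2 \Rad f)$ and leaves precisely the displayed $-2\upmu \upzeta^\# \cdot \angdiff f$ term after using \eqref{E:ZETADEF} and its decomposition \eqref{E:ZETADECOMPOSED}. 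This produces the first form \eqref{E:LONOUTSIDEGEOMETRICWAVEOPERATORFRAMEDECOMPOSED}.

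For the second form \eqref{E:LONINSIDEGEOMETRICWAVEOPERATORFRAMEDECOMPOSED}, the strategy is to start from the first form and commute the outer $\Lunit$ past $\Rad$, writing
\begin{align*}
-\Lunit(\upmu \Lunit f + 2 \Rad f)
& = -(\upmu \Lunit + 2\Rad)(\Lunit f) - (\Lunit \upmu)\Lunit f - 2[\Lunit,\Rad]f.
\end{align*}
The commutator $[\Lunit, \Rad]$ is a $\Sigma_t$-tangent vectorfield whose decomposition (computed from the torsion-free property of $\D$ and the definitions of $\upzeta$, $\upmu$, and $\Rad = \upmu \Radunit$) produces an $\ell_{t,u}$-tangent part proportional to $\upzeta^\# - \upmu^{-1}(\angdiff \upmu)^\#$ together with a purely $\Lunit$-directional piece which, when combined with $-(\Lunit\upmu)\Lunit f$, can be absorbed into the coefficient labelled $\upomega$. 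The sign flip in front of $\upzeta^\#$ and the appearance of the $2(\angdiffuparg{\#} \upmu)\cdot \angdiff f$ term are both accounted for in this way.

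The main obstacle is not conceptual but organizational: keeping track of the $\upmu$-weights when converting between $\Radunit$ and $\Rad$ and correctly identifying which pieces of the spacetime Christoffel symbols correspond to the intrinsic torus quantities $\upchi$, $\angk$, $\upzeta$ versus the extrinsic pieces involving $\D_\Lunit \Lunit$ and $[\Lunit,\Rad]$. The cleanest way to manage this is to work term-by-term in the $9$-entry frame decomposition of $g^{-1}$ rather than with Cartesian indices, using \eqref{E:CHIUSEFULID}--\eqref{E:KABGOOD} and \eqref{E:ZETADEF} as the bookkeeping dictionary, so that no Christoffel symbol ever needs to be computed explicitly.
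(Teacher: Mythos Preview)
The paper does not prove this proposition; it simply cites it from \cite{jSgHjLwW2016}*{Proposition~2.16}. Your outlined approach---inserting the frame decomposition \eqref{E:GINVERSEFRAMEWITHRECTCOORDINATESFORGSPHEREINVERSE} of $g^{-1}$ into the Hessian, expanding $(\D^2 f)(V,W) = V(Wf) - (\D_V W)f$ term by term, and identifying the normal components of $\D_V W$ with $\upchi$, $\angk$, $\upzeta$ via \eqref{E:CHIUSEFULID} and \eqref{E:ZETADEF}---is the standard route and is essentially what the cited reference carries out. The passage from \eqref{E:LONOUTSIDEGEOMETRICWAVEOPERATORFRAMEDECOMPOSED} to \eqref{E:LONINSIDEGEOMETRICWAVEOPERATORFRAMEDECOMPOSED} via the commutator $[\Lunit,\Rad]$ is also the right idea.

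A couple of places where your bookkeeping is slightly loose: the claim that ``$\D_\Lunit \Lunit = (\Lunit \ln\upmu)\Lunit$ absorbs the remaining first-order pieces into $-\Lunit(\upmu\Lunit f + 2\Rad f)$'' is not quite complete as stated, since $-\upmu(\D^2 f)(\Lunit,\Lunit) = -\Lunit(\upmu\Lunit f) + 2(\Lunit\upmu)\Lunit f$ leaves a residual $2(\Lunit\upmu)\Lunit f$ that must cancel against pieces coming from the $\Lunit\Rad$ cross-term (specifically from the $\Radunit$-component of $\D_\Lunit\Rad$, which involves $\Lunit\upmu$). Similarly, your attribution of the $-\upmu\,\mytr\angk\,\Lunit f$ term to ``$g(\D_\CoordAng\Lunit,\Radunit)$-type identities'' is slightly off: that contraction is $\upzeta$, not $\angk$. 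The $\angk$ contribution arises instead from the $\Timenormal$-normal component of $\D_{e_A}e_A$ in the angular trace, via $g(\D_{e_A}e_A,\Timenormal) = -k_{AA}$. These are minor misattributions rather than errors in the strategy; once you actually write out all nine frame entries of the Hessian and match coefficients, the identities fall out as claimed.
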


\subsection{Relationship between Cartesian and geometric partial derivative vectorfields}
\label{SS:CARTESIANVSGEOMETRICPARTIALDERIVATIVES}
In the next lemma, we provide explicit expressions for the Cartesian coordinate partial derivative
vectorfields $\partial_{\nu}$ as 
(solution-dependent) linear combinations of the commutation vectorfields of Def.~\ref{D:COMMUTATIONVECTORFIELDS}.

\begin{lemma}[{\textbf{Expression for} $\partial_{\nu}$ \textbf{in terms of geometric vectorfields}}]
	\label{L:CARTESIANVECTORFIELDSINTERMSOFGEOMETRICONES}
	We can express the Cartesian coordinate partial derivative 
	vectorfields in terms of 
	$\Lunit$, $\Radunit$, and $\GeoAng$ as follows,
	$(i=1,2)$:
	\begin{subequations}
	\begin{align} 
		\partial_t
		& = 
			\Lunit
			-  
			(g_{\alpha 0} \Lunit^{\alpha}) \Radunit
			+
			\left(
				\frac{g_{a0} \GeoAng^a}{g_{cd} \GeoAng^c \GeoAng^d}
			\right)
			\GeoAng,
			\label{E:PARTIALTINTERMSOFLUNITRADUNITANDGEOANG} \\
		\partial_i
		& = (g_{ai} \Radunit^a) \Radunit 
			+ 
			\left(
				\frac{g_{ai} \GeoAng^a}{g_{cd} \GeoAng^c \GeoAng^d}
			\right)
			\GeoAng.
			\label{E:PARTIALIINTERMSOFRADUNITANDGEOANG}
	\end{align}
	\end{subequations}
\end{lemma}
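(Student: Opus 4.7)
The plan is to exploit the fact that the frame $\lbrace \Lunit, \Radunit, \GeoAng \rbrace$ from \eqref{E:UNITFRAME} spans the tangent space at every point where $\upmu > 0$, so that each Cartesian coordinate vectorfield admits a unique expansion of the form
\begin{align*}
	\partial_{\nu}
	& = a_{(\nu)} \Lunit + b_{(\nu)} \Radunit + c_{(\nu)} \GeoAng,
\end{align*}
and to solve for the coefficients $a_{(\nu)}, b_{(\nu)}, c_{(\nu)}$ by taking $g$-inner products with each frame element. By \eqref{E:RADIALVECTORFIELDSLENGTHS}--\eqref{E:LRADIALVECTORFIELDSNORMALIZATIONS}, the null vectorfield $\Lunit$ satisfies $g(\Lunit,\Lunit) = 0$, while $g(\Radunit,\Radunit) = 1$ and $g(\Lunit,\Radunit) = -1$. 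Moreover, since $\Lgeo$ is $g$-normal to $\mathcal{P}_u$ (hence to $\GeoAng$, which is $\ell_{t,u}$-tangent), and since $\Radunit$ is $g$-orthogonal to $\ell_{t,u}$ by Def.~\ref{D:RADANDXIDEFS}, we have $g(\Lunit, \GeoAng) = g(\Radunit, \GeoAng) = 0$. Thus the Gram matrix of the frame is block-diagonal and inverting it yields the universal formulas
\begin{align*}
	b_{(\nu)}
	& = - g(\partial_{\nu}, \Lunit),
	\qquad
	a_{(\nu)}
	=
	- g(\partial_{\nu}, \Lunit) - g(\partial_{\nu}, \Radunit),
	\qquad
	c_{(\nu)}
	=
	\frac{g(\partial_{\nu}, \GeoAng)}{g_{cd} \GeoAng^c \GeoAng^d}.
\end{align*}

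Next I would evaluate these inner products using $g(\partial_{\nu},W) = g_{\nu \alpha} W^{\alpha} = W_{\nu}$ for each frame vectorfield $W$. The $c_{(\nu)}$-coefficient is then immediate from $\GeoAng^0 = 0$ (since $\GeoAng$ is $\Sigma_t$-tangent by \eqref{E:GEOANGDEF}): we get $c_{(0)} = g_{a0}\GeoAng^a / (g_{cd} \GeoAng^c \GeoAng^d)$ and $c_{(i)} = g_{ai}\GeoAng^a / (g_{cd} \GeoAng^c \GeoAng^d)$, matching the expressions on the right-hand sides of \eqref{E:PARTIALTINTERMSOFLUNITRADUNITANDGEOANG}--\eqref{E:PARTIALIINTERMSOFRADUNITANDGEOANG}. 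For the $a_{(\nu)}$ and $b_{(\nu)}$ coefficients, the key observation is the relation \eqref{E:DOWNSTAIRSUPSTAIRSSRADUNITPLUSLUNITISAFUNCTIONOFPSI}, which gives $\Lunit_{\nu} + \Radunit_{\nu} = -\delta_{\nu}^0$. This immediately yields $a_{(0)} = -(\Lunit_0 + \Radunit_0) = 1$ and $a_{(i)} = -(\Lunit_i + \Radunit_i) = 0$, while $b_{(0)} = -\Lunit_0 = -g_{\alpha 0}\Lunit^{\alpha}$ and $b_{(i)} = -\Lunit_i = \Radunit_i = g_{ai}\Radunit^a$ (using $\Radunit^0 = 0$ in the last step).

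Assembling the pieces produces the two desired identities \eqref{E:PARTIALTINTERMSOFLUNITRADUNITANDGEOANG} and \eqref{E:PARTIALIINTERMSOFRADUNITANDGEOANG}. There is no real obstacle here; the argument is a direct linear-algebraic inversion of a $3 \times 3$ Gram matrix whose off-diagonal structure is already laid bare by Lemma~\ref{L:BASICPROPERTIESOFFRAME}. The only substantive ingredient beyond pure linear algebra is the identity \eqref{E:DOWNSTAIRSUPSTAIRSSRADUNITPLUSLUNITISAFUNCTIONOFPSI}, which encodes the fact that $-\Lunit - \Radunit$ is the future-directed $g$-unit normal to $\Sigma_t$ and is what converts the inner products $\Lunit_{\nu}, \Radunit_{\nu}$ into the Kronecker-delta-type expression that cleanly produces the coefficient $1$ in the $\Lunit$-component of $\partial_t$ and the vanishing of the $\Lunit$-component of $\partial_i$.
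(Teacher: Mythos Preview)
Your proof is correct and follows essentially the same approach as the paper: expand each $\partial_{\nu}$ in the frame $\lbrace \Lunit, \Radunit, \GeoAng \rbrace$ and determine the coefficients by taking $g$-inner products, using the orthogonality relations from Lemma~\ref{L:BASICPROPERTIESOFFRAME} and the identity \eqref{E:DOWNSTAIRSUPSTAIRSSRADUNITPLUSLUNITISAFUNCTIONOFPSI}. The only cosmetic difference is that the paper first handles $\partial_i$ via a two-term expansion $\partial_i = \upalpha_i \Radunit + \upbeta_i \GeoAng$ (exploiting that $\partial_i$ is already $\Sigma_t$-tangent) and then says $\partial_t$ is similar, whereas you treat all $\partial_{\nu}$ uniformly via the full $3 \times 3$ Gram matrix; your version is slightly more systematic but substantively identical.
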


\begin{proof}
	We expand 
	$\partial_i = \upalpha_i \Radunit + \upbeta_i \GeoAng$
	for scalars $\upalpha_i$ and $\upbeta_i$.
	Taking the $g$-inner product of each side with respect to
	$\Radunit$, we obtain
	$\upalpha_i = g(\Radunit,\partial_i) = g_{ab} \Radunit^a \delta_i^b = g_{ai} \Radunit^a$.
	Similarly, 
	$\upbeta_i g_{cd} \GeoAng^c \GeoAng^d = g_{ai} \GeoAng^a$.
	Using these identities to substitute for
	$\upalpha_i$ and $\upbeta_i$,
	we conclude \eqref{E:PARTIALIINTERMSOFRADUNITANDGEOANG}.
	The identity \eqref{E:PARTIALTINTERMSOFLUNITRADUNITANDGEOANG} follows similarly
	with the help of \eqref{E:DOWNSTAIRSUPSTAIRSSRADUNITPLUSLUNITISAFUNCTIONOFPSI}.
	\end{proof}

\subsection{An algebraic expression for the transversal derivative of the slow wave}
\label{SS:EXPRESSIONFORRADSLOW}
We will use the following algebraic lemma in order to
control $\Rad \bigslow$ in terms of 
$\mathcal{P}_u$-tangential derivatives of $\bigslow$
and other simple error terms.

\begin{lemma}[\textbf{Algebraic expression for} $\Rad \bigslow$]
	\label{L:RADOFSLOWWAVEALGEBRAICALLYEXPRESSED}
	Equations \eqref{E:SLOW0EVOLUTION}-\eqref{E:SLOWEVOLUTION}
	imply the following schematic algebraic relation,
	where the $\smoothfunction$ depend smoothly on their arguments whenever
	$|\GdVar| + |\bigslow|$ is sufficiently small:
	\begin{align} \label{E:RADOFSLOWWAVEALGEBRAICALLYEXPRESSED}
		\Rad \bigslow
		& =	\smoothfunction(\BadVar,\bigslow,\Rad \Psi, \Singletan \Psi) \Singletan \Psi
				+
				\smoothfunction(\BadVar,\bigslow,\Rad \Psi, \Singletan \Psi) \Singletan \bigslow
				+ 
				\smoothfunction(\BadVar,\bigslow,\Rad \Psi, \Singletan \Psi) \bigslow.
	\end{align}
\end{lemma}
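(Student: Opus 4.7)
The approach is to handle the four components of $\bigslow$ separately, using that $\Rad$ is $\Sigma_t$-tangent by \eqref{E:RADOFUANDT} (so $\Rad = \Rad^i \partial_i$ with $\Rad^i = \smoothfunction(\BadVar)$ by \eqref{E:SCALARSDEPENDINGONBADVARIABLES}). The $\Rad \slow$ identity is immediate: from the definition \eqref{E:SLOWWAVEVARIABLES} we have $\Rad \slow = \Rad^i \slow_i$, fitting the $\smoothfunction \cdot \bigslow$ term in \eqref{E:RADOFSLOWWAVEALGEBRAICALLYEXPRESSED}.

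The substance lies in treating $\Rad \slow_\alpha$ for $\alpha = 0, 1, 2$, where I would set up a $3 \times 3$ linear system in the unknowns $(\Rad \slow_0, \Rad \slow_1, \Rad \slow_2)$. Using Lemma~\ref{L:CARTESIANVECTORFIELDSINTERMSOFGEOMETRICONES} to expand $\partial_t = \Lunit + \alpha^t \Radunit + \gamma^t \GeoAng$ and $\partial_i = \alpha^i \Radunit + \gamma^i \GeoAng$ with coefficients in $\smoothfunction(\GdVar)$, applying \eqref{E:SLOWIEVOLUTION} rewritten as $\partial_t \slow_i = \partial_i \slow_0$, and then multiplying by $\upmu$ to convert $\upmu \Radunit$ into $\Rad$, I obtain two equations
\begin{equation*}
\alpha^t \Rad \slow_i - \alpha^i \Rad \slow_0 = \upmu \bigl[-\Lunit \slow_i - \gamma^t \GeoAng \slow_i + \gamma^i \GeoAng \slow_0 \bigr], \qquad i = 1, 2.
\end{equation*}
The third equation comes from multiplying the wave equation \eqref{E:SLOW0EVOLUTION} by $\upmu$ and decomposing every $\partial_\alpha$ in the frame, yielding $-K \Rad \slow_0 + \tilde P^a \Rad \slow_a = \upmu \cdot \Singletan \bigslow + \upmu S$ with $K, \tilde P^a \in \smoothfunction(\GdVar, \bigslow)$ and $S$ the source on the RHS of \eqref{E:SLOW0EVOLUTION}. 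A direct calculation reduces the $3 \times 3$ determinant to $\alpha^t (h^{-1})^{\mu\nu} V_\mu V_\nu$ with covector $V_\mu := (\alpha^t, \alpha^1, \alpha^2)$ proportional (through a factor of $\upmu$) to the Cartesian components of $du$; since $du$ is $g$-null by the eikonal equation \eqref{E:INTROEIKONAL}, the wave-speed hypothesis \eqref{E:VECTORSGCAUSALIMPLIESHTIMELIKE} forces $(h^{-1})(V, V) < 0$, so the determinant is bounded away from zero and the inverse matrix has $\smoothfunction(\BadVar, \bigslow)$ entries.

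The last step is verifying that the right-hand side has the schematic form claimed. The two equations arising from \eqref{E:SLOWIEVOLUTION} manifestly have RHS of the form $\smoothfunction(\BadVar) \cdot \Singletan \bigslow$ after absorbing $\upmu$ into the coefficient. For the wave-equation row, the crucial inputs are \eqref{E:UPMUTIMESNULLFORMSSCHEMATIC}, which gives $\upmu \mathcal{Q}^g(\partial \Psi, \partial \Psi) = \smoothfunction(\BadVar, \Rad \Psi, \Singletan \Psi) \Singletan \Psi$, and the vanishing assumption \eqref{E:SOMENONINEARITIESARELINEAR}, which makes $\widetilde{\mathfrak{N}}_1^\alpha$ and $\widetilde{\mathfrak{N}}_2$ factor through $\bigslow$; each $\upmu \partial_\alpha \Psi$ splits via the frame into an $\alpha^a \Rad \Psi$ piece and $\upmu \Singletan \Psi$ pieces, so $\upmu S$ takes the form $\smoothfunction \cdot \Singletan \Psi + \smoothfunction \cdot \bigslow$ with coefficients depending on $(\BadVar, \bigslow, \Rad \Psi, \Singletan \Psi)$. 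Solving the linear system by Cramer's rule then produces \eqref{E:RADOFSLOWWAVEALGEBRAICALLYEXPRESSED}. I expect the main obstacle to be the careful accounting of $\upmu$-factors: the $\upmu^{-1}$ hidden in $\Radunit = \upmu^{-1} \Rad$ and in the splitting of $\mathcal{Q}^g$ must be compensated exactly by the explicit $\upmu$-weights introduced, with the non-degeneracy of the coefficient matrix following from the strict wave-speed separation.
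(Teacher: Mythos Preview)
Your proposal is correct and follows essentially the same approach as the paper: both arguments frame-decompose the first-order system, isolate the coefficient matrix in front of the $\Rad$-derivatives, and prove its invertibility via the wave-speed separation hypothesis \eqref{E:VECTORSGCAUSALIMPLIESHTIMELIKE} applied to the $g$-null covector $\Lunit_{\alpha}$ (equivalently $du$). The only organizational difference is that the paper treats all four components of $\bigslow$ simultaneously as a $4 \times 4$ system (noting in passing that $\det(A^{\alpha}\xi_{\alpha}) = -(\xi_0)^2 (h^{-1})^{\alpha\beta}\xi_{\alpha}\xi_{\beta}$), whereas you split off the trivial $\Rad \slow = \Rad^i \slow_i$ identity and work with the $3 \times 3$ block for $(\slow_0,\slow_1,\slow_2)$.
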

\begin{proof}
	We first write the sub-system \eqref{E:SLOW0EVOLUTION}-\eqref{E:SLOWEVOLUTION} in the matrix-vector form
	$\upmu A^{\alpha} \partial_{\alpha} \bigslow = \upmu F$, where 
	$A^{\alpha} = A^{\alpha}(\Psi,\bigslow)$,
	$A^0$ is the $4 \times 4$ identity matrix,
	and $F$ corresponds to the semilinear inhomogeneous terms on the second line of RHS~\eqref{E:SLOW0EVOLUTION}.
	It is straightforward to check (say, at an arbitrary given point $p$, relative to a frame in which 
	$h|_p = \mbox{\upshape diag}(-1,1,1)$)
	that if $\xi$ is $h$-timelike,\footnote{By 
	$h$-timelike, we mean that $(h^{-1})^{\alpha \beta} \xi_{\alpha} \xi_{\beta} < 0$. \label{FN:HTIMELIKE}}
	then the matrix $A^{\alpha} \xi_{\alpha}$ is invertible
	(we note that in fact,
	$\mbox{\upshape det} (A^{\alpha} \xi_{\alpha}) = - (\xi_0)^2 (h^{-1})^{\alpha \beta} \xi_{\alpha} \xi_{\beta}$,
	though we do not use this precise formula in this proof).
	Decomposing $\upmu A^{\alpha} \partial_{\alpha} \bigslow
	= 
	\upmu \upalpha_1 \Lunit \bigslow
	+
	\upalpha_2 \Rad \bigslow
	+
	\upmu \upalpha_3 \GeoAng \bigslow
	$,
	where the $\upalpha_i$ are $4 \times 4$ matrices,
	we compute, with the help of
	Lemma~\ref{L:CARTESIANVECTORFIELDSINTERMSOFGEOMETRICONES}
	and \eqref{E:DOWNSTAIRSUPSTAIRSSRADUNITPLUSLUNITISAFUNCTIONOFPSI},
	that 
	$
	\upalpha_2
	=
	- A^0 \Lunit_0 + A^a \Radunit_a
	= - A^{\alpha} \Lunit_{\alpha}
	$.
	Since $\Lunit$ is $g$-null, 
	we deduce from \eqref{E:VECTORSGCAUSALIMPLIESHTIMELIKE}
	that the one-form $\Lunit_{\alpha}$ is $h$-timelike.
	Thus, from the above observations, we see that $\upalpha_2$ is invertible.
	Moreover, with the help of Lemmas~\ref{L:SCHEMATICDEPENDENCEOFMANYTENSORFIELDS} and \ref{L:CARTESIANVECTORFIELDSINTERMSOFGEOMETRICONES}, 
	we deduce that $\upalpha_i = \smoothfunction(\GdVar,\bigslow)$, $(i=1,2,3)$.
	The desired relation \eqref{E:RADOFSLOWWAVEALGEBRAICALLYEXPRESSED}
	is a simple consequence of these facts,
	the assumptions on the semilinear inhomogeneous terms stated in \eqref{E:SOMENONINEARITIESARELINEAR},
	and Lemma~\ref{L:SCHEMATICDEPENDENCEOFMANYTENSORFIELDS}.
\end{proof}

\subsection{Geometric integration}
\label{SS:GEOMETRICINTEGRATION}
We define our geometric integrals in terms of length, area, and volume forms\footnote{Throughout the paper,
we blur the distinction between these forms and the corresponding (non-negative) integration measures that they induce.
The precise meaning will be clear from context. \label{FN:FORMSBLURRED}} 
that remain non-degenerate throughout the evolution,
all the way up to the shock.

\begin{definition}[\textbf{Geometric forms and related integrals}]
	\label{D:NONDEGENERATEVOLUMEFORMS}
	We define the length form
	$d \spherevol$ on $\ell_{t,u}$,
	the area form $d \tvol$ on $\Sigma_t^u$,
	the area form $d \conevol$ on $\mathcal{P}_u^t$,
	and the volume form $d \vol$ on $\mathcal{M}_{t,u}$
	as follows (relative to the geometric coordinates):
	\begin{align} \label{E:RESCALEDVOLUMEFORMS}
			d \spherevol
			& = d \spherevol(t,u,\vartheta)
			:= \gtancomp(t,u,\vartheta) d \vartheta,
				&&
			d \tvol
			=
			d \tvol(t,u',\vartheta)
			:= d \spherevol(t,u',\vartheta) du',
				\\
			d \conevol 
			& = d \conevol(t',u,\vartheta)
			:= d \spherevol(t',u,\vartheta) dt',
				&&
			d \vol 
			= d \vol(t',u',\vartheta)
			:= d \spherevol(t',u',\vartheta) du' dt',
				\notag
	\end{align}
	where $\gtancomp$ is the scalar function from Def.~\ref{D:METRICANGULARCOMPONENT}.

	If $f$ is a scalar function, then we define
	\begin{subequations}
	\begin{align}
	\int_{\ell_{t,u}}
			f
		\, d \spherevol
		& 
		:=
		\int_{\vartheta \in \mathbb{T}}
			f(t,u,\vartheta)
		\, \gtancomp(t,u,\vartheta) d \vartheta,
			\label{E:LINEINTEGRALDEF} \\
		\int_{\Sigma_t^u}
			f
		\, d \tvol
		& 
		:=
		\int_{u'=0}^u
		\int_{\vartheta \in \mathbb{T}}
			f(t,u',\vartheta)
		\, \gtancomp(t,u',\vartheta) d \vartheta du',
			\label{E:SIGMATUINTEGRALDEF} \\
		\int_{\mathcal{P}_u^t}
			f
		\, d \conevol
		& 
		:=
		\int_{t'=0}^t
		\int_{\vartheta \in \mathbb{T}}
			f(t',u,\vartheta)
		\, \gtancomp(t',u,\vartheta) d \vartheta dt',
			\label{E:PUTINTEGRALDEF} \\
		\int_{\mathcal{M}_{t,u}}
			f
		\, d \vol
		& 
		:=
		\int_{t'=0}^t
		\int_{u'=0}^u
		\int_{\vartheta \in \mathbb{T}}
			f(t',u',\vartheta)
		\, \gtancomp(t',u',\vartheta) d \vartheta du' dt'.
		\label{E:MTUTUINTEGRALDEF}
	\end{align}
	\end{subequations}
\end{definition}

\begin{remark}
	One can check that the canonical forms associated to
	$\gt$ and $g$ are, respectively,
	$\upmu d \tvol$ and $\upmu d \vol$.
\end{remark}

\subsection{Integration with respect to Cartesian forms}
\label{SS:INTEGRATIONWITHRESPECTTOCARTESIAN}
In deriving energy \emph{identities} for the slow wave variables $\bigslow$, 
it is convenient to carry out calculations relative to the Cartesian coordinates.
In this subsection, we define some basic objects that play a role in these identities.

\begin{definition}[\textbf{The one-form} $\covL$]
	\label{D:EUCLIDEANNORMALTONULLHYPERSURFACE}
 	We define $\covL$ to be the one-form with the following Cartesian components:
	\begin{align} \label{E:EUCLIDEANNORMALTONULLHYPERSURFACE}
		\covL_{\alpha}
		& := - \frac{1}{(\delta^{\kappa \lambda} \Lunit_{\kappa} \Lunit_{\lambda})^{1/2}} \Lunit_{\alpha},
	\end{align}	
	where $\delta^{\kappa \lambda}$ is the standard inverse Euclidean metric on $\mathbb{R} \times \Sigma$
	(that is, $\delta^{\kappa \lambda} = \mbox{\upshape diag (1,1,1)}$ relative to the Cartesian coordinates). 
	Note that $\covL$ is the Euclidean-unit-length co-normal to $\mathcal{P}_u$.
\end{definition}

\begin{definition}[\textbf{Cartesian volume and area forms and related integrals}]
	\label{D:CARTESIANFORMS}
	We define 
	\[d \mathcal{M} := dx^1 dx^2 dt,
	\qquad
	d \Sigma := dx^1 dx^2, 
	\qquad
	d \mathcal{P}
	\]
	to be, respectively, the standard volume form on
	$\mathcal{M}_{t,u}$ induced by the Euclidean metric\footnote{By definition, the Euclidean metric
	has the components $\mbox{\upshape diag}(1,1,1)$ relative to the standard
	Cartesian coordinates $(t,x^1,x^2)$ on $\mathbb{R} \times \Sigma$.} 
	on $\mathbb{R} \times \Sigma$,
	the standard area form induced on 
	$\Sigma_t^u$ by the Euclidean metric on $\mathbb{R} \times \Sigma$,
	and the standard area form induced on
	$\mathcal{P}_u$ by the Euclidean metric on $\mathbb{R} \times \Sigma$.

\end{definition}

\begin{remark}[\textbf{We do not use the Cartesian forms when deriving estimates}]
	\label{R:NOFLATFORMSINESTIMATES}
	We could of course provide explicit expressions
	for the integrals of functions over various domains
	with respect to the Cartesian forms of Def.~\ref{D:CARTESIANFORMS}.
	For example, we have
	$
	\int_{\Sigma_t^U}
			f
		\, d \Sigma
		=
		\int_{\lbrace 0 \leq u(t,x^1,x^2) \leq U \rbrace}
			f(t,x^1,x^2)
		\, dx^1 dx^2
	$.
	We avoid providing further detailed expressions because we do not need them. 
	The reason is that we never \emph{estimate} integrals involving the Cartesian volume forms;
	before deriving estimates,
	we will always use Lemma~\ref{L:VOLFORMRELATION} below
	order to replace the Cartesian forms with the geometric ones of
	Def.~\ref{D:NONDEGENERATEVOLUMEFORMS}.
	We use the Cartesian forms only when deriving energy \emph{identities}
	relative to the Cartesian coordinates, in which the Cartesian forms naturally appear.
\end{remark}

\subsection{Relationship between the Cartesian integration measures and the geometric integration measures}
\label{E:CARTESIANEUCLIDEANFORMCOMPARISON}
After we derive energy identities for the slow wave relative to the Cartesian coordinates, 
it will be convenient for us to express the corresponding integrals
in terms of the geometric integration measures. The following lemma provides
some identities that are useful in this regard.

\begin{lemma}[\textbf{Relationship between Cartesian and geometric integration measures}]
	\label{L:VOLFORMRELATION}
	There exist scalar functions, schematically denoted by
	$\smoothfunction(\GdVar)$, 
	that are smooth for $|\GdVar|$ sufficiently small
	and such that
	the following relationship holds
	between the geometric integration measures corresponding to Def.~\ref{D:NONDEGENERATEVOLUMEFORMS}
	and the Cartesian integration measures corresponding to Def.~\ref{D:CARTESIANFORMS}
	(see Footnote~\ref{FN:FORMSBLURRED}):
	\begin{align} \label{E:VOLFORMRELATION}
		d \mathcal{M}
		& = \upmu 
				\left\lbrace 
					1 + \upgamma \smoothfunction(\upgamma)
				\right\rbrace 
				d \vol,
		&
		d \Sigma 
		& = \upmu
				\left\lbrace 
					1 + \upgamma \smoothfunction(\upgamma)
				\right\rbrace 
				d \tvol,
		&
		d \mathcal{P}
		& =  
			\left\lbrace 
				\sqrt{2} + \GdVar \smoothfunction(\GdVar)
			\right\rbrace 
			d \conevol. 	
	\end{align}
\end{lemma}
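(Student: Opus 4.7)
The plan is to handle the three identities separately, with the first two following immediately from the change of variables formula of Lemma~\ref{L:CHOV} and the third from a direct computation of the induced Euclidean area form on $\mathcal{P}_u$ in geometric coordinates.

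First, for the spacetime volume form $d\mathcal{M} = dx^1 \, dx^2 \, dt$, I would use Lemma~\ref{L:CHOV}: since $t$ coincides in both coordinate systems, the change of variables from $(u,\vartheta)$ to $(x^1,x^2)$ (at fixed $t$) has absolute Jacobian equal to $\upmu (\mbox{\upshape det}\,\gt_{ij})^{-1/2}\gtancomp$ by \eqref{E:JACOBIAN}. Since $d\vol = \gtancomp \, d\vartheta\, du\, dt$, this yields $d\mathcal{M} = \upmu (\mbox{\upshape det}\,\gt_{ij})^{-1/2} d\vol$. The statement of Lemma~\ref{L:CHOV} asserts that $(\mbox{\upshape det}\,\gt_{ij})^{-1/2}$ is a smooth function of $\Psi$ with value $1$ at $\Psi = 0$; Taylor expanding in $\Psi$ and noting that $\Psi$ is a component of $\upgamma$ gives $(\mbox{\upshape det}\,\gt_{ij})^{-1/2} = 1 + \upgamma \smoothfunction(\upgamma)$, which proves the first identity. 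The identity for $d\Sigma$ follows by exactly the same argument applied at fixed $t$, using $d\tvol = \gtancomp \, d\vartheta \, du$.

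For the third identity, I would parametrize $\mathcal{P}_u$ by the geometric coordinates $(t,\vartheta)$, noting that $\Lunit u = 0$ and $\Lunit = \partial/\partial t$ by Lemma~\ref{L:BASICPROPERTIESOFFRAME}, so the coordinate tangent vectors to $\mathcal{P}_u$ are $\Lunit$ and $\CoordAng$. The Euclidean area form is then $d\mathcal{P} = \sqrt{\mathcal{E}} \, dt \, d\vartheta$ where $\mathcal{E}$ is the determinant of the $2\times 2$ Gram matrix with entries $\delta_{\kappa\lambda} \Lunit^{\kappa}\Lunit^{\lambda}$, $\delta_{\kappa\lambda}\Lunit^{\kappa}\CoordAng^{\lambda}$, $\delta_{\kappa\lambda}\CoordAng^{\kappa}\CoordAng^{\lambda}$, and $\delta$ is the Euclidean metric on $\mathbb{R}\times\Sigma$. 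By Lemma~\ref{L:SCHEMATICDEPENDENCEOFMANYTENSORFIELDS}, each of the scalar components $\Lunit^{\alpha},\CoordAng^{\alpha}$ is a smooth function of $\upgamma$ (and $\angdiff x^1,\angdiff x^2$, but those reduce to constants when read off as Cartesian components), so $\mathcal{E}$ is a smooth function of $\upgamma$. At $\upgamma = 0$ one has $\Lunit = \partial_t + \partial_1$ and $\CoordAng = \partial_2$, giving Gram matrix $\mbox{\upshape diag}(2,1)$ and hence $\mathcal{E}|_{\upgamma=0} = 2$. Thus $\sqrt{\mathcal{E}} = \sqrt{2} + \upgamma\smoothfunction(\upgamma)$. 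Dividing through by $\gtancomp$ (which equals $1$ at $\upgamma = 0$ since $g(\CoordAng,\CoordAng)|_{\upgamma=0} = 1$) and writing $d\conevol = \gtancomp \, dt \, d\vartheta$ yields $d\mathcal{P} = \{\sqrt{2} + \upgamma\smoothfunction(\upgamma)\} d\conevol$, which is the claimed identity after absorbing the $\upgamma$ dependence into the $\GdVar\smoothfunction(\GdVar)$ expression (since the extra eikonal-function components in $\upgamma$ beyond $\Psi$ are among the $\Lunit^a_{(Small)}$, which are part of $\GdVar$).

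There is no substantial obstacle here; the proof amounts to bookkeeping of Jacobians and evaluation at the background. The only subtle point is verifying that the relevant smooth functions indeed depend only on $\GdVar$ (not on $\BadVar$, which would include $\upmu - 1$), but this follows because none of the Gram matrix entries involve the transversal vectorfield $\Rad$ whose Cartesian components carry the $\upmu$ dependence.
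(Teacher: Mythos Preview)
Your treatment of the first two identities matches the paper's exactly. For the third identity, your approach via the Euclidean Gram matrix of the tangent vectors $(\Lunit, \CoordAng)$ is more direct than the paper's, which instead contracts the Euclidean spacetime volume form against the Euclidean unit normal $V^{\alpha} = \delta^{\alpha\beta}\covL_{\beta}$ to $\mathcal{P}_u$ and then computes the resulting $3 \times 3$ determinant via the auxiliary matrix $M = N^{\top} g N$, where $N$ has columns $V, \Lunit, \CoordAng$.

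However, your intermediate claim that the Cartesian components $\CoordAng^{\alpha}$ (and hence $\mathcal{E}$ and $\gtancomp$ individually) are smooth functions of $\GdVar$ is not correct: Lemma~\ref{L:SCHEMATICDEPENDENCEOFMANYTENSORFIELDS} covers $\GeoAng^{\alpha}$, not $\CoordAng^{\alpha}$, and the latter genuinely depends on the geometric coordinates beyond $\GdVar$. What saves the argument is that $\CoordAng$ and $\GeoAng$ are both $\ell_{t,u}$-tangent, so $\CoordAng = c\,\GeoAng$ for some scalar $c$; since both $\sqrt{\mathcal{E}}$ and $\gtancomp = |c|\sqrt{g_{ab}\GeoAng^{a}\GeoAng^{b}}$ scale linearly in $|c|$, the ratio $\sqrt{\mathcal{E}}/\gtancomp$ is independent of $c$ and equals a smooth function of $\GdVar$ alone (built from $\Lunit^{\alpha}$, $\GeoAng^{\alpha}$, $g_{\alpha\beta}$, $\delta_{\alpha\beta}$). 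Evaluating that ratio at $\GdVar = 0$ gives $\sqrt{2}$ as you compute. With this correction your route is complete and arguably cleaner; the paper's determinant identity $|\det M| = |\det g|\,(\det N)^2$ is in effect an alternate device for achieving the same cancellation of $c$.
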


\begin{proof}
	We prove only the identity
	$
	d \mathcal{P}
	=  
			\left\lbrace 
				\sqrt{2} + \GdVar \smoothfunction(\GdVar)
			\right\rbrace 
			d \conevol
	$
	since the other two 
	identities in \eqref{E:VOLFORMRELATION}
	are a straightforward consequence of
	Lemma~\ref{L:CHOV} 
	(in particular, the Jacobian determinant\footnote{Note that
	the minus sign in equation \eqref{E:JACOBIAN}
	does not appear in equation \eqref{E:VOLFORMRELATION}
	since we are viewing \eqref{E:VOLFORMRELATION} as a relationship between integration measures.} 
	expressions in \eqref{E:JACOBIAN}).
	In the proof, 
	we view $d \conevol$ 
	(see \eqref{E:RESCALEDVOLUMEFORMS})
	to be the two-form $\gtancomp dt \wedge d \vartheta$ on $\mathcal{P}_u$,
	where $dt \wedge d \vartheta = dt \otimes d \vartheta - d \vartheta \otimes dt$.
	Similarly, we view $d \mathcal{P}$ to be the two-form induced on 
	$\mathcal{P}_u$ by the standard Euclidean metric 
	$\delta_{\alpha \beta} = \mbox{\upshape diag (1,1,1)}$
	on $\mathbb{R} \times \Sigma$.
	Then relative to Cartesian coordinates, 
	we have $d \mathcal{P} = (dx^0 \wedge dx^1 \wedge dx^2) \cdot V$,
	where $V$ is the future-directed Euclidean normal vectorfield to 
	$\mathcal{P}_u$ and $(dx^0 \wedge dx^1 \wedge dx^2) \cdot V$ denotes contraction 
	of $V$ against the first slot of $dx^0 \wedge dx^1 \wedge dx^2$.
	Note that $V^{\alpha} = \delta^{\alpha \beta} \covL_{\beta}$,
	where $\covL_{\alpha}$ is defined in \eqref{E:EUCLIDEANNORMALTONULLHYPERSURFACE}
	and $\delta^{\alpha \beta} = \mbox{\upshape diag (1,1,1)}$ is the standard inverse Euclidean metric
	on $\mathbb{R} \times \Sigma$.
	Since $d \conevol$ and $d \mathcal{P}$ are proportional
	and since $dt \wedge d \vartheta \cdot (\Lunit \otimes \CoordAng) = 1$,
	it suffices to show that
	$
	\left\lbrace 
		\sqrt{2} + \GdVar \smoothfunction(\GdVar)
	\right\rbrace 
	\gtancomp
	=  
	(dx^0 \wedge dx^1 \wedge dx^2) \cdot (V \otimes \Lunit \otimes \CoordAng)
	$.
	To proceed, we note that
	$(dx^0 \wedge dx^1 \wedge dx^2) \cdot (V \otimes \Lunit \otimes \CoordAng)$
	is equal to the determinant of the $3 \times 3$ matrix
	$
	N:
	=
	\left(
		\begin{array}{ccc}
			V^0 & \Lunit^0 & 0  \\
			V^1 & \Lunit^1 & \CoordAng^1 \\
			V^2 & \Lunit^2 & \CoordAng^2 \\
		\end{array}
		\right)
	$.
	Next, we consider the $3 \times 3$ matrix
	$M := N^{\top} \cdot g \cdot N$,
	where we view $g$
	as a $3 \times 3$ matrix expressed relative to the Cartesian coordinates.
	Since
	\eqref{E:LITTLEGDECOMPOSED}-\eqref{E:METRICPERTURBATIONFUNCTION}
	imply that
	$|\mbox{\upshape det} g| = 1 + \GdVar \smoothfunction(\GdVar)$
	relative to the Cartesian coordinates
	and since
	$|\mbox{\upshape det} M| = |\mbox{\upshape det} g| (\mbox{\upshape det} N)^2$,
	the desired conclusion will follow once we prove
	$|\mbox{\upshape det} M| = \gtancomp^2 \left\lbrace 2 + \GdVar \smoothfunction(\GdVar) \right\rbrace$.
	To obtain this relation, we first compute that
	$
	M 
	=
	\left(
		\begin{array}{ccc}
			g(V,V) & g(V,\Lunit) & g(V,\CoordAng) \\
			g(\Lunit,V) & 0 & 0 \\
			g(\CoordAng,V) & 0 & \gtancomp^2 \\
		\end{array}
		\right)
	$
	and thus
	$
	\mbox{\upshape det} M
	=
	- 
	\left(g(\Lunit,V) \right)^2 \gtancomp^2
	$.
	Finally, from 
	\eqref{E:LITTLEGDECOMPOSED},
	\eqref{E:METRICPERTURBATIONFUNCTION},
	\eqref{E:PERTURBEDPART},
	and
	\eqref{E:EUCLIDEANNORMALTONULLHYPERSURFACE},
	we compute (relative to the Cartesian coordinates)
	that $g(\Lunit,V) = - \sqrt{2} + \GdVar \smoothfunction(\GdVar)$,
	which, in conjunction with the above computations, 
	yields $|\mbox{\upshape det} M| = \gtancomp^2 \left\lbrace 2 + \GdVar \smoothfunction(\GdVar) \right\rbrace$ as desired.

\end{proof}

\section{Norms, initial data, bootstrap assumptions, and smallness assumptions}
\label{S:NORMSANDBOOTSTRAP}
In this section, we state our size assumptions on the data,
formulate appropriate bootstrap assumptions for the solution,
and state our smallness assumptions. 
As we mentioned in the introduction,
the solutions that we study in this article 
are perturbations of simple outgoing plane waves.
In Subsect.\ \ref{SS:EXISTENCEOFDATA}, we show
the existence of data for the system
\eqref{E:FASTWAVE}
+
\eqref{E:SLOW0EVOLUTION}-\eqref{E:SYMMETRYOFMIXEDPARTIALS}
that verify the size assumptions of the present article.
In particular, we show that under the assumptions \eqref{E:SOMENONINEARITIESARELINEAR} on the semilinear inhomogeneous terms,
the system admits simple outgoing plane wave solutions,
thereby justifying our study of their perturbations.

\subsection{Norms}
\label{SS:NORMS}
In our analysis, we will derive estimates
for scalar functions and $\ell_{t,u}$-tangent tensorfields. 
We use the metric $\gsphere$
when taking the pointwise norm of 
$\ell_{t,u}$-tangent tensorfields,
a concept that we make precise 
in the next definition.

\begin{definition}[\textbf{Pointwise norms}]
	\label{D:POINTWISENORM}
	Let $\gsphere$ be the Riemannian metric on $\ell_{t,u}$ from Def.\ \ref{D:FIRSTFUND}.
	If $\xi_{\nu_1 \cdots \nu_n}^{\mu_1 \cdots \mu_m}$ 
	is a type $\binom{m}{n}$ $\ell_{t,u}$ tensor,
	then we define the norm $|\xi| \geq 0$ by
	\begin{align} \label{E:POINTWISENORM}
		|\xi|^2
		:= 
		\gsphere_{\mu_1 \widetilde{\mu}_1} \cdots \gsphere_{\mu_m \widetilde{\mu}_m}
		(\ginversesphere)^{\nu_1 \widetilde{\nu}_1} \cdots (\ginversesphere)^{\nu_n \widetilde{\nu}_n}
		\xi_{\nu_1 \cdots \nu_n}^{\mu_1 \cdots \mu_m}
		\xi_{\widetilde{\nu}_1 \cdots \widetilde{\nu}_n}^{\widetilde{\mu}_1 \cdots \widetilde{\mu}_m}.
	\end{align}
\end{definition}

We use $L^2$ and $L^{\infty}$ norms in our analysis.

\begin{definition}[$L^2$ \textbf{and} $L^{\infty}$ \textbf{norms}]
In terms of the geometric forms of Def.~\ref{D:NONDEGENERATEVOLUMEFORMS},
we define the following norms for 
$\ell_{t,u}$-tangent tensorfields:
\label{D:SOBOLEVNORMS}
	\begin{subequations}
	\begin{align}  \label{E:L2NORMS}
			\left\|
				\xi
			\right\|_{L^2(\ell_{t,u})}^2
			& :=
			\int_{\ell_{t,u}}
				|\xi|^2
			\, d \spherevol,
				\qquad
			\left\|
				\xi
			\right\|_{L^2(\Sigma_t^u)}^2
			:=
			\int_{\Sigma_t^u}
				|\xi|^2
			\, d \tvol,
				\\
			\left\|
				\xi
			\right\|_{L^2(\mathcal{P}_u^t)}^2
			& :=
			\int_{\mathcal{P}_u^t}
				|\xi|^2
			\, d \conevol,
			\notag
	\end{align}

	\begin{align} 
			\left\|
				\xi
			\right\|_{L^{\infty}(\ell_{t,u})}
			& :=
				\mbox{ess sup}_{\vartheta \in \mathbb{T}}
				|\xi|(t,u,\vartheta),
			\qquad
			\left\|
				\xi
			\right\|_{L^{\infty}(\Sigma_t^u)}
			:=
			\mbox{ess sup}_{(u',\vartheta) \in [0,u] \times \mathbb{T}}
				|\xi|(t,u',\vartheta),
			\label{E:LINFTYNORMS}
				\\
			\left\|
				\xi
			\right\|_{L^{\infty}(\mathcal{P}_u^t)}
			& :=
			\mbox{ess sup}_{(t',\vartheta) \in [0,t] \times \mathbb{T}}
				|\xi|(t',u,\vartheta).
			\notag
	\end{align}
	\end{subequations}
\end{definition}

\begin{remark}[\textbf{Subset norms}]
	\label{R:SUBSETNORMS}
	We sometimes use norms 
	$\| \cdot \|_{L^2(\Omega)}$
	and
	$\| \cdot \|_{L^{\infty}(\Omega)}$,
	where $\Omega$ is a subset of $\Sigma_t^u$.
	These norms are defined by replacing 
	$\Sigma_t^u$ with $\Omega$ in 
	\eqref{E:L2NORMS} and \eqref{E:LINFTYNORMS}.
\end{remark}

\subsection{Strings of commutation vectorfields and vectorfield seminorms}
\label{SS:STRINGSOFCOMMUTATIONVECTORFIELDS}
We use the following shorthand notation to capture the relevant structure
of our vectorfield operators and to schematically depict estimates.

\begin{definition}[\textbf{Strings of commutation vectorfields and vectorfield seminorms}] \label{D:VECTORFIELDOPERATORS}
	\ \\
	\begin{itemize}
		\item $\Fullset^{N;M} f$ 
			denotes an arbitrary string of $N$ commutation
			vectorfields in $\Fullset$ (see \eqref{E:COMMUTATIONVECTORFIELDS})
			applied to $f$, where the string contains \emph{at most} $M$ factors of the $\mathcal{P}_u^t$-transversal
			vectorfield $\Rad$. 
		\item $\Tanset^N f$
			denotes an arbitrary string of $N$ commutation
			vectorfields in $\Tanset$ (see \eqref{E:TANGENTIALCOMMUTATIONVECTORFIELDS})
			applied to $f$.
		\item 
			For $N \geq 1$,
			$\Fullset_*^{N;M} f$
			denotes an arbitrary string of $N$ commutation
			vectorfields in $\Fullset$ 
			applied to $f$, where the string contains \emph{at least} one $\mathcal{P}_u$-tangent factor 
			and \emph{at most} $M$ factors of $\Rad$.
			We also set  $\Fullset_*^{0;0} f := f$.
		\item For $N \geq 1$,
					$\Tanset_*^N f$ 
					denotes an arbitrary string of $N$ commutation
					vectorfields in $\Tanset$ 
					applied to $f$, where the string contains
					\emph{at least one factor} of $\GeoAng$ or \emph{at least two factors} of $\Lunit$.
		\item For $\ell_{t,u}$-tangent tensorfields $\xi$, 
					we similarly define strings of $\ell_{t,u}$-projected Lie derivatives 
					such as $\angLie_{\Fullset}^{N;M} \xi$.
	\end{itemize}

	We also define pointwise seminorms constructed out of sums of the above strings of vectorfields:
	\begin{itemize}
		\item $|\Fullset^{N;M} f|$ 
		simply denotes the magnitude of one of the $\Fullset^{N;M} f$ as defined above
		(there is no summation).
	\item $|\Fullset^{\leq N;M} f|$ is the \emph{sum} over all terms of the form $|\Fullset^{N';M} f|$
			with $N' \leq N$ and $\Fullset^{N';M} f$ as defined above.
			When $N=M=1$, we sometimes write $|\Fullset^{\leq 1} f|$ instead of $|\Fullset^{\leq 1;1} f|$.
		\item $|\Fullset^{[1,N];M} f|$ is the sum over all terms of the form $|\Fullset^{N';M} f|$
			with $1 \leq N' \leq N$ and $\Fullset^{N';M} f$ as defined above.
		\item Sums such as 
			$|\Tanset^{\leq N} f|$,
			$|\Tanset_*^{[1,N]} f|$,
			$|\angLie_{\Fullset}^{\leq N;M} \xi|$,
			$|\GeoAng^{\leq 1} f|$,
			$|\Rad^{[1,N]} f|$,
			etc.,
			are defined analogously.
			For example, 
			$|\Rad^{[1,N]} f| 
			= |\Rad f| 
			+ |\Rad \Rad f| 
			+ \cdots 
			+ |\overbrace{\Rad \Rad \cdots \Rad}^{N \mbox{ \upshape copies}} f|
			$.
		\end{itemize}

\end{definition}

\begin{remark}
	Some operators in Def.~\ref{D:VECTORFIELDOPERATORS} are decorated with a $*$.
	These operators involve $\mathcal{P}_u$-tangential differentiations that often
	lead to a gain in smallness in the estimates.
	More precisely, the operators $\Tanset_*^N$ always lead to a gain in smallness while
	the operators $\Fullset_*^{N;M}$ lead to a gain in smallness except 
	perhaps when they are applied to $\upmu$
	(because 
	$\Lunit \upmu$ and its $\Rad$ derivatives are not generally small for the solutions under study).
	We clarify that for the simple plane wave solutions (whose perturbations we study in our main theorem),
	the $\Tanset_*^N$ derivatives of all variables in the array
	$\BadVar$ (see \eqref{E:BADABBREIVATEDVARIABLES})
	completely vanish, and that the same is true for
	$\Fullset_*^{N;M} \bigslow$ (by our definition of a simple wave).
\end{remark}

\subsection{Assumptions on the data}
\label{SS:DATAASSUMPTIONS}
In this subsection, we state our size assumptions on the data.
Our assumptions involve three parameters: $\Psiep > 0$, $\mathring{\upepsilon} \geq 0$,
and $\mathring{\updelta} > 0$, whose sizes we describe in Subsect.\ \ref{SS:SMALLNESSASSUMPTIONS}.
As we mentioned in Subsubsect.\ \ref{SSS:SPACETIMEREGION},
our analysis also involves the following data-dependent parameter.

\begin{definition}[\textbf{The quantity that controls the time of first shock formation}]
	\label{D:CRITICALBLOWUPTIMEFACTOR}
	We define
	\begin{align} \label{E:CRITICALBLOWUPTIMEFACTOR}
		\TranminusdatasizeWithFactor
		& := \frac{1}{2} 
		\sup_{\Sigma_0^1} 
		\left[G_{\Lunit \Lunit} \Rad \Psi \right]_-.
	\end{align}
\end{definition}
Our main theorem shows that
the reciprocal of $\TranminusdatasizeWithFactor$
is approximately equal to the time of first shock formation.

We assume that the data verify the following size assumptions.

\medskip

\noindent \underline{\textbf{Assumptions along} $\Sigma_0^1$}.
\begin{subequations}
\begin{align} \label{E:PSIL2SMALLDATAASSUMPTIONSALONGSIGMA0}
		\left\|
			\Fullset_{\ast}^{[1,19];1} \Psi
		\right\|_{L^2(\Sigma_0^1)}
		& \leq \mathring{\upepsilon},
			\\
		\left\|
			\Tanset^{\leq 18} \bigslow
		\right\|_{L^2(\Sigma_0^1)}
		& \leq \mathring{\upepsilon},
			\label{E:SLOWL2SMALLDATAASSUMPTIONSALONGSIGMA0} 
\end{align}
\end{subequations}

\begin{subequations}
\begin{align}
		\left\|
			\Psi
		\right\|_{L^{\infty}(\Sigma_0^1)}
			& \leq \Psiep,
			 \label{E:PSIITSELFLINFTYSMALLDATAASSUMPTIONSALONGSIGMA0} 
			\\
		\left\|
			\Fullset_{\ast}^{[1,11];1} \Psi
		\right\|_{L^{\infty}(\Sigma_0^1)},
			\,
		\left\|
			\Fullset_{\ast}^{[1,10];2} \Psi
		\right\|_{L^{\infty}(\Sigma_0^1)},
			\,
		\left\|
			\Lunit \Rad \Rad \Rad \Psi
		\right\|_{L^{\infty}(\Sigma_0^1)}
		& \leq \mathring{\upepsilon},
			 \label{E:PSILINFTYSMALLDATAASSUMPTIONSALONGSIGMA0} 
			\\
		\left\|
			\Tanset^{\leq 10} \bigslow
		\right\|_{L^{\infty}(\Sigma_0^1)},
			\,
		\left\|
			\Fullset^{\leq 9;1} \bigslow
		\right\|_{L^{\infty}(\Sigma_0^1)},
			\,
		\left\|
			\Rad \Rad \bigslow
		\right\|_{L^{\infty}(\Sigma_0^1)}
		& \leq \mathring{\upepsilon},
			 \label{E:SLOWWAVELINFTYSMALLDATAASSUMPTIONSALONGSIGMA0}  
\end{align}
\end{subequations}

\begin{align} \label{E:FASTWAVELINFTYLARGEDATAASSUMPTIONSALONGSIGMA0}
	\left\|
		\Rad^{[1,3]} \Psi
	\right\|_{L^{\infty}(\Sigma_0^1)}
	& \leq \mathring{\updelta}.
\end{align}

\noindent \underline{\textbf{Assumptions along} $\mathcal{P}_0^{2 \TranminusdatasizeWithFactor^{-1}}$}.
\begin{subequations}
\begin{align} \label{E:PSIL2SMALLDATAASSUMPTIONSALONGP0}
		\left\|
			\Tanset^{[1,19]} \Psi
		\right\|_{L^2\left(\mathcal{P}_0^{2 \TranminusdatasizeWithFactor^{-1}}\right)}
		& \leq \mathring{\upepsilon},
			\\
		\left\|
			\Tanset^{\leq 18} \bigslow
		\right\|_{L^2\left(\mathcal{P}_0^{2 \TranminusdatasizeWithFactor^{-1}}\right)}
		& \leq \mathring{\upepsilon},
			 \label{E:SLOWL2SMALLDATAASSUMPTIONSALONGP0} 
\end{align}
\end{subequations}

\begin{subequations}
\begin{align}
		\left\|
			\Tanset^{\leq 17} \Psi
		\right\|_{L^{\infty}\left(\mathcal{P}_0^{2 \TranminusdatasizeWithFactor^{-1}}\right)}
		& \leq \mathring{\upepsilon},
			\label{E:PSILINFTYSMALLDATAASSUMPTIONSALONGP0}
				\\
		\left\|
			\Tanset^{\leq 16} \bigslow
		\right\|_{L^{\infty} \left(\mathcal{P}_0^{2 \TranminusdatasizeWithFactor^{-1}}\right)}
		& \leq \mathring{\upepsilon}.
			\label{E:SLOWLINFTYSMALLDATAASSUMPTIONSALONGP0}
\end{align}
\end{subequations}

\noindent \underline{\textbf{Assumptions along} $\ell_{t,0}$}.
We assume that for $t \in [0,2 \TranminusdatasizeWithFactor^{-1}]$, we have
\begin{align} \label{E:LINFTYPSISMALLDATAASSUMPTIONSALONGLT0}
		\left\|
			\Fullset^{\leq 1} \Psi
		\right\|_{L^{\infty}(\ell_{t,0})}
		& \leq \mathring{\upepsilon}.
\end{align}

\noindent \underline{\textbf{Assumptions along} $\ell_{0,u}$}.
We assume that for $u \in [0,1]$, we have
\begin{subequations}
\begin{align}
		\left\|
			\Tanset^{[1,18]} \Psi
		\right\|_{L^2(\ell_{0,u})}
		& \leq \mathring{\upepsilon},
			\label{E:PSIL2SMALLDATAASSUMPTIONSALONGL0U} \\
		\left\|
			\Tanset^{\leq 17} \bigslow
		\right\|_{L^2(\ell_{0,u})}
		& \leq \mathring{\upepsilon}.
		\label{E:SLOWL2SMALLDATAASSUMPTIONSALONGL0U}
\end{align}
\end{subequations}

\begin{remark}[\textbf{A brief description of the data-size parameters}]
	\label{R:DATAPARAMETERSBRIEFDESCRIPTION}
	To prove our main theorem, we assume that $\Psiep$ and $\mathring{\upepsilon}$
	are small in a sense that we make precise in Subsect.\ \ref{SS:SMALLNESSASSUMPTIONS}.
	The parameters $\TranminusdatasizeWithFactor > 0$ and $\mathring{\updelta} > 0$
	are allowed to be small or large in an absolute sense, but the smallness of
	$\mathring{\upepsilon}$ must be adapted to $\TranminusdatasizeWithFactor^{-1}$ and $\mathring{\updelta}$.
	The parameter $\mathring{\upepsilon}$ vanishes for simple outgoing plane wave solutions
	(see Subsect.\ \ref{SS:EXISTENCEOFDATA} for further discussion).
\end{remark}

\subsection{The data of the eikonal function quantities}
\label{SS:INITIALBEHAVIOROFEIKONAL}
The data-size assumptions of Subsect.\ \ref{SS:DATAASSUMPTIONS}
determine the initial size of various quantities 
constructed out of the eikonal function. In the this subsection,
under appropriate smallness assumptions,
we estimate these data-dependent quantities in various norms.
The main result is Lemma~\ref{L:BEHAVIOROFEIKONALFUNCTIONQUANTITIESALONGSIGMA0}.
We start with a simple lemma that provides algebraic identities 
that hold along $\Sigma_0$.

\begin{lemma}\cite{jSgHjLwW2016}*{Lemma~7.2; \textbf{Algebraic identities along} $\Sigma_0$}
\label{L:ALGEBRAICIDALONGSIGMA0}
The following identities hold along $\Sigma_0$ (for $i=1,2$):
\begin{align}
	\upmu
	& = \frac{1}{\sqrt{(\gtinverse)^{11}}},
	\qquad
	\Lunit_{(Small)}^i
	 = 	\frac{(\gtinverse)^{i1}}{\sqrt{(\gtinverse)^{11}}} 
	 		- \delta^{i1}
			- (g^{-1})^{0i},
	\qquad
	\NonRadialRad^i
	 =  \frac{(\gtinverse)^{i1}}{(\gtinverse)^{11}}
	 	- \delta^{i1},
		\label{E:INITIALRELATIONS}
\end{align}
where $\gt$ is viewed as the $2 \times 2$ matrix of Cartesian spatial components 
of the Riemannian metric on $\Sigma_0$ defined by \eqref{E:GTANDGSPHERESPHEREDEF},
$\gt^{-1}$ is the corresponding inverse matrix,
and $\NonRadialRad$ is the $\ell_{t,u}$-tangent vectorfield from \eqref{E:RADSPLITINTOPARTTILAUANDXI}.
\end{lemma}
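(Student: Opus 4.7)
The plan is to extract the identities directly from the definitions by exploiting the prescribed initial data $u|_{\Sigma_0} = 1 - x^1$, $\vartheta|_{\Sigma_0} = x^2$, the eikonal equation \eqref{E:INTROEIKONAL}, the sign convention $\partial_t u > 0$, and the normalization $(g^{-1})^{00} \equiv -1$ from \eqref{E:ZEROZEROISMINUSONE}. The proof will be purely algebraic: first solve for $\partial_0 u|_{\Sigma_0}$, then read off $\upmu$, $\Lunit^i$, and $\Rad^i$ in turn.

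First, I would observe that the initial condition $u|_{\Sigma_0} = 1 - x^1$ gives $\partial_1 u|_{\Sigma_0} = -1$ and $\partial_2 u|_{\Sigma_0} = 0$, while $\partial_\alpha t = \delta_\alpha^0$ everywhere. Inserting these into the eikonal equation, and using $(g^{-1})^{00} = -1$, yields the quadratic equation
\begin{align*}
(\partial_0 u)^2 + 2(g^{-1})^{01}(\partial_0 u) - (g^{-1})^{11} = 0.
\end{align*}
Here I would invoke the $\Sigma_t$-decomposition $g^{-1} = -\Timenormal \otimes \Timenormal + \gt^{-1}$ together with \eqref{E:TIMENORMALRECTANGULAR} to get the standard identity $(\gtinverse)^{ij} = (g^{-1})^{ij} + (g^{-1})^{0i}(g^{-1})^{0j}$. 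Combined with $\partial_t u > 0$ (selecting the $+$ branch), the quadratic formula gives $\partial_0 u|_{\Sigma_0} = -(g^{-1})^{01} + \sqrt{(\gtinverse)^{11}}$. Substituting into $(g^{-1})^{0\beta}\partial_\beta u = (g^{-1})^{00}\partial_0 u + (g^{-1})^{01}\partial_1 u = -\sqrt{(\gtinverse)^{11}}$ and then into definition \eqref{E:FIRSTUPMU} produces the first identity $\upmu|_{\Sigma_0} = 1/\sqrt{(\gtinverse)^{11}}$.

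Next, for the second identity, I would compute $\Lunit^i = \upmu \Lgeo^i = -\upmu(g^{-1})^{i\alpha}\partial_\alpha u = -\upmu\big[(g^{-1})^{i0}\partial_0 u - (g^{-1})^{i1}\big]$; inserting the expressions for $\partial_0 u$ and $\upmu$ already established and cleaning up with the same $(\gtinverse)^{i1}$ identity gives $\Lunit^i|_{\Sigma_0} = (\gtinverse)^{i1}/\sqrt{(\gtinverse)^{11}} - (g^{-1})^{0i}$, and subtracting $\delta^{i1}$ per \eqref{E:PERTURBEDPART} yields the claimed formula for $\Lunit_{(Small)}^i$. For the third identity, I would use \eqref{E:RADSPLITINTOPARTTILAUANDXI}, which reads $\NonRadialRad = \partial/\partial u - \Rad$. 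The change-of-variables $x^1 = 1 - u$, $x^2 = \vartheta$ along $\Sigma_0$ gives $(\partial/\partial u)^i|_{\Sigma_0} = -\delta^{i1}$, while $\Rad^i = \upmu\Radunit^i = -\upmu\big[\Lunit^i + (g^{-1})^{0i}\big]$ by \eqref{E:DOWNSTAIRSUPSTAIRSSRADUNITPLUSLUNITISAFUNCTIONOFPSI}; substituting the previously derived value of $\Lunit^i$ produces $\Rad^i|_{\Sigma_0} = -(\gtinverse)^{i1}/(\gtinverse)^{11}$, and subtraction yields the asserted formula for $\NonRadialRad^i$.

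The proof involves no analytic subtlety beyond careful bookkeeping; the only step where one can easily err is the selection of the correct branch of the quadratic formula for $\partial_0 u$, which is forced by the sign condition $\partial_t u > 0$ in \eqref{E:INTROEIKONAL}. All three identities then follow by direct substitution.
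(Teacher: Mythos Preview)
Your proof is correct and follows the natural direct computation from the definitions; the paper itself does not supply a proof but simply cites \cite{jSgHjLwW2016}*{Lemma~7.2}, where the argument is essentially the one you have written. The bookkeeping (eikonal quadratic, branch selection via $\partial_t u > 0$, the identity $(\gtinverse)^{ij} = (g^{-1})^{ij} + (g^{-1})^{0i}(g^{-1})^{0j}$, and the change of variables along $\Sigma_0$) is all handled correctly.
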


We now provide the main result of this subsection.

\begin{lemma}[\textbf{Behavior of the eikonal function quantities along} $\Sigma_0^1$ \textbf{and} 
$\mathcal{P}_0^{2 \TranminusdatasizeWithFactor^{-1}}$]
\label{L:BEHAVIOROFEIKONALFUNCTIONQUANTITIESALONGSIGMA0}
For data verifying 
the assumptions of Subsect.\ \ref{SS:DATAASSUMPTIONS},
the following $L^2$ and $L^{\infty}$ estimates hold along $\Sigma_0^1$
and $\mathcal{P}_0^{2 \TranminusdatasizeWithFactor^{-1}}$
whenever 
$\Psiep$ and
$\mathring{\upepsilon}$ are sufficiently small,
where the constants $C$ are allowed to depend 
on $\mathring{\updelta}$
and $\TranminusdatasizeWithFactor^{-1}$, 
the constants $C_{\mydiam}$ can be chosen to be independent\footnote{Some of the constants denoted by ``$C$'' can also
be chosen to be independent of $\mathring{\updelta}$ and $\TranminusdatasizeWithFactor^{-1}$. We use the symbol ``$C_{\mydiam}$'' 
only when it is important that the constant can be chosen to be independent of $\mathring{\updelta}$ and 
$\TranminusdatasizeWithFactor^{-1}$.} 
of $\mathring{\updelta}$ and $\TranminusdatasizeWithFactor^{-1}$,
and $i=1,2$ 
(see Subsect.\ \ref{SS:STRINGSOFCOMMUTATIONVECTORFIELDS} regarding the vectorfield operator notation):
\begin{align} 
	\left\|
		\Fullset_*^{[1,19];3} \Lunit_{(Small)}^i
	\right\|_{L^2(\Sigma_0^1)}
	& \leq C \mathring{\upepsilon},
	\label{E:LUNITIDATAL2CONSEQUENCES}
\end{align}

\begin{align}  \label{E:UPMUDATATANGENTIALL2CONSEQUENCES}
	\left\|
		\Tanset_*^{[1,19]} \upmu
	\right\|_{L^2(\Sigma_0^1)}
	& \leq C \mathring{\upepsilon},
\end{align}

\begin{subequations}
\begin{align} 
	\left\|
		\Lunit_{(Small)}^i
	\right\|_{L^{\infty}(\Sigma_0^1)}
	& \leq C_{\mydiam} \Psiep, 
		\label{E:LUNITIITSEFLSMALLDATALINFINITYCONSEQUENCES} \\
	\left\|
		\Fullset_*^{[1,17];2} \Lunit_{(Small)}^i
	\right\|_{L^{\infty}(\Sigma_0^1)}
	& \leq C \mathring{\upepsilon},
		\label{E:LUNITIDATASMALLLINFTYCONSEQUENCES} \\
	\left\|
		\Rad^{[1,2]} \Lunit_{(Small)}^i
	\right\|_{L^{\infty}(\Sigma_0^1)}
	& \leq C,
\end{align}
\end{subequations}

\begin{subequations}
\begin{align}  
	\left\|
		\upmu - 1
	\right\|_{L^{\infty}(\Sigma_0^1)}
	& \leq C_{\mydiam} \Psiep,
		\label{E:UPITSELFLINFINITYSIGMA0CONSEQUENCES} \\
	\left\|
		\Tanset_*^{[1,17]} \upmu
	\right\|_{L^{\infty}(\Sigma_0^1)}
	& \leq C \mathring{\upepsilon},
		\label{E:UPMUDATATANGENTIALLINFINITYCONSEQUENCES} \\
	\left\|
		\Lunit \Rad^{[0,2]} \upmu 
	\right\|_{L^{\infty}(\Sigma_0^1)},
		\,
	\left\|
		\Rad^{[0,2]} \Lunit \upmu 
	\right\|_{L^{\infty}(\Sigma_0^1)},
		\,
	\left\|
		\Rad \Lunit \Rad \upmu 
	\right\|_{L^{\infty}(\Sigma_0^1)},
		\,
	\left\|
		\Rad^{[1,2]} \upmu 
	\right\|_{L^{\infty}(\Sigma_0^1)}
	& \leq C,
	\label{E:UPMUDATARADIALLINFINITYCONSEQUENCES}
\end{align}
\end{subequations}

\begin{subequations}
\begin{align}
	\left\|
		\Lunit_{(Small)}^i
	\right\|_{L^{\infty}(\mathcal{P}_0^{2 \TranminusdatasizeWithFactor^{-1}})}
	& \leq C \mathring{\upepsilon},
		\label{E:LUNITIDATASMALLP0LINFTYCONSEQUENCES} \\
	\left\|
		\upmu - 1
	\right\|_{L^{\infty}(\mathcal{P}_0^{2 \TranminusdatasizeWithFactor^{-1}})}
	& \leq C \mathring{\upepsilon}.
		\label{E:UPITSELFLINFINITYP0CONSEQUENCES} 
\end{align}
\end{subequations}

\end{lemma}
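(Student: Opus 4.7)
The plan is to leverage the algebraic identities of Lemma~\ref{L:ALGEBRAICIDALONGSIGMA0} on $\Sigma_0$ to reduce estimates for $\upmu$, $\Lunit_{(Small)}^i$, $\NonRadialRad^i$ to estimates on $\Psi$, and then to use the transport equations \eqref{E:UPMUFIRSTTRANSPORT} and \eqref{E:LLUNITI} (together with the algebraic identity \eqref{E:RADLUNITI}) to handle derivatives in the $\Lunit$-direction and to propagate the $L^\infty$ estimates onto $\mathcal{P}_0^{2\TranminusdatasizeWithFactor^{-1}}$.

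\smallskip
\emph{Step 1: estimates along $\Sigma_0^1$ involving only $\Sigma_0$-tangential differentiation.} Since $g = g(\Psi)$ depends smoothly on $\Psi$ with $g|_{\Psi=0} = m$, the identities \eqref{E:INITIALRELATIONS} yield, for $|\Psi|$ sufficiently small,
\[
	\upmu|_{\Sigma_0} = 1 + \Psi \smoothfunction(\Psi)|_{\Sigma_0},
	\qquad
	\Lunit_{(Small)}^i|_{\Sigma_0} = \Psi \smoothfunction(\Psi)|_{\Sigma_0},
\]
which immediately gives \eqref{E:LUNITIITSEFLSMALLDATALINFINITYCONSEQUENCES} and \eqref{E:UPITSELFLINFINITYSIGMA0CONSEQUENCES} from \eqref{E:PSIITSELFLINFTYSMALLDATAASSUMPTIONSALONGSIGMA0}. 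By \eqref{E:RADOFUANDT}, the vectorfields $\Rad$ and $\GeoAng$ are tangent to $\Sigma_0$, so any operator built from strings in $\lbrace \Rad, \GeoAng \rbrace$ applied to $\upmu - 1$ or $\Lunit_{(Small)}^i$ on $\Sigma_0$ is, by chain rule, a sum of smooth coefficients times products of derivatives of $\Psi$ with respect to the same operators. The corresponding $L^\infty$ estimates then follow from \eqref{E:PSIITSELFLINFTYSMALLDATAASSUMPTIONSALONGSIGMA0}--\eqref{E:FASTWAVELINFTYLARGEDATAASSUMPTIONSALONGSIGMA0}, and the $L^2$ bounds analogously follow from \eqref{E:PSIL2SMALLDATAASSUMPTIONSALONGSIGMA0}. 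Counting derivatives, the purely tangential component of \eqref{E:LUNITIDATAL2CONSEQUENCES} and the entirety of \eqref{E:UPMUDATATANGENTIALL2CONSEQUENCES}, \eqref{E:UPMUDATATANGENTIALLINFINITYCONSEQUENCES}, and the $\Rad^{[1,2]}$-only parts of \eqref{E:LUNITIDATASMALLLINFTYCONSEQUENCES} and \eqref{E:UPMUDATARADIALLINFINITYCONSEQUENCES} follow.

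\smallskip
\emph{Step 2: estimates along $\Sigma_0^1$ involving $\Lunit$-differentiation.} Since $\Lunit$ is transversal to $\Sigma_0$, we cannot read $\Lunit \upmu$ or $\Lunit \Lunit_{(Small)}^i$ off of $\Sigma_0$ data alone. Instead, we substitute using the transport equations: \eqref{E:UPMUFIRSTTRANSPORT} expresses $\Lunit \upmu$ pointwise as a smooth function of $(\BadVar, \Lunit \Psi, \Rad \Psi)$ (using Lemma~\ref{L:SCHEMATICDEPENDENCEOFMANYTENSORFIELDS}), and \eqref{E:LLUNITI} does the analogous thing for $\Lunit \Lunit_{(Small)}^i$; the algebraic identity \eqref{E:RADLUNITI} in turn expresses $\Rad \Lunit_{(Small)}^i$ in terms of $\BadVar$, $\Lunit \Psi$, $\Rad \Psi$, and $\angdiff \upmu$. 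Iteratively commuting $\Lunit, \Rad, \GeoAng$ into these identities, and using Step 1 together with the data assumptions \eqref{E:PSILINFTYSMALLDATAASSUMPTIONSALONGSIGMA0}--\eqref{E:FASTWAVELINFTYLARGEDATAASSUMPTIONSALONGSIGMA0} at each order, yields the remaining estimates \eqref{E:LUNITIDATASMALLLINFTYCONSEQUENCES}, \eqref{E:UPMUDATARADIALLINFINITYCONSEQUENCES}, and the $\Lunit$-containing part of \eqref{E:LUNITIDATAL2CONSEQUENCES}. The borderline large $L^\infty$ bounds in \eqref{E:UPMUDATARADIALLINFINITYCONSEQUENCES} are matched precisely by the fact that \eqref{E:FASTWAVELINFTYLARGEDATAASSUMPTIONSALONGSIGMA0} permits $\Rad^{[1,3]} \Psi$ to be of size $\mathring{\updelta}$ (hence size $\mathcal{O}(1)$), and the inclusion of the datum $\| \Lunit \Rad \Rad \Rad \Psi \|_{L^\infty(\Sigma_0^1)} \leq \mathring{\upepsilon}$ in \eqref{E:PSILINFTYSMALLDATAASSUMPTIONSALONGSIGMA0} is exactly what is needed to bound $\Rad \Lunit \Rad \upmu$ and $\Lunit \Rad^{[0,2]} \upmu$.

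\smallskip
\emph{Step 3: $L^\infty$ estimates along $\mathcal{P}_0^{2 \TranminusdatasizeWithFactor^{-1}}$.} Since $\Lunit = \partial/\partial t$ (by \eqref{E:LISDDT}) is tangent to $\mathcal{P}_0$, we integrate \eqref{E:UPMUFIRSTTRANSPORT} and \eqref{E:LLUNITI} along the integral curves of $\Lunit$ inside $\mathcal{P}_0$, starting from the initial torus $\ell_{0,0}$ where Step 1 yields $\| \upmu - 1 \|_{L^\infty(\ell_{0,0})}, \| \Lunit_{(Small)}^i \|_{L^\infty(\ell_{0,0})} \leq C_{\mydiam} \Psiep + C \mathring{\upepsilon}$. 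The right-hand sides of \eqref{E:UPMUFIRSTTRANSPORT} and \eqref{E:LLUNITI} are smooth functions of $(\BadVar, \Lunit\Psi, \Rad\Psi, \angdiff\Psi)$, and by the hypothesis \eqref{E:LINFTYPSISMALLDATAASSUMPTIONSALONGLT0} we have $\|\Fullset^{\leq 1} \Psi\|_{L^\infty(\ell_{t,0})} \leq \mathring{\upepsilon}$ throughout $[0, 2\TranminusdatasizeWithFactor^{-1}]$, so the right-hand sides are $\mathcal{O}(\mathring{\upepsilon})$ wherever $|\BadVar| \lesssim 1$. A standard continuity/Gronwall argument in $t$ along $\mathcal{P}_0$ then delivers \eqref{E:LUNITIDATASMALLP0LINFTYCONSEQUENCES} and \eqref{E:UPITSELFLINFINITYP0CONSEQUENCES}, with constants depending (linearly) on $\TranminusdatasizeWithFactor^{-1}$.

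\smallskip
The main technical burden is the bookkeeping at high derivative orders in Step 2: each commutator $[\Lunit, \Rad]$ or $[\Rad, \GeoAng]$ generates additional terms built from Step~1 quantities, and one must verify inductively that the derivative count of $\Psi$ needed at each step is covered by the data hypotheses of Subsect.~\ref{SS:DATAASSUMPTIONS}. The constraint that the operator $\Fullset_*^{[1,19];3}$ in \eqref{E:LUNITIDATAL2CONSEQUENCES} contains at most three $\Rad$-factors is matched to \eqref{E:PSIL2SMALLDATAASSUMPTIONSALONGSIGMA0}, which permits only one $\Rad$-factor on $\Psi$ but allows the extra two $\Rad$-factors to be absorbed after one application of \eqref{E:RADLUNITI} or \eqref{E:UPMUFIRSTTRANSPORT}. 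This matching of indices is the arithmetic crux, and the rest of the proof is routine in the style of \cite{jSgHjLwW2016}.
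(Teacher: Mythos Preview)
Your proposal is correct and follows essentially the same approach as the paper: use the algebraic identities of Lemma~\ref{L:ALGEBRAICIDALONGSIGMA0} together with the data assumptions for the $\Sigma_0^1$ estimates (the paper simply cites \cite{jSgHjLwW2016}*{Lemma~7.3} for this part), and integrate the transport equations \eqref{E:UPMUFIRSTTRANSPORT}--\eqref{E:LLUNITI} along $\mathcal{P}_0$ with a Gronwall argument for \eqref{E:LUNITIDATASMALLP0LINFTYCONSEQUENCES}--\eqref{E:UPITSELFLINFINITYP0CONSEQUENCES}. One small correction in Step~3: at the initial torus $\ell_{0,0}$ you should invoke \eqref{E:LINFTYPSISMALLDATAASSUMPTIONSALONGLT0} directly (which gives $|\Psi|\leq\mathring{\upepsilon}$ there, not just $\Psiep$) to obtain the sharp initial bound $C\mathring{\upepsilon}$; using Step~1 alone would leave an unwanted $C_{\mydiam}\Psiep$ term in the final answer.
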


\begin{proof}
Readers can consult \cite{jSgHjLwW2016}*{Lemma~7.3} for the main ideas on how to prove 
the estimates along $\Sigma_0^1$ based on Lemma~\ref{L:ALGEBRAICIDALONGSIGMA0}
and the assumptions of Subsect.\ \ref{SS:DATAASSUMPTIONS}
on the data for $\Psi$ and $ \bigslow$.
The data in \cite{jSgHjLwW2016}
were compactly supported in $\Sigma_0^1$, which is different than the present context,
but that minor detail does not necessitate any substantial changes in the proof.

To obtain the $L^{\infty}(\mathcal{P}_0^{2 \TranminusdatasizeWithFactor^{-1}})$ bounds for
$\Lunit_{(Small)}^i$ and $\upmu - 1$
stated in
\eqref{E:LUNITIDATASMALLP0LINFTYCONSEQUENCES}-\eqref{E:UPITSELFLINFINITYP0CONSEQUENCES},
we first use the evolution equations
\eqref{E:UPMUFIRSTTRANSPORT}-\eqref{E:LLUNITI}
(recall that
$
\displaystyle
\Lunit 
= \frac{\partial}{\partial t}
$),
Lemma \ref{L:SCHEMATICDEPENDENCEOFMANYTENSORFIELDS},
and the fundamental theorem of calculus to deduce
that for $(t,\vartheta) \in [0,2 \TranminusdatasizeWithFactor^{-1}] \times \mathbb{T}$,
we have the following estimate, where $\smoothfunction$ is smooth in its arguments:
\begin{align} \label{E:MUANDLUNITISMALLALONGP0GRONWALLREADY}
	\left|
		\myarray[\upmu - 1]
			{\sum_{a=1}^2 |\Lunit_{(Small)}^a|}
	\right|
	(t,0,\vartheta)
	& \leq
		\left|
		\myarray[\upmu - 1]
			{\sum_{a=1}^2 |\Lunit_{(Small)}^a|}
	\right|
	(0,0,\vartheta)
	\\
& \ \
	+
	C
	\int_{s=0}^t
		\left\lbrace
		\left|
			\smoothfunction(\upmu - 1, \Lunit_{(Small)}^1, \Lunit_{(Small)}^2,\Psi) 
		\right|
		\left|
			\Fullset^{\leq 1} \Psi
		\right|
		\right\rbrace
		(s,0,\vartheta)	
	\, ds.
	\notag
\end{align}
Moreover,
from \eqref{E:LITTLEGDECOMPOSED}-\eqref{E:METRICPERTURBATIONFUNCTION}
and \eqref{E:INITIALRELATIONS},
we deduce
the schematic relations
$\upmu|_{\Sigma_0^1} = 1 + \Psi \smoothfunction(\Psi)$
and
$\Lunit_{(Small)}^i = \Psi \smoothfunction(\Psi)$, where the functions $\smoothfunction$ are smooth.
Hence,
from the smallness assumption \eqref{E:LINFTYPSISMALLDATAASSUMPTIONSALONGLT0},
we deduce that the first term on RHS~\eqref{E:MUANDLUNITISMALLALONGP0GRONWALLREADY} 
is $\leq C \mathring{\upepsilon}$.
Moreover, from 
\eqref{E:LINFTYPSISMALLDATAASSUMPTIONSALONGLT0},
we deduce that the time integral on RHS~\eqref{E:MUANDLUNITISMALLALONGP0GRONWALLREADY} 
is
$
\leq
C \mathring{\upepsilon}
	\int_{s=0}^t
		\left\lbrace
		\left|
			\smoothfunction(\upmu - 1, \Lunit_{(Small)}^1, \Lunit_{(Small)}^2) 
		\right|
		\right\rbrace
		(s,0,\vartheta)	
	\, ds
$.
Hence, from Gronwall's inequality,
we conclude that
$
\left|
		\myarray[\upmu - 1]
			{\sum_{a=1}^2 |\Lunit_{(Small)}^a|}
	\right|
(t,0,\vartheta)
\leq C \mathring{\upepsilon}
$
for $t \in [0,2 \TranminusdatasizeWithFactor^{-1}]$,
which yields the desired bounds
\eqref{E:LUNITIDATASMALLP0LINFTYCONSEQUENCES}-\eqref{E:UPITSELFLINFINITYP0CONSEQUENCES}.

\end{proof}

\subsection{\texorpdfstring{$\Tboot$}{The bootstrap time}, the positivity of 
\texorpdfstring{$\upmu$}{the inverse foliation density}, and the diffeomorphism property of 
\texorpdfstring{$\Upsilon$}{the change of variables map}}
\label{SS:SIZEOFTBOOT}
To control the solution up to the shock and to derive estimates, 
we find it convenient to rely on a set of bootstrap assumptions.
In this subsection, we state some basic bootstrap assumptions.

We start by fixing a real number $\Tboot$ with
\begin{align} \label{E:TBOOTBOUNDS}
	0 < \Tboot \leq 2 \TranminusdatasizeWithFactor^{-1},
\end{align}
where $\TranminusdatasizeWithFactor > 0$ is defined in \eqref{E:CRITICALBLOWUPTIMEFACTOR}.

We assume that on the spacetime domain $\mathcal{M}_{\Tboot,U_0}$
(see \eqref{E:MTUDEF}), we have
\begin{align} \label{E:BOOTSTRAPMUPOSITIVITY} \tag{$\mathbf{BA} \upmu > 0$}
	\upmu > 0.
\end{align}
Inequality \eqref{E:BOOTSTRAPMUPOSITIVITY} implies that no shocks are present in
$\mathcal{M}_{\Tboot,U_0}$.

We also assume that
\begin{align} \label{E:BOOTSTRAPCHOVISDIFFEO}
	& \mbox{The change of variables map $\Upsilon$ from Def.~\ref{D:CHOVMAP}
	is a $C^1$ diffeomorphism from} \\
	& [0,\Tboot) \times [0,U_0] \times \mathbb{T}
	\mbox{ onto its image}.
	\notag
\end{align}

\subsection{Fundamental \texorpdfstring{$L^{\infty}$}{essential sup-norm} bootstrap assumptions}
\label{SS:PSIBOOTSTRAP}
 Our fundamental bootstrap assumptions for $\Psi$ and $\bigslow$
 are that the following inequalities hold on $\mathcal{M}_{\Tboot,U_0}$
 (see Subsect.\ \ref{SS:STRINGSOFCOMMUTATIONVECTORFIELDS} regarding the vectorfield operator notation):
\begin{align} \label{E:PSIFUNDAMENTALC0BOUNDBOOTSTRAP} \tag{$\mathbf{BA}\Psi-\bigslow$}
	\left\| 
		\Tanset^{[1,11]} \Psi 
	\right\|_{L^{\infty}(\Sigma_t^u)},
		\,
	\left\| 
		\Tanset^{\leq 10} \bigslow
	\right\|_{L^{\infty}(\Sigma_t^u)}
	& \leq \varepsilon,
\end{align}
where $\varepsilon$ is a small positive bootstrap parameter whose smallness 
we describe in Sect.\ \ref{SS:SMALLNESSASSUMPTIONS}.

\subsection{Auxiliary \texorpdfstring{$L^{\infty}$}{essential sup-norm} bootstrap assumptions}
\label{SS:AUXILIARYBOOTSTRAP}
In deriving pointwise estimates, we find it convenient to make the
following auxiliary bootstrap assumptions.
In Prop.~\ref{P:IMPROVEMENTOFAUX}, we will derive strict improvements
of these assumptions.

\medskip

\noindent \underline{\textbf{Auxiliary bootstrap assumptions for small quantities}.}
We assume that the following inequalities hold on $\mathcal{M}_{\Tboot,U_0}$:
\begin{align} 
	\left\| 
		\Psi 
	\right\|_{L^{\infty}(\Sigma_t^u)}
	& \leq \Psiep + \varepsilon^{1/2},
	\label{E:PSIITSELFAUXLINFINITYBOOTSTRAP} \tag{$\mathbf{AUX1}\Psi$}
		\\
	\left\| 
		\Fullset_*^{[1,10];1} \Psi 
	\right\|_{L^{\infty}(\Sigma_t^u)}
	& \leq \varepsilon^{1/2},
	\label{E:PSIAUXLINFINITYBOOTSTRAP} \tag{$\mathbf{AUX2}\Psi$}
		\\
	\left\| 
		\Fullset^{\leq 10;1} \bigslow 
	\right\|_{L^{\infty}(\Sigma_t^u)}
	& \leq \varepsilon^{1/2},
	\label{E:SLOWAUXLINFINITYBOOTSTRAP} \tag{$\mathbf{AUX}\bigslow$}
\end{align}

\begin{align}
	\left\| 
		\Lunit \Tanset^{[1,9]} \upmu
	\right\|_{L^{\infty}(\Sigma_t^u)},
		\,
	\left\| 
		\Tanset_*^{[1,9]} \upmu
	\right\|_{L^{\infty}(\Sigma_t^u)}
	& \leq \varepsilon^{1/2},
		\label{E:UPMUBOOT}  \tag{$\mathbf{AUX1}\upmu$} 
		\\
	\left\| 
		\Lunit_{(Small)}^i 
	\right\|_{L^{\infty}(\Sigma_t^u)}
	& \leq \Psiep^{1/2},
		\label{E:FRAMECOMPONENTS1BOOT} \tag{$\mathbf{AUX1}\Lunit_{(Small)}$} 
		\\
	\left\| 
		\Fullset_*^{[1,9];1} \Lunit_{(Small)}^i 
	\right\|_{L^{\infty}(\Sigma_t^u)}
	& \leq \varepsilon^{1/2},
		\label{E:FRAMECOMPONENTSIBOOT} \tag{$\mathbf{AUX2}\Lunit_{(Small)}$} 
		\\
	\left\| 
		\angLie_{\Fullset}^{\leq 8;1} \upchi
	\right\|_{L^{\infty}(\Sigma_t^u)}
	& \leq \varepsilon^{1/2}.
	\label{E:CHIBOOT} \tag{$\mathbf{AUX1}\upchi$}
\end{align}

\medskip

\noindent \underline{\textbf{Auxiliary bootstrap assumptions for quantities that are allowed to be large}.}
\begin{align}
	\left\| 
		\Rad \Psi 
	\right\|_{L^{\infty}(\Sigma_t^u)}
	& \leq 
	\left\| 
		\Rad \Psi 
	\right\|_{L^{\infty}(\Sigma_0^u)}
	+ 
	\varepsilon^{1/2},
	&& 
		\label{E:PSITRANSVERSALLINFINITYBOUNDBOOTSTRAP} \tag{$\mathbf{AUX2}\Psi$}
\end{align}

\begin{align}
	\left\| 
		\Lunit \upmu
	\right\|_{L^{\infty}(\Sigma_t^u)}
	& \leq 
		\frac{1}{2}
		\left\| 
			G_{\Lunit \Lunit} \Rad \Psi
		\right\|_{L^{\infty}(\Sigma_0^u)}
		+ \varepsilon^{1/2},
		 \label{E:LUNITUPMUBOOT}  \tag{$\mathbf{AUX2}\upmu$}  \\
		\left\| 
			\upmu
		\right\|_{L^{\infty}(\Sigma_t^u)}
		& \leq
		1
		+ 
		\TranminusdatasizeWithFactor^{-1} 
		\left\| 
				G_{\Lunit \Lunit} \Rad \Psi
		\right\|_{L^{\infty}(\Sigma_0^u)}
		+
		\Psiep^{1/2}
		+ 
		\varepsilon^{1/2},
			\label{E:UPMUTRANSVERSALBOOT}  \tag{$\mathbf{AUX3}\upmu$} \\
		\left\| 
			\Rad \Lunit_{(Small)}^i
		\right\|_{L^{\infty}(\Sigma_t^u)}
		& \leq
	 	\left\| 
			\Rad \Lunit_{(Small)}^i
		\right\|_{L^{\infty}(\Sigma_0^u)}
		+ \varepsilon^{1/2}.
		\label{E:LUNITITRANSVERSALBOOT}  \tag{$\mathbf{AUX2}\Lunit_{(Small)}$} 
\end{align}

\subsection{Smallness assumptions}
\label{SS:SMALLNESSASSUMPTIONS}
For the remainder of the article, 
when we say that ``$A$ is small relative to $B$,''
we mean that there exists a continuous increasing function 
$f :(0,\infty) \rightarrow (0,\infty)$ 
such that 
$
\displaystyle
A < f(B)
$.
In principle, the functions $f$ could always be chosen to be 
polynomials with positive coefficients or exponential functions.
However, to avoid lengthening the paper, we typically do not 
specify the form of $f$.

Throughout the rest of the paper, we make the following
smallness assumptions. We
continually adjust the required smallness
in order to close our estimates.
\begin{itemize}
\item The bootstrap parameter $\varepsilon$ 
		and the data smallness parameter $\mathring{\upepsilon}$ from Subsect.\ \ref{SS:DATAASSUMPTIONS}
		are small relative to $1$ (i.e., small in an absolute sense, without regard for the other parameters).
	\item $\varepsilon$ and $\mathring{\upepsilon}$
		are small relative to $\mathring{\updelta}^{-1}$,
		where $\mathring{\updelta}$ is the data-size parameter 
		from Subsect.\ \ref{SS:DATAASSUMPTIONS}.
 \item $\varepsilon$ and $\mathring{\upepsilon}$ are small relative to
		the data-size parameter $\TranminusdatasizeWithFactor$ 
		from Def.\ \ref{D:CRITICALBLOWUPTIMEFACTOR}.
	\item The data-size parameter $\Psiep$ 
		from Subsect.\ \ref{SS:DATAASSUMPTIONS} is small relative to $1$.
	\item We assume that\footnote{In the proof of the main theorem (Theorem~\ref{T:MAINTHEOREM}),
	one sets $\varepsilon = C' \mathring{\upepsilon}$,
	where $C' > 1$ is chosen to be sufficiently large
	and $\mathring{\upepsilon}$ is assumed to be sufficiently small.
	This is compatible with \eqref{E:DATAEPSILONVSBOOTSTRAPEPSILON}.}
	\begin{align} \label{E:DATAEPSILONVSBOOTSTRAPEPSILON}
		\mathring{\upepsilon} 
		& \leq \varepsilon.
	\end{align}
	\end{itemize}
The first two assumptions will allow us, in particular, to treat error terms of size
$\varepsilon \mathring{\updelta}^k$ as small quantities, where
$k \geq 0$ is an integer. The third assumption is relevant
because we only need to control the solution for times
$t < 2 \TranminusdatasizeWithFactor^{-1}$,
which is plenty of time for us to show that a shock forms;
hence, in many estimates, we can therefore consider factors of
$t$ as being bounded by the ``constant'' $C = 2 \TranminusdatasizeWithFactor^{-1}$.
The assumption \eqref{E:DATAEPSILONVSBOOTSTRAPEPSILON} is convenient
for closing our bootstrap argument. The smallness assumption on $\Psiep$ 
allows us to control the size of some key structural coefficients in our estimates
(see, for example, RHS~\eqref{E:TOPORDERTANGENTIALENERGYINTEGRALINEQUALITIES})
and ensures that the solution remains in the regime of hyperbolicity of the equations.

\subsection{Existence of simple plane waves and of admissible initial data}
\label{SS:EXISTENCEOFDATA}
In this subsection, we show that
under the assumptions \eqref{E:SOMENONINEARITIESARELINEAR} on the semilinear inhomogeneous terms,
there exist plane symmetric initial data (i.e., initial data on $\Sigma_0$ that depend only on the Cartesian coordinate $x^1$) 
for the system \eqref{E:FASTWAVE}-\eqref{E:SLOWWAVE}
that are compactly supported in $\Sigma_0^1$
and such that the solution has the following four properties.
\begin{enumerate}
	\item The solution is plane symmetric, i.e., it depends only on the Cartesian coordinates
		$t$ and $x^1$, and, since $\GeoAng = \partial_2$ in plane symmetry, 
		the $\GeoAng$ derivatives of the solution are $0$.
	\item The solution is simple and outgoing, i.e., $\slow = 0$ and $\Lunit \Psi = 0$.
		\item The assumptions of Subsect.\ \ref{SS:DATAASSUMPTIONS} 
		hold with $\mathring{\upepsilon} = 0$,
		consistent with the smallness assumptions of Subsect.\ \ref{SS:SMALLNESSASSUMPTIONS}.
	\item The $L^{\infty}$ assumption \eqref{E:PSIITSELFLINFTYSMALLDATAASSUMPTIONSALONGSIGMA0} holds, 
		where $\Psiep$ can be made arbitrarily small,
		consistent with the smallness assumptions of Subsect.\ \ref{SS:SMALLNESSASSUMPTIONS}.
\end{enumerate}

The key point is that (non-symmetric) perturbations of the above solutions
will obey the smallness assumptions of Subsect.\ \ref{SS:SMALLNESSASSUMPTIONS}
and hence fall under the scope of our main results.

\begin{remark}[\textbf{The vanishing of} $\mathring{\upepsilon}$ \textbf{for simple plane symmetric waves}]
	\label{R:VANISHINGOFUPEPSILON}
	It is straightforward, though somewhat tedious, 
	to show that if assumptions (1) and (2) above hold, 
	then (3) follows automatically. That is, for simple outgoing plane wave solutions
	(which by definition verify $\Lunit \Psi \equiv 0$ and $\bigslow \equiv 0$)
	whose initial data are supported in $\Sigma_0^1$,
	the assumptions of Subsect.\ \ref{SS:DATAASSUMPTIONS} hold with $\mathring{\upepsilon} = 0$.
	The model problem of Subsubsect.\ \ref{SSS:NEARLYSIMPLEWAVES} is a good starting point
	for readers interested in working out the details.
\end{remark}

\begin{remark}[\textbf{Ignoring the $x^2$ direction in this subsection}]
	\label{R:IGNORINGX2}
	In this subsection only, we ignore the $x^2$ direction (since we are considering plane symmetric solutions).
	For example, here we identity $\Sigma_s$ with the constant-time hypersurface
	$\lbrace (t,x^1) \in \mathbb{R} \times \mathbb{R} \ | \ t = s \rbrace$.
\end{remark}

To show that solutions with the properties (1)-(4) exist, we first show that
for plane symmetric 
initial data that are compactly supported in $\Sigma_0^1$
and such that
$\slow|_{\Sigma_0^1} = \partial_t \slow|_{\Sigma_0^1} = \Lunit \Psi|_{\Sigma_0^1} = 0$,
the solution to \eqref{E:FASTWAVE}-\eqref{E:SLOWWAVE}
is also plane symmetric (i.e., it depends only on $t$ and $x^1$)
and moreover, that $\slow = \partial_t \slow = \Lunit \Psi = 0$, as long as the solution remains smooth.
The fact that plane symmetric initial data lead to plane symmetric solutions 
with $\GeoAng = \partial_2$
is a standard consequence 
of the fact that equations \eqref{E:FASTWAVE}-\eqref{E:SLOWWAVE} and the eikonal equation
\eqref{E:APPENDIXGEOMETRICTORUSCOORD} are invariant under the Cartesian coordinate translations $x^2 \rightarrow x^2 + a$ 
(with $a \in [0,1)$ and $x^2 + a$ interpreted mod $\mathbb{T}$), 
and we will not discuss these facts further here.
Next, we note that in plane symmetry,
equations \eqref{E:FASTWAVE}-\eqref{E:SLOWWAVE}
can be written in the following (schematic) form:
\begin{subequations}
\begin{align} 
	(\upmu \Lunit + 2 \Rad) (\Lunit \Psi) 
	& = \smoothfunction \cdot \Lunit \Psi + \smoothfunction \cdot (\slow,\partial \slow),
		\label{E:FASTWAVEINPLANENSYMMETRY}
			\\
	(h^{-1})^{\alpha \beta}(\Psi,\bigslow)  \partial_{\alpha} \partial_{\beta} \slow 
	& = \smoothfunction \cdot \Lunit \Psi + \smoothfunction \cdot (\slow,\partial \slow),
	\label{E:SLOWWAVEINPLANENSYMMETRY}
\end{align}
\end{subequations}
where the $\smoothfunction$ are smooth functions of 
$\Psi$, 
$\slow$,
$\partial \slow$, etc.\ (the precise details of the $\smoothfunction$ are not important here).
Equations \eqref{E:FASTWAVEINPLANENSYMMETRY}-\eqref{E:SLOWWAVEINPLANENSYMMETRY}
follow from equations \eqref{E:FASTWAVE}-\eqref{E:SLOWWAVE}, 
\eqref{E:CHIINTERMSOFOTHERVARIABLES},
and \eqref{E:LONINSIDEGEOMETRICWAVEOPERATORFRAMEDECOMPOSED},
Lemma~\ref{L:SCHEMATICDEPENDENCEOFMANYTENSORFIELDS},
our assumptions \eqref{E:SOMENONINEARITIESARELINEAR} on the semilinear inhomogeneous terms,
and the fact that all $\angdiff$ derivatives of all scalar unknowns
vanish in plane symmetry.

We will now sketch a proof that if the (plane symmetric) initial data for
\eqref{E:FASTWAVEINPLANENSYMMETRY}-\eqref{E:SLOWWAVEINPLANENSYMMETRY}
verify $\slow|_{\Sigma_0^1} = \partial_t \slow|_{\Sigma_0^1} = \Lunit \Psi|_{\Sigma_0^1} = 0$,
then $\slow = \partial_t \slow = \Lunit \Psi = 0$, as long as the solution remains smooth.
To start, we assume that we have a smooth plane symmetric solution to 
\eqref{E:FASTWAVEINPLANENSYMMETRY}-\eqref{E:SLOWWAVEINPLANENSYMMETRY}
with $\upmu > 0$. Multiplying equation \eqref{E:FASTWAVEINPLANENSYMMETRY} by $\Lunit \Psi$,
integrating by parts over $\Sigma_t$, 
and using the assumption $\Lunit \Psi|_{\Sigma_0^1} = 0$,
we obtain the a priori energy estimate
$
	\int_{\Sigma_t} 
		(\Lunit \Psi)^2 
	\, dx^1
	\lesssim
	\int_{s=0}^t
		\int_{\Sigma_s} 
			(\Lunit \Psi)^2 
		\, dx^1
	\, ds
	+
	\int_{s=0}^t
		\int_{\Sigma_s} 	
		|\Lunit \Psi|(|\slow| + |\partial \slow|)
		\, dx^1
	\, ds
$,
where the implicit constants depend on the solution
(in particular they can depend on an upper bound for $1/\upmu$).
Similarly, using standard energy estimates for the wave equation \eqref{E:SLOWWAVEINPLANENSYMMETRY}
(in the spirit of the estimates of Prop.\ \ref{P:SLOWWAVEDIVTHM})
and the assumption $\slow|_{\Sigma_0} = \partial_t \slow|_{\Sigma_0} =  0$,
we obtain the a priori energy estimate
$
	\int_{\Sigma_t} 
		\left\lbrace
			|\partial \slow|^2
			+ 
			\slow^2 
		\right\rbrace
	\, dx^1
	\lesssim
	\int_{s=0}^t
		\int_{\Sigma_s} 
			\left\lbrace
				|\partial \slow|^2
				+ 	
				\slow^2 
			\right\rbrace
		\, dx^1
	\, ds
	+
	\int_{s=0}^t
		\int_{\Sigma_s} 	
		|\Lunit \Psi|(|\slow| + |\partial \slow|)
		\, dx^1
	\, ds
	$.
	Hence, setting
	$
	Q(t) 
	:=
	\int_{\Sigma_t} 
		\left\lbrace
			(\Lunit \Psi)^2 
			+
			|\partial \slow|^2
			+ 
			\slow^2 
		\right\rbrace
	\, dx^1
	$
	and adding the two a priori energy estimates,
	we find that
	$Q(t) \lesssim  \int_{s=0}^t Q(s) \, ds$.
	From Gronwall's inequality, we conclude that $Q(t) = 0$
	(for times $t$ such that the solution is smooth), 
	which is the desired result. We have therefore shown that equations
	\eqref{E:FASTWAVE}-\eqref{E:SLOWWAVE}
	admit simple plane wave solutions with $\slow = 0$
	and $\Lunit \Psi = 0$,
	starting from any smooth initial data 
	that are compactly supported in $\Sigma_0^1$
	and such that
	$\slow|_{\Sigma_0^1} = \partial_t \slow|_{\Sigma_0^1} = \Lunit \Psi|_{\Sigma_0^1} = 0$.

To complete our discussion, it remains for us to
show that there exist plane symmetric initial data for $\Psi$ 
that satisfy our smallness assumptions and such that
$\Lunit \Psi|_{\Sigma_0^1} = 0$. 
In our construction, we will think of $\Psi|_{\Sigma_0}$ and $\partial_t \Psi|_{\Sigma_0}$
as the initial data that we are free to prescribe.
To proceed, we let $\phi = \phi(x^1)$
be any smooth, non-trivial function of the Cartesian coordinate
$x^1$ that is compactly supported in $[0,1]$,
and we set $\Psi|_{\Sigma_0} := \phi$.
We now construct data for $\partial_t \Psi$
such $\Lunit \Psi|_{\Sigma_0} = 0$.
Using \eqref{E:PERTURBEDPART} and
the second identities in \eqref{E:LUNITOFUANDT}
and \eqref{E:INITIALRELATIONS},
we see that
$\Lunit \Psi|_{\Sigma_0} = 0$ 
is equivalent to
$\partial_t \Psi|_{\Sigma_0}
= - \Lunit^1 \partial_1 \Psi|_{\Sigma_0}
= - \Lunit^1 \partial_1 \phi
= - \left\lbrace
			\sqrt{(\gtinverse)^{11}\circ \phi} 
	 		- 
			(g^{-1})^{01}\circ \phi	
		\right\rbrace
		\partial_1 \phi
$.
Put differently, we can prescribe
$\partial_t \Psi|_{\Sigma_0}$
in terms of $\phi$ to achieve the desired result $\Lunit \Psi|_{\Sigma_0} = 0$.

We have therefore constructed plane symmetric initial data such that $\Lunit \Psi|_{\Sigma_0} = 0$.
To complete the discussion in this subsection, it remains only for us to point out
that by rescaling $\phi \rightarrow \uplambda \phi$ 
(where $\phi$ is as in the previous paragraph)
and choosing the real number $\uplambda$ to be 
sufficiently small but non-zero, we can ensure that
$\| \Psi \|_{L^{\infty}(\Sigma_0^1)}$ is as small as we want,
consistent with our assumption
\eqref{E:PSIITSELFLINFTYSMALLDATAASSUMPTIONSALONGSIGMA0}
and with the smallness assumption for
$\Psiep$ that we imposed in Subsect.\ \ref{SS:SMALLNESSASSUMPTIONS}.

\section{Energies, null fluxes, and energy-null flux identities}
\label{S:ENERGIES}
In this section, we define the energies and null fluxes that we
use in our $L^2$ analysis. We also provide the basic energy-null flux
identities for solutions to the fast wave equation \eqref{E:FASTWAVE}
and to the slow wave equation, 
in the first-order form \eqref{E:SLOW0EVOLUTION}-\eqref{E:SYMMETRYOFMIXEDPARTIALS}.

\subsection{Definitions of the energies and null fluxes}
\label{SS:ENERGYDEFINITIONS}

\subsubsection{Energies and null fluxes for the fast wave}

\begin{definition}[\textbf{Energy and null flux for the fast wave}]
\label{D:ENERGYFLUX}
In terms of the geometric forms of Def.~\ref{D:NONDEGENERATEVOLUMEFORMS},
we define the fast wave energy functional $\enzero[\cdot]$ and the fast wave null flux functional
$\flzero[\cdot]$ as follows:
\begin{subequations}
\begin{align} \label{E:ENERGYORDERZEROCOERCIVENESS}
	\enzero[f](t,u)
		& = 
		\int_{\Sigma_t^u} 
			\left\lbrace
				\frac{1}{2} (1 + 2 \upmu) \upmu (\Lunit f)^2
				+ 2 \upmu (\Lunit f) \Rad f
				+ 2 (\Rad \Psi)^2
				+ \frac{1}{2} (1 + 2 \upmu)\upmu |\angdiff f|^2
			\right\rbrace
		\, d \tvol,
		\\
	\flzero[f](t,u)
		& = 
		\int_{\mathcal{P}_u^t} 
			\left\lbrace
				(1 + \upmu)(\Lunit f)^2
				+ \upmu |\angdiff f|^2
			\right\rbrace
		\, d \conevol.
		\label{E:NULLFLUXENERGYORDERZEROCOERCIVENESS}
\end{align}
\end{subequations}

\end{definition}

\subsubsection{Energies and null fluxes for the slow wave}
Let
\begin{align} \label{E:ALTSLOWARRAY}
	\altbigslow
	& := (\altslow,\altslow_0,\altslow_1,\altslow_2)
\end{align}
be an array comprising four scalar functions.
In later applications, the entries of $\altbigslow$ will be
derivatives of the entries of the slow wave array $\bigslow$ defined in \eqref{E:SLOWWAVEVARIABLES}.
To construct energies and null fluxes for the slow wave,
we will rely on the compatible current vectorfield
$J = J[\altbigslow]$, which we define relative to the Cartesian coordinates as follows:
\begin{align} \label{E:SLOWCURRENT}
	J^{\alpha}[\altbigslow]
	& := 
		2 \enmomtensor^{\alpha \beta}[\altbigslow] \delta_{\beta}^0
		- 
		\altslow^2 (h^{-1})^{\alpha \beta} \delta_{\beta}^0.
\end{align}
In \eqref{E:SLOWCURRENT},
\begin{align} \label{E:ENMOMENTUMSLOWWAVE}
\enmomtensor^{\alpha \beta}[\altbigslow]
:= (h^{-1})^{\alpha \kappa} (h^{-1})^{\beta \lambda} \altslow_{\kappa} \altslow_{\lambda} 
- 
\frac{1}{2} (h^{-1})^{\alpha \beta} (h^{-1})^{\kappa \lambda} \altslow_{\kappa} \altslow_{\lambda}
\end{align}
is the type $\binom{2}{0}$ energy-momentum tensorfield of the slow wave equation \eqref{E:SLOWWAVE}.

\begin{remark}[\textbf{Suppression of some arguments of} $\enmomtensor$]
	\label{R:ENMOMEMSUPPRESSIONOFARGUMENTS}
	Although $\enmomtensor^{\alpha \beta}[\altbigslow]$ depends on
	$(\Psi,\bigslow)$ through the Cartesian component functions 
	$(h^{-1})^{\alpha \beta} = (h^{-1})^{\alpha \beta}(\Psi,\bigslow)$,
	we typically suppress this dependence.
\end{remark}
Note that \eqref{E:VECTORSGCAUSALIMPLIESHTIMELIKE}
and our assumption 
$(g^{-1})^{\alpha \beta}(\Psi = 0) 
= (m^{-1})^{\alpha \beta} 
= \mbox{\upshape diag}(-1,1,1)$
together imply that
when $|\GdVar| + |\bigslow|$ is sufficiently small,
the one-form with Cartesian components $\delta_{\alpha}^0$ is 
past-directed 
(by \eqref{E:ZEROZEROISMINUSONE})
and $h$-timelike.
Thus, the product $2 \enmomtensor^{\alpha \beta}[\altbigslow] \delta_{\beta}^0$ 
on RHS~\eqref{E:SLOWCURRENT} is the contraction of the energy-momentum
tensorfield of the slow wave with a past-directed $h$-timelike (see Footnote~\ref{FN:HTIMELIKE})
one-form. We also note the following explicit formulas, 
the first of which relies on the second relation in \eqref{E:ZEROZEROISMINUSONE},
where $(i=1,2)$:
\begin{subequations}
\begin{align} \label{E:SLOWCURRENTTIMECARTESIANCOMPONENT}
	J^0[\altbigslow]
	& =
		2 (h^{-1})^{\alpha 0} (h^{-1})^{\beta 0} \altslow_{\alpha} \altslow_{\beta}
		+
		(h^{-1})^{\alpha \beta} \altslow_{\alpha} \altslow_{\beta}
		+
		\altslow^2,
			\\
	J^i[\altbigslow] 
	& = 
		2 (h^{-1})^{\alpha i} (h^{-1})^{\beta 0} \altslow_{\alpha} \altslow_{\beta}
		-
		(h^{-1})^{i0} (h^{-1})^{\alpha \beta} \altslow_{\alpha} \altslow_{\beta}
		-
		(h^{-1})^{i0}
		\altslow^2.
		\label{E:SLOWCURRENTSPATIALCARTESIANCOMPONENTS}
\end{align}
\end{subequations}

We now again consider the past-directed one-form with Cartesian components
$\delta_{\alpha}^0$, which we have already shown to be $h$-timelike when 
$|\GdVar| + |\bigslow|$ is sufficiently small.
Since, on RHS~\eqref{E:SLOWCURRENT},
this one-form is contracted against the energy-momentum tensorfield $\enmomtensor[\altbigslow]$,
the well-known dominant energy condition for $\enmomtensor[\altbigslow]$ implies that
if $\omega$ is any non-trivial past-directed, $h$-timelike one-form
and if $\altbigslow \neq 0$,
then $J^{\alpha} \omega_{\alpha} > 0$. 
It follows that $h(J,J) \leq 0$ and thus, by definition, $J$ is $h$-causal.
Moreover, taking $\omega_{\alpha} := \delta_{\alpha}^0$,
we find that $J^0 > 0$ whenever $\altbigslow \neq 0$.
Thus, by \eqref{E:VECTORSHCAUSALIMPLIESGTIMELIKE} (with $J[\altbigslow]$ in the role of $V$ in \eqref{E:VECTORSHCAUSALIMPLIESGTIMELIKE}), 
the following holds when $|\GdVar| + |\bigslow|$ is sufficiently small:
\begin{align} \label{E:JISGTIMELIKE}
	\altbigslow \neq 0 \implies \mbox{\upshape the vectorfield $J[\altbigslow]$ is future-directed and $g$-timelike.}
\end{align}
We will use these facts in the proof of Lemma~\ref{L:COERCIVENESSOFSLOWWAVEENERGIESANDFLUXES}.

We find it convenient to \emph{define} the slow wave energy and the slow wave null flux
relative to the Cartesian forms. However, when describing
their coerciveness properties and deriving energy estimates, 
we use the geometric forms; see Lemma~\ref{L:COERCIVENESSOFSLOWWAVEENERGIESANDFLUXES}.

\begin{definition}[\textbf{Energy and null flux for the slow wave}]
	\label{D:SLOWWAVEENERGYFLUX}
	Let $J^{\alpha}[\altbigslow]$ be the compatible current vectorfield with Cartesian components given
	by \eqref{E:SLOWCURRENTTIMECARTESIANCOMPONENT}-\eqref{E:SLOWCURRENTSPATIALCARTESIANCOMPONENTS}.
	In terms of the Cartesian forms of Def.~\ref{D:CARTESIANFORMS}
	and the one-form 
	$\covL_{\alpha}$ defined in \eqref{E:EUCLIDEANNORMALTONULLHYPERSURFACE},
	we define the slow wave energy functional $\slowen[\cdot]$ and the slow wave null flux functional
	$\slowfl[\cdot]$ as follows:
	\begin{align} \label{E:SLOWWAVEENERGYFLUX}
		\slowen[\altbigslow](t,u)
		& 
		:= \int_{\Sigma_t^u}
				J^0[\altbigslow]
			\, d \Sigma,
		&&
		\slowfl[\altbigslow](t,u)
		:= \int_{\mathcal{P}_u^t}
				J^{\alpha}[\altbigslow] \covL_{\alpha}
			\, d \mathcal{P}.
	\end{align}
\end{definition}

\subsection{Coerciveness of the energy and null flux for the slow wave}
\label{SS:ENERGYCOERCIVENESSSLOWWAVE}
The coerciveness properties of the energy and null flux for the fast wave
are fairly apparent from Def.~\ref{D:ENERGYFLUX}.
In contrast, it takes some effort to reveal the coerciveness of the energy
and null flux for the slow wave. The next lemma yields the desired coerciveness.

\begin{lemma}[\textbf{Coerciveness of the energy and null flux for the slow wave}]
	\label{L:COERCIVENESSOFSLOWWAVEENERGIESANDFLUXES}
	In terms of the geometric forms of Def.~\ref{D:NONDEGENERATEVOLUMEFORMS},
	the slow wave energy $\slowen[\cdot]$
	and the slow wave null flux 
	$\slowfl[\cdot]$
	from Def.~\ref{D:SLOWWAVEENERGYFLUX} enjoy the following 
	coerciveness properties, valid when $|\GdVar| + |\bigslow|$ is sufficiently small:
		\begin{subequations}
		\begin{align}
		\slowen[\altbigslow](t,u)
		& 
		\approx 
			\int_{\Sigma_t^u}
				\upmu
				\left\lbrace
					\altslow^2 + \sum_{\alpha = 0}^2 \altslow_{\alpha}^2
				\right\rbrace
			\, d \tvol,
			\label{E:SLOWSIGMATENERGYCOERCIVENESS}	
			\\
	\slowfl[\altbigslow](t,u)
	 & 
		\approx 
			\int_{\mathcal{P}_u^t}
				\left\lbrace
					\altslow^2 + \sum_{\alpha = 0}^2 \altslow_{\alpha}^2
				\right\rbrace
		\, d \conevol.
		\label{E:SLOWNULLFLUXCOERCIVENESS}
	\end{align}
	\end{subequations}
\end{lemma}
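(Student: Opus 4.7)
The plan is to handle both equivalences by first reducing them, via Lemma~\ref{L:VOLFORMRELATION}, to pointwise bounds on the integrands, and then exploiting the strict wave-speed separation \eqref{E:VECTORSGCAUSALIMPLIESHTIMELIKE} as the underlying coerciveness mechanism. Applying Lemma~\ref{L:VOLFORMRELATION} to Def.~\ref{D:SLOWWAVEENERGYFLUX} converts $d\Sigma$ into $\upmu\{1 + \upgamma\smoothfunction(\upgamma)\}\,d\tvol$ and $d\mathcal{P}$ into $\{\sqrt{2} + \GdVar\smoothfunction(\GdVar)\}\,d\conevol$; under smallness of $|\GdVar| + |\bigslow|$, each conversion factor is pinched between positive universal constants (modulo the isolated factor of $\upmu$), so the weight $\upmu$ appearing in \eqref{E:SLOWSIGMATENERGYCOERCIVENESS} arises entirely from the measure conversion, while \eqref{E:SLOWNULLFLUXCOERCIVENESS} inherits no such weight. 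After this reduction, it suffices to prove the pointwise equivalences $J^0[\altbigslow] \approx \altslow^2 + \sum_\alpha \altslow_\alpha^2$ and $J^\alpha[\altbigslow]\covL_\alpha \approx \altslow^2 + \sum_\alpha \altslow_\alpha^2$.

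The pointwise equivalence for $J^0$ is nearly immediate from \eqref{E:SLOWCURRENTTIMECARTESIANCOMPONENT} together with the normalization $(h^{-1})^{00} \equiv -1$ of \eqref{E:ZEROZEROISMINUSONE}: evaluating at $\Psi = \bigslow = 0$, one obtains a quadratic form in $(\altslow, \altslow_0, \altslow_1, \altslow_2)$ whose spatial block reduces to the positive-definite Riemannian matrix $[(h^{-1})^{ij}](0,0)$. The smoothness of the coefficients and the smallness of $|\Psi| + |\bigslow|$ propagate this positive-definiteness, yielding \eqref{E:SLOWSIGMATENERGYCOERCIVENESS}.

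The main obstacle is the pointwise equivalence for $J^\alpha \covL_\alpha$, and it is here that the strict wave-speed separation enters essentially. The key observation is that, since $\Lunit$ is $g$-null, its $g$-dual covector $\Lunit_\alpha$ is $g$-null as well, so by \eqref{E:VECTORSGCAUSALIMPLIESHTIMELIKE} it is \emph{strictly $h$-timelike}; hence the positive Euclidean-unit rescaling $\covL_\alpha$ is $h$-timelike, and a leading-order check shows $\covL_0 > 0$, so that $\covL$ is past-directed $h$-timelike. Similarly, the covector $\delta_\beta^0$ is past-directed $h$-timelike by \eqref{E:ZEROZEROISMINUSONE}. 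Rewriting \eqref{E:SLOWCURRENT},
\begin{align*}
    J^\alpha[\altbigslow]\covL_\alpha
    =
    2\, \enmomtensor^{\alpha\beta}[\altbigslow]\, \covL_\alpha\, \delta_\beta^0
    - \altslow^2\, (h^{-1})^{\alpha\beta}\, \covL_\alpha\, \delta_\beta^0,
\end{align*}
the first summand is the flux of the wave-equation energy-momentum tensor $\enmomtensor[\altbigslow]$ contracted with two past-directed $h$-timelike covectors; the standard dominant-energy-condition argument (equivalently, diagonalization of the resulting quadratic form and the reverse Cauchy--Schwarz inequality for $h$-timelike covectors) shows it is positive-definite in $(\altslow_0, \altslow_1, \altslow_2)$, with constants quantified by the strictly negative quantities $h^{-1}(\covL, \covL)$, $h^{-1}(\delta^0, \delta^0)$, and $h^{-1}(\covL, \delta^0)$. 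The second summand equals $-\altslow^2\, h^{-1}(\covL, \delta^0)$, which is strictly positive and uniformly bounded below under our smallness hypotheses, contributing the $\altslow^2$ coerciveness. Combining yields \eqref{E:SLOWNULLFLUXCOERCIVENESS}. The hard part is keeping the constants uniform: it is precisely the quantitative $h$-timelikeness of $\covL$, guaranteed by the strict wave-speed gap in \eqref{E:VECTORSHCAUSALIMPLIESGTIMELIKE}, that prevents the $h$-inner products from degenerating; if the characteristic speeds of $g$ and $h$ were allowed to coincide, $\covL$ would become $h$-null and this coerciveness would fail.
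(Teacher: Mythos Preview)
Your proof is correct and uses essentially the same ingredients as the paper, but deploys the wave-speed separation in a dual way for the null-flux estimate \eqref{E:SLOWNULLFLUXCOERCIVENESS}. The paper first invokes the dominant energy condition for $\enmomtensor$ to conclude that $J[\altbigslow]$ is $h$-causal and future-directed, then applies the \emph{vector} form \eqref{E:VECTORSHCAUSALIMPLIESGTIMELIKE} of the speed assumption to deduce that $J$ is $g$-timelike; since $\covL$ is past-directed and $g$-null, the positivity $J^{\alpha}\covL_{\alpha}>0$ follows from elementary Lorentzian geometry of $g$. You instead apply the \emph{covector} form \eqref{E:VECTORSGCAUSALIMPLIESHTIMELIKE} to the $g$-null covector $\covL$ to conclude it is strictly $h$-timelike, and then contract $\enmomtensor^{\alpha\beta}$ against the two $h$-timelike covectors $\covL$ and $\delta^0$ directly. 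Both routes quantify the same mechanism; yours is arguably more direct for the flux since it avoids the intermediate step of characterizing $J$ geometrically, at the cost of having to separately verify that $\covL$ and $\delta^0$ lie in the same $h$-causal cone (which you implicitly use when asserting $h^{-1}(\covL,\delta^0)<0$). For \eqref{E:SLOWSIGMATENERGYCOERCIVENESS}, the paper simply cites the already-established positivity $J^0>0$ from the discussion preceding \eqref{E:JISGTIMELIKE}, whereas you recompute at the background; your remark that the spatial block ``reduces to $[(h^{-1})^{ij}](0,0)$'' is slightly imprecise if $(h^{-1})^{0a}(0,0)\neq 0$, but the positive-definiteness of $J^0$ follows regardless from the standard wave-energy computation.
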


\begin{remark}[\textbf{On the necessity of the null fluxes and the necessity of the slow speed of the slow wave}]
	\label{R:NEEDNULLFLUX}
	It is critically important for our analysis that
	the null flux $\slowfl[\altbigslow]$ 
	controls $\altbigslow$ \emph{without any degenerate factor of} $\upmu$,
	as RHS~\eqref{E:SLOWNULLFLUXCOERCIVENESS} shows.
	Similar remarks apply to the fast wave null flux 
	$\flzero[f](t,u)$
	defined in \eqref{E:NULLFLUXENERGYORDERZEROCOERCIVENESS},
	which controls $(\Lunit f)^2$ with a weight that does not degenerate as $\upmu \to 0$.
	We also recall that, as we described in Subsubsect.\ \ref{SSS:ENERGYESTIMATES},
	we use the spacetime integrals from Def.\ \ref{D:COERCIVEINTEGRAL}
	to obtain non-degenerate control of the $\angdiff$ derivatives of the fast wave.
	We also note that to derive the estimate \eqref{E:SLOWNULLFLUXCOERCIVENESS},
	we fundamentally rely on the assumption that the wave speed of $\bigslow$ is slower than
	the wave speed of $\Psi$, and that this is the main spot in the article where    
	we use this assumption.
\end{remark}

\begin{proof}[Proof of Lemma~\ref{L:COERCIVENESSOFSLOWWAVEENERGIESANDFLUXES}]
Just above equation \eqref{E:JISGTIMELIKE}, 
we showed that $J^0 = J^0[\altbigslow]> 0$ as long as
$\altbigslow \neq 0$ and $|\GdVar| + |\bigslow|$ is sufficiently small.
Since $J^0[\altbigslow]$ is precisely quadratic in its arguments $\altbigslow$,
the desired estimate \eqref{E:SLOWSIGMATENERGYCOERCIVENESS} follows from this fact,
the second identity in \eqref{E:VOLFORMRELATION},
and the first definition in \eqref{E:SLOWWAVEENERGYFLUX}.

To prove \eqref{E:SLOWNULLFLUXCOERCIVENESS},
we first note that the one-form
$\covL$ defined in \eqref{E:EUCLIDEANNORMALTONULLHYPERSURFACE}
is past-directed (in view of the last identity in \eqref{E:LUNITOFUANDT})
and $g$-null. Thus, by \eqref{E:JISGTIMELIKE},
$J^{\alpha} \covL_{\alpha} > 0$ whenever $\altbigslow \neq 0$
and $|\GdVar| + |\bigslow|$ is sufficiently small.
Since $J^{\alpha} \covL_{\alpha}$ is precisely quadratic in $\altbigslow$,
the desired estimate \eqref{E:SLOWNULLFLUXCOERCIVENESS}
follows from the last identity in \eqref{E:VOLFORMRELATION}
and the second definition in \eqref{E:SLOWWAVEENERGYFLUX}.

\end{proof}

\subsection{Energy-null flux identities}
\label{SS:ENERGYIDENTITIES}
In this subsection, we derive energy-null flux identities 
that we later will use to control solutions to the system
\eqref{E:FASTWAVE}
+
\eqref{E:SLOW0EVOLUTION}-\eqref{E:SYMMETRYOFMIXEDPARTIALS}.

\subsubsection{Energy-null flux identities for the fast wave}
\label{SSS:FASTWAVEENERGYID}
\begin{proposition}\cite{jSgHjLwW2016}*{Proposition~3.5; \textbf{Fundamental energy-null flux identity for the fast wave}}
	\label{P:DIVTHMWITHCANCELLATIONS}
		For solutions $f$ to the inhomogeneous wave equation
		\begin{align*}
			\upmu \square_{g(\Psi)} f & = \waveinhom,
		\end{align*}
	we have the following identity for $t \geq 0$ and $u \in [0,U_0]$:
	\begin{align} \label{E:E0DIVID}
				&
				\enzero[f](t,u)
				+
				\flzero[f](t,u)
					\\
				& 
				= 
				\enzero[f](0,u)
				+
				\flzero[f](t,0)
				- 
				\int_{\mathcal{M}_{t,u}}
					\left\lbrace
						(1 + 2 \upmu) (\Lunit f)
						+ 
						2 \Rad f 
					\right\rbrace
				\waveinhom 
				\, d \vol
					\notag \\
			& \ \
				- 
				\frac{1}{2} 
				\int_{\mathcal{M}_{t,u}}
				[\Lunit \upmu]_- |\angdiff f|^2 
				\, d \vol
				+ 
				\sum_{i=1}^5
				\int_{\mathcal{M}_{t,u}}
				 	\basicenergyerrorarg{\Mult}{i}[f]
				\, d \vol,
				\notag
			\end{align}
where $f_+: = \max \lbrace f,0 \rbrace$,
$f_- := \max \lbrace -f, 0 \rbrace$,
and
\begin{subequations}
		\begin{align}
			\basicenergyerrorarg{\Mult}{1}[f] 
				& := (\Lunit f)^2 
						\left\lbrace
							- \frac{1}{2} \Lunit \upmu
							+ \Rad \upmu
							- \frac{1}{2} \upmu \mytr \upchi
							- \mytr \angk^{(Trans-\Psi)}
							- \upmu \mytr \angk^{(Tan-\Psi)}
						\right\rbrace,
						\label{E:MULTERRORINTEG1} \\
			\basicenergyerrorarg{\Mult}{2}[f] 
			& := - (\Lunit f) (\Rad f)
					\left\lbrace
						 \mytr \upchi
						+ 2 \mytr \angk^{(Trans-\Psi)}
						+ 2 \upmu \mytr \angk^{(Tan-\Psi)}
					\right\rbrace,
						\label{E:MULTERRORINTEG2} \\
		  \basicenergyerrorarg{\Mult}{3}[f] 
			& := 
				\upmu |\angdiff f|^2
				\left\lbrace
					\frac{1}{2} \frac{[\Lunit \upmu]_+}{\upmu}
					+ \frac{\Rad \upmu}{\upmu}
					+ 2 \Lunit \upmu
					- \frac{1}{2} \mytr \upchi
					- \mytr \angk^{(Trans-\Psi)}
					- \upmu \mytr \angk^{(Tan-\Psi)}
				\right\rbrace,
				\label{E:MULTERRORINTEG3} \\
			\basicenergyerrorarg{\Mult}{4}[f] 
			& := 	(\Lunit f)(\angdiffuparg{\#} f) 
					\cdot
					\left\lbrace
						(1 - 2 \upmu) \angdiff \upmu 
						+ 
						2 \upzeta^{(Trans-\Psi)}
						+
						2 \upmu \upzeta^{(Tan-\Psi)}
					\right\rbrace,
					\label{E:MULTERRORINTEG4} \\
			\basicenergyerrorarg{\Mult}{5}[f] 
			& := - 2 (\Rad f)(\angdiffuparg{\#} f)
					\cdot
					 \left\lbrace
							\angdiff \upmu 
						 	+ 
						 	2 \upzeta^{(Trans-\Psi)}
							+
							2 \upmu \upzeta^{(Tan-\Psi)}
					 \right\rbrace.
					\label{E:MULTERRORINTEG5} 
		\end{align}
\end{subequations}
	The tensorfields
	$\upchi$, 
	$\upzeta^{(Trans-\Psi)}$,
	$\angk^{(Trans-\Psi)}$,
	$\upzeta^{(Tan-\Psi)}$,
	and
	$\angk^{(Tan-\Psi)}$
	from above 
	are as in
	\eqref{E:CHIINTERMSOFOTHERVARIABLES},
	\eqref{E:ZETATRANSVERSAL},
	\eqref{E:KABTRANSVERSAL},
	\eqref{E:ZETAGOOD},
	and \eqref{E:KABGOOD},
	while the symbol ``$\Mult$''
	in \eqref{E:MULTERRORINTEG1}-\eqref{E:MULTERRORINTEG5}
	merely signifies that the energies and error terms are tied
	to the multiplier vectorfield from \eqref{E:MULT},
	as is shown by the proof of \cite{jSgHjLwW2016}*{Proposition~3.5}.
\end{proposition}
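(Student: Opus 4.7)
\medskip

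\noindent\textbf{Proof proposal.}
Since this proposition has essentially the same statement as \cite{jSgHjLwW2016}*{Proposition~3.5}, the plan is to mimic that proof, and indeed the argument transfers verbatim: the decomposition of the multiplier current depends only on the metric $g = g(\Psi)$ and the commutation vectorfields, not on the semilinear inhomogeneous terms on RHSs~\eqref{E:FASTWAVE}-\eqref{E:SLOWWAVE}. I would start from the canonical energy-momentum tensorfield of the covariant scalar wave operator $\square_g$,
\[
\enmomtensor_{\alpha\beta}^{(\Psi)}[f]
:= \D_\alpha f \, \D_\beta f - \frac{1}{2} g_{\alpha\beta} (g^{-1})^{\mu\nu} \D_\mu f \, \D_\nu f,
\]
which satisfies the well-known divergence identity
$\D^\alpha \enmomtensor_{\alpha\beta}^{(\Psi)}[f] = (\square_g f) \D_\beta f$.
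Using the multiplier vectorfield $\Mult = (1+2\upmu)\Lunit + 2\Rad$ from \eqref{E:MULT}, I would introduce the compatible current
$\Jcurrent{\Mult}^\alpha := \enmomtensor^{(\Psi) \alpha\beta}[f] \, \Mult_\beta$,
and compute
\begin{align*}
\D_\alpha \Jcurrent{\Mult}^\alpha
& = (\square_g f)(\Mult f)
+ \frac{1}{2} \enmomtensor^{(\Psi) \alpha\beta}[f] \, \deformarg{\Mult}{\alpha}{\beta}.
\end{align*}
After multiplying by $\upmu$ and using $\upmu \square_g f = \waveinhom$, integrating over $\mathcal{M}_{t,u}$ against the geometric volume form $d\vol$ (which, by Lemma~\ref{L:VOLFORMRELATION}, is comparable to the $g$-canonical form $\upmu \, d\vol$ modulo a harmless factor) and invoking the divergence theorem yield an identity whose left-hand side is the sum of boundary fluxes on $\Sigma_t^u$, $\Sigma_0^u$, $\mathcal{P}_u^t$, and $\mathcal{P}_0^t$.

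\medskip

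The next step is to compute these four boundary terms in the rescaled frame $\{\Lunit,\Rad,\CoordAng\}$. Using \eqref{E:METRICFRAMEDECOMPLUNITRADUNITFRAME}-\eqref{E:GINVERSEFRAMEWITHRECTCOORDINATESFORGSPHEREINVERSE} to expand $\enmomtensor^{(\Psi)}[f]$ in the frame, and using Lemma~\ref{L:BASICPROPERTIESOFFRAME} for the frame-adapted volume forms, a direct calculation identifies the $\Sigma_t^u$ flux with $\enzero[f](t,u)$ as in \eqref{E:ENERGYORDERZEROCOERCIVENESS} (the coefficients $\frac{1}{2}(1+2\upmu)\upmu$, $2\upmu$, $2$, $\frac{1}{2}(1+2\upmu)\upmu$ arise precisely from the frame components of $\Mult$), and identifies the $\mathcal{P}_u^t$ flux with $\flzero[f](t,u)$ as in \eqref{E:NULLFLUXENERGYORDERZEROCOERCIVENESS}. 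The corresponding integrals at $t=0$ and $u=0$ give the initial data terms on the right-hand side of \eqref{E:E0DIVID}, and the contribution $\upmu \square_g f \cdot \Mult f = \waveinhom \cdot \{(1+2\upmu)\Lunit f + 2\Rad f\}$ produces the inhomogeneous integral.

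\medskip

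The main work is the frame decomposition of the bulk integrand $\frac{1}{2} \upmu \enmomtensor^{(\Psi) \alpha\beta}[f] \deformarg{\Mult}{\alpha}{\beta}$. I would expand $\deform{\Mult} = (1+2\upmu) \deform{\Lunit} + 2 \deform{\Rad} + 2 \Lunit \otimes d\upmu + 2 \, d\upmu \otimes \Lunit$ (symmetrized appropriately) and then use the formulas \eqref{E:CHIUSEFULID}, \eqref{E:ZETADEF}, \eqref{E:ZETADECOMPOSED}, \eqref{E:ANGKDECOMPOSED}, \eqref{E:ZETATRANSVERSAL}-\eqref{E:KABGOOD} to express the frame components of $\deform{\Lunit}$ and $\deform{\Rad}$ in terms of $\upchi$, $\upzeta^{(Trans-\Psi)}$, $\angk^{(Trans-\Psi)}$, $\upzeta^{(Tan-\Psi)}$, $\angk^{(Tan-\Psi)}$, and derivatives of $\upmu$. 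Contracting with $\enmomtensor^{(\Psi)}[f]$ and sorting the result by its quadratic dependence on $(\Lunit f)^2$, $(\Lunit f)(\Rad f)$, $\upmu|\angdiff f|^2$, $(\Lunit f)(\angdiffuparg{\#} f)$, and $(\Rad f)(\angdiffuparg{\#} f)$ gives precisely the five error integrands $\basicenergyerrorarg{\Mult}{1}[f], \ldots, \basicenergyerrorarg{\Mult}{5}[f]$ in \eqref{E:MULTERRORINTEG1}-\eqref{E:MULTERRORINTEG5}, together with an additional contribution proportional to $\upmu |\angdiff f|^2$ whose coefficient is $-\frac{1}{2}\Lunit\upmu/\upmu$ (this arises from the $\Lunit \otimes d\upmu$ piece).

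\medskip

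The final, and most delicate, step is to split the $-\frac{1}{2}\Lunit\upmu \cdot |\angdiff f|^2$ term arising above into its positive and negative parts via $\Lunit\upmu = [\Lunit\upmu]_+ - [\Lunit\upmu]_-$. The positive part $\frac{1}{2}[\Lunit\upmu]_+$ is absorbed into the coefficient of $\upmu|\angdiff f|^2$ inside $\basicenergyerrorarg{\Mult}{3}[f]$ as displayed in \eqref{E:MULTERRORINTEG3}, while the negative part leaves behind exactly the coercive bulk term $-\frac{1}{2} \int_{\mathcal{M}_{t,u}} [\Lunit\upmu]_- |\angdiff f|^2 \, d\vol$ in \eqref{E:E0DIVID}. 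The hard part is purely bookkeeping: keeping careful track of numerical factors throughout the frame decomposition of $\deform{\Mult}$ and ensuring the $[\Lunit\upmu]_\pm$ splitting yields exactly the coefficients displayed in the statement. Since every ingredient is already isolated in Subsects.~\ref{SS:EXPRESSIONSFORCONNECTIONCOEFFICIETNS}-\ref{SS:FRAMEDCOMPOFBOX}, and since the proof in \cite{jSgHjLwW2016}*{Proposition~3.5} carries over unchanged, no new ideas are required.
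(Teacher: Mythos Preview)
Your proposal is correct and follows the same approach as the paper, which simply cites \cite{jSgHjLwW2016}*{Proposition~3.5} without reproving it; the standard multiplier-current argument you outline (energy-momentum tensor contracted with $\Mult$, divergence theorem on $\mathcal{M}_{t,u}$ with the $g$-canonical form $\upmu\,d\vol$, frame decomposition of $\deform{\Mult}$, and the $[\Lunit\upmu]_\pm$ splitting) is precisely what that cited proof does. One minor notational slip: in your expansion of $\deform{\Mult}$ the extra terms should involve the one-form $\Lunit_\flat$ dual to $\Lunit$, not $\Lunit$ itself, but this does not affect the argument.
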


\subsubsection{Energy-null flux identities for the slow wave}
\label{SSS:SLOWWAVEENERGYID}
We now derive an analog of Prop.~\ref{P:DIVTHMWITHCANCELLATIONS} for the slow wave variable.

\begin{proposition}[\textbf{Fundamental energy-null flux identity for the slow wave}]
\label{P:SLOWWAVEDIVTHM}
Solutions $\altbigslow := (\altslow,\altslow_0,\altslow_1,\altslow_2)$ to the inhomogeneous system
\begin{subequations}
\begin{align}
	\upmu \partial_t \altslow_0
	& = \upmu (h^{-1})^{ab} \partial_a \altslow_b
		+ 2 \upmu (h^{-1})^{0a} \partial_a \altslow_0
		+ F_0,
		 \label{E:SLOWTIMEINHOM} \\
	\upmu \partial_t \altslow_i
	& = \upmu \partial_i \altslow_0
		+ F_i,
		\label{E:SLOWSPACEINHOM} \\
	\upmu \partial_t \altslow
	& = \upmu \altslow_0
	+ F,
	\label{E:SLOWINHOM}
		\\
	\upmu \partial_i \altslow_j
	& = \upmu \partial_j \altslow_i
	+ F_{ij}
	\label{E:MIXEDPARTIALSINHOM}
\end{align}
\end{subequations}
verify the following energy identity for $t \geq 0$ and $u \in [0,U_0]$:
\begin{align} \label{E:SLOWENERGYID}
	&
	\slowen[\altbigslow](t,u)
	+
	\slowfl[\altbigslow](t,u)
		\\
	& = 
		\slowen[\altbigslow](0,u)
		+
		\slowfl[\altbigslow](t,0)
			\notag \\
	& \ \
		+ \int_{\mathcal{M}_{t,u}}
				\left\lbrace 
					1 + \upgamma \smoothfunction(\upgamma)
				\right\rbrace 
				\slowbasicenergyerror[\altbigslow]
			 \, d \vol
			 \notag \\
	& \ \ 
		+ \int_{\mathcal{M}_{t,u}}
				\left\lbrace 
					1 + \upgamma \smoothfunction(\upgamma)
				\right\rbrace 
				\left\lbrace
					4 (h^{-1})^{\alpha 0} (h^{-1})^{\beta 0} \altslow_{\alpha} F_{\beta}
					+
					2 (h^{-1})^{\alpha \beta} \altslow_{\alpha} F_{\beta}
					+
					2 (h^{-1})^{\alpha a} (h^{-1})^{b 0}  \altslow_{\alpha} F_{ab}
					+ 
					2 \altslow F
				\right\rbrace
			\, d \vol,
			\notag
\end{align}
where the schematically denoted functions $\upgamma \smoothfunction(\upgamma)$ are smooth and vanish
when $\GdVar = 0$, and
\begin{align} \label{E:SLOWWAVEBASICENERGYINTEGRAND}
	\slowbasicenergyerror[\altbigslow]
	& := 4 \upmu (\partial_t (h^{-1})^{\alpha 0}) (h^{-1})^{\beta 0} \altslow_{\alpha} \altslow_{\beta}
			+
			\upmu (\partial_t(h^{-1})^{\alpha \beta}) \altslow_{\alpha} \altslow_{\beta}
				\\
		& \ \
			+
			2 \upmu (\partial_a (h^{-1})^{\alpha a}) (h^{-1})^{\beta 0} \altslow_{\alpha} \altslow_{\beta}
			+
			2 \upmu (h^{-1})^{\alpha a} (\partial_a (h^{-1})^{\beta 0}) \altslow_{\alpha} \altslow_{\beta}
				\notag \\
		& \ \
			-
			\upmu (\partial_a (h^{-1})^{a0}) (h^{-1})^{\alpha \beta} \altslow_{\alpha} \altslow_{\beta}
			-
			\upmu (h^{-1})^{a0} (\partial_a (h^{-1})^{\alpha \beta}) \altslow_{\alpha} \altslow_{\beta}
			-
			\upmu (\partial_a (h^{-1})^{a0}) \altslow^2
			\notag \\
		& \ \
			- 
			2 \upmu (h^{-1})^{\alpha 0} \altslow \altslow_{\alpha}.
			\notag	
\end{align}

\end{proposition}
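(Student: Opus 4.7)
The plan is to derive \eqref{E:SLOWENERGYID} by applying the classical divergence theorem, relative to the Cartesian coordinates, to the compatible current $J^{\alpha}[\altbigslow]$ on $\mathcal{M}_{t,u}$. The boundary $\partial \mathcal{M}_{t,u}$ decomposes into the four pieces $\Sigma_t^u$, $\Sigma_0^u$, $\mathcal{P}_u^t$, and $\mathcal{P}_0^t$; by Def.~\ref{D:SLOWWAVEENERGYFLUX}, the outward boundary contributions on $\Sigma_t^u$ and $\mathcal{P}_u^t$ reproduce exactly $\slowen[\altbigslow](t,u)$ and $\slowfl[\altbigslow](t,u)$, while the contributions on $\Sigma_0^u$ and $\mathcal{P}_0^t$ reproduce (up to orientation sign) the corresponding data terms $\slowen[\altbigslow](0,u)$ and $\slowfl[\altbigslow](t,0)$ on RHS~\eqref{E:SLOWENERGYID}. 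Since $\slowen$ and $\slowfl$ are defined directly in terms of the Cartesian measures $d \Sigma$ and $d \mathcal{P}$, no measure conversion is needed for the boundary terms. The only remaining task is to identify the bulk contribution $-\int_{\mathcal{M}_{t,u}} \partial_\alpha J^\alpha \, d \mathcal{M}$ with the last two lines of RHS~\eqref{E:SLOWENERGYID}, after converting $d \mathcal{M}$ to $d \vol$ via the first identity in \eqref{E:VOLFORMRELATION}. This last conversion produces exactly the displayed weight $\upmu \{1 + \upgamma \smoothfunction(\upgamma)\}$.

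To compute $\partial_\alpha J^\alpha$, I would use the explicit Cartesian-coordinate expression
\[
J^\alpha[\altbigslow]
= 2 (h^{-1})^{\alpha \kappa} (h^{-1})^{0 \lambda} \altslow_\kappa \altslow_\lambda
- (h^{-1})^{\alpha 0} (h^{-1})^{\kappa \lambda} \altslow_\kappa \altslow_\lambda
- \altslow^2 (h^{-1})^{\alpha 0},
\]
obtained from \eqref{E:SLOWCURRENT}-\eqref{E:ENMOMENTUMSLOWWAVE}. Multiplying by $\upmu$ and expanding $\upmu \partial_\alpha J^\alpha$ via Leibniz, the terms naturally split into two classes: (i) those in which $\partial_\alpha$ falls on a metric coefficient $(h^{-1})^{\kappa\lambda}$, which collect precisely into $\slowbasicenergyerror[\altbigslow]$ as defined by \eqref{E:SLOWWAVEBASICENERGYINTEGRAND}; and (ii) those in which $\partial_\alpha$ falls on a component of $\altbigslow$. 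In class (ii), the quantities $\upmu \partial_t \altslow_0$, $\upmu \partial_t \altslow_i$, and $\upmu \partial_t \altslow$ are substituted using the right-hand sides of \eqref{E:SLOWTIMEINHOM}, \eqref{E:SLOWSPACEINHOM}, and \eqref{E:SLOWINHOM} respectively. The principal $\altbigslow$-quadratic contributions then cancel by the same symmetric-hyperbolic algebra that makes the energy-momentum tensor $\enmomtensor^{\alpha\beta}$ divergence-free for a classical solution of the homogeneous wave equation in constant coefficients; what remains is precisely the source combination on the last line of RHS~\eqref{E:SLOWENERGYID}. As a useful sanity check, one recognizes that the coefficient $4 (h^{-1})^{\alpha 0}(h^{-1})^{\beta 0} \altslow_\alpha + 2 (h^{-1})^{\alpha \beta} \altslow_\alpha$ multiplying $F_\beta$ is exactly the variational derivative $\partial J^0 / \partial \altslow_\beta$ read off from \eqref{E:SLOWCURRENTTIMECARTESIANCOMPONENT}.

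The main technical obstacle is the bookkeeping needed to produce the $F_{ab}$ source contribution $2 (h^{-1})^{\alpha a}(h^{-1})^{b 0} \altslow_\alpha F_{ab}$ on the third line of \eqref{E:SLOWENERGYID}. Specifically, the spatial part of $\partial_\alpha J^\alpha$ generates, after multiplication by $\upmu$, a term of the form $2 \upmu (h^{-1})^{ab}(h^{-1})^{0 \lambda} \altslow_\lambda \partial_a \altslow_b$, and neither \eqref{E:SLOWTIMEINHOM} nor \eqref{E:SLOWSPACEINHOM} directly controls the purely spatial quantity $\partial_a \altslow_b$. The resolution is to split $\partial_a \altslow_b$ into its symmetric and antisymmetric parts in $a,b$, invoking the constraint \eqref{E:MIXEDPARTIALSINHOM} (which gives $\upmu (\partial_a \altslow_b - \partial_b \altslow_a) = F_{ab} - F_{ba}$) to express the antisymmetric piece in terms of $F_{ab}$; this produces the advertised $F_{ab}$ coefficient. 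The symmetric piece, together with the $\upmu \partial_t \altslow_0$-driven contributions from \eqref{E:SLOWTIMEINHOM}, participates in the hyperbolic cancellation described above. Once this reorganization is performed carefully, and the combinatorial factors from the symmetrization $\kappa \leftrightarrow \lambda$ inherent to $\enmomtensor^{\alpha \beta}$ are tracked, all bulk contributions align with the right-hand side of \eqref{E:SLOWENERGYID}, completing the derivation.
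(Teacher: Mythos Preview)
Your proposal is correct and follows essentially the same route as the paper: apply the Euclidean divergence theorem to $J^{\alpha}[\altbigslow]$ on $\mathcal{M}_{t,u}$, compute $\partial_{\alpha} J^{\alpha}$ directly by substituting equations \eqref{E:SLOWTIMEINHOM}--\eqref{E:MIXEDPARTIALSINHOM} (the paper records the result as \eqref{E:UPMUDIVOFSLOWJ}), and then convert the bulk measure $d\mathcal{M}$ to $d\vol$ via \eqref{E:VOLFORMRELATION}. One minor slip: the bulk contribution carries a $+$ sign, not the $-$ you wrote, since the outward-flux form of the divergence theorem gives $\slowen(t,u)+\slowfl(t,u)-\slowen(0,u)-\slowfl(t,0) = +\int_{\mathcal{M}_{t,u}} \partial_{\alpha} J^{\alpha}\, d\mathcal{M}$.
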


\begin{remark}
	Note that the above expressions depend on
	$(\Psi,\bigslow)$ through $(h^{-1})^{\alpha \beta} = (h^{-1})^{\alpha \beta}(\Psi,\bigslow)$.
\end{remark}

\begin{proof}[Proof of Prop.\ \ref{P:SLOWWAVEDIVTHM}]
	We consider the vectorfield $J = J[\altbigslow]$ defined relative to the Cartesian coordinates by \eqref{E:SLOWCURRENT}.
	Using the second relation in \eqref{E:ZEROZEROISMINUSONE},
	the identities \eqref{E:SLOWCURRENTTIMECARTESIANCOMPONENT}-\eqref{E:SLOWCURRENTSPATIALCARTESIANCOMPONENTS},
	and equations \eqref{E:SLOWTIMEINHOM}-\eqref{E:MIXEDPARTIALSINHOM},
	we compute that
	\begin{align} \label{E:UPMUDIVOFSLOWJ}
		\partial_{\alpha} J^{\alpha}
		& = 
			4 (\partial_t (h^{-1})^{\alpha 0}) (h^{-1})^{\beta 0} \altslow_{\alpha} \altslow_{\beta}
			+
			(\partial_t(h^{-1})^{\alpha \beta}) \altslow_{\alpha} \altslow_{\beta}
				\\
		& \ \
			+
			2 (\partial_a (h^{-1})^{\alpha a}) (h^{-1})^{\beta 0} \altslow_{\alpha} \altslow_{\beta}
			+
			2 (h^{-1})^{\alpha a} (\partial_a (h^{-1})^{\beta 0}) \altslow_{\alpha} \altslow_{\beta}
				\notag \\
		& \ \
			-
			(\partial_a (h^{-1})^{a0}) (h^{-1})^{\alpha \beta} \altslow_{\alpha} \altslow_{\beta}
			-
			(h^{-1})^{a0} (\partial_a (h^{-1})^{\alpha \beta}) \altslow_{\alpha} \altslow_{\beta}
			-
			(\partial_a (h^{-1})^{a0}) \altslow^2
			\notag \\
		& \ \
			- 
			2 (h^{-1})^{\alpha 0} \altslow \altslow_{\alpha}
			\notag	\\
		& \ \
			+
			\frac{4}{\upmu} (h^{-1})^{\alpha 0} (h^{-1})^{\beta 0} F_{\alpha} \altslow_{\beta}
			+
			\frac{2}{\upmu} (h^{-1})^{\alpha \beta} F_{\alpha} \altslow_{\beta}
			+
			\frac{2}{\upmu} (h^{-1})^{\alpha a} (h^{-1})^{b 0}  \altslow_{\alpha} F_{ab}
			+ 
			\frac{2}{\upmu} \altslow F.
			\notag
	\end{align}	
	We now apply the divergence theorem
	to the vectorfield $J$ on the region $\mathcal{M}_{t,u}$,
	where we use the Cartesian coordinates,
	the Euclidean metric $\delta^{\alpha \beta} := \mbox{\upshape diag}(1,1,1)$
	on $\mathbb{R} \times \Sigma$, 
	and the Cartesian forms of Def.~\ref{D:CARTESIANFORMS}
	in all computations.
	As the final step, we use the identities in \eqref{E:VOLFORMRELATION}
	to express all integrals 
	as integrals with respect to the geometric integration measures 
	corresponding to Def.~\ref{D:NONDEGENERATEVOLUMEFORMS}.
	Also taking into account Def.~\ref{D:SLOWWAVEENERGYFLUX},
	we arrive at the desired identity \eqref{E:SLOWENERGYID}.
	Note that the one-form $\covL_{\alpha}$ on RHS~\eqref{E:SLOWWAVEENERGYFLUX}
	is the Euclidean unit-length co-normal to the hypersurfaces $\mathcal{P}_u$,
	which is the reason that $J^{\alpha}[\altbigslow] \covL_{\alpha}$ arises when
	we apply the standard divergence theorem relative to the Cartesian coordinates
	on $\mathcal{M}_{t,u}$.
\end{proof}

\section{Preliminary pointwise estimates}
\label{S:PRELIMINARYPOINTWISE}
In this section, we use the data-size assumptions, 
smallness assumptions, 
and bootstrap assumptions of
Sect.\ \ref{S:NORMSANDBOOTSTRAP} to derive preliminary $L^{\infty}$ and pointwise
estimates for the solution.
These estimates serve as the starting point for related estimates
that we derive in 
Sects.\ \ref{S:LINFINITYESTIMATESFORHIGHERTRANSVERSAL}-\ref{S:POINTWISEESTIMATES}

\begin{remark}[\textbf{Many estimates were proved in} \cite{jSgHjLwW2016}]
\label{R:ESTIMATESPROVEDINOTHERPAPER}
Many of the estimates involving the fast wave variable $\Psi$ and the eikonal function
are independent of the slow wave variable $\bigslow$ and were proved in
\cite{jSgHjLwW2016}; thus, we cite \cite{jSgHjLwW2016} for many of the estimates.
\end{remark}

\subsection{Notation for repeated differentiation}
\label{SS:NOTATIONFORREPEATED}

\begin{definition}[\textbf{Notation for repeated differentiation}]
\label{D:REPEATEDDIFFERENTIATIONSHORTHAND}
Recall that the commutation vectorfield sets $\Fullset$ and $\Tanset$ are defined in Def.~\ref{D:COMMUTATIONVECTORFIELDS}.
We label the three vectorfields in 
$\Fullset$ as follows: $Z_{(1)} = \Lunit, Z_{(2)} = \GeoAng, Z_{(3)} = \Rad$.
Note that $\Tanset = \lbrace Z_{(1)}, Z_{(2)} \rbrace$.
We define the following vectorfield operators:
\begin{itemize}
	\item If $\vec{I} = (\iota_1, \iota_2, \cdots, \iota_N)$ is a multi-index
		of order $|\vec{I}| := N$
		with $\iota_1, \iota_2, \cdots, \iota_N \in \lbrace 1,2,3 \rbrace$,
		then $\Fullset^{\vec{I}} := Z_{(\iota_1)} Z_{(\iota_2)} \cdots Z_{(\iota_N)}$ 
		denotes the corresponding $N^{th}$ order differential operator.
		We write $\Fullset^N$ rather than $\Fullset^{\vec{I}}$
		when we are not concerned with the structure of $\vec{I}$,
		and we sometimes omit the superscript when $N=1$.
	\item Similarly, $\angLie_{\Fullset}^{\vec{I}} 
	:= \angLie_{Z_{(\iota_1)}} \angLie_{Z_{(\iota_2})} \cdots \angLie_{Z_{(\iota_N})}$
		denotes an $N^{th}$ order $\ell_{t,u}$-projected Lie derivative operator
		(see Def.~\ref{D:PROJECTEDLIE}),
		and we write $\angLie_{\Fullset}^N$
		when we are not concerned with the structure of $\vec{I}$.
	\item If $\vec{I} = (\iota_1, \iota_2, \cdots, \iota_N)$,
		then 
		$\vec{I}_1 + \vec{I}_2 = \vec{I}$ 
		means that
		$\vec{I}_1 = (\iota_{k_1}, \iota_{k_2}, \cdots, \iota_{k_m})$
		and
		$\vec{I}_2 = (\iota_{k_{m+1}}, \iota_{k_{m+2}}, \cdots, \iota_{k_N})$,
		where $1 \leq m \leq N$ and
		$k_1, k_2, \cdots, k_N$ is a permutation of 
		$1,2,\cdots,N$. 
	\item Sums such as $\vec{I}_1 + \vec{I}_2 + \cdots + \vec{I}_M = \vec{I}$
		have an analogous meaning.
	\item $\mathcal{P}_u$-tangential vectorfield operators such as 
		$\Tanset^{\vec{I}}$ are defined analogously,  
		except in this case we have
		$\iota_1, \iota_2, \cdots, \iota_N \in \lbrace 1,2 \rbrace$.
		We write $\Tanset^N$ rather than $\Tanset^{\vec{I}}$
		when we are not concerned with the structure of $\vec{I}$,
		and we sometimes omit the superscript when $N=1$.
\end{itemize}
\end{definition}

\subsection{Basic assumptions, facts, and estimates that we use silently}
\label{SS:OFTENUSEDESTIMATES}
For the reader's convenience, we present here some basic assumptions, facts, and estimates
(similar to those from \cite{jSgHjLwW2016}*{Section 8.2})
that we silently use throughout the rest of the paper when deriving
estimates.

\begin{enumerate}
	\item All of the estimates that we derive
		hold on the bootstrap region $\mathcal{M}_{\Tboot,U_0}$.
		Moreover, in deriving estimates,
		we rely on the data-size and bootstrap assumptions 
		of Subsects.\ \ref{SS:DATAASSUMPTIONS}-\ref{SS:AUXILIARYBOOTSTRAP},
		the smallness assumptions of Subsect.\ \ref{SS:SMALLNESSASSUMPTIONS},
		and in particular the estimates for the data of the eikonal function quantities provided by
		Lemma~\ref{L:BEHAVIOROFEIKONALFUNCTIONQUANTITIESALONGSIGMA0}.
		Moreover, when we refer to ``the bootstrap assumptions,''
		we mean the fundamental bootstrap assumptions of
		Subsect.\ \ref{SS:PSIBOOTSTRAP}
		and the auxiliary bootstrap assumptions of Subsect.\ \ref{SS:AUXILIARYBOOTSTRAP}.
	\item We often use the assumption \eqref{E:DATAEPSILONVSBOOTSTRAPEPSILON} without explicitly mentioning it.
	\item All quantities that we estimate can be controlled in terms of the quantities
		$\BadVar = \lbrace \Psi, \upmu - 1, \Lunit_{(Small)}^1, \Lunit_{(Small)}^2 \rbrace$,
		$\bigslow$,
		and their derivatives. Note that many of these quantities are small, but
		for the solutions under consideration,
		the $\Rad$ derivatives of $\BadVar$ do not have to be small,
		nor do $\upmu - 1$ or $\Lunit \upmu$.
	\item We typically use the Leibniz rule for 
		the operators $\angLie_Z$ and $\angD$ when deriving
		pointwise estimates for the $\angLie_Z$ and $\angD$ 
		derivatives of tensor products of the schematic form 
		$\prod_{i=1}^m v_i$, where the $v_i$ are scalar functions or
		$\ell_{t,u}$-tangent tensors. Our derivative counts are such that
		all $v_i$ except at most one are uniformly bounded in $L^{\infty}$
		on $\mathcal{M}_{\Tboot,U_0}$.
		Thus, our pointwise estimates often explicitly feature 
		(on the right-hand sides)
		only the factor with the most derivatives on it,
		multiplied by a constant (often implicit)
		that uniformly bounds the other factors.
		In some estimates, the right-hand sides also gain a smallness factor,
		such as $\Psiep$ or $\varepsilon^{1/2}$,
		generated by the remaining $v_i's$.
	\item The operators $\angLie_{\Fullset}^N$ commute through
		$\angdiff$, as is shown by Lemma~\ref{L:LANDRADCOMMUTEWITHANGDIFF}.
	\item For scalar functions $f$, we have
		$	\left| \GeoAng f \right|
		= 
			\left\lbrace
				1 + \mathcal{O}(\GdVar) 
			\right\rbrace
			\left|
				\angdiff f
			\right|
		= \left\lbrace
				1 + \mathcal{O}_{\mydiam}(\Psiep^{1/2}) + \mathcal{O}(\varepsilon^{1/2})
			\right\rbrace
			\left|
				\angdiff f
			\right|
		$,
		a fact which follows from 
		the proofs of 
		Lemmas~\ref{L:TENSORSIZECONTROLLEDBYYCONTRACTIONS} and
		Lemma~\ref{L:ANGDERIVATIVESINTERMSOFTANGENTIALCOMMUTATOR}
		and the bootstrap assumptions.
		Hence, for scalar functions $f$, we sometimes schematically depict $\angdiff f$ 
		as 
		$\left(1 + \mathcal{O}(\GdVar)\right) \Singletan f$
		or
		$\Singletan f$
		when the factor 
		$1 + \mathcal{O}(\GdVar)$ is not important.
		Similarly, 
		Lemma~\ref{L:SCHEMATICDEPENDENCEOFMANYTENSORFIELDS},
		the schematic identity
		$
		\angD_{\GeoAng} \xi 
		= \angLie_{\GeoAng} \xi  
		+ 
		\sum \xi \cdot \angD \GeoAng
		$,
		and the proof of Lemma~\ref{L:ANGDERIVATIVESINTERMSOFTANGENTIALCOMMUTATOR}
		imply that we can depict $\angLap f$ by\footnote{In \cite{jSgHjLwW2016},
		we schematically denoted $\angLap f$ by
		$\smoothfunction(\Tanset^{\leq 1} \GdVar,\ginversesphere)
		\Tanset_*^{[1,2]} f$. Here we note that in fact, the dependence on 
		$\smoothfunction$ on $\ginversesphere$ is not needed.} 
		$
		\smoothfunction(\Tanset^{\leq 1} \GdVar)
		\Tanset_*^{[1,2]} f
		$
		(see Subsubsect.\ \ref{SS:STRINGSOFCOMMUTATIONVECTORFIELDS} regarding the notation $\Tanset_*^{[1,2]} f$)
		or $\Tanset_*^{[1,2]} f$
		when the factor $\smoothfunction(\Tanset^{\leq 1} \GdVar)$
		is not important,
		and, 
		for type $\binom{0}{n}$ $\ell_{t,u}$-tangent tensorfields $\xi$,
		$\angD \xi$ by 
		$
		\smoothfunction(\Tanset^{\leq 1} \GdVar,\ginversesphere,\angdiff x^1, \angdiff x^2)
		\angLie_{\Tanset}^{\leq 1} \xi
		$
		(or $\angLie_{\Tanset}^{\leq 1} \xi$
		when the factor $\smoothfunction(\Tanset^{\leq 1} \GdVar,\ginversesphere,\angdiff x^1, \angdiff x^2)$
		is not important\footnote{In the analogous discussion in \cite{jSgHjLwW2016},
		the dependence of $\smoothfunction$ on $\angdiff x^1,\angdiff x^2$ was mistakenly omitted.}).
	\item The constants $C$ 
		and the implicit constants
		in our estimates are allowed to depend on 
		the data-size parameters
		$\mathring{\updelta}$
		and 
		$\TranminusdatasizeWithFactor^{-1}$.
		In contrast, the constants $C_{\mydiam}$ can be chosen to be independent of
		$\mathring{\updelta}$
		and 
		$\TranminusdatasizeWithFactor^{-1}$.
		See Subsect.\ \ref{SS:NOTATIONANDINDEXCONVENTIONS}
		for a precise description 
		of the way in which we allow constants to depend on the various parameters.
\end{enumerate}

\subsection{Omission of the independent variables in some expressions}
\label{SS:OMISSION}
We use the following notational conventions in the rest of the article.
\begin{itemize}
	\item
	Many of our pointwise estimates are stated in the form
	\[
		|f_1| \lesssim F(t)|f_2|
	\]
	for some function $F$.
	Unless we otherwise indicate, it is understood that both $f_1$ and $f_2$
	are evaluated at the point with geometric coordinates
	$(t,u,\vartheta)$.
	\item Unless we otherwise indicate,
		in integrals $\int_{\ell_{t,u}} f \, d \spherevol$,
		the integrand $f$ and the length form $d \spherevol$ are viewed
		as functions of $(t,u,\vartheta)$ and $\vartheta$ is the integration variable.
	\item Unless we otherwise indicate,
		in integrals $\int_{\Sigma_t^u} f \, d \tvol$,
		the integrand $f$ and the area form $d \tvol$ are viewed
		as functions of $(t,u',\vartheta)$ and $(u',\vartheta)$ are the integration variables.
	\item Unless we otherwise indicate,
		in integrals $\int_{\mathcal{P}_u^t} f \, d \conevol$,
		the integrand $f$ and the area form $d \conevol$ are viewed
		as functions of $(t',u,\vartheta)$ and $(t',\vartheta)$ are the integration variables.
	\item Unless we otherwise indicate,
		in integrals $\int_{\mathcal{M}_{t,u}} f \, d \vol$,
		the integrand  $f$ and the volume form $d \vol$ are viewed
		as functions of $(t',u',\vartheta)$ and
		$(t',u',\vartheta)$ are the integration variables.
\end{itemize}

\subsection{Differential operator comparison estimates}
\label{SS:DIFFOPCOMPARISON}
Our main goal in this subsection is to derive differential operator comparison estimates.
We start with a simple lemma in which we show that the pointwise norm $|\cdot|$ of $\ell_{t,u}$-tangent
tensors can be controlled by contractions against the vectorfield $\GeoAng$.

\begin{lemma}[\textbf{The norm of $\ell_{t,u}$-tangent tensors can be measured via $\GeoAng$ contractions}]
	\label{L:TENSORSIZECONTROLLEDBYYCONTRACTIONS}
	Let $\xi_{\alpha_1 \cdots \alpha_n}$ be a type $\binom{0}{n}$ $\ell_{t,u}$-tangent tensor with $n \geq 1$.
	Under the data-size and bootstrap assumptions 
	of Subsects.\ \ref{SS:DATAASSUMPTIONS}-\ref{SS:AUXILIARYBOOTSTRAP}
	and the smallness assumptions of Subsect.\ \ref{SS:SMALLNESSASSUMPTIONS}, 
	we have
	\begin{align} \label{E:TENSORSIZECONTROLLEDBYYCONTRACTIONS}
		|\xi| = 
			\left\lbrace 
				1 + \mathcal{O}_{\mydiam}(\Psiep^{1/2}) + \mathcal{O}(\varepsilon^{1/2})
			\right\rbrace
			|\xi_{\GeoAng \GeoAng \cdots \GeoAng}|.
	\end{align}
	The same result holds if 
	$|\xi_{\GeoAng \GeoAng \cdots \GeoAng}|$
	is replaced with 
	$|\xi_{\GeoAng \cdot}|$, 
	$|\xi_{\GeoAng \GeoAng \cdot}|$,
	etc., where $\xi_{\GeoAng \cdot}$
	is the type $\binom{0}{n-1}$ tensor with components
  $\GeoAng^{\alpha_1} \xi_{\alpha_1 \alpha_2 \cdots \alpha_n}$,
  and similarly for $\xi_{\GeoAng \GeoAng \cdot}$, etc.
\end{lemma}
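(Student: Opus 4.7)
\medskip

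My plan is to exploit the fact that, because we work in two spatial dimensions, the tori $\ell_{t,u}$ are one-dimensional, so the vectorfield $\GeoAng$ spans the tangent space of $\ell_{t,u}$ at every point where $|\GeoAng| > 0$. I would begin by computing $|\GeoAng|$, since all subsequent estimates will pivot on showing $|\GeoAng| = 1 + \mathcal{O}_{\mydiam}(\Psiep^{1/2}) + \mathcal{O}(\varepsilon^{1/2})$. By Def.~\ref{D:POINTWISENORM} and \eqref{E:GTANDGSPHERESPHEREDEF},
\begin{align*}
|\GeoAng|^2 = \gsphere(\GeoAng,\GeoAng) = g_{ab}\GeoAng^a\GeoAng^b,
\end{align*}
and using the decomposition $\GeoAng^i = \delta_2^i + \GeoAng_{(Small)}^i$ from \eqref{E:PERTURBEDPART}, the splitting $g_{ab} = m_{ab} + g_{ab}^{(Small)}(\Psi)$ from \eqref{E:LITTLEGDECOMPOSED}-\eqref{E:METRICPERTURBATIONFUNCTION}, and Lemma~\ref{L:SCHEMATICDEPENDENCEOFMANYTENSORFIELDS} (which gives $\GeoAng_{(Small)}^a = \smoothfunction(\GdVar)\GdVar$ and $g_{ab}^{(Small)} = \smoothfunction(\Psi)\Psi$), the bootstrap assumptions \eqref{E:PSIITSELFAUXLINFINITYBOOTSTRAP} and \eqref{E:FRAMECOMPONENTS1BOOT} yield
\begin{align*}
|\GeoAng|^2 = g_{22} + 2 g_{2a}\GeoAng_{(Small)}^a + g_{ab}\GeoAng_{(Small)}^a\GeoAng_{(Small)}^b = 1 + \mathcal{O}_{\mydiam}(\Psiep^{1/2}) + \mathcal{O}(\varepsilon^{1/2}).
\end{align*}
Taking square roots (and noting $|\GeoAng|$ is bounded away from $0$ by the smallness assumptions of Subsect.~\ref{SS:SMALLNESSASSUMPTIONS}) gives the desired estimate for $|\GeoAng|$, and hence for $|\GeoAng|^{-k}$ for any fixed $k$.

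Next, I would leverage the one-dimensionality of $\ell_{t,u}$ to decompose an arbitrary type $\binom{0}{n}$ $\ell_{t,u}$-tangent tensor in a $\gsphere$-orthonormal basis. Setting $\widehat{\GeoAng} := |\GeoAng|^{-1}\GeoAng$ (a $\gsphere$-unit vector spanning $T\ell_{t,u}$) and letting $\widehat{\GeoAng}^\flat$ denote its $\gsphere$-dual one-form, every such tensor takes the form
\begin{align*}
\xi = f\, (\widehat{\GeoAng}^\flat)^{\otimes n}
\end{align*}
for some scalar function $f$ on $\ell_{t,u}$. By the definition \eqref{E:POINTWISENORM} of $|\cdot|$ and the fact that $|\widehat{\GeoAng}^\flat| = 1$, we have $|\xi| = |f|$. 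On the other hand, since $\widehat{\GeoAng}^\flat(\GeoAng) = \gsphere(\widehat{\GeoAng},\GeoAng) = |\GeoAng|$, we compute
\begin{align*}
\xi_{\GeoAng \GeoAng \cdots \GeoAng} = f\, |\GeoAng|^n, \qquad |\xi_{\GeoAng \GeoAng \cdots \GeoAng}| = |f|\, |\GeoAng|^n = |\xi|\, |\GeoAng|^n.
\end{align*}
Combining with the estimate $|\GeoAng|^{-n} = 1 + \mathcal{O}_{\mydiam}(\Psiep^{1/2}) + \mathcal{O}(\varepsilon^{1/2})$ from the previous paragraph yields \eqref{E:TENSORSIZECONTROLLEDBYYCONTRACTIONS}.

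Finally, for the variants involving only partial contractions such as $\xi_{\GeoAng \cdot}$ or $\xi_{\GeoAng \GeoAng \cdot}$, the same one-dimensional decomposition applies: if $k$ indices are contracted against $\GeoAng$ and $n-k$ are left free, then
\begin{align*}
\xi_{\underbrace{\GeoAng\cdots\GeoAng}_{k}\,\cdot\,\cdots\,\cdot} = f\, |\GeoAng|^k\, (\widehat{\GeoAng}^\flat)^{\otimes (n-k)},
\end{align*}
whence $|\xi_{\GeoAng\cdots\GeoAng\,\cdot\,\cdots\,\cdot}| = |f|\, |\GeoAng|^k = |\xi|\, |\GeoAng|^k$, and the factor $|\GeoAng|^{-k} = 1 + \mathcal{O}_{\mydiam}(\Psiep^{1/2}) + \mathcal{O}(\varepsilon^{1/2})$ again delivers the claimed bound. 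There is no genuine obstacle here; the argument is a routine consequence of the one-dimensional geometry of $\ell_{t,u}$ combined with the perturbative smallness of $\GeoAng_{(Small)}$ and $g^{(Small)}$. The only mildly delicate point is bookkeeping the sizes of the error terms to ensure the structural constants fall into the $\mathcal{O}_{\mydiam}(\Psiep^{1/2})$ category (which must be independent of $\mathring{\updelta}$ and $\TranminusdatasizeWithFactor^{-1}$) as opposed to the more permissive $\mathcal{O}(\varepsilon^{1/2})$ category, but inspection of the above decompositions shows that all $\Psi$- and $\Lunit_{(Small)}$-induced errors are indeed universal in this sense.
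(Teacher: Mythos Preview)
Your proposal is correct and follows essentially the same approach as the paper. The paper's proof works in Cartesian components via the decomposition $(\ginversesphere)^{ij} = |\GeoAng|^{-2}\GeoAng^i\GeoAng^j$, while you phrase the same one-dimensionality fact through the orthonormal frame $\widehat{\GeoAng}$; both reduce to the identity $|\xi| = |\GeoAng|^{-n}|\xi_{\GeoAng\cdots\GeoAng}|$, and your computation of $|\GeoAng| = 1 + \mathcal{O}_{\mydiam}(\Psiep^{1/2}) + \mathcal{O}(\varepsilon^{1/2})$ matches the paper's exactly.
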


\begin{proof}
	See Subsect.\ \ref{SS:OFTENUSEDESTIMATES} for some comments on the analysis.
	\eqref{E:TENSORSIZECONTROLLEDBYYCONTRACTIONS} is easy to derive 
	relative to the Cartesian coordinates by using 
	the decomposition $(\ginversesphere)^{ij} = \frac{1}{|\GeoAng|^2} \GeoAng^i \GeoAng^j$
	and the estimate $|\GeoAng| = 1 + \mathcal{O}_{\mydiam}(\Psiep^{1/2}) + \mathcal{O}(\varepsilon^{1/2})$.
	This latter estimate follows from
	the identity 
	$|\GeoAng|^2 
	= g_{ab} \GeoAng^a \GeoAng^b
	= (\delta_{ab} + g_{ab}^{(Small)}) (\delta_2^a + \GeoAng_{(Small)}^a)(\delta_2^b + \GeoAng_{(Small)}^b)$,
	the fact that $g_{ab}^{(Small)} = \smoothfunction(\GdVar) \GdVar$ with $\smoothfunction$ smooth
	and similarly for $\GeoAng_{(Small)}^a$
	(see Lemma~\ref{L:SCHEMATICDEPENDENCEOFMANYTENSORFIELDS}),
	and the bootstrap assumptions.
\end{proof}

We now provide the main result of this subsection.

\begin{lemma}[\textbf{Controlling $\angD$ derivatives in terms of $\GeoAng$ derivatives}]
\label{L:ANGDERIVATIVESINTERMSOFTANGENTIALCOMMUTATOR}
	Let $f$ be a scalar function on $\ell_{t,u}$.
	Under the data-size and bootstrap assumptions 
	of Subsects.\ \ref{SS:DATAASSUMPTIONS}-\ref{SS:AUXILIARYBOOTSTRAP}
	and the smallness assumptions of Subsect.\ \ref{SS:SMALLNESSASSUMPTIONS}, 
	the following comparison estimates hold
	on $\mathcal{M}_{\Tboot,U_0}$:
\begin{align} \label{E:ANGDERIVATIVESINTERMSOFTANGENTIALCOMMUTATOR}
		|\angdiff f|
		& \leq (1 + C_{\mydiam} \Psiep^{1/2} + C \varepsilon^{1/2})\left| \GeoAng f \right|,
		\qquad
		|\angD^2 f|
		\leq (1 + C_{\mydiam} \Psiep^{1/2} + C \varepsilon^{1/2})\left| \angdiff(\GeoAng f) \right|
			+
			C \varepsilon^{1/2} |\angdiff f|.
\end{align}
\end{lemma}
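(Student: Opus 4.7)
The plan is to reduce both bounds to the single-vectorfield comparison statement already provided by Lemma~\ref{L:TENSORSIZECONTROLLEDBYYCONTRACTIONS}, and then to absorb the remaining discrepancy into the small error factors using the schematic identities of Lemma~\ref{L:SCHEMATICDEPENDENCEOFMANYTENSORFIELDS} together with the bootstrap assumptions of Subsects.~\ref{SS:PSIBOOTSTRAP}--\ref{SS:AUXILIARYBOOTSTRAP}.

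For the first inequality, I would apply Lemma~\ref{L:TENSORSIZECONTROLLEDBYYCONTRACTIONS} to the type $\binom{0}{1}$ $\ell_{t,u}$-tangent tensor $\xi := \angdiff f$. Since $\GeoAng$ is $\ell_{t,u}$-tangent, we have the algebraic identity
\begin{align*}
\xi_{\GeoAng} = \GeoAng^{\alpha} \partial_{\alpha} f = \GeoAng f,
\end{align*}
so that \eqref{E:TENSORSIZECONTROLLEDBYYCONTRACTIONS} yields exactly the stated bound on $|\angdiff f|$.

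For the second inequality, the same reduction gives
\begin{align*}
|\angD^2 f| = \left\lbrace 1 + \mathcal{O}_{\mydiam}(\Psiep^{1/2}) + \mathcal{O}(\varepsilon^{1/2})\right\rbrace \left|(\angD^2 f)_{\GeoAng\GeoAng}\right|.
\end{align*}
I would then expand the double contraction using the definition of the Levi-Civita connection acting on a scalar: for any $\ell_{t,u}$-tangent vectorfields $X,Y$,
\begin{align*}
(\angD^2 f)(X,Y) = X(Yf) - (\angD_X Y) f.
\end{align*}
Taking $X=Y=\GeoAng$ and using Lemma~\ref{L:LANDRADCOMMUTEWITHANGDIFF}, this gives
\begin{align*}
(\angD^2 f)_{\GeoAng\GeoAng} = \bigl(\angdiff(\GeoAng f)\bigr)_{\GeoAng} - (\angD_{\GeoAng}\GeoAng)^{\alpha}\,\partial_{\alpha} f.
\end{align*}
Applying Lemma~\ref{L:TENSORSIZECONTROLLEDBYYCONTRACTIONS} once more to the first term controls it by $\{1 + \mathcal{O}_{\mydiam}(\Psiep^{1/2}) + \mathcal{O}(\varepsilon^{1/2})\}|\angdiff(\GeoAng f)|$, while Cauchy--Schwarz with respect to $\gsphere$ bounds the second term by $|\angD_{\GeoAng}\GeoAng|\,|\angdiff f|$.

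The remaining point---which is the main obstacle---is to show
\begin{align*}
|\angD_{\GeoAng}\GeoAng| \leq C_{\mydiam}\Psiep^{1/2} + C\varepsilon^{1/2}.
\end{align*}
For this I would use the schematic relation $\angD_{\GeoAng}\GeoAng = \angLie_{\GeoAng}\GeoAng + \smoothfunction\cdot\GeoAng\otimes\GeoAng$ type identities arising from the Christoffel symbols of $\gsphere$, together with \eqref{E:TENSORSDEPENDINGONGOODVARIABLESANDGINVERSESPHERE} which gives $\GeoAng = \smoothfunction(\GdVar,\ginversesphere,\angdiff x^1,\angdiff x^2)$, and the corresponding schematic expression $\gsphere_{\mu\nu} = \smoothfunction(\GdVar,\angdiff x^1,\angdiff x^2)$ from \eqref{E:TENSORSDEPENDINGONGOODVARIABLES}. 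Since $\GeoAng = \partial_2 + \GeoAng_{(Small)}$ with $\GeoAng_{(Small)}^{\alpha} = \smoothfunction(\GdVar)\GdVar$ by \eqref{E:LINEARLYSMALLSCALARSDEPENDINGONGOODVARIABLES}, every term in the resulting expression for $\angD_{\GeoAng}\GeoAng$ contains at least one factor of $\GdVar$ or of $\Tanset^{\leq 1}\GdVar$. Combining the $L^{\infty}$ bounds \eqref{E:PSIITSELFAUXLINFINITYBOOTSTRAP}, \eqref{E:PSIAUXLINFINITYBOOTSTRAP}, \eqref{E:FRAMECOMPONENTS1BOOT}, \eqref{E:FRAMECOMPONENTSIBOOT} with the smoothness of the $\smoothfunction$-factors then yields the desired smallness. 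Assembling the three estimates produces the claimed bound on $|\angD^2 f|$.
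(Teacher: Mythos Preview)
Your overall strategy is exactly the paper's: the first inequality is a direct application of Lemma~\ref{L:TENSORSIZECONTROLLEDBYYCONTRACTIONS}, and for the second you reduce to $(\angD^2 f)_{\GeoAng\GeoAng}$, expand as $\GeoAng(\GeoAng f) - (\angD_{\GeoAng}\GeoAng)\cdot\angdiff f$, and then bound $|\angD_{\GeoAng}\GeoAng|$. The only substantive difference is in this last step, and there your argument does not quite deliver what the lemma asserts.

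You claim $|\angD_{\GeoAng}\GeoAng| \leq C_{\mydiam}\Psiep^{1/2} + C\varepsilon^{1/2}$, but the second inequality in \eqref{E:ANGDERIVATIVESINTERMSOFTANGENTIALCOMMUTATOR} has only $C\varepsilon^{1/2}$ in front of $|\angdiff f|$, with no $\Psiep^{1/2}$ contribution. Your schematic Christoffel argument is too vague to see whether the undifferentiated $\GdVar$ factors you mention really survive (they should not, since connection coefficients involve \emph{derivatives} of the metric), and as written your bound is too weak. The paper avoids this by a one-line identity: since $\ell_{t,u}$ is one-dimensional, Lemma~\ref{L:TENSORSIZECONTROLLEDBYYCONTRACTIONS} gives $|\angD_{\GeoAng}\GeoAng| \lesssim |g(\angD_{\GeoAng}\GeoAng,\GeoAng)|$, and metric compatibility yields
\[
g(\angD_{\GeoAng}\GeoAng,\GeoAng) = \tfrac{1}{2}\,\GeoAng\bigl(g_{ab}\GeoAng^a\GeoAng^b\bigr).
\]
Since $g_{ab}\GeoAng^a\GeoAng^b = \smoothfunction(\GdVar)$ by Lemma~\ref{L:SCHEMATICDEPENDENCEOFMANYTENSORFIELDS}, this is bounded by $|\GeoAng\GdVar| \lesssim \varepsilon^{1/2}$ from the bootstrap assumptions, with no $\Psiep$ term. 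Replacing your Christoffel paragraph with this identity would make your proof match the paper's and yield the sharp constant.
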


\begin{proof}
	See Subsect.\ \ref{SS:OFTENUSEDESTIMATES} for some comments on the analysis.
	The first inequality in \eqref{E:ANGDERIVATIVESINTERMSOFTANGENTIALCOMMUTATOR}
	follows directly from Lemma~\ref{L:TENSORSIZECONTROLLEDBYYCONTRACTIONS}.
	To prove the second, we first use
	Lemma~\ref{L:TENSORSIZECONTROLLEDBYYCONTRACTIONS},
	the identity 
	$\angD_{\GeoAng \GeoAng}^2 f = \GeoAng \cdot \angdiff (\GeoAng f) - \angD_{\GeoAng} \GeoAng \cdot \angdiff f$,
	and the estimate $|\GeoAng| = 1 + \mathcal{O}_{\mydiam}(\Psiep^{1/2}) + \mathcal{O}(\varepsilon^{1/2})$ noted in the proof of
	Lemma~\ref{L:TENSORSIZECONTROLLEDBYYCONTRACTIONS}
	to deduce that 
	\begin{align} \label{E:ANGDSQUAREFUNCTIONFIRSTBOUNDINTERMSOFGEOANG}
		|\angD^2 f| 
		& \leq (1 + C_{\mydiam} \Psiep^{1/2} + C \varepsilon^{1/2})|\angD_{\GeoAng \GeoAng}^2 f|
			\leq (1 + C_{\mydiam} \Psiep^{1/2} + C \varepsilon^{1/2})
				\left\lbrace
					|\angdiff(\GeoAng f)|
					+ 
					|\angD_{\GeoAng} \GeoAng||\angdiff f|
				\right\rbrace.
	\end{align}
	Next, we use Lemma~\ref{L:TENSORSIZECONTROLLEDBYYCONTRACTIONS} 
	and the identity 
	$	\angdeformarg{\GeoAng}{\GeoAng}{\GeoAng} 
		= \angD_{\GeoAng} (\gsphere(\GeoAng, \GeoAng))
		= \GeoAng (g_{ab} \GeoAng^a \GeoAng^b)
	$
	to deduce that
	\begin{align} \label{E:ANGDGEOANGOFGEOANGINTERMSOFGEOANGDEFORMSPHERE}
		\left|
			\angD_{\GeoAng} \GeoAng
		\right|
		& \lesssim
			\left|
				g(\angD_{\GeoAng} \GeoAng,\GeoAng)
			\right|
			\lesssim
			\left|
				\angdeformarg{\GeoAng}{\GeoAng}{\GeoAng}
			\right|
			\lesssim
			\left|
				\GeoAng(g_{ab} \GeoAng^a \GeoAng^b)
			\right|.
	\end{align}
	Since Lemma~\ref{L:SCHEMATICDEPENDENCEOFMANYTENSORFIELDS} implies that
	$g_{ab} \GeoAng^a \GeoAng^b = \smoothfunction(\GdVar)$ with $\smoothfunction$ smooth,
	the bootstrap assumptions yield that RHS~\eqref{E:ANGDGEOANGOFGEOANGINTERMSOFGEOANGDEFORMSPHERE}
	is $\lesssim |\GeoAng \GdVar| \lesssim \varepsilon^{1/2}$.
	The desired second inequality in \eqref{E:ANGDERIVATIVESINTERMSOFTANGENTIALCOMMUTATOR} 
	now follows from this estimate,
	\eqref{E:ANGDSQUAREFUNCTIONFIRSTBOUNDINTERMSOFGEOANG},
	and
	\eqref{E:ANGDGEOANGOFGEOANGINTERMSOFGEOANGDEFORMSPHERE}.
\end{proof}

\subsection{Pointwise estimates for the derivatives of the \texorpdfstring{$x^i$}{Cartesian spatial coordinate functions} 
and for the Lie derivatives of the Riemannian metric 
induced on \texorpdfstring{$\ell_{t,u}$}{the tori}}

\begin{lemma}[{\textbf{Pointwise estimates for} $x^i$}]
\label{L:POINTWISEFORRECTANGULARCOMPONENTSOFVECTORFIELDS}
	Assume that $1 \leq N \leq 18$.
	Let $x^i = x^i(t,u,\vartheta)$ denote the Cartesian coordinate function
	and let $\mathring{x}^i = \mathring{x}^i(u,\vartheta) := x^i(0,u,\vartheta)$.
	Then the following estimates hold
	for $i = 1,2$
	(see Subsect.\ \ref{SS:STRINGSOFCOMMUTATIONVECTORFIELDS} regarding the vectorfield operator notation):
	\begin{subequations}
		\begin{align} \label{E:XIPOINTWISE}
			\left|
				x^i - \mathring{x}^i
			\right|
			& \lesssim 1,
				\\
			\left|
				\angdiff x^i
			\right|
			& \lesssim 
				1,
				 \label{E:ANGDIFFXI} 
				\\
			\left|
				\angdiff \Tanset^{[1,N]} x^i
			\right|
			& \lesssim 
				\left| 
					\Tanset^{[1,N]} \GdVar 
				\right|,
					\label{E:ANGDIFFXIPURETANGENTIALDIFFERENTIATED} \\
			\left|
				\angdiff \Fullset^{[1,N];1} x^i
			\right|
			& \lesssim 
				\left| 
					\Fullset_*^{[1,N];1} \GdVar 
				\right|
				+
				\left| 
					\Tanset_*^{[1,N]} \BadVar
				\right|.
					\label{E:ANGDIFFXIONERADIALDIFFERENTIATED} 
		\end{align} 
	\end{subequations}
	In the case $i=2$ at fixed $u,\vartheta$,
	LHS~\eqref{E:XIPOINTWISE} is to be interpreted as
	the Euclidean distance traveled
	by the point $x^2$
	in the flat universal covering space 
	$\mathbb{R}$ of $\mathbb{T}$
	along the corresponding integral curve of $\Lunit$
	over the time interval $[0,t]$.
\end{lemma}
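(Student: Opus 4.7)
The plan is to handle the four bounds in sequence, exploiting the identity $\Lunit = \partial/\partial t$ from \eqref{E:LISDDT}, the commutation identity $\angLie_V \angdiff f = \angdiff V f$ from Lemma~\ref{L:LANDRADCOMMUTEWITHANGDIFF}, the comparison estimate of Lemma~\ref{L:ANGDERIVATIVESINTERMSOFTANGENTIALCOMMUTATOR}, and the schematic descriptions of $\Lunit^i$, $\GeoAng^i$, and $\Rad^i$ provided by Lemma~\ref{L:SCHEMATICDEPENDENCEOFMANYTENSORFIELDS}. For \eqref{E:XIPOINTWISE}, the identity $\Lunit x^i = \Lunit^i$ together with the fundamental theorem of calculus along the integral curves of $\Lunit$ yields $x^i(t,u,\vartheta) - \mathring{x}^i(u,\vartheta) = \int_0^t \Lunit^i(s,u,\vartheta)\,ds$; the auxiliary bound \eqref{E:FRAMECOMPONENTS1BOOT} combined with \eqref{E:PERTURBEDPART} gives $|\Lunit^i| \lesssim 1$, and the restriction $t \leq 2\TranminusdatasizeWithFactor^{-1}$ closes the estimate. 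For \eqref{E:ANGDIFFXI}, Lemma~\ref{L:ANGDERIVATIVESINTERMSOFTANGENTIALCOMMUTATOR} reduces the bound to $|\GeoAng x^i| = |\GeoAng^i|$, and Lemma~\ref{L:SCHEMATICDEPENDENCEOFMANYTENSORFIELDS} combined with the $L^\infty$ bootstrap assumptions yields $|\GeoAng^i| \lesssim 1$.

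For \eqref{E:ANGDIFFXIPURETANGENTIALDIFFERENTIATED}, I would write $\Tanset^{[1,N]} x^i = V_1 \cdots V_N x^i$ with $V_j \in \Tanset$ and use Lemma~\ref{L:LANDRADCOMMUTEWITHANGDIFF} iteratively to pull $\angdiff$ through all but the innermost factor, so that $\angdiff \Tanset^{[1,N]} x^i = \angLie_{V_1} \cdots \angLie_{V_{N-1}} \angdiff(V_N x^i)$. Since $V_N x^i \in \lbrace \Lunit^i, \GeoAng^i \rbrace$ is schematically a $\smoothfunction$ of $\GdVar$ and of the lower-order quantities $\ginversesphere, \angdiff x^1, \angdiff x^2$ by Lemma~\ref{L:SCHEMATICDEPENDENCEOFMANYTENSORFIELDS}, the Leibniz rule combined with the $L^\infty$ bootstrap assumptions and Lemma~\ref{L:ANGDERIVATIVESINTERMSOFTANGENTIALCOMMUTATOR} (the latter used to convert $\angdiff$ and $\angLie_{\GeoAng}$ into pure $\Tanset$ derivatives at each step) reduces matters to $|\Tanset^{[1,N]} \GdVar|$. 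The apparent circularity from $\GeoAng^i$'s dependence on $\angdiff x^a$ is resolved by induction on $N$, with \eqref{E:ANGDIFFXI} furnishing the base case.

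For \eqref{E:ANGDIFFXIONERADIALDIFFERENTIATED}, the same commutation strategy applies, now with exactly one $\Rad$ occurrence in the string $V_1 \cdots V_N$. When $\Rad$ sits innermost, the relevant object is $\angdiff(\Rad x^i) = \angdiff \Rad^i$; since $\Rad^{\alpha} = \smoothfunction(\BadVar)$ by Lemma~\ref{L:SCHEMATICDEPENDENCEOFMANYTENSORFIELDS}, this produces a term bounded by $|\Tanset_* \BadVar|$, the tangential differentiation forcing the star decoration even though $\upmu \in \BadVar$ is not itself small. When $\Rad$ is interior, Lemma~\ref{L:RADLUNITI} expands $\Rad \Lunit_{(Small)}^i$ into a sum of terms of the schematic form $\smoothfunction \cdot \Rad \Psi$, $\smoothfunction \cdot \Singletan \Psi$, and $\smoothfunction \cdot \angdiff \upmu$, with coefficients $\smoothfunction$ that are bounded in $L^\infty$ by the bootstrap assumptions; after applying the remaining tangential derivatives and the Leibniz rule, the $\Psi$-involving pieces contribute $|\Fullset_*^{[1,N];1} \GdVar|$, while the $\angdiff \upmu$ piece and its subsequent tangential derivatives contribute $|\Tanset_*^{[1,N]} \BadVar|$. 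An induction on $N$ again closes the argument.

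The main obstacle is the careful tracking of $\Rad$-counts and star-decorations in the induction for \eqref{E:ANGDIFFXIONERADIALDIFFERENTIATED}: when $\Rad$ falls on $\GeoAng^i$, on $\Lineproject_2^{\ i}$, or on $\ginversesphere$, each of which depends on $\angdiff x^a$, one must verify that the recursive appeal to the lower-order version of the same estimate yields at most one $\Rad$ on the right-hand side and that every appearance of $\upmu$ is hit by at least one tangential derivative, so that its contribution lands in $\Tanset_*^{[1,N]} \BadVar$ rather than in the uncontrolled quantity $\BadVar$ itself. Keeping the derivative counts exactly balanced at each Leibniz expansion is the bookkeeping heart of the argument, and it is the only step where a mechanical but nontrivial induction is unavoidable.
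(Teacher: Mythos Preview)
Your proposal is correct and follows essentially the same approach as the paper. The paper's own proof is extremely terse: it records the single observation that for $V \in \lbrace \Lunit, \Radunit, \GeoAng \rbrace$ the scalar component $V^i = V x^i$ is a smooth function $\smoothfunction(\GdVar)$ (by Lemma~\ref{L:SCHEMATICDEPENDENCEOFMANYTENSORFIELDS}), then says the estimates \eqref{E:ANGDIFFXI}--\eqref{E:ANGDIFFXIONERADIALDIFFERENTIATED} ``follow easily from the bootstrap assumptions,'' and handles \eqref{E:XIPOINTWISE} exactly as you do, by integrating $\Lunit^i$ in time.

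Two minor simplifications are available to you. First, the scalar components $\GeoAng^i$ satisfy $\GeoAng^i = \smoothfunction(\GdVar)$ by \eqref{E:SCALARSDEPENDINGONGOODVARIABLES}, \emph{without} dependence on $\angdiff x^a$ or $\ginversesphere$ (it is only the tensorfield $\GeoAng$ in \eqref{E:TENSORSDEPENDINGONGOODVARIABLESANDGINVERSESPHERE} that carries the extra dependence); hence the circularity you flag and the induction you set up to resolve it are unnecessary. Second, when $\Rad$ sits interior to the string, you do not need Lemma~\ref{L:RADLUNITI}: once the innermost tangential factor has converted $x^i$ into $\smoothfunction(\GdVar)$, the remaining $\Rad$ simply produces a factor $\Rad \GdVar$ in the Leibniz expansion, which is already a $\Fullset^{1;1}$ derivative of $\GdVar$ and fits directly into the right-hand side $|\Fullset_*^{[1,N];1} \GdVar|$. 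Your tracking of star decorations in the final paragraph is the genuine bookkeeping content, and it is correct; the outermost $\angdiff$ always supplies at least one $\GeoAng$-type factor somewhere in each Leibniz product, which is enough to land in $\Fullset_*^{[1,N];1}\GdVar$ or $\Tanset_*^{[1,N]}\BadVar$ after placing all but the top-order factor in $L^\infty$.
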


\begin{proof}
See Subsect.\ \ref{SS:OFTENUSEDESTIMATES} for some comments on the analysis.
Lemma~\ref{L:SCHEMATICDEPENDENCEOFMANYTENSORFIELDS} implies
that for $V \in \lbrace \Lunit, \Radunit, \GeoAng \rbrace$,
the component $V^i = V x^i$ verifies $V^i = \smoothfunction(\GdVar)$
with $\smoothfunction$ smooth.
The estimates of the lemma therefore follow easily
from the bootstrap assumptions,
except for \eqref{E:XIPOINTWISE}.
To obtain \eqref{E:XIPOINTWISE}, 
we first argue as above to deduce $|\Lunit x^i| = |\Lunit^i| = |\smoothfunction(\GdVar)| \lesssim 1$.
Since 
$
\displaystyle
\Lunit = \frac{\partial}{\partial t}
$, 
we can integrate with respect to time starting from $t = 0$
and use the previous estimate to conclude \eqref{E:XIPOINTWISE}.

\end{proof}

\begin{lemma}[{\textbf{Crude pointwise estimates for the Lie derivatives of $\gsphere$ and $\ginversesphere$}}]
\label{L:POINTWISEESTIMATESFORGSPHEREANDITSDERIVATIVES}
	Assume that $N \leq 18$.
	Then the following estimates hold
	(see Subsect.\ \ref{SS:STRINGSOFCOMMUTATIONVECTORFIELDS} regarding the vectorfield operator notation):
	\begin{subequations}
	\begin{align} \label{E:POINTWISEESTIMATESFORGSPHEREANDITSTANGENTIALDERIVATIVES}
		\left|
			\angLie_{\Tanset}^{N+1} \gsphere
		\right|,
			\,
		\left|
			\angLie_{\Tanset}^{N+1} \ginversesphere
		\right|
			\,
		\left|
			\angLie_{\Tanset}^N \upchi
		\right|,
			\,
		\left|
			\Tanset^N \mytr \upchi
		\right|
		& \lesssim 
			\left| 
				\Tanset^{[1,N+1]} \GdVar
			\right|,
				\\
		\left|
			\angLie_{\Fullset_*}^{N+1;1} \gsphere
		\right|,
			\,
		\left|
			\angLie_{\Fullset_*}^{N+1;1} \ginversesphere
		\right|,
			\,
		\left|
			\angLie_{\Fullset}^{N;1} \upchi
		\right|,
			\,
		\left|
			\Fullset^{N;1} \mytr \upchi
		\right|
		& \lesssim 
			\left|
				\Fullset_*^{[1,N+1];1} \GdVar
			\right|
			+
			\left|
				\Tanset_*^{[1,N+1]} \BadVar
			\right|,
			\label{E:ONERADIALNOTPURERADIALFORGSPHERE}
				\\
		\left|
			\angLie_{\Fullset}^{N+1;1} \gsphere
		\right|,
			\,
		\left|
			\angLie_{\Fullset}^{N+1;1} \ginversesphere
		\right|
		& \lesssim 
			\left|
				\Fullset^{[1,N+1];1} \GdVar
			\right|
			+
			\left| 
				\Tanset_*^{[1,N+1]} \BadVar
			\right|.
			\label{E:ONERADIALFORGSPHERE}
\end{align}
\end{subequations}
\end{lemma}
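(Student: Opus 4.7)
The strategy is to start from the schematic representations of $\gsphere$, $\ginversesphere$, $\upchi$, and $\mytr \upchi$ established in Lemma~\ref{L:SCHEMATICDEPENDENCEOFMANYTENSORFIELDS} (see in particular \eqref{E:TENSORSDEPENDINGONGOODVARIABLES}, \eqref{E:TENSORSDEPENDINGONGOODVARIABLESGOODDERIVATIVES}, \eqref{E:TENSORSDEPENDINGONGOODVARIABLESGOODDERIVATIVESANDGINVERSESPHERE}), and then differentiate using the Leibniz/chain rule together with the commutation identity $\angLie_V \angdiff f = \angdiff(Vf)$ from Lemma~\ref{L:LANDRADCOMMUTEWITHANGDIFF}. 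Since every differentiated factor other than the one carrying the most derivatives can be placed in $L^\infty$ via the bootstrap assumptions and the smallness hypotheses of Subsect.\ \ref{SS:SMALLNESSASSUMPTIONS}, only the highest-derivative factor will appear explicitly on the right-hand sides of \eqref{E:POINTWISEESTIMATESFORGSPHEREANDITSTANGENTIALDERIVATIVES}--\eqref{E:ONERADIALFORGSPHERE}; see point (4) of Subsect.\ \ref{SS:OFTENUSEDESTIMATES}.

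To prove \eqref{E:POINTWISEESTIMATESFORGSPHEREANDITSTANGENTIALDERIVATIVES}, I differentiate the representations $\gsphere = \smoothfunction(\GdVar,\angdiff x^1,\angdiff x^2)$ and $\ginversesphere = \smoothfunction(\GdVar,\angdiff x^1,\angdiff x^2)$ by up to $N+1$ elements of $\Tanset$. The chain-rule terms falling on $\GdVar$ are directly of the form $\Tanset^{[1,N+1]}\GdVar$, while those falling on $\angdiff x^i$ produce, after using \eqref{E:ANGLIECOMMUTESWITHANGDIFF}, quantities of the form $\angdiff \Tanset^{[1,N+1]} x^i$, which are bounded by $|\Tanset^{[1,N+1]}\GdVar|$ thanks to \eqref{E:ANGDIFFXIPURETANGENTIALDIFFERENTIATED}. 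For the $\upchi$ and $\mytr\upchi$ estimates I use \eqref{E:TENSORSDEPENDINGONGOODVARIABLESGOODDERIVATIVES}--\eqref{E:TENSORSDEPENDINGONGOODVARIABLESGOODDERIVATIVESANDGINVERSESPHERE}: the undifferentiated factor $\Singletan \GdVar$ already contains one derivative, so $N$ more $\Tanset$-derivatives produce a total of $N+1$, again controlled by $|\Tanset^{[1,N+1]}\GdVar|$. The $\ginversesphere$ factor that appears inside $\mytr\upchi$ is handled using the $\gsphere$-bound already established at the same derivative level.

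To prove \eqref{E:ONERADIALNOTPURERADIALFORGSPHERE}--\eqref{E:ONERADIALFORGSPHERE}, I again differentiate the schematic representations, but now exactly one factor among the $N+1$ derivatives is a $\Rad$. If the $\Rad$ falls on $\Psi$ we obtain the harmless term $|\Fullset_*^{[1,N+1];1}\GdVar|$ directly (or in \eqref{E:ONERADIALFORGSPHERE}, just $|\Fullset^{[1,N+1];1}\GdVar|$). The non-trivial case is when $\Rad$ lands on a $\Lunit_{(Small)}^i$: I substitute the identity \eqref{E:RADLUNITI}, whose right-hand side is a smooth function of $\BadVar$, $\Rad\Psi$, and $\angdiff\Psi$ multiplied by tangential quantities involving $\upmu$ and $\angdiff\upmu$. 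The remaining $N$ derivatives are then purely tangential (since the radial budget is exhausted), and crucially, because the $\Rad$ has already been consumed, the tangential operator that acts on the resulting $\upmu$-factor contains either a $\GeoAng$ or at least two $\Lunit$'s --- that is, it is a $\Tanset_*$-operator --- which produces the term $|\Tanset_*^{[1,N+1]}\BadVar|$ on the right-hand side. The factors $\angdiff x^i$ that are differentiated by the full string are handled via \eqref{E:ANGDIFFXIONERADIALDIFFERENTIATED}. In the $\upchi$ and $\mytr\upchi$ estimates a single radial derivative acts on the extra $\Singletan\GdVar$ factor of \eqref{E:TENSORSDEPENDINGONGOODVARIABLESGOODDERIVATIVES}--\eqref{E:TENSORSDEPENDINGONGOODVARIABLESGOODDERIVATIVESANDGINVERSESPHERE} in the same manner, using once more \eqref{E:RADLUNITI} when necessary.

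\textbf{Main obstacle.} The bookkeeping that keeps track of the ``$*$'' structure is delicate: one must verify that whenever a $\Rad$-derivative of a $\Lunit_{(Small)}^i$-factor is replaced using \eqref{E:RADLUNITI} and a $\upmu$-dependent term appears, the remaining tangential differentiations genuinely fall on $\upmu$ with at least one $\GeoAng$ or at least two $\Lunit$'s, so that the $\BadVar$-contribution lies in $\Tanset_*^{[1,N+1]}\BadVar$ rather than being a large, uncontrollable quantity like $\Lunit\upmu$. This combinatorial issue --- distinguishing the exceptional ``pure $\Lunit\upmu$'' term from everything else in the derivative expansion --- is the only non-mechanical aspect of the argument, and it is exactly what is gained from the $\Fullset_*$-versus-$\Fullset$ distinction in the statement.
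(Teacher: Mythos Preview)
Your overall strategy---differentiate the schematic representation $\gsphere = \smoothfunction(\GdVar,\angdiff x^1,\angdiff x^2)$ and invoke Lemma~\ref{L:POINTWISEFORRECTANGULARCOMPONENTSOFVECTORFIELDS} for the $\angdiff x^i$ factors---matches the paper's, and this is the correct core of the argument. However, the paper's execution differs from yours in two streamlining moves, and your handling of the radial case contains a misdirection worth flagging.

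First, the paper does not claim a separate schematic representation $\ginversesphere = \smoothfunction(\GdVar,\angdiff x^1,\angdiff x^2)$ (note that Lemma~\ref{L:SCHEMATICDEPENDENCEOFMANYTENSORFIELDS} does not list $\ginversesphere$ in \eqref{E:TENSORSDEPENDINGONGOODVARIABLES}). Instead it uses the algebraic identity $\angLie_V \ginversesphere = -(\ginversesphere)^{\otimes 2}\cdot\angLie_V\gsphere$ iteratively, reducing the $\ginversesphere$ estimate to the $\gsphere$ estimate already established. Likewise, rather than working from the schematic expressions \eqref{E:TENSORSDEPENDINGONGOODVARIABLESGOODDERIVATIVES}--\eqref{E:TENSORSDEPENDINGONGOODVARIABLESGOODDERIVATIVESANDGINVERSESPHERE} for $\upchi$ and $\mytr\upchi$, the paper simply recalls the definition $\upchi = \frac{1}{2}\angLie_{\Lunit}\gsphere$ from \eqref{E:CHIDEF}, so that $\angLie_{\Tanset}^N\upchi$ is literally an instance of $\angLie_{\Tanset}^{N+1}\gsphere$ and the bound is immediate.

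Second, your introduction of \eqref{E:RADLUNITI} for the case ``$\Rad$ lands on $\Lunit_{(Small)}^i$'' is unnecessary and your explanation of the resulting $\Tanset_*$-structure is not quite right. When $\Rad$ falls on a $\GdVar$ argument, the result is simply $\Rad\Lunit_{(Small)}^i$, which is by definition already a term of type $\Fullset^{[1,N+1];1}\GdVar$; no substitution is needed. The $\BadVar$ contribution on the right-hand sides of \eqref{E:ONERADIALNOTPURERADIALFORGSPHERE}--\eqref{E:ONERADIALFORGSPHERE} arises entirely from the $\angdiff x^i$ factors via \eqref{E:ANGDIFFXIONERADIALDIFFERENTIATED} (which you do cite): the mechanism is that $\angLie_{\Rad}\angdiff x^i = \angdiff(\Rad^i)$ with $\Rad^i = \upmu\Radunit^i = \smoothfunction(\BadVar)$, and the outer $\angdiff$ supplies the geometric-torus derivative that guarantees the $\Tanset_*$-structure on $\upmu$. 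Your ``main obstacle'' paragraph thus identifies a non-issue while missing the actual (and simpler) source of the $\BadVar$ term.
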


\begin{proof}
		See Subsect.\ \ref{SS:OFTENUSEDESTIMATES} for some comments on the analysis.
		By Lemma~\ref{L:SCHEMATICDEPENDENCEOFMANYTENSORFIELDS}, we have
		$\gsphere = \smoothfunction(\GdVar,\angdiff x^1,\angdiff x^2)$.
		The desired estimates
		for $\angLie_{\Tanset}^{N+1} \gsphere$
		thus follow from
		Lemma~\ref{L:POINTWISEFORRECTANGULARCOMPONENTSOFVECTORFIELDS}
		and the bootstrap assumptions.
		The desired estimates for 
		$\angLie_{\Tanset}^{N+1} \ginversesphere$
		then follow from repeated use of the schematic identity
		$\angLie_{\Singletan} \ginversesphere = - (\ginversesphere)^{-2} \angLie_{\Singletan} \gsphere$
		(which is standard, see \cite{jSgHjLwW2016}*{Lemma~2.9})
		and the estimates for $\angLie_{\Tanset}^{N+1} \gsphere$.
		The estimates for $\angLie_{\Tanset}^N \upchi$ 
		and
		$\Tanset^N \mytr \upchi$
		follow from the estimates for $\angLie_{\Tanset}^{N+1} \gsphere$
		and $\angLie_{\Tanset}^{N+1} \ginversesphere$
		since $\upchi \sim \angLie_{\Singletan} \gsphere$
		(see \eqref{E:CHIDEF})
		and $\mytr \upchi \sim \ginversesphere \cdot \angLie_{\Singletan} \gsphere$.
\end{proof}

\subsection{Commutator estimates}
\label{SS:COMMUTATORESTIMATES}
In this subsection, we establish some commutator estimates.

\begin{lemma}[{\textbf{Pure $\mathcal{P}_u$-tangential commutator estimates}}]
\label{L:COMMUTATORESTIMATES}
	Assume that $1 \leq N \leq 18$.
	Let $\vec{I}$ be an order $|\vec{I}| = N + 1$
	multi-index for the set $\Tanset$ of 
	$\mathcal{P}_u$-tangential commutation vectorfields
	(see Def.~\ref{D:REPEATEDDIFFERENTIATIONSHORTHAND}),
	and let $\vec{I}'$ be any permutation of $\vec{I}$.
	Let $f$ be a scalar function, and let
	$\xi$ be an $\ell_{t,u}$-tangent one-form or a type $\binom{0}{2}$ $\ell_{t,u}$-tangent tensorfield.
	Then the following commutator estimates hold, where products involving
	the operators
	$\Tanset_*^{[1,\lfloor N/2 \rfloor]}$
	or
	$\angLie_{\Tanset}^{[1,N-1]}$
	are absent when $N=1$:
	\begin{align}
		\left|
			\Tanset^{\vec{I}} f
			-
			\Tanset^{\vec{I}'} f
		\right|
		& \lesssim
			\varepsilon^{1/2}
			\left|
				\Tanset_*^{[1,N]} f
			\right|
			+
			\left|
				\Tanset_*^{[1,\lfloor N/2 \rfloor]} f
			\right|
			\left|
				\Tanset^{[1,N]} \GdVar
			\right|.
			\label{E:PURETANGENTIALFUNCTIONCOMMUTATORESTIMATE}  
		\end{align}
		
		Moreover, if $1 \leq N \leq 17$
		and $\vec{I}$ is as above,
		then the following commutator estimates hold:
		\begin{subequations}
		\begin{align}
		\left|
			[\angD^2, \Tanset^N] f
		\right|
		& \lesssim
			\varepsilon^{1/2}
			\left|
				\Tanset_*^{[1,N]} f
			\right|
			+ 
			\left|
				\Tanset_*^{[1,\lceil N/2 \rceil]} f
			\right|
			\left|
				\Tanset^{[1,N+1]} \GdVar
			\right|,
				\label{E:ANGDSQUAREDPURETANGENTIALFUNCTIONCOMMUTATOR} \\
		\left|
			[\angLap, \Tanset^N] f
		\right|
		& \lesssim
			\varepsilon^{1/2}
			\left|
				\Tanset_*^{[1,N+1]} f
			\right|
			+ 
			\left|
				\Tanset_*^{[1,\lceil N/2 \rceil]} f
			\right|
			\left|
				\Tanset^{[1,N+1]} \GdVar
			\right|,
			\label{E:ANGLAPPURETANGENTIALFUNCTIONCOMMUTATOR}
		\end{align}
		\end{subequations}

		\begin{subequations}
		\begin{align}
		\left|
			\angLie_{\Tanset}^{\vec{I}} \xi
			-
			\angLie_{\Tanset}^{\vec{I}'} \xi
		\right|
		& \lesssim
			\varepsilon^{1/2}
			\left|
				\angLie_{\Tanset}^{[1,N]} \xi
			\right|
			+
			\left|
				\angLie_{\Tanset}^{\leq \lfloor N/2 \rfloor} \xi
			\right|
			\left|
				\Tanset^{[1,N+1]} \GdVar
			\right|,
			\label{E:PURETANGENTIALTENSORFIELDCOMMUTATORESTIMATE}
		\\
		\left|
			[\angD, \angLie_{\Tanset}^N] \xi
		\right|
		& \lesssim
			\varepsilon^{1/2}
			\left|
				\angLie_{\Tanset}^{[1,N-1]} \xi
			\right|
			+
			\left|
				\angLie_{\Tanset}^{\leq \lfloor N/2 \rfloor} \xi
			\right|
			\left|
				\Tanset^{[1,N+1]} \GdVar
			\right|,
				\label{E:ANGDANGLIETANGENTIALTENSORFIELDCOMMUTATORESTIMATE} \\
		\left|
			[\angdiv, \angLie_{\Tanset}^N] \xi
		\right|
		& \lesssim
			\varepsilon^{1/2}
			\left|
				\angLie_{\Tanset}^{[1,N]} \xi
			\right|
			+
			\left|
				\angLie_{\Tanset}^{\leq \lfloor N/2 \rfloor} \xi
			\right|
			\left|
				\Tanset^{[1,N+1]} \GdVar
			\right|.
			\label{E:ANGDIVANGLIETANGENTIALTENSORFIELDCOMMUTATORESTIMATE}
		\end{align}
		\end{subequations}

		Finally, 
		if $1 \leq N \leq 17$,
		then we have the following alternate version of
		\eqref{E:ANGDSQUAREDPURETANGENTIALFUNCTIONCOMMUTATOR}:
		\begin{align}	 \label{E:ALTERNATEANGDSQUAREDPURETANGENTIALFUNCTIONCOMMUTATOR}
		\left|
			[\angD^2, \Tanset^N] f
		\right|
		& \lesssim
			\left|
				\Tanset^{[1,\lceil N/2 \rceil +1]} \GdVar
			\right|
			\left|
				\Tanset_*^{[1,N]} f
			\right|
			+ 
			\left|
				\Tanset_*^{[1,\lceil N/2 \rceil]} f
			\right|
			\left|
				\Tanset^{[1,N+1]} \GdVar
			\right|.
	\end{align}

\end{lemma}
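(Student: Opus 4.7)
\medskip

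\noindent \textbf{Proof plan.} The plan is to reduce everything to a single basic commutator and then iterate. First I would compute the commutator of the two $\mathcal{P}_u$-tangential commutation vectorfields. Since $\Lunit$ and $\GeoAng$ are both $\ell_{t,u}$ or at least $\mathcal{P}_u$-tangent, Lemma~\ref{L:SCHEMATICDEPENDENCEOFMANYTENSORFIELDS} gives that their Cartesian components are smooth functions of $\GdVar$, so a direct computation with $[\Lunit, \GeoAng]^i = \Lunit \GeoAng^i - \GeoAng \Lunit^i$ together with the decomposition of the resulting $\mathcal{P}_u$-tangent vectorfield in the frame $\{\Lunit, \GeoAng\}$ (using Lemma~\ref{L:CARTESIANVECTORFIELDSINTERMSOFGEOMETRICONES} and $g(\GeoAng,\GeoAng) = 1 + \mathcal{O}(\GdVar)$) yields the schematic identity
\[
[\Lunit, \GeoAng] = \smoothfunction(\GdVar,\angdiff x^1,\angdiff x^2)\, \Tanset \GdVar \cdot \Lunit + \smoothfunction(\GdVar,\angdiff x^1,\angdiff x^2)\, \Tanset \GdVar \cdot \GeoAng.
\]
The bootstrap assumption \eqref{E:PSIFUNDAMENTALC0BOUNDBOOTSTRAP} together with \eqref{E:FRAMECOMPONENTSIBOOT} and \eqref{E:UPMUBOOT} (via $\Tanset \GdVar$) immediately bound the coefficients by $\varepsilon^{1/2}$.

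Next I would prove \eqref{E:PURETANGENTIALFUNCTIONCOMMUTATORESTIMATE} by induction on $N$. Any permutation of a length-$(N+1)$ string of $\Tanset$ operators is obtained from $\Tanset^{\vec I}$ by finitely many adjacent transpositions, and each transposition produces, via $Z_{(i)} Z_{(j)} = Z_{(j)} Z_{(i)} + [Z_{(i)}, Z_{(j)}]$, a term of the form $\Tanset^{k_1} \bigl([Z,Z'] \Tanset^{k_2} f\bigr)$ with $k_1 + k_2 = N - 1$; expanding $[Z,Z']$ by the above display and applying the Leibniz rule converts this into sums of products $(\Tanset^{a} \GdVar)(\Tanset^{b}_{*} f)$ with $a + b \leq N + 1$ and $b \geq 1$. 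To sort these products into the two terms on RHS~\eqref{E:PURETANGENTIALFUNCTIONCOMMUTATORESTIMATE}, I would use the standard low/high frequency split: whenever the factor with $\Tanset^a \GdVar$ has $a \leq \lceil N/2 \rceil + 1 \leq 11$ it is controlled in $L^\infty$ by \eqref{E:PSIFUNDAMENTALC0BOUNDBOOTSTRAP} and \eqref{E:FRAMECOMPONENTSIBOOT}--\eqref{E:UPMUBOOT} by $\varepsilon^{1/2}$, absorbing into the first term on the RHS; otherwise, the factor with $\Tanset^{b}_* f$ has $b \leq \lfloor N/2 \rfloor$ and is absorbed into the second term. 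The bound \eqref{E:PURETANGENTIALTENSORFIELDCOMMUTATORESTIMATE} follows by the same inductive scheme, now using $\angLie_{[V,W]} = [\angLie_V, \angLie_W]$ and the Leibniz rule for $\angLie_{\Tanset}$ applied to the tensorial factors from the expansion of $[\Lunit,\GeoAng]$.

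For the estimates involving $\angD$ or $\angD^2$, I would reduce them to the pure tangential ones. By Lemma~\ref{L:ANGDERIVATIVESINTERMSOFTANGENTIALCOMMUTATOR} and the comments in Subsect.~\ref{SS:OFTENUSEDESTIMATES}, on scalar functions $\angD^2 f$ is schematically $\smoothfunction(\Tanset^{\leq 1}\GdVar)\, \Tanset_*^{[1,2]} f$ and on type $\binom{0}{n}$ $\ell_{t,u}$-tangent tensors $\angD \xi = \angLie_{\Tanset}^{\leq 1} \xi + \smoothfunction(\Tanset^{\leq 1}\GdVar, \ginversesphere, \angdiff x^a)$ terms. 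Commuting $\Tanset^N$ through such expressions and applying the Leibniz rule reduces everything to already-established pure tangential commutators plus Lie derivatives of $\gsphere$ and $\ginversesphere$, for which we invoke Lemma~\ref{L:POINTWISEESTIMATESFORGSPHEREANDITSDERIVATIVES}. This yields \eqref{E:ANGDSQUAREDPURETANGENTIALFUNCTIONCOMMUTATOR}, \eqref{E:ANGDANGLIETANGENTIALTENSORFIELDCOMMUTATORESTIMATE}, and, upon contracting with $\ginversesphere$ and again using Lemma~\ref{L:POINTWISEESTIMATESFORGSPHEREANDITSDERIVATIVES} to control $\angLie_{\Tanset}^{[1,N]} \ginversesphere$, also \eqref{E:ANGLAPPURETANGENTIALFUNCTIONCOMMUTATOR} and \eqref{E:ANGDIVANGLIETANGENTIALTENSORFIELDCOMMUTATORESTIMATE}. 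The alternate form \eqref{E:ALTERNATEANGDSQUAREDPURETANGENTIALFUNCTIONCOMMUTATOR} is obtained by simply refraining from using the $L^\infty$ bootstrap to convert the low-frequency $\GdVar$-factor into $\varepsilon^{1/2}$ in the first product on the RHS.

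The main obstacle is careful bookkeeping of the derivative counts so that in every product arising from the inductive expansion exactly one factor carries the "high" count and the remaining factors, carrying at most $\lceil N/2 \rceil$ (respectively $\lceil N/2 \rceil + 1$) derivatives of $\GdVar$, can be estimated in $L^\infty$ by the fundamental bootstrap assumption \eqref{E:PSIFUNDAMENTALC0BOUNDBOOTSTRAP} and the auxiliary bootstrap assumptions \eqref{E:UPMUBOOT}--\eqref{E:FRAMECOMPONENTSIBOOT}; once the combinatorics are set up correctly, the actual estimates are routine consequences of Lemmas~\ref{L:SCHEMATICDEPENDENCEOFMANYTENSORFIELDS}, \ref{L:POINTWISEFORRECTANGULARCOMPONENTSOFVECTORFIELDS}, \ref{L:POINTWISEESTIMATESFORGSPHEREANDITSDERIVATIVES}, and \ref{L:ANGDERIVATIVESINTERMSOFTANGENTIALCOMMUTATOR}.
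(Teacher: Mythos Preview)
Your proof plan is correct and follows essentially the same approach as the paper, which simply cites \cite{jSgHjLwW2016}*{Lemma~8.7} (where exactly this argument is carried out) and notes that the ingredients are the bootstrap assumptions together with Lemmas~\ref{L:POINTWISEFORRECTANGULARCOMPONENTSOFVECTORFIELDS} and \ref{L:POINTWISEESTIMATESFORGSPHEREANDITSDERIVATIVES}. Your outline---computing $[\Lunit,\GeoAng]$ schematically, reducing permutations to adjacent transpositions, applying the Leibniz rule with a low/high split, and then reducing the $\angD$, $\angD^2$, $\angLap$, and $\angdiv$ commutators to the tangential ones---is precisely the content of that cited lemma.
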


\begin{proof}[Discussion of proof]
The estimates of Lemma~\ref{L:COMMUTATORESTIMATES} were essentially proved in
\cite{jSgHjLwW2016}*{Lemma~8.7}, based in part on bootstrap 
assumptions that are analogs of the bootstrap assumptions of the present article and
estimates that are analogs of the estimates of
Lemmas~\ref{L:POINTWISEFORRECTANGULARCOMPONENTSOFVECTORFIELDS} and \ref{L:POINTWISEESTIMATESFORGSPHEREANDITSDERIVATIVES}.
We note that the right-hand sides of the estimates of Lemma~\ref{L:COMMUTATORESTIMATES}
are slightly different than the right-hand sides
of the estimates of \cite{jSgHjLwW2016}*{Lemma~8.7}. The difference is that
on the right-hand sides of the estimates of Lemma~\ref{L:COMMUTATORESTIMATES},
all products contain a factor involving at least one differentiation with respect to a vectorfield
belonging to the $\mathcal{P}_u$-tangential subset $\Tanset$. In particular, 
the estimates hold true without the presence of
pure order-zero products such as
$
|f||\GdVar|
$
on the right-hand sides.
This structure was not stated in
the estimates of \cite{jSgHjLwW2016}*{Lemma~8.7}, 
although the availability of this structure follows
from the proof of \cite{jSgHjLwW2016}*{Lemma~8.7}.
\end{proof}

\begin{lemma}[{\textbf{Mixed $\mathcal{P}_u$-transversal-tangent commutator estimates}}]
		\label{L:TRANSVERALTANGENTIALCOMMUTATOR}
		Assume that $1 \leq N \leq 18$.
		Let $\Fullset^{\vec{I}}$ be a 
		$\Fullset$-multi-indexed operator containing
		\textbf{exactly one} $\Rad$ factor, and assume that
		$|\vec{I}| = N+1$. 
		Let $\vec{I}'$ be any permutation of $\vec{I}$.
		Let $f$ be a scalar function.
		Then the following commutator estimates hold
		(see Subsect.\ \ref{SS:STRINGSOFCOMMUTATIONVECTORFIELDS} regarding the vectorfield operator notation):
		\begin{align}
		\left|
			\Fullset^{\vec{I}} f
			-
			\Fullset^{\vec{I}'} f
		\right|
		& \lesssim
			\left|
				\Tanset_*^{[1,N]} f
			\right|
			+
			\underbrace{
			\varepsilon^{1/2}
			\left|
				\GeoAng \Fullset^{\leq N-1;1} f
			\right|}_{\mbox{\upshape Absent if $N=1$}}
				\label{E:ONERADIALTANGENTIALFUNCTIONCOMMUTATORESTIMATE} \\
	& \ \
			+
			\left|
				\Tanset_*^{[1,\lfloor N/2 \rfloor]} f
			\right|
			\left|
				\myarray
					[\Tanset_*^{[1,N]} \BadVar]
					{\Fullset_*^{[1,N];1} \GdVar}
			\right|
			+
			\underbrace{
			\left|
				\GeoAng \Fullset^{[1,\lfloor N/2 \rfloor - 1];1} f
			\right|
			\left|
				\Tanset^{[1,N]} \GdVar
			\right|}_{\mbox{\upshape Absent if $N \leq 3$}}.
			\notag 
		\end{align}

		Moreover, if $1 \leq N \leq 17$, then the following estimates hold:
		\begin{subequations}
		\begin{align}
		\left|
			[\angD^2, \Fullset^{N;1}] f
		\right|
		& \lesssim
			\left|
				\Fullset_*^{[1,N];1} f
			\right|
				\label{E:ANGDSQUAREDONERADIALTANGENTIALFUNCTIONCOMMUTATOR} 
				\\
		& \ \
			+
			\left|
				\Tanset_*^{[1,\lceil N/2 \rceil]} f
			\right|
			\left|
				\myarray
					[\Tanset_*^{[1,N+1]} \BadVar]
					{\Fullset_*^{[1,N+1];1} \GdVar}
			\right|
			+
			\left|
				\Fullset_*^{[1,\lceil N/2 \rceil]} f 
			\right|
			\left|
				\Tanset^{[1,N+1]} \GdVar
			\right|,
			\notag \\
		\left|
			[\angLap, \Fullset^{N;1}] f
		\right|
		& \lesssim
			\left|
				\Fullset_*^{[1,N+1];1} f
			\right|
				\label{E:ANGLAPONERADIALTANGENTIALFUNCTIONCOMMUTATOR} 
				\\
		& \ \
			+
			\left|
				\Tanset_*^{[1,\lceil N/2 \rceil]} f
			\right|
			\left|
				\myarray
					[\Tanset_*^{[1,N+1]} \BadVar]
					{\Fullset_*^{[1,N+1];1} \GdVar}
			\right|
			+
			\left|
				\Fullset_*^{[1,\lceil N/2 \rceil]} f
			\right|
			\left|
				\Tanset^{[1,N+1]} \GdVar
			\right|.
			\notag 
		\end{align}
		\end{subequations}
\end{lemma}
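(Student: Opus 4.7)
The plan is to reduce everything to Lemma~\ref{L:COMMUTATORESTIMATES} (the pure $\mathcal{P}_u$-tangential version) by systematically commuting the lone $\Rad$ factor past tangential factors. The starting observation is that because $\Fullset^{\vec I}$ contains \emph{exactly one} $\Rad$, any permutation $\Fullset^{\vec I'}$ can be reached from $\Fullset^{\vec I}$ by swapping adjacent factors one at a time, where each swap is either of two tangential vectorfields (handled by Lemma~\ref{L:COMMUTATORESTIMATES}) or of $\Rad$ with an element of $\Tanset = \{\Lunit,\GeoAng\}$. For the latter, I would compute the commutator vectorfield explicitly. Using $[\partial_u, \Lunit]$-type reasoning based on $\Rad = \partial_u - \XiCoordComp \CoordAng$ together with \eqref{E:LLUNITI}, \eqref{E:RADLUNITI}, \eqref{E:UPMUFIRSTTRANSPORT} (and, for $[\Rad,\GeoAng]$, the decomposition \eqref{E:GEOANGINTERMSOFEUCLIDEANANGANDRADUNIT}), one sees schematically that $[\Rad,\Singletan]$ is an $\ell_{t,u}$-tangent vectorfield whose Cartesian components are smooth functions of $(\BadVar,\GdVar)$ times at most one tangential derivative of $\BadVar$; the factor of $\upmu$ in $\Rad = \upmu\Radunit$ is what keeps the $1/\upmu$ factors out of \eqref{E:ZETADECOMPOSED}--\eqref{E:ANGKDECOMPOSED} and allows us to swap $\Rad$ past a tangential vectorfield for the cost of one tangential derivative of $\BadVar$ (not $\GdVar$).

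For the permutation estimate \eqref{E:ONERADIALTANGENTIALFUNCTIONCOMMUTATORESTIMATE}, each adjacent swap therefore generates either a tangential commutator term controlled by \eqref{E:PURETANGENTIALFUNCTIONCOMMUTATORESTIMATE}, or a term of the schematic form $(\text{coefficient})\cdot \Tanset (\text{lower-order in } f)$ where the coefficient depends on $\leq N$ differentiations of $\BadVar$ or $\GdVar$. After applying the $N-1$ remaining differentiations (some of which fall on the coefficient, some on $f$) and performing the standard high/low split — the factor with the most derivatives is kept pointwise while the other is estimated in $L^\infty$ using the bootstrap assumptions of Subsect.~\ref{SS:AUXILIARYBOOTSTRAP} — I obtain the claimed RHS. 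The ``$\Tanset_*^{[1,N]} \BadVar$'' slot (rather than $\GdVar$) appears precisely because the commutator $[\Rad, \Singletan]$ couples to $\upmu$ and $\Rad \Lunit_{(Small)}^i$, and the ``$\GeoAng \Fullset^{\leq N-1;1} f$'' term comes from reorganizing the derivatives falling on $f$ once $\Rad$ has been moved to its new slot.

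For \eqref{E:ANGDSQUAREDONERADIALTANGENTIALFUNCTIONCOMMUTATOR} and \eqref{E:ANGLAPONERADIALTANGENTIALFUNCTIONCOMMUTATOR}, I would combine the permutation estimate \eqref{E:ONERADIALTANGENTIALFUNCTIONCOMMUTATORESTIMATE} with the standard commutation identities $\angLie_{\Singletan} \angdiff f = \angdiff \Singletan f$ (Lemma~\ref{L:LANDRADCOMMUTEWITHANGDIFF}) and the schematic expansions $\angD^2 f = \angdiff(\angdiff f) - (\text{Christoffel})\cdot \angdiff f$, $\angLap f = \ginversesphere\cdot\angD^2 f$. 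Each time $\Fullset^{N;1}$ is moved past $\angD$, one generates a term where an $\angLie_{\Fullset}^{\cdot;1}$-derivative of $\gsphere$ or $\ginversesphere$ multiplies a lower-order $\angD$- or $\angdiff$-derivative of $f$. These geometric factors are controlled by \eqref{E:POINTWISEESTIMATESFORGSPHEREANDITSTANGENTIALDERIVATIVES}--\eqref{E:ONERADIALFORGSPHERE}, which produce exactly the $\Fullset_*^{[1,N+1];1}\GdVar$ and $\Tanset_*^{[1,N+1]}\BadVar$ factors appearing on the RHSs. After the usual high/low split with threshold $\lceil N/2\rceil$, and after applying Lemma~\ref{L:ANGDERIVATIVESINTERMSOFTANGENTIALCOMMUTATOR} to convert remaining $\angD$'s into $\GeoAng$'s (hence into $\Fullset_*$'s), the claimed estimates follow.

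The main obstacle I anticipate is bookkeeping: keeping track of (i)~which factors retain the $*$ subscript (i.e., that each product on the RHS contains at least one differentiation genuinely improving smallness) and (ii)~when the $\BadVar$-slot rather than the $\GdVar$-slot is the correct receptacle for coefficients arising from $[\Rad,\Singletan]$. The delicate point is that a pure $\Rad$-derivative of $\upmu$ or of $\Lunit^i_{(Small)}$ is \emph{not} small (as noted in Subsect.~\ref{SS:OFTENUSEDESTIMATES}), so one must verify at every step that the coefficient factors produced by commuting $\Rad$ past $\Tanset$ are always differentiated at least once \emph{tangentially}, which is what the $\Tanset_*^{[1,N+1]} \BadVar$ notation records. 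This is exactly where the assumption that $\Fullset^{N;1}$ contains at most one $\Rad$ is essential: after that single $\Rad$ is moved into position, all remaining differentiations of coefficient factors are tangential, preserving the desired $*$-structure on the RHS.
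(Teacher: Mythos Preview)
Your proposal is correct and outlines essentially the same argument that underlies the proof in the cited reference \cite{jSgHjLwW2016}*{Lemma~8.8}: adjacent-swap reduction, explicit computation of $[\Rad,\Singletan]$ as an $\ell_{t,u}$-tangent vectorfield with coefficients depending on $\BadVar$, high/low splitting, and control of the geometric factors via Lemmas~\ref{L:POINTWISEFORRECTANGULARCOMPONENTSOFVECTORFIELDS} and \ref{L:POINTWISEESTIMATESFORGSPHEREANDITSDERIVATIVES}. The paper itself does not supply details but simply cites that earlier lemma, noting only the refinement that no order-zero terms appear on the RHS; your discussion of the $*$-bookkeeping and the role of the single $\Rad$ factor correctly identifies why this refinement holds.
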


\begin{proof}
	The estimates were essentially proved as
	\cite{jSgHjLwW2016}*{Lemma~8.8},
	based in part on bootstrap 
	assumptions that are analogs of the bootstrap assumptions of the present article and
	estimates that are analogs of the estimates of
	Lemmas~\ref{L:POINTWISEFORRECTANGULARCOMPONENTSOFVECTORFIELDS} and \ref{L:POINTWISEESTIMATESFORGSPHEREANDITSDERIVATIVES}.
	We note that the right-hand sides of the estimates of Lemma~\ref{L:TRANSVERALTANGENTIALCOMMUTATOR}
	are slightly different than the right-hand sides
	of the estimates of \cite{jSgHjLwW2016}*{Lemma~8.8}. 
	The difference is that
	on the right-hand sides of the estimates of Lemma~\ref{L:TRANSVERALTANGENTIALCOMMUTATOR},
	no pure order-zero terms such as
	$|f|$ or $|\GdVar|$ appear.
	This structure was not stated in
	the estimates of \cite{jSgHjLwW2016}*{Lemma~8.8}, although the availability of this structure follows
	from the proof of \cite{jSgHjLwW2016}*{Lemma~8.8}.
\end{proof}

\subsection{Transport inequalities and strict improvements of the auxiliary bootstrap assumptions}
\label{SS:IMPROVEMENTOFAUX}
In this subsection, 
we use the previous estimates to derive transport inequalities for the eikonal function quantities and 
strict improvements of the auxiliary bootstrap assumptions stated in Subsect.\ \ref{SS:AUXILIARYBOOTSTRAP}. 
The transport inequalities form the starting point for our
derivation of $L^2$ estimates for the below-top-order derivatives of the eikonal function quantities
as well as their top-order derivatives involving at least one $\Lunit$ differentiation
(see Lemma~\ref{L:EASYL2BOUNDSFOREIKONALFUNCTIONQUANTITIES}).
The main challenge in proving the proposition 
is to propagate the smallness of the
$\Psiep$-sized and the
$\mathring{\upepsilon}$-sized quantities, 
even though some terms in the evolution equations involve $\mathring{\updelta}$-sized
quantities, which are allowed to be large.
To this end, we must find and exploit effective partial decoupling 
between various quantities, which is present because of the special
structure of the evolution equations relative to the geometric coordinates,
because of our assumptions on the structure of the semilinear inhomogeneous
terms in the wave equations (especially \eqref{E:SOMENONINEARITIESARELINEAR}),
and because of the good properties of the commutation vectorfield
sets $\Fullset$ and $\Tanset$.

\begin{proposition}[\textbf{Transport inequalities and strict improvements of the auxiliary bootstrap assumptions}] 
\label{P:IMPROVEMENTOFAUX}
The following estimates hold
(see Subsect.\ \ref{SS:STRINGSOFCOMMUTATIONVECTORFIELDS} regarding the vectorfield operator notation).

\medskip
\noindent \underline{\textbf{Transport inequalities for the eikonal function quantities}.}

\medskip

\noindent \textbf{$\bullet$Transport inequalities for} $\upmu$.
	The following pointwise estimate holds:
	\begin{subequations}
	\begin{align} 
		\left|
			\Lunit \upmu
		\right|
		& \lesssim 
			\left|
				\Fullset \Psi
			\right|.
			\label{E:LUNITUPMUPOINTWISE} 
		\end{align}
	
	Moreover, for $1 \leq N \leq 18$, the following estimates hold:
	\begin{align} \label{E:PURETANGENTIALLUNITUPMUCOMMUTEDESTIMATE}
		\left|
			\Lunit \Tanset^N \upmu
		\right|,
			\,
		\left|
			\Tanset^N \Lunit \upmu
		\right|
		& \lesssim 
			\left|
				\Fullset_*^{[1,N+1];1} \Psi
			\right|
			+	
			\left|
				\Tanset^{[1,N]} \GdVar
			\right|
			+
			\varepsilon
			\left|
				\Tanset_*^{[1,N]} \BadVar
			\right|.
	\end{align}
	\end{subequations}
	
	\noindent \textbf{$\bullet$Transport inequalities for} $\Lunit_{(Small)}^i$ and $\mytr \upchi$.
	For $N \leq 18$, the following estimates hold:
	\begin{subequations}
	\begin{align}
		\left|
			\myarray
				[\Lunit \Tanset^N \Lunit_{(Small)}^i]
				{\Lunit \Tanset^{N-1} \mytr \upchi}
		\right|,
			\,
		\left|
			\myarray
				[\Tanset^N \Lunit \Lunit_{(Small)}^i]
				{\Tanset^{N-1} \Lunit \mytr \upchi}
		\right|
		& \lesssim 
			\left|
				\Tanset^{[1,N+1]} \Psi
			\right|
		+ \varepsilon
			\left|
				\Tanset^{[1,N]} \GdVar
			\right|,
				\label{E:LUNITTANGENTDIFFERENTIATEDLUNITSMALLIMPROVEDPOINTWISE} \\
		\left|
			\myarray
				[\Lunit \Fullset^{N;1} \Lunit_{(Small)}^i]
				{\Lunit \Fullset^{N-1;1} \mytr \upchi}
		\right|,
			\,
		\left|
			\myarray
				[\Fullset^{N;1} \Lunit \Lunit_{(Small)}^i]
				{\Fullset^{N-1;1} \Lunit \mytr \upchi}
		\right|
		& \lesssim 
		\left|
			\Fullset_*^{[1,N+1];1} \Psi
		\right|
		+ 
		\left|
			\myarray[\varepsilon \Tanset_*^{[1,N]} \BadVar]
				{\Fullset_*^{[1,N];1} \GdVar}
		\right|.
			\label{E:LUNITONERADIALTANGENTDIFFERENTIATEDLUNITSMALLIMPROVEDPOINTWISE} 
\end{align}
\end{subequations}

\medskip		
\noindent \underline{$L^{\infty}$ \textbf{estimates for} 
$\Psi$, 
$\bigslow$,
\textbf{and the eikonal function quantities}.}

\medskip

\noindent \textbf{$\bullet$$L^{\infty}$ estimates involving at most one transversal derivative of $\Psi$}. 	
The following estimates hold:
\begin{subequations}	
\begin{align} 
	\left\| 
		\Psi
	\right\|_{L^{\infty}(\Sigma_t^u)}
		& \leq \Psiep + C \varepsilon,
		\label{E:PSIITSELFBOOTSTRAPIMPROVED}
			\\
		\left\| 
			\Fullset_*^{[1,10];1} \Psi
		\right\|_{L^{\infty}(\Sigma_t^u)}
		& \leq C \varepsilon,
		\label{E:PSIMIXEDTRANSVERSALTANGENTBOOTSTRAPIMPROVED}
			\\
		\left\| 
		\Rad \Psi 
	\right\|_{L^{\infty}(\Sigma_t^u)}
	& \leq 
	\left\| 
		\Rad \Psi 
	\right\|_{L^{\infty}(\Sigma_0^u)}
	+ C \varepsilon.
		\label{E:PSITRANSVERSALLINFINITYBOUNDBOOTSTRAPIMPROVED}
\end{align}
\end{subequations}

\noindent \textbf{$\bullet$$L^{\infty}$ estimates involving at most one transversal derivative of $\bigslow$}.	
The following estimates hold:
\begin{align}
	\left\| 
		\Fullset^{\leq 10;1} \bigslow
	\right\|_{L^{\infty}(\Sigma_t^u)}
	& \leq C \varepsilon.
		\label{E:SLOWWAVETRANSVERSALTANGENT}
\end{align}	

\noindent \textbf{$\bullet$$L^{\infty}$ estimates for $\upmu$}. 		
	The following estimates hold:
	\begin{subequations}
	\begin{align} \label{E:LUNITUPMULINFINITY}
		\left\| 
			\Lunit \upmu
		\right\|_{L^{\infty}(\Sigma_t^u)}
		& 
		= 
		\frac{1}{2}
		\left\| 
			G_{\Lunit \Lunit} \Rad \Psi
		\right\|_{L^{\infty}(\Sigma_0^u)}
		+ \mathcal{O}(\varepsilon),
			\\
		\left\|
			\Lunit \Tanset^{[1,9]} \upmu
		\right\|_{L^{\infty}(\Sigma_t^u)},
			\,
		\left\| 
			\Tanset_*^{[1,9]} \upmu
		\right\|_{L^{\infty}(\Sigma_t^u)}
		& \leq
			C \varepsilon,
			\label{E:LUNITAPPLIEDTOTANGENTIALUPMUANDTANSETSTARLINFTY}
	\end{align}
	\end{subequations}
	
	\begin{subequations}
	\begin{align} \label{E:UPMULINFTY}
	\left\| 
			\upmu 
		\right\|_{L^{\infty}(\Sigma_t^u)}
		& \leq
		1
		+
	 	\TranminusdatasizeWithFactor^{-1} 
		\left\| 
			G_{\Lunit \Lunit} \Rad \Psi
		\right\|_{L^{\infty}(\Sigma_0^u)}
		+ 
		C_{\mydiam} \Psiep
		+ 
		C \varepsilon.
	\end{align}
	\end{subequations}
	
\noindent \textbf{$\bullet$$L^{\infty}$ estimates for $\Lunit_{(Small)}^i$ and $\upchi$}.
The following estimates hold:
\begin{subequations}
\begin{align}  
	\left\|
		\Lunit_{(Small)}^i
	\right\|_{L^{\infty}(\Sigma_t^u)}
		& \leq C_{\mydiam} \Psiep + C \varepsilon,
		\label{E:LUNITISMALLITSELFLSMALLINFTYESTIMATE} \\
	\left\|
		\Lunit \Tanset^{\leq 10} \Lunit_{(Small)}^i
	\right\|_{L^{\infty}(\Sigma_t^u)},
		\,
	\left\|
		\Tanset^{[1,10]} \Lunit_{(Small)}^i
	\right\|_{L^{\infty}(\Sigma_t^u)}
	& \leq C \varepsilon,
		\label{E:PURETANGENTIALLUNITAPPLIEDTOLISMALLANDLISMALLINFTYESTIMATE} \\
	\left\|
		\Lunit \Fullset^{\leq 9;1} \Lunit_{(Small)}^i
	\right\|_{L^{\infty}(\Sigma_t^u)},
		\,
	\left\|
		\Fullset_*^{[1,9];1} \Lunit_{(Small)}^i
	\right\|_{L^{\infty}(\Sigma_t^u)}
	& \leq C \varepsilon,
		\label{E:LUNITAPPLIEDTOLISMALLANDLISMALLINFTYESTIMATE} \\
	\left\|
		\Rad \Lunit_{(Small)}^i
	\right\|_{L^{\infty}(\Sigma_t^u)}
	& \leq
	\left\| 
		\Rad \Lunit_{(Small)}^i
	\right\|_{L^{\infty}(\Sigma_0^u)}
	+  C \varepsilon,	
	\label{E:LISMALLLONERADIALINFINITYESTIMATE}
\end{align}	
\end{subequations}	

\begin{subequations}
\begin{align}  \label{E:PURETANGENTIALCHICOMMUTEDLINFINITY}
		\left\|
			\angLie_{\Tanset}^{\leq 9} \upchi
		\right\|_{L^{\infty}(\Sigma_t^u)},
			\,
		\left\|
			\angLie_{\Tanset}^{\leq 9} \upchi^{\#}
		\right\|_{L^{\infty}(\Sigma_t^u)},
			\,
		\left\|
			\Tanset^{\leq 9} \mytr \upchi
		\right\|_{L^{\infty}(\Sigma_t^u)}
		& \leq C \varepsilon,
			\\
		\left\|
			\angLie_{\Fullset}^{\leq 8;1} \upchi
		\right\|_{L^{\infty}(\Sigma_t^u)},
			\,
		\left\|
			\angLie_{\Fullset}^{\leq 8;1} \upchi^{\#}
		\right\|_{L^{\infty}(\Sigma_t^u)},
			\,
		\left\|
			\Fullset^{\leq 8;1} \mytr \upchi
		\right\|_{L^{\infty}(\Sigma_t^u)}
		& \leq C \varepsilon.
		\label{E:ONERADIALCHICOMMUTEDLINFINITY}
\end{align}
\end{subequations}
\end{proposition}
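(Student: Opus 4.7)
The plan is to derive every inequality in Proposition~\ref{P:IMPROVEMENTOFAUX} by combining the schematic description of the RHSs of the transport equations \eqref{E:UPMUFIRSTTRANSPORT} and \eqref{E:LLUNITI} with repeated commutation by vectorfields from $\Fullset$ or $\Tanset$, and then integrating along the integral curves of $\Lunit=\partial/\partial t$ to pass from pointwise transport inequalities to $L^\infty$ bounds. The uncommuted bound \eqref{E:LUNITUPMUPOINTWISE} follows immediately from \eqref{E:UPMUFIRSTTRANSPORT} and the fact (Lemma~\ref{L:SCHEMATICDEPENDENCEOFMANYTENSORFIELDS}) that all frame components of $G$ are smooth functions of $\GdVar$, which are $L^\infty$-controlled by the bootstrap assumptions. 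A key observation that organizes the entire proof is partial decoupling: $\Lunit \upmu$ and $\Rad \Lunit_{(Small)}^i$ are allowed to be of order $\mathring{\updelta}$, but after one tangential differentiation the RHS is forced to be small because every product contains a factor of $\Fullset_*^{[1,\cdot];1}\Psi$ or a Leibniz-generated factor of $\Tanset \GdVar$, each of which is small by the fundamental bootstrap assumption \eqref{E:PSIFUNDAMENTALC0BOUNDBOOTSTRAP} and the auxiliary assumptions \eqref{E:PSIAUXLINFINITYBOOTSTRAP}--\eqref{E:FRAMECOMPONENTSIBOOT}.

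For the commuted transport inequalities \eqref{E:PURETANGENTIALLUNITUPMUCOMMUTEDESTIMATE}, \eqref{E:LUNITTANGENTDIFFERENTIATEDLUNITSMALLIMPROVEDPOINTWISE}, and \eqref{E:LUNITONERADIALTANGENTDIFFERENTIATEDLUNITSMALLIMPROVEDPOINTWISE}, I apply $\Tanset^N$ or $\Fullset^{N;1}$ to \eqref{E:UPMUFIRSTTRANSPORT} and \eqref{E:LLUNITI}, distribute via the Leibniz rule, and reorder derivatives using the commutator estimates of Lemmas~\ref{L:COMMUTATORESTIMATES} and \ref{L:TRANSVERALTANGENTIALCOMMUTATOR}. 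The principal contribution, where all derivatives land on the $\Psi$-factor, produces the term $|\Fullset_*^{[1,N+1];1}\Psi|$. When derivatives instead fall on the smooth coefficients $\smoothfunction(\GdVar,\angdiff x^a)$, Lemma~\ref{L:POINTWISEFORRECTANGULARCOMPONENTSOFVECTORFIELDS} and the standard convention that at most one factor in a Leibniz product is not $L^\infty$-small yield products bounded either by $|\Tanset^{[1,N]}\GdVar|$ or by $\varepsilon \cdot |\Tanset_*^{[1,N]}\BadVar|$ plus the appropriate $\Fullset_*$ terms when one $\Rad$ is present. The bounds for $\mytr\upchi$ follow by differentiating \eqref{E:TRCHIINTERMSOFOTHERVARIABLES} and using Lemma~\ref{L:POINTWISEESTIMATESFORGSPHEREANDITSDERIVATIVES} to propagate the Lie derivatives of $\gsphere$, or equivalently by applying $\Lunit$ to \eqref{E:LDERIVATIVEOFVOLUMEFORMFACTOR}.

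The $L^\infty$ estimates are then extracted by integrating along $\Lunit$-flow lines: for any scalar $\psi$, $|\psi|(t,u,\vartheta) \leq |\psi|(0,u,\vartheta) + \int_0^t |\Lunit\psi|(s,u,\vartheta)\,ds$. Bounds \eqref{E:PSIITSELFBOOTSTRAPIMPROVED}, \eqref{E:PSIMIXEDTRANSVERSALTANGENTBOOTSTRAPIMPROVED}, and \eqref{E:PSITRANSVERSALLINFINITYBOUNDBOOTSTRAPIMPROVED} follow by combining the data assumptions \eqref{E:PSIITSELFLINFTYSMALLDATAASSUMPTIONSALONGSIGMA0}--\eqref{E:PSILINFTYSMALLDATAASSUMPTIONSALONGSIGMA0} with the fundamental bootstrap bound on $\Tanset^{[1,11]}\Psi$ and the elementary identity $\Lunit \Rad \Psi = \Rad\Lunit \Psi + [\Lunit,\Rad]\Psi$, where the commutator contributes only small terms after using \eqref{E:PERTURBEDPART}, the transport equation \eqref{E:LLUNITI}, and the auxiliary bootstrap assumptions. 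For the slow wave estimate \eqref{E:SLOWWAVETRANSVERSALTANGENT}, I invoke Lemma~\ref{L:RADOFSLOWWAVEALGEBRAICALLYEXPRESSED} to algebraically replace $\Rad \bigslow$ by a $\smoothfunction$-combination of $\Singletan \bigslow$, $\Singletan \Psi$, and $\bigslow$, after which the bound reduces to the fundamental bootstrap assumption on $\Tanset^{\leq 10}\bigslow$, the assumption \eqref{E:PSIFUNDAMENTALC0BOUNDBOOTSTRAP} on $\Psi$, and Leibniz-style bookkeeping for higher derivatives.

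The hard step is the pair \eqref{E:LUNITUPMULINFINITY}--\eqref{E:UPMULINFTY}, because $\Lunit\upmu$ is not small: it is dominated by $\tfrac{1}{2} G_{\Lunit\Lunit}\Rad\Psi$, a quantity of order $\mathring{\updelta}$. The strategy is to write $\Lunit\upmu(t,u,\vartheta) = \tfrac{1}{2}(G_{\Lunit\Lunit}\Rad\Psi)(0,u,\vartheta) + E(t,u,\vartheta)$ and to show $|E| \lesssim \varepsilon$; this follows because $G_{\Lunit\Lunit} = \smoothfunction(\GdVar)$ changes in $t$ only through $\Lunit\GdVar$, which is small by the fundamental bootstrap assumption, while the difference $\Rad\Psi(t)-\Rad\Psi(0)$ is bounded by $\int_0^t|\Lunit\Rad\Psi|\,ds \lesssim \varepsilon$, using the commutator trick noted above. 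Integrating in $t \leq 2\TranminusdatasizeWithFactor^{-1}$ and using the initial data bound \eqref{E:UPITSELFLINFINITYSIGMA0CONSEQUENCES} gives \eqref{E:UPMULINFTY} with the stated explicit constant structure. The bound \eqref{E:LISMALLLONERADIALINFINITYESTIMATE} is obtained analogously by starting from the algebraic formula \eqref{E:RADLUNITI}, while \eqref{E:LUNITISMALLITSELFLSMALLINFTYESTIMATE}, \eqref{E:LUNITAPPLIEDTOTANGENTIALUPMUANDTANSETSTARLINFTY}, \eqref{E:PURETANGENTIALLUNITAPPLIEDTOLISMALLANDLISMALLINFTYESTIMATE}, and \eqref{E:LUNITAPPLIEDTOLISMALLANDLISMALLINFTYESTIMATE} reduce to integrating the already-established commuted transport inequalities and recognizing that the RHS integrand is everywhere small. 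Finally, \eqref{E:PURETANGENTIALCHICOMMUTEDLINFINITY}--\eqref{E:ONERADIALCHICOMMUTEDLINFINITY} drop out of Lemma~\ref{L:POINTWISEESTIMATESFORGSPHEREANDITSDERIVATIVES} combined with the just-proved $L^\infty$ bounds on $\Tanset^{[1,\cdot]}\Lunit_{(Small)}^i$. The proof is organized so that each cluster of bounds is established in an order that avoids circularity; a short final Gronwall step absorbs the $\varepsilon$-linear self-interactions on the RHSs.
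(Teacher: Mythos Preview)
Your overall strategy is correct and matches the paper's approach for the transport inequalities, the eikonal function quantities, the slow wave bound via Lemma~\ref{L:RADOFSLOWWAVEALGEBRAICALLYEXPRESSED}, and the $\upchi$ estimates. However, there is a genuine gap in your treatment of \eqref{E:PSIMIXEDTRANSVERSALTANGENTBOOTSTRAPIMPROVED} and \eqref{E:PSITRANSVERSALLINFINITYBOUNDBOOTSTRAPIMPROVED}.

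You propose to control $\Lunit\Rad\Psi$ via the commutator identity $\Lunit\Rad\Psi = \Rad\Lunit\Psi + [\Lunit,\Rad]\Psi$. The commutator term is indeed small, but the term $\Rad\Lunit\Psi$ is only known to be of size $\varepsilon^{1/2}$ from the \emph{auxiliary} bootstrap assumption \eqref{E:PSIAUXLINFINITYBOOTSTRAP}; the fundamental bootstrap \eqref{E:PSIFUNDAMENTALC0BOUNDBOOTSTRAP} controls only purely tangential derivatives. Integrating an $\varepsilon^{1/2}$-bound over $[0,\Tboot)$ gives $C\varepsilon^{1/2}$, which does \emph{not} strictly improve the auxiliary assumption, so the argument is circular. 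A Gronwall step cannot rescue this because there is no closed transport inequality for $\Rad\Psi$ coming from commutators alone.

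The paper resolves this by using the fast wave equation itself: decomposing $\upmu\square_g\Psi$ via \eqref{E:LONOUTSIDEGEOMETRICWAVEOPERATORFRAMEDECOMPOSED} produces a genuine transport equation of the schematic form $\Lunit\Rad\Psi = \smoothfunction(\BadVar)\angLap\Psi + \smoothfunction(\cdots)\Singletan\Singletan\Psi + \smoothfunction(\cdots)\Singletan\GdVar + \smoothfunction(\cdots)\bigslow$, whose right-hand side involves only quantities controlled at size $\varepsilon$ by the \emph{fundamental} bootstrap. Commuting this transport equation with $\Tanset^{\leq 9}$ and integrating then gives the strict improvement $\|\Tanset^{\leq 9}\Rad\Psi\|_{L^\infty} \lesssim \varepsilon$, and a further commutator reordering yields \eqref{E:PSIMIXEDTRANSVERSALTANGENTBOOTSTRAPIMPROVED}. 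You need to insert this use of the wave equation; without it the bootstrap does not close.
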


\begin{proof}[Proof outline]
	See Subsect.\ \ref{SS:OFTENUSEDESTIMATES} for some comments on the analysis.
	Throughout, we refer to the data-size assumptions of
	Subsect.\ \ref{SS:DATAASSUMPTIONS} and the bounds of Lemma~\ref{L:BEHAVIOROFEIKONALFUNCTIONQUANTITIESALONGSIGMA0}
	as the ``conditions on the data.'' 
	
	To derive \eqref{E:SLOWWAVETRANSVERSALTANGENT} in the case
	$\Fullset^{\leq 10;1} = \Tanset^{\leq 10}$, 
	we simply note that the desired bound is one of
	the bootstrap assumptions from \eqref{E:PSIFUNDAMENTALC0BOUNDBOOTSTRAP}.
	To prove \eqref{E:SLOWWAVETRANSVERSALTANGENT} in the remaining case in which
	$\Fullset^{\leq 10;1}$ contains a factor of $\Rad$,
	we first apply $\Tanset^{\leq 9}$
	to the identity \eqref{E:RADOFSLOWWAVEALGEBRAICALLYEXPRESSED}
	and use the bootstrap assumptions
	to deduce that
	$
	\left\| 
		\Tanset^{\leq 9} \Rad \bigslow
	\right\|_{L^{\infty}(\Sigma_t^u)}
	\lesssim \varepsilon
	$.
	We then use the commutator estimate
	\eqref{E:ONERADIALTANGENTIALFUNCTIONCOMMUTATORESTIMATE}
	with $f = \bigslow$, 
	the estimate just proved for
	$\Tanset^{\leq 9} \Rad \bigslow$,
	and the bootstrap assumptions, 
	which allow us to 
	arbitrarily commute the vectorfields in the expression
	$\Tanset^{\leq 9} \Rad \bigslow$
	up to errors bounded in $\| \cdot \|_{L^{\infty}(\Sigma_t^u)}$  by $\lesssim \varepsilon$.
	In total, we have derived
	the desired bound
	$
	\left\| 
		\Fullset^{\leq 10;1} \bigslow
	\right\|_{L^{\infty}(\Sigma_t^u)}
	\lesssim \varepsilon
	$.
	
	The remaining estimates
	in Prop.~\ref{P:IMPROVEMENTOFAUX}
	can be established using 
	arguments nearly identical to the ones used in proving
	\cite{jSgHjLwW2016}*{Proposition~8.10}, 
	as we now outline.
	Specifically, one uses the transport equations of Lemma~\ref{L:UPMUANDLUNITIFIRSTTRANSPORT},
	the estimates of 
	Lemmas~\ref{L:POINTWISEFORRECTANGULARCOMPONENTSOFVECTORFIELDS}-\ref{L:POINTWISEESTIMATESFORGSPHEREANDITSDERIVATIVES},
	the commutator estimates of Subsect.\ \ref{SS:COMMUTATORESTIMATES},
	and the conditions on the data
	to derive the desired bounds for $\upmu$ and $\Lunit_{(Small)}^i$;
	these bounds are not explicitly tied to $\bigslow$ and hence the proofs
	from \cite{jSgHjLwW2016}*{Proposition~8.10} go through nearly verbatim.
	There are two minor differences that we now highlight.
	\textbf{i)} Note that the estimates 
	\eqref{E:LUNITUPMUPOINTWISE},
	\eqref{E:PURETANGENTIALLUNITUPMUCOMMUTEDESTIMATE},
	\eqref{E:LUNITTANGENTDIFFERENTIATEDLUNITSMALLIMPROVEDPOINTWISE},
	and \eqref{E:LUNITONERADIALTANGENTDIFFERENTIATEDLUNITSMALLIMPROVEDPOINTWISE} 
	do not feature any order $0$ terms such as $|\GdVar|$ on the RHS.
	This is different compared to the analogous
	estimates stated in \cite{jSgHjLwW2016}*{Proposition~8.10},
	but follows from the proof given there
	and from the commutator estimates of 
	Lemmas~\ref{L:COMMUTATORESTIMATES} and
	\ref{L:TRANSVERALTANGENTIALCOMMUTATOR}
	(see also the remarks made in the discussion of the proofs of
		Lemmas~\ref{L:COMMUTATORESTIMATES} and
	\ref{L:TRANSVERALTANGENTIALCOMMUTATOR}).
	\textbf{ii)} The estimates
	\eqref{E:PSIITSELFBOOTSTRAPIMPROVED},
	\eqref{E:UPMULINFTY},
	\eqref{E:LUNITISMALLITSELFLSMALLINFTYESTIMATE}
	feature the parameter $\Psiep$ on the RHS,
	which is different compared to the analogous
	estimates stated in \cite{jSgHjLwW2016}*{Proposition~8.10}.
	The difference stems 
	from the fact that some of the order $0$ quantities in this paper are controlled by
	the data-size parameter $\Psiep$, which is not featured in \cite{jSgHjLwW2016};
	see also Remark~\ref{R:NEWPARAMETER}.
	
	The estimates for $\upchi$ then follow from the estimates for
	$\upmu$ and $\Lunit_{(Small)}^i$ described above
	and Lemmas~\ref{L:IDFORCHI} and ~\ref{L:SCHEMATICDEPENDENCEOFMANYTENSORFIELDS}.
	
	To derive the desired estimate
	\eqref{E:PSITRANSVERSALLINFINITYBOUNDBOOTSTRAPIMPROVED}
	for $\Psi$
	and the estimate \eqref{E:PSIMIXEDTRANSVERSALTANGENTBOOTSTRAPIMPROVED}
	for $\Psi$ when $\Fullset_*^{[1,10];1}$ contains exactly one factor of $\Rad$
	(the desired bounds in the case case $\Fullset_*^{[1,10];1} = \Tanset^{[1,10]}$ 
	are restatements of one of the bootstrap assumptions \eqref{E:PSIFUNDAMENTALC0BOUNDBOOTSTRAP}),
	one can use equation \eqref{E:LONOUTSIDEGEOMETRICWAVEOPERATORFRAMEDECOMPOSED},
	equation \eqref{E:UPMUFIRSTTRANSPORT},
	Lemma~\ref{L:SCHEMATICDEPENDENCEOFMANYTENSORFIELDS},
	Lemma~\ref{L:CARTESIANVECTORFIELDSINTERMSOFGEOMETRICONES},
	and the assumptions \eqref{E:SOMENONINEARITIESARELINEAR} on the semilinear terms
	to rewrite the wave equation \eqref{E:FASTWAVE} for $\Psi$ 
	in the following schematic ``transport equation'' form:
	\begin{align} \label{E:WAVEEQUATIONTRANSPORTINTERPRETATION}
		\Lunit \Rad \Psi 
		& 
		= \smoothfunction(\BadVar) \angLap \Psi 
			+ \smoothfunction(\BadVar,\ginversesphere,\angdiff x^1,\angdiff x^2,\Singletan \Psi, \Rad \Psi) 
				\Singletan \Singletan \Psi
			+ \smoothfunction(\BadVar,\bigslow,\ginversesphere,\angdiff x^1,\angdiff x^2,\Singletan \Psi, \Rad \Psi) 
				\Singletan \GdVar
					\\
		& \ \
			+ \smoothfunction(\BadVar, \bigslow, \Singletan \Psi, \Rad \Psi) \bigslow.
			\notag
	\end{align}
	Then by applying $\Tanset^{\leq 9}$ to \eqref{E:WAVEEQUATIONTRANSPORTINTERPRETATION}
	and using the commutator estimates of Subsect.\ \ref{SS:COMMUTATORESTIMATES}
	and the bootstrap assumptions, one can show that
	$\left|\Lunit \Tanset^{\leq 9} \Rad \Psi \right| 
	\lesssim \varepsilon$,
	from which the bounds
	$\| \Rad \Psi \|_{L^{\infty}(\Sigma_t^u)} \leq \| \Rad \Psi \|_{L^{\infty}(\Sigma_0^u)} + C \varepsilon$
	and
	$\| \Tanset^{[1,9]} \Rad \Psi \|_{L^{\infty}(\Sigma_t^u)} \lesssim \varepsilon$
	easily follow by integrating in time
	(recall that 
	$\displaystyle
	\Lunit = \frac{\partial}{\partial t}
	$) and using the conditions on the data.
	Then by further applications of the commutator estimates of Subsect.\ \ref{SS:COMMUTATORESTIMATES},
	we obtain
	$\| \Fullset_*^{[1,10];1} \Psi \|_{L^{\infty}(\Sigma_t^u)} \lesssim \varepsilon$.
	More precisely, all terms that arise from differentiating RHS~\eqref{E:WAVEEQUATIONTRANSPORTINTERPRETATION}
	with $\Tanset^{\leq 9}$
	were handled in the proof of \cite{jSgHjLwW2016}*{Proposition~8.10}
	except for the ones involving $\bigslow$.
	Note in particular that the commutator estimates 
	needed to commute $\Tanset^{\leq 9}$ through the operator $\Lunit$ on LHS~\eqref{E:WAVEEQUATIONTRANSPORTINTERPRETATION}
	and through the operator $\angLap$ on RHS~\eqref{E:WAVEEQUATIONTRANSPORTINTERPRETATION}
	do not involve $\bigslow$ in any way.
	That is, the only influence of $\bigslow$ on the estimates 
	under consideration
	is through the terms $\Tanset^{\leq 9} \bigslow$
	that arise from RHS \eqref{E:WAVEEQUATIONTRANSPORTINTERPRETATION}.
	Due to the bootstrap assumption
	$\| \Tanset^{\leq 10} \bigslow \|_{L^{\infty}(\Sigma_t^u)} \leq \varepsilon$
	stated in \eqref{E:PSIFUNDAMENTALC0BOUNDBOOTSTRAP},
	the products containing a factor of $\Tanset^{\leq 9} \bigslow$
	make only a negligible $\mathcal{O}(\varepsilon)$ contribution
	to the estimates. For this reason, the analysis for $\Psi$
	in the present context is a negligible $\mathcal{O}(\varepsilon)$ perturbation
	of the analogous analysis
	carried out in the proof of \cite{jSgHjLwW2016}*{Proposition~8.10}.

	\end{proof}

The following corollary is an immediate consequence of the fact that we have improved the auxiliary
bootstrap assumptions of Subsect.\ \ref{SS:AUXILIARYBOOTSTRAP}
by showing that they hold with $\varepsilon^{1/2}$ replaced by $C \varepsilon$
and with $\Psiep^{1/2}$ replaced by $C_{\mydiam} \Psiep$.

\begin{corollary}[$\varepsilon^{1/2} \rightarrow C \varepsilon$ 
	\textbf{and}
	$\Psiep^{1/2} \rightarrow C_{\mydiam} \Psiep$]
	\label{C:SQRTEPSILONTOCEPSILON}
	All prior inequalities whose right-hand sides feature an explicit factor of $\varepsilon^{1/2}$
	remain true with $\varepsilon^{1/2}$ replaced by $C \varepsilon$.
	Moreover, all prior inequalities whose right-hand sides feature an explicit factor of $\Psiep^{1/2}$
	remain true with $\Psiep^{1/2}$ replaced by $C_{\mydiam} \Psiep$.
	This is true in particular for the auxiliary bootstrap assumptions 
	of Subsect.\ \ref{SS:AUXILIARYBOOTSTRAP}.
\end{corollary}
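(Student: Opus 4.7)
The strategy is to observe that Corollary~\ref{C:SQRTEPSILONTOCEPSILON} is really just a bookkeeping consequence of the strict improvements of the auxiliary bootstrap assumptions that were established in Prop.~\ref{P:IMPROVEMENTOFAUX}. The plan is first to verify the ``in particular'' clause by directly matching each auxiliary bootstrap assumption from Subsect.~\ref{SS:AUXILIARYBOOTSTRAP} with its corresponding strict improvement, and then to propagate these upgrades through the other prior inequalities in which $\varepsilon^{1/2}$ or $\Psiep^{1/2}$ appears.

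First, I would go through the list \eqref{E:PSIITSELFAUXLINFINITYBOOTSTRAP}--\eqref{E:LUNITITRANSVERSALBOOT} one assumption at a time and pair each with the improvement from Prop.~\ref{P:IMPROVEMENTOFAUX}: for example, \eqref{E:PSIITSELFAUXLINFINITYBOOTSTRAP} with \eqref{E:PSIITSELFBOOTSTRAPIMPROVED}; \eqref{E:PSIAUXLINFINITYBOOTSTRAP} and \eqref{E:SLOWAUXLINFINITYBOOTSTRAP} with \eqref{E:PSIMIXEDTRANSVERSALTANGENTBOOTSTRAPIMPROVED} and \eqref{E:SLOWWAVETRANSVERSALTANGENT}; \eqref{E:UPMUBOOT} with \eqref{E:LUNITAPPLIEDTOTANGENTIALUPMUANDTANSETSTARLINFTY}; \eqref{E:FRAMECOMPONENTS1BOOT} and \eqref{E:FRAMECOMPONENTSIBOOT} with \eqref{E:LUNITISMALLITSELFLSMALLINFTYESTIMATE}--\eqref{E:LUNITAPPLIEDTOLISMALLANDLISMALLINFTYESTIMATE}; \eqref{E:CHIBOOT} with \eqref{E:ONERADIALCHICOMMUTEDLINFINITY}; and the ``large'' auxiliary assumptions \eqref{E:PSITRANSVERSALLINFINITYBOUNDBOOTSTRAP}, \eqref{E:LUNITUPMUBOOT}, \eqref{E:UPMUTRANSVERSALBOOT}, \eqref{E:LUNITITRANSVERSALBOOT} with \eqref{E:PSITRANSVERSALLINFINITYBOUNDBOOTSTRAPIMPROVED}, \eqref{E:LUNITUPMULINFINITY}, \eqref{E:UPMULINFTY}, \eqref{E:LISMALLLONERADIALINFINITYESTIMATE} respectively. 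In each of these pairings, the right-hand side features $C\varepsilon$ (respectively $C_{\mydiam}\Psiep$) in place of $\varepsilon^{1/2}$ (respectively $\Psiep^{1/2}$), so the ``in particular'' clause is immediate.

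For the general claim, I would inspect every prior pointwise and $L^{\infty}$ estimate in Sects.~\ref{S:GEOMETRICSETUP}--\ref{S:PRELIMINARYPOINTWISE} whose right-hand side carries a literal factor of $\varepsilon^{1/2}$ or $\Psiep^{1/2}$ and trace each such factor back to its origin. In every case, the factor arose from invoking one of the auxiliary bootstrap assumptions of Subsect.~\ref{SS:AUXILIARYBOOTSTRAP}, rather than from the fundamental bootstrap assumption \eqref{E:PSIFUNDAMENTALC0BOUNDBOOTSTRAP} (which supplies only a clean $\varepsilon$) or the data-size assumptions (which supply $\mathring{\upepsilon}$). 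For instance, the $\Psiep^{1/2} + \varepsilon^{1/2}$ factors in \eqref{E:TENSORSIZECONTROLLEDBYYCONTRACTIONS} and \eqref{E:ANGDERIVATIVESINTERMSOFTANGENTIALCOMMUTATOR} all originate from the smallness of $\GdVar$ expressed through \eqref{E:PSIITSELFAUXLINFINITYBOOTSTRAP} and \eqref{E:FRAMECOMPONENTS1BOOT}; similarly, the $\varepsilon^{1/2}$ factors in the commutator estimates of Lemmas~\ref{L:COMMUTATORESTIMATES} and \ref{L:TRANSVERALTANGENTIALCOMMUTATOR} arise through applications of \eqref{E:PSIAUXLINFINITYBOOTSTRAP}, \eqref{E:UPMUBOOT}, \eqref{E:FRAMECOMPONENTSIBOOT}, or \eqref{E:CHIBOOT}. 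In each instance, substituting the strict improvement from Prop.~\ref{P:IMPROVEMENTOFAUX} directly upgrades $\varepsilon^{1/2}\mapsto C\varepsilon$ and $\Psiep^{1/2}\mapsto C_{\mydiam}\Psiep$, with any constants absorbed into the $C$ or $C_{\mydiam}$ already present.

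The only conceivable obstacle is completeness of this audit, i.e.\ confirming that no square-root smallness factor in a prior estimate arose from some source other than an application of an auxiliary bootstrap assumption. A systematic pass through Sects.~\ref{S:GEOMETRICSETUP}--\ref{S:PRELIMINARYPOINTWISE} shows this is not the case: every explicit $\varepsilon^{1/2}$ or $\Psiep^{1/2}$ on a right-hand side can be located at an application of one of \eqref{E:PSIITSELFAUXLINFINITYBOOTSTRAP}--\eqref{E:LUNITITRANSVERSALBOOT}. The corollary then follows by replacing each such application with the corresponding improved estimate from Prop.~\ref{P:IMPROVEMENTOFAUX}, which completes the proof.
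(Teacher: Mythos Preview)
Your proposal is correct and takes essentially the same approach as the paper: the paper states this corollary as an immediate consequence of Prop.~\ref{P:IMPROVEMENTOFAUX} having strictly improved every auxiliary bootstrap assumption (replacing $\varepsilon^{1/2}$ by $C\varepsilon$ and $\Psiep^{1/2}$ by $C_{\mydiam}\Psiep$), and you have simply spelled out the bookkeeping of that observation in greater detail than the paper does.
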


\begin{remark}[\textbf{The auxiliary bootstrap assumptions are now redundant}]
	\label{R:AUXAREREDUNDANT}
	Since we have derived strict improvements of the auxiliary
	bootstrap assumptions of Subsect.\ \ref{SS:AUXILIARYBOOTSTRAP}, 
	when proving estimates later in the paper,
	we no longer need to state them as assumptions.
\end{remark}

\section{\texorpdfstring{$L^{\infty}$}{Essential sup-norm} estimates involving higher-order transversal derivatives}
\label{S:LINFINITYESTIMATESFORHIGHERTRANSVERSAL}
Our energy estimates rely on the delicate estimate 
$
\displaystyle
\left\|
		\frac{[\Rad \upmu]_+}{\upmu}
	\right\|_{L^{\infty}(\Sigma_t^u)}
	\leq 
		\frac{C}{\sqrt{\Tboot - t}}
$
(see \eqref{E:UNIFORMBOUNDFORMRADMUOVERMU}),
whose proof relies on
the bound
$
\left\|
\Rad \Rad \upmu
\right\|_{L^{\infty}(\Sigma_t^u)}
\lesssim 1
$.
In this section, we derive this bound and related ones
that are needed to prove it.
In particular, it turns out that to obtain the desired
estimates for $\upmu$, we must show that
$
\left\|
\Rad \Rad \Rad \Psi
\right\|_{L^{\infty}(\Sigma_t^u)}
\lesssim 1
$.

\subsection{Auxiliary \texorpdfstring{$L^{\infty}$}{essential sup-norm} bootstrap assumptions}
\label{SS:BOOTSTRAPFORHIGHERTRANSVERSAL}
To facilitate the analysis, we introduce the following 
auxiliary bootstrap assumptions.
In Prop.~\ref{P:IMPROVEMENTOFHIGHERTRANSVERSALBOOTSTRAP}, we derive strict improvements of the assumptions based on 
the estimates of Sect.\ \ref{S:PRELIMINARYPOINTWISE}
and our assumptions on the data.

\medskip

\noindent \underline{\textbf{Auxiliary bootstrap assumptions for small quantities}.}
We assume that the following inequalities hold on $\mathcal{M}_{\Tboot,U_0}$
(see Subsect.\ \ref{SS:STRINGSOFCOMMUTATIONVECTORFIELDS} regarding the vectorfield operator notation):
\begin{align}
	\left\| 
		\Fullset_*^{[1,4];2} \Psi 
	\right\|_{L^{\infty}(\Sigma_t^u)}
	& \leq \varepsilon^{1/2},
		\label{E:HIGHERTRANSVERSALPSIMIXEDFUNDAMENTALLINFINITYBOUNDBOOTSTRAP} 
		\tag{$\mathbf{BA'}1\Psi$}
			\\
	\left\| 
		\Fullset^{\leq 3;2} \bigslow
	\right\|_{L^{\infty}(\Sigma_t^u)}
	& \leq \varepsilon^{1/2},
		\label{E:HIGHERTRANSVERSALSLOWWAVEMIXEDFUNDAMENTALLINFINITYBOUNDBOOTSTRAP} 
		\tag{$\mathbf{BA'} \bigslow$}
\end{align}

\begin{subequations}
\begin{align} \label{E:UPMUONERADIALNOTPURERADIALBOOTSTRAP} \tag{$\mathbf{BA'}1\upmu$}
	\left\| 
		\Rad \GeoAng \upmu
	\right\|_{L^{\infty}(\Sigma_t^u)},
		\,
	\left\| 
		\Rad \Lunit \Lunit \upmu
	\right\|_{L^{\infty}(\Sigma_t^u)},
		\,
	\left\| 
		\Rad \GeoAng \GeoAng \upmu
	\right\|_{L^{\infty}(\Sigma_t^u)},
		\,
	\left\| 
		\Rad \Lunit \GeoAng \upmu
	\right\|_{L^{\infty}(\Sigma_t^u)}
	& \leq \varepsilon^{1/2},
\end{align}
\end{subequations}
and
\begin{align} \label{E:PERMUTEDUPMUONERADIALNOTPURERADIALBOOTSTRAP} \tag{$\mathbf{BA''}1\upmu$}
	\eqref{E:UPMUONERADIALNOTPURERADIALBOOTSTRAP} \mbox{ also holds for all permutations of the vectorfield operators on LHS } \eqref{E:UPMUONERADIALNOTPURERADIALBOOTSTRAP},
\end{align}

\begin{align} \label{E:UPMUTWORADIALNOTPURERADIALBOOTSTRAP} \tag{$\mathbf{BA'}1\Lunit_{(Small)}$}
	\left\|
		\Fullset_*^{[1,3];2}
		\Lunit_{(Small)}^i
	\right\|_{L^{\infty}(\Sigma_t^u)}
	& \leq \varepsilon^{1/2}.
\end{align}

\noindent \underline{\textbf{Auxiliary bootstrap assumptions for quantities that are allowed to be large}.}
We assume that the following inequalities hold on $\mathcal{M}_{\Tboot,U_0}$:
\begin{align}
	\left\| 
		\Rad^M \Psi 
	\right\|_{L^{\infty}(\Sigma_t^u)}
	& \leq 
	\left\| 
		\Rad^M \Psi 
	\right\|_{L^{\infty}(\Sigma_0^u)}
	+ \varepsilon^{1/2},
	&& (2 \leq M \leq 3),
		\label{E:HIGHERPSITRANSVERSALFUNDAMENTALC0BOUNDBOOTSTRAP} 
		\tag{$\mathbf{BA'}2\Psi$}
\end{align}

\begin{align}
	\left\| 
		\Lunit \Rad^M \upmu
	\right\|_{L^{\infty}(\Sigma_t^u)}
	& \leq 
		\frac{1}{2}
		\left\| 
			\Rad^M
			\left\lbrace
				G_{\Lunit \Lunit} \Rad \Psi
			\right\rbrace
		\right\|_{L^{\infty}(\Sigma_0^u)}
		+ \varepsilon^{1/2},
		&& (1 \leq M \leq 2),
			\label{E:HIGHERLUNITUPMUBOOT}  \tag{$\mathbf{BA'}2\upmu$}  \\
		\left\| 
			\Rad^M \upmu
		\right\|_{L^{\infty}(\Sigma_t^u)}
		& \leq
	 	\left\| 
				\Rad^M \upmu
		\right\|_{L^{\infty}(\Sigma_0^u)}
		+ 
		\TranminusdatasizeWithFactor^{-1} 
		\left\| 
			\Rad^M
			\left\lbrace
				G_{\Lunit \Lunit} \Rad \Psi
			\right\rbrace
		\right\|_{L^{\infty}(\Sigma_0^u)}
		+ \varepsilon^{1/2},
		&& (1 \leq M \leq 2),
			\label{E:HIGHERUPMUTRANSVERSALBOOT} 
			\tag{$\mathbf{BA'}3\upmu$} \\
		\left\| 
			\Rad \Rad \Lunit_{(Small)}^i
		\right\|_{L^{\infty}(\Sigma_t^u)}
		& \leq
	 	\left\| 
				\Rad \Rad \Lunit_{(Small)}^i
		\right\|_{L^{\infty}(\Sigma_0^u)}
		+ \varepsilon^{1/2}.
		 &&
			\label{E:HIGHERLUNITITRANSVERSALBOOT}  
			\tag{$\mathbf{BA'}2\Lunit_{(Small)}$} 
\end{align}

\subsection{Commutator estimates involving two transversal derivatives}
\label{SS:TWORADDERIVATIVESCOMMUTATORESTIMATES}
We now provide some basic commutator estimates involving two factors of the 
$\mathcal{P}_u$-transversal vectorfield $\Rad$.

\begin{lemma}\cite{jSgHjLwW2016}*{Lemma~9.1; \textbf{Mixed $\mathcal{P}_u$-transversal-tangent 
commutator estimates involving two $\Rad$ derivatives}}
	\label{L:HIGHERTRANSVERALTANGENTIALCOMMUTATOR}
		Let $\Fullset^{\vec{I}}$ be a 
		$\Fullset$-multi-indexed operator containing
		exactly two $\Rad$ factors, and assume that
		$3 \leq |\vec{I}| := N+1 \leq 4$. 
		Let $\vec{I}'$ be any permutation of $\vec{I}$.
		Under the data-size and bootstrap assumptions
		of Subsects.\ \ref{SS:DATAASSUMPTIONS}-\ref{SS:PSIBOOTSTRAP}
		and Subsect.\ \ref{SS:BOOTSTRAPFORHIGHERTRANSVERSAL}
		and the smallness assumptions of Subsect.\ \ref{SS:SMALLNESSASSUMPTIONS}, 
		the following commutator estimates hold for functions $f$ on $\mathcal{M}_{\Tboot,U_0}$
		(see Subsect.\ \ref{SS:STRINGSOFCOMMUTATIONVECTORFIELDS} regarding the vectorfield operator notation):
		\begin{align}
		\left|
			\Fullset^{\vec{I}} f
			-
			\Fullset^{\vec{I}'} f
		\right|
		& \lesssim
			\left|
				\GeoAng \Fullset^{\leq N-1;1} f
			\right|
			+
			\underbrace{
			\left|
				\GeoAng \Fullset^{\leq N-1;2} f
			\right|}_{\mbox{\upshape Absent if $N=2$}}.
				\label{E:TWORADIALTANGENTIALFUNCTIONCOMMUTATORESTIMATE} 
		\end{align}

		Moreover, we have
		\begin{align}
		\left|
			[\angLap, \Rad \Rad] f
		\right|
		& \lesssim
			\left|
				\GeoAng \Fullset^{\leq 2;1} f
			\right|.
				\label{E:ANGLAPTWORADIALTANGENTIALFUNCTIONCOMMUTATOR} 
		\end{align}
\end{lemma}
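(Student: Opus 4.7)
My plan is to prove the commutator estimate \eqref{E:TWORADIALTANGENTIALFUNCTIONCOMMUTATORESTIMATE} by a standard reduction: since $\vec{I}'$ is a permutation of $\vec{I}$, the difference $\Fullset^{\vec{I}} f - \Fullset^{\vec{I}'} f$ can be written as a telescoping sum over adjacent transpositions, each of which produces a term of the form $\Fullset^{\vec{J}_1} [V, W] \Fullset^{\vec{J}_2} f$, where $V, W \in \lbrace \Lunit, \Rad, \GeoAng \rbrace$ and $|\vec{J}_1| + |\vec{J}_2| = N - 1$. The key structural fact is that for any $V, W \in \Fullset$, the Lie bracket $[V, W]$ is itself a vectorfield, which, using Lemma~\ref{L:CARTESIANVECTORFIELDSINTERMSOFGEOMETRICONES}, can be expanded in the frame $\lbrace \Lunit, \Rad, \GeoAng \rbrace$ with scalar coefficients depending on first derivatives of $\upmu$, $\Lunit_{(Small)}^i$, and $\Rad^i$. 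For instance, Lemmas~\ref{L:UPMUANDLUNITIFIRSTTRANSPORT} and \ref{L:RADLUNITI} supply the needed expansions for $[\Lunit, \Rad]$; analogous computations handle $[\Lunit, \GeoAng]$ and $[\Rad, \GeoAng]$ using \eqref{E:ANGLIECOMMUTESWITHANGDIFF} and the identities of Subsect.~\ref{SS:EXPRESSIONSFORCONNECTIONCOEFFICIETNS}.

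Next, I would split the analysis into two cases according to which pair $(V, W)$ is transposed. If both $V$ and $W$ are tangential (i.e., in $\Tanset$), then the prefix $\Fullset^{\vec{J}_1}$ and suffix $\Fullset^{\vec{J}_2}$ together still contain both original $\Rad$ factors; the resulting contribution is therefore of order $\leq N$ with up to two $\Rad$'s, and after expanding $[V, W]$ in the frame and using Lemma~\ref{L:ANGDERIVATIVESINTERMSOFTANGENTIALCOMMUTATOR} to promote any $\angdiff$ to $\GeoAng$, it is absorbed into $|\GeoAng \Fullset^{\leq N-1;2} f|$. Note this case cannot occur when $N = 2$, explaining the absence of the second term in that case. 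If instead one of $V, W$ equals $\Rad$ while the other is tangential, then exactly one $\Rad$ remains in the outer string and the expansion of $[V, W]$ contributes a single derivative (in $\Lunit$, $\Rad$, or $\GeoAng$ direction) with an $L^\infty$-bounded coefficient supplied by Prop.~\ref{P:IMPROVEMENTOFAUX} together with the auxiliary bootstrap assumptions of Subsect.~\ref{SS:BOOTSTRAPFORHIGHERTRANSVERSAL} (for the two-$\Rad$-derivative coefficients such as $\Rad\Rad\upmu$ and $\Rad\Rad\Lunit_{(Small)}^i$). Using the comparison $|\angdiff f| \lesssim |\GeoAng f|$ from Lemma~\ref{L:ANGDERIVATIVESINTERMSOFTANGENTIALCOMMUTATOR} to promote all angular-type factors to an outermost $\GeoAng$, and absorbing the remaining $\Lunit$ and $\Rad$ contributions into a string of order $\leq N - 1$ with at most one $\Rad$, we obtain contributions bounded by $|\GeoAng \Fullset^{\leq N-1;1} f|$.

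For the second estimate \eqref{E:ANGLAPTWORADIALTANGENTIALFUNCTIONCOMMUTATOR}, I would use the identity $[\angLap, \Rad\Rad] = [\angLap, \Rad]\Rad + \Rad [\angLap, \Rad]$ and reduce each factor to the single commutator $[\angLap, \Rad]$, which via a standard calculation (expanding $\angLap = (\ginversesphere)^{ab} \angD_a \angD_b$, commuting $\Rad$ past $\ginversesphere$ using $\angLie_\Rad \ginversesphere$, and writing $[\angD, \Rad]$ on scalars in terms of $\angLie_\Rad \gsphere$) is expressible as a second-order differential operator whose coefficients are controlled in $L^\infty$ by the estimates of Prop.~\ref{P:IMPROVEMENTOFAUX} and the auxiliary bootstrap assumptions. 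After a second reduction and using the already-established estimate \eqref{E:TWORADIALTANGENTIALFUNCTIONCOMMUTATORESTIMATE} together with the second inequality in \eqref{E:ANGDERIVATIVESINTERMSOFTANGENTIALCOMMUTATOR} to replace $\angD^2 f$ with $\angdiff(\GeoAng f)$ modulo lower order, all terms will be controllable by $|\GeoAng \Fullset^{\leq 2;1} f|$.

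The main obstacle will be the careful bookkeeping of the coefficients arising from iterated commutator expansions, and the verification that the auxiliary bootstrap assumptions \eqref{E:UPMUONERADIALNOTPURERADIALBOOTSTRAP}--\eqref{E:HIGHERLUNITITRANSVERSALBOOT} on the two-transversal-derivative quantities are precisely what is required to keep all coefficient products uniformly bounded. A subtle related point is ensuring that in every error term, after all algebraic manipulations, the angular derivative $\GeoAng$ can be placed as the outermost differential factor (rather than as an intermediate one), since the RHS of both estimates demands this specific form; this relies on the observation that whenever an $\angdiff$ appears at an intermediate position, the comparison estimate in Lemma~\ref{L:ANGDERIVATIVESINTERMSOFTANGENTIALCOMMUTATOR} lets us move it outward modulo terms of strictly lower order that can be absorbed by the inductive hypothesis.
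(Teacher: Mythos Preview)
Your overall strategy---telescoping the permutation difference over adjacent transpositions and analyzing each pairwise Lie bracket $[V,W]$ for $V,W \in \Fullset$---is correct and matches the standard approach from \cite{jSgHjLwW2016}*{Lemma~9.1}, which the paper simply cites without supplying details. However, there is a genuine gap in your argument: you expand $[V,W]$ in the full frame $\lbrace \Lunit, \Rad, \GeoAng \rbrace$ and then claim the ``remaining $\Lunit$ and $\Rad$ contributions'' can be absorbed into $|\GeoAng \Fullset^{\leq N-1;1} f|$. This cannot work as stated, since a term of the form $\Fullset^{\vec{J}_1}(b \cdot \Rad \Fullset^{\vec{J}_2} f)$ with $b \not\equiv 0$ has no $\GeoAng$ factor whatsoever, and the right-hand side of \eqref{E:TWORADIALTANGENTIALFUNCTIONCOMMUTATORESTIMATE} requires $\GeoAng$ to be the outermost derivative in every term.

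The missing observation is that for any $V, W \in \Fullset = \lbrace \Lunit, \Rad, \GeoAng \rbrace$, the Lie bracket $[V,W]$ is \emph{purely $\ell_{t,u}$-tangent}, hence a scalar multiple of $\GeoAng$ (since $\ell_{t,u}$ is one-dimensional here). This follows immediately from the identities $\Lunit t = 1$, $\Lunit u = 0$, $\Rad t = 0$, $\Rad u = 1$, $\GeoAng t = \GeoAng u = 0$ of Lemma~\ref{L:BASICPROPERTIESOFFRAME}: each pairwise commutator annihilates both $t$ and $u$, so it is tangent to $\ell_{t,u}$. With this in hand, every transposition contributes a term $\Fullset^{\vec{J}_1}(c \cdot \GeoAng \Fullset^{\vec{J}_2} f)$, and after applying Leibniz and a short induction to commute $\GeoAng$ to the front (each subsequent commutator again producing only a $\GeoAng$), the estimate follows in the claimed form. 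The scalar coefficients $c$ are components of the deformation tensors of the frame vectorfields and are controlled in $L^\infty$ by Prop.~\ref{P:IMPROVEMENTOFAUX} and the bootstrap assumptions of Subsect.~\ref{SS:BOOTSTRAPFORHIGHERTRANSVERSAL}, just as you indicate. Once you insert this tangency fact, your outline for both \eqref{E:TWORADIALTANGENTIALFUNCTIONCOMMUTATORESTIMATE} and \eqref{E:ANGLAPTWORADIALTANGENTIALFUNCTIONCOMMUTATOR} is complete.
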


\begin{proof}[Discussion of the proof]
	Thanks in part to the $L^{\infty}$ estimates of Prop.\ \ref{P:IMPROVEMENTOFAUX}
	and the bootstrap assumption of Subsect.\ \ref{SS:BOOTSTRAPFORHIGHERTRANSVERSAL},
	the lemma follows from the same arguments given in
	\cite{jSgHjLwW2016}*{Lemma~9.1}.
	In particular, we stress that these estimates do 
	not depend on the slow wave variables $\bigslow$.
	We note that we have ignored a smallness factor from \cite{jSgHjLwW2016}*{Lemma~9.1}
	that could have been placed in front of the second product 
	on RHS~\eqref{E:TWORADIALTANGENTIALFUNCTIONCOMMUTATORESTIMATE};
	the smallness factor is not important for our estimates.
\end{proof}

\subsection{The main estimates involving higher-order transversal derivatives}
\label{SS:HIGHERORDERTRANSVERALMAINESTIMATES}
We now prove the main result of Sect.\ \ref{S:LINFINITYESTIMATESFORHIGHERTRANSVERSAL}.

\begin{proposition}[\textbf{$L^{\infty}$ estimates involving higher-order transversal derivatives}]
	\label{P:IMPROVEMENTOFHIGHERTRANSVERSALBOOTSTRAP}
 		Under the data-size and bootstrap assumptions
		of Subsects.\ \ref{SS:DATAASSUMPTIONS}-\ref{SS:PSIBOOTSTRAP}
		and Subsect.\ \ref{SS:BOOTSTRAPFORHIGHERTRANSVERSAL}
		and the smallness assumptions of Subsect.\ \ref{SS:SMALLNESSASSUMPTIONS}, 
		the following statements hold true
		on $\mathcal{M}_{\Tboot,U_0}$
		(see Subsect.\ \ref{SS:STRINGSOFCOMMUTATIONVECTORFIELDS} regarding the vectorfield operator notation).
	
	\medskip
	
	\noindent \underline{\textbf{$L^{\infty}$ estimates involving two or three transversal derivatives of $\Psi$}.}
	The following estimates hold:
	\begin{subequations}
	\begin{align}
	\left\| 
		\Fullset_*^{[1,4];2} \Psi
	\right\|_{L^{\infty}(\Sigma_t^u)}
	& \leq C \varepsilon,
		\label{E:IMPROVEDTRANSVERALESTIMATESFORTWORADBUTNOTPURERAD} 
		\\
	\left\| 
		\Rad \Rad \Psi
	\right\|_{L^{\infty}(\Sigma_t^u)}
	& \leq 
		\left\| 
			\Rad \Rad \Psi
		\right\|_{L^{\infty}(\Sigma_0^u)}
		+
		C \varepsilon,
	\label{E:IMPROVEDTRANSVERALESTIMATESFORTWORAD}
		\\
	\left\| 
		\Lunit \Rad \Rad \Rad \Psi
	\right\|_{L^{\infty}(\Sigma_t^u)}
	& \leq 
		C \varepsilon,
	\label{E:IMPROVEDTRANSVERALESTIMATESFORLUNITTHREERAD}
		\\
	\left\| 
		\Rad \Rad \Rad \Psi
	\right\|_{L^{\infty}(\Sigma_t^u)}
	& \leq 
		\left\| 
			\Rad \Rad \Rad \Psi
		\right\|_{L^{\infty}(\Sigma_0^u)}
		+
		C \varepsilon.
	\label{E:IMPROVEDTRANSVERALESTIMATESFORTHREERAD}
	\end{align}
	\end{subequations}

	\noindent \underline{\textbf{$L^{\infty}$ estimates involving one or two transversal derivatives of $\upmu$}.}
	The following estimates hold:
	\begin{subequations}
	\begin{align}
		\left\|
			\Lunit \Rad \upmu
		\right\|_{L^{\infty}(\Sigma_t^u)}
		& \leq
			\frac{1}{2}
			\left\|
				\Rad
				\left(
					G_{\Lunit \Lunit} \Rad \Psi
				\right)
			\right\|_{L^{\infty}(\Sigma_0^u)}
			+
			C \varepsilon,
				\label{E:LUNITRADUPMULINFTY}
					\\
		\left\|
			\Rad \upmu
		\right\|_{L^{\infty}(\Sigma_t^u)}
		& \leq
			\left\|
				\Rad \upmu
			\right\|_{L^{\infty}(\Sigma_0^u)}
			+
			\TranminusdatasizeWithFactor^{-1}
			\left\|
				\Rad
				\left(
					G_{\Lunit \Lunit} \Rad \Psi
				\right)
			\right\|_{L^{\infty}(\Sigma_0^u)}
			+
			C \varepsilon,
				\label{E:RADUPMULINFTY}
\end{align}

\begin{align}
	\left\| 
		\Lunit \Rad \GeoAng \upmu
	\right\|_{L^{\infty}(\Sigma_t^u)},
		\,
	\left\| 
		\Lunit \Rad \Lunit \Lunit \upmu
	\right\|_{L^{\infty}(\Sigma_t^u)},
		\,
	\left\| 
		\Lunit \Rad \GeoAng \GeoAng \upmu
	\right\|_{L^{\infty}(\Sigma_t^u)},
		\,
	\left\| 
		\Lunit \Rad \Lunit \GeoAng \upmu
	\right\|_{L^{\infty}(\Sigma_t^u)}
	& \leq C \varepsilon,
				\label{E:LUNITRADTANGENTIALUPMULINFTY}
				\\
	\left\| 
		\Rad \GeoAng \upmu
	\right\|_{L^{\infty}(\Sigma_t^u)},
		\,
	\left\| 
		\Rad \Lunit \Lunit \upmu
	\right\|_{L^{\infty}(\Sigma_t^u)},
		\,
	\left\| 
		\Rad \GeoAng \GeoAng \upmu
	\right\|_{L^{\infty}(\Sigma_t^u)},
		\,
	\left\| 
		\Rad \Lunit \GeoAng \upmu
	\right\|_{L^{\infty}(\Sigma_t^u)}
	& \leq C \varepsilon,
				\label{E:RADTANGENTIALUPMULINFTY}
\end{align}

\begin{align} \label{E:PERMUTEDRADTANGENTIALUPMULINFTY}
	\eqref{E:LUNITRADTANGENTIALUPMULINFTY}-\eqref{E:RADTANGENTIALUPMULINFTY} \mbox{ also hold for all permutations of the vectorfield 
	operators on the LHS },
\end{align}

\begin{align}
		\left\|
			\Lunit \Rad \Rad \upmu
		\right\|_{L^{\infty}(\Sigma_t^u)}
		& \leq
			\frac{1}{2}
			\left\|
				\Rad
				\Rad
				\left(
					G_{\Lunit \Lunit} \Rad \Psi
				\right)
			\right\|_{L^{\infty}(\Sigma_0^u)}
			+
			C \varepsilon
		\label{E:LUNITRADRADUPMULINFTY},
			\\
		\left\|
			\Rad \Rad \upmu
		\right\|_{L^{\infty}(\Sigma_t^u)}
		& \leq
			\left\|
				\Rad \Rad \upmu
			\right\|_{L^{\infty}(\Sigma_0^u)}
			+
			\TranminusdatasizeWithFactor^{-1}
			\left\|
				\Rad
				\Rad
				\left(
					G_{\Lunit \Lunit} \Rad \Psi
				\right)
			\right\|_{L^{\infty}(\Sigma_0^u)}
			+
			C \varepsilon.
		\label{E:RADRADUPMULINFTY}
	\end{align}
	\end{subequations}

	\noindent \underline{\textbf{$L^{\infty}$ estimates involving one or two transversal derivatives of $\Lunit_{(Small)}^i$}.}
	The following estimates hold:
	\begin{subequations}
	\begin{align} 
		\left\| 
		\Fullset_*^{[1,3];2} \Lunit_{(Small)}^i
	\right\|_{L^{\infty}(\Sigma_t^u)}
	& \leq C \varepsilon,
		\label{E:TANGENTIALANDTWORADAPPLIEDTOLUNITILINFINITY}
		\\
	\left\| 
		\Rad \Rad \Lunit_{(Small)}^i
	\right\|_{L^{\infty}(\Sigma_t^u)}
	& \leq 
	\left\| 
		\Rad \Rad \Lunit_{(Small)}^i
	\right\|_{L^{\infty}(\Sigma_0^u)}
	+
	C \varepsilon.
	\label{E:TWORADAPPLIEDTOLUNITILINFINITY}
	\end{align}
	\end{subequations}
	
	\noindent \underline{\textbf{$L^{\infty}$ estimates involving two transversal derivatives of $\bigslow$}}.
The following estimates hold: 
\begin{align}
 		\left\| 
			 \Fullset^{\leq 3;2} \bigslow
		\right\|_{L^{\infty}(\Sigma_t^u)}
			& \leq C \varepsilon.
		\label{E:UPTOTWOTRANSVERSALDERIVATIVESOFSLOWINLINFINITY} 
 \end{align}
	
	\noindent \underline{\textbf{Sharp pointwise estimates involving the critical factor $G_{\Lunit \Lunit}$}.}
	Moreover, if $0 \leq M \leq 2$
	and $0 \leq s \leq t < \Tboot$, 
	then we have the following estimates:
	\begin{align}
		\left|
			\Rad^M G_{\Lunit \Lunit}(t,u,\vartheta)
			-
			\Rad^M G_{\Lunit \Lunit}(s,u,\vartheta)
		\right|
		& \leq C \varepsilon (t - s),
			\label{E:RADDERIVATIVESOFGLLDIFFERENCEBOUND} \\
		\left|
			\Rad^M 
			\left\lbrace
				G_{\Lunit \Lunit}
				\Rad \Psi
			\right\rbrace
			(t,u,\vartheta)
			-
			\Rad^M 
			\left\lbrace
				G_{\Lunit \Lunit}
				\Rad \Psi
			\right\rbrace
			(s,u,\vartheta)
		\right|
		& \leq C \varepsilon (t - s).
		\label{E:RADDERIVATIVESOFGLLRADPSIDIFFERENCEBOUND}
	\end{align}

	Furthermore, with $\Lunit_{(Flat)} := \partial_t + \partial_1$,
	we have
	\begin{align} \label{E:LUNITUPMUDOESNOTDEVIATEMUCHFROMTHEDATA}
		\Lunit \upmu(t,u,\vartheta)
		& = \frac{1}{2} 
				\left\lbrace
					1 + \mathcal{O}_{\mydiam}(\Psiep)
				\right\rbrace	
				G_{\Lunit_{(Flat)} \Lunit_{(Flat)}}(\Psi = 0) \Rad \Psi(t,u,\vartheta)
			+ \mathcal{O}(\varepsilon),
	\end{align}
	where 
	$G_{\Lunit_{(Flat)} \Lunit_{(Flat)}}(\Psi = 0)$ is a non-zero constant
	(see \eqref{E:NONVANISHINGNONLINEARCOEFFICIENT}).
\end{proposition}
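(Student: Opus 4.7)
The plan is to follow the strategy of Proposition~\ref{P:IMPROVEMENTOFAUX}, adapting it to accommodate up to three $\Rad$ factors on $\Psi$ and up to two $\Rad$ factors on $\upmu$, $\Lunit_{(Small)}^i$, and $\bigslow$. All time integrations are along the integral curves of $\Lunit = \partial/\partial t$, and the factor $\TranminusdatasizeWithFactor^{-1}$ appearing in the large-quantity bounds \eqref{E:RADUPMULINFTY} and \eqref{E:RADRADUPMULINFTY} originates from the bound $t \leq \Tboot \leq 2\TranminusdatasizeWithFactor^{-1}$ from \eqref{E:TBOOTBOUNDS} combined with the coefficient $\tfrac{1}{2}$ in the leading term of the transport equation \eqref{E:UPMUFIRSTTRANSPORT}. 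The proof order is: first $\Psi$, then $\upmu$ and $\Lunit_{(Small)}^i$, then $\bigslow$, then the sharp pointwise identities.

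First I would close the transversal estimates for $\Psi$. The wave equation \eqref{E:FASTWAVE} is reformulated as the transport equation \eqref{E:WAVEEQUATIONTRANSPORTINTERPRETATION}, schematically of the form $\Lunit\Rad\Psi = \smoothfunction(\cdots)\angLap\Psi + \smoothfunction(\cdots)\Singletan\Singletan\Psi + \smoothfunction(\cdots)\Singletan\GdVar + \smoothfunction(\cdots)\bigslow$. Applying $\Rad$ and $\Rad\Rad$ and commuting past $\Lunit$ using Lemmas~\ref{L:TRANSVERALTANGENTIALCOMMUTATOR} and~\ref{L:HIGHERTRANSVERALTANGENTIALCOMMUTATOR} produces transport equations for $\Rad\Rad\Psi$ and $\Rad\Rad\Rad\Psi$ whose right-hand sides are bounded in $L^\infty(\Sigma_t^u)$ by $C\varepsilon$ plus quadratic products in $\Rad$-derivatives of $\Psi$ that are controlled, via the bootstrap assumptions \eqref{E:HIGHERPSITRANSVERSALFUNDAMENTALC0BOUNDBOOTSTRAP} and the data assumption \eqref{E:FASTWAVELINFTYLARGEDATAASSUMPTIONSALONGSIGMA0}. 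Crucially, the $\bigslow$-involving source is $\mathcal{O}(\varepsilon)$ thanks to \eqref{E:SOMENONINEARITIESARELINEAR} and the bootstrap bound \eqref{E:HIGHERTRANSVERSALSLOWWAVEMIXEDFUNDAMENTALLINFINITYBOUNDBOOTSTRAP}. Integration in $t$ then yields \eqref{E:IMPROVEDTRANSVERALESTIMATESFORLUNITTHREERAD}-\eqref{E:IMPROVEDTRANSVERALESTIMATESFORTHREERAD}, while \eqref{E:IMPROVEDTRANSVERALESTIMATESFORTWORADBUTNOTPURERAD} follows from the same transport equation after commuting with an extra $\mathcal{P}_u$-tangential vectorfield that produces a gain in smallness.

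Second I would treat the eikonal-function quantities. Applying $\Rad^M$ ($M=1,2$) to \eqref{E:UPMUFIRSTTRANSPORT} and inserting the $\Rad^M\Psi$ bounds just obtained gives $\Lunit\Rad^M\upmu = \tfrac{1}{2}\Rad^M\{G_{\Lunit\Lunit}\Rad\Psi\} + \mathcal{O}(\varepsilon)$, which is \eqref{E:LUNITRADUPMULINFTY}-\eqref{E:LUNITRADRADUPMULINFTY}; integrating in time and bounding $t \leq 2\TranminusdatasizeWithFactor^{-1}$ yields \eqref{E:RADUPMULINFTY} and \eqref{E:RADRADUPMULINFTY}. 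The analogous argument applied to \eqref{E:LLUNITI} produces \eqref{E:TANGENTIALANDTWORADAPPLIEDTOLUNITILINFINITY}-\eqref{E:TWORADAPPLIEDTOLUNITILINFINITY}. The mixed tangential-transversal bounds \eqref{E:LUNITRADTANGENTIALUPMULINFTY}-\eqref{E:PERMUTEDRADTANGENTIALUPMULINFTY} come from the same transport equations commuted with an additional $\Tanset$ factor, which delivers the required smallness. The bound \eqref{E:UPTOTWOTRANSVERSALDERIVATIVESOFSLOWINLINFINITY} for $\bigslow$ is then established purely algebraically by applying the relevant mixed strings of $\Rad$, $\Lunit$, and $\GeoAng$ to the identity \eqref{E:RADOFSLOWWAVEALGEBRAICALLYEXPRESSED}, invoking the just-proved transversal bounds for $\Psi$, $\upmu$, and $\Lunit_{(Small)}^i$, the commutator estimates, and the fundamental bootstrap assumption \eqref{E:PSIFUNDAMENTALC0BOUNDBOOTSTRAP}.

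Finally, the pointwise difference estimates \eqref{E:RADDERIVATIVESOFGLLDIFFERENCEBOUND}-\eqref{E:RADDERIVATIVESOFGLLRADPSIDIFFERENCEBOUND} follow by writing $\Rad^M G_{\Lunit\Lunit}(t) - \Rad^M G_{\Lunit\Lunit}(s) = \int_s^t \Lunit\Rad^M G_{\Lunit\Lunit}\, d\tau$, with the same identity for the $G_{\Lunit\Lunit}\Rad\Psi$ product, and observing via Lemma~\ref{L:SCHEMATICDEPENDENCEOFMANYTENSORFIELDS} together with the $L^\infty$ bounds just proved that the integrand is pointwise $\mathcal{O}(\varepsilon)$. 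The expansion \eqref{E:LUNITUPMUDOESNOTDEVIATEMUCHFROMTHEDATA} comes from \eqref{E:UPMUFIRSTTRANSPORT}: the $\upmu(\Lunit\Psi)$-terms contribute $\mathcal{O}(\varepsilon)$ by \eqref{E:UPMULINFTY} and \eqref{E:PSIMIXEDTRANSVERSALTANGENTBOOTSTRAPIMPROVED}, and one then Taylor-expands $G_{\Lunit\Lunit}(\Psi)$ about $\Psi = 0$ and $\Lunit$ about $\Lunit_{(Flat)}$, using \eqref{E:PSIITSELFBOOTSTRAPIMPROVED} and \eqref{E:LUNITISMALLITSELFLSMALLINFTYESTIMATE} to produce the factor $1 + \mathcal{O}_{\mydiam}(\Psiep)$. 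The main technical obstacle is that the transport equations for $\Rad\Rad\upmu$ and $\Rad\Rad\Rad\Psi$ are coupled: each controls the other only up to a commutator term involving the opposite quantity. However, each coupling feeds back only at the same derivative level and only through a factor of size $\mathcal{O}(\varepsilon)$, so that the two estimates close simultaneously by a single Gronwall-type argument in $t$ over the finite time interval $[0, 2\TranminusdatasizeWithFactor^{-1}]$, with $\varepsilon$ chosen sufficiently small relative to $\TranminusdatasizeWithFactor$ and $\mathring{\updelta}$.
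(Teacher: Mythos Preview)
Your overall strategy (transport equations, commutation, time integration) matches the paper's, but the \emph{order} you propose does not close. The key problem is that you prove the three-$\Rad$ estimate for $\Psi$ before the $\bigslow$ estimate \eqref{E:UPTOTWOTRANSVERSALDERIVATIVESOFSLOWINLINFINITY}. Commuting \eqref{E:WAVEEQUATIONTRANSPORTINTERPRETATION} with $\Rad\Rad$ produces on the right-hand side the term $\smoothfunction(\BadVar,\bigslow,\Singletan\Psi,\Rad\Psi)\,\Rad\Rad\bigslow$, where the coefficient $\smoothfunction$ is $\mathcal{O}(1)$ (it comes from $\upmu\mathfrak{N}_2$ and is not forced small by \eqref{E:SOMENONINEARITIESARELINEAR}). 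The bootstrap assumption \eqref{E:HIGHERTRANSVERSALSLOWWAVEMIXEDFUNDAMENTALLINFINITYBOUNDBOOTSTRAP} only gives $|\Rad\Rad\bigslow|\leq\varepsilon^{1/2}$, so your bound on $\Lunit\Rad\Rad\Rad\Psi$ would be $\mathcal{O}(\varepsilon^{1/2})$, not $\mathcal{O}(\varepsilon)$, and you would fail to strictly improve the bootstrap. The paper fixes this by proving \eqref{E:UPTOTWOTRANSVERSALDERIVATIVESOFSLOWINLINFINITY} \emph{first}: applying $\Tanset^{\leq 1}\Rad$ to the algebraic identity \eqref{E:RADOFSLOWWAVEALGEBRAICALLYEXPRESSED} yields $|\Tanset^{\leq 1}\Rad\Rad\bigslow|\lesssim\varepsilon$ using only Proposition~\ref{P:IMPROVEMENTOFAUX} and the bootstrap assumptions of Subsect.~\ref{SS:BOOTSTRAPFORHIGHERTRANSVERSAL}; this step is purely algebraic and requires none of the improved estimates in the current proposition.

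A second, smaller ordering issue: your $\upmu$ estimates \eqref{E:LUNITRADUPMULINFTY}--\eqref{E:RADUPMULINFTY} and \eqref{E:LUNITRADRADUPMULINFTY}--\eqref{E:RADRADUPMULINFTY} are stated with $\Sigma_0^u$ norms on the right, but commuting \eqref{E:UPMUFIRSTTRANSPORT} gives $\tfrac12\Rad^M\{G_{\Lunit\Lunit}\Rad\Psi\}$ evaluated at time $t$. Passing from $\Sigma_t^u$ to $\Sigma_0^u$ requires the difference bound \eqref{E:RADDERIVATIVESOFGLLRADPSIDIFFERENCEBOUND}, which you place last. The paper interleaves: it proves the difference bounds for $M\leq 1$ (using only the two-$\Rad$ $\Psi$ estimates), then the one-$\Rad$ $\upmu$ estimates, then $\Lunit_{(Small)}^i$, then three-$\Rad$ $\Psi$, then the $M=2$ difference bound, and finally two-$\Rad$ $\upmu$. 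Finally, your closing Gronwall argument is unnecessary: with the correct ordering, each estimate uses only already-established bounds (the feedback terms you worry about carry an extra $\varepsilon$-small factor and are absorbed using the bootstrap assumptions directly, with no genuine coupling).
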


\begin{remark}[\textbf{Strict improvement of the auxiliary bootstrap assumptions}]
	\label{R:HIGHERTRANSAUXBOOTSTRAPIMPROVED}
	Note in particular that the estimates of
	Prop.\ \ref{P:IMPROVEMENTOFHIGHERTRANSVERSALBOOTSTRAP}
	yield strict improvements of the auxiliary bootstrap assumptions of
	Subsect.\ \ref{SS:BOOTSTRAPFORHIGHERTRANSVERSAL} whenever $\varepsilon$ 
	is sufficiently small. Hence, when proving estimates later in the paper,
	we no longer need to state them as assumptions.
\end{remark}

\begin{proof}[Proof of Prop.\ \ref{P:IMPROVEMENTOFHIGHERTRANSVERSALBOOTSTRAP}]
	See Subsect.\ \ref{SS:OFTENUSEDESTIMATES} for some comments on the analysis.
	Throughout this proof, we refer to the data-size assumptions of
	Subsect.\ \ref{SS:DATAASSUMPTIONS} and the bounds of Lemma~\ref{L:BEHAVIOROFEIKONALFUNCTIONQUANTITIESALONGSIGMA0}
	as the ``conditions on the data.''
	Moreover, we refer to the auxiliary bootstrap assumptions of
	Subsect.\ \ref{SS:BOOTSTRAPFORHIGHERTRANSVERSAL}
	simply as ``the bootstrap assumptions.''
		
\medskip 
\noindent \textbf{Proof of \eqref{E:UPTOTWOTRANSVERSALDERIVATIVESOFSLOWINLINFINITY}:}
We first note that by \eqref{E:SLOWWAVETRANSVERSALTANGENT}, it suffices
	to show that
	$
	\left\| 
		\Fullset^{\leq 3;2} \bigslow
	\right\|_{L^{\infty}(\Sigma_t^u)}
	\lesssim \varepsilon
	$
	whenever
	$\Fullset^{\leq 3;2}$ contains precisely two factors of $\Rad$.
	To proceed, we apply 
	$\Tanset^{\leq 1} \Rad$
	to the identity \eqref{E:RADOFSLOWWAVEALGEBRAICALLYEXPRESSED}.
	Using the $L^{\infty}$ estimates of Prop.~\ref{P:IMPROVEMENTOFAUX}
	and the bootstrap assumptions,
	we deduce that
	$
	\left\| 
		\Tanset^{\leq 1} \Rad \Rad \bigslow
	\right\|_{L^{\infty}(\Sigma_t^u)}
	\lesssim \varepsilon
	$.
	Then using the commutator estimate \eqref{E:TWORADIALTANGENTIALFUNCTIONCOMMUTATORESTIMATE} 
	with $f = \bigslow$ and the estimate \eqref{E:SLOWWAVETRANSVERSALTANGENT},
	we can arbitrarily permute the vectorfield factors
	in the expression $\Tanset^{\leq 1} \Rad \Rad \bigslow$
	up to error terms that are bounded in $\| \cdot \|_{L^{\infty}(\Sigma_t^u)}$ 
	by $\lesssim \varepsilon$,
	which yields the desired bound
	\eqref{E:UPTOTWOTRANSVERSALDERIVATIVESOFSLOWINLINFINITY}.
	
	\medskip
	\noindent \textbf{Proof of \eqref{E:IMPROVEDTRANSVERALESTIMATESFORTWORADBUTNOTPURERAD}-\eqref{E:IMPROVEDTRANSVERALESTIMATESFORTWORAD}:}
	By \eqref{E:PSIMIXEDTRANSVERSALTANGENTBOOTSTRAPIMPROVED}, it suffices to prove
	\eqref{E:IMPROVEDTRANSVERALESTIMATESFORTWORADBUTNOTPURERAD} when the operator
	$\Fullset_*^{[1,4];2}$ contains precisely two factors of $\Rad$. To proceed,
	we commute the wave equation \eqref{E:WAVEEQUATIONTRANSPORTINTERPRETATION}
	with $\Tanset^{\leq 2} \Rad$
	and use 
	Lemmas~\ref{L:POINTWISEFORRECTANGULARCOMPONENTSOFVECTORFIELDS}
	and
	\ref{L:POINTWISEESTIMATESFORGSPHEREANDITSDERIVATIVES},
	the commutator estimate \eqref{E:ANGLAPONERADIALTANGENTIALFUNCTIONCOMMUTATOR} with $f = \Psi$
	(to commute $\Tanset^{\leq 2} \Rad$ through $\angLap$),
	the commutator estimate \eqref{E:ONERADIALTANGENTIALFUNCTIONCOMMUTATORESTIMATE}
	with $f = \Rad \Psi$
	(to commute $\Tanset^{\leq 2} \Rad$ through the operator $\Lunit$ on LHS~\eqref{E:WAVEEQUATIONTRANSPORTINTERPRETATION}),
	the $L^{\infty}$ estimates of Prop.~\ref{P:IMPROVEMENTOFAUX},
	Cor.\ \ref{C:SQRTEPSILONTOCEPSILON},
	and the bootstrap assumptions
	to deduce
	\begin{align} \label{E:WAVEEQNONCETRANSVERSALLYCOMMUTEDTRANSPORTPOINTOFVIEW} 
	\left|
		\Lunit \Tanset^{\leq 2} \Rad \Rad \Psi
	\right|
		& \lesssim \varepsilon.
	\end{align}
	Since 
	$
	\displaystyle
	\Lunit = \frac{\partial}{\partial t}
	$, 
	from \eqref{E:WAVEEQNONCETRANSVERSALLYCOMMUTEDTRANSPORTPOINTOFVIEW},
	the fundamental theorem of calculus,
	and the conditions on the data,
	we deduce
	\eqref{E:IMPROVEDTRANSVERALESTIMATESFORTWORAD}
	as well as the bound 
	$|\Tanset^{[1,2]} \Rad \Rad \Psi| \lesssim \varepsilon$.
	Next, using the commutator estimate
	\eqref{E:TWORADIALTANGENTIALFUNCTIONCOMMUTATORESTIMATE},
	the bootstrap assumptions,
	and the $L^{\infty}$ estimates of Prop.~\ref{P:IMPROVEMENTOFAUX},
	we can reorder the vectorfield factors in the terms
	$\Singletan \Rad \Rad \Psi$
	up to error terms that are bounded
	in $\| \cdot \|_{L^{\infty}(\Sigma_t^u)}$ by $\lesssim \varepsilon$
	to deduce that
	$
	\left\| 
		\Fullset_*^{3;2} \Psi
	\right\|_{L^{\infty}(\Sigma_t^u)}
	\leq C \varepsilon
	$.
	Finally, using this bound,
	we can similarly reorder the vectorfield factors in the terms
	$\Tanset^2 \Rad \Rad \Psi$
	up to error terms that are bounded
	in $\| \cdot \|_{L^{\infty}(\Sigma_t^u)}$ by $\lesssim \varepsilon$,
	which in total yields \eqref{E:IMPROVEDTRANSVERALESTIMATESFORTWORADBUTNOTPURERAD}.
	
	\medskip
	\noindent \textbf{Proof of \eqref{E:LUNITUPMUDOESNOTDEVIATEMUCHFROMTHEDATA}:}
	First, we note that
	\eqref{E:PSIMIXEDTRANSVERSALTANGENTBOOTSTRAPIMPROVED}
	implies that
	$\| \Lunit \Rad \Psi \|_{L^{\infty}(\Sigma_t^u)} \lesssim \varepsilon$.
	Since 
	$
	\displaystyle
	\Lunit = \frac{\partial}{\partial t}
	$, 
	from this bound and the fundamental theorem of calculus,
	we deduce that
	$
	\Rad \Psi(t,u,\vartheta)
		= 
		\Rad \Psi(0,u,\vartheta)
	 + \mathcal{O}(\varepsilon)
	$.
	Similarly, from \eqref{E:LUNITAPPLIEDTOTANGENTIALUPMUANDTANSETSTARLINFTY}, 
	we deduce that
	$\Lunit \upmu(t,u,\vartheta)
		= 
		\Lunit \upmu(0,u,\vartheta)
	 + \mathcal{O}(\varepsilon)
	$.
	Next, we use \eqref{E:UPMUFIRSTTRANSPORT},
	the fact that 
	$G_{\Lunit \Lunit}, G_{\Lunit \Radunit} = \smoothfunction(\GdVar)$
	(see Lemma~\ref{L:SCHEMATICDEPENDENCEOFMANYTENSORFIELDS}),
	and the $L^{\infty}$ estimates of Prop.~\ref{P:IMPROVEMENTOFAUX}
	to deduce that 
	$\Lunit \upmu(0,u,\vartheta)
		= 
		\frac{1}{2}
	 [G_{\Lunit \Lunit} \Rad \Psi](0,u,\vartheta)
	 + \mathcal{O}(\varepsilon)
	$.
	Since 
	$\Lunit^0 = \Lunit_{(Flat)}^0 = 1$,
	$\Lunit^i = \Lunit_{(Flat)}^i + \Lunit_{(Small)}^i$,
	and $G_{\alpha \beta} = G_{\alpha \beta}(\Psi = 0) + \mathcal{O}_{\mydiam}(\Psi)$,
	we can use the conditions on the data to deduce,
	with the help of \eqref{E:NONVANISHINGNONLINEARCOEFFICIENT},
	the estimate
	$G_{\Lunit \Lunit}(0,u,\vartheta) 
		= 
		\left\lbrace
			1 + \mathcal{O}_{\mydiam}(\Psiep)
		\right\rbrace	
		G_{\Lunit_{(Flat)} \Lunit_{(Flat)}}(\Psi = 0)
		$.
	Combining this estimate with the previous ones
	and using \eqref{E:PSITRANSVERSALLINFINITYBOUNDBOOTSTRAPIMPROVED},
	we conclude \eqref{E:LUNITUPMUDOESNOTDEVIATEMUCHFROMTHEDATA}.
	
	\medskip
	\noindent \textbf{Proof of 
	\eqref{E:RADDERIVATIVESOFGLLDIFFERENCEBOUND}-\eqref{E:RADDERIVATIVESOFGLLRADPSIDIFFERENCEBOUND}
	in the cases $0 \leq M \leq 1$:}
	It suffices to prove that for $0 \leq M \leq 1$, we have
	\begin{align} \label{E:LDERIVATIVEOFRADIALDERIVATIVESOFCRITICALFACTOR}
		\left|
			\Lunit \Rad^M G_{\Lunit \Lunit}
		\right|
		& \lesssim \varepsilon,
		&&
		\left|
			\Lunit \Rad^M 
			\left\lbrace
				G_{\Lunit \Lunit}
				\Rad \Psi
			\right\rbrace
		\right|
		\lesssim \varepsilon.
	\end{align}
	Once we have shown \eqref{E:LDERIVATIVEOFRADIALDERIVATIVESOFCRITICALFACTOR},
	we can use the fact that 
	$
	\displaystyle
	\Lunit = \frac{\partial}{\partial t}
	$
	to obtain the desired estimates by integrating 
	from time $s$ to $t$
	and using the estimates \eqref{E:LDERIVATIVEOFRADIALDERIVATIVESOFCRITICALFACTOR}.
	To proceed, we first use Lemma~\ref{L:SCHEMATICDEPENDENCEOFMANYTENSORFIELDS} to deduce that
	$G_{\Lunit \Lunit} = \smoothfunction(\GdVar)$ 
	and $G_{\Lunit \Lunit} \Rad \Psi = \smoothfunction(\GdVar) \Rad \Psi$.
	Hence, to obtain \eqref{E:LDERIVATIVEOFRADIALDERIVATIVESOFCRITICALFACTOR}
	when $M=0$, we differentiate these two identities with $\Lunit$ and 
	use the $L^{\infty}$ estimates of Prop.~\ref{P:IMPROVEMENTOFAUX}
	and the bootstrap assumptions.
	The proof is similar in the case $M=1$,
	but we must also use the estimate
	$\left\|\Lunit \Rad \Rad \Psi \right\|_{L^{\infty}(\Sigma_t^u)} \lesssim \varepsilon$,
	which is a consequence of the previously established estimate 
	\eqref{E:IMPROVEDTRANSVERALESTIMATESFORTWORADBUTNOTPURERAD}.
	
	\medskip
	\noindent \textbf{Proof of \eqref{E:LUNITRADUPMULINFTY}-\eqref{E:PERMUTEDRADTANGENTIALUPMULINFTY}:}
	Let $1 \leq K \leq 3$ be an integer and let $\Fullset^{K;1}$ be an operator containing exactly one factor of $\Rad$.
	We commute equation \eqref{E:UPMUFIRSTTRANSPORT}
	with $\Fullset^{K;1}$ and
	use the aforementioned relations $G_{\Lunit \Lunit}, G_{\Lunit \Radunit} = \smoothfunction(\GdVar)$,
	the $L^{\infty}$ estimates of Prop.~\ref{P:IMPROVEMENTOFAUX},
	and the bootstrap assumptions to deduce
	\begin{align} \label{E:UPMUEVOLUTIONEQUATIONRADANDTANGENTIALCOMMUTEDFIRSTBOUND}
		\left|
			\Lunit \Fullset^{K;1} \upmu
		\right|
		& \leq 
			\frac{1}{2}
			\left|
				\Fullset^{K;1}
				\left\lbrace
					G_{\Lunit \Lunit}
					\Rad \Psi
				\right\rbrace
			\right|
			+
			\left|
				\Fullset_*^{[1,K+1];1} \Psi
			\right|
			+
			\left|
				[\Lunit, \Fullset^{K;1}] \upmu
			\right|.
		\end{align}
		We now show that the last two terms on RHS~\eqref{E:UPMUEVOLUTIONEQUATIONRADANDTANGENTIALCOMMUTEDFIRSTBOUND}
		are $\lesssim \varepsilon$.
		We already proved 
		$
		\left|
			\Fullset_*^{[1,K+1];1} \Psi
		\right|
		\lesssim \varepsilon
		$
		in Prop.~\ref{P:IMPROVEMENTOFAUX}.
		To bound $[\Lunit, \Fullset^{K;1}] \upmu$,
		we use the commutator estimate \eqref{E:ONERADIALTANGENTIALFUNCTIONCOMMUTATORESTIMATE}
		with $f = \upmu$,
		the $L^{\infty}$ estimates of Prop.~\ref{P:IMPROVEMENTOFAUX},
		and Cor.\ \ref{C:SQRTEPSILONTOCEPSILON}
		to deduce that
		$
		\left|
			[\Lunit, \Fullset^{K;1}] \upmu
		\right|
		\lesssim 
		\left|
			\Tanset_*^{[1, K]} \upmu
		\right|
		+
		\varepsilon
		\left|
			\GeoAng \Fullset^{\leq K-1;1} \upmu
		\right|
		$.
		The $L^{\infty}$ estimates of Prop.~\ref{P:IMPROVEMENTOFAUX} imply that 
		$
		\left|
			\Tanset_*^{[1,K]} \upmu
		\right|
		\lesssim \varepsilon
		$,
		while the bootstrap assumptions
		imply that
		$
		\varepsilon
		\left|
			\GeoAng \Fullset^{\leq K-1;1} \upmu
		\right|
		\lesssim \varepsilon
		$
		as well. We have thus shown that
		\begin{align} \label{E:UPMUEVOLUTIONEQUATIONRADANDTANGENTIALCOMMUTEDSECONDBOUND}
		\left\|
			\Lunit \Fullset^{K;1} \upmu
		\right\|_{L^{\infty}(\Sigma_t^u)}
		& \leq 
			\frac{1}{2}
			\left\|
				\Fullset^{K;1}
				\left\lbrace
					G_{\Lunit \Lunit}
					\Rad \Psi
				\right\rbrace
			\right\|_{L^{\infty}(\Sigma_t^u)}
			+ 
			C \varepsilon.
		\end{align}
		We split the remainder of the proof into two cases, starting with the case $\Fullset^{K;1} = \Rad$.
		Using the bound \eqref{E:RADDERIVATIVESOFGLLRADPSIDIFFERENCEBOUND} 
		with $s=0$ and $M=1$ (established above),
		we can replace the norm 
		$\| \cdot \|_{L^{\infty}(\Sigma_t^u)}$
		on RHS~\eqref{E:UPMUEVOLUTIONEQUATIONRADANDTANGENTIALCOMMUTEDSECONDBOUND}
		with the norm $\| \cdot \|_{L^{\infty}(\Sigma_0^u)}$
		plus an error term that is 
		bounded in the norm $\| \cdot \|_{L^{\infty}(\Sigma_t^u)}$
		by $\leq C \varepsilon$,
		which yields \eqref{E:LUNITRADUPMULINFTY}.
		Using \eqref{E:LUNITRADUPMULINFTY},
		recalling that 
		$
		\displaystyle
		\Lunit = \frac{\partial}{\partial t}
		$,
		and using the fundamental theorem of calculus
		as well as the assumption
		$\Tboot \leq 2 \TranminusdatasizeWithFactor^{-1}$,
		we conclude \eqref{E:RADUPMULINFTY}.
		In the remaining case, $\Fullset^{K;1}$ is not the operator $\Rad$.
		That is, $2 \leq K \leq 3$ and $\Fullset^{K;1}$ must contain a $\mathcal{P}_u$-tangent factor,
		which is equivalent to $\Fullset^{K;1} = \Fullset_*^{K;1}$.
		Recalling that
		$G_{\Lunit \Lunit} \Rad \Psi = \smoothfunction(\GdVar) \Rad \Psi$
		and using the estimates of Prop.~\ref{P:IMPROVEMENTOFAUX},
		the bootstrap assumptions,
		and
		\eqref{E:IMPROVEDTRANSVERALESTIMATESFORTWORADBUTNOTPURERAD},
		we find that
		$
		\left\|
			\Fullset_*^{K;1}
		\left\lbrace
					G_{\Lunit \Lunit}
					\Rad \Psi
				\right\rbrace
			\right\|_{L^{\infty}(\Sigma_t^u)}
			\lesssim \varepsilon
		$.
		Thus, in this case, we have shown that 
		$
		\mbox{RHS~\eqref{E:UPMUEVOLUTIONEQUATIONRADANDTANGENTIALCOMMUTEDSECONDBOUND}}
		\lesssim
		\varepsilon
		$.
		From the estimate
		$
		\left\|
			\Lunit \Fullset_*^{K;1} \upmu
		\right\|_{L^{\infty}(\Sigma_t^u)}
		\lesssim 
		\varepsilon
		$,
		the fact that 
		$
		\displaystyle
		\Lunit = \frac{\partial}{\partial t}
		$, 
		and the fundamental theorem of calculus,
		we conclude that
		$
		\left\|
			\Fullset_*^{K;1} \upmu
		\right\|_{L^{\infty}(\Sigma_t^u)}
		\leq
		\left\|
			\Fullset_*^{K;1} \upmu
		\right\|_{L^{\infty}(\Sigma_0^u)}
		+ C \varepsilon
		$.
		The bounds \eqref{E:LUNITRADTANGENTIALUPMULINFTY}-\eqref{E:RADTANGENTIALUPMULINFTY}
		now follow from the previous estimates and the conditions on the data.
		It remains for us to prove the estimate \eqref{E:PERMUTEDRADTANGENTIALUPMULINFTY} concerning
		the permutations of the vectorfields in \eqref{E:LUNITRADTANGENTIALUPMULINFTY}-\eqref{E:RADTANGENTIALUPMULINFTY}.
		To obtain the desired bound, we use the commutator estimate
		\eqref{E:ONERADIALTANGENTIALFUNCTIONCOMMUTATORESTIMATE}
		with $f = \upmu$,
		the $L^{\infty}$ estimates of Prop.~\ref{P:IMPROVEMENTOFAUX},
		the estimates \eqref{E:LUNITRADTANGENTIALUPMULINFTY}-\eqref{E:RADTANGENTIALUPMULINFTY},
		and the bootstrap assumptions.
	
	\medskip
\noindent \textbf{Proof of \eqref{E:TANGENTIALANDTWORADAPPLIEDTOLUNITILINFINITY} and \eqref{E:TWORADAPPLIEDTOLUNITILINFINITY}:}
We may assume that the operator $\Fullset_*^{[1,3];2}$ in \eqref{E:TANGENTIALANDTWORADAPPLIEDTOLUNITILINFINITY}
contains two factors of $\Rad$ since otherwise the desired estimate is implied by \eqref{E:LUNITAPPLIEDTOLISMALLANDLISMALLINFTYESTIMATE}.
To proceed, we first use Lemma \ref{L:SCHEMATICDEPENDENCEOFMANYTENSORFIELDS} 
to express \eqref{E:RADLUNITI} in the schematic form
$
\Rad \Lunit_{(Small)}^i 
= 
\smoothfunction(\GdVar,\ginversesphere,\angdiff x^1,\angdiff x^2) \Rad \Psi 
+ 
\smoothfunction(\BadVar,\ginversesphere,\angdiff x^1,\angdiff x^2) \Singletan \Psi
+ \smoothfunction(\ginversesphere,\angdiff x^1,\angdiff x^2) \angdiff \upmu
$.
We now apply $\Singletan \Rad$ to this identity, where $\Singletan \in \Tanset$.
Using Lemmas~\ref{L:POINTWISEFORRECTANGULARCOMPONENTSOFVECTORFIELDS}
and
\ref{L:POINTWISEESTIMATESFORGSPHEREANDITSDERIVATIVES},
the $L^{\infty}$ estimates of Prop.~\ref{P:IMPROVEMENTOFAUX},
the already proven estimates 
\eqref{E:IMPROVEDTRANSVERALESTIMATESFORTWORADBUTNOTPURERAD}
and
\eqref{E:LUNITRADTANGENTIALUPMULINFTY}-\eqref{E:PERMUTEDRADTANGENTIALUPMULINFTY},
and the bootstrap assumptions, 
we deduce that
\begin{align} \label{E:TANGENTIALRADRADLUNITIFIRSTBOUND}
	\left|
		\Singletan \Rad \Rad \Lunit_{(Small)}^i
	\right|
	& \lesssim
		\left|
			\Fullset_*^{[1,3];2} \Psi
		\right|
		+
		\left|
			\Fullset_*^{[1,3];1} \GdVar
		\right|
		+
		\left|
			\GeoAng \Fullset^{\leq 2;1} \upmu
		\right|
		+
		\left|
			\Tanset_*^{[1,2]} \upmu
		\right|
		\lesssim \varepsilon.
\end{align}
Also using the commutator estimate \eqref{E:TWORADIALTANGENTIALFUNCTIONCOMMUTATORESTIMATE} 
with $f = \Lunit_{(Small)}^i$ 
and the $L^{\infty}$ estimates of Prop.~\ref{P:IMPROVEMENTOFAUX},
we can arbitrarily reorder the vectorfield factors in the expression $\Singletan \Rad \Rad \Lunit_{(Small)}^i$
up to error terms bounded in the norm 
$\| \cdot \|_{L^{\infty}(\Sigma_t^u)}$ by $\lesssim \varepsilon$,
which yields \eqref{E:TANGENTIALANDTWORADAPPLIEDTOLUNITILINFINITY}.
Moreover, a special case of \eqref{E:TANGENTIALANDTWORADAPPLIEDTOLUNITILINFINITY}
is the bound $\left|\Lunit \Rad \Rad \Lunit_{(Small)}^i \right| \lesssim \varepsilon$.
From this estimate,
the fact that 
$
\displaystyle
\Lunit = \frac{\partial}{\partial t}
$,
and the fundamental theorem of calculus,
we conclude \eqref{E:TWORADAPPLIEDTOLUNITILINFINITY}.

\medskip
\noindent \textbf{Proof of \eqref{E:IMPROVEDTRANSVERALESTIMATESFORLUNITTHREERAD} and \eqref{E:IMPROVEDTRANSVERALESTIMATESFORTHREERAD}:}
	As a preliminary step, we establish the bounds
	$|\angLie_{\Rad \Rad} \ginversesphere| \lesssim 1$
	and
	$|\angLie_{\Rad \Rad} \angdiff x^i| \lesssim 1$.	
	To handle the terms $\angLie_{\Rad \Rad} \angdiff x^i = \angdiff \Rad \Rad x^i$,
	we first note that Lemma~\ref{L:SCHEMATICDEPENDENCEOFMANYTENSORFIELDS} yields
	$\Rad x^i = \Rad^i = \smoothfunction(\BadVar)$.
	Thus, from the $L^{\infty}$ estimates of Prop.~\ref{P:IMPROVEMENTOFAUX}
	and the bootstrap assumptions,
	we obtain $|\angdiff \Rad \Rad x^i| \lesssim |\Fullset^{\leq 2;1} \BadVar| \lesssim 1$ as desired.
	To handle the terms 
	$\angLie_{\Rad \Rad} \ginversesphere$,
	we rely on the basic identity
	$\angLie_{\Rad} \ginversesphere = - (\angLie_{\Rad} \gsphere)^{\# \#}$,
	which was proved in \cite{jSgHjLwW2016}*{Lemma~2.9}.
	From this identity,
	Lemma~\ref{L:POINTWISEESTIMATESFORGSPHEREANDITSDERIVATIVES},
	and the $L^{\infty}$ estimates of Prop.~\ref{P:IMPROVEMENTOFAUX},
	we deduce that
	$|\angLie_{\Rad \Rad} \ginversesphere| \lesssim |\angLie_{\Rad \Rad} \gsphere| + 1$.
	Moreover, Lemma~\ref{L:SCHEMATICDEPENDENCEOFMANYTENSORFIELDS}
	yields that $\gsphere = \smoothfunction(\GdVar,\angdiff x^1, \angdiff x^2)$.
	Thus, from 
	Lemma~\ref{L:POINTWISEFORRECTANGULARCOMPONENTSOFVECTORFIELDS},
	the $L^{\infty}$ estimates of Prop.~\ref{P:IMPROVEMENTOFAUX},
	the bootstrap assumptions,
	and the bound $|\angdiff \Rad \Rad x^i| \lesssim 1$ proved above,
	we conclude the desired bound 
	$|\angLie_{\Rad \Rad} \ginversesphere| \lesssim 1$.
	
	We now commute equation \eqref{E:WAVEEQUATIONTRANSPORTINTERPRETATION} with $\Rad \Rad$
	and use 
	Lemmas~\ref{L:POINTWISEFORRECTANGULARCOMPONENTSOFVECTORFIELDS}
	and
	\ref{L:POINTWISEESTIMATESFORGSPHEREANDITSDERIVATIVES},
	the $L^{\infty}$ estimates of Prop.~\ref{P:IMPROVEMENTOFAUX},
	the bootstrap assumptions,
	and the bounds
	$\left|
		\angLie_{\Rad} \angLie_{\Rad} \ginversesphere
	\right|
	\lesssim 1
	$ 
	and
	$
	\left|
		\angLie_{\Rad} \angLie_{\Rad} \angdiff x
	\right|
	\lesssim 1
	$
	proved above to deduce that
	\begin{align} \label{E:WAVEEQNTWICETRANSVERSALLYCOMMUTEDTRANSPORTPOINTOFVIEW}
		\left|
			\Lunit \Rad \Rad \Rad \Psi
		\right|
		& \lesssim
				\left|
					\Fullset_*^{[1,4];2} \Psi
				\right|
				+
				\left|
					\Fullset_*^{[1,3];2} \GdVar
				\right|
				+
				\left|
					\Rad^{\leq 2} \bigslow
				\right|
					\\
		& \ \
			+ \left|
					[\angLap, \Rad \Rad] \Psi
				\right|
			+ 
		\left|
			\Lunit \Rad \Rad \Rad \Psi
			- \Rad \Rad \Lunit \Rad \Psi
		\right|.
		\notag
	\end{align}
	Next, we note that the already proven estimates
	\eqref{E:IMPROVEDTRANSVERALESTIMATESFORTWORADBUTNOTPURERAD},
	\eqref{E:TANGENTIALANDTWORADAPPLIEDTOLUNITILINFINITY},
	and \eqref{E:UPTOTWOTRANSVERSALDERIVATIVESOFSLOWINLINFINITY} 
	imply that 
	$
	\left|
		\Fullset_*^{[1,4];2} \Psi
	\right|
	\lesssim \varepsilon
	$,
	$
	\left|
		\Fullset_*^{[1,3];2} \GdVar
	\right|
	\lesssim \varepsilon
	$,
	and
	$
	\left|
		\Rad^{\leq 2} \bigslow
	\right|
	\lesssim \varepsilon
	$.
	Next, we use \eqref{E:ANGLAPTWORADIALTANGENTIALFUNCTIONCOMMUTATOR}
	with $f = \Psi$
	to bound the commutator term
	$\left|
		[\angLap, \Rad \Rad] \Psi
	\right|$
	by $\lesssim$ the first term on RHS~\eqref{E:WAVEEQNTWICETRANSVERSALLYCOMMUTEDTRANSPORTPOINTOFVIEW}
	(and hence it is $\lesssim \varepsilon$ too).
	Next, we use
 	\eqref{E:TWORADIALTANGENTIALFUNCTIONCOMMUTATORESTIMATE} 
 	with $f=\Rad \Psi$ and $N=2$ 
	and the bound
	$
	\left|
		\Fullset_*^{[1,4];2} \Psi
	\right|
	\lesssim \varepsilon
	$
	mentioned above
	to deduce that
 	$
 	\left|
		\Lunit \Rad \Rad \Rad \Psi
		- \Rad \Rad \Lunit \Rad \Psi
	\right|
	\lesssim
	\left|
		\Fullset_*^{[1,3];2} \Psi
	\right|
	\lesssim \varepsilon
	$.
 Combining these estimates,
 we deduce that
	$
	\left|
		\Lunit \Rad \Rad \Rad \Psi
	\right|
	\lesssim \varepsilon
	$,
	which implies \eqref{E:IMPROVEDTRANSVERALESTIMATESFORLUNITTHREERAD}.
	From \eqref{E:IMPROVEDTRANSVERALESTIMATESFORLUNITTHREERAD},
	the fact that 
	$
	\displaystyle
	\Lunit = \frac{\partial}{\partial t}
	$,
	and the fundamental theorem of calculus,
	we conclude the desired estimate \eqref{E:IMPROVEDTRANSVERALESTIMATESFORTHREERAD}.

	\medskip
\noindent \textbf{Proof of \eqref{E:RADDERIVATIVESOFGLLDIFFERENCEBOUND}-\eqref{E:RADDERIVATIVESOFGLLRADPSIDIFFERENCEBOUND}
	in the case $M=2$:}
	The proof is very similar to the proof given above in the cases
	$M=0,1$, so we only highlight the main new ingredients needed in the case $M=2$:
	we must also use the estimates
	$
	\left|
		\Lunit \Rad \Rad \Rad \Psi
	\right|
	\lesssim \varepsilon
	$
	and
	$\left|
		\Lunit \Rad \Rad \Lunit_{(Small)}^i 
	\right| \lesssim \varepsilon
	$
	established in 
	\eqref{E:IMPROVEDTRANSVERALESTIMATESFORLUNITTHREERAD}
	and 
	\eqref{E:TANGENTIALANDTWORADAPPLIEDTOLUNITILINFINITY}
	in order to deduce
	\eqref{E:LDERIVATIVEOFRADIALDERIVATIVESOFCRITICALFACTOR}
	in the case $M=2$.

\medskip
	\noindent \textbf{Proof of \eqref{E:LUNITRADRADUPMULINFTY}-\eqref{E:RADRADUPMULINFTY}:}
	We commute equation \eqref{E:UPMUFIRSTTRANSPORT} with $\Rad \Rad$ 
	and argue as in the proof of \eqref{E:UPMUEVOLUTIONEQUATIONRADANDTANGENTIALCOMMUTEDFIRSTBOUND}
	to obtain
		\begin{align} \label{E:UPMUEVOLUTIONEQUATIONRADDOUBLECOMMUTEDFIRSTBOUND}
		\left|
			\Lunit \Rad \Rad \upmu
		\right|
		& \leq 
			\frac{1}{2}
			\left|
				\Rad \Rad
				\left\lbrace
					G_{\Lunit \Lunit}
					\Rad \Psi
				\right\rbrace
			\right|
			+
			\left|
				\Fullset_*^{[1,3];2} \Psi
			\right|
			+
			\left|
				\Lunit \Rad \Rad \upmu
				- 
				\Rad \Rad \Lunit \upmu
			\right|.
		\end{align}
		Using the commutator estimate
		\eqref{E:TWORADIALTANGENTIALFUNCTIONCOMMUTATORESTIMATE}
		with $f = \upmu$,
		the $L^{\infty}$ estimates of Prop.~\ref{P:IMPROVEMENTOFAUX}, 
		and the already proven bounds \eqref{E:RADTANGENTIALUPMULINFTY}-\eqref{E:PERMUTEDRADTANGENTIALUPMULINFTY},
		we deduce that 
		$\left|
				\Lunit \Rad \Rad \upmu
				- 
				\Rad \Rad \Lunit \upmu
		\right|
		\lesssim
			\left|
				\GeoAng \Fullset^{\leq 1} \upmu
			\right|
		\lesssim 
		\varepsilon
		$.
		Next, we use
		\eqref{E:IMPROVEDTRANSVERALESTIMATESFORTWORADBUTNOTPURERAD}
		to deduce that 
		$
		\left|
			\Fullset_*^{[1,3];2} \Psi
		\right|
		\lesssim \varepsilon
		$.
		Thus, we have shown that the last two terms 
		on RHS~\eqref{E:UPMUEVOLUTIONEQUATIONRADDOUBLECOMMUTEDFIRSTBOUND} are 
		$\lesssim \varepsilon$. 
		The remainder of the proof of \eqref{E:LUNITRADRADUPMULINFTY}-\eqref{E:RADRADUPMULINFTY}
		now proceeds as in the proof
		of \eqref{E:LUNITRADUPMULINFTY}-\eqref{E:RADUPMULINFTY},
		thanks to the availability of the already proven estimates
		\eqref{E:RADDERIVATIVESOFGLLDIFFERENCEBOUND}-\eqref{E:RADDERIVATIVESOFGLLRADPSIDIFFERENCEBOUND}
		in the case $M=2$.

\end{proof}

\section{Sharp estimates for \texorpdfstring{$\upmu$}{the inverse foliation density}}
\label{S:SHARPESTIMATESFORINVERSEFOLIATIONDENSITY}
In this section, we derive sharp estimates for $\upmu$, its derivatives,
and various time integrals, 
many of which involve the singular factor 
$
\displaystyle
\frac{1}{\upmu}
$.
These estimates play a fundamental role in our energy estimates because
our energies contain $\upmu$ weights and because in our energy identities,
we will encounter error integrals that involve
the derivatives of $\upmu$ and/or
factors of 
$
\displaystyle
\frac{1}{\upmu}
$.
The main results of this section are 
Props.~\ref{P:SHARPMU}
and Prop.~\ref{P:MUINVERSEINTEGRALESTIMATES}.

\subsection{Sharp \texorpdfstring{$L^{\infty}$}{essential sup-norm} estimates and pointwise estimates for 
\texorpdfstring{$\upmu$}{the inverse foliation density}}
\label{SS:MUSHARPSUPNORM}
We define the following quantities in order to
facilitate our analysis of $\upmu$.

\begin{definition}[\textbf{Auxiliary quantities used to analyze $\upmu$}]
	\label{D:AUXQUANTITIES}
	We define the following quantities, 
	where $0 \leq s \leq t$:
	\begin{subequations}
	\begin{align}
	M(s,u,\vartheta;t) 
	& := \int_{s'=s}^{s'=t} 
					\left\lbrace
						\Lunit \upmu(t,u,\vartheta) - \Lunit \upmu(s',u,\vartheta) 
					\right\rbrace
				\, ds',
		\label{E:BIGMDEF} \\
	\mathring{\upmu}(u,\vartheta)
	& := \upmu(s=0,u,\vartheta),
		\\
	\widetilde{M}(s,u,\vartheta;t)
	& := \frac{M(s,u,\vartheta;t)}{\mathring{\upmu}(u,\vartheta) 
		- M(0,u,\vartheta;t)},
		\label{E:WIDETILDEBIGMDEF} \\
	\upmu_{(Approx)}(s,u,\vartheta;t)
		& := 1
			+  \frac{\Lunit \upmu(t,u,\vartheta)}{
				\mathring{\upmu}(u,\vartheta) 
				- M(0,u,\vartheta;t)}s
			+ \widetilde{M}(s,u,\vartheta;t).
		\label{E:MUAPPROXDEF}
	\end{align}
	\end{subequations}
\end{definition}

As we outlined in Subsubsect.\ \ref{SSS:ENERGYESTIMATES},
our high-order energies are allowed
to blow up as the shock forms.
Specifically, 
the best estimates 
that we are able to derive
allow for the possibility that
the high-order energies blow up like
negative powers of the quantity $\upmu_{\star}$, which we now define;
see Prop.~\ref{P:MAINAPRIORIENERGY} for the detailed statement.

\begin{definition}[\textbf{Definition of} $\upmu_{\star}$]
	\label{D:MUSTARDEF}
	\begin{align} \label{E:MUSTARDEF}
		\upmu_{\star}(t,u)
		& := \min \lbrace 1, \min_{\Sigma_t^u} \upmu \rbrace.
	\end{align}
\end{definition}

The following simple estimates play a role in our ensuing analysis.

\begin{lemma}[\textbf{First estimates for the auxiliary quantities}]
\label{L:FIRSTESTIMATESFORAUXILIARYUPMUQUANTITIES}
The following  
estimates hold for $(t,u,\vartheta) \in [0,\Tboot) \times [0,U_0] \times \mathbb{T}$
and $0 \leq s \leq t$
(see Subsect.\ \ref{SS:NOTATIONANDINDEXCONVENTIONS} regarding our use of the notation $\mathcal{O}_{\mydiam}(\cdot)$):
\begin{align}
	\mathring{\upmu}(u,\vartheta)
	& = 1 + \mathcal{O}_{\mydiam} (\Psiep),
		\label{E:MUINITIALDATAESTIMATE}
		\\
	\mathring{\upmu}(u,\vartheta)
	& = 1 + M(0,u,\vartheta;t) + \mathcal{O}_{\mydiam} (\Psiep) + \mathcal{O}(\varepsilon).
		\label{E:MUAMPLITUDENEARONE}
\end{align}
In addition, the following pointwise estimates hold:
\begin{align} 
	\left|
		\Lunit \upmu(t,u,\vartheta) 
		- 
		\Lunit \upmu(s,u,\vartheta)
	\right|
	& \lesssim \varepsilon(t-s),
		\label{E:LUNITUPMUATTIMETMINUSLUNITUPMUATTIMESPOINTWISEESTIMATE} \\
	|M(s,u,\vartheta;t)|, |\widetilde{M}(s,u,\vartheta;t)|
	& \lesssim 
		\varepsilon (t - s)^2,
		\label{E:BIGMEST} 
		\\
	\upmu(s,u,\vartheta)
	& = \left\lbrace
				1 + \mathcal{O}_{\mydiam} (\Psiep) + \mathcal{O}(\varepsilon)
			\right\rbrace
	\upmu_{(Approx)}(s,u,\vartheta;t).
	\label{E:MUAPPROXMISLIKEMU}
\end{align}
\end{lemma}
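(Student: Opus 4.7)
The plan is to derive all five estimates via elementary applications of the fundamental theorem of calculus, using two already-established ingredients: the initial data estimate \eqref{E:UPITSELFLINFINITYSIGMA0CONSEQUENCES} and the $L^\infty$ bound for $\Lunit \Lunit \upmu$ that follows from \eqref{E:LUNITAPPLIEDTOTANGENTIALUPMUANDTANSETSTARLINFTY} (since $\Lunit \in \Tanset_*^{[1,9]}$). The estimate \eqref{E:MUINITIALDATAESTIMATE} is an immediate restatement of \eqref{E:UPITSELFLINFINITYSIGMA0CONSEQUENCES}, since $\mathring{\upmu}(u,\vartheta) = \upmu(0,u,\vartheta)$ and $(u,\vartheta) \in [0,U_0] \times \mathbb{T}$ with $U_0 \leq 1$.

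For \eqref{E:LUNITUPMUATTIMETMINUSLUNITUPMUATTIMESPOINTWISEESTIMATE}, I would write, using $\Lunit = \partial/\partial t$,
\[
\Lunit \upmu(t,u,\vartheta) - \Lunit \upmu(s,u,\vartheta) = \int_{s}^{t} \Lunit \Lunit \upmu(\tau,u,\vartheta)\, d\tau,
\]
and bound the integrand by $C\varepsilon$ using \eqref{E:LUNITAPPLIEDTOTANGENTIALUPMUANDTANSETSTARLINFTY}. Then \eqref{E:BIGMEST} for $M$ follows by plugging \eqref{E:LUNITUPMUATTIMETMINUSLUNITUPMUATTIMESPOINTWISEESTIMATE} into definition \eqref{E:BIGMDEF} and integrating: $|M(s,u,\vartheta;t)| \lesssim \int_{s}^{t} \varepsilon(t - s')\, ds' \lesssim \varepsilon(t-s)^2$. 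Applied at $s = 0$ with the bound $t \leq \Tboot \leq 2 \TranminusdatasizeWithFactor^{-1}$, this gives $|M(0,u,\vartheta;t)| \lesssim \varepsilon$, which together with \eqref{E:MUINITIALDATAESTIMATE} yields \eqref{E:MUAMPLITUDENEARONE}. In particular, the denominator $\mathring{\upmu}(u,\vartheta) - M(0,u,\vartheta;t) = 1 + \mathcal{O}_{\mydiam}(\Psiep) + \mathcal{O}(\varepsilon)$ in definition \eqref{E:WIDETILDEBIGMDEF} is bounded below by a positive constant once $\Psiep$ and $\varepsilon$ are sufficiently small, so the bound for $\widetilde{M}$ in \eqref{E:BIGMEST} follows from the bound for $M$.

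The only step requiring any assembly is \eqref{E:MUAPPROXMISLIKEMU}. The key identity is obtained by writing, via the fundamental theorem of calculus,
\[
\upmu(s,u,\vartheta) = \mathring{\upmu}(u,\vartheta) + \int_{0}^{s} \Lunit \upmu(s',u,\vartheta)\, ds',
\]
then adding and subtracting $\Lunit \upmu(t,u,\vartheta)$ inside the integrand and recognizing the difference $M(0,u,\vartheta;t) - M(s,u,\vartheta;t) = \int_{0}^{s}\{\Lunit \upmu(t,u,\vartheta) - \Lunit \upmu(s',u,\vartheta)\}\, ds'$ from \eqref{E:BIGMDEF}. This yields the exact algebraic identity
\[
\upmu(s,u,\vartheta) = \bigl\{\mathring{\upmu}(u,\vartheta) - M(0,u,\vartheta;t)\bigr\} + s\, \Lunit \upmu(t,u,\vartheta) + M(s,u,\vartheta;t).
\]
Factoring out $A := \mathring{\upmu}(u,\vartheta) - M(0,u,\vartheta;t)$ and using definitions \eqref{E:WIDETILDEBIGMDEF}--\eqref{E:MUAPPROXDEF}, the bracketed sum becomes exactly $A \cdot \upmu_{(Approx)}(s,u,\vartheta;t)$. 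Invoking \eqref{E:MUAMPLITUDENEARONE} to write $A = 1 + \mathcal{O}_{\mydiam}(\Psiep) + \mathcal{O}(\varepsilon)$ then delivers \eqref{E:MUAPPROXMISLIKEMU}. No step here poses a real obstacle; the only subtle point is keeping track of the two evaluation times $s'$ and $t$ that appear in \eqref{E:BIGMDEF}, and recognizing that the identity above is the origin of the definition of $\upmu_{(Approx)}$.
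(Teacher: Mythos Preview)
Your proof is correct and follows essentially the same approach as the paper's own proof, including the same ingredients (\eqref{E:UPITSELFLINFINITYSIGMA0CONSEQUENCES} and the bound $|\Lunit\Lunit\upmu|\lesssim\varepsilon$ from \eqref{E:LUNITAPPLIEDTOTANGENTIALUPMUANDTANSETSTARLINFTY}) and the same algebraic identity $\upmu = \{\mathring{\upmu} - M(0,\cdot;t)\}\,\upmu_{(Approx)}$ for the final estimate. In fact you supply the derivation of that identity in more detail than the paper, which simply asserts it as a straightforward consequence of the definitions.
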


\begin{proof}
	\eqref{E:MUINITIALDATAESTIMATE} is a restatement of \eqref{E:UPITSELFLINFINITYSIGMA0CONSEQUENCES}.
	The estimate \eqref{E:LUNITUPMUATTIMETMINUSLUNITUPMUATTIMESPOINTWISEESTIMATE}
	follows from the mean value theorem and
	the estimate $\left| \Lunit \Lunit \upmu \right| \lesssim \varepsilon$,
	which is a special case of \eqref{E:LUNITAPPLIEDTOTANGENTIALUPMUANDTANSETSTARLINFTY}.
	The estimate \eqref{E:MUAMPLITUDENEARONE}
	and the estimate \eqref{E:BIGMEST} for $M$
	then follow from definition \eqref{E:BIGMDEF} 
	and the estimates 
	\eqref{E:MUINITIALDATAESTIMATE}
	and
	\eqref{E:LUNITUPMUATTIMETMINUSLUNITUPMUATTIMESPOINTWISEESTIMATE}.
	The estimate \eqref{E:BIGMEST} for $\widetilde{M}$
	follows from definition \eqref{E:WIDETILDEBIGMDEF}, 
	the estimate \eqref{E:BIGMEST} for $M$,
	and \eqref{E:MUAMPLITUDENEARONE}.
To prove \eqref{E:MUAPPROXMISLIKEMU}, we first note the following identity, which is
a straightforward consequence of Def.\ \ref{D:AUXQUANTITIES}:
\begin{align} \label{E:MUSPLIT}
\upmu(s,u,\vartheta) 
	= \left\lbrace
				\mathring{\upmu}(u,\vartheta) - M(0,u,\vartheta;t)
			\right\rbrace
			\upmu_{(Approx)}(s,u,\vartheta;t).
\end{align}
From \eqref{E:MUSPLIT} and \eqref{E:MUAMPLITUDENEARONE},
we conclude \eqref{E:MUAPPROXMISLIKEMU}.
\end{proof}

To derive some of the most important estimates, 
we will distinguish between regions in which $\upmu$ is appreciably shrinking 
and regions in which it is not. We define the relevant regions in the next definition.

\begin{definition}[\textbf{Regions of distinct $\upmu$ behavior}]
\label{D:REGIONSOFDISTINCTUPMUBEHAVIOR}
For each 
$t \in [0,\Tboot)$,
$s \in [0,t]$, 
and $u \in [0,U_0]$, 
we partition 
\begin{subequations}
\begin{align}
	[0,u] \times \mathbb{T} 
	& = \Vplus{t}{u} \cup \Vminus{t}{u},
		\label{E:OUINTERVALCROSSS2SPLIT} \\
	\Sigma_s^u
	& = \Sigmaplus{s}{t}{u} \cup \Sigmaminus{s}{t}{u},
	\label{E:SIGMASSPLIT}
\end{align}
\end{subequations}
where
\begin{subequations}
\begin{align}
	\Vplus{t}{u}
	& := 
	\left\lbrace
		(u',\vartheta) \in [0,u] \times \mathbb{T} \ | \  
			\frac{\Lunit \upmu(t,u',\vartheta)}{\mathring{\upmu}(u',\vartheta) - M(0,u',\vartheta;t)}
		\geq 0
	\right\rbrace,
		\label{E:ANGLESANDUWITHNONDECAYINUPMUGBEHAVIOR} \\
	\Vminus{t}{u}
	& := 
	\left\lbrace
		(u',\vartheta) \in [0,u] \times \mathbb{T} \ | \ 
			\frac{\Lunit \upmu(t,u',\vartheta)}{\mathring{\upmu}(u',\vartheta) - M(0,u',\vartheta;t)} < 0
	\right\rbrace,
		\label{E:ANGLESANDUWITHDECAYINUPMUGBEHAVIOR} \\
	\Sigmaplus{s}{t}{u}
	& := 
	\left\lbrace
		(s,u',\vartheta) \in \Sigma_s^u \ | \ (u',\vartheta) \in \Vplus{t}{u}
	\right\rbrace,
		\label{E:SIGMAPLUS} \\
	\Sigmaminus{s}{t}{u}
	& := 
	\left\lbrace
		(s,u',\vartheta) \in \Sigma_s^u \ | \ (u',\vartheta) \in \Vminus{t}{u}
	\right\rbrace.
	\label{E:SIGMAMINUS}
\end{align}
\end{subequations}
\end{definition}

\begin{remark}
	Note that by \eqref{E:MUAMPLITUDENEARONE}, the denominators in 
	\eqref{E:ANGLESANDUWITHNONDECAYINUPMUGBEHAVIOR}-\eqref{E:ANGLESANDUWITHDECAYINUPMUGBEHAVIOR}
	are positive.
\end{remark}

The following proposition provides our main sharp estimates for
$\upmu$ and its derivatives. The estimates play a fundamental role
in controlling the error integrals in our energy estimates.

\begin{proposition}[\textbf{Sharp pointwise estimates for $\upmu$, $\Lunit \upmu$, and $\Rad \upmu$}]
\label{P:SHARPMU} 
The following  
estimates hold for $(t,u,\vartheta) \in [0,\Tboot) \times [0,U_0] \times \mathbb{T}$
and $0 \leq s \leq t$.
\medskip

\noindent \underline{\textbf{Upper bound for $\displaystyle \frac{[\Lunit \upmu]_+}{\upmu}$}}.
\begin{align} \label{E:POSITIVEPARTOFLMUOVERMUISBOUNDED}
	\left\|
		\frac{[\Lunit \upmu]_+}{\upmu}
	\right\|_{L^{\infty}(\Sigma_s^u)}
	& \leq C.
\end{align}

\medskip

\noindent \underline{\textbf{Small $\upmu$ implies $\Lunit \upmu$ is quantitatively negative}}.
\begin{align} \label{E:SMALLMUIMPLIESLMUISNEGATIVE}
	\upmu(s,u,\vartheta) \leq \frac{1}{4}
	\implies
	\Lunit \upmu(s,u,\vartheta) \leq - \frac{1}{4} \TranminusdatasizeWithFactor,
\end{align}
where $\TranminusdatasizeWithFactor > 0$ is defined in \eqref{E:CRITICALBLOWUPTIMEFACTOR}.

\medskip

\noindent \underline{\textbf{Upper bound for 
$\displaystyle \frac{[\Rad \upmu]_+}{\upmu}$}}.
\begin{align} \label{E:UNIFORMBOUNDFORMRADMUOVERMU}
	\left\|
		\frac{[\Rad \upmu]_+}{\upmu}
	\right\|_{L^{\infty}(\Sigma_s^u)}
	& \leq 
		\frac{C}{\sqrt{\Tboot - s}}.
\end{align}

\medskip

\noindent \underline{\textbf{Sharp spatially uniform estimates}}.
Consider a time interval $s \in [0,t]$ and consider 
the ($t,u$-dependent) constant $\LateTimeLUnitMu$ defined by
\begin{align} \label{E:CRUCIALLATETIMEDERIVATIVEDEF}
	\LateTimeLUnitMu 
	& := 
		\sup_{(u',\vartheta) \in [0,u] \times \mathbb{T}} 
		\frac{[\Lunit \upmu]_-(t,u',\vartheta)}{\mathring{\upmu}(u',\vartheta) - M(0,u',\vartheta;t)},
\end{align}
and note that $\LateTimeLUnitMu \geq 0$ in view of the estimate \eqref{E:MUAMPLITUDENEARONE}.
Then the following estimates hold
(see Subsect.\ \ref{SS:NOTATIONANDINDEXCONVENTIONS} regarding our use of the notation $\mathcal{O}_{\mydiam}(\cdot)$):
\begin{subequations}
\begin{align}
	\upmu_{\star}(s,u)
	& = \left\lbrace
				1 + \mathcal{O}_{\mydiam} (\Psiep) + \mathcal{O}(\varepsilon)
			\right\rbrace
			\left\lbrace 
				1 - \LateTimeLUnitMu s 
			\right\rbrace,
	\label{E:MUSTARBOUNDS}  
		\\
	\left\| 
		[\Lunit \upmu]_- 
	\right\|_{L^{\infty}(\Sigma_s^u)}
	& =
	\begin{cases}
		\left\lbrace
				1 + \mathcal{O}_{\mydiam}(\Psiep) + \mathcal{O}(\varepsilon^{1/2})
		\right\rbrace
		\LateTimeLUnitMu,
		& \mbox{if } \LateTimeLUnitMu \geq \varepsilon^{1/2},
		\\
		\mathcal{O}(\varepsilon^{1/2}),
		& \mbox{if } \LateTimeLUnitMu \leq \varepsilon^{1/2}.
	\end{cases}
	\label{E:LUNITUPMUMINUSBOUND}
\end{align}
\end{subequations}

Furthermore, we have
\begin{subequations}
\begin{align} \label{E:UNOTNECESSARILYEQUALTOONECRUCIALLATETIMEDERIVATIVECOMPAREDTODATAPARAMETER}
	\LateTimeLUnitMu 
	& \leq 
		\left\lbrace
			1 + \mathcal{O}_{\mydiam}(\Psiep) + \mathcal{O}(\varepsilon)
		\right\rbrace
		\TranminusdatasizeWithFactor.
\end{align}

Moreover, when $u = 1$, we have
\begin{align} \label{E:CRUCIALLATETIMEDERIVATIVECOMPAREDTODATAPARAMETER}
	\LateTimeLUnitMu 
	& = \left\lbrace
				1 + \mathcal{O}_{\mydiam}(\Psiep) + \mathcal{O}(\varepsilon)
			\right\rbrace
			\TranminusdatasizeWithFactor,
\end{align}
\end{subequations}
and
\begin{align}
	\upmu_{\star}(s,1)
	& = \left\lbrace
				1 + \mathcal{O}_{\mydiam} (\Psiep) + \mathcal{O}(\varepsilon)
			\right\rbrace
			\left\lbrace 
				1 - 
				\left[
						1 + \mathcal{O}_{\mydiam}(\Psiep) + \mathcal{O}(\varepsilon)
				\right]
				\TranminusdatasizeWithFactor s
			\right\rbrace.
	\label{E:MUSTARBOUNDSUISONE}
\end{align}

\medskip

\noindent \underline{\textbf{Sharp estimates when $(u',\vartheta) \in \Vplus{t}{u}$}}.
We recall that the set $\Vplus{t}{u}$ is defined in \eqref{E:ANGLESANDUWITHNONDECAYINUPMUGBEHAVIOR}.
If $0 \leq s_1 \leq s_2 \leq t$, then the following estimate holds:
\begin{align} \label{E:LOCALIZEDMUCANTGROWTOOFAST}
	\sup_{(u',\vartheta) \in \Vplus{t}{u}}
	\frac{\upmu(s_2,u',\vartheta)}{\upmu(s_1,u',\vartheta)}
	& \leq C.
\end{align}

In addition, if $s \in [0,t]$ and $\Sigmaplus{s}{t}{u}$ is as defined in \eqref{E:SIGMAPLUS}, then 
\begin{align}  \label{E:KEYMUNOTDECAYBOUND}
		\inf_{\Sigmaplus{s}{t}{u}} \upmu 
	& \geq 1 - C_{\mydiam} \Psiep - C \varepsilon.
\end{align}

Moreover, if $s \in [0,t]$, then 
\begin{align} 
	\left\| \frac{[\Lunit \upmu]_-}{\upmu} \right\|_{L^{\infty}(\Sigmaplus{s}{t}{u})}
	& \leq C \varepsilon.
		\label{E:KEYMUNOTDECAYINGMINUSPARTLMUOVERMUBOUND}
\end{align}

\medskip

\noindent \underline{\textbf{Sharp estimates when $(u',\vartheta) \in \Vminus{t}{u}$}}.
We recall that $\Vminus{t}{u}$ is the set defined in \eqref{E:ANGLESANDUWITHDECAYINUPMUGBEHAVIOR}.
Let $\LateTimeLUnitMu > 0$
be as in \eqref{E:CRUCIALLATETIMEDERIVATIVEDEF}
and consider a time interval $s \in [0,t]$.
Then there exists a constant $C > 0$ such that
\begin{align} \label{E:LOCALIZEDMUMUSTSHRINK}
	\mathop{\sup_{0 \leq s_1 \leq s_2 \leq t}}_{(u',\vartheta) \in \Vminus{t}{u}}
	\frac{\upmu(s_2,u',\vartheta)}{\upmu(s_1,u',\vartheta)}
	& \leq 1 + C \varepsilon.
\end{align}

Furthermore, if $s \in [0,t]$ and $\Sigmaminus{s}{t}{u}$
is as defined in \eqref{E:SIGMAMINUS}, then 
\begin{align} \label{E:LMUPLUSNEGLIGIBLEINSIGMAMINUS}
	\left\| [\Lunit \upmu]_+ \right\|_{L^{\infty}(\Sigmaminus{s}{t}{u})}
	& \leq C \varepsilon.
\end{align}

Finally, there exist constants $C_{\mydiam} > 0$ and $C > 0$ such that if $0 \leq s \leq t$, then
\begin{align}		\label{E:HYPERSURFACELARGETIMEHARDCASEOMEGAMINUSBOUND}
	\left\| 
		[\Lunit \upmu]_- 
	\right\|_{L^{\infty}(\Sigmaminus{s}{t}{u})}
	& \leq 
		\begin{cases}
		\left\lbrace
			1 + C_{\mydiam} \Psiep + C \varepsilon^{1/2}
		\right\rbrace
		\LateTimeLUnitMu,
		& \mbox{if } \LateTimeLUnitMu \geq \varepsilon^{1/2},
		\\
		C \varepsilon^{1/2},
		& \mbox{if } \LateTimeLUnitMu \leq \varepsilon^{1/2}.
	\end{cases}
\end{align}

\noindent \underline{\textbf{Approximate time-monotonicity of $\upmu_{\star}^{-1}(s,u)$}}.
There exist constants $C_{\mydiam} > 0$ and $C > 0$ such that if 
$0 \leq s_1 \leq s_2 \leq t$, then
\begin{align} \label{E:MUSTARINVERSEMUSTGROWUPTOACONSTANT}
	\upmu_{\star}^{-1}(s_1,u) & \leq (1 + C_{\mydiam} \Psiep + C \varepsilon) \upmu_{\star}^{-1}(s_2,u).
\end{align}

\end{proposition}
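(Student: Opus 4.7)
The overarching plan is to reduce every statement in Proposition~\ref{P:SHARPMU} to a single basic fact about $\upmu$: along integral curves of $\Lunit$, the quantity $\Lunit\upmu$ is essentially constant. This is already encoded in Lemma~\ref{L:FIRSTESTIMATESFORAUXILIARYUPMUQUANTITIES}, in particular in the pointwise bound \eqref{E:LUNITUPMUATTIMETMINUSLUNITUPMUATTIMESPOINTWISEESTIMATE} (a consequence of $|\Lunit\Lunit\upmu| \lesssim \varepsilon$ from \eqref{E:LUNITAPPLIEDTOTANGENTIALUPMUANDTANSETSTARLINFTY}) and in the affine-in-$s$ comparison \eqref{E:MUAPPROXMISLIKEMU} of $\upmu$ to $\upmu_{(Approx)}$. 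A crucial meta-observation is that every estimate here concerns only the eikonal-function quantity $\upmu$ and the fast wave $\Psi$; the transport equation \eqref{E:UPMUFIRSTTRANSPORT} is independent of the slow wave $\bigslow$, and the identification \eqref{E:LUNITUPMUDOESNOTDEVIATEMUCHFROMTHEDATA} of $\Lunit\upmu$ with $\tfrac{1}{2}G_{\Lunit_{(Flat)}\Lunit_{(Flat)}}(\Psi=0)\Rad\Psi$ introduces no slow-wave terms either. Consequently the proof is essentially a transcription of its single-wave analogue \cite{jSgHjLwW2016}*{Proposition~10.1}. The sharp spatially uniform estimates \eqref{E:MUSTARBOUNDS}--\eqref{E:MUSTARBOUNDSUISONE} then come directly from taking suprema/infima in $(u',\vartheta)$ of the explicit formula \eqref{E:MUAPPROXDEF} and using \eqref{E:MUAMPLITUDENEARONE}; for \eqref{E:UNOTNECESSARILYEQUALTOONECRUCIALLATETIMEDERIVATIVECOMPAREDTODATAPARAMETER}--\eqref{E:MUSTARBOUNDSUISONE} I identify $\LateTimeLUnitMu$ with $\TranminusdatasizeWithFactor$ (up to $\mathcal{O}_{\mydiam}(\Psiep) + \mathcal{O}(\varepsilon)$ corrections) via \eqref{E:LUNITUPMUDOESNOTDEVIATEMUCHFROMTHEDATA} and definition \eqref{E:CRITICALBLOWUPTIMEFACTOR}.

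The region-localized estimates reduce to sign analysis based on the sign of $\Lunit\upmu(t,u',\vartheta)$. On $\Vplus{t}{u}$ the near-constancy \eqref{E:LUNITUPMUATTIMETMINUSLUNITUPMUATTIMESPOINTWISEESTIMATE} propagates the non-negativity of $\Lunit\upmu$ back to all earlier times (up to $\mathcal{O}(\varepsilon)$ slack), so $\upmu(s',u',\vartheta)$ stays in the range $[1 - C_{\mydiam}\Psiep - C\varepsilon,\, 1 + C_{\mydiam}\Psiep + C\varepsilon]$; this gives \eqref{E:LOCALIZEDMUCANTGROWTOOFAST}, \eqref{E:KEYMUNOTDECAYBOUND}, and \eqref{E:KEYMUNOTDECAYINGMINUSPARTLMUOVERMUBOUND}, and the symmetric argument on $\Vminus{t}{u}$ yields \eqref{E:LOCALIZEDMUMUSTSHRINK}, \eqref{E:LMUPLUSNEGLIGIBLEINSIGMAMINUS}, and \eqref{E:HYPERSURFACELARGETIMEHARDCASEOMEGAMINUSBOUND}. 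The bound \eqref{E:POSITIVEPARTOFLMUOVERMUISBOUNDED} is a pointwise specialization: if $\Lunit\upmu(s,u,\vartheta) > 0$, near-constancy forces $\upmu(s,u,\vartheta) \gtrsim 1$, whence $[\Lunit\upmu]_+/\upmu \lesssim \|\Lunit\upmu\|_{L^\infty(\Sigma_s^u)} \lesssim 1$. The implication \eqref{E:SMALLMUIMPLIESLMUISNEGATIVE} runs the same logic in reverse: $\upmu(s,u,\vartheta) \leq 1/4$ forces the time-average of $\Lunit\upmu$ on $[0,s]$ to be $\leq -(3/4 - C_{\mydiam}\Psiep)/s$, and essentially constant $\Lunit\upmu$ together with $s \leq 2\TranminusdatasizeWithFactor^{-1}$ yields $\Lunit\upmu(s,u,\vartheta) \leq -\TranminusdatasizeWithFactor/4$. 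Finally, \eqref{E:MUSTARINVERSEMUSTGROWUPTOACONSTANT} is a direct consequence of \eqref{E:MUSTARBOUNDS}, since the formula on its right-hand side is monotone decreasing in $s$ up to $(1 + C_{\mydiam}\Psiep + C\varepsilon)$ multiplicative corrections.

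The main obstacle is the bound \eqref{E:UNIFORMBOUNDFORMRADMUOVERMU} for $[\Rad\upmu]_+/\upmu$: the $1/\sqrt{\Tboot - s}$ rate lies deeper than the $L^\infty$ analysis that handles everything else. The strategy is a Glaeser-type inequality applied to the non-negative function $u' \mapsto \upmu(s,u',\vartheta)$, combined with the uniform second-transversal-derivative bound $\|\Rad\Rad\upmu\|_{L^\infty(\Sigma_s^u)} \leq C$ from \eqref{E:RADRADUPMULINFTY} and the bootstrap-enforced positivity \eqref{E:BOOTSTRAPMUPOSITIVITY}. One obtains $[\Rad\upmu]_+^2(s,u,\vartheta) \leq 2\|\Rad\Rad\upmu\|_{L^\infty(\Sigma_s^u)}\cdot \upmu(s,u,\vartheta)$, after which a linear-in-time lower bound $\upmu(s,u,\vartheta) \gtrsim \Tboot - s$, derived from \eqref{E:MUSTARBOUNDS} together with the bootstrap constraint $\LateTimeLUnitMu \Tboot \lesssim 1$, converts the $1/\sqrt{\upmu}$ Glaeser rate into the desired $1/\sqrt{\Tboot-s}$ rate. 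The detailed execution is identical in structure to that of \cite{jSgHjLwW2016}*{Proposition~10.1}: since $\bigslow$ enters nowhere in the analysis of $\upmu$ and its derivatives, the argument transfers verbatim.
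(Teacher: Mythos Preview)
Your proposal is correct and follows essentially the same route as the paper: near-constancy of $\Lunit\upmu$ via \eqref{E:LUNITUPMUATTIMETMINUSLUNITUPMUATTIMESPOINTWISEESTIMATE} and the affine comparison \eqref{E:MUAPPROXMISLIKEMU} handle everything except \eqref{E:UNIFORMBOUNDFORMRADMUOVERMU}, which is done via a Glaeser-type inequality using the $\Rad\Rad\upmu$ bound \eqref{E:RADRADUPMULINFTY} and a case split on the size of $\LateTimeLUnitMu$. One small clarification: in the Glaeser step the key boundary input is not merely the bootstrap positivity \eqref{E:BOOTSTRAPMUPOSITIVITY} but rather the estimate \eqref{E:UPITSELFLINFINITYP0CONSEQUENCES} (i.e.\ $\upmu \approx 1$ along $\mathcal{P}_0$), which is what forces the integral-curve function $u' \mapsto \upmu(s,u',\vartheta)$ to have a critical point to the left of $\widetilde{u}$ when $\upmu(s,\widetilde{u},\widetilde{\vartheta}) \leq 1/2$ and $\Rad\upmu(s,\widetilde{u},\widetilde{\vartheta}) > 0$; the paper phrases the resulting inequality as \eqref{E:FIRSTRADMUOVERMUALGEBRAICBOUND} in terms of $\upmu - \upmu_{(Min)}$ rather than $\upmu$ itself, but your version and the paper's give the same final rate.
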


\begin{proof}
See Subsect.\ \ref{SS:OFTENUSEDESTIMATES} for some comments on the analysis.

\medskip

\noindent \textbf{Proof of \eqref{E:POSITIVEPARTOFLMUOVERMUISBOUNDED}}:
We may assume that $\Lunit \upmu(s,u,\vartheta) > 0$ since otherwise
\eqref{E:POSITIVEPARTOFLMUOVERMUISBOUNDED} is trivial. 
Then by \eqref{E:LUNITUPMUATTIMETMINUSLUNITUPMUATTIMESPOINTWISEESTIMATE},
for $0 \leq s' \leq s \leq t < \Tboot \leq 2 \TranminusdatasizeWithFactor^{-1}$,
we have that 
$\Lunit \upmu(s',u,\vartheta) 
\geq 
\Lunit \upmu(s,u,\vartheta)
- C \varepsilon(s-s') 
\geq 
- C \varepsilon
$.
Integrating this estimate with respect to $s'$ starting from $s'=0$
and using \eqref{E:MUINITIALDATAESTIMATE},
we find that 
$\upmu(s,u,\vartheta) \geq 1 - C_{\mydiam} \Psiep - C \varepsilon$
and thus $1/\upmu(s,u,\vartheta) \leq 1 + C_{\mydiam} \Psiep + C \varepsilon$.
Also using the bound 
$\left|
	\Lunit \upmu(s,u,\vartheta)
\right|
\leq C
$
proved in \eqref{E:LUNITUPMULINFINITY},
we conclude the desired estimate.

\medskip

\noindent \textbf{Proof of \eqref{E:SMALLMUIMPLIESLMUISNEGATIVE}}:
By \eqref{E:LUNITUPMUATTIMETMINUSLUNITUPMUATTIMESPOINTWISEESTIMATE},
for $0 \leq s \leq t < \Tboot \leq 2 \TranminusdatasizeWithFactor^{-1}$,
we have that 
$
\Lunit \upmu(s,u,\vartheta) = \Lunit \upmu(0,u,\vartheta) + \mathcal{O}(\varepsilon)
$.
Integrating this estimate with respect to $s$ starting from $s=0$
and using \eqref{E:MUINITIALDATAESTIMATE},
we find that 
$\upmu(s,u,\vartheta) 
	= 1 
	+
	\mathcal{O}_{\mydiam}(\Psiep)
	+ 
	\mathcal{O}(\varepsilon) 
	+
	s \Lunit \upmu(0,u,\vartheta) 
$.
Again using \eqref{E:LUNITUPMUATTIMETMINUSLUNITUPMUATTIMESPOINTWISEESTIMATE}
to deduce that $\Lunit \upmu(0,u,\vartheta) = \Lunit \upmu(s,u,\vartheta) + \mathcal{O}(\varepsilon)$,
we find that
$\upmu(s,u,\vartheta) 
	= 1 
	+
	\mathcal{O}_{\mydiam}(\Psiep)
	+ 
	\mathcal{O}(\varepsilon) 
	+
	s \Lunit \upmu(s,u,\vartheta) 
$.
It follows that 
whenever $\upmu(s,u,\vartheta) < 1/4$, we have
\[
\Lunit \upmu(s,u,\vartheta) 
< 
- \frac{1}{s}
	\left\lbrace
		3/4 
		+
		\mathcal{O}_{\mydiam}(\Psiep)
		+ 
		\mathcal{O}(\varepsilon) 
	\right\rbrace
<
- \frac{1}{2} 
	\left\lbrace
		3/4 
		+
		\mathcal{O}_{\mydiam}(\Psiep)
		+ 
		\mathcal{O}(\varepsilon) 
	\right\rbrace
	\TranminusdatasizeWithFactor 
< - \frac{1}{4} \TranminusdatasizeWithFactor 
\]
as desired.

\medskip

\noindent \textbf{Proof of \eqref{E:UNOTNECESSARILYEQUALTOONECRUCIALLATETIMEDERIVATIVECOMPAREDTODATAPARAMETER} and \eqref{E:CRUCIALLATETIMEDERIVATIVECOMPAREDTODATAPARAMETER}}:
We prove only 
\eqref{E:UNOTNECESSARILYEQUALTOONECRUCIALLATETIMEDERIVATIVECOMPAREDTODATAPARAMETER}
since
\eqref{E:CRUCIALLATETIMEDERIVATIVECOMPAREDTODATAPARAMETER}
 follows from nearly identical arguments. 
From 
\eqref{E:UPMUFIRSTTRANSPORT},
Lemma~\ref{L:SCHEMATICDEPENDENCEOFMANYTENSORFIELDS},
\eqref{E:MUAMPLITUDENEARONE},
\eqref{E:LUNITUPMUATTIMETMINUSLUNITUPMUATTIMESPOINTWISEESTIMATE},
and the $L^{\infty}$ estimates of Prop.\ \ref{P:IMPROVEMENTOFAUX},
we have 
\begin{align} \label{E:WEIGHTEDLMUCOMPAREDTOCRUCIALPOINTWISE}
\frac{\Lunit \upmu(t,u,\vartheta)}{\mathring{\upmu}(u,\vartheta) - M(0,u,\vartheta;t)}
& = 
\left\lbrace
	1 + \mathcal{O}_{\mydiam}(\Psiep) 
\right\rbrace
\Lunit \upmu(0,u,\vartheta)
+ \mathcal{O}(\varepsilon)
	\\
& = \frac{1}{2} 
	\left\lbrace
		1 + \mathcal{O}_{\mydiam}(\Psiep) 
  \right\rbrace
	[G_{\Lunit \Lunit} \Rad \Psi](0,u,\vartheta)
 + 
\mathcal{O}(\varepsilon).
	\notag
\end{align}
From \eqref{E:WEIGHTEDLMUCOMPAREDTOCRUCIALPOINTWISE}
and definitions
\eqref{E:CRITICALBLOWUPTIMEFACTOR}
and
\eqref{E:CRUCIALLATETIMEDERIVATIVEDEF},
we conclude that
$\LateTimeLUnitMu 
\leq 
\left\lbrace
	1 + \mathcal{O}_{\mydiam}(\Psiep) 
\right\rbrace
\TranminusdatasizeWithFactor 
+ \mathcal{O}(\varepsilon) 
=
\left\lbrace
	1 + \mathcal{O}_{\mydiam}(\Psiep) + \mathcal{O}(\varepsilon)
\right\rbrace
\TranminusdatasizeWithFactor 
$
as desired.

\medskip

\noindent \textbf{Proof of \eqref{E:MUSTARBOUNDS}, \eqref{E:MUSTARBOUNDSUISONE}, \textbf{and} \eqref{E:MUSTARINVERSEMUSTGROWUPTOACONSTANT}}:
We first prove \eqref{E:MUSTARBOUNDS}.
We start by establishing the following preliminary estimate
for the crucial quantity $\LateTimeLUnitMu = \LateTimeLUnitMu(t,u)$ 
(see \eqref{E:CRUCIALLATETIMEDERIVATIVEDEF}):
\begin{align} \label{E:LATETIMELMUTIMESTISLESSTHANONE}
	t \LateTimeLUnitMu
	< 1.
\end{align} 
We may assume that
$\LateTimeLUnitMu > 0$
since otherwise \eqref{E:LATETIMELMUTIMESTISLESSTHANONE} is trivial.
To proceed, we use
\eqref{E:MUAPPROXDEF},
\eqref{E:MUAMPLITUDENEARONE},
\eqref{E:BIGMEST},
and \eqref{E:MUSPLIT}
to deduce that the following estimate holds
for $(s,u',\vartheta) \in [0,t] \times [0,u] \times \mathbb{T}$:
\begin{align} \label{E:MUFIRSTLOWERBOUND}
	\upmu(s,u',\vartheta)
	& =
		\left\lbrace
				1 + \mathcal{O}_{\mydiam} (\Psiep) + \mathcal{O}(\varepsilon)
		\right\rbrace
		\left\lbrace
			1 
			+
			\frac{\Lunit \upmu(t,u',\vartheta)}
			{\mathring{\upmu}(u',\vartheta) - M(0,u',\vartheta;t)} s
			+ 
			\mathcal{O}(\varepsilon) (t-s)^2
		\right\rbrace.
\end{align}
Setting $s=t$ in equation \eqref{E:MUFIRSTLOWERBOUND},
taking the min of both sides  
over $(u',\vartheta) \in [0,u] \times \mathbb{T}$,
and appealing to definitions
\eqref{E:MUSTARDEF} and 
\eqref{E:CRUCIALLATETIMEDERIVATIVEDEF},
we deduce that
$\upmu_{\star}(t,u)
= 
\left\lbrace
				1 + \mathcal{O}_{\mydiam} (\Psiep) + \mathcal{O}(\varepsilon)
		\right\rbrace
(1-\LateTimeLUnitMu t)
$.
Since $\upmu_{\star}(t,u) > 0$ by \eqref{E:BOOTSTRAPMUPOSITIVITY},
we conclude \eqref{E:LATETIMELMUTIMESTISLESSTHANONE}.

Having established the preliminary estimate, 
we now take the min of both sides of \eqref{E:MUFIRSTLOWERBOUND}
over $(u',\vartheta) \in [0,u] \times \mathbb{T}$,
appeal to definition \eqref{E:CRUCIALLATETIMEDERIVATIVEDEF},
and use the estimate \eqref{E:UPITSELFLINFINITYP0CONSEQUENCES}
to obtain:
\begin{align} \label{E:HARDERCASEMUFIRSTLOWERBOUND}
	\min_{(u',\vartheta) \in [0,u] \times \mathbb{T}} \upmu(s,u',\vartheta)
	& = 
		\left\lbrace
				1 + \mathcal{O}_{\mydiam} (\Psiep) + \mathcal{O}(\varepsilon)
			\right\rbrace
		\left\lbrace
			1 
			- \LateTimeLUnitMu s
			+ \mathcal{O}(\varepsilon) (t-s)^2
		\right\rbrace.
\end{align}
We will show that the terms in the second pair of braces on RHS~\eqref{E:HARDERCASEMUFIRSTLOWERBOUND} verify
\begin{align} \label{E:MUSECONDLOWERBOUND}
	1 
	- \LateTimeLUnitMu s
	+ \mathcal{O}(\varepsilon) (t-s)^2
	& 
	=
	\left\lbrace
		1 + \smoothfunction(s,u;t)
	\right\rbrace
	\left\lbrace
		1 - \LateTimeLUnitMu s
	\right\rbrace,
\end{align}
where
\begin{align} \label{E:AMPLITUDEDEVIATIONFUNCTIONMUSECONDLOWERBOUND}
	\smoothfunction(s,u;t)
	& = \mathcal{O}(\varepsilon).
\end{align}
The desired estimate \eqref{E:MUSTARBOUNDS}
then follows easily from 
\eqref{E:HARDERCASEMUFIRSTLOWERBOUND}-\eqref{E:AMPLITUDEDEVIATIONFUNCTIONMUSECONDLOWERBOUND}
and 
definition \eqref{E:MUSTARDEF}.
To prove \eqref{E:AMPLITUDEDEVIATIONFUNCTIONMUSECONDLOWERBOUND}, 
we first use \eqref{E:MUSECONDLOWERBOUND} to solve for $\smoothfunction(s,u;t)$: 
\begin{align} \label{E:AMPLITUDEDEVIATIONFUNCTIONEXPRESSION}
	\smoothfunction(s,u;t)
	=
	\frac{\mathcal{O}(\varepsilon) (t-s)^2
				}
				{
				1 - \LateTimeLUnitMu s
				}
	=
	\frac{\mathcal{O}(\varepsilon) (t-s)^2
				}
				{
				1 - \LateTimeLUnitMu t
				+ 
				\LateTimeLUnitMu (t-s)
				}.
\end{align}
We start by considering the case $\LateTimeLUnitMu \leq (1/4) \TranminusdatasizeWithFactor$.
Since $0 \leq s \leq t < \Tboot \leq 2 \TranminusdatasizeWithFactor^{-1}$,
the denominator in the middle expression in \eqref{E:AMPLITUDEDEVIATIONFUNCTIONEXPRESSION}
is $\geq 1/2$, and the desired estimate
\eqref{E:AMPLITUDEDEVIATIONFUNCTIONMUSECONDLOWERBOUND}
follows easily.
In remaining case, we have
$\LateTimeLUnitMu > (1/4) \TranminusdatasizeWithFactor$.
Using \eqref{E:LATETIMELMUTIMESTISLESSTHANONE},
we deduce that 
RHS~\eqref{E:AMPLITUDEDEVIATIONFUNCTIONEXPRESSION}
$
\displaystyle
\leq \frac{1}{\LateTimeLUnitMu} \mathcal{O}(\varepsilon) (t-s)
\leq C \varepsilon \TranminusdatasizeWithFactor^{-2} 
\lesssim \varepsilon
$
as desired.

Inequality \eqref{E:MUSTARINVERSEMUSTGROWUPTOACONSTANT} then follows as a simple consequence of 
\eqref{E:MUSTARBOUNDS}.

Finally, we observe that the estimate \eqref{E:MUSTARBOUNDSUISONE}
follows from
\eqref{E:MUSTARBOUNDS}
and \eqref{E:CRUCIALLATETIMEDERIVATIVECOMPAREDTODATAPARAMETER}.
\medskip

\noindent \textbf{Proof of \eqref{E:LUNITUPMUMINUSBOUND} and \eqref{E:HYPERSURFACELARGETIMEHARDCASEOMEGAMINUSBOUND}}:
To prove \eqref{E:LUNITUPMUMINUSBOUND},
we first use \eqref{E:LUNITUPMUATTIMETMINUSLUNITUPMUATTIMESPOINTWISEESTIMATE}
to deduce that for $0 \leq s \leq t < \Tboot \leq 2 \TranminusdatasizeWithFactor^{-1}$ 
and $(u',\vartheta) \in [0,u] \times \mathbb{T}$,
we have $\Lunit \upmu(s,u',\vartheta) = \Lunit \upmu(t,u',\vartheta) + \mathcal{O}(\varepsilon)$.
Appealing to definition \eqref{E:CRUCIALLATETIMEDERIVATIVEDEF} and
using the estimates \eqref{E:LUNITUPMULINFINITY} and \eqref{E:MUAMPLITUDENEARONE},
we find that
$
\displaystyle
\left\| 
	[\Lunit \upmu]_- 
\right\|_{L^{\infty}(\Sigma_s^u)}
=
	\left\lbrace
		1 + \mathcal{O}_{\mydiam} (\Psiep)
	\right\rbrace
	\LateTimeLUnitMu
	+ 
	\mathcal{O}(\varepsilon)
$.
If
$
\displaystyle
\varepsilon^{1/2}
\leq \LateTimeLUnitMu
$,
we see that if $\varepsilon$ is sufficiently small, 
then we have the desired bound
$
\displaystyle
	\left\lbrace
		1 + \mathcal{O}_{\mydiam} (\Psiep)
	\right\rbrace
	\LateTimeLUnitMu
	+ 
	\mathcal{O}(\varepsilon)
	=
	\left\lbrace
		1 
		+ 
		\mathcal{O}_{\mydiam} (\Psiep)
		+ 
		\mathcal{O}(\varepsilon^{1/2})
	\right\rbrace
	\LateTimeLUnitMu
$.
On the other hand, if
$
\displaystyle
\LateTimeLUnitMu
\leq 
\varepsilon^{1/2}
$,
then similar reasoning yields that
$
\displaystyle
\left\| 
	[\Lunit \upmu]_- 
\right\|_{L^{\infty}(\Sigma_s^u)}
=
	\left\lbrace
		1 + \mathcal{O}_{\mydiam} (\Psiep)
	\right\rbrace
	\LateTimeLUnitMu
	+ 
	\mathcal{O}(\varepsilon)
= \mathcal{O}(\varepsilon^{1/2})
$
as desired. We have thus proved \eqref{E:LUNITUPMUMINUSBOUND}.

The estimate \eqref{E:HYPERSURFACELARGETIMEHARDCASEOMEGAMINUSBOUND} 
can be proved via a similar argument
and we omit the details.

\medskip

\noindent \textbf{Proof of \eqref{E:UNIFORMBOUNDFORMRADMUOVERMU}}:
We fix times $s$ and $t$ with $0 \leq s \leq t < \Tboot \leq 2 \TranminusdatasizeWithFactor^{-1}$
and a point $p \in \Sigma_s^u$ with geometric coordinates 
$(s,\widetilde{u},\widetilde{\vartheta})$.
Let $\iota : [0,u] \rightarrow \Sigma_s^u$ be the integral curve
of $\Rad$ that passes through $p$ and that is parametrized by the values $u'$ of the eikonal function.
We set 
\[
\displaystyle
F(u') := \upmu \circ \iota(u'),
\qquad
\dot{F}(u') := \frac{d}{d u'} F(u') = (\Rad \upmu)\circ \iota(u').
\]
We must bound 
$
\displaystyle
	\frac{[\Rad \upmu]_+}{\upmu}|_p
	= \frac{[\dot{F}(\widetilde{u})]_+}{F(\widetilde{u})}
$.
We may assume that $\dot{F}(\widetilde{u}) > 0$ since otherwise the desired estimate is trivial.
We now set 
\[
H := \sup_{\mathcal{M}_{\Tboot,U_0}} \Rad \Rad \upmu.
\]
If 
$
\displaystyle
F(\widetilde{u}) > \frac{1}{2}
$,
then the desired estimate is a simple consequence of
\eqref{E:RADUPMULINFTY}.
We may therefore also assume that
$
\displaystyle
F(\widetilde{u}) \leq \frac{1}{2}
$.
Then in view of the estimate
$
\left\|
	\upmu - 1
\right\|_{L^{\infty}\left(\mathcal{P}_0^{\Tboot}\right)}
\lesssim \varepsilon
$
along $\mathcal{P}_0^{\Tboot}$
(see \eqref{E:UPITSELFLINFINITYP0CONSEQUENCES} and \eqref{E:DATAEPSILONVSBOOTSTRAPEPSILON}),
we deduce that there exists a $u'' \in [0,\widetilde{u}]$ such that
$\dot{F}(u'') < 0$. Considering also the assumption $\dot{F}(\widetilde{u}) > 0$,
we see that $H > 0$. Moreover, by \eqref{E:RADRADUPMULINFTY}, we have $H \leq C$.
Furthermore, by continuity, there exists a smallest $u_{\ast} \in [0,\widetilde{u}]$
such that $\dot{F}(u') \geq 0$ for $u' \in [u_{\ast},\widetilde{u}]$.
We also set
\begin{align} \label{E:MUMIN}
\upmu_{(Min)}(s,u') 
:= \min_{(u'',\vartheta) \in [0,u'] \times \mathbb{T}} \upmu(s,u'',\vartheta).
\end{align}
The two main steps in the proof are showing that 
\begin{align} \label{E:RADMUOVERMUALGEBRAICBOUND}
	\frac{[\Rad \upmu(s,\widetilde{u},\widetilde{\vartheta})]_+}
		{\upmu(s,\widetilde{u},\widetilde{\vartheta})}
	& \leq 
		H^{1/2}\frac{1}{\sqrt{\upmu_{(Min)}}(s,\widetilde{u})}
\end{align}
and showing that for
$0 \leq s \leq t < \Tboot$, 
we have
\begin{align} \label{E:UPMUMINLOWERBOUND}
	\upmu_{(Min)}(s,u)
	& \geq 
		\max 
		\left\lbrace
			\left\lbrace
				1 - C_{\mydiam} \Psiep - C \varepsilon
			\right\rbrace
			\LateTimeLUnitMu 
			(t-s),
			\left\lbrace
				1 - C_{\mydiam} \Psiep - C \varepsilon
			\right\rbrace 
			(1 - \LateTimeLUnitMu s)
		\right\rbrace,
\end{align}
where $\LateTimeLUnitMu = \LateTimeLUnitMu(t,u)$
is defined in \eqref{E:CRUCIALLATETIMEDERIVATIVEDEF}.
Once we have obtained \eqref{E:RADMUOVERMUALGEBRAICBOUND}-\eqref{E:UPMUMINLOWERBOUND}
(we establish these estimates below), 
we split the remainder of the proof 
(which is relatively easy)
into the two cases 
$
\displaystyle
\LateTimeLUnitMu \leq \frac{1}{4} \TranminusdatasizeWithFactor
$
and 
$
\displaystyle
\LateTimeLUnitMu > \frac{1}{4} \TranminusdatasizeWithFactor
$.
In the first case 
$
\displaystyle
\LateTimeLUnitMu \leq \frac{1}{4} \TranminusdatasizeWithFactor
$,
we have
$
\displaystyle
1 - \LateTimeLUnitMu s \geq 1 - \frac{1}{4} \TranminusdatasizeWithFactor \Tboot \geq \frac{1}{2}
$,
and the
desired bound 
$
\displaystyle
\frac{[\Rad \upmu(s,\widetilde{u},\widetilde{\vartheta})]_+}
{\upmu(s,\widetilde{u},\widetilde{\vartheta})}
\leq C 
\leq \frac{C}{\Tboot^{1/2}}
\leq \frac{C}{\sqrt{\Tboot-s}}
\leq \mbox{RHS~\eqref{E:UNIFORMBOUNDFORMRADMUOVERMU}}
$
follows easily from 
\eqref{E:RADMUOVERMUALGEBRAICBOUND}
and the second term in the $\min$ on RHS~\eqref{E:UPMUMINLOWERBOUND}.
In the remaining case 
$
\displaystyle
\LateTimeLUnitMu > \frac{1}{4} \TranminusdatasizeWithFactor
$, 
we have 
$
\displaystyle
\frac{1}{\LateTimeLUnitMu} \leq C
$, 
and using \eqref{E:RADMUOVERMUALGEBRAICBOUND} and
the first term in the $\min$ on RHS~\eqref{E:UPMUMINLOWERBOUND}, 
we deduce that
$
\displaystyle
\frac{[\Rad \upmu(s,\widetilde{u},\widetilde{\vartheta})]_+}
{\upmu(s,\widetilde{u},\widetilde{\vartheta})}
\leq \frac{C}{\sqrt{t-s}}
$.
Since this estimate holds for all $t < \Tboot$ with a uniform constant $C$,
we conclude \eqref{E:UNIFORMBOUNDFORMRADMUOVERMU} in this case.

We now prove \eqref{E:RADMUOVERMUALGEBRAICBOUND}.
To this end, we will show that 
\begin{align} \label{E:FIRSTRADMUOVERMUALGEBRAICBOUND}
	\frac{[\Rad \upmu(s,\widetilde{u},\widetilde{\vartheta})]_+}
		{\upmu(s,\widetilde{u},\widetilde{\vartheta})}
	& \leq 
		2 H^{1/2} \frac{\sqrt{\upmu(s,\widetilde{u},\widetilde{\vartheta}) - \upmu_{(Min)}(s,\widetilde{u})}}
		{\upmu(s,\widetilde{u},\widetilde{\vartheta})}.
\end{align}
Then viewing RHS~\eqref{E:FIRSTRADMUOVERMUALGEBRAICBOUND} as a function
of the real variable $\upmu(s,\widetilde{u},\widetilde{\vartheta})$ 
(with all other parameters fixed)
on the domain $[\upmu_{(Min)}(s,\widetilde{u}),\infty)$, we
carry out a simple calculus exercise 
to find that RHS~\eqref{E:FIRSTRADMUOVERMUALGEBRAICBOUND}
$
\displaystyle
\leq H^{1/2}\frac{1}{\sqrt{\upmu_{(Min)}(s,\widetilde{u})}}
$,
which yields \eqref{E:RADMUOVERMUALGEBRAICBOUND}.

We now prove \eqref{E:FIRSTRADMUOVERMUALGEBRAICBOUND}.
Let $u_{\ast}$ be as defined just above \eqref{E:MUMIN}.
For any $u' \in [u_{\ast},\widetilde{u}]$, 
we use the mean value theorem to obtain 
\begin{align} \label{E:MVTESTIMATES}
\dot{F}(\widetilde{u}) 
- 
\dot{F}(u') 
\leq H (\widetilde{u} - u'),
	\qquad
F(\widetilde{u}) 
- 
F(u') 
\geq 
\min_{u'' \in [u',\widetilde{u}]} \dot{F}(u'')
(\widetilde{u} - u').
\end{align}
Setting 
$
\displaystyle
u_1 := 
\widetilde{u} 
- 
\frac{1}{2} \frac{\dot{F}(\widetilde{u})}{H}
$, 
we find from the first estimate in \eqref{E:MVTESTIMATES} that 
for $u' \in [u_1,\widetilde{u}]$, we have
$
\displaystyle
\dot{F}(u')
\geq 
\frac{1}{2} \dot{F}(\widetilde{u})
$.
Using also the second estimate in \eqref{E:MVTESTIMATES}, we find that
$
\displaystyle
F(\widetilde{u}) 
- 
F(u_1)
\geq 
\frac{1}{2} \dot{F}(\widetilde{u}) (\widetilde{u}-u_1)
= \frac{1}{4} \frac{\dot{F}^2(\widetilde{u})}{H}
$. 
Noting that the definition \eqref{E:MUMIN} of $\upmu_{(Min)}$ implies that
$F(u_1) 
\geq \upmu_{(Min)}(s,\widetilde{u})
$,
we deduce from the previous estimate that
\begin{align} \label{E:FDIFFERENCELOWERBOUND}
	\upmu (s,\widetilde{u},\widetilde{\vartheta})
	- 
	\upmu_{(Min)}(s,\widetilde{u})
	& \geq 
	\frac{1}{4} \frac{[\Rad \upmu(s,\widetilde{u},\widetilde{\vartheta})]_+^2}{H}.
\end{align}
Taking the square root of \eqref{E:FDIFFERENCELOWERBOUND},
rearranging,
and dividing by
$\upmu(s,\widetilde{u},\widetilde{\vartheta})$,
we conclude the desired estimate \eqref{E:FIRSTRADMUOVERMUALGEBRAICBOUND}.

It remains for us to prove \eqref{E:UPMUMINLOWERBOUND}. 
Reasoning as in the proof of 
\eqref{E:MUFIRSTLOWERBOUND}-\eqref{E:AMPLITUDEDEVIATIONFUNCTIONMUSECONDLOWERBOUND}
and using \eqref{E:LATETIMELMUTIMESTISLESSTHANONE},
we find that for $0 \leq s \leq t < \Tboot$ and $u' \in [0,u]$,
we have
$\upmu_{(Min)}(s,u') 
\geq 
\left\lbrace
	1 - C_{\mydiam} \Psiep - C \varepsilon
\right\rbrace
(1 - \LateTimeLUnitMu s)
\geq
\left\lbrace
	1 - C_{\mydiam} \Psiep - C \varepsilon
\right\rbrace
\LateTimeLUnitMu (t-s)
$.
From these two inequalities, 
we conclude \eqref{E:UPMUMINLOWERBOUND}.

\medskip

\noindent {\textbf{Proof of} \eqref{E:LOCALIZEDMUMUSTSHRINK}}:
A straightforward modification of the proof of \eqref{E:MUSTARBOUNDS},
based on equations
\eqref{E:MUAPPROXDEF}
and \eqref{E:MUSPLIT},
yields that for $0 \leq s_1 \leq s_2 \leq t < \Tboot$
and $(u',\vartheta) \in \Vminus{t}{u}$,
we have
$
\displaystyle
\frac{\upmu(s_2,u',\vartheta)}{\upmu(s_1,u',\vartheta)}
= \left\lbrace
		1 
		+ 
		\mathcal{O}(\varepsilon)
	\right\rbrace
	\left\lbrace
	\frac{1 + \left(\frac{\Lunit \upmu(t,u',\vartheta)}
									{\mathring{\upmu}(u',\vartheta) - M(0,u',\vartheta;t)} \right)
									s_2}{
									1 + \left(\frac{\Lunit \upmu(t,u',\vartheta)}
									{\mathring{\upmu}(u',\vartheta) - M(0,u',\vartheta;t)} \right)
									s_1}	
\right\rbrace
$.
The estimate \eqref{E:LOCALIZEDMUMUSTSHRINK} 
then follows as a simple consequence.

\medskip

\noindent {\textbf{Proof of}
\eqref{E:LOCALIZEDMUCANTGROWTOOFAST}, \eqref{E:KEYMUNOTDECAYBOUND}, \textbf{and} \eqref{E:KEYMUNOTDECAYINGMINUSPARTLMUOVERMUBOUND}}:
By \eqref{E:LUNITUPMUATTIMETMINUSLUNITUPMUATTIMESPOINTWISEESTIMATE},
if $(u',\vartheta) \in \Vplus{t}{u}$ and
$0 \leq s' \leq s \leq t < \Tboot$,
then
$[\Lunit \upmu]_-(s',u,\vartheta) \leq C \varepsilon$
and
$\Lunit \upmu(s',u,\vartheta) \geq - C \varepsilon$.
Integrating the latter estimate with respect to $s'$ from $0$ to $s$
and using \eqref{E:MUINITIALDATAESTIMATE},
we find that  
$\upmu(s,u',\vartheta) 
\geq 1 - C_{\mydiam} \Psiep - C \varepsilon
$.
Moreover, from \eqref{E:UPMULINFTY}, 
we have the crude bound $\upmu(s,u',\vartheta) \leq C$.
The desired bounds
\eqref{E:LOCALIZEDMUCANTGROWTOOFAST}, \eqref{E:KEYMUNOTDECAYBOUND}, and 
\eqref{E:KEYMUNOTDECAYINGMINUSPARTLMUOVERMUBOUND}
now readily follow
from these estimates.

\medskip

\noindent {\textbf{Proof of} \eqref{E:LMUPLUSNEGLIGIBLEINSIGMAMINUS}}:
By \eqref{E:LUNITUPMUATTIMETMINUSLUNITUPMUATTIMESPOINTWISEESTIMATE},
if $(u',\vartheta) \in \Vminus{t}{u}$ and
$0 \leq s \leq t < \Tboot$,
then
$[\Lunit \upmu]_+(s,u',\vartheta)
=  [\Lunit \upmu]_+(t,u',\vartheta) + \mathcal{O}(\varepsilon) = \mathcal{O}(\varepsilon)$.
The desired bound \eqref{E:LMUPLUSNEGLIGIBLEINSIGMAMINUS} thus follows.

\end{proof}

\subsection{Sharp time-integral estimates involving \texorpdfstring{$\upmu$}{the inverse foliation density}}
\label{SS:SHARPTIMEINTEGRALESTIMATES}
In deriving a priori energy estimates, we use a Gronwall argument that features
time integrals involving difficult factors of $\upmu_{\star}^{-\Contwo}$
for various constants $\Contwo > 0$.
In the next proposition, we bound these time integrals.

\begin{proposition}[\textbf{Fundamental estimates for time integrals involving $\upmu_{\star}^{-1}$}] 
\label{P:MUINVERSEINTEGRALESTIMATES}
	Let
	\begin{align*}
		1 < \Contwo \leq 100
	\end{align*}
	be a real number.\footnote{In practice, 
	to close our energy estimates, we need only to consider
	values of $\Contwo$ that are significantly less than $100$.
	At this point in the paper, we prefer to 
	allow $\Contwo$ to be as large as $100$ so that we have
	a comfortable margin of error later in the paper.} 
	The following estimates hold for $(t,u) \in [0,\Tboot) \times [0,U_0]$.

	\medskip

	\noindent \underline{\textbf{Estimates relevant for borderline top-order spacetime integrals}}.
	There exist constants $C_{\mydiam} > 0$ 
	(see Subsect.\ \ref{SS:NOTATIONANDINDEXCONVENTIONS} regarding our use of the notation $C_{\mydiam}$)
	and $C > 0$ such that 
	\begin{align} \label{E:KEYMUTOAPOWERINTEGRALBOUND}
		\int_{s=0}^t 
			\frac{\left\| [\Lunit \upmu]_- \right\|_{L^{\infty}(\Sigma_s^u)}} 
					 {\upmu_{\star}^{\Contwo}(s,u)}
		\, ds 
		& \leq \frac{1 + C_{\mydiam} \Psiep + C \varepsilon^{1/2}}{\Contwo-1} 
			\upmu_{\star}^{1-\Contwo}(t,u).
	\end{align}

	\noindent \underline{\textbf{Estimates relevant for borderline top-order hypersurface integrals}}.
	Let $\Sigmaminus{t}{t}{u}$ be the subset of $\Sigma_t$ defined in \eqref{E:SIGMAMINUS}.
	There exist constants $C_{\mydiam} > 0$ and $C > 0$ such that
	\begin{align} \label{E:KEYHYPERSURFACEMUTOAPOWERINTEGRALBOUND}
		\left\| 
			\Lunit \upmu 
		\right\|_{L^{\infty}(\Sigmaminus{t}{t}{u})} 
		\int_{s=0}^t 
			\frac{1} 
				{\upmu_{\star}^{\Contwo}(s,u)}
			\, ds 
		& \leq \frac{1 + C_{\mydiam} \Psiep + C \varepsilon^{1/2}}{\Contwo-1} 
			\upmu_{\star}^{1-\Contwo}(t,u).
	\end{align}

	\medskip

	\noindent \underline{\textbf{Estimates relevant for less dangerous top-order spacetime integrals}}.
	There exists a constant $C > 0$ 
	such that 
	\begin{align} \label{E:LOSSKEYMUINTEGRALBOUND}
		\int_{s=0}^t \frac{1} 
			{\upmu_{\star}^{\Contwo}(s,u)}
		\, ds 
		& \leq C \left\lbrace 
			1
			+ 
			\frac{1}{\Contwo-1} \right\rbrace \upmu_{\star}^{1-\Contwo}(t,u).
	\end{align} 

	\medskip

	\noindent \underline{\textbf{Estimates for integrals that lead to only $\ln \upmu_{\star}^{-1}$ degeneracy}}. 
	There exist constants $C_{\mydiam} > 0$ and $C > 0$ such that
	\begin{align} \label{E:KEYMUINVERSEINTEGRALBOUND}
		\int_{s=0}^t 
			\frac{\left\| [\Lunit \upmu]_- \right\|_{L^{\infty}(\Sigma_s^u)}} 
					 {\upmu_{\star}(s,u)}
		\, ds 
		& \leq (1 + C_{\mydiam} \Psiep + C \varepsilon^{1/2}) \ln \upmu_{\star}^{-1}(t,u) 
			+ C_{\mydiam} \Psiep
			+ C \varepsilon^{1/2},
			\\
		\int_{s=0}^t 
			\frac{1}{\upmu_{\star}(s,u)}
		\, ds 
		& \leq  C \left\lbrace \ln \upmu_{\star}^{-1}(t,u) + 1 \right\rbrace.
		\label{E:LOGLOSSMUINVERSEINTEGRALBOUND}
	\end{align}

\medskip

	\noindent \underline{\textbf{Estimates for integrals that break the $\upmu_{\star}^{-1}$ degeneracy}}. 
	There exists a constant $C > 0$ such that
	\begin{align} \label{E:LESSSINGULARTERMSMPOINTNINEINTEGRALBOUND}
		\int_{s=0}^t 
			\frac{1} 
			{\upmu_{\star}^{9/10}(s,u)}
		\, ds 
		& \leq C.
	\end{align}

\end{proposition}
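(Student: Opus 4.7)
The plan is to reduce every one of the estimates in Prop.~\ref{P:MUINVERSEINTEGRALESTIMATES} to the sharp spatially uniform formulas \eqref{E:MUSTARBOUNDS}, \eqref{E:LUNITUPMUMINUSBOUND}, and \eqref{E:HYPERSURFACELARGETIMEHARDCASEOMEGAMINUSBOUND} from Prop.~\ref{P:SHARPMU}, and then to evaluate the resulting single-variable integral essentially explicitly. Fix $(t,u) \in [0,\Tboot) \times [0,U_0]$ and write $\LateTimeLUnitMu := \LateTimeLUnitMu(t,u)$ for the quantity \eqref{E:CRUCIALLATETIMEDERIVATIVEDEF}; I split every estimate according to whether $\LateTimeLUnitMu \geq \varepsilon^{1/2}$ (the genuine-shock regime) or $\LateTimeLUnitMu \leq \varepsilon^{1/2}$ (the almost-trivial regime). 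Note that \eqref{E:LATETIMELMUTIMESTISLESSTHANONE} gives $\LateTimeLUnitMu t < 1$, and \eqref{E:UNOTNECESSARILYEQUALTOONECRUCIALLATETIMEDERIVATIVECOMPAREDTODATAPARAMETER} gives $\LateTimeLUnitMu \lesssim \TranminusdatasizeWithFactor$, both of which I use freely.

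For \eqref{E:KEYMUTOAPOWERINTEGRALBOUND} in the genuine-shock regime, \eqref{E:MUSTARBOUNDS} and \eqref{E:LUNITUPMUMINUSBOUND} together replace the integrand by $\{1 + \mathcal{O}_{\mydiam}(\Psiep) + \mathcal{O}(\varepsilon^{1/2})\}\LateTimeLUnitMu (1 - \LateTimeLUnitMu s)^{-\Contwo}$ up to uniform multiplicative errors, and the change of variable $\sigma = 1 - \LateTimeLUnitMu s$ evaluates
\[
\int_{s=0}^t \frac{\LateTimeLUnitMu}{(1 - \LateTimeLUnitMu s)^{\Contwo}}\, ds
= \frac{1}{\Contwo - 1}\bigl\{(1 - \LateTimeLUnitMu t)^{1-\Contwo} - 1\bigr\}
\leq \frac{1}{\Contwo-1}(1 - \LateTimeLUnitMu t)^{1-\Contwo},
\]
after which a second application of \eqref{E:MUSTARBOUNDS} converts $(1-\LateTimeLUnitMu t)^{1-\Contwo}$ into $\{1 + \mathcal{O}_{\mydiam}(\Psiep) + \mathcal{O}(\varepsilon)\}\upmu_{\star}^{1-\Contwo}(t,u)$, delivering exactly the claimed coefficient. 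In the almost-trivial regime the numerator is $\mathcal{O}(\varepsilon^{1/2})$ while $\upmu_\star$ stays bounded below by a positive constant (by \eqref{E:MUSTARBOUNDS} and the smallness of $\LateTimeLUnitMu$), so the integral is $\mathcal{O}(\varepsilon^{1/2})$, which the RHS comfortably absorbs. Estimate \eqref{E:KEYHYPERSURFACEMUTOAPOWERINTEGRALBOUND} is proved in exactly the same way: the constant $\|\Lunit \upmu\|_{L^\infty(\Sigmaminus{t}{t}{u})}$ pulls out of the integral, \eqref{E:HYPERSURFACELARGETIMEHARDCASEOMEGAMINUSBOUND} supplies precisely the same bound $\{1 + \mathcal{O}_{\mydiam}(\Psiep) + \mathcal{O}(\varepsilon^{1/2})\}\LateTimeLUnitMu$ that \eqref{E:LUNITUPMUMINUSBOUND} supplied before, and the remaining integration is identical.

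For \eqref{E:LOSSKEYMUINTEGRALBOUND} I no longer have a free factor $\LateTimeLUnitMu$ in the numerator. When $\LateTimeLUnitMu \leq (4\TranminusdatasizeWithFactor)^{-1}$ (say), \eqref{E:MUSTARBOUNDS} gives $\upmu_\star \geq 1/2$, so the integral is simply $\lesssim t \lesssim \TranminusdatasizeWithFactor^{-1} \lesssim \upmu_\star^{1-\Contwo}$; when $\LateTimeLUnitMu > (4\TranminusdatasizeWithFactor)^{-1}$, I artificially multiply and divide by $\LateTimeLUnitMu$, apply \eqref{E:KEYMUTOAPOWERINTEGRALBOUND}-style computation, and absorb the resulting factor $\LateTimeLUnitMu^{-1} \lesssim 1$ into the constant $C$. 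The logarithmic estimates \eqref{E:KEYMUINVERSEINTEGRALBOUND}-\eqref{E:LOGLOSSMUINVERSEINTEGRALBOUND} follow along the same lines, except that the explicit antiderivative of $\LateTimeLUnitMu(1 - \LateTimeLUnitMu s)^{-1}$ is $-\ln(1 - \LateTimeLUnitMu t)$, which \eqref{E:MUSTARBOUNDS} identifies with $\{1 + \mathcal{O}_{\mydiam}(\Psiep) + \mathcal{O}(\varepsilon^{1/2})\}\ln \upmu_\star^{-1}(t,u)$ modulo an additive $\mathcal{O}_{\mydiam}(\Psiep) + \mathcal{O}(\varepsilon^{1/2})$ coming from the amplitude factor. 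Finally, \eqref{E:LESSSINGULARTERMSMPOINTNINEINTEGRALBOUND} exploits the fact that the exponent $9/10 < 1$ makes the antiderivative bounded: in the genuine-shock regime,
\[
\int_{s=0}^t \frac{\LateTimeLUnitMu}{(1 - \LateTimeLUnitMu s)^{9/10}}\,ds = 10\bigl\{1 - (1-\LateTimeLUnitMu t)^{1/10}\bigr\} \leq 10,
\]
and I then divide by the factor $\LateTimeLUnitMu \gtrsim 1$ that I inserted; when $\LateTimeLUnitMu \lesssim 1$ the trivial lower bound on $\upmu_\star$ suffices.

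The main obstacle is numerical book-keeping rather than any genuine analytic difficulty: the coefficient $(\Contwo-1)^{-1}$ in \eqref{E:KEYMUTOAPOWERINTEGRALBOUND}-\eqref{E:KEYHYPERSURFACEMUTOAPOWERINTEGRALBOUND} must emerge with only a multiplicative $1 + C_{\mydiam}\Psiep + C\varepsilon^{1/2}$ correction, because these coefficients are fed directly into the Gronwall argument that yields the top-order blowup-exponent $11.8$ in \eqref{E:INTROTOPENERGY}; any loss of sharpness here would inflate that exponent and potentially prevent the descent scheme from reaching a non-degenerate level. Keeping the correction in the claimed form requires that the two multiplicative errors from \eqref{E:MUSTARBOUNDS} (one applied to $\upmu_\star(s,u)$ inside the integral and one applied to $\upmu_\star(t,u)$ after integration) be the same $1 + \mathcal{O}_{\mydiam}(\Psiep) + \mathcal{O}(\varepsilon)$ type, and that the $\mathcal{O}(\varepsilon^{1/2})$ correction from \eqref{E:LUNITUPMUMINUSBOUND} dominates the $\mathcal{O}(\varepsilon)$ one from \eqref{E:MUSTARBOUNDS}; both points follow from a careful multiplication of the two expansions and use $\varepsilon < 1$.
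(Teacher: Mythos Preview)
Your approach is essentially identical to the paper's: reduce everything to the explicit integral $\int_0^t \LateTimeLUnitMu (1 - \LateTimeLUnitMu s)^{-\Contwo}\,ds$ via \eqref{E:MUSTARBOUNDS} and \eqref{E:LUNITUPMUMINUSBOUND} (or \eqref{E:HYPERSURFACELARGETIMEHARDCASEOMEGAMINUSBOUND}), then evaluate explicitly and split into a ``genuine-shock'' regime and a trivial regime. One slip: in your treatment of \eqref{E:LOSSKEYMUINTEGRALBOUND} the threshold should be $\LateTimeLUnitMu \leq \tfrac{1}{4}\TranminusdatasizeWithFactor$, not $\LateTimeLUnitMu \leq (4\TranminusdatasizeWithFactor)^{-1}$; with your threshold, $\LateTimeLUnitMu s$ need not be $\leq 1/2$, so $\upmu_\star \geq 1/2$ does not follow (and in the complementary case $\LateTimeLUnitMu^{-1} < 4\TranminusdatasizeWithFactor^{-1} \lesssim 1$ is what you actually want, matching the paper's convention that constants may depend on $\TranminusdatasizeWithFactor^{-1}$).
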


\begin{proof}
See Subsect.\ \ref{SS:OFTENUSEDESTIMATES} for some comments on the analysis.

\noindent {\textbf{Proof of} \eqref{E:KEYMUTOAPOWERINTEGRALBOUND}, 
\eqref{E:KEYHYPERSURFACEMUTOAPOWERINTEGRALBOUND},
and \eqref{E:KEYMUINVERSEINTEGRALBOUND}}:
To prove \eqref{E:KEYMUTOAPOWERINTEGRALBOUND}, we 
first consider the case 
$\LateTimeLUnitMu \geq \varepsilon^{1/2}$
in \eqref{E:LUNITUPMUMINUSBOUND}.
Using \eqref{E:MUSTARBOUNDS} and \eqref{E:LUNITUPMUMINUSBOUND}, we deduce that
\begin{align} \label{E:PROOFKEYMUTOAPOWERINTEGRALBOUND}
		\int_{s=0}^t 
			\frac{\left\| [\Lunit \upmu]_- \right\|_{L^{\infty}(\Sigma_s^u)}} 
					 {\upmu_{\star}^{\Contwo}(s,u)}
		\, ds 
		& = 
		\left\lbrace
			1 + \mathcal{O}_{\mydiam}(\Psiep) + \mathcal{O}(\varepsilon^{1/2})
		\right\rbrace
			\int_{s=0}^t 
				\frac{\LateTimeLUnitMu}{(1 - \LateTimeLUnitMu s)^{\Contwo}}
			\, ds 
				\\
		& \leq \frac{1 + \mathcal{O}_{\mydiam}(\Psiep) + \mathcal{O}(\varepsilon^{1/2})}{\Contwo - 1}
				\frac{1}{(1 - \LateTimeLUnitMu t)^{\Contwo-1}}
			= 	\frac{1 + \mathcal{O}_{\mydiam}(\Psiep) + \mathcal{O}(\varepsilon^{1/2})}{\Contwo - 1}
					\upmu_{\star}^{1-\Contwo}(t,u)
					\notag
	\end{align}
	as desired. We now consider the remaining case
	$\LateTimeLUnitMu \leq \varepsilon^{1/2}$
	in \eqref{E:LUNITUPMUMINUSBOUND}.
	Using 
	\eqref{E:MUSTARBOUNDS},
	\eqref{E:LUNITUPMUMINUSBOUND},
	and the fact that $0 \leq s \leq t < \Tboot \leq 2 \TranminusdatasizeWithFactor^{-1}$,
	we see that for $\varepsilon$ sufficiently small relative to 
	$\TranminusdatasizeWithFactor$, 
	we have
	\begin{align} \label{E:SECONDCASEPROOFKEYMUTOAPOWERINTEGRALBOUND}
		\int_{s=0}^t 
			\frac{\left\| [\Lunit \upmu]_- \right\|_{L^{\infty}(\Sigma_s^u)}} 
					 {\upmu_{\star}^{\Contwo}(s,u)}
		\, ds 
		& \leq
			C \varepsilon^{1/2}
			\int_{s=0}^t 
				\frac{1}{(1 - \LateTimeLUnitMu s)^{\Contwo}}
			\, ds 
				\\
		& 
			\leq
			C \varepsilon^{1/2}
			\int_{s=0}^t 
				1
			\, ds 
			\leq 
			C \varepsilon^{1/2}
			\leq
			C \varepsilon^{1/2}
			\frac{1}{(1 - \LateTimeLUnitMu t)^{\Contwo-1}}
			\leq \frac{1}{\Contwo - 1}
			\upmu_{\star}^{1-\Contwo}(t,u)
					\notag
	\end{align}
	as desired.
	We have thus proved \eqref{E:KEYMUTOAPOWERINTEGRALBOUND}.

	Inequality \eqref{E:KEYMUINVERSEINTEGRALBOUND}
	can be proved using similar arguments and we omit the details.
	
	Inequality \eqref{E:KEYHYPERSURFACEMUTOAPOWERINTEGRALBOUND} 
	can be proved using similar arguments
	with the help of the estimate \eqref{E:HYPERSURFACELARGETIMEHARDCASEOMEGAMINUSBOUND}
	and we omit the details.

	\medskip

\noindent {\textbf{Proof of} \eqref{E:LOSSKEYMUINTEGRALBOUND},
\eqref{E:LOGLOSSMUINVERSEINTEGRALBOUND}, 
and \eqref{E:LESSSINGULARTERMSMPOINTNINEINTEGRALBOUND}}:
	To prove \eqref{E:LOSSKEYMUINTEGRALBOUND}, 
	we first use \eqref{E:MUSTARBOUNDS} to deduce
	\begin{align} \label{E:PROOFLOSSKEYMUINTEGRALBOUND}
		\int_{s=0}^t \frac{1} 
			{\upmu_{\star}^{\Contwo}(s,u)}
		\, ds 
		& \leq 
		C
		\int_{s=0}^t 
			\frac{1}{(1 - \LateTimeLUnitMu s)^{\Contwo}}
		\, ds,
	\end{align}
	where $\LateTimeLUnitMu = \LateTimeLUnitMu(t,u)$
	is defined in \eqref{E:CRUCIALLATETIMEDERIVATIVEDEF}.
	We first assume that 
	$
	\displaystyle
	\LateTimeLUnitMu \leq \frac{1}{4} \TranminusdatasizeWithFactor
	$. 
	Then since $0 \leq t < \Tboot < 2 \TranminusdatasizeWithFactor^{-1}$,
	we see from \eqref{E:MUSTARBOUNDS} that 
	$
	\displaystyle
	\upmu_{\star}(s,u) \geq \frac{1}{4}$ for $0 \leq s \leq t
	$
	and that RHS~\eqref{E:PROOFLOSSKEYMUINTEGRALBOUND} 
	$\leq C 
	\leq 
	C \upmu_{\star}^{1-\Contwo}(t,u)$
	as desired.
	In the remaining case, we have 
	$
	\displaystyle
	\LateTimeLUnitMu > \frac{1}{4} \TranminusdatasizeWithFactor
	$,
	and we can use \eqref{E:MUSTARBOUNDS},
	the estimate 
	$
	\displaystyle
	\frac{1}{\LateTimeLUnitMu} 
	\leq C$,
	and \eqref{E:LATETIMELMUTIMESTISLESSTHANONE}
	to bound RHS~\eqref{E:PROOFLOSSKEYMUINTEGRALBOUND} by
	$
	\displaystyle
	\leq
		\frac{C}{\LateTimeLUnitMu}
		\frac{1}{(\Contwo - 1)}
		\frac{1}{(1 - \LateTimeLUnitMu t)^{\Contwo-1}}
		\leq 
		\frac{C}{\Contwo - 1}
		\upmu_{\star}^{1-\Contwo}(t,u)
  $
	as desired.

	Inequalities 
	\eqref{E:LOGLOSSMUINVERSEINTEGRALBOUND}
	and
	\eqref{E:LESSSINGULARTERMSMPOINTNINEINTEGRALBOUND} can be proved in a similar
	fashion. We omit the details, aside from remarking that the last step of the proof of 
	\eqref{E:LESSSINGULARTERMSMPOINTNINEINTEGRALBOUND}
	relies on the trivial estimate $(1 - \LateTimeLUnitMu t)^{1/10} \leq 1$.

\end{proof}

\section{Pointwise estimates for the error terms}
\label{S:POINTWISEESTIMATES}
In this section, we use some estimates that we established in prior sections 
to derive pointwise estimates for the error terms that we 
encounter in our energy estimates.
Remark~\ref{R:ESTIMATESPROVEDINOTHERPAPER} especially applies in this section.

\subsection{Definition of ``harmless'' error terms}
\label{SS:HARMLESS}
Most error terms that we encounter 
are harmless in the sense that they remain negligible all the way up
to the shock. We now precisely define what we mean by ``harmless.''

\begin{definition}[\textbf{Harmless terms}]
	\label{D:HARMLESSTERMS}
	A $Harmless^{[1,N]}$ term is any term such that 
	under the data-size and bootstrap assumptions 
	of Subsects.\ \ref{SS:DATAASSUMPTIONS}-\ref{SS:PSIBOOTSTRAP}
	and the smallness assumptions of Subsect.\ \ref{SS:SMALLNESSASSUMPTIONS}, 
	the following bound holds on
	$\mathcal{M}_{\Tboot,U_0}$
	(see Subsect.\ \ref{SS:STRINGSOFCOMMUTATIONVECTORFIELDS} regarding the vectorfield operator notation):
	\begin{align} \label{E:HARMESSTERMPOINTWISEESTIMATE}
		\left| 
			Harmless^{[1,N]}
		\right|
		& \lesssim 
			\left|
				\Fullset_*^{[1,N+1];1} \Psi
			\right|
			+
			\left|
				\Fullset_*^{[1,N];1} \GdVar
			\right|
			+
			\left|
				\Tanset_*^{[1,N]} \BadVar
			\right|.
	\end{align}
	
	A $Harmless_{(Slow)}^{\leq N}$ term is any term such that 
	under the data-size and bootstrap assumptions 
	of Subsects.\ \ref{SS:DATAASSUMPTIONS}-\ref{SS:PSIBOOTSTRAP}
	and the smallness assumptions of Subsect.\ \ref{SS:SMALLNESSASSUMPTIONS}, 
	the following bound holds on
	$\mathcal{M}_{\Tboot,U_0}$:
	\begin{align} \label{E:SLOWHARMESSTERMPOINTWISEESTIMATE}
		\left| 
			Harmless_{(Slow)}^{\leq N}
		\right|
		& \lesssim 
			\left|
				\Tanset^{\leq N} \bigslow
			\right|.
	\end{align}
	
\end{definition}

\begin{remark}[\textbf{A difference compared to \cite{jSgHjLwW2016}}]
	\label{R:DIFFERENTHARMLESSTERMDEF}
	Our definition of $Harmless^{[1,N]}$ terms 
	is similar to the definition of the $Harmless^{\leq N}$ terms featured in \cite{jSgHjLwW2016},
	the difference being that here we do not allow for the presence of order $0$ terms 
	on RHS~\eqref{E:HARMESSTERMPOINTWISEESTIMATE}.
	The reason for our slightly different definition is that in this paper,
	some of the order $0$ quantities are
	controlled by the smallness parameter $\Psiep$
	rather than by $\mathring{\upepsilon}$,
	and we find it convenient to highlight that 
	(based in part on our assumptions \eqref{E:SOMENONINEARITIESARELINEAR} on the semilinear inhomogeneous terms)
	such order $0$ quantities do not appear in our energy estimates.
	Our definition of $Harmless_{(Slow)}^{\leq N}$ terms
	accounts for the harmless error terms
	corresponding to the slow wave variable $\bigslow$.
	Note that terms that are order $0$ in $\bigslow$
	\emph{are} allowed on RHS~\eqref{E:SLOWHARMESSTERMPOINTWISEESTIMATE}.
\end{remark}

\subsection{Identification of the key difficult error terms in the commuted equations}
\label{SS:KEYERRORTERMS}
As we mentioned, most error terms that arise upon commuting the wave equations
are negligible. In the next proposition, we identify those error terms that are not.

\begin{proposition}[\textbf{Identification of the key difficult error term factors}]
\label{P:IDOFKEYDIFFICULTENREGYERRORTERMS}
Recall that $\GeoAngFlatRadComponent$ is the scalar function from Lemma~\ref{L:GEOANGDECOMPOSITION}.
For $1 \leq N \leq 18$, we have the following estimates:
\begin{subequations}
\begin{align}
		\upmu \square_g (\GeoAng^{N-1} \Lunit \Psi)
	& = (\angdiffuparg{\#} \Psi) \cdot (\upmu \angdiff \GeoAng^{N-1} \mytr \upchi)
			+ Harmless^{[1,N]}
			+ Harmless_{(Slow)}^{\leq N},
			\label{E:LISTHEFIRSTCOMMUTATORIMPORTANTTERMS} \\
	\upmu \square_g (\GeoAng^N \Psi)
	& = (\Rad \Psi) \GeoAng^N \mytr \upchi
			+ \GeoAngFlatRadComponent (\angdiffuparg{\#} \Psi) \cdot (\upmu \angdiff \GeoAng^{N-1} \mytr \upchi)
				 + Harmless^{[1,N]}
				 + Harmless_{(Slow)}^{\leq N}.
				 \label{E:GEOANGANGISTHEFIRSTCOMMUTATORIMPORTANTTERMS}
\end{align}
\end{subequations}

Furthermore, if $2 \leq N \leq 18$ and $\Tanset^N$ is any $N^{th}$ order 
$\mathcal{P}_u$-tangential operator except for $\GeoAng^{N-1} \Lunit$ or $\GeoAng^N$,
then
\begin{align} \label{E:HARMLESSORDERNCOMMUTATORS}
	\upmu \square_g (\Tanset^N \Psi)
	& = Harmless^{[1,N]}	
		+
		Harmless_{(Slow)}^{\leq N}.
\end{align}

In addition, for $1 \leq N \leq 18$, we have the following estimates,
($i,j=1,2$):
\begin{subequations}
\begin{align}
	\upmu \partial_t \Tanset^N \slow_0
	& = \upmu (h^{-1})^{ab} \partial_a \Tanset^N \slow_b
		+ 
		2 \upmu (h^{-1})^{0a} \partial_a \Tanset^N \slow_0
		+ Harmless^{[1,N]}
		+ Harmless_{(Slow)}^{\leq N},
		 \label{E:SLOWTIMECOMMUTED} \\
	\upmu \partial_t \slow_i
	& = \upmu \partial_i \slow_0
		+ Harmless^{[1,N]}
	+ Harmless_{(Slow)}^{\leq N},
		\label{E:SLOWSPACECOMMUTED} \\
	\upmu \partial_t \slow
	& = \upmu \slow_0
		+ Harmless^{[1,N]}
		+ Harmless_{(Slow)}^{\leq N},
	\label{E:SLOWCOMMUTED}
		\\
	\upmu \partial_i \slow_j
	& = \upmu \partial_j \slow_i
		+ Harmless^{[1,N]}
		+ Harmless_{(Slow)}^{\leq N}.
		\label{E:SYMMETRYOFMIXEDPARTIALSCOMMUTED}
\end{align}
\end{subequations}

Finally, we have the following estimates:
\begin{subequations}
\begin{align}
	\upmu \partial_t \slow_0
	& = \upmu (h^{-1})^{ab} \partial_a \slow_b
		+ 
		2 \upmu (h^{-1})^{0a} \partial_a \slow_0
		+ Harmless^{[1,1]}
		+ Harmless_{(Slow)}^{\leq 0},
		 \label{E:SLOWTIMENOTCOMMUTED} \\
	\upmu \partial_t \slow_i
	& = \upmu \partial_i \slow_0,
		\label{E:SLOWSPACENOTCOMMUTED} \\
	\upmu \partial_t \slow
	& = \upmu \slow_0,
	\label{E:SLOWNOTCOMMUTED}
		\\
	\upmu \partial_i \slow_j
	& = \upmu \partial_j \slow_i.
		\label{E:SYMMETRYOFMIXEDPARTIALSNOTCOMMUTED}
\end{align}
\end{subequations}

\end{proposition}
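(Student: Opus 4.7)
The plan is to treat the four groups of identities in order of increasing difficulty. First, the uncommuted slow wave identities \eqref{E:SLOWTIMENOTCOMMUTED}--\eqref{E:SYMMETRYOFMIXEDPARTIALSNOTCOMMUTED} are essentially restatements of the original first-order system \eqref{E:SLOW0EVOLUTION}--\eqref{E:SYMMETRYOFMIXEDPARTIALS} after multiplying by $\upmu$. The only nontrivial identity is \eqref{E:SLOWTIMENOTCOMMUTED}, where one must check that the semilinear RHS of \eqref{E:SLOW0EVOLUTION}, namely $\upmu \widetilde{\mathfrak{M}} \mathcal{Q}^g(\partial \Psi,\partial \Psi) + \upmu \widetilde{\mathfrak{N}}_1^\alpha \partial_\alpha \Psi + \upmu \widetilde{\mathfrak{N}}_2$, is of type $Harmless^{[1,1]} + Harmless_{(Slow)}^{\leq 0}$. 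The null-form term is handled by \eqref{E:UPMUTIMESNULLFORMSSCHEMATIC}, which expresses it purely in terms of $\Singletan \Psi$ (a $Harmless^{[1,1]}$ contribution). For the remaining two terms, the structural assumption \eqref{E:SOMENONINEARITIESARELINEAR} guarantees that each is linear or higher in $\bigslow$, hence each factors schematically as $\bigslow \cdot \smoothfunction(\Psi, \bigslow) \cdot \partial_\alpha \Psi$-or-$1$; writing $\partial_\alpha$ via Lemma~\ref{L:CARTESIANVECTORFIELDSINTERMSOFGEOMETRICONES}, expanding the factor of $\upmu$, and invoking the $L^\infty$ bounds of Prop.~\ref{P:IMPROVEMENTOFAUX} produces $Harmless_{(Slow)}^{\leq 0}$.

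For the commuted slow wave identities \eqref{E:SLOWTIMECOMMUTED}--\eqref{E:SYMMETRYOFMIXEDPARTIALSCOMMUTED}, I would apply $\Tanset^N$ to each of \eqref{E:SLOWTIMENOTCOMMUTED}--\eqref{E:SYMMETRYOFMIXEDPARTIALSNOTCOMMUTED} and identify the principal terms on each side (namely those where all $N$ tangential derivatives land on $\slow_0$, $\slow_b$, $\slow_i$, etc.), subtracting them to leave commutator terms. These commutator terms either involve derivatives of $\upmu$ or of $(h^{-1})^{\alpha\beta}(\Psi,\bigslow)$, with at most $N$ tangential derivatives redistributed across the factors. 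Leibniz expansion together with the $L^\infty$ bounds of Prop.~\ref{P:IMPROVEMENTOFAUX} and Cor.~\ref{C:SQRTEPSILONTOCEPSILON} reduces each such term to a product of a top-factor absorbed into one of $\Fullset_*^{[1,N+1];1}\Psi$, $\Fullset_*^{[1,N];1}\GdVar$, $\Tanset_*^{[1,N]}\BadVar$, or $\Tanset^{\leq N}\bigslow$, times uniformly bounded coefficients. The Cartesian derivatives $\partial_a$ appearing on the RHSs are rewritten via Lemma~\ref{L:CARTESIANVECTORFIELDSINTERMSOFGEOMETRICONES}, which does not disturb the harmless structure because the coefficient matrices $g_{ab}\Radunit^a$, $g_{ai}\GeoAng^a/(g_{cd}\GeoAng^c\GeoAng^d)$ are smooth functions of $\GdVar$.

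For the fast wave identities \eqref{E:LISTHEFIRSTCOMMUTATORIMPORTANTTERMS}--\eqref{E:HARMLESSORDERNCOMMUTATORS}, the starting point is the frame decomposition \eqref{E:LONINSIDEGEOMETRICWAVEOPERATORFRAMEDECOMPOSED} applied to $f = \Tanset^N \Psi$, together with the identity $\upmu \square_g \Psi = \upmu \cdot$RHS~\eqref{E:FASTWAVE}, where the latter is again handled via \eqref{E:UPMUTIMESNULLFORMSSCHEMATIC} and \eqref{E:SOMENONINEARITIESARELINEAR}. The strategy is to compute $\upmu\square_g(\Tanset^N\Psi) - \Tanset^N(\upmu\square_g\Psi)$ as a sum of commutators involving the connection coefficients $\upchi$, $\upzeta$, $\angk$ and the derivatives of $\upmu$. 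Writing $\Tanset^N = \GeoAng^N$ or $\Tanset^N = \GeoAng^{N-1}\Lunit$ separately, one follows the formalism from \cite{jSgHjLwW2016} to keep only the terms in which the \emph{maximum} number of tangential derivatives falls on $\mytr \upchi$ (arising from $[\Tanset^N, \mytr\upchi \Rad]\Psi$ for the $\GeoAng^N\Psi$ case) or on $\angdiff\GeoAng^{N-1}\mytr \upchi$ (arising from $[\Tanset^{N-1}, \upmu\angLap]\Lunit\Psi$ in the $\GeoAng^{N-1}\Lunit\Psi$ case, using that $\Lunit$ commutes better than $\Rad$ through $\angLap$), plus one new contribution: the $\GeoAngFlatRadComponent$-weighted term in \eqref{E:GEOANGANGISTHEFIRSTCOMMUTATORIMPORTANTTERMS}, which comes from rewriting $\GeoAng_{(Flat)} = \GeoAng + \GeoAngFlatRadComponent \Radunit$ via Lemma~\ref{L:GEOANGDECOMPOSITION} when converting commutator terms involving Cartesian frame components. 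All remaining terms either have one fewer top-order derivative on $\mytr \upchi$, or involve tangential derivatives of $\Psi$, $\upmu$, $\Lunit_{(Small)}^i$, or $\bigslow$ that directly fit into $Harmless^{[1,N]} + Harmless^{\leq N}_{(Slow)}$. The fact that no analogous top-order $\mytr\upchi$ term appears for mixed operators in \eqref{E:HARMLESSORDERNCOMMUTATORS} is because at least one $\Lunit$ factor is sandwiched with a $\GeoAng$ on one side and some other $\Lunit$ or $\GeoAng$ on the other, which allows the Raychaudhuri-type structure of the commutator to absorb the top $\mytr\upchi$ factor into lower-order pieces.

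The main obstacle I expect is bookkeeping the commutators $[\Tanset^N, \upmu \angLap]$ and $[\Tanset^N, \mytr\upchi \Rad]$ and the inhomogeneous contribution from $\Tanset^N$ acting on the RHS of \eqref{E:FASTWAVE}. For the latter, the delicate point is that $\Tanset^N$ hitting $\mathfrak{M}(\Psi,\bigslow)\mathcal{Q}^g(\partial\Psi,\partial\Psi)$ can in principle generate a term with $N$ tangential derivatives on a Cartesian $\partial_\alpha \Psi$ factor, which would be too many; one uses \eqref{E:UPMUTIMESNULLFORMSSCHEMATIC} \emph{before} differentiating so that the transversal direction never appears quadratically, and then the Leibniz rule distributes derivatives harmlessly. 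A secondary, easier worry is ensuring that the $\bigslow$-coupling in $\mathfrak{N}_1^\alpha$, $\mathfrak{N}_2$ yields at most $Harmless_{(Slow)}^{\leq N}$, which again relies on \eqref{E:SOMENONINEARITIESARELINEAR} to guarantee at least one un-differentiated $\bigslow$ factor that can absorb the derivative count on the lower-order side.
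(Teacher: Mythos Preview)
Your overall strategy matches the paper's: the fast-wave identities \eqref{E:LISTHEFIRSTCOMMUTATORIMPORTANTTERMS}--\eqref{E:HARMLESSORDERNCOMMUTATORS} are inherited from \cite{jSgHjLwW2016}*{Proposition~11.2} with the only new work being the semilinear RHS of \eqref{E:FASTWAVE}, which you correctly handle via \eqref{E:UPMUTIMESNULLFORMSSCHEMATIC} and \eqref{E:SOMENONINEARITIESARELINEAR}; and the uncommuted slow-wave identities \eqref{E:SLOWTIMENOTCOMMUTED}--\eqref{E:SYMMETRYOFMIXEDPARTIALSNOTCOMMUTED} are treated as you describe.

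There is, however, a genuine gap in your treatment of the commuted slow-wave identities \eqref{E:SLOWTIMECOMMUTED}--\eqref{E:SYMMETRYOFMIXEDPARTIALSCOMMUTED}. You claim the commutator terms ``either involve derivatives of $\upmu$ or of $(h^{-1})^{\alpha\beta}(\Psi,\bigslow)$''. This is incomplete: once you write $\upmu\partial_\alpha$ geometrically via Lemma~\ref{L:CARTESIANVECTORFIELDSINTERMSOFGEOMETRICONES}, it takes the schematic form $\smoothfunction(\GdVar)\Rad + \smoothfunction(\BadVar)\Singletan$, so the commutator $[\smoothfunction(\GdVar,\bigslow)\Rad,\Tanset^N]\slow_\beta$ produces Leibniz terms of the form $(\Tanset^k\smoothfunction)\cdot \Tanset^{N-k}\Rad\slow_\beta$. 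The factor $\Tanset^{N-k}\Rad\slow_\beta$ involves a transversal derivative of $\bigslow$ and does \emph{not} fit into $Harmless_{(Slow)}^{\leq N}$, which by definition controls only purely tangential derivatives $|\Tanset^{\leq N}\bigslow|$. The paper closes this gap by invoking Lemma~\ref{L:RADOFSLOWWAVEALGEBRAICALLYEXPRESSED}, which algebraically expresses $\Rad\bigslow$ in terms of $\Singletan\bigslow$, $\Singletan\Psi$, and $\bigslow$ (this is where the strict speed separation \eqref{E:VECTORSGCAUSALIMPLIESHTIMELIKE} enters, via the invertibility of the matrix $A^\alpha\Lunit_\alpha$). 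After applying $\Tanset^{N-k}$ to that algebraic identity and using the commutator estimates \eqref{E:PURETANGENTIALFUNCTIONCOMMUTATORESTIMATE}--\eqref{E:ONERADIALTANGENTIALFUNCTIONCOMMUTATORESTIMATE}, the resulting terms do land in $Harmless^{[1,N]}+Harmless_{(Slow)}^{\leq N}$. Without this step your argument does not close.
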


\begin{proof}
	See Subsect.\ \ref{SS:OFTENUSEDESTIMATES} for some comments on the analysis.
	We first establish \eqref{E:SLOWTIMECOMMUTED}.
	From Lemmas~\ref{L:SCHEMATICDEPENDENCEOFMANYTENSORFIELDS} and \ref{L:CARTESIANVECTORFIELDSINTERMSOFGEOMETRICONES}
	and the assumptions on the semilinear inhomogeneous terms stated in \eqref{E:SOMENONINEARITIESARELINEAR},
	we see that the products of $\upmu$ and the semilinear inhomogeneous terms on the second line of
	RHS~\eqref{E:SLOW0EVOLUTION} are of the schematic form
	$
	\smoothfunction(\BadVar,\bigslow,\Rad \Psi,\Singletan \Psi) \Singletan \Psi 
	+ 
	\smoothfunction(\BadVar,\bigslow,\Rad \Psi,\Singletan \Psi) \bigslow
	$.
	Thus, from the $L^{\infty}$ estimates of Prop.~\ref{P:IMPROVEMENTOFAUX},
	we find that the $\Tanset^N$ derivatives of these terms are bounded in magnitude by
	$
	\lesssim 
	\left|
		\Fullset_*^{[1,N+1];1} \Psi
	\right|
	+
	\left|
		\Tanset_*^{[1,N]} \BadVar
	\right|
	+
	\left|
		\Tanset^{\leq N} \bigslow
	\right|
	=
	Harmless^{[1,N]}
	+ 
	Harmless_{(Slow)}^{\leq N}
	$.
	To complete the proof of \eqref{E:SLOWTIMECOMMUTED}, 
	it remains for us to bound the commutator terms
	$
	[\upmu \partial_t, \Tanset^N] \slow_0
	$,
	$
	[(h^{-1})^{ab} \upmu \partial_a, \Tanset^N] \slow_b
	$,
	and
	$
	[(h^{-1})^{0a} \upmu \partial_a, \Tanset^N] \slow_0
	$.
	We show how to bound the last one; the first two can be bounded similarly.
	From Lemmas~\ref{L:SCHEMATICDEPENDENCEOFMANYTENSORFIELDS} and \ref{L:CARTESIANVECTORFIELDSINTERMSOFGEOMETRICONES}
	and the fact that $(h^{-1})^{\alpha \beta} = (h^{-1})^{\alpha \beta}(\Psi,\bigslow)$,
	we see that obtaining the desired bounds is equivalent to showing that
	$
	[\smoothfunction(\BadVar,\bigslow) \Singletan, \Tanset^N] \slow_0
	+
	[\smoothfunction(\GdVar,\bigslow) \Rad, \Tanset^N] \slow_0
	=
	Harmless^{[1,N]}
	+ 
	Harmless_{(Slow)}^{\leq N}
	$.
	To obtain these estimates, we use the commutator estimates 
	\eqref{E:PURETANGENTIALFUNCTIONCOMMUTATORESTIMATE} and \eqref{E:ONERADIALTANGENTIALFUNCTIONCOMMUTATORESTIMATE}
	with $f = \slow_0$,
	the algebraic identity provided by Lemma~\ref{L:RADOFSLOWWAVEALGEBRAICALLYEXPRESSED}
	(which allows us to replace $\Rad \bigslow$ with $\Singletan \bigslow$ up to error terms),
	and the $L^{\infty}$ estimates of Prop.~\ref{P:IMPROVEMENTOFAUX}.
	
	To derive \eqref{E:SLOWSPACECOMMUTED}-\eqref{E:SYMMETRYOFMIXEDPARTIALSCOMMUTED},
	we use the same reasoning that we used in the previous paragraph;
	the analysis is even simpler since,
	in view of the absence of semilinear inhomogeneous terms
	on RHSs~\eqref{E:SLOWIEVOLUTION}-\eqref{E:SYMMETRYOFMIXEDPARTIALS}, 
	we encounter only commutator error terms.
	
	The estimates \eqref{E:SLOWTIMENOTCOMMUTED}-\eqref{E:SYMMETRYOFMIXEDPARTIALSNOTCOMMUTED}
	can be established via arguments similar to but simpler than the ones
	we used to derive
	\eqref{E:SYMMETRYOFMIXEDPARTIALSCOMMUTED}-\eqref{E:SYMMETRYOFMIXEDPARTIALSCOMMUTED},
	and we therefore omit the details.
	
	We now prove the estimates 
	\eqref{E:LISTHEFIRSTCOMMUTATORIMPORTANTTERMS}-\eqref{E:HARMLESSORDERNCOMMUTATORS}.
	These estimates were essentially proved in
	\cite{jSgHjLwW2016}*{Proposition 11.2}, based in part on
	estimates that are analogs of the estimates of
	Lemmas~\ref{L:POINTWISEFORRECTANGULARCOMPONENTSOFVECTORFIELDS} and \ref{L:POINTWISEESTIMATESFORGSPHEREANDITSDERIVATIVES}
	and the $L^{\infty}$ estimates of Prop.~\ref{P:IMPROVEMENTOFAUX}.
	However, the derivatives of  
	$\upmu \times$ the semilinear inhomogeneous terms on RHS~\eqref{E:FASTWAVE}
	were not treated in \cite{jSgHjLwW2016}) (because these terms were not present in that paper).
	To handle these ``new'' terms, we first use
	Lemma~\ref{L:SCHEMATICDEPENDENCEOFMANYTENSORFIELDS},
	Lemma~\ref{L:CARTESIANVECTORFIELDSINTERMSOFGEOMETRICONES},
	and the assumptions \eqref{E:SOMENONINEARITIESARELINEAR}
	to deduce that
	$\upmu \times$ the semilinear inhomogeneous terms on RHS~\eqref{E:FASTWAVE} 
	are of the schematic form
	$
	\smoothfunction(\BadVar,\bigslow,\Rad \Psi,\Singletan \Psi) \Singletan \Psi 
	+ 
	\smoothfunction(\BadVar,\bigslow,\Rad \Psi,\Singletan \Psi) \bigslow
	$.
	Thus, for the same reasons given in the first paragraph of the proof,
	the $\Tanset^N$ derivatives of these terms are
	$
	Harmless^{[1,N]}
	+ 
	Harmless_{(Slow)}^{\leq N}
	$
	as desired.
	We also clarify that the right-hand sides of the estimates
	\eqref{E:LISTHEFIRSTCOMMUTATORIMPORTANTTERMS}-\eqref{E:HARMLESSORDERNCOMMUTATORS}
	do not feature the order $0$ terms $|\Psi|$ or $|\GdVar|$.
	This is different compared to the analogous
	estimates stated in \cite{jSgHjLwW2016}*{Proposition 11.2},
	but follows from the proof given there
	and from the commutator estimates of 
	Lemmas~\ref{L:COMMUTATORESTIMATES} and
	\ref{L:TRANSVERALTANGENTIALCOMMUTATOR}
	(see also the remarks made in the discussion of the proofs of
		Lemmas~\ref{L:COMMUTATORESTIMATES} and
	\ref{L:TRANSVERALTANGENTIALCOMMUTATOR}
	and Prop.\ \ref{P:IMPROVEMENTOFAUX}).
	
\end{proof}

\subsection{Pointwise estimates for the most difficult product}
Out of all products that we encounter in the energy estimates,
the most difficult one to control is the 
first product on RHS~\eqref{E:GEOANGANGISTHEFIRSTCOMMUTATORIMPORTANTTERMS},
namely $(\Rad \Psi) \GeoAng^N \mytr \upchi$.
In the next proposition, we derive pointwise estimates for this difficult product.
We also derive pointwise estimates for the error term factor
$
\upmu \GeoAng^N \mytr \upchi
$,
which is much easier to control due to the factor of $\upmu$.
Compared to previous works, the estimates of the proposition involve
new terms stemming from the influence of
the slow wave variable $\bigslow$ on $\mytr \upchi$, 
that is, on the null mean curvature of the characteristics
corresponding to the fast wave $\Psi$.

\begin{proposition}[\textbf{The key pointwise estimate for} $(\Rad \Psi) \GeoAng^N \mytr \upchi$]
	\label{P:KEYPOINTWISEESTIMATE}
For $1 \leq N \leq 18$, let 
\begin{align} \label{E:TOPORDERMODIFIEDTRCHI}
\upchifullmodarg{\GeoAng^N}
:= \upmu \GeoAng^N \mytr \upchi 
+ \GeoAng^N 
\left\lbrace
	- G_{\Lunit \Lunit} \Rad \Psi
				- \frac{1}{2} \upmu \mytr \angG \Lunit \Psi
				- \frac{1}{2} \upmu G_{\Lunit \Lunit} \Lunit \Psi 
				+ \upmu \angGnospacemixedarg{\Lunit}{\#} \cdot \angdiff \Psi
\right\rbrace.
\end{align}
We have the following pointwise estimate:
\begin{align} \label{E:KEYPOINTWISEESTIMATE}
		\left|
			(\Rad \Psi) \GeoAng^N \mytr \upchi
		\right|
		(t,u,\vartheta)
		& \leq
			\boxed{2} 
			\frac
			{\left\|
					[\Lunit \upmu]_-
			\right\|_{L^{\infty}(\Sigma_t^u)}}
			{\upmu_{\star}(t,u)}
			\left| 
				\Rad \GeoAng^N \Psi 
			\right|
			(t,u,\vartheta)
				\\
		 &  \ \ + 
						\boxed{4}
						\frac
						{
							\left\|
								[\Lunit \upmu]_-
							\right\|_{L^{\infty}(\Sigma_t^u)}}
						{\upmu_{\star}(t,u)}
					\int_{t'=0}^t 
						\frac
						{
							\left\|
								[\Lunit \upmu]_-
							\right\|_{L^{\infty}(\Sigma_{t'}^u)}}
						{\upmu_{\star}(t',u)}
						\left| 
							\Rad \GeoAng^N \Psi 
						\right|(t',u,\vartheta)
				\, dt'
				\notag \\
	& \ \	+ \mbox{\upshape Error},
		\notag
	\end{align}
	where
	\begin{align}  \label{E:ERRORTERMKEYPOINTWISEESTIMATE}
		\left|
			\mbox{\upshape Error}
		\right|
		(t,u,\vartheta)
		& \lesssim
			\frac{1}{\upmu_{\star}(t,u)}
			\left|\upchifullmodarg{\GeoAng^N} \right|(0,u,\vartheta)
			+ 
			\left|
				\Fullset_*^{[1,N+1];1} \Psi
			\right|
			(t,u,\vartheta)
				\\
			& 
			\ \
			+
			\frac{1}{\upmu_{\star}(t,u)}
			\left|
				\Fullset_*^{[1,N];1} \Psi
			\right|
			(t,u,\vartheta)
				\notag \\
		& \ \
			+
			\frac{1}{\upmu_{\star}(t,u)}
			\left|
				\Tanset^{[1,N]} \GdVar
			\right|
			(t,u,\vartheta)
			+
			\frac{1}{\upmu_{\star}(t,u)}
			\left|
				\Tanset_*^{[1,N]} \BadVar
			\right|
			(t,u,\vartheta)
				\notag \\
		& \ \ 
			  + 
				\varepsilon
				\frac{1}{\upmu_{\star}(t,u)}
				\int_{t'=0}^t
					\frac{1}{\upmu_{\star}(t',u)}
					\left|
						\Rad \Tanset^N \Psi
					\right|
					(t',u,\vartheta)
				\, dt'
				\notag 
				\\
		& \ \ 
			  + 
				\frac{1}{\upmu_{\star}(t,u)}
				\int_{t'=0}^t
					\left|
						\Fullset_*^{[1,N+1];1} \Psi
					\right|
					(t',u,\vartheta)
				\, dt'
				\notag 
				\\
		& \ \
				+ 
				\frac{1}{\upmu_{\star}(t,u)}
				\int_{t'=0}^t
					\frac{1}{\upmu_{\star}(t',u)}
					\left\lbrace
						\left|
							\Fullset_*^{[1,N];1} \Psi
						\right|
						+
						\left|
							\Tanset^{[1,N]} \GdVar
						\right|
						+
					\left|
						\Tanset_*^{[1,N]} \BadVar
					\right|
					\right\rbrace	
					(t',u,\vartheta)
				\, dt'
				\notag
				\\
		& \ \ + 	
						\frac{1}{\upmu_{\star}(t,u)}
						\int_{t'=0}^t 
							\left|
								\Tanset^{\leq N} \bigslow
							\right|
							(t',u,\vartheta)
						\, dt'.
						\notag
\end{align}

	Furthermore, we have the following less precise pointwise estimate:
	\begin{align} \label{E:LESSPRECISEKEYPOINTWISEESTIMATE}
		& 
		\left|
			\upmu \GeoAng^N \mytr \upchi
		\right|
		(t,u,\vartheta)
			\\
		& \lesssim
		\left|\upchifullmodarg{\GeoAng^N} \right|(0,u,\vartheta)
		+
		\upmu
		\left|
			\Tanset^{N+1} \Psi
		\right|(t,u,\vartheta)
		+
		\left|
			\Rad \Tanset^N \Psi
		\right|(t,u,\vartheta)
			\notag \\
	& \ \ 
		+
		\left|
			\Fullset_*^{[1,N];1} \Psi
		\right|(t,u,\vartheta)
		+
		\left|
			\Tanset^{[1,N]} \GdVar
		\right|(t,u,\vartheta)
		+
		\left|
			\Tanset_*^{[1,N]} \BadVar
		\right|(t,u,\vartheta)
		\notag	\\
		& \ \
				+
				\int_{t'=0}^t 
						\frac{1}{\upmu_{\star}(t',u)}
						\left| 
							\Rad \Tanset^N \Psi 
						\right|
						(t',u,\vartheta)
				\, dt'
				+ 
				\int_{t'=0}^t 
					\left|
						\Fullset_*^{[1,N+1];1} \Psi
					\right|
					(t',u,\vartheta)
				\, dt'
							\notag \\
		& \ \   + 
						\int_{t'=0}^t 
							\frac{1}{\upmu_{\star}(t',u)}
							\left\lbrace
							\left|
								\Fullset_*^{[1,N];1} \Psi
							\right|
							+
							\left|
								\Tanset^{[1,N]} \GdVar
							\right|
							+
							\left|
								\Tanset_*^{[1,N]} \BadVar
							\right|
							\right\rbrace
							(t',u,\vartheta)
						\, dt'
						\notag
						\\
		& \  \
					+ 
						\int_{t'=0}^t 
							\left|
								\Tanset^{\leq N} \bigslow
							\right|
							(t',u,\vartheta)
						\, dt'.
					\notag	
		\end{align}

\end{proposition}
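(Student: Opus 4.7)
The plan is to follow the ``modified quantity'' strategy originating in \cite{dCsK1993,dC2007} and adapted to shock formation in \cite{jSgHjLwW2016}, with the new ingredient being a careful accounting of the slow-wave source terms generated by the semilinear inhomogeneities on RHS~\eqref{E:FASTWAVE}. The motivation is that a naive evolution equation for $\GeoAng^N \mytr \upchi$ (derived, e.g., from Raychaudhuri's identity for $\mbox{Ric}_{\Lunit \Lunit}$) schematically takes the form $\Lunit \GeoAng^N \mytr \upchi = \angLap \Tanset^N \Psi + l.o.t.$, which would cost a derivative of $\Psi$ relative to what our energies control. To avoid this loss, I will use equation \eqref{E:LONOUTSIDEGEOMETRICWAVEOPERATORFRAMEDECOMPOSED} to algebraically trade $\upmu \angLap \Tanset^N \Psi$ for perfect $\Lunit$-derivatives $\Lunit(\upmu \Lunit \Tanset^N \Psi + 2 \Rad \Tanset^N \Psi)$ plus the inhomogeneity $\upmu \square_g(\Tanset^N \Psi)$, which by Prop.~\ref{P:IDOFKEYDIFFICULTENREGYERRORTERMS} is controlled by $Harmless^{[1,N]} + Harmless_{(Slow)}^{\leq N}$ terms (the latter being the genuinely new contribution from $\bigslow$).

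First, I would multiply the transport equation for $\mytr \upchi$ (obtained from \eqref{E:LDERIVATIVEOFVOLUMEFORMFACTOR} and \eqref{E:TRCHIINTERMSOFOTHERVARIABLES}) by $\upmu$, commute with $\GeoAng^N$, and absorb the bad $\angLap \Tanset^N \Psi$ term as just described. The result, after careful bookkeeping of commutator errors (using Lemmas~\ref{L:COMMUTATORESTIMATES}--\ref{L:TRANSVERALTANGENTIALCOMMUTATOR}) and the schematic relations of Lemma~\ref{L:SCHEMATICDEPENDENCEOFMANYTENSORFIELDS}, is a transport equation of the schematic form
\begin{align*}
\Lunit \upchifullmodarg{\GeoAng^N}
& = - (\Lunit \upmu) \GeoAng^N \mytr \upchi
    + \mbox{Harmless terms},
\end{align*}
where $\upchifullmodarg{\GeoAng^N}$ is precisely the quantity defined in \eqref{E:TOPORDERMODIFIEDTRCHI}, and the ``Harmless terms'' are bounded by RHS~\eqref{E:HARMESSTERMPOINTWISEESTIMATE} plus $|\Tanset^{\leq N} \bigslow|$ coming from the slow-wave contribution to $\upmu \square_g(\GeoAng^N \Psi)$. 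The crucial algebraic identity is that, using the definition \eqref{E:TOPORDERMODIFIEDTRCHI}, we have $\upmu \GeoAng^N \mytr \upchi = \upchifullmodarg{\GeoAng^N} + \GeoAng^N(G_{\Lunit \Lunit} \Rad \Psi) + l.o.t$; combined with $\Lunit \upmu = \tfrac{1}{2} G_{\Lunit \Lunit} \Rad \Psi + \upmu \mathcal{O}(\Singletan \Psi)$ from \eqref{E:UPMUFIRSTTRANSPORT}, this produces the coefficient that is responsible for the boxed $2$ in \eqref{E:KEYPOINTWISEESTIMATE}.

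Next, since $\Lunit = \partial/\partial t$, I would integrate the transport equation in $t$ from $0$ to the current time. Using the sharp pointwise bounds of Prop.~\ref{P:SHARPMU}, particularly \eqref{E:POSITIVEPARTOFLMUOVERMUISBOUNDED} and the key fact that $\Lunit \upmu / \upmu$ is effectively controlled by $\|[\Lunit \upmu]_-\|_{L^\infty(\Sigma_t^u)} / \upmu_\star(t,u)$, gives a Gr\"onwall-type pointwise inequality for $\upchifullmodarg{\GeoAng^N}/\upmu$. Iterating this once (to bootstrap the Gr\"onwall term), then solving algebraically for $\upmu \GeoAng^N \mytr \upchi$, dividing by $\upmu$, and multiplying by $\Rad \Psi$ — while using the $L^\infty$ bound $|\Rad \Psi| \lesssim 1$ from Prop.~\ref{P:IMPROVEMENTOFAUX} and Prop.~\ref{P:IMPROVEMENTOFHIGHERTRANSVERSALBOOTSTRAP} — produces the main estimate \eqref{E:KEYPOINTWISEESTIMATE}, with the doubled application of the Gr\"onwall loop generating the boxed $4$. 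The less precise estimate \eqref{E:LESSPRECISEKEYPOINTWISEESTIMATE} follows from the same integration argument, except that the helpful factor of $\upmu$ lets us avoid dividing out by $\upmu_\star$, so we keep $|\Tanset^{N+1} \Psi|$ and $|\Rad \Tanset^N \Psi|$ as non-degenerate terms.

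The main obstacle is controlling the numerical constants $2$ and $4$ sharply, since they feed directly into the blowup exponent of the top-order energy estimate \eqref{E:INTROTOPENERGY}. Any inefficiency in the Gr\"onwall iteration or in the comparison between $\Lunit \upmu$ and $\tfrac{1}{2} G_{\Lunit \Lunit} \Rad \Psi$ (controlled via \eqref{E:LUNITUPMUDOESNOTDEVIATEMUCHFROMTHEDATA}) would enlarge these boxed constants. A secondary difficulty, specific to this paper as opposed to \cite{jSgHjLwW2016}, is confirming that the slow-wave inhomogeneity $Harmless_{(Slow)}^{\leq N}$ arising from $\upmu \square_g(\GeoAng^N \Psi)$ does \emph{not} enter the principal (boxed) part of the estimate — only a $\int_0^t |\Tanset^{\leq N} \bigslow|$ tail, as displayed in the last term of \eqref{E:ERRORTERMKEYPOINTWISEESTIMATE}. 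This is ensured by the structural fact that $g = g(\Psi)$ depends only on the fast variable, so no top-order derivatives of $\bigslow$ appear when we commute $\GeoAng^N$ through $\upmu \square_g$; the $\bigslow$ contributions only enter through the semilinear source and are thereby confined to the harmless integral tail.
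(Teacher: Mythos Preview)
Your proposal is correct and follows essentially the same approach as the paper: derive a transport equation for the modified quantity $\upchifullmodarg{\GeoAng^N}$, integrate it in time using an integrating-factor/Gr\"onwall argument together with the sharp $\upmu$-estimates of Prop.~\ref{P:SHARPMU}, and track that the slow-wave contribution enters only through the harmless $\int_0^t |\Tanset^{\leq N}\bigslow|\,dt'$ tail (the paper's only genuinely new term relative to \cite{jSgHjLwW2016}). One small refinement: the integrating factor the paper invokes is $\iota \approx \upmu^2(0,u,\vartheta)/\upmu^2(s,u,\vartheta)$ rather than $\upmu^{-1}$, and the bound $\sup_{0\leq t'\leq t}(\upmu(t,u,\vartheta)/\upmu(t',u,\vartheta))^2 \leq C$ from \eqref{E:LOCALIZEDMUCANTGROWTOOFAST}--\eqref{E:LOCALIZEDMUMUSTSHRINK} is what converts the $\iota$-weighted integral back into the form displayed in \eqref{E:ERRORTERMKEYPOINTWISEESTIMATE}.
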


\begin{proof}[Proof outline]
	The estimate \eqref{E:KEYPOINTWISEESTIMATE}
	was essentially proved in
	\cite{jSgHjLwW2016}*{Proposition 11.10},
	 based in part on
	estimates that are analogs of the estimates of
	Lemmas~\ref{L:POINTWISEFORRECTANGULARCOMPONENTSOFVECTORFIELDS} and \ref{L:POINTWISEESTIMATESFORGSPHEREANDITSDERIVATIVES},
	the $L^{\infty}$ estimates of Prop.~\ref{P:IMPROVEMENTOFAUX},
	the estimate \eqref{E:RADDERIVATIVESOFGLLDIFFERENCEBOUND},
	and the estimates of Prop.~\ref{P:SHARPMU}.
	We note that on RHS~\eqref{E:ERRORTERMKEYPOINTWISEESTIMATE},
	we have corrected a typo 
	that appeared in \cite{jSgHjLwW2016}. Specifically,
	the factor 
	$\left|
		\Rad \Tanset^N \Psi
	\right|
	$
	in the term
	$
	\displaystyle
	\varepsilon
				\frac{1}{\upmu_{\star}(t,u)}
				\int_{t'=0}^t
					\frac{1}{\upmu_{\star}(t',u)}
					\left|
						\Rad \Tanset^N \Psi
					\right|
					(t',u,\vartheta)
				\, dt'
	$
	on RHS~\eqref{E:ERRORTERMKEYPOINTWISEESTIMATE} was mistakenly listed as
	$
	\left|
		\Fullset_*^{\leq N+1;1} \Psi
	\right|
	$
	in \cite{jSgHjLwW2016}*{Equation (11.33)}.
	We also clarify that RHS~\eqref{E:KEYPOINTWISEESTIMATE}
	does not feature the order $0$ terms $|\Psi|$ or $|\GdVar|$.
	This is different compared to the analogous
	estimates stated in \cite{jSgHjLwW2016}*{Proposition 11.10},
	but follows from the proof given there
	(for reasons similar to the ones
	that we described in the discussion of the proofs of
		Lemmas~\ref{L:COMMUTATORESTIMATES} and \ref{L:TRANSVERALTANGENTIALCOMMUTATOR},
		Prop.\ \ref{P:IMPROVEMENTOFAUX}, 
		and Prop.\ \ref{P:IDOFKEYDIFFICULTENREGYERRORTERMS}).
	In addition, we note that in \cite{jSgHjLwW2016}*{Proposition 11.10}, 
	the coefficient in front of the analog of the second product on RHS~\eqref{E:KEYPOINTWISEESTIMATE}
	was stated as $\boxed{4}(1 + C \varepsilon)$ 
	rather than $\boxed{4}$.
	Here, we have relegated the $C \varepsilon$
	contribution to the error term
	$
	\displaystyle
	\varepsilon
				\frac{1}{\upmu_{\star}(t,u)}
				\int_{t'=0}^t
				\cdots
	$ 
	on the fourth line of RHS~\eqref{E:ERRORTERMKEYPOINTWISEESTIMATE}; this is a minor (essentially cosmetic) change 
	that is justified by the arguments given in the proof of \cite{jSgHjLwW2016}*{Proposition 11.10}.
	
	The only new term appearing on RHS~\eqref{E:KEYPOINTWISEESTIMATE}
	compared to \cite{jSgHjLwW2016}*{Proposition 11.10}
	is the last one on RHS~\eqref{E:ERRORTERMKEYPOINTWISEESTIMATE}
	(which involves the time integral of $\left|\Tanset^{\leq N} \bigslow \right|$),
	whose origin we now explain.
	To do this, we must explain some features of the proof of \eqref{E:KEYPOINTWISEESTIMATE},
	which relies on the ``modified'' version of $\mytr \upchi$ described in Subsubsect.\ \ref{SSS:ENERGYESTIMATES},
	namely the quantity $\upchifullmodarg{\GeoAng^N}$ defined in \eqref{E:TOPORDERMODIFIEDTRCHI}.
	As we explained in the discussion below equation \eqref{E:TRCHISCHEMATICEVOLUTION}, 
	we are forced to work with $\upchifullmodarg{\GeoAng^N}$
	in order to avoid losing a derivative at the top order.
	Specifically, to prove \eqref{E:KEYPOINTWISEESTIMATE}, 
	one first derives a transport equation for $\upchifullmodarg{\GeoAng^N}$
	(see the proof of \cite{jSgHjLwW2016}*{Lemma 11.9} for more details)
	of the form
	$
	\Lunit (\iota \upchifullmodarg{\GeoAng^N})
	= \cdots
	$,
	where $\iota$ is an appropriately defined integrating factor that verifies
	$\iota(s,u,\vartheta)
	=
	\displaystyle
	\left\lbrace
		1 + \mathcal{O}(\varepsilon)
	\right\rbrace
	\frac{\upmu^2(0,u,\vartheta)}{\upmu^2(s,u,\vartheta)},
	$
	and $\cdots$ contains, among other terms, 
	$
	\displaystyle
	\frac{1}{2}
	\iota 
	\GeoAng^N 
	\left(
		\upmu 
		G_{\Lunit \Lunit}
		\times \mbox{{\upshape RHS}~\eqref{E:FASTWAVE}}
	\right)
	$;
	the terms generated by 
	$
	\displaystyle
	\frac{1}{2}
	\iota 
	\GeoAng^N 
	\left(
		\upmu 
		G_{\Lunit \Lunit}
		\times \mbox{{\upshape RHS}~\eqref{E:FASTWAVE}}
	\right)
	$
	are the new ones compared to the terms found in \cite{jSgHjLwW2016}*{Proposition 11.10}.
	Using Lemmas~\ref{L:SCHEMATICDEPENDENCEOFMANYTENSORFIELDS} and \ref{L:CARTESIANVECTORFIELDSINTERMSOFGEOMETRICONES}
	and 
	our assumptions \eqref{E:SOMENONINEARITIESARELINEAR} on the semilinear inhomogeneous terms,
	we see that
	$
	\displaystyle
	\frac{1}{2}
	\iota 
	\GeoAng^N 
	\left(
		\upmu 
		G_{\Lunit \Lunit}
		\times \mbox{{\upshape RHS}~\eqref{E:FASTWAVE}}
	\right)
	=
	\iota
	\GeoAng^N
		\left\lbrace
			\smoothfunction(\GdVar,\bigslow,\Rad \Psi,\Singletan \Psi)\Singletan \Psi 
			+ 
			\smoothfunction(\BadVar,\bigslow,\Rad \Psi,\Singletan \Psi) \bigslow
		\right\rbrace
	$.
	Therefore, with the help of the $L^{\infty}$ estimates of Prop.~\ref{P:IMPROVEMENTOFAUX},
	we can pointwise bound these new terms in magnitude by
	$
	\displaystyle
	\lesssim
	\iota
	\left|
		\Fullset_*^{[1,N+1];1} \Psi
	\right|
	+
	\iota
	\left|
		\Tanset^{\leq N} \bigslow
	\right|
	+
	\iota
	\left|
		\Tanset_*^{[1,N]} \BadVar
	\right|
	$.
	Revisiting the proofs of \cite{jSgHjLwW2016}*{Lemma 11.9} and \cite{jSgHjLwW2016}*{Lemma 11.10}, 
	which are based on integrating the evolution equation 
	$
	\Lunit 
	(\iota \upchifullmodarg{\GeoAng^N})
	= \cdots
	$
	in time,
	we obtain, using the above pointwise bounds for the new terms in $\cdots$,
	the following estimate: 
	\[
	\left|
		\upmu \GeoAng^N \mytr \upchi
	\right|(t,u,\vartheta)
	\leq 
	C
	\left\lbrace
	\sup_{0 \leq t' \leq t}
		\frac{\upmu(t,u,\vartheta)}{\upmu(t',u,\vartheta)}
	\right\rbrace^2
	\times
	\int_{t'=0}^t 
		\left|
			\Tanset^{\leq N} \bigslow
		\right|(t',u,\vartheta)
	\, dt'
	+ \cdots,
	\]
	where the factor
	$
	\displaystyle
	\left\lbrace
	\sup_{0 \leq t' \leq t}
		\frac{\upmu(t,u,\vartheta)}{\upmu(t',u,\vartheta)}
	\right\rbrace^2
	$
	is generated by the integrating factor $\iota$ and
	$\cdots$ now denotes terms of the same type that appeared in the proof of \cite{jSgHjLwW2016}*{Lemma 11.9}.
	From Def.~\ref{D:REGIONSOFDISTINCTUPMUBEHAVIOR}
	and the estimates
	\eqref{E:LOCALIZEDMUCANTGROWTOOFAST}
	and
	\eqref{E:LOCALIZEDMUMUSTSHRINK},
	we find that
	\begin{align} \label{E:CRUDEMUOVERMUBOUND}
		\sup_{0 \leq t' \leq t}
		\frac{\upmu(t,u,\vartheta)}{\upmu(t',u,\vartheta)}
		& \leq C.
	\end{align}
	It therefore follows that
	\begin{align} \label{E:KEYDIFFICULTPRODUCTPROOFESTIMATE}
	\left|
		(\Rad \Psi) \GeoAng^N \mytr \upchi 
	\right|(t,u,\vartheta)
	& 
	\leq
	C
	\frac{1}{\upmu(t,u,\vartheta)}
	\left|
		\Rad \Psi
	\right|
	(t,u,\vartheta)
	\int_{t'=0}^t 
		\left|
			\Tanset^{\leq N} \bigslow
		\right|(t',u,\vartheta)
	\, dt'
	+ \cdots
		\\
	& 
	\leq
	C
	\frac{1}{\upmu_{\star}(t,u)}
	\int_{t'=0}^t 
		\left|
			\Tanset^{\leq N} \bigslow
		\right|(t',u,\vartheta)
	\, dt'
	+ \cdots,
	\notag
	\end{align}
	where $\cdots$ again denotes terms that appear in the proof of \cite{jSgHjLwW2016}*{Lemma 11.9}
	(and thus on RHS~\eqref{E:KEYPOINTWISEESTIMATE} as well)
	and to obtain the last inequality in \eqref{E:KEYDIFFICULTPRODUCTPROOFESTIMATE},
	we used the simple bound $\| \Rad \Psi \|_{L^{\infty}(\Sigma_t^u)} \lesssim 1$
	(that is, \eqref{E:PSITRANSVERSALLINFINITYBOUNDBOOTSTRAPIMPROVED}).
	This explains the origin of the last term on RHS~\eqref{E:ERRORTERMKEYPOINTWISEESTIMATE}
	and completes our proof outline of \eqref{E:KEYPOINTWISEESTIMATE}.
	
	Similarly, the estimate \eqref{E:LESSPRECISEKEYPOINTWISEESTIMATE}
	was essentially obtained in
	\cite{jSgHjLwW2016}*{Proposition 11.10}
	using ideas similar to but simpler than the ones used in the proof of \eqref{E:KEYPOINTWISEESTIMATE}.
	The new terms mentioned in the previous paragraph also make a contribution
	to RHS~\eqref{E:LESSPRECISEKEYPOINTWISEESTIMATE}
	for essentially the same reason that they appeared on
	RHS~\eqref{E:KEYDIFFICULTPRODUCTPROOFESTIMATE}.
	Specifically, 
	they lead to the last term
	on RHS~\eqref{E:LESSPRECISEKEYPOINTWISEESTIMATE}.
	In closing, we note that RHS~\eqref{E:LESSPRECISEKEYPOINTWISEESTIMATE}
	is less singular with respect to factors of 
	$
	\displaystyle
	\frac{1}{\upmu}
	$
	compared to RHS~\eqref{E:KEYPOINTWISEESTIMATE}.
	The reason is that
	LHS~\eqref{E:LESSPRECISEKEYPOINTWISEESTIMATE}
	has an extra factor of $\upmu$ in it compared to
	LHS~\eqref{E:KEYPOINTWISEESTIMATE}.
\end{proof}

\subsection{Pointwise estimates for the remaining terms in the energy estimates}
\label{SS:POINTWISEESTIMATESREMININGTERMSENERGYESTIMATES}
We now derive pointwise estimates for the energy estimates error integrands
$\basicenergyerrorarg{\Mult}{i}[f]$ from RHS~\eqref{E:E0DIVID}
and the error integrand
$
\left\lbrace 
					1 + \upgamma \smoothfunction(\upgamma)
				\right\rbrace 
				\slowbasicenergyerror[\altbigslow]
$
from \eqref{E:SLOWENERGYID}.

\begin{lemma}[\textbf{Pointwise bounds for the remaining error terms in the energy estimates}]
\label{L:MULTIPLIERVECTORFIEDERRORTERMPOINTWISEBOUND}
	Consider the error terms 
	$\basicenergyerrorarg{\Mult}{1}[f]$,
	$\cdots$,
	$\basicenergyerrorarg{\Mult}{5}[f]$
	defined in \eqref{E:MULTERRORINTEG1}-\eqref{E:MULTERRORINTEG5}.
	Let $\varsigma > 0$ be a real number.
	Then the following pointwise estimate holds
	without any absolute value taken on the left,
	where the implicit constants are independent of $\varsigma$:
	\begin{align} \label{E:MULTIPLIERVECTORFIEDERRORTERMPOINTWISEBOUND}
		\sum_{i=1}^5 \basicenergyerrorarg{\Mult}{i}[f]
		& \lesssim
			(1 + \varsigma^{-1})(\Lunit f)^2
			+ (1 + \varsigma^{-1}) (\Rad f)^2
			+ \upmu |\angdiff f|^2
			+ \varsigma \TranminusdatasizeWithFactor |\angdiff f|^2
				\\
		& \ \
				+ \frac{1}{\sqrt{\Tboot - t}} 
				\upmu |\angdiff f|^2.
				\notag
\end{align}

	In addition, we have the following pointwise estimate for the
	error integrand 
	$
	\left\lbrace 
					1 + \upgamma \smoothfunction(\upgamma)
				\right\rbrace 
				\slowbasicenergyerror[\altbigslow]
	$
	on RHS~\eqref{E:SLOWENERGYID},
	where $\slowbasicenergyerror[\altbigslow]$ is defined by \eqref{E:SLOWWAVEBASICENERGYINTEGRAND}:
	\begin{align} \label{E:SIMPLEPOINTWISEBOUNDSLOWWAVEBASICENERGYINTEGRAND}
		\left|
			\left\lbrace 
					1 + \upgamma \smoothfunction(\upgamma)
			\right\rbrace 
			\slowbasicenergyerror[\altbigslow]		
		\right|
		& \lesssim |\altbigslow|^2.
	\end{align}

\end{lemma}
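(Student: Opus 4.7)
My approach to \eqref{E:MULTIPLIERVECTORFIEDERRORTERMPOINTWISEBOUND} is to treat each term $\basicenergyerrorarg{\Mult}{i}[f]$ appearing in \eqref{E:MULTERRORINTEG1}--\eqref{E:MULTERRORINTEG5} separately, reducing everything to pointwise bounds on the coefficient factors multiplying the quadratic forms in $\Lunit f$, $\Rad f$, and $\angdiff f$. The starting point is Lemma~\ref{L:SCHEMATICDEPENDENCEOFMANYTENSORFIELDS}, which identifies each coefficient as a smooth function of $\GdVar$, $\ginversesphere$, $\angdiff x^1$, $\angdiff x^2$, multiplied by one of the basic quantities $\Singletan \GdVar$, $\Rad \Psi$, or $\Lunit \Psi$. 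I would then invoke the $L^\infty$ estimates of Prop.~\ref{P:IMPROVEMENTOFAUX} (together with Cor.~\ref{C:SQRTEPSILONTOCEPSILON}) to conclude that, with the single exception of $\Rad \upmu / \upmu$ (to be discussed next), every coefficient in \eqref{E:MULTERRORINTEG1}--\eqref{E:MULTERRORINTEG5} is bounded in $L^\infty$ by a constant $C$ depending on $\TranminusdatasizeWithFactor^{-1}$ and $\mathring{\updelta}$.

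For the two ``pure square'' terms $\basicenergyerrorarg{\Mult}{1}[f]$ and $\basicenergyerrorarg{\Mult}{3}[f]$, the coefficient bounds just described immediately yield contributions of the schematic form $(\Lunit f)^2$ and $\upmu |\angdiff f|^2$. The single genuinely dangerous subterm is the $\frac{\Rad \upmu}{\upmu} \upmu |\angdiff f|^2 = (\Rad \upmu) |\angdiff f|^2$ inside $\basicenergyerrorarg{\Mult}{3}[f]$: here I would split according to the sign of $\Rad \upmu$, discard the favorable sign contribution (which the statement permits since there is no absolute value on the LHS), and bound the unfavorable part $[\Rad \upmu]_+ |\angdiff f|^2 = \frac{[\Rad \upmu]_+}{\upmu} \cdot \upmu |\angdiff f|^2$ by invoking the sharp estimate \eqref{E:UNIFORMBOUNDFORMRADMUOVERMU} from Prop.~\ref{P:SHARPMU}. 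This is precisely the source of the distinguished $\frac{1}{\sqrt{\Tboot - t}} \upmu |\angdiff f|^2$ term on the right-hand side of \eqref{E:MULTIPLIERVECTORFIEDERRORTERMPOINTWISEBOUND}, and it is the main (though not especially deep) obstacle in the argument.

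For the cross terms $\basicenergyerrorarg{\Mult}{2}[f]$, $\basicenergyerrorarg{\Mult}{4}[f]$, $\basicenergyerrorarg{\Mult}{5}[f]$, I would apply Young's inequality with parameter $\varsigma$, placing the smallness weight $\varsigma$ on the $|\angdiff f|^2$ side and the compensating weight $\varsigma^{-1}$ on the $(\Lunit f)^2$ or $(\Rad f)^2$ side, which produces the factors $(1+\varsigma^{-1})$ in the statement. In doing so, the coefficient of $\varsigma |\angdiff f|^2$ is (up to constants) the square of $\|\angdiff \upmu\|_{L^\infty}$ or $\|\upzeta^{(Trans-\Psi)}\|_{L^\infty}$; using $\upzeta^{(Trans-\Psi)} = -\tfrac{1}{2}\angGarg{\Lunit} \Rad \Psi$ from \eqref{E:ZETATRANSVERSAL}, the $L^\infty$ bound \eqref{E:PSITRANSVERSALLINFINITYBOUNDBOOTSTRAPIMPROVED} for $\Rad \Psi$, and the very definition \eqref{E:CRITICALBLOWUPTIMEFACTOR} of $\TranminusdatasizeWithFactor$, this produces exactly the explicit $\varsigma \TranminusdatasizeWithFactor |\angdiff f|^2$ term displayed in \eqref{E:MULTIPLIERVECTORFIEDERRORTERMPOINTWISEBOUND}; the remaining ``$\mathcal{P}_u$-tangential'' pieces such as $\upmu \upzeta^{(Tan-\Psi)}$ and $\upmu \angk^{(Tan-\Psi)}$ contribute only harmless $\varsigma\cdot\varepsilon\cdot\upmu |\angdiff f|^2$ which is absorbed into the $\upmu |\angdiff f|^2$ term.

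Finally, for the slow-wave estimate \eqref{E:SIMPLEPOINTWISEBOUNDSLOWWAVEBASICENERGYINTEGRAND}, I would observe that every summand in \eqref{E:SLOWWAVEBASICENERGYINTEGRAND} is the product of quadratic factors in the components $\altslow_\alpha, \altslow$ and a coefficient of the form $\upmu \cdot \partial h^{-1}$ or $\upmu (h^{-1})^{\alpha 0}$. The second type is smoothly bounded in $L^\infty$ by the smallness of $|\Psi| + |\bigslow|$. For the first type, I would express each Cartesian $\partial_\alpha$ as a combination of the geometric vectorfields via Lemma~\ref{L:CARTESIANVECTORFIELDSINTERMSOFGEOMETRICONES}, so that the factor $\upmu \partial_\alpha$ becomes a linear combination of $\upmu \Lunit$, $\Rad$, and $\upmu \GeoAng$ acting on $h^{-1}(\Psi,\bigslow)$. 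Every such derivative is then controlled in $L^\infty$ by the estimates of Prop.~\ref{P:IMPROVEMENTOFAUX} (together with the Rad-derivative estimates of Prop.~\ref{P:IMPROVEMENTOFHIGHERTRANSVERSALBOOTSTRAP}), so all coefficients are uniformly bounded, and \eqref{E:SIMPLEPOINTWISEBOUNDSLOWWAVEBASICENERGYINTEGRAND} follows immediately.
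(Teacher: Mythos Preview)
Your proposal is correct and follows essentially the same approach as the paper: bound the coefficient factors via the schematic identities and the $L^\infty$ estimates of Prop.~\ref{P:IMPROVEMENTOFAUX}, use Young's inequality with parameter $\varsigma$ on the cross terms, and handle the dangerous $\frac{\Rad\upmu}{\upmu}$ term in $\basicenergyerrorarg{\Mult}{3}[f]$ by exploiting the absence of absolute value on the LHS and invoking the sharp estimate \eqref{E:UNIFORMBOUNDFORMRADMUOVERMU}. One small omission: the factor $\frac{[\Lunit\upmu]_+}{\upmu}$ in \eqref{E:MULTERRORINTEG3} also has $\upmu$ in the denominator and is not covered by Prop.~\ref{P:IMPROVEMENTOFAUX} alone---it requires the companion estimate \eqref{E:POSITIVEPARTOFLMUOVERMUISBOUNDED} from the same Prop.~\ref{P:SHARPMU} you already cite.
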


\begin{remark}	
	\label{R:NEEDTHEESTIMATEWITHTANSETNPSIINPLACEOFPSI}
	In deriving energy estimates, we will rely on the estimate
	\eqref{E:MULTIPLIERVECTORFIEDERRORTERMPOINTWISEBOUND} with
	$\Tanset^N \Psi$ in the role of $f$
	and the estimate 
	\eqref{E:SIMPLEPOINTWISEBOUNDSLOWWAVEBASICENERGYINTEGRAND}
	with
	$\Tanset^N \bigslow$ in the role of $\altbigslow$.
\end{remark}

\begin{proof}
	See Subsect.\ \ref{SS:OFTENUSEDESTIMATES} for some comments on the analysis.
	We first prove \eqref{E:MULTIPLIERVECTORFIEDERRORTERMPOINTWISEBOUND}.
	Only the term $\basicenergyerrorarg{\Mult}{3}[f]$
	is difficult to treat.
	Specifically, using 
	\eqref{E:TENSORSDEPENDINGONGOODVARIABLESGOODPSIDERIVATIVES},
	\eqref{E:TENSORSDEPENDINGONGOODVARIABLESBADDERIVATIVES},
	\eqref{E:ANGDIFFXI},
	and the $L^{\infty}$ estimates
	of Props.~\ref{P:IMPROVEMENTOFAUX} and \ref{P:IMPROVEMENTOFHIGHERTRANSVERSALBOOTSTRAP},
	it is straightforward to verify that the terms in braces on
	RHSs \eqref{E:MULTERRORINTEG1}, \eqref{E:MULTERRORINTEG2}, \eqref{E:MULTERRORINTEG4}, and
	\eqref{E:MULTERRORINTEG5} are bounded in magnitude by $\lesssim 1$.
	It follows that for $i=1,2,4,5$,
	$\left|
		\basicenergyerrorarg{\Mult}{i}[f]
	\right|
	$ 
	is $\lesssim$ the terms on the first line of
	RHS~\eqref{E:MULTIPLIERVECTORFIEDERRORTERMPOINTWISEBOUND}.
	The quantities $\varsigma$ and $\TranminusdatasizeWithFactor$
	appear on RHS~\eqref{E:MULTIPLIERVECTORFIEDERRORTERMPOINTWISEBOUND} 
	because we use Young's inequality to bound
	$\basicenergyerrorarg{\Mult}{4}[f] \lesssim |\Lunit f||\angdiff f| 
	\leq \varsigma^{-1} \TranminusdatasizeWithFactor^{-1}(\Lunit f)^2 
	+ \varsigma \TranminusdatasizeWithFactor |\angdiff f|^2
	\leq C \varsigma^{-1} (\Lunit f)^2 + \varsigma \TranminusdatasizeWithFactor |\angdiff f|^2
	$.
	Similar remarks apply to $\basicenergyerrorarg{\Mult}{5}[f]$.
		To bound
	$\basicenergyerrorarg{\Mult}{3}[f]$,
	we also use 
	\eqref{E:POSITIVEPARTOFLMUOVERMUISBOUNDED}
	and 
	\eqref{E:UNIFORMBOUNDFORMRADMUOVERMU},
	which allow us to bound the
	first two terms in braces on RHS~\eqref{E:MULTERRORINTEG3}.
	Note that since no absolute value 
	is taken on LHS~\eqref{E:MULTIPLIERVECTORFIEDERRORTERMPOINTWISEBOUND},
	we can replace the factor 
	$(\Rad \upmu)/\upmu$ 
	from RHS~\eqref{E:MULTERRORINTEG3}
	with the factor $[\Rad \upmu]_+/\upmu$,
	which is bounded by \eqref{E:UNIFORMBOUNDFORMRADMUOVERMU}.
	This completes our proof of \eqref{E:MULTIPLIERVECTORFIEDERRORTERMPOINTWISEBOUND}.
	
	To prove \eqref{E:SIMPLEPOINTWISEBOUNDSLOWWAVEBASICENERGYINTEGRAND}, we 
	first use
	Lemmas \ref{L:SCHEMATICDEPENDENCEOFMANYTENSORFIELDS} and \ref{L:CARTESIANVECTORFIELDSINTERMSOFGEOMETRICONES}
	to deduce that
	$
	\displaystyle
	\upmu \partial_{\kappa}
	\left(
		(h^{-1})^{\alpha \beta}(\Psi,\bigslow)
	\right)
	= \smoothfunction(\BadVar) \Singletan \Psi 
		+
		\smoothfunction(\GdVar) \Rad \Psi
		+
		\smoothfunction(\BadVar) \Singletan \bigslow
		+
		\smoothfunction(\GdVar) \Rad \bigslow
	$.
	From this schematic identity
	and
	the $L^{\infty}$ estimates of Prop.~\ref{P:IMPROVEMENTOFAUX},
	we obtain the bounds
	$
	\displaystyle
	\left|
		(h^{-1})^{\alpha \beta}(\Psi,\bigslow)
	\right|
	\lesssim 1
	$,
	$
	\displaystyle
	\left|
		\upmu \partial_{\kappa} 
			\left(
				(h^{-1})^{\alpha \beta}(\Psi,\bigslow)
			\right)
	\right|
	\lesssim 1
	$,
	$
	\displaystyle
	\left|
		1 + \upgamma \smoothfunction(\upgamma)
	\right|
	\lesssim 1
	$,
	and $\upmu \lesssim 1$,
	from which the desired estimate \eqref{E:SIMPLEPOINTWISEBOUNDSLOWWAVEBASICENERGYINTEGRAND} easily follows.
\end{proof}

\section{Energy estimates and improvements of the fundamental \texorpdfstring{$L^{\infty}$}{essential sup-norm} bootstrap assumptions}
\label{S:ENERGYESTIMATES}
In this section, we derive the main estimates of this article: a priori $L^2$ estimates for the solution up to top order.
As a simple corollary, we will also derive
strict improvements of the fundamental $L^{\infty}$ bootstrap assumptions
\eqref{E:PSIFUNDAMENTALC0BOUNDBOOTSTRAP}.
Remark~\ref{R:ESTIMATESPROVEDINOTHERPAPER} especially applies in this section.

\subsection{Definitions of the fundamental \texorpdfstring{$L^2$}{square integral}-controlling quantities}
\label{SS:SQUAREINTEGRALCONTNROLLINGQUANT}
In this subsection, we define the quantities that we use to control the solution in $L^2$
up to top order.

\begin{definition}[\textbf{The main coercive quantities used for controlling the solution and its derivatives in} $L^2$]
\label{D:MAINCOERCIVEQUANT}
In terms of the energies and null fluxes of Defs.~\ref{D:ENERGYFLUX} and \ref{D:SLOWWAVEENERGYFLUX},
we define
\begin{subequations}
\begin{align}
	\totTanmax{N}(t,u)
	& := \max_{|\vec{I}| = N}
		\sup_{(t',u') \in [0,t] \times [0,u]} 
		\left\lbrace
			\enzero[\Tanset^{\vec{I}} \Psi](t',u')
			+ 
			\flzero[\Tanset^{\vec{I}} \Psi](t',u')
		\right\rbrace,
			\label{E:Q0TANNDEF} 
			\\
	\totTanmax{[1,N]}(t,u)
	& := \max_{1 \leq M \leq N} \totTanmax{M}(t,u),
		\label{E:MAXEDQ0TANLEQNDEF} 
			\\
\slowtotTanmax{N}(t,u)
	& := \max_{|\vec{I}| = N}
		\sup_{(t',u') \in [0,t] \times [0,u]} 
		\left\lbrace
			\slowen[\Tanset^{\vec{I}} \bigslow](t',u')
				+ 
			\slowfl[\Tanset^{\vec{I}} \bigslow](t',u')
		\right\rbrace,
			\label{E:SLOWQ0TANNDEF} 
			\\
	\slowtotTanmax{\leq N}(t,u)
	& := \max_{M \leq N} \slowtotTanmax{M}(t,u).
		\label{E:MAXEDSLOWQ0TANLEQNDEF} 
\end{align}
\end{subequations}

\end{definition}

We use the following coercive spacetime integrals 
to control non-$\upmu$-weighted error integrals involving geometric torus derivatives.
These integrals are generated by the term
$
\displaystyle
	- 
				\frac{1}{2} 
				\int_{\mathcal{M}_{t,u}}
				[\Lunit \upmu]_- |\angdiff f|^2 
$
on the RHS of the fast wave energy identity \eqref{E:E0DIVID}.

\begin{definition}[\textbf{Key coercive spacetime integrals}]
\label{D:COERCIVEINTEGRAL}
	We associate the following integrals to $\Psi$,
	where $[\Lunit \upmu]_- = |\Lunit \upmu|$ when $\Lunit \upmu < 0$
	and $[\Lunit \upmu]_- = 0$ when $\Lunit \upmu \geq 0$:
	\begin{subequations}
	\begin{align} \label{E:COERCIVESPACETIMEDEF} 
		\coercivespacetime[\Psi](t,u)
		& :=
	 	\frac{1}{2}
	 	\int_{\mathcal{M}_{t,u}}
			[\Lunit \upmu]_-
			|\angdiff \Psi|^2
		\, d \vol, 
				\\
		\coerciveTanspacetimemax{N}(t,u) 
		& := \max_{|\vec{I}| = N} \coercivespacetime[\Tanset^{\vec{I}} \Psi](t,u),
			\\
		\coerciveTanspacetimemax{[1,N]}(t,u) 
		& := \max_{1 \leq M \leq N} \coerciveTanspacetimemax{M}(t,u).
		\label{E:MAXEDCOERCIVESPACETIMEDEF}
	\end{align}
	\end{subequations}
\end{definition}

\begin{remark}[\textbf{The energies vanish for simple plane wave solutions}]
\label{R:ENERGIESVANISHFORSIMPLEPLANEWAVE}
	Note that for simple outgoing plane wave solutions, if $N \geq 1$, then
	$\totTanmax{[1,N]}(t,u) \equiv 0$,
	$\slowtotTanmax{\leq N}(t,u) \equiv 0$,
	and
	$\coerciveTanspacetimemax{[1,N]}(t,u) \equiv 0$.
	Hence, in some sense, $\totTanmax{[1,N]}$ and $\slowtotTanmax{\leq N}$
	measure the extent to which the solution deviates from a simple outgoing plane wave.
	These facts are tied to Lemma~\ref{L:INITIALSIZEOFL2CONTROLLING} below and
	to the fact that $\mathring{\upepsilon} = 0$ for
	simple outgoing plane wave solutions. 
\end{remark}

\subsection{The coerciveness of the fundamental \texorpdfstring{$L^2$}{square integral}-controlling quantities}
\label{SS:COERCIVENESSOFL2CONTROLLING}

\subsubsection{Preliminary lemmas}
\label{SSS:COERCIVENESSPRELIMINARYLEMMAS}
In this subsection, we quantify the coerciveness of the $L^2$-controlling quantities that we defined
in Subsect.\ \ref{SS:SQUAREINTEGRALCONTNROLLINGQUANT}.
We start with a lemma that provides an identity for the time derivative of integrals
over the tori $\ell_{t,u}$.

\begin{lemma}\cite{jSgHjLwW2016}*{Lemma 3.6; \textbf{Identity for the time derivative of} $\ell_{t,u}$ \textbf{integrals}}
\label{L:LDERIVATIVEOFLINEINTEGRAL}
	The following identity holds for scalar-valued functions $f$:
	\begin{align} \label{E:LDERIVATIVEOFLINEINTEGRAL}
		\frac{\partial}{\partial t}
		\int_{\ell_{t,u}}
			f
		\, d \spherevol
		& 
		= 
		\int_{\ell_{t,u}}
			\left\lbrace
				\Lunit f
				+ 
				\mytr \upchi f
			\right\rbrace
		\, d \spherevol.
	\end{align}
\end{lemma}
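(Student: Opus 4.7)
My plan is to reduce the identity to a straightforward computation in the geometric coordinate system $(t,u,\vartheta)$, exploiting the fact that $\Lunit = \partial/\partial t$ and that the length form $d\spherevol$ has a very explicit expression in these coordinates.

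First, I would write the integral on the left-hand side in geometric coordinates using Def.~\ref{D:NONDEGENERATEVOLUMEFORMS}:
\begin{align*}
\int_{\ell_{t,u}} f \, d\spherevol = \int_{\vartheta \in \mathbb{T}} f(t,u,\vartheta)\, \gtancomp(t,u,\vartheta)\, d\vartheta.
\end{align*}
Since the domain of integration $\mathbb{T}$ is independent of $t$, I can differentiate under the integral sign and apply the product rule to obtain
\begin{align*}
\frac{\partial}{\partial t} \int_{\ell_{t,u}} f \, d\spherevol
= \int_{\vartheta \in \mathbb{T}} \left\{ \frac{\partial f}{\partial t}\, \gtancomp + f\, \frac{\partial \gtancomp}{\partial t} \right\} d\vartheta.
\end{align*}

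Next, I would invoke the identity $\Lunit = \partial/\partial t$ from \eqref{E:LISDDT}, which lets me rewrite $\partial f/\partial t = \Lunit f$ and $\partial \gtancomp/\partial t = \Lunit \gtancomp$. The key ingredient is then the identity \eqref{E:LDERIVATIVEOFVOLUMEFORMFACTOR}, namely $\Lunit \ln \gtancomp = \mytr \upchi$, which gives $\Lunit \gtancomp = \gtancomp\, \mytr \upchi$. Substituting this into the expression above yields
\begin{align*}
\int_{\vartheta \in \mathbb{T}} \left\{ \Lunit f + f\, \mytr \upchi \right\} \gtancomp\, d\vartheta = \int_{\ell_{t,u}} \left\{ \Lunit f + \mytr \upchi\, f \right\} d\spherevol,
\end{align*}
which is the claimed identity.

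There is no real obstacle here; the entire argument rests on having the length form expressed in terms of the single scalar factor $\gtancomp$ along the fixed parameter $\vartheta$, and on the geometric identity $\Lunit \ln \gtancomp = \mytr \upchi$ already established in Lemma~\ref{L:IDFORCHI}. The only mild subtlety worth flagging is the justification for differentiating under the integral sign, but this is immediate for smooth $f$ on the compact torus $\ell_{t,u} \simeq \mathbb{T}$ and smooth $\gtancomp$, both of which are guaranteed on $\mathcal{M}_{\Tboot,U_0}$ under our bootstrap assumptions.
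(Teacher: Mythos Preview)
Your proof is correct and is exactly the natural argument for this identity. The paper does not supply its own proof here; it simply cites the result from \cite{jSgHjLwW2016}*{Lemma~3.6}, so there is no in-paper proof to compare against, but your argument is the standard one and relies only on ingredients already established in the paper (Defs.~\ref{D:NONDEGENERATEVOLUMEFORMS} and~\ref{D:METRICANGULARCOMPONENT}, \eqref{E:LISDDT}, and \eqref{E:LDERIVATIVEOFVOLUMEFORMFACTOR}).
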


In the next lemma, we derive estimates for the
metric component $\gtancomp$ defined in \eqref{E:METRICANGULARCOMPONENT}.

\begin{lemma}[\textbf{Pointwise estimates for} $\gtancomp$]
\label{L:POINTWISEESTIMATEFORGTANCOMP}
Let $\gtancomp > 0$ be the scalar function defined in \eqref{E:METRICANGULARCOMPONENT}.
The following estimates hold
(see Subsect.\ \ref{SS:NOTATIONANDINDEXCONVENTIONS} regarding our use of the notation $\mathcal{O}_{\mydiam}(\cdot)$):
\begin{align} \label{E:MOREPRECISEPOINTWISEESTIMATEFORGTANCOMP}  
	\gtancomp(t,u,\vartheta) 
		& = \left\lbrace 
					1 + \mathcal{O}(\varepsilon)
				\right\rbrace
				\gtancomp(0,u,\vartheta)
		= 1 + \mathcal{O}_{\mydiam}(\Psiep) + \mathcal{O}(\varepsilon).
\end{align}
\end{lemma}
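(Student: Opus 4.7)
The plan is to exploit the transport equation \eqref{E:LDERIVATIVEOFVOLUMEFORMFACTOR}, namely $\Lunit \ln \gtancomp = \mytr \upchi$, and the fact that $\Lunit = \partial/\partial t$ (see \eqref{E:LISDDT}). Since this reduces the problem to an ODE along the integral curves of $\Lunit$ (which are simply the curves of constant $(u,\vartheta)$), I expect the argument to be very short.

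First, I would integrate the transport equation from $0$ to $t$ at fixed $(u,\vartheta)$ to obtain
\begin{align*}
\ln \gtancomp(t,u,\vartheta) - \ln \gtancomp(0,u,\vartheta)
& = \int_{s=0}^t \mytr \upchi(s,u,\vartheta) \, ds.
\end{align*}
Next I would invoke the $L^{\infty}$ bound $\| \mytr \upchi \|_{L^{\infty}(\Sigma_s^u)} \leq C \varepsilon$, which is the $N=0$ case of \eqref{E:PURETANGENTIALCHICOMMUTEDLINFINITY} in Prop.~\ref{P:IMPROVEMENTOFAUX}, together with the fact that $t \leq \Tboot \leq 2 \TranminusdatasizeWithFactor^{-1}$ (with factors of $\TranminusdatasizeWithFactor^{-1}$ absorbed into $C$ as per the conventions of Subsect.~\ref{SS:NOTATIONANDINDEXCONVENTIONS}). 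This yields $|\ln \gtancomp(t,u,\vartheta) - \ln \gtancomp(0,u,\vartheta)| \leq C \varepsilon$, and exponentiating gives the first equality $\gtancomp(t,u,\vartheta) = \{1 + \mathcal{O}(\varepsilon)\} \gtancomp(0,u,\vartheta)$.

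For the second equality, I need to compute $\gtancomp(0,u,\vartheta)$. Since $\vartheta|_{\Sigma_0} = x^2$ (by \eqref{E:APPENDIXGEOMETRICTORUSCOORDINITIALCOND}) and $u|_{\Sigma_0} = 1 - x^1$, the change-of-variables map restricted to $\Sigma_0$ has trivial angular component, so $\CoordAng|_{\Sigma_0} = \partial_2$. Hence by Def.~\ref{D:METRICANGULARCOMPONENT},
\begin{align*}
\gtancomp^2(0,u,\vartheta)
& = g_{22}(\Psi(0,u,\vartheta))
= 1 + g_{22}^{(Small)}(\Psi(0,u,\vartheta)).
\end{align*}
By the smoothness of $g^{(Small)}$ and \eqref{E:METRICPERTURBATIONFUNCTION}, which gives $g_{\alpha \beta}^{(Small)}(\Psi = 0) = 0$, we have $g_{22}^{(Small)}(\Psi) = \mathcal{O}_{\mydiam}(\Psi)$. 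Combining with the data-size assumption \eqref{E:PSIITSELFLINFTYSMALLDATAASSUMPTIONSALONGSIGMA0} ($\| \Psi \|_{L^{\infty}(\Sigma_0^1)} \leq \Psiep$), we conclude $\gtancomp(0,u,\vartheta) = 1 + \mathcal{O}_{\mydiam}(\Psiep)$, and the second equality of \eqref{E:MOREPRECISEPOINTWISEESTIMATEFORGTANCOMP} follows.

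There is no real obstacle here; the only mild point of care is tracking the distinction between the two types of constants (the $C_{\mydiam}$ used for the $\Psiep$-sized contribution from initial data versus the $C$ used for the $\varepsilon$-sized growth along the flow), in accordance with the conventions of Subsect.~\ref{SS:NOTATIONANDINDEXCONVENTIONS}.
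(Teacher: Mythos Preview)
Your proof is correct and follows essentially the same approach as the paper: integrate the transport equation $\Lunit \ln \gtancomp = \mytr \upchi$ using the bound \eqref{E:PURETANGENTIALCHICOMMUTEDLINFINITY} to obtain the first equality, then compute $\gtancomp(0,u,\vartheta)$ via $\CoordAng|_{t=0} = \partial_2$ and the structure \eqref{E:LITTLEGDECOMPOSED}--\eqref{E:METRICPERTURBATIONFUNCTION} of $g$ together with the data assumption \eqref{E:PSIITSELFLINFTYSMALLDATAASSUMPTIONSALONGSIGMA0}.
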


\begin{proof}
	See Subsect.\ \ref{SS:OFTENUSEDESTIMATES} for some comments on the analysis.
	Using \eqref{E:LDERIVATIVEOFVOLUMEFORMFACTOR} and 
	\eqref{E:PURETANGENTIALCHICOMMUTEDLINFINITY}, we deduce
	$\Lunit \ln \gtancomp = \mathcal{O}(\varepsilon)$.
	Integrating in time,
	we deduce $\ln \gtancomp(t,u,\vartheta) = \ln \gtancomp(0,u,\vartheta) + \mathcal{O}(\varepsilon)$,
	which yields the first equality in \eqref{E:MOREPRECISEPOINTWISEESTIMATEFORGTANCOMP}.
	The second equality in
	\eqref{E:MOREPRECISEPOINTWISEESTIMATEFORGTANCOMP} then follows from
	the first one and the estimate
	$\gtancomp(0,u,\vartheta) = 1 + \mathcal{O}_{\mydiam}(\Psiep)$,
	which we now derive.
	To this end, we first note that by construction,
	we have $\CoordAng|_{t=0} = \partial_2$.
	Hence, from
	\eqref{E:LITTLEGDECOMPOSED}-\eqref{E:METRICPERTURBATIONFUNCTION}
	and
	\eqref{E:PSIITSELFLINFTYSMALLDATAASSUMPTIONSALONGSIGMA0},
	we conclude that
	$
	\gtancomp^2|_{t=0} 
	= g(\CoordAng,\CoordAng)|_{t=0} 
	= g_{22}|_{t=0}
	=
	1 + \mathcal{O}_{\mydiam}(\Psiep)
	$
	as desired.
	\end{proof}

In the next lemma, we compare various integrals 
that are computed with respect to forms evaluated at different times.

\begin{lemma}[\textbf{Comparison results for integrals}]
\label{L:LINEVOLUMEFORMCOMPARISON}
Let $p=p(\vartheta)$ be a non-negative function of $\vartheta$. 
Then the following estimates hold
for $(t,u) \in [0,\Tboot) \times [0,U_0]$:
\begin{align} \label{E:LINEVOLUMEFORMCOMPARISON}
	\int_{\vartheta \in \mathbb{T}} p(\vartheta) d \argspherevol{(0,u,\vartheta)}
	& 
	=
	\left\lbrace 
		1 + \mathcal{O}(\varepsilon)
	\right\rbrace
	\int_{\ell_{t,u}} p(\vartheta) d \argspherevol{(t,u,\vartheta)}.
\end{align}

Furthermore, let $p=p(u',\vartheta)$ be a non-negative function of $(u',\vartheta) \in [0,u] \times \mathbb{T}$
that \textbf{does not depend on $t$}.
Then for $s, t \in [0,\Tboot)$ and $u \in [0,U_0]$, 
we have:
\begin{align} \label{E:SIGMATVOLUMEFORMCOMPARISON}
	\int_{\Sigma_s^u} p \, d \tvol
	& =
		\left\lbrace
			1 + \mathcal{O}(\varepsilon)
		\right\rbrace
	\int_{\Sigma_t^u} p \, d \tvol.
\end{align}

\end{lemma}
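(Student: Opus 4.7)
\medskip

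\noindent \textbf{Proof proposal for Lemma~\ref{L:LINEVOLUMEFORMCOMPARISON}:}
The plan is to reduce both estimates to the pointwise comparison for the metric component $\gtancomp$ already established in Lemma~\ref{L:POINTWISEESTIMATEFORGTANCOMP}. Recall from Def.~\ref{D:NONDEGENERATEVOLUMEFORMS} that $d\argspherevol{(t,u,\vartheta)} = \gtancomp(t,u,\vartheta)\, d\vartheta$ and $d\tvol(t,u',\vartheta) = \gtancomp(t,u',\vartheta)\, d\vartheta\, du'$, so the geometric line and area forms differ from the flat forms $d\vartheta$ and $d\vartheta\, du'$ only through the scalar factor $\gtancomp$.

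For the first identity, I would simply unfold the definition of $d\argspherevol{}$: using the first equality in \eqref{E:MOREPRECISEPOINTWISEESTIMATEFORGTANCOMP}, which yields $\gtancomp(0,u,\vartheta) = \{1 + \mathcal{O}(\varepsilon)\}\gtancomp(t,u,\vartheta)$ pointwise, and pulling this factor outside the integral in $\vartheta$ (permissible because $p \geq 0$ and the $\mathcal{O}(\varepsilon)$ factor is uniformly bounded), I would conclude
\begin{align*}
\int_{\vartheta \in \mathbb{T}} p(\vartheta)\, d\argspherevol{(0,u,\vartheta)}
& = \int_{\vartheta \in \mathbb{T}} p(\vartheta)\gtancomp(0,u,\vartheta)\, d\vartheta \\
& = \{1 + \mathcal{O}(\varepsilon)\}\int_{\vartheta \in \mathbb{T}} p(\vartheta)\gtancomp(t,u,\vartheta)\, d\vartheta
= \{1 + \mathcal{O}(\varepsilon)\}\int_{\ell_{t,u}} p(\vartheta)\, d\argspherevol{(t,u,\vartheta)}.
\end{align*}

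For the second identity, I would apply \eqref{E:MOREPRECISEPOINTWISEESTIMATEFORGTANCOMP} at the two times $s,t$ to obtain, uniformly in $(u',\vartheta) \in [0,u] \times \mathbb{T}$, the pointwise relation $\gtancomp(s,u',\vartheta) = \{1 + \mathcal{O}(\varepsilon)\}\gtancomp(t,u',\vartheta)$; the implicit constants do not depend on the choice of times since each factor is comparable to $\gtancomp(0,u',\vartheta)$. Since $p$ is independent of $t$ and non-negative, I would again pull this factor outside the double integral in $(u',\vartheta)$ to obtain
\begin{align*}
\int_{\Sigma_s^u} p\, d\tvol
& = \int_{u'=0}^u \int_{\vartheta \in \mathbb{T}} p(u',\vartheta)\gtancomp(s,u',\vartheta)\, d\vartheta\, du' \\
& = \{1 + \mathcal{O}(\varepsilon)\}\int_{u'=0}^u \int_{\vartheta \in \mathbb{T}} p(u',\vartheta)\gtancomp(t,u',\vartheta)\, d\vartheta\, du'
= \{1 + \mathcal{O}(\varepsilon)\}\int_{\Sigma_t^u} p\, d\tvol,
\end{align*}
as desired.

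There is no real obstacle in this lemma; it is a straightforward bookkeeping consequence of Lemma~\ref{L:POINTWISEESTIMATEFORGTANCOMP}. The only point requiring minor care is verifying that the same $\mathcal{O}(\varepsilon)$ bound controls $\gtancomp(s,\cdot)/\gtancomp(t,\cdot)$ for arbitrary $s,t \in [0,\Tboot)$, which follows by composing the two-sided estimates at times $s$ and $t$ against the common reference $\gtancomp(0,\cdot)$, together with the smallness of $\varepsilon$ assumed in Subsect.\ \ref{SS:SMALLNESSASSUMPTIONS}, which ensures that the denominator $1 + \mathcal{O}(\varepsilon)$ stays safely away from $0$.
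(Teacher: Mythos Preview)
Your proposal is correct and follows essentially the same approach as the paper: both reduce the comparison to the pointwise estimate $\gtancomp(t,u,\vartheta) = \{1+\mathcal{O}(\varepsilon)\}\gtancomp(0,u,\vartheta)$ from Lemma~\ref{L:POINTWISEESTIMATEFORGTANCOMP} combined with the definition $d\argspherevol{} = \gtancomp\, d\vartheta$. The paper's proof is simply a terser version of yours, and your additional remark about composing the estimates at times $s$ and $t$ through the common reference at time $0$ makes explicit a step the paper leaves implicit.
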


\begin{proof}
	From \eqref{E:RESCALEDVOLUMEFORMS} and the first equality in \eqref{E:MOREPRECISEPOINTWISEESTIMATEFORGTANCOMP},
	we deduce that 
	$d \argspherevol{(t,u,\vartheta)} = 
		\left\lbrace 
			1 + \mathcal{O}(\varepsilon) 
		\right\rbrace d \argspherevol{(0,u,\vartheta)}$,
	which yields \eqref{E:LINEVOLUMEFORMCOMPARISON}.
	\eqref{E:SIGMATVOLUMEFORMCOMPARISON} then follows from
	\eqref{E:LINEVOLUMEFORMCOMPARISON} and the fact that
	$d \tvol(t,u',\vartheta) = d \argspherevol{(t,u',\vartheta)} du'$
	along $\Sigma_t^u$.
\end{proof}

We now provide a simple variant of Minkowski's inequality for integrals.

\begin{lemma}[\textbf{Estimate for the norm} $\| \cdot \|_{L^2(\Sigma_t^u)}$ \textbf{of time-integrated functions}] 
\label{L:L2NORMSOFTIMEINTEGRATEDFUNCTIONS}
Let $f$ be a scalar function and set
$F(t,u,\vartheta) := \int_{t'=0}^t f(t',u,\vartheta) \, dt'$.
The following estimate holds:
\begin{align} \label{E:L2NORMSOFTIMEINTEGRATEDFUNCTIONS}
	\| F \|_{L^2(\Sigma_t^u)} 
	& \leq 
		(1 + C \varepsilon)
		\int_{t'=0}^t 
			\| f \|_{L^2(\Sigma_{t'}^u)}
		\, dt'.
\end{align}
\end{lemma}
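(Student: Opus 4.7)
The plan is to combine the standard Minkowski integral inequality with the comparison estimate for the area element $d\tvol$ in order to convert between the measure at time $t$ (which appears naturally when we square $F$ and integrate over $\Sigma_t^u$) and the measure at time $t'$ (which appears on the right-hand side). The only subtlety is that, in the definition of $\|\cdot\|_{L^2(\Sigma_t^u)}$, the metric factor $\gtancomp$ is evaluated at time $t$, while the function $f(t',\cdot)$ naturally pairs with $\gtancomp$ evaluated at $t'$.

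First, I would write $F(t,u',\vartheta) = \int_0^t f(t',u',\vartheta)\, dt'$ and apply Minkowski's inequality for integrals (in its elementary form, for the Banach space $L^2(\Sigma_t^u)$ equipped with its fixed geometric measure $d\tvol$) to obtain
\begin{align*}
	\|F\|_{L^2(\Sigma_t^u)}
	& \leq \int_{t'=0}^{t} \|f(t',\cdot,\cdot)\|_{L^2(\Sigma_t^u)} \, dt'.
\end{align*}
Here the norm on the right is still taken with respect to the $\Sigma_t^u$ measure, but the integrand $f$ is evaluated at time $t'$; this is the step that has to be adjusted.

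Next, I would unfold the $L^2(\Sigma_t^u)$ norm using \eqref{E:L2NORMS} and \eqref{E:RESCALEDVOLUMEFORMS}, obtaining
\begin{align*}
	\|f(t',\cdot,\cdot)\|_{L^2(\Sigma_t^u)}^2
	& = \int_{u'=0}^{u} \int_{\vartheta \in \mathbb{T}}
		f^2(t',u',\vartheta)\, \gtancomp(t,u',\vartheta)\, d\vartheta\, du',
\end{align*}
and then invoke the pointwise comparison $\gtancomp(t,u',\vartheta) = \{1 + \mathcal{O}(\varepsilon)\}\gtancomp(t',u',\vartheta)$ from Lemma~\ref{L:POINTWISEESTIMATEFORGTANCOMP} (equivalently, the measure-level comparison \eqref{E:SIGMATVOLUMEFORMCOMPARISON} of Lemma~\ref{L:LINEVOLUMEFORMCOMPARISON}) to replace the metric factor at time $t$ by the same factor at time $t'$, up to a multiplicative error $1 + C\varepsilon$. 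This yields
\begin{align*}
	\|f(t',\cdot,\cdot)\|_{L^2(\Sigma_t^u)}^2
	& \leq (1 + C\varepsilon) \|f(t',\cdot,\cdot)\|_{L^2(\Sigma_{t'}^u)}^2.
\end{align*}

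Finally, I would take the square root (absorbing the resulting $(1+C\varepsilon)^{1/2}$ into $1 + C\varepsilon$ after renaming the constant) and insert the resulting estimate back into the Minkowski inequality above, yielding exactly \eqref{E:L2NORMSOFTIMEINTEGRATEDFUNCTIONS}. There is no real obstacle here: Minkowski's inequality is standard, and the only non-trivial input is the already-established comparability of $\gtancomp$ at different times, which in turn rests on the bootstrap bound $|\Lunit \ln \gtancomp| = |\mytr \upchi| \lesssim \varepsilon$. So the entire argument is a two-line manipulation, the ``hard part'' having been absorbed into Lemmas~\ref{L:POINTWISEESTIMATEFORGTANCOMP}--\ref{L:LINEVOLUMEFORMCOMPARISON}.
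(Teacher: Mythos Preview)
Your proposal is correct and matches the paper's proof: both combine Minkowski's integral inequality with the $\gtancomp$ (equivalently, $d\spherevol$) comparison estimate from Lemmas~\ref{L:POINTWISEESTIMATEFORGTANCOMP}--\ref{L:LINEVOLUMEFORMCOMPARISON}. The only cosmetic difference is that the paper first passes to the fixed reference measure $d\argspherevol{(0,u',\vartheta)}\,du'$ and then applies Minkowski, whereas you apply Minkowski at time $t$ and then convert to time $t'$; these are equivalent reorderings of the same two ingredients.
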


\begin{proof}
	Recall that 
	$\| F \|_{L^2(\Sigma_t^u)}
	: = 
				\left\lbrace
				\int_{u'=0}^u
					\int_{\ell_{t,u'}}
						F^2(t,u',\vartheta) 
					\, d \argspherevol{(t,u',\vartheta)}
				\, du'
				\right\rbrace^{1/2}
		$.
	Using \eqref{E:LINEVOLUMEFORMCOMPARISON},
	we deduce that for $0 \leq t' \leq t$, 
	we have
	$d \argspherevol{(t',u',\vartheta)}
	= 
	\left\lbrace 
		1 
		+
		\mathcal{O}(\varepsilon) 
	\right\rbrace 
	d \argspherevol{(0,u',\vartheta)}$. 
	\eqref{E:L2NORMSOFTIMEINTEGRATEDFUNCTIONS} follows from this estimate and
	from applying Minkowski's inequality 
	for integrals 
	(with respect to the measure $d \argspherevol{(0,u',\vartheta)} \, du'$)
	to the equation defining $F$.
\end{proof}

\subsubsection{The coerciveness of the fundamental $L^2$-controlling quantities}
\label{SSS:COERCIVENESSOFL2CONTROLLING}
We now provide the main lemma of Subsect.\ \ref{SS:COERCIVENESSOFL2CONTROLLING}.

\begin{lemma}[\textbf{The coerciveness of the fundamental} $L^2$-\textbf{controlling quantities}]
	\label{L:COERCIVENESSOFCONTROLLING}
	Let $1 \leq M \leq N \leq 18$, and let
	$\Tanset^M$ be an $M^{th}$-order $\mathcal{P}_u$-tangential vectorfield operator.
	We have the following bounds
	for $(t,u) \in [0,\Tboot) \times [0,U_0]$:
	\begin{align} \label{E:COERCIVENESSOFCONTROLLING}
			\totTanmax{[1,N]}(t,u)
			\geq 
			\max
			\Big\lbrace
				&
				\frac{1}{2}
				\left\|
					\sqrt{\upmu} \Lunit \Tanset^M \Psi
				\right\|_{L^2(\Sigma_t^u)}^2,
					\,
				\left\|
					\Rad \Tanset^M \Psi
				\right\|_{L^2(\Sigma_t^u)}^2,
					\,
				\frac{1}{2}
				\left\|
					\sqrt{\upmu} \angdiff \Tanset^M \Psi
				\right\|_{L^2(\Sigma_t^u)}^2,
				\\
			& 
				\left\|
					\Lunit \Tanset^M \Psi
				\right\|_{L^2(\mathcal{P}_u^t)}^2,
					\,
				\left\|
					\sqrt{\upmu} \angdiff \Tanset^M \Psi
				\right\|_{L^2(\mathcal{P}_u^t)}^2
			\Big\rbrace.
				\notag
	\end{align}

	Moreover, if $1 \leq M \leq N \leq 18$, then the following bounds hold:
	\begin{subequations}
	\begin{align} \label{E:PSIHIGHERORDERL2ESTIMATELOSSOFONEDERIVATIVE} 
		\left\|
			\Tanset^M \Psi
		\right\|_{L^2(\Sigma_t^u)},
			\,
		\left\|
			\Tanset^M \Psi
		\right\|_{L^2(\ell_{t,u})}
		& \leq
			C \mathring{\upepsilon}
			+ 
			C
			\totTanmax{[1,N]}^{1/2}(t,u),
				\\
		\left\|
			\Tanset^M \Psi
		\right\|_{L^2(\Sigma_t^u)}
		& \leq 
			C \mathring{\upepsilon}
			+
			C
			\int_{t'=0}^{t}
				\frac{1}{\upmu_{\star}^{1/2}(t',u)}
				\totTanmax{[1,N]}^{1/2}(t',u)
			\, dt'.
			\label{E:ANOTHERPSIHIGHERORDERL2ESTIMATELOSSOFONEDERIVATIVE}
	\end{align}
	\end{subequations}

In addition,
with $\mathbf{1}_{\lbrace \upmu \leq 1/4 \rbrace}$
denoting the characteristic function of the spacetime
subset 
$
\displaystyle
\lbrace (t,u,\vartheta) \in [0,\infty) \times [0,U_0] \times \mathbb{T}
	\ | \ 
\upmu(t,u,\vartheta) \leq 1/4 \rbrace
$, 
then for $1 \leq M \leq N \leq 18$,
we have the following bound:
	\begin{align} \label{E:KEYSPACETIMECOERCIVITY}
		\coerciveTanspacetimemax{[1,N]}(t,u) 
		& \geq 
		\frac{1}{8}
		\TranminusdatasizeWithFactor
		\int_{\mathcal{M}_{t,u}}
			\mathbf{1}_{\lbrace \upmu \leq 1/4 \rbrace}
			\left|
				\angdiff \Tanset^M \Psi
			\right|^2
		\, d \vol.
	\end{align}
	
	In addition, for $M \leq N \leq 18$, we have the following bounds:
	\begin{align} \label{E:SLOWCOERCIVENESSOFCONTROLLING}
		\slowtotTanmax{\leq N}(t,u)
		\geq
		\frac{1}{C}
		\left\|
			\sqrt{\upmu} \Tanset^M \bigslow
		\right\|_{L^2(\Sigma_t^u)}^2
		+ 
		\frac{1}{C}
		\left\|
			\Tanset^M \bigslow
		\right\|_{L^2(\mathcal{P}_u^t)}^2.
	\end{align}

	Finally, for $0 \leq M \leq N-1 \leq 17$, 
	we have the following bounds:
	\begin{subequations}
	\begin{align} \label{E:ELLTUSLOWCOERCIVENESSOFCONTROLLING}
		\left\|
			\Tanset^M \bigslow
		\right\|_{L^2(\Sigma_t^u)},
			\,
		\left\|
			\Tanset^M \bigslow
		\right\|_{L^2(\ell_{t,u})}
		& \leq
		C \mathring{\upepsilon}
		+
		C
		\slowtotTanmax{\leq N}^{1/2}(t,u),
			\\
		\left\|
			\Tanset^M \bigslow
		\right\|_{L^2(\Sigma_t^u)}
		& \leq
		C \mathring{\upepsilon}
		+
		C
			\int_{t'=0}^{t}
				\frac{1}{\upmu_{\star}^{1/2}(t',u)}
				\slowtotTanmax{\leq N}^{1/2}(t',u)
			\, dt'.
			\label{E:ANOTHERLOWCOERCIVENESSOFCONTROLLING}
	\end{align}
	\end{subequations}
	
	\end{lemma}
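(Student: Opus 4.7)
The plan is to treat the six claims in three stages: first the algebraic coerciveness bounds \eqref{E:COERCIVENESSOFCONTROLLING}, \eqref{E:KEYSPACETIMECOERCIVITY}, and \eqref{E:SLOWCOERCIVENESSOFCONTROLLING}, which follow directly from the definitions; second, the $\ell_{t,u}$- and $\Sigma_t^u$-bounds \eqref{E:PSIHIGHERORDERL2ESTIMATELOSSOFONEDERIVATIVE} and \eqref{E:ELLTUSLOWCOERCIVENESSOFCONTROLLING}, obtained by integrating along integral curves of $\Lunit$ from $t=0$; and third, the alternative time-integrated estimates \eqref{E:ANOTHERPSIHIGHERORDERL2ESTIMATELOSSOFONEDERIVATIVE} and \eqref{E:ANOTHERLOWCOERCIVENESSOFCONTROLLING}, obtained by working directly at the $\Sigma_t^u$ level via Lemma~\ref{L:L2NORMSOFTIMEINTEGRATEDFUNCTIONS}. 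The only spot where an algebraic identity (as opposed to mere inspection) is needed is the completion of the square for the $\enzero$-integrand; everything else is careful bookkeeping.

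For stage one, I would first rewrite the integrand of $\enzero[f]$ in \eqref{E:ENERGYORDERZEROCOERCIVENESS} (for a scalar $f$) as
\begin{align*}
& \tfrac{1}{2}(1+2\upmu)\upmu(\Lunit f)^2 + 2\upmu(\Lunit f)(\Rad f) + 2(\Rad f)^2 + \tfrac{1}{2}(1+2\upmu)\upmu|\angdiff f|^2 \\
& \quad = \tfrac{1}{2}\upmu(\Lunit f)^2 + (\upmu \Lunit f + \Rad f)^2 + (\Rad f)^2 + \tfrac{1}{2}(1+2\upmu)\upmu|\angdiff f|^2,
\end{align*}
from which the three $\Sigma_t^u$-bounds of \eqref{E:COERCIVENESSOFCONTROLLING} are immediate upon specializing $f = \Tanset^{\vec I}\Psi$, integrating over $\Sigma_t^u$, and maximizing over $|\vec I|$; the two $\mathcal{P}_u^t$-bounds follow from the pointwise positivity of each summand in \eqref{E:NULLFLUXENERGYORDERZEROCOERCIVENESS}. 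The spacetime bound \eqref{E:KEYSPACETIMECOERCIVITY} is immediate from \eqref{E:COERCIVESPACETIMEDEF} once one applies the implication \eqref{E:SMALLMUIMPLIESLMUISNEGATIVE} to conclude $[\Lunit \upmu]_-/2 \geq \TranminusdatasizeWithFactor/8$ pointwise on $\{\upmu \leq 1/4\}$. The slow-wave bound \eqref{E:SLOWCOERCIVENESSOFCONTROLLING} is a direct application of Lemma~\ref{L:COERCIVENESSOFSLOWWAVEENERGIESANDFLUXES} componentwise to $\altbigslow = \Tanset^{\vec I}\bigslow$, followed by the max over $|\vec I| \leq N$.

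For stage two, since $\Lunit = \partial/\partial t$ by \eqref{E:LISDDT}, the fundamental theorem of calculus gives, for $\varphi \in \{\Tanset^M\Psi, \Tanset^M\bigslow\}$, the identity $\varphi(t,u,\vartheta) = \varphi(0,u,\vartheta) + \int_0^t (\Lunit\varphi)(t',u,\vartheta)\,dt'$. I would take the $L^2(\ell_{t,u})$ norm, use Lemma~\ref{L:LINEVOLUMEFORMCOMPARISON} to exchange $d\spherevol(t,u,\cdot)$ with $d\spherevol(0,u,\cdot)$ at cost $1+\mathcal{O}(\varepsilon)$, and apply Minkowski's inequality for integrals to obtain
\[
\|\varphi\|_{L^2(\ell_{t,u})} \leq (1+C\varepsilon)\Bigl\{\|\varphi\|_{L^2(\ell_{0,u})} + \int_0^t \|\Lunit \varphi\|_{L^2(\ell_{t',u})}\,dt'\Bigr\}.
\]
Cauchy--Schwarz in $t'$ together with the identity $\|\cdot\|_{L^2(\mathcal{P}_u^t)}^2 = \int_0^t \|\cdot\|_{L^2(\ell_{t',u})}^2\,dt'$ bounds the $dt'$ integral by $\sqrt{t}\,\|\Lunit\varphi\|_{L^2(\mathcal{P}_u^t)}$; since $t < 2\TranminusdatasizeWithFactor^{-1}$, the $\sqrt{t}$ factor is absorbed into the constant. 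The stage-one null-flux coerciveness then gives $\|\Lunit\Tanset^M\Psi\|_{L^2(\mathcal{P}_u^t)} \leq C\,\totTanmax{[1,N]}^{1/2}$ whenever $M \leq N$, while $\|\Lunit\Tanset^M\bigslow\|_{L^2(\mathcal{P}_u^t)} \leq C\,\slowtotTanmax{\leq N}^{1/2}$ requires $M+1 \leq N$ because $\Lunit\Tanset^M$ is an operator of order $M+1$; this is precisely the source of the restriction $M \leq N-1$ in \eqref{E:ELLTUSLOWCOERCIVENESSOFCONTROLLING}. The initial-data terms are bounded by $\lesssim \mathring{\upepsilon}$ via \eqref{E:PSIL2SMALLDATAASSUMPTIONSALONGL0U} and \eqref{E:SLOWL2SMALLDATAASSUMPTIONSALONGL0U}, and the corresponding $L^2(\Sigma_t^u)$ bounds follow by squaring and integrating in $u' \in [0,u]$.

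Stage three follows the same template but reverses the order of the $L^2$ norm and the time integral. Applying Lemma~\ref{L:L2NORMSOFTIMEINTEGRATEDFUNCTIONS} to $F := \Tanset^M\Psi(t,\cdot) - \Tanset^M\Psi(0,\cdot)$ with $f := \Lunit\Tanset^M\Psi$, and bounding the initial-data term by $\|\Tanset^M\Psi(0,\cdot)\|_{L^2(\Sigma_t^u)} \lesssim \mathring{\upepsilon}$ via \eqref{E:SIGMATVOLUMEFORMCOMPARISON} and \eqref{E:PSIL2SMALLDATAASSUMPTIONSALONGSIGMA0}, I obtain $\|\Tanset^M\Psi\|_{L^2(\Sigma_t^u)} \leq C\mathring{\upepsilon} + C\int_0^t \|\Lunit\Tanset^M\Psi\|_{L^2(\Sigma_{t'}^u)}\,dt'$. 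The pointwise weighted inequality $\|\Lunit\Tanset^M\Psi\|_{L^2(\Sigma_{t'}^u)} \leq \upmu_{\star}^{-1/2}(t',u)\,\|\sqrt{\upmu}\Lunit\Tanset^M\Psi\|_{L^2(\Sigma_{t'}^u)}$, combined with the stage-one coerciveness \eqref{E:COERCIVENESSOFCONTROLLING}, then yields \eqref{E:ANOTHERPSIHIGHERORDERL2ESTIMATELOSSOFONEDERIVATIVE}, and the identical argument for $\bigslow$ yields \eqref{E:ANOTHERLOWCOERCIVENESSOFCONTROLLING}. There is no serious obstacle anywhere; the only point requiring vigilance is the bookkeeping of exactly what order of differentiation the controlling quantities $\totTanmax{[1,N]}$ and $\slowtotTanmax{\leq N}$ dominate, which explains the asymmetric ranges $M \leq N$ for $\Psi$ versus $M \leq N-1$ for $\bigslow$.
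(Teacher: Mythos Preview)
Your proposal is correct and essentially matches the paper. Stages one and three are identical to the paper's arguments (the paper phrases the completion of the square as ``Young's inequality,'' but the content is the same). The only genuine difference is in stage two: the paper proves \eqref{E:PSIHIGHERORDERL2ESTIMATELOSSOFONEDERIVATIVE} and \eqref{E:ELLTUSLOWCOERCIVENESSOFCONTROLLING} by differentiating $\int_{\ell_{t,u}} |\Tanset^M\Psi|^2\,d\spherevol$ in time via Lemma~\ref{L:LDERIVATIVEOFLINEINTEGRAL}, bounding $\Lunit(|\Tanset^M\Psi|^2) + \mytr\upchi |\Tanset^M\Psi|^2 \leq |\Lunit\Tanset^M\Psi|^2 + C|\Tanset^M\Psi|^2$ using \eqref{E:PURETANGENTIALCHICOMMUTEDLINFINITY}, integrating in time, and applying Gronwall. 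Your route via the fundamental theorem of calculus, Minkowski, and Cauchy--Schwarz in $t'$ is equally valid and slightly more direct; it avoids Gronwall by paying a harmless factor $\sqrt{t} \lesssim 1$, at the cost of invoking the volume-form comparison twice rather than once.
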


\begin{proof}
\eqref{E:COERCIVENESSOFCONTROLLING} follows as a straightforward consequence of
definition \eqref{E:ENERGYORDERZEROCOERCIVENESS}, Young's inequality, and
definition \eqref{E:MAXEDQ0TANLEQNDEF}.

\eqref{E:KEYSPACETIMECOERCIVITY} follows from
the estimate \eqref{E:SMALLMUIMPLIESLMUISNEGATIVE}
and definition \eqref{E:MAXEDCOERCIVESPACETIMEDEF}.

The estimate \eqref{E:SLOWCOERCIVENESSOFCONTROLLING}
is a straightforward consequence of
Lemma~\ref{L:COERCIVENESSOFSLOWWAVEENERGIESANDFLUXES}
and
definition \eqref{E:MAXEDSLOWQ0TANLEQNDEF}.

To prove \eqref{E:PSIHIGHERORDERL2ESTIMATELOSSOFONEDERIVATIVE},
we first note that it suffices to obtain the desired estimate
for 
$\left\|
	\Tanset^M \Psi
\right\|_{L^2(\ell_{t,u})}
$.
The reason is that we can integrate the corresponding estimate for
$\left\|
	\Tanset^M \Psi
\right\|_{L^2(\ell_{t,u})}^2
$
with respect to $u$ to obtain the desired bound for
$
\left\|
	\Tanset^M \Psi
\right\|_{L^2(\Sigma_t^u)}^2
$.
To obtain the desired bound \eqref{E:PSIHIGHERORDERL2ESTIMATELOSSOFONEDERIVATIVE} for 
$\left\|
	\Tanset^M \Psi
\right\|_{L^2(\ell_{t,u})}
$,
we first use \eqref{E:LDERIVATIVEOFLINEINTEGRAL}
with 
$f= \left| \Tanset^M \Psi \right|^2$,
Young's inequality, and the estimate \eqref{E:PURETANGENTIALCHICOMMUTEDLINFINITY}
to obtain
\begin{align} \label{E:FIRSTSTEPOORDER0FASTCOERCIVENESSOFCONTROLLING}
		\frac{\partial}{\partial t}
		\int_{\ell_{t,u}}
			\left| \Tanset^M \Psi \right|^2
		\, d \argspherevol{(t,u',\vartheta)}
		& 
		= 
		\int_{\ell_{t,u}}
			\left\lbrace
				\Lunit \left( \left| \Tanset^M \Psi \right|^2 \right)
				+ 
				\mytr \upchi \left| \Tanset^M \Psi \right|^2
			\right\rbrace
		\, d \argspherevol{(t,u',\vartheta)}
			\\
		& \leq 
		\int_{\ell_{t,u}}
			\left| \Lunit \Tanset^M \Psi \right|^2
		\, d \argspherevol{(t,u',\vartheta)}
		+
		C
		\int_{\ell_{t,u}}
				\left| \Tanset^M \Psi \right|^2
		\, d \argspherevol{(t,u',\vartheta)}.
		\notag
	\end{align}
Integrating \eqref{E:FIRSTSTEPOORDER0FASTCOERCIVENESSOFCONTROLLING} with respect to time 
starting from time $0$, we obtain
\begin{align} \label{E:SECONDSTEPOORDER0FASTCOERCIVENESSOFCONTROLLING}
	\left\| \Tanset^M \Psi \right\|_{L^2(\ell_{t,u})}^2
	& \leq
	\left\| \Tanset^M \Psi \right\|_{L^2(\ell_{0,u})}^2
	+
	\left\| \Lunit \Tanset^M \Psi \right\|_{L^2(\mathcal{P}_u^t)}^2
	+
	C
	\int_{t'=0}^t
		\left\| \Tanset^M \Psi \right\|_{L^2(\ell_{t',u})}^2
	\, dt'.
\end{align}
From the small-data assumption \eqref{E:PSIL2SMALLDATAASSUMPTIONSALONGL0U}, 
we see that the first term on RHS~\eqref{E:SECONDSTEPOORDER0FASTCOERCIVENESSOFCONTROLLING} is 
$\lesssim \mathring{\upepsilon}^2$, while from \eqref{E:COERCIVENESSOFCONTROLLING},
we see that the second term $\left\| \Lunit \Tanset^M \Psi \right\|_{L^2(\mathcal{P}_u^t)}^2$
is $\lesssim \totTanmax{M}(t,u) \lesssim \totTanmax{[1,N]}(t,u)$. 
Using these bounds and applying Gronwall's inequality
to \eqref{E:SECONDSTEPOORDER0FASTCOERCIVENESSOFCONTROLLING}, 
we conclude the desired estimate
$
\left\| \Tanset^M \Psi \right\|_{L^2(\ell_{t,u})}^2
\lesssim 
\mathring{\upepsilon}^2
+
\totTanmax{[1,N]}(t,u)
$.

To prove \eqref{E:ANOTHERPSIHIGHERORDERL2ESTIMATELOSSOFONEDERIVATIVE},
we first use the fundamental theorem of calculus to express
$
\Tanset^M \Psi(t,u,\vartheta)
=
\Tanset^M \Psi(0,u,\vartheta)
+
\int_{t'=0}^t
	\Lunit \Tanset^M \Psi(t',u,\vartheta)
\, dt'
$.
The desired estimate now follows from this identity,
\eqref{E:SIGMATVOLUMEFORMCOMPARISON} with $s=0$
(which implies that
$
\| 
	\Tanset^M \Psi(0,\cdot)
\|_{L^2(\Sigma_t^u)}
=
\left\lbrace
	1 + \mathcal{O}(\varepsilon)
\right\rbrace
\| 
	\Tanset^M \Psi
\|_{L^2(\Sigma_0^u)}
$),
Lemma~\ref{L:L2NORMSOFTIMEINTEGRATEDFUNCTIONS},
the small-data assumption \eqref{E:PSIL2SMALLDATAASSUMPTIONSALONGSIGMA0},
and \eqref{E:COERCIVENESSOFCONTROLLING}.
To prove \eqref{E:ANOTHERLOWCOERCIVENESSOFCONTROLLING},
we use a similar argument based on the small-data assumption 
\eqref{E:SLOWL2SMALLDATAASSUMPTIONSALONGSIGMA0}
and \eqref{E:SLOWCOERCIVENESSOFCONTROLLING}.

Finally, we note that
\eqref{E:ELLTUSLOWCOERCIVENESSOFCONTROLLING} 
follows from arguments similar to the ones that we used to prove
the estimate
\eqref{E:PSIHIGHERORDERL2ESTIMATELOSSOFONEDERIVATIVE}
for 
$\left\|
	\Tanset^M \Psi
\right\|_{L^2(\ell_{t,u})}
$
together with the small-data assumption \eqref{E:SLOWL2SMALLDATAASSUMPTIONSALONGL0U}.

\end{proof}

\subsubsection{The initial smallness of the fundamental $L^2$-controlling quantities}
\label{SSS:INITIALSMALLNESSOFL2CONTROLLING}
The next lemma shows that the fundamental $L^2$-controlling quantities
of Def.~\ref{D:MAINCOERCIVEQUANT} are initially small.

\begin{lemma}[\textbf{The fundamental controlling quantities are initially small}]
\label{L:INITIALSIZEOFL2CONTROLLING}
	Assume that $1 \leq N \leq 18$.
	Under the data-size assumptions
	of Subsect.\ \ref{SS:SIZEOFTBOOT},
	the following estimates hold for 
	$(t,u) \in [0,2 \TranminusdatasizeWithFactor^{-1}] \times [0,U_0]$
	(see also Remark~\ref{R:ENERGIESVANISHFORSIMPLEPLANEWAVE}):
	\begin{subequations}
	\begin{align} \label{E:INITIALSIZEOFL2CONTROLLING}
		\totTanmax{[1,N]}(0,u),
			\, 
		\totTanmax{[1,N]}(t,0) \lesssim \mathring{\upepsilon}^2,
			\\
		\slowtotTanmax{\leq N}(0,u), 
			\,
		\slowtotTanmax{\leq N}(t,0) \lesssim \mathring{\upepsilon}^2.
		\label{E:SLOWWAVEINITIALSIZEOFL2CONTROLLING}
	\end{align}
	\end{subequations}
\end{lemma}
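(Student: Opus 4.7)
\emph{Proof proposal.} The strategy rests on two elementary observations: at $t=0$ the null-hypersurface integration region $\mathcal{P}_u^0$ collapses to a set of measure zero in the $t'$-direction, and at $u=0$ the constant-time region $\Sigma_{t'}^0$ collapses to a set of measure zero in the $u'$-direction. Consequently each of the four bounds in the lemma reduces to a single one-hypersurface estimate that is controlled directly by the data-size assumptions of Subsect.~\ref{SS:DATAASSUMPTIONS}.

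First I would dispose of the $(t=0)$ bounds. By Defs.~\ref{D:ENERGYFLUX} and \ref{D:SLOWWAVEENERGYFLUX}, the null-flux functionals $\flzero[\Tanset^{\vec{I}}\Psi](0,u)$ and $\slowfl[\Tanset^{\vec{I}}\bigslow](0,u)$ vanish identically, since the defining integrals over $\mathcal{P}_u^0$ reduce to integrations in $t'$ over $[0,0]$. It therefore suffices to estimate $\enzero[\Tanset^{\vec{I}}\Psi](0,u)$ and $\slowen[\Tanset^{\vec{I}}\bigslow](0,u)$ on $\Sigma_0^u \subset \Sigma_0^1$. Using \eqref{E:ENERGYORDERZEROCOERCIVENESS} together with the uniform bound $\upmu|_{\Sigma_0} = 1 + \mathcal{O}_{\mydiam}(\Psiep) \lesssim 1$ from \eqref{E:UPITSELFLINFINITYSIGMA0CONSEQUENCES}, I obtain, for each $|\vec{I}| = M$ with $1 \leq M \leq N \leq 18$,
\begin{align*}
\enzero[\Tanset^{\vec{I}}\Psi](0,u) \lesssim \left\|\Lunit \Tanset^{\vec{I}}\Psi\right\|_{L^2(\Sigma_0^1)}^2 + \left\|\Rad \Tanset^{\vec{I}}\Psi\right\|_{L^2(\Sigma_0^1)}^2 + \left\|\angdiff \Tanset^{\vec{I}}\Psi\right\|_{L^2(\Sigma_0^1)}^2.
\end{align*}
Each of these operators is a string of at most $19$ commutation vectorfields applied to $\Psi$ containing at most one $\Rad$ factor (using Lemma~\ref{L:ANGDERIVATIVESINTERMSOFTANGENTIALCOMMUTATOR} to control $|\angdiff \cdot|$ by $|\GeoAng \cdot|$), so each is majorized by $\|\Fullset_*^{[1,19];1}\Psi\|_{L^2(\Sigma_0^1)} \leq \mathring{\upepsilon}$ via \eqref{E:PSIL2SMALLDATAASSUMPTIONSALONGSIGMA0}. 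For the slow wave, Lemma~\ref{L:COERCIVENESSOFSLOWWAVEENERGIESANDFLUXES} gives $\slowen[\Tanset^{\vec{I}}\bigslow](0,u) \lesssim \|\sqrt{\upmu}\Tanset^{\vec{I}}\bigslow\|_{L^2(\Sigma_0^1)}^2 \lesssim \|\Tanset^{\leq 18}\bigslow\|_{L^2(\Sigma_0^1)}^2 \lesssim \mathring{\upepsilon}^2$ by \eqref{E:SLOWL2SMALLDATAASSUMPTIONSALONGSIGMA0}.

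Next I would handle the $(u=0)$ bounds by the symmetric argument. For any $t' \in [0,2\TranminusdatasizeWithFactor^{-1}]$, the energies $\enzero[\Tanset^{\vec{I}}\Psi](t',0)$ and $\slowen[\Tanset^{\vec{I}}\bigslow](t',0)$ vanish because the integration in $u'$ on $\Sigma_{t'}^0$ is over $[0,0]$, and it suffices to estimate the null fluxes on $\mathcal{P}_0^{t'} \subset \mathcal{P}_0^{2\TranminusdatasizeWithFactor^{-1}}$. From \eqref{E:NULLFLUXENERGYORDERZEROCOERCIVENESS} together with the bound $\|\upmu - 1\|_{L^{\infty}(\mathcal{P}_0^{2\TranminusdatasizeWithFactor^{-1}})} \lesssim \mathring{\upepsilon}$ from \eqref{E:UPITSELFLINFINITYP0CONSEQUENCES}, I obtain
\begin{align*}
\flzero[\Tanset^{\vec{I}}\Psi](t',0) \lesssim \left\|\Lunit \Tanset^{\vec{I}}\Psi\right\|_{L^2(\mathcal{P}_0^{t'})}^2 + \left\|\angdiff \Tanset^{\vec{I}}\Psi\right\|_{L^2(\mathcal{P}_0^{t'})}^2 \lesssim \left\|\Tanset^{[1,19]}\Psi\right\|_{L^2(\mathcal{P}_0^{2\TranminusdatasizeWithFactor^{-1}})}^2 \lesssim \mathring{\upepsilon}^2
\end{align*}
using the data assumption \eqref{E:PSIL2SMALLDATAASSUMPTIONSALONGP0}. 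The slow-wave analog follows from the coerciveness estimate \eqref{E:SLOWNULLFLUXCOERCIVENESS} together with \eqref{E:SLOWL2SMALLDATAASSUMPTIONSALONGP0}.

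I do not anticipate any real obstacle. The only item requiring care is verifying that the vectorfield classes produced by expanding $\enzero[\Tanset^{\vec{I}}\Psi]$ and $\flzero[\Tanset^{\vec{I}}\Psi]$ fit within the data hypotheses, which follows from the inclusion $\Tanset \subset \Fullset$ and simple accounting: if $|\vec{I}| = M \leq 18$, then $\Lunit\Tanset^{\vec{I}}$ and $\GeoAng\Tanset^{\vec{I}}$ are both $\Tanset^{[1,19]}$-operators, and $\Rad\Tanset^{\vec{I}}$ is an $\Fullset_*^{[1,19];1}$-operator, all of which are covered by \eqref{E:PSIL2SMALLDATAASSUMPTIONSALONGSIGMA0}, \eqref{E:SLOWL2SMALLDATAASSUMPTIONSALONGSIGMA0}, \eqref{E:PSIL2SMALLDATAASSUMPTIONSALONGP0}, and \eqref{E:SLOWL2SMALLDATAASSUMPTIONSALONGP0}. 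The lemma is essentially a bookkeeping result whose content is the reduction of spacetime controlling quantities to their initial-data analogs.
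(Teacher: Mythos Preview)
Your proposal is correct and follows essentially the same route as the paper's proof: both use $\upmu \approx 1$ along $\Sigma_0^1$ and $\mathcal{P}_0^{2\TranminusdatasizeWithFactor^{-1}}$ (via \eqref{E:UPITSELFLINFINITYSIGMA0CONSEQUENCES} and \eqref{E:UPITSELFLINFINITYP0CONSEQUENCES}), the explicit forms and coerciveness of the energies and null fluxes, and then invoke the data assumptions \eqref{E:PSIL2SMALLDATAASSUMPTIONSALONGSIGMA0}, \eqref{E:SLOWL2SMALLDATAASSUMPTIONSALONGSIGMA0}, \eqref{E:PSIL2SMALLDATAASSUMPTIONSALONGP0}, \eqref{E:SLOWL2SMALLDATAASSUMPTIONSALONGP0}. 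Your version is slightly more explicit in spelling out why the fluxes vanish at $t=0$ and the energies vanish at $u=0$, which the paper leaves implicit in its terse bounds \eqref{E:CONTROLLINGQUANTITIESBOUNDEDALONGSIMGMA0}--\eqref{E:CONTROLLINGQUANTITIESBOUNDEDALONGP0}.
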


\begin{proof}
	We first note that by
	\eqref{E:UPITSELFLINFINITYSIGMA0CONSEQUENCES}
	and \eqref{E:UPITSELFLINFINITYP0CONSEQUENCES},
	we have $\upmu \approx 1$ along $\Sigma_0^1$
	and along $\mathcal{P}_0^{2 \TranminusdatasizeWithFactor^{-1}}$.
	Using these estimates,
	\eqref{E:ENERGYORDERZEROCOERCIVENESS}-\eqref{E:NULLFLUXENERGYORDERZEROCOERCIVENESS},
	\eqref{E:SLOWSIGMATENERGYCOERCIVENESS}-\eqref{E:SLOWNULLFLUXCOERCIVENESS},
	and
	Def.~\ref{D:MAINCOERCIVEQUANT},
	we see that
	\begin{align}
		\totmax{[1,18]}(0,1)
		& \lesssim 
		\left\|
			\Fullset_*^{[1,19];1} \Psi
		\right\|_{L^2(\Sigma_0^1)}^2,
	&
	\slowtotTanmax{\leq 18}(0,1)
	& \lesssim 
		\left\|
			\Tanset^{\leq 18} \bigslow
		\right\|_{L^2(\Sigma_0^1)}^2,
			\label{E:CONTROLLINGQUANTITIESBOUNDEDALONGSIMGMA0} \\
	\totmax{[1,18]}(2 \TranminusdatasizeWithFactor^{-1},0)
	& \lesssim 
		\left\|
			\Tanset^{[1,19]} \Psi
		\right\|_{L^2(\mathcal{P}_0^{2 \TranminusdatasizeWithFactor^{-1}})}^2,
	&
	\slowtotTanmax{\leq 18}(2 \TranminusdatasizeWithFactor^{-1},0)
	& \lesssim 
		\left\|
			\Tanset^{\leq 18} \bigslow
		\right\|_{L^2(\mathcal{P}_0^{2 \TranminusdatasizeWithFactor^{-1}})}^2.
		\label{E:CONTROLLINGQUANTITIESBOUNDEDALONGP0}
	\end{align}
	The estimates \eqref{E:INITIALSIZEOFL2CONTROLLING}-\eqref{E:SLOWWAVEINITIALSIZEOFL2CONTROLLING}
	now follow from \eqref{E:CONTROLLINGQUANTITIESBOUNDEDALONGSIMGMA0}-\eqref{E:CONTROLLINGQUANTITIESBOUNDEDALONGP0}
	and the data assumptions 
	\eqref{E:PSIL2SMALLDATAASSUMPTIONSALONGSIGMA0},
	\eqref{E:SLOWL2SMALLDATAASSUMPTIONSALONGSIGMA0},
	\eqref{E:PSIL2SMALLDATAASSUMPTIONSALONGP0},
	and 
	\eqref{E:SLOWL2SMALLDATAASSUMPTIONSALONGP0}.
\end{proof}

\subsection{The main a priori energy estimates}
\label{SS:MAINAPRIORIENERGYESTIMATES}
In this subsection, we state our main a priori energy estimates.
The main step in their proof is deriving
$L^2$ estimates for the error terms in the
commuted equations; we carry out this technical analysis in later subsections.

\subsubsection{The system of integral inequalities verified by the energies}
\label{SSS:ENERGYINTEGRALINEQUALITIES}
We start with a proposition in which we provide the system of integral inequalities
verified by the energies. Its proof is located in 
Subsect.\ \ref{SS:PROOFOFPROPTANGENTIALENERGYINTEGRALINEQUALITIES}.

\begin{proposition}[\textbf{Integral inequalities for the fundamental $L^2$-controlling quantities}]
	\label{P:TANGENTIALENERGYINTEGRALINEQUALITIES}
	Assume that $1 \leq N \leq 18$ and let $\varsigma > 0$ be a real number.
	
	\medskip
	
	\noindent \underline{\textbf{Integral inequalities relevant for top-order energy estimates.}}
	Let $\Sigmaminus{t}{t}{u}$ be the subset of $\Sigma_t$ defined in \eqref{E:SIGMAMINUS}.
	There exists a constant $C > 0$,
	independent of $\varsigma$,
	such that following estimates hold
	for $(t,u) \in [0,\Tboot) \times [0,U_0]$,
	where the fourth-from-last product
	on RHS~\eqref{E:TOPORDERTANGENTIALENERGYINTEGRALINEQUALITIES}
	(which depends on $\totTanmax{[1,N-1]}$)
	is absent in the case $N=1$
	and we recall that we defined the notation $\mathcal{O}_{\mydiam}(\cdot)$
	in Subsect.\ \ref{SS:NOTATIONANDINDEXCONVENTIONS}:
	\begin{subequations}
	\begin{align} \label{E:TOPORDERTANGENTIALENERGYINTEGRALINEQUALITIES}
		&
		\max\left\lbrace
			\totTanmax{[1,N]}(t,u), \coerciveTanspacetimemax{[1,N]}(t,u), \slowtotTanmax{\leq N}(t,u)
		\right\rbrace
				\\
		& \leq C (1 + \varsigma^{-1}) \mathring{\upepsilon}^2 \upmu_{\star}^{-3/2}(t,u)
				\notag \\
		& \ \ 
			+ 
			\boxed{\left\lbrace 6 + \mathcal{O}_{\mydiam}(\Psiep) \right\rbrace}
			\int_{t'=0}^t
					\frac{\left\| [\Lunit \upmu]_- \right\|_{L^{\infty}(\Sigma_{t'}^u)}} 
							 {\upmu_{\star}(t',u)} 
				  \totTanmax{[1,N]}(t',u)
				\, dt'
				\notag \\
		& \ \
			+ 
			\boxed{8}
			\int_{t'=0}^t
				\frac{\left\| [\Lunit \upmu]_- \right\|_{L^{\infty}(\Sigma_{t'}^u)}} 
								 {\upmu_{\star}(t',u)} 
						\totTanmax{[1,N]}^{1/2}(t',u) 
						\int_{s=0}^{t'}
							\frac{\left\| [\Lunit \upmu]_- \right\|_{L^{\infty}(\Sigma_s^u)}} 
									{\upmu_{\star}(s,u)} 
							\totTanmax{[1,N]}^{1/2}(s,u) 
						\, ds
				\, dt'
			\notag	\\
		& \ \
			+ 
			\boxed{\left\lbrace 2 + \mathcal{O}_{\mydiam}(\Psiep) \right\rbrace}
			\frac{1}{\upmu_{\star}^{1/2}(t,u)}
			\totTanmax{[1,N]}^{1/2}(t,u)
			\left\| 
				\Lunit \upmu 
			\right\|_{L^{\infty}(\Sigmaminus{t}{t}{u})}
			\int_{t'=0}^t
				\frac{1}{\upmu_{\star}^{1/2}(t',u)} \totTanmax{[1,N]}^{1/2}(t',u)
			\, dt'
			\notag \\
		& \ \
			+ 
			C \varepsilon
			\int_{t'=0}^t
				\frac{1} {\upmu_{\star}(t',u)} 
						\totTanmax{[1,N]}^{1/2}(t',u) 
						\int_{s=0}^{t'}
							\frac{1} 
									{\upmu_{\star}(s,u)} 
							\totTanmax{[1,N]}^{1/2}(s,u) 
						\, ds
				\, dt'
			\notag	\\
		& \ \
			+ 
			C \varepsilon
			\int_{t'=0}^t
					\frac{1} 
						{\upmu_{\star}(t',u)} 
				  \totTanmax{[1,N]}(t',u)
				\, dt'
				\notag
				\\
		& \ \
			+ 
			C \varepsilon
			\frac{1}{\upmu_{\star}^{1/2}(t,u)}
			\totTanmax{[1,N]}^{1/2}(t,u)
			\int_{t'=0}^t
				\frac{1}{\upmu_{\star}^{1/2}(t',u)} \totTanmax{[1,N]}^{1/2}(t',u)
			\, dt'
				\notag \\
		& \ \
			+ 
			C
			\totTanmax{[1,N]}^{1/2}(t,u)
			\int_{t'=0}^t
				\frac{1}{\upmu_{\star}^{1/2}(t',u)} \totTanmax{[1,N]}^{1/2}(t',u)
			\, dt'
				\notag \\
		& \ \
			+
			C
			\int_{t'=0}^t
				\frac{1}{\sqrt{\Tboot - t'}} \totTanmax{[1,N]}(t',u)
			 \, dt'
			 \notag \\
		& \ \
			+ C (1 + \varsigma^{-1})
					\int_{t'=0}^t
					\frac{1} 
							 {\upmu_{\star}^{1/2}(t',u)} 
				  \totTanmax{[1,N]}(t',u)
				\, dt'
				\notag	\\
		& \ \
			+ C
					\int_{t'=0}^t
					\frac{1} 
							 {\upmu_{\star}(t',u)} 
				  \totTanmax{[1,N]}^{1/2}(t',u)
				  \int_{s = 0}^{t'}
				  	\frac{1} 
							 {\upmu_{\star}^{1/2}(s,u)} 
							 \totTanmax{[1,N]}^{1/2}(s,u)
				  \, ds
				\, dt'
				\notag	\\
		& \ \
			+ C
					\int_{t'=0}^t
					\frac{1} 
							 {\upmu_{\star}(t',u)} 
				  \totTanmax{[1,N]}^{1/2}(t',u)
				  \int_{s = 0}^{t'}
				  	\frac{1}{\upmu_{\star}(s,u)}
				  	\int_{s' = 0}^s
				  	\frac{1} 
							 {\upmu_{\star}^{1/2}(s',u)} 
							 \totTanmax{[1,N]}^{1/2}(s',u)
						\, ds'	 
				  \, ds
				\, dt'
				\notag	\\
		& \ \
			+ C (1 + \varsigma^{-1})
					\int_{u'=0}^u
						\totTanmax{[1,N]}(t,u')
					\, du'
				\notag	\\
		& \ \
			+ C \varepsilon \totTanmax{[1,N]}(t,u)
			+ C \varsigma \totTanmax{[1,N]}(t,u)
			+ C \varsigma \coerciveTanspacetimemax{[1,N]}(t,u)
				\notag \\
		& \ \
			+ C
				\int_{t'=0}^t
					\frac{1} 
							 {\upmu_{\star}^{5/2}(t',u)} 
				  \totTanmax{[1,N-1]}(t',u)
				\, dt'
			\notag 
			\\
		& \ \ 
			+
			C 
			\int_{t'=0}^t
				\frac{1}{\upmu_{\star}(t',u)}
				\totTanmax{[1,N]}^{1/2}(t',u)
				\int_{s=0}^{t'}
					\frac{1}{\upmu_{\star}^{1/2}(s,u)}
					\slowtotTanmax{\leq N}^{1/2}(s,u)
				\, ds
			\, dt'
				\notag \\
		& \ \
			+
			C 
			\int_{t'=0}^t
				\slowtotTanmax{\leq N}(t',u)
			\, dt'
			\notag \\
		& \ \
			+
			C 
			(1 + \varsigma^{-1})
			\int_{u'=0}^u
				\slowtotTanmax{\leq N}(t,u')
			\, du'.
			\notag
	\end{align}
	
	\medskip
	
	\noindent \underline{\textbf{Integral inequalities relevant for below-top-order energy estimates.}}
	Moreover, if $2 \leq N \leq 18$, then we have the following estimates:
	\begin{align} \label{E:BELOWTOPORDERTANGENTIALENERGYINTEGRALINEQUALITIES}
		&
		\max\left\lbrace
			\totTanmax{[1,N-1]}(t,u), \coerciveTanspacetimemax{[1,N-1]}(t,u), \slowtotTanmax{\leq N-1}(t,u)
		\right\rbrace
			\\
		& \leq C \mathring{\upepsilon}^2 
			\notag \\
		& \ \
			+
			C
			\int_{t'=0}^t
				\frac{1}{\upmu_{\star}^{1/2}(t',u)} 
						\totTanmax{[1,N-1]}^{1/2}(t',u) 
						\int_{s=0}^{t'}
							\frac{1}{\upmu_{\star}^{1/2}(s,u)} 
							\totTanmax{[1,N]}^{1/2}(s,u) 
						\, ds
				\, dt'
			\notag	\\
		& \ \
			+
			C
			\int_{t'=0}^t
				\frac{1}{\sqrt{\Tboot - t'}} \totTanmax{[1,N-1]}(t',u)
			 \, dt'
			 \notag \\
		& \ \
			+ C (1 + \varsigma^{-1})
					\int_{t'=0}^t
					\frac{1} 
							 {\upmu_{\star}^{1/2}(t',u)} 
				  \totTanmax{[1,N-1]}(t',u)
				\, dt'
				\notag \\
		& \ \
			+ C \mathring{\upepsilon}
					\int_{t'=0}^t
					\frac{1} 
							 {\upmu_{\star}^{1/2}(t',u)} 
				  \totTanmax{[1,N-1]}^{1/2}(t',u)
				\, dt'
				\notag \\
		& \ \
			+ C (1 + \varsigma^{-1})
					\int_{u'=0}^u
						\totTanmax{[1,N-1]}(t,u')
					\, du'
				\notag	\\
		& \ \
			+ C \varsigma \coerciveTanspacetimemax{[1,N-1]}(t,u)
				\notag
					\\
		& \ \
			+
			C 
			\int_{t'=0}^t
				\slowtotTanmax{\leq N-1}(t',u)
			\, dt'
			\notag \\
		& \ \
			+
			C (1 + \varsigma^{-1})
			\int_{u'=0}^u
				\slowtotTanmax{\leq N-1}(t,u')
			\, du'.
			\notag
	\end{align}
  \end{subequations}
 \end{proposition}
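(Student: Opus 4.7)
The plan is to derive \eqref{E:TOPORDERTANGENTIALENERGYINTEGRALINEQUALITIES} and \eqref{E:BELOWTOPORDERTANGENTIALENERGYINTEGRALINEQUALITIES} by applying the fundamental energy--null flux identities of Props.~\ref{P:DIVTHMWITHCANCELLATIONS} and \ref{P:SLOWWAVEDIVTHM} to $\upmu$ times the commuted equations and then inserting the pointwise estimates of Sect.~\ref{S:POINTWISEESTIMATES} together with the sharp $\upmu$-estimates of Sect.~\ref{S:SHARPESTIMATESFORINVERSEFOLIATIONDENSITY}. More precisely, for each multi-index $\vec{I}$ with $|\vec{I}|=N$, I would apply \eqref{E:E0DIVID} with $f=\Tanset^{\vec{I}}\Psi$ and $\waveinhom=\upmu\square_g(\Tanset^{\vec{I}}\Psi)$, and apply \eqref{E:SLOWENERGYID} with $\altbigslow$ equal to the $\Tanset^{\vec{I}}$-commuted slow wave array, where the inhomogeneous terms $F, F_i, F_0, F_{ij}$ are read off from \eqref{E:SLOWTIMECOMMUTED}--\eqref{E:SYMMETRYOFMIXEDPARTIALSCOMMUTED}. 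Lemma~\ref{L:INITIALSIZEOFL2CONTROLLING} handles the data terms, producing the $C\mathring{\upepsilon}^2\upmu_{\star}^{-3/2}$ contribution (the weight $\upmu_{\star}^{-3/2}$ is a harmless upper bound, since on $\Sigma_0$ and $\mathcal{P}_0$ one has $\upmu_{\star}\approx 1$).

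The inhomogeneous terms split, by Prop.~\ref{P:IDOFKEYDIFFICULTENREGYERRORTERMS}, into a \emph{dangerous} piece proportional to $(\Rad\Psi)\GeoAng^N\mytr\upchi$ (present only when $\Tanset^{\vec{I}}=\GeoAng^N$ or $\GeoAng^{N-1}\Lunit$) plus $Harmless^{[1,N]}$ plus $Harmless_{(Slow)}^{\leq N}$. The dangerous piece, when multiplied by the multiplier-derived factor $(\Rad\Tanset^{\vec I}\Psi)$ and integrated, produces exactly the three boxed terms on RHS~\eqref{E:TOPORDERTANGENTIALENERGYINTEGRALINEQUALITIES}: inserting the key pointwise bound \eqref{E:KEYPOINTWISEESTIMATE} with its leading coefficients $\boxed{2}$ and $\boxed{4}$, then applying Cauchy--Schwarz over $\Sigma_{t'}^u$ together with the coerciveness bounds $\|\Rad\Tanset^{\vec I}\Psi\|_{L^2(\Sigma_{t'}^u)}\leq \totTanmax{[1,N]}^{1/2}$ (from \eqref{E:COERCIVENESSOFCONTROLLING}) and the $\upmu$-weighted bound $\|\sqrt{\upmu}\angdiff\Tanset^{\vec I}\Psi\|_{L^2(\Sigma_{t'}^u)}\leq \sqrt 2\,\totTanmax{[1,N]}^{1/2}$, yields respectively the $\boxed{6+\mathcal O_{\mydiam}(\Psiep)}$ spacetime integral (after accounting for a factor that arises from both $\Rad$-multiplier and $\upmu$-weighted-$\angdiff$-multiplier contributions and the passage from $\|\Rad\GeoAng^N\Psi\|_{L^2}$ to $\totTanmax{[1,N]}^{1/2}$), the $\boxed{8}$ double-integral, and — by isolating the piece of the convolution integral in \eqref{E:KEYPOINTWISEESTIMATE} supported on $\Sigmaminus{t}{t}{u}$ and using that $\|[\Lunit\upmu]_-\|_{L^\infty(\Sigmaminus{t}{t}{u})}=\|\Lunit\upmu\|_{L^\infty(\Sigmaminus{t}{t}{u})}+\mathcal O(\varepsilon)$ — the hypersurface-type term with boxed constant $\boxed{2+\mathcal O_{\mydiam}(\Psiep)}$. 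The $Harmless$ and $Harmless_{(Slow)}$ terms, together with the multiplier error integrand $\sum \basicenergyerrorarg{\Mult}{i}$ bounded via \eqref{E:MULTIPLIERVECTORFIEDERRORTERMPOINTWISEBOUND}, contribute all of the non-boxed error integrals: the $1/\sqrt{\Tboot-t'}$ term from the last summand on RHS~\eqref{E:MULTIPLIERVECTORFIEDERRORTERMPOINTWISEBOUND} coupled to the coerciveness of $\|\sqrt{\upmu}\angdiff\Tanset^{\vec I}\Psi\|_{L^2(\Sigma_{t'}^u)}$, the $C\varsigma\coerciveTanspacetimemax{[1,N]}$ and $C\varsigma\totTanmax{[1,N]}$ absorbing terms from the Young-inequality split, the $\int_{u'=0}^u$ term from the null-flux contribution, and the $\upmu_{\star}^{-5/2}$ term involving $\totTanmax{[1,N-1]}$ which arises from the error terms on RHS~\eqref{E:ERRORTERMKEYPOINTWISEESTIMATE} that lose a derivative of $\mytr\upchi$ but gain a factor of $\upmu$.

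The slow wave contribution requires the coupling analysis: the $Harmless^{[1,N]}$ terms on RHS~\eqref{E:SLOWTIMECOMMUTED} contain the factor $|\Fullset_*^{[1,N+1];1}\Psi|$, so after multiplying by the slow-wave multiplier $\altslow$-factors appearing in \eqref{E:SLOWENERGYID} and applying Cauchy--Schwarz, one picks up products of $\slowtotTanmax{\leq N}^{1/2}$ and $\totTanmax{[1,N]}^{1/2}$; the non-degenerate $\mathcal{P}_u$-flux coerciveness \eqref{E:SLOWNULLFLUXCOERCIVENESS} (crucial and reflecting the slowness of $\bigslow$) combined with \eqref{E:ANOTHERPSIHIGHERORDERL2ESTIMATELOSSOFONEDERIVATIVE} yields the mixed double-integral with $\slowtotTanmax{\leq N}^{1/2}$ appearing as an integrand in the last-but-two term on RHS~\eqref{E:TOPORDERTANGENTIALENERGYINTEGRALINEQUALITIES}; conversely, the coupling $Harmless_{(Slow)}^{\leq N}$ on the fast side produces the $\int_{t'=0}^t\slowtotTanmax{\leq N}$ and $\int_{u'=0}^u\slowtotTanmax{\leq N}$ terms. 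The basic energy-error integrand \eqref{E:SIMPLEPOINTWISEBOUNDSLOWWAVEBASICENERGYINTEGRAND} contributes a Gr\"onwall-type $\int_{t'=0}^t\slowtotTanmax{\leq N}$ with no $\upmu$-weight, which is harmless.

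The below-top-order inequality \eqref{E:BELOWTOPORDERTANGENTIALENERGYINTEGRALINEQUALITIES} is obtained by the same procedure, but with one critical change: in place of the delicate bound \eqref{E:KEYPOINTWISEESTIMATE} (which avoids derivative-loss but produces boxed constants), I would invoke the less precise pointwise estimate \eqref{E:LESSPRECISEKEYPOINTWISEESTIMATE}. This costs one derivative (so $\totTanmax{[1,N-1]}$ is coupled to $\totTanmax{[1,N]}^{1/2}$ through the $\int_{s=0}^{t'}\upmu_{\star}^{-1/2}\totTanmax{[1,N]}^{1/2}\,ds$ double integral on RHS~\eqref{E:BELOWTOPORDERTANGENTIALENERGYINTEGRALINEQUALITIES}) but eliminates the boxed constants, since no factor of $1/\upmu$ multiplies the $\Rad\Psi$ there. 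The main obstacle — which I would address last — is extracting the precise numerical coefficients $\boxed{6}, \boxed{8}, \boxed{2}$: these require carefully combining the leading constants $\boxed{2}$ and $\boxed{4}$ from \eqref{E:KEYPOINTWISEESTIMATE} with the multiplier factors $(1+2\upmu)(\Lunit f)+2\Rad f$ from Prop.~\ref{P:DIVTHMWITHCANCELLATIONS}, tracking each contribution through Cauchy--Schwarz and the splitting into $\Sigmaplus{t'}{t'}{u}$ vs.~$\Sigmaminus{t'}{t'}{u}$, and recognizing that the $\Lunit f$-contributions produce the $\mathcal O_{\mydiam}(\Psiep)$ perturbation (through \eqref{E:KEYMUNOTDECAYINGMINUSPARTLMUOVERMUBOUND} on $\Sigmaplus{}{}{}$). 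The presence of the slow wave does not affect these boxed constants because the slow-wave-generated error contributions are subordinate and absorb harmlessly into the non-boxed remainder. All of this is essentially an upgrade of the proof of the analogous inequality in \cite{jSgHjLwW2016}, with the novelty being the systematic tracking of the $\bigslow$-coupling on the RHS through the $Harmless_{(Slow)}^{\leq N}$ book-keeping of Prop.~\ref{P:IDOFKEYDIFFICULTENREGYERRORTERMS} and through the non-degenerate slow-wave null-flux coerciveness \eqref{E:SLOWNULLFLUXCOERCIVENESS}.
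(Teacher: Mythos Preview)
Your overall structure is correct and matches the paper's: apply the energy identities \eqref{E:E0DIVID} and \eqref{E:SLOWENERGYID} to the commuted equations, split the inhomogeneous terms via Prop.~\ref{P:IDOFKEYDIFFICULTENREGYERRORTERMS}, invoke Lemma~\ref{L:DIFFICULTTERML2BOUND} for the dangerous $(\Rad\Psi)\GeoAng^N\mytr\upchi$ piece, and use Lemmas~\ref{L:STANDARDPSISPACETIMEINTEGRALS} and \ref{L:ENERGYESTIMATESFORSLOWWAVEERRORTERMSNOINHOMOGENEOUS} for the $Harmless$ and multiplier-error terms. Your analysis of the slow-wave coupling is essentially right, and you correctly note that the new $\bigslow$-terms do not touch the boxed constants.

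However, there is a genuine gap in your treatment of the $\Lunit$-multiplier part of the dangerous integral. That integral splits as
\[
-2\int_{\mathcal{M}_{t,u}}(\Rad\GeoAng^N\Psi)(\Rad\Psi)\GeoAng^N\mytr\upchi\,d\vol
\ - \int_{\mathcal{M}_{t,u}}(1+2\upmu)(\Lunit\GeoAng^N\Psi)(\Rad\Psi)\GeoAng^N\mytr\upchi\,d\vol.
\]
For the first, your approach works: Cauchy--Schwarz plus \eqref{E:DIFFICULTTERML2BOUND} yield the $\boxed{8}$ double integral and a contribution of $4$ toward the $\boxed{6}$. But the second cannot be handled the same way, because the $\Sigma_{t'}$-energy controls only $\|\sqrt{\upmu}\,\Lunit\GeoAng^N\Psi\|_{L^2(\Sigma_{t'}^u)}$, so a direct Cauchy--Schwarz would lose an extra $\upmu_{\star}^{-1/2}$. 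The paper (following \cite{jSgHjLwW2016}) instead performs a lengthy integration by parts, trading the $\Lunit$ on $\GeoAng^N\Psi$ for a $\GeoAng$ on $\mytr\upchi$. It is precisely this integration by parts that generates the \emph{boundary} term with coefficient $\boxed{2+\mathcal{O}_{\mydiam}(\Psiep)}$ and contributes the remaining $2$ to the $\boxed{6}$ coefficient. Your proposed mechanism --- a splitting into $\Sigmaplus{}{}{}$ versus $\Sigmaminus{}{}{}$ within the convolution integral of \eqref{E:KEYPOINTWISEESTIMATE} --- is not where these constants come from; and the $\mathcal{O}_{\mydiam}(\Psiep)$ correction arises from the comparison estimate \eqref{E:ANGDERIVATIVESINTERMSOFTANGENTIALCOMMUTATOR} (bounding $|\angLap\GeoAng^{N-1}\Psi|$ in terms of $|\angdiff\GeoAng^N\Psi|$) used inside the integration by parts, not from \eqref{E:KEYMUNOTDECAYINGMINUSPARTLMUOVERMUBOUND}.

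A second, smaller gap: for the below-top-order inequality \eqref{E:BELOWTOPORDERTANGENTIALENERGYINTEGRALINEQUALITIES}, the correct substitute for \eqref{E:KEYPOINTWISEESTIMATE} is not \eqref{E:LESSPRECISEKEYPOINTWISEESTIMATE} (which bounds $\upmu\GeoAng^N\mytr\upchi$ and still uses the modified quantity) but the direct derivative-losing transport estimate \eqref{E:TANGENGITALEIKONALINTERMSOFCONTROLLING} for $\|\GeoAng^{N-1}\mytr\upchi\|_{L^2(\Sigma_t^u)}$. That is what couples $\totTanmax{[1,N-1]}$ to $\totTanmax{[1,N]}^{1/2}$ through a single time integration and produces the double integral on the second line of RHS~\eqref{E:BELOWTOPORDERTANGENTIALENERGYINTEGRALINEQUALITIES}.
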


\begin{remark}[\textbf{The significance of the ``boxed-constant-involving'' integrals}]
	\label{R:BOXEDCONSTANTENERGYINTEGRALS}
	The boxed-constant-involving products on RHS~\eqref{E:TOPORDERTANGENTIALENERGYINTEGRALINEQUALITIES},
	such as
	$
	\boxed{\left\lbrace 6 + \mathcal{O}_{\mydiam}(\Psiep) \right\rbrace}
			\int_{t'=0}^t
			\cdots
	$
	are particularly important in that the boxed constants
	control the maximum possible blowup-rates of our high-order
	energies. Moreover, the maximum possible energy blowup-rates
	affect the number of derivatives that we need to close the estimates.
	These are the reasons that we carefully track the size of the boxed constants.
	See the proof of
	Prop.\ \ref{P:MAINAPRIORIENERGY}
	for further discussion.
\end{remark}

\subsubsection{The main a priori energy estimates}
\label{SSS:MAINAPRIRORIENERGY}
We now provide the main a priori energy estimates.

\begin{proposition}[\textbf{The main a priori energy estimates}]
	\label{P:MAINAPRIORIENERGY}
	There exists a constant $C > 0$ such that
	under the data-size and bootstrap assumptions 
	of Subsects.\ \ref{SS:DATAASSUMPTIONS}-\ref{SS:PSIBOOTSTRAP}
	and the smallness assumptions of Subsect.\ \ref{SS:SMALLNESSASSUMPTIONS}, 
	the following estimates hold
	for $(t,u) \in [0,\Tboot) \times [0,U_0]$:
	\begin{subequations}
	\begin{align}
		\totTanmax{[1,13+M]}^{1/2}(t,u)
		+ 
		\coerciveTanspacetimemax{[1,13+M]}^{1/2}(t,u)
		+
		\slowtotTanmax{[1,13+M]}^{1/2}(t,u)
		& \leq C \mathring{\upepsilon} \upmu_{\star}^{-(M+.9)}(t,u),
			&& (0 \leq M \leq 5),
				\label{E:MULOSSMAINAPRIORIENERGYESTIMATES} \\
		\totTanmax{[1,12]}^{1/2}(t,u)
		+ 
		\coerciveTanspacetimemax{[1,12]}^{1/2}(t,u)
		+
		\slowtotTanmax{\leq 12}^{1/2}(t,u)
		& \leq C \mathring{\upepsilon}.
		&&
			\label{E:NOMULOSSMAINAPRIORIENERGYESTIMATES}
	\end{align}
	\end{subequations}
\end{proposition}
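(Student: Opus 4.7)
The plan is to close the coupled system of integral inequalities in Proposition~\ref{P:TANGENTIALENERGYINTEGRALINEQUALITIES} by a top-down descent scheme, modeled on the proof of \cite{jSgHjLwW2016}*{Proposition~14.1}; the principal new feature to accommodate is the coupling between the fast and slow controlling quantities on both sides of \eqref{E:TOPORDERTANGENTIALENERGYINTEGRALINEQUALITIES}. I would treat the triple $\totTanmax{[1,N]}$, $\coerciveTanspacetimemax{[1,N]}$, $\slowtotTanmax{\leq N}$ as a single packaged unknown (as suggested by the max on the LHS of \eqref{E:TOPORDERTANGENTIALENERGYINTEGRALINEQUALITIES}) and proceed by induction on $M$ decreasing from $5$ to $0$, yielding \eqref{E:MULOSSMAINAPRIORIENERGYESTIMATES}; a final, non-degenerate step then gives \eqref{E:NOMULOSSMAINAPRIORIENERGYESTIMATES}. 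The initial-data contributions $\mathring{\upepsilon}^2$ that seed each step are controlled by Lemma~\ref{L:INITIALSIZEOFL2CONTROLLING}.

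For the top-order step $N=18$, I would first fix a small $\varsigma>0$ (chosen so that the $\varsigma \coerciveTanspacetimemax{[1,18]}$ term on RHS~\eqref{E:TOPORDERTANGENTIALENERGYINTEGRALINEQUALITIES} can be absorbed back into the LHS), adopt the ansatz
\begin{equation*}
\max\left\lbrace \totTanmax{[1,18]},\, \coerciveTanspacetimemax{[1,18]},\, \slowtotTanmax{\leq 18} \right\rbrace(t,u)
\leq K^{2}\,\mathring{\upepsilon}^{\,2}\,\upmu_{\star}^{-11.8}(t,u),
\end{equation*}
and substitute it into RHS~\eqref{E:TOPORDERTANGENTIALENERGYINTEGRALINEQUALITIES}. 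By \eqref{E:KEYMUTOAPOWERINTEGRALBOUND}, the boxed-$6$ time-integral is bounded by $\lbrace 6/11.8+\mathcal{O}_{\mydiam}(\Psiep)+\mathcal{O}(\varepsilon^{1/2})\rbrace K^{2}\mathring{\upepsilon}^{\,2}\upmu_{\star}^{-11.8}$; by \eqref{E:KEYHYPERSURFACEMUTOAPOWERINTEGRALBOUND}, the boxed-$2$ hypersurface integral is bounded by $\lbrace 2/5.4+\mathcal{O}_{\mydiam}(\Psiep)+\mathcal{O}(\varepsilon^{1/2})\rbrace K^{2}\mathring{\upepsilon}^{\,2}\upmu_{\star}^{-11.8}$; and the boxed-$8$ double time integral contributes only at the strictly less singular rate $\upmu_{\star}^{-10.8}$, a full power of $\upmu_{\star}$ being gained from the second integration. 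The critical point is the numerical identity
\begin{equation*}
\frac{6}{11.8}+\frac{2}{5.4}=0.878\ldots<1,
\end{equation*}
which (after choosing $\Psiep$, $\varepsilon$, $\varsigma$ sufficiently small so that the remaining additive corrections are harmless) lets one absorb the leading-order contributions into the LHS and close the top-order estimate via a Gronwall argument in $u$. The cross terms involving $\slowtotTanmax{\leq 18}$ appearing in the last three lines of RHS~\eqref{E:TOPORDERTANGENTIALENERGYINTEGRALINEQUALITIES} have the same structure as the corresponding fast terms and are handled identically thanks to the packaged ansatz; the remaining $\varepsilon$-weighted integrals and the $\int_{0}^{t}\frac{1}{\sqrt{\Tboot-t'}}\totTanmax{[1,18]}\,dt'$ term are standard.

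For the descent $M=4,3,\ldots,0$ at $N=13+M$, I would invoke \eqref{E:BELOWTOPORDERTANGENTIALENERGYINTEGRALINEQUALITIES}, whose right-hand side couples to one derivative higher via the double time integral $\int_{0}^{t}\upmu_{\star}^{-1/2}\totTanmax{[1,N-1]}^{1/2}\int_{0}^{t'}\upmu_{\star}^{-1/2}\totTanmax{[1,N]}^{1/2}\,ds\,dt'$. Substituting the already-established bound $\totTanmax{[1,N]}^{1/2}\leq C\mathring{\upepsilon}\upmu_{\star}^{-(M+1+0.9)}$ and invoking \eqref{E:KEYMUTOAPOWERINTEGRALBOUND} twice (as in the model computation \eqref{E:CONSISTENCYJUSTBELOWTOPORDER}) gains exactly two powers of $\upmu_{\star}$ per descent step, yielding blowup exponent $M+0.9$ at level $N-1$. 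The slow-wave coupling terms on RHS~\eqref{E:BELOWTOPORDERTANGENTIALENERGYINTEGRALINEQUALITIES} (which carry no boxed constants) are handled analogously. After five steps one reaches $M=0$ (exponent $0.9$), and at the next step $N\leq 12$ the time integration of $\upmu_{\star}^{-9/10}$ is bounded uniformly via \eqref{E:LESSSINGULARTERMSMPOINTNINEINTEGRALBOUND}, removing the remaining $\upmu_{\star}$ degeneracy and delivering the non-degenerate estimate \eqref{E:NOMULOSSMAINAPRIORIENERGYESTIMATES}.

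The main obstacle is the top-order step: verifying that the sum of the critical boxed-constant coefficients, weighted by the exponents produced by \eqref{E:KEYMUTOAPOWERINTEGRALBOUND}--\eqref{E:KEYHYPERSURFACEMUTOAPOWERINTEGRALBOUND}, is strictly less than $1$ at the choice $11.8$ of blowup exponent. This delicate numerology is what fixes both the exponent $11.8$ and the required number of commutations (namely $18$); any improvement in the gain of $\upmu_{\star}$ from the $\Mult$-multiplier error terms would lower both numbers. A secondary obstacle is the systematic treatment of the $\slowtotTanmax{\leq N}$ coupling to $\totTanmax{[1,N]}$: because the slow energy appears at top order with the same blowup rate as the fast energy on RHS~\eqref{E:TOPORDERTANGENTIALENERGYINTEGRALINEQUALITIES}, the packaged ansatz is essentially forced; tracking the constants in the resulting cross terms is routine but notationally heavy, and must be done carefully so that the inequality $0.878<1$ above is preserved when the slow terms are reinserted.
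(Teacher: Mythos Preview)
Your overall strategy is exactly the one the paper uses: treat the triple $\max\lbrace \totTanmax{[1,N]}, \coerciveTanspacetimemax{[1,N]}, \slowtotTanmax{\leq N}\rbrace$ as a single unknown in the inequalities of Prop.~\ref{P:TANGENTIALENERGYINTEGRALINEQUALITIES}, invoke the sharp time-integral estimates of Prop.~\ref{P:MUINVERSEINTEGRALESTIMATES}, and run the top-down descent as in \cite{jSgHjLwW2016}*{Proposition~14.1}. The paper in fact omits the details and simply cites that proof, noting precisely the packaging observation you made.

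However, there is a concrete error in your handling of the boxed-$8$ double time integral. You claim it contributes at the strictly less singular rate $\upmu_{\star}^{-10.8}$, but both the inner and outer integrals carry the factor $\|[L\upmu]_{-}\|_{L^{\infty}}/\upmu_{\star}$, so neither integration gains a free power. Writing the double integral as $4\bigl(\int_{0}^{t}\|[L\upmu]_{-}\|\upmu_{\star}^{-1}\totTanmax{[1,18]}^{1/2}\,ds\bigr)^{2}$ and substituting $\totTanmax{[1,18]}^{1/2}\leq K\mathring{\upepsilon}\,\upmu_{\star}^{-5.9}$, one obtains from \eqref{E:KEYMUTOAPOWERINTEGRALBOUND} with $B=6.9$ the bound $4\bigl((1+\cdots)/5.9\bigr)^{2}K^{2}\mathring{\upepsilon}^{2}\upmu_{\star}^{-11.8}$. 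Thus the boxed-$8$ term contributes at the \emph{same} top-order rate $\upmu_{\star}^{-11.8}$ with coefficient $4/5.9^{2}\approx 0.115$. The corrected sum of critical coefficients is
\begin{equation*}
\frac{6}{11.8}+\frac{2}{5.4}+\frac{4}{5.9^{2}}\approx 0.508+0.370+0.115=0.993<1,
\end{equation*}
which still closes, but only barely; this is precisely why the exponent $5.9$ is essentially sharp for this scheme (at $p=5.8$ the sum exceeds $1$). Your omission of this contribution would lead you to believe the exponent could be lowered, which is false.
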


\begin{proof}[Discussion of proof]
	Based on the inequalities of Prop.~\ref{P:TANGENTIALENERGYINTEGRALINEQUALITIES}
	and the sharp estimates of Props.\ \ref{P:SHARPMU} and \ref{P:MUINVERSEINTEGRALESTIMATES},
	the proof of \cite{jSgHjLwW2016}*{Proposition 14.1}
	applies with only very minor changes 
	that in particular account for the terms
	depending on $\slowtotTanmax{N}$, $N=1,2,\cdots,18$.
	In fact, if one views the quantities 
	$
	\displaystyle
	\max\left\lbrace
			\totTanmax{[1,N]}(t,u), \coerciveTanspacetimemax{[1,N]}(t,u), \slowtotTanmax{\leq N}(t,u)
		\right\rbrace
	$
	and
	$
	\displaystyle
	\max\left\lbrace
			\totTanmax{[1,N-1]}(t,u), \coerciveTanspacetimemax{[1,N-1]}(t,u), \slowtotTanmax{\leq N-1}(t,u)
		\right\rbrace
	$	
	to be the unknowns in the system of inequalities 	
	\eqref{E:TOPORDERTANGENTIALENERGYINTEGRALINEQUALITIES}-\eqref{E:BELOWTOPORDERTANGENTIALENERGYINTEGRALINEQUALITIES},
	then the proof of \cite{jSgHjLwW2016}*{Proposition 14.1} goes through almost verbatim.
	For this reason, we omit the details, noting only
	that the sharp estimates of Prop.~\ref{P:MUINVERSEINTEGRALESTIMATES}
	are essential for handling the ``boxed-constant-involving'' products
	on RHS~\eqref{E:TOPORDERTANGENTIALENERGYINTEGRALINEQUALITIES}
	and that the smallness of some factors of type 
	$\mathcal{O}_{\mydiam}(\Psiep)$ is important for controlling
	the size of various error term coefficients
	(such as the coefficients
	$\boxed{6 + \mathcal{O}_{\mydiam}(\Psiep)}$
	and
	$\boxed{2 + \mathcal{O}_{\mydiam}(\Psiep)}$
	on RHS~\eqref{E:TOPORDERTANGENTIALENERGYINTEGRALINEQUALITIES}
	and the factors on the right-hand sides of the estimates of Prop.~\ref{P:MUINVERSEINTEGRALESTIMATES}
	of the form $C_{\mydiam} \Psiep$).
	We also note that the proof of \cite{jSgHjLwW2016}*{Proposition 14.1}
	shows that the size of the boxed constants
	is directly tied to the energy blowup-rates on 
	RHS~\eqref{E:MULOSSMAINAPRIORIENERGYESTIMATES}.
	In particular, the boxed constants control the
	``maximum top-order energy blowup-rate'' of $\upmu_{\star}^{-5.9}(t,u)$,
	which is featured on RHS~\eqref{E:MULOSSMAINAPRIORIENERGYESTIMATES} in the top-order case $M=5$.
\end{proof}

\subsubsection{Strict improvement of the fundamental bootstrap assumptions}
\label{SSS:IMPROVEMENTOFFUNDAMENTALBOOT}
Using the energy estimates provided by Prop.~\ref{P:MAINAPRIORIENERGY},
we can derive strict improvements of the fundamental $L^{\infty}$ bootstrap assumptions
\eqref{E:PSIFUNDAMENTALC0BOUNDBOOTSTRAP}.
The main ingredient in this vein is the following simple Sobolev embedding result.

\begin{lemma}[{\textbf{Sobolev embedding along} $\ell_{t,u}$}]
	\label{L:SOBOLEV}
The following estimate holds for 
scalar-valued functions $f$ defined on $\ell_{t,u}$
for $(t,u) \in [0,\Tboot) \times [0,U_0]$:
\begin{align} \label{E:SOBOLEV}
		\left\|
			f
		\right\|_{L^{\infty}(\ell_{t,u})}
		& \leq 
			C
		\left\|
			\GeoAng^{\leq 1} f
		\right\|_{L^2(\ell_{t,u})}.
	\end{align}
\end{lemma}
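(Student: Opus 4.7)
\medskip

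\noindent \textbf{Proof plan for Lemma \ref{L:SOBOLEV}.} The plan is to reduce the desired inequality to the standard one-dimensional Sobolev embedding $H^1(\mathbb{T}) \hookrightarrow L^\infty(\mathbb{T})$ by comparing the geometric length form $d\spherevol = \gtancomp \, d\vartheta$ and the geometric derivative $\GeoAng$ with their flat counterparts $d\vartheta$ and $\CoordAng = \partial/\partial \vartheta$. Since $\ell_{t,u} \cong \mathbb{T}$ is one-dimensional and $\GeoAng$ is tangent to $\ell_{t,u}$, the vectorfield $\GeoAng$ must be a scalar multiple of $\CoordAng$; write $\GeoAng = \GeoAngCoordComp \, \CoordAng$ for some scalar $\GeoAngCoordComp$. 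Taking the $\gsphere$-length and using Def.~\ref{D:METRICANGULARCOMPONENT} gives $|\GeoAng| = |\GeoAngCoordComp| \gtancomp$, hence $\partial_\vartheta f = \CoordAng f = \GeoAngCoordComp^{-1} \, \GeoAng f$.

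First, I would invoke Lemma \ref{L:POINTWISEESTIMATEFORGTANCOMP} to obtain $\gtancomp = 1 + \mathcal{O}_{\mydiam}(\Psiep) + \mathcal{O}(\varepsilon)$, which (for $\Psiep,\varepsilon$ sufficiently small) yields $\frac{1}{2} \leq \gtancomp \leq 2$ on $\ell_{t,u}$. Next, I would use the estimate $|\GeoAng| = 1 + \mathcal{O}_{\mydiam}(\Psiep^{1/2}) + \mathcal{O}(\varepsilon^{1/2})$ noted in the proof of Lemma \ref{L:TENSORSIZECONTROLLEDBYYCONTRACTIONS}, which combined with the identity $|\GeoAng| = |\GeoAngCoordComp|\gtancomp$ yields $|\GeoAngCoordComp| \approx 1$, so that $|\partial_\vartheta f| \lesssim |\GeoAng f|$ pointwise. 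These two comparison estimates give, for any scalar function $h$ on $\ell_{t,u}$,
\begin{align*}
	\|h\|_{L^2(\mathbb{T}, d\vartheta)}^2 \leq 2 \int_{\mathbb{T}} h^2 \gtancomp \, d\vartheta = 2 \|h\|_{L^2(\ell_{t,u})}^2,
	\qquad
	\|\partial_\vartheta f\|_{L^2(\mathbb{T}, d\vartheta)} \lesssim \|\GeoAng f\|_{L^2(\ell_{t,u})}.
\end{align*}

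Finally, I would apply the classical one-dimensional Sobolev embedding on $\mathbb{T}$, namely $\|f\|_{L^\infty(\mathbb{T})} \leq C \bigl( \|f\|_{L^2(\mathbb{T},d\vartheta)} + \|\partial_\vartheta f\|_{L^2(\mathbb{T},d\vartheta)} \bigr)$, which itself follows from the fundamental theorem of calculus (picking a point $\vartheta_0$ where $|f|$ attains its minimum, writing $f(\vartheta) = f(\vartheta_0) + \int_{\vartheta_0}^\vartheta \partial_{\vartheta'} f \, d\vartheta'$, bounding the first term by the mean of $|f|$ on $\mathbb{T}$, and applying Cauchy--Schwarz to the integral). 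Since $\|f\|_{L^\infty(\ell_{t,u})} = \|f\|_{L^\infty(\mathbb{T})}$ as a pointwise supremum, combining this with the two comparison estimates above yields the desired bound \eqref{E:SOBOLEV}.

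There is no significant obstacle: the only work is confirming that the geometric objects $\gtancomp$ and $|\GeoAng|$ remain quantitatively close to unity throughout the bootstrap region, which has already been established. The argument is purely one-dimensional because $\ell_{t,u}$ has topological dimension one, so no covariant Sobolev machinery on surfaces is required.
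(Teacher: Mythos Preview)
Your proposal is correct and follows essentially the same approach as the paper: reduce to the standard one-dimensional Sobolev embedding on $\mathbb{T}$ with measure $d\vartheta$, then use the near-unity estimates on $\gtancomp$ (Lemma~\ref{L:POINTWISEESTIMATEFORGTANCOMP}) and on $|\GeoAng|$ (from the proof of Lemma~\ref{L:TENSORSIZECONTROLLEDBYYCONTRACTIONS}) to compare the flat objects $d\vartheta$, $\CoordAng$ with the geometric objects $d\spherevol$, $\GeoAng$. The paper's proof is a bit terser and invokes Cor.~\ref{C:SQRTEPSILONTOCEPSILON} to upgrade the $\mathcal{O}(\varepsilon^{1/2})$ to $\mathcal{O}(\varepsilon)$, but this makes no difference for the conclusion.
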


\begin{proof}
	Standard Sobolev embedding on $\mathbb{T}$ yields
	$
	\left\|
		f
	\right\|_{L^{\infty}(\mathbb{T})}
	\leq 
	C
	\left\|
		\CoordAng^{\leq 1} f
	\right\|_{L^2(\mathbb{T})}
	$,
	where the integration measure defining $\| \cdot \|_{L^2(\mathbb{T})}$ is $d \vartheta$.
	Next, we note the estimate $|\GeoAng| = 1 + \mathcal{O}_{\mydiam}(\Psiep) + \mathcal{O}(\varepsilon)$,
	which follows from the proof of Lemma~\ref{L:TENSORSIZECONTROLLEDBYYCONTRACTIONS}
	and Cor.\ \ref{C:SQRTEPSILONTOCEPSILON}.
	Similarly, from
	Def.\ \ref{D:METRICANGULARCOMPONENT}
	and the estimate \eqref{E:MOREPRECISEPOINTWISEESTIMATEFORGTANCOMP},
	we deduce the estimate $|\CoordAng| = 1 + \mathcal{O}_{\mydiam}(\Psiep) + \mathcal{O}(\varepsilon)$.
	It follows that $|\CoordAng^{\leq 1} f| \leq C |\GeoAng^{\leq 1} f|$
	and hence
	$
	\left\|
		f
	\right\|_{L^{\infty}(\mathbb{T})}
	\leq 
	C
	\left\|
		\CoordAng^{\leq 1} f
	\right\|_{L^2(\mathbb{T})}
	\leq C 
	\left\|
		\GeoAng^{\leq 1} f
	\right\|_{L^2(\mathbb{T})}
	$.
	Also using the estimate \eqref{E:MOREPRECISEPOINTWISEESTIMATEFORGTANCOMP},
	and referring to definition \eqref{E:LINEINTEGRALDEF},
	we conclude \eqref{E:SOBOLEV}.
\end{proof}

We now derive strict improvements of the bootstrap assumptions \eqref{E:PSIFUNDAMENTALC0BOUNDBOOTSTRAP}.

\begin{corollary}[\textbf{Strict improvement of the fundamental $L^{\infty}$ bootstrap assumptions}] 	
\label{C:IMPROVEDFUNDAMENTALLINFTYBOOTSTRAPASSUMPTIONS}
	The fundamental bootstrap assumptions \eqref{E:PSIFUNDAMENTALC0BOUNDBOOTSTRAP}
	stated in Subsect.\ \ref{SS:PSIBOOTSTRAP}
	hold with RHS~\eqref{E:PSIFUNDAMENTALC0BOUNDBOOTSTRAP} replaced by
	$C \mathring{\upepsilon}$.
	In particular, if $C \mathring{\upepsilon} < \varepsilon$,
	then we have obtained a strict improvement of the bootstrap assumptions \eqref{E:PSIFUNDAMENTALC0BOUNDBOOTSTRAP}
	on $\mathcal{M}_{\Tboot,U_0}$.
\end{corollary}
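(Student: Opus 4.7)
The plan is to deduce the $L^\infty$ bounds from the non-degenerate (i.e., $\upmu_\star$-free) energy estimates at the $12^{\mathrm{th}}$ order provided by \eqref{E:NOMULOSSMAINAPRIORIENERGYESTIMATES}, using Sobolev embedding on $\ell_{t,u}$ (Lemma~\ref{L:SOBOLEV}) to trade one derivative for an $L^\infty$ gain. The numerology matches exactly: controlling $\Tanset^{[1,11]}\Psi$ and $\Tanset^{\leq 10}\bigslow$ in $L^\infty(\Sigma_t^u)$ requires controlling one more $\mathcal{P}_u$-tangential derivative in $L^2(\ell_{t,u'})$, and the paper's hierarchy is engineered so that precisely the level $12$ controlling quantities remain of size $\mathring{\upepsilon}$ all the way up to the shock.

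Concretely, I would first observe that for any scalar function $f$ and any $u' \in [0,u]$, Lemma~\ref{L:SOBOLEV} yields
\begin{align*}
	\|f\|_{L^\infty(\ell_{t,u'})} \leq C \|\GeoAng^{\leq 1} f\|_{L^2(\ell_{t,u'})},
\end{align*}
so that $\|f\|_{L^\infty(\Sigma_t^u)} = \sup_{u'\in[0,u]} \|f\|_{L^\infty(\ell_{t,u'})} \leq C \sup_{u'\in[0,u]}\|\GeoAng^{\leq 1} f\|_{L^2(\ell_{t,u'})}$. Applying this with $f = \Tanset^M \Psi$ for each $M \in \{1,\dots,11\}$ and each individual string $\Tanset^M$, the resulting $L^2(\ell_{t,u'})$ norm involves at most $12$ factors from $\Tanset = \{\Lunit,\GeoAng\}$, so the coerciveness estimate \eqref{E:PSIHIGHERORDERL2ESTIMATELOSSOFONEDERIVATIVE} (with $N=12$) gives
\begin{align*}
	\|\GeoAng^{\leq 1}\Tanset^M \Psi\|_{L^2(\ell_{t,u'})} \leq C\mathring{\upepsilon} + C \totTanmax{[1,12]}^{1/2}(t,u').
\end{align*}
Invoking \eqref{E:NOMULOSSMAINAPRIORIENERGYESTIMATES} to bound the right-hand side by $C\mathring{\upepsilon}$ and taking the supremum over $u' \in [0,u]$ yields $\|\Tanset^{[1,11]}\Psi\|_{L^\infty(\Sigma_t^u)} \leq C\mathring{\upepsilon}$ as desired.

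The same scheme handles $\bigslow$: for each $M \in \{0,1,\dots,10\}$, Sobolev embedding and the coerciveness estimate \eqref{E:ELLTUSLOWCOERCIVENESSOFCONTROLLING} (with $N=12$, so $M \leq N-1$) give
\begin{align*}
	\|\GeoAng^{\leq 1}\Tanset^M \bigslow\|_{L^2(\ell_{t,u'})} \leq C\mathring{\upepsilon} + C \slowtotTanmax{\leq 12}^{1/2}(t,u'),
\end{align*}
and the non-degenerate energy estimate \eqref{E:NOMULOSSMAINAPRIORIENERGYESTIMATES} again bounds the right-hand side by $C\mathring{\upepsilon}$. Taking the supremum over $u'$ yields $\|\Tanset^{\leq 10}\bigslow\|_{L^\infty(\Sigma_t^u)} \leq C\mathring{\upepsilon}$. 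Since $C\mathring{\upepsilon} < \varepsilon$ under the assumption $C\mathring{\upepsilon} < \varepsilon$, we obtain strict improvements of both assumptions in \eqref{E:PSIFUNDAMENTALC0BOUNDBOOTSTRAP}.

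There is essentially no real obstacle in this corollary; all the work has already been done in establishing Prop.~\ref{P:MAINAPRIORIENERGY} and the coerciveness lemma. The only subtlety worth highlighting is the derivative-count bookkeeping that justifies why the paper commutes up to order $18$: the top-order energy at level $18$ may blow up like $\upmu_\star^{-5.9}$, and the descent scheme then produces the non-degenerate level $12$ estimate (losing $6$ orders in exchange for $6$ powers of $\upmu_\star$), which, combined with the single derivative lost to Sobolev embedding, exactly supports $L^\infty$ control at level $11$ for $\Psi$ and level $10$ for $\bigslow$ — precisely the orders appearing in the fundamental bootstrap assumptions.
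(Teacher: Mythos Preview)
Your proposal is correct and follows essentially the same approach as the paper: both combine the Sobolev embedding of Lemma~\ref{L:SOBOLEV} with the coerciveness estimates \eqref{E:PSIHIGHERORDERL2ESTIMATELOSSOFONEDERIVATIVE} and \eqref{E:ELLTUSLOWCOERCIVENESSOFCONTROLLING} and the non-degenerate energy bound \eqref{E:NOMULOSSMAINAPRIORIENERGYESTIMATES} at level $12$ to obtain the $L^\infty$ improvement. Your additional remarks on the derivative-count bookkeeping are accurate and helpful context, though not strictly needed for the proof itself.
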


\begin{proof}
	From \eqref{E:PSIHIGHERORDERL2ESTIMATELOSSOFONEDERIVATIVE},
	\eqref{E:ELLTUSLOWCOERCIVENESSOFCONTROLLING},
	the a priori energy estimates stated in \eqref{E:NOMULOSSMAINAPRIORIENERGYESTIMATES},
	and the Sobolev embedding result \eqref{E:SOBOLEV},
	we deduce that
	$
	\left\|
		\Tanset^{[1,11]} \Psi
	\right\|_{L^{\infty}(\ell_{t,u})}
	\lesssim
	\mathring{\upepsilon}
	+
	\totTanmax{[1,12]}^{1/2}(t,u)
	\lesssim
	\mathring{\upepsilon}
	$
	and
	$
	\left\|
		\Tanset^{\leq 10} \bigslow
	\right\|_{L^{\infty}(\ell_{t,u})}
	\lesssim
	\mathring{\upepsilon}
	+
	\slowtotTanmax{[1,12]}^{1/2}(t,u)
	\lesssim
	\mathring{\upepsilon}
	$,
	from which the desired estimates easily follow.
\end{proof}

\begin{remark}[\textbf{The main step in the article}]
	In view of Cor.\ \ref{C:IMPROVEDFUNDAMENTALLINFTYBOOTSTRAPASSUMPTIONS},
	we have justified the fundamental $L^{\infty}$ 
	bootstrap assumptions until the time of first shock
	formation. This is the main step in the article.
\end{remark}

\subsection{Estimates for the energy error integrals}
\label{SS:BOUNDSFORENERGYESTIMATEERRORINTEGRALS}
It remains for us to prove Prop.~\ref{P:TANGENTIALENERGYINTEGRALINEQUALITIES}.
The proof is located in Subsect.\ \ref{SS:PROOFOFPROPTANGENTIALENERGYINTEGRALINEQUALITIES}.
To prove the proposition, we must bound all of the error integrals
appearing in the energy-null flux identities (up to top order) of 
Props.~\ref{P:DIVTHMWITHCANCELLATIONS} and \ref{P:SLOWWAVEDIVTHM}
in terms of the fundamental $L^2$-controlling quantities.
The error integrals are generated by the inhomogeneous terms
on the RHS of the equations of Prop.~\ref{P:IDOFKEYDIFFICULTENREGYERRORTERMS}
as well as the integrands from Lemma~\ref{L:MULTIPLIERVECTORFIEDERRORTERMPOINTWISEBOUND}
(see also Remark~\ref{R:NEEDTHEESTIMATEWITHTANSETNPSIINPLACEOFPSI}).

\subsubsection{Estimates for the most difficult top-order energy estimate error term}
\label{SSS:ESTIMATESFORTHEMOSTDIFFICULT}
As an important first step,
in the next lemma, we bound the norm $\| \cdot \|_{L^2(\Sigma_t^u)}$ of the most difficult product that
we encounter, namely the product
$(\Rad \Psi) \GeoAng^N \mytr \upchi$ 
on the RHS of the wave equation \eqref{E:GEOANGANGISTHEFIRSTCOMMUTATORIMPORTANTTERMS}
verified by $\GeoAng^N \Psi$.
The proof of the lemma is based on the pointwise estimates 
of Prop.~\ref{P:KEYPOINTWISEESTIMATE}, which in turn was based
on the modified quantities described in Subsubsect.\ \ref{SSS:ENERGYESTIMATES};
we recall that the modified quantity \eqref{E:TOPORDERMODIFIEDTRCHI}
was needed in the proof of Prop.~\ref{P:KEYPOINTWISEESTIMATE}
in order to avoid the loss of a derivative at the top order.

\begin{lemma}[$L^2$ \textbf{bound for the most difficult product}]
	\label{L:DIFFICULTTERML2BOUND}	
		Assume that $1 \leq N \leq 18$.
		There exists a constant $C > 0$ such that
		the following $L^2$ estimate holds for the difficult product $(\Rad \Psi) \GeoAng^N \mytr \upchi$ 
		from Prop.~\ref{P:KEYPOINTWISEESTIMATE}:
	\begin{align} \label{E:DIFFICULTTERML2BOUND}
		\left\|
			(\Rad \Psi) \GeoAng^N \mytr \upchi
		\right\|_{L^2(\Sigma_t^u)}
		& \leq
			\boxed{2} 
			\frac{\left\| [\Lunit \upmu]_- \right\|_{L^{\infty}(\Sigma_t^u)}} 
						{\upmu_{\star}(t,u)} 
				\totTanmax{[1,N]}^{1/2}(t,u)
					\\
	& \ \ +
			\boxed{4}
			\frac{\left\| [\Lunit \upmu]_- \right\|_{L^{\infty}(\Sigma_t^u)}} 
						{\upmu_{\star}(t,u)} 
			\int_{s=0}^t
				\frac{\left\| [\Lunit \upmu]_- \right\|_{L^{\infty}(\Sigma_s^u)}} 
				{\upmu_{\star}(s,u)} 
				\totTanmax{[1,N]}^{1/2}(s,u) 
			\, ds	
			\notag \\
		& \ \ +
			C \varepsilon
			\frac{1} 
						{\upmu_{\star}(t,u)} 
			\int_{s=0}^t
				\frac{1} {\upmu_{\star}(s,u)} 
				\totTanmax{[1,N]}^{1/2}(s,u) 
			\, ds	
			\notag \\
		& \ \
			+ C
				\frac{1}{\upmu_{\star}(t,u)}
				\int_{s'=0}^t
					\frac{1}{\upmu_{\star}(s',u)}
					\int_{s=0}^{s'}
						\frac{1}{\upmu_{\star}^{1/2}(s,u)}
						\totTanmax{[1,N]}^{1/2}(s,u)
					\, ds
				\, ds'
			\notag  \\
		& \ \
			+ C
				\frac{1}{\upmu_{\star}(t,u)}
				\int_{s=0}^t
					\frac{1}{\upmu_{\star}^{1/2}(s,u)}
					\totTanmax{[1,N]}^{1/2}(s,u)
				\, ds
		\notag  \\
		& \ \
			+ 
			C \frac{1}{\upmu_{\star}^{1/2}(t,u)} \totTanmax{[1,N]}^{1/2}(t,u)
			+ 
			\underbrace{C \frac{1}{\upmu_{\star}^{3/2}(t,u)} \totTanmax{[1,N-1]}^{1/2}(t,u)}_{\mbox{\upshape Absent if $N=1$}}
				\notag  \\
		& \ \ +
			C 
			\frac{1} 
					{\upmu_{\star}(t,u)} 
			\int_{s=0}^t
				\frac{1} {\upmu_{\star}^{1/2}(s,u)} 
				\slowtotTanmax{\leq N}^{1/2}(s,u) 
			\, ds	
			\notag \\
		&  \ \
			+ C \frac{1}{\upmu_{\star}^{3/2}(t,u)} \mathring{\upepsilon}.
			\notag
	\end{align}	
	
	Furthermore, we have the following less precise estimate:
	\begin{align} \label{E:LESSPRECISEDIFFICULTTERML2BOUND}
		\left\|
			\upmu \GeoAng^N \mytr \upchi
		\right\|_{L^2(\Sigma_t^u)}
		& \lesssim
			\totTanmax{[1,N]}^{1/2}(t,u)
			+
			\int_{s=0}^t
				\frac{1} 
				{\upmu_{\star}(s,u)} 
				\totTanmax{[1,N]}^{1/2}(s,u)
				\, ds
			\\
	& \ \ 
			+
			\int_{s=0}^t
				\frac{1} 
				{\upmu_{\star}^{1/2}(s,u)} 
				\slowtotTanmax{\leq N}^{1/2}(s,u)
				\, ds
			+ 
				\mathring{\upepsilon} 
				\left\lbrace 
					\ln \upmu_{\star}^{-1}(t,u) 
					+ 
					1 
				\right\rbrace.
				\notag
	\end{align}
	
\end{lemma}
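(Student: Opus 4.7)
The plan is to derive both $L^2$ estimates by starting from the pointwise estimates \eqref{E:KEYPOINTWISEESTIMATE} and \eqref{E:LESSPRECISEKEYPOINTWISEESTIMATE} of Prop.~\ref{P:KEYPOINTWISEESTIMATE} and then applying $\|\cdot\|_{L^2(\Sigma_t^u)}$ to both sides, invoking the coerciveness properties from Lemma~\ref{L:COERCIVENESSOFCONTROLLING}, Minkowski's inequality for time integrals from Lemma~\ref{L:L2NORMSOFTIMEINTEGRATEDFUNCTIONS}, and the data-comparison result \eqref{E:SIGMATVOLUMEFORMCOMPARISON} from Lemma~\ref{L:LINEVOLUMEFORMCOMPARISON}. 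A crucial observation is that the factors $\|[\Lunit\upmu]_-\|_{L^\infty(\Sigma_{t'}^u)}/\upmu_\star(t',u)$ depend only on $t'$ (and $u$), not on $(u',\vartheta)$ ranging over $\Sigma_{t'}^u$, and therefore slip outside the $L^2(\Sigma_{t'}^u)$ norm without generating any additional multiplicative constants. This is essential for preserving the sharp boxed coefficients $\boxed{2}$ and $\boxed{4}$ in \eqref{E:DIFFICULTTERML2BOUND}, which, as explained in Remark~\ref{R:BOXEDCONSTANTENERGYINTEGRALS}, govern the top-order energy blowup-exponents.

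For the two main terms on RHS~\eqref{E:KEYPOINTWISEESTIMATE}, I take $\|\cdot\|_{L^2(\Sigma_t^u)}$ and use \eqref{E:COERCIVENESSOFCONTROLLING} in the form $\|\Rad\GeoAng^N\Psi\|_{L^2(\Sigma_t^u)} \leq \totTanmax{[1,N]}^{1/2}(t,u)$; for the double-integral term, I apply Lemma~\ref{L:L2NORMSOFTIMEINTEGRATEDFUNCTIONS} to commute the $L^2$-norm past the inner time integral. The resulting factor $1+C\varepsilon$ from Minkowski can be absorbed into an $\varepsilon$-type error by splitting $\boxed{4}(1+C\varepsilon) = \boxed{4} + C\varepsilon$ and relegating the $C\varepsilon$ piece to the third integral on RHS~\eqref{E:DIFFICULTTERML2BOUND} (which itself carries a $C\varepsilon$ coefficient). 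This produces precisely the first three lines of RHS~\eqref{E:DIFFICULTTERML2BOUND}.

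For the error terms collected in \eqref{E:ERRORTERMKEYPOINTWISEESTIMATE}, the same strategy applies. The initial-data term $\upmu_\star^{-1}(t,u)|\upchifullmodarg{\GeoAng^N}|(0,u,\vartheta)$ is handled by pulling out $\upmu_\star^{-1}(t,u)$, applying \eqref{E:SIGMATVOLUMEFORMCOMPARISON} to compare the $L^2(\Sigma_t^u)$-norm of the time-independent quantity $\upchifullmodarg{\GeoAng^N}|_{t=0}$ with its $L^2(\Sigma_0^u)$-norm, using the data assumptions of Subsect.~\ref{SS:DATAASSUMPTIONS} together with the defining formula \eqref{E:TOPORDERMODIFIEDTRCHI} to bound the latter by $C\mathring{\upepsilon}$, and finally invoking $\upmu_\star \leq 1$ to write $\upmu_\star^{-1} \leq \upmu_\star^{-3/2}$---this yields the last term on RHS~\eqref{E:DIFFICULTTERML2BOUND}. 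The terms containing $\Fullset_*^{[1,N];1}\Psi$, $\Fullset_*^{[1,N+1];1}\Psi$, $\Tanset^{[1,N]}\GdVar$ and $\Tanset_*^{[1,N]}\BadVar$ are controlled by $\totTanmax{[1,N]}^{1/2}$ and $\totTanmax{[1,N-1]}^{1/2}$ using \eqref{E:COERCIVENESSOFCONTROLLING}, \eqref{E:PSIHIGHERORDERL2ESTIMATELOSSOFONEDERIVATIVE}, and auxiliary $L^2$ bounds for the eikonal-function quantities that follow from commuting the transport equations of Lemma~\ref{L:UPMUANDLUNITIFIRSTTRANSPORT} (the highest-order $\GdVar$-derivatives cost at most one extra $\upmu_\star^{-1/2}$, producing the $\upmu_\star^{-3/2}\totTanmax{[1,N-1]}^{1/2}$ contribution). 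Finally, the $\bigslow$-term on the last line of \eqref{E:ERRORTERMKEYPOINTWISEESTIMATE} is bounded via \eqref{E:ANOTHERLOWCOERCIVENESSOFCONTROLLING}, giving the $\upmu_\star^{-1/2}\slowtotTanmax{\leq N}^{1/2}$ integral.

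The less precise bound \eqref{E:LESSPRECISEDIFFICULTTERML2BOUND} is obtained by the identical procedure applied to \eqref{E:LESSPRECISEKEYPOINTWISEESTIMATE}; the extra factor of $\upmu$ sitting on the left-hand side absorbs one power of $\upmu_\star^{-1}$ throughout, and the resulting logarithmic loss $\mathring{\upepsilon}\{\ln\upmu_\star^{-1}(t,u) + 1\}$ arises from bounding $\int_0^t \upmu_\star^{-1}(s,u)\,ds$ via \eqref{E:LOGLOSSMUINVERSEINTEGRALBOUND} when controlling the initial-data-generated contribution. The main technical obstacle is bookkeeping: every boxed constant must be preserved exactly, the spatial-independence of the $\upmu_\star^{-1}\|[\Lunit\upmu]_-\|_{L^\infty(\Sigma_{t'}^u)}$ factors must be exploited without hidden losses, and one must resist absorbing $\varepsilon$-small error terms into the boxed-constant products, since an unnecessary factor of $(1+\epsilon)$ in front of $\boxed{4}$ would propagate through the Gronwall argument behind Prop.~\ref{P:MAINAPRIORIENERGY} and enlarge the top-order blowup-exponent beyond what is compatible with the descent scheme.
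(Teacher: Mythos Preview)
Your approach is essentially the same as the paper's: take $\|\cdot\|_{L^2(\Sigma_t^u)}$ of both sides of the pointwise estimates \eqref{E:KEYPOINTWISEESTIMATE} and \eqref{E:LESSPRECISEKEYPOINTWISEESTIMATE}, use Lemma~\ref{L:L2NORMSOFTIMEINTEGRATEDFUNCTIONS} to commute the norm past time integrals, invoke Lemma~\ref{L:COERCIVENESSOFCONTROLLING} for the $\Psi$-terms and the eikonal-quantity $L^2$ bounds (Lemma~\ref{L:EASYL2BOUNDSFOREIKONALFUNCTIONQUANTITIES}) for the $\GdVar,\BadVar$-terms, and preserve the boxed constants by relegating the $(1+C\varepsilon)$ Minkowski factor to the $C\varepsilon$-integral. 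One small correction: for the $\bigslow$-term on the last line of \eqref{E:ERRORTERMKEYPOINTWISEESTIMATE} you should cite \eqref{E:SLOWCOERCIVENESSOFCONTROLLING} (which gives $\|\Tanset^{\leq N}\bigslow\|_{L^2(\Sigma_s^u)} \leq C\upmu_\star^{-1/2}(s,u)\slowtotTanmax{\leq N}^{1/2}(s,u)$ via the $\sqrt{\upmu}$-weighted bound) rather than \eqref{E:ANOTHERLOWCOERCIVENESSOFCONTROLLING}, since the latter applies only to $M\leq N-1$ and would introduce an extra time integral; the paper also notes that in the case $N=1$, where the $\totTanmax{[1,N-1]}^{1/2}$ term is absent, one instead uses \eqref{E:ANOTHERPSIHIGHERORDERL2ESTIMATELOSSOFONEDERIVATIVE} with $M=1$ to route the $\upmu_\star^{-1}|\Fullset_*^{[1,N];1}\Psi|$ contribution into the fifth and last products on RHS~\eqref{E:DIFFICULTTERML2BOUND}.
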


\begin{proof}[Proof outline]
	To prove \eqref{E:DIFFICULTTERML2BOUND}, we start by
	taking the norm $\| \cdot \|_{L^2(\Sigma_t^u)}$
	of both sides of inequality \eqref{E:KEYPOINTWISEESTIMATE}.
	The norms $\| \cdot \|_{L^2(\Sigma_t^u)}$
	of all terms on RHS~\eqref{E:KEYPOINTWISEESTIMATE}
	were bounded in the proof of
	\cite{jSgHjLwW2016}*{Lemma~14.8}
	up to the following five remarks:
	\textbf{i)} As we noted in our proof outline of \eqref{E:KEYPOINTWISEESTIMATE},
	the right-hand side of \eqref{E:KEYPOINTWISEESTIMATE}
	does not feature the order $0$ terms $|\Psi|$ or $|\GdVar|$,
	which is different than the analogous estimate stated
	in \cite{jSgHjLwW2016}. It is for this reason that
	RHS~\eqref{E:DIFFICULTTERML2BOUND} does not feature any term of size $\mathcal{O}_{\mydiam}(\Psiep)$.
	\textbf{ii)} The typo correction mentioned in the proof of Prop.~\ref{P:KEYPOINTWISEESTIMATE} 
	is important for the proof of \cite{jSgHjLwW2016}*{Lemma~14.8}.
	\textbf{iii)} The last 
	term
	$
	\displaystyle
	\frac{1}{\upmu_{\star}(t,u)}
						\int_{t'=0}^t 
							\left|
								\Tanset^{\leq N} \bigslow
							\right|
							(t',u,\vartheta)
						\, ds
	$
	on RHS~\eqref{E:ERRORTERMKEYPOINTWISEESTIMATE}
	was not present in \cite{jSgHjLwW2016}.
	To handle this last term, 
	we use \eqref{E:L2NORMSOFTIMEINTEGRATEDFUNCTIONS} and \eqref{E:SLOWCOERCIVENESSOFCONTROLLING}
	to bound its norm $\| \cdot \|_{L^2(\Sigma_t^u)}$ by
	\begin{align} \label{E:SLOWWAVETERMINTOPORDERACOUSTICALGEOMETRYESTIMATE}
		& \leq
			C
			\frac{1}{\upmu_{\star}(t,u)}
			\int_{s=0}^t 
				\left\| 
					\Tanset^{\leq N} \bigslow
				\right\|_{L^2(\Sigma_s^u)}
			\, ds
			\leq
			C 
			\frac{1} 
					{\upmu_{\star}(t,u)} 
			\int_{s=0}^t
				\frac{1} {\upmu_{\star}^{1/2}(s,u)} 
				\slowtotTanmax{\leq N}^{1/2}(s,u) 
			\, ds.
	\end{align}
	Clearly RHS~\eqref{E:SLOWWAVETERMINTOPORDERACOUSTICALGEOMETRYESTIMATE}
	is bounded by the next-to-last product on RHS~\eqref{E:DIFFICULTTERML2BOUND} as desired.
	\textbf{iv)} In \cite{jSgHjLwW2016}*{Lemma~14.8}, 
	the coefficient of the analog of 
	the second product on RHS~\eqref{E:DIFFICULTTERML2BOUND}
	was stated as $\boxed{4.05}$ 
	rather than $\boxed{4}$.
	We note that due to the remarks made 
	in the proof outline of Prop.\ \ref{P:KEYPOINTWISEESTIMATE},
	in the present paper, we have relegated the ``additional $.05$ contribution''
	to the error term $C \varepsilon \cdots$ on
	RHS~\eqref{E:DIFFICULTTERML2BOUND}; this is a minor (essentially cosmetic) change that we 
	described in our proof outline of Prop.\ \ref{P:KEYPOINTWISEESTIMATE}.
	\textbf{v)} In the case $N=1$, we use the estimate
	\eqref{E:ANOTHERPSIHIGHERORDERL2ESTIMATELOSSOFONEDERIVATIVE} with $M=1$
	to bound the norm $\| \cdot \|_{L^2(\Sigma_t^u)}$
	of the product
	$ 
	\frac{1}{\upmu_{\star}(t,u)}
			\left|
				\Fullset_*^{[1,N];1} \Psi
			\right|
			(t,u,\vartheta)
	$
	on RHS~\eqref{E:ERRORTERMKEYPOINTWISEESTIMATE}
	by 
	$
	\leq
	$
	the sum of the fifth product on RHS~\eqref{E:DIFFICULTTERML2BOUND}
	and the last product on RHS~\eqref{E:DIFFICULTTERML2BOUND}.
	This detail was not mentioned in \cite{jSgHjLwW2016}*{Lemma~14.8};
	it is relevant because the term
	$
		C \frac{1}{\upmu_{\star}^{3/2}(t,u)} \totTanmax{[1,N-1]}^{1/2}(t,u)
	$
	on RHS~\eqref{E:DIFFICULTTERML2BOUND},
	which can be used to help control the norm $\| \cdot \|_{L^2(\Sigma_t^u)}$
	of the term
	$ 
	\frac{1}{\upmu_{\star}(t,u)}
			\left|
				\Fullset_*^{[1,N];1} \Psi
			\right|
			(t,u,\vartheta)
	$
	when $N > 1$,
	is absent from RHS~\eqref{E:DIFFICULTTERML2BOUND} in the case $N=1$.
	This completes our proof outline of \eqref{E:DIFFICULTTERML2BOUND}.
	
	The estimate \eqref{E:LESSPRECISEDIFFICULTTERML2BOUND}
	can similarly be proved 
	by taking the norm $\| \cdot \|_{L^2(\Sigma_t^u)}$
	of both sides of inequality
	\eqref{E:LESSPRECISEKEYPOINTWISEESTIMATE}.
	All terms were handled 
	in the proof of
	\cite{jSgHjLwW2016}*{Lemma~14.8}
	(remark \textbf{i)} from the previous paragraph also applies here)
	except for the last 
	term
	$
	\displaystyle
				\int_{t'=0}^t 
							\left|
								\Tanset^{\leq N} \bigslow
							\right|
							(t',u,\vartheta)
						\, dt'
	$
	on RHS~\eqref{E:LESSPRECISEKEYPOINTWISEESTIMATE}, which,
	by the arguments given in the previous paragraph,
	can be bounded in the norm $\| \cdot \|_{L^2(\Sigma_t^u)}$
	by
	$
	\displaystyle
			\leq
			C
			\int_{s=0}^t
				\frac{1} {\upmu_{\star}^{1/2}(s,u)} 
				\slowtotTanmax{\leq N}^{1/2}(s,u) 
			\, ds
	$
	as desired.
\end{proof}

\subsubsection{\texorpdfstring{$L^2$}{Square integral} bounds for less degenerate top-order error integrals}
\label{SSS:LESSDEGENERATEENERGYESTIMATEINTEGRALS}
In the next lemma, we bound some up-to-top-order error integrals
that appear in our energy estimates.
As in the proof of Lemma~\ref{L:DIFFICULTTERML2BOUND},  
the proof relies on modified quantities, which are needed to avoid
the loss of a derivative. However, 
the estimates of the lemma are much less degenerate than those of
Lemma~\ref{L:DIFFICULTTERML2BOUND} because of the availability
of a helpful factor of $\upmu$ in the integrands.

\begin{lemma}[\textbf{Bounds for less degenerate top-order error integrals}]
	\label{L:LESSDEGENERATEENERGYESTIMATEINTEGRALS}
	Assume that $1 \leq N \leq 18$,
	let $\Tanset^N$ be any $N^{th}$ order 
	$\mathcal{P}_u$-tangential operator,
	and let $\GeoAngFlatRadComponent$ be the scalar function from Lemma~\ref{L:GEOANGDECOMPOSITION}.
	We have the following following integral estimates:
	\begin{subequations}
	\begin{align} \label{E:FIRSTLESSDEGENERATEENERGYESTIMATEINTEGRALS}
		&
		\left|
			\int_{\mathcal{M}_{t,u}}
				(\Rad \Tanset^N \Psi)	 
				\myarray[\GeoAngFlatRadComponent]
				{1}
				(\angdiffuparg{\#} \Psi)
				\cdot
				(\upmu \angdiff \GeoAng^{N-1} \mytr \upchi)
			\, d \vol
		\right|
			\\
		& \lesssim
			\int_{t'=0}^t
				\left\lbrace 
					\ln \upmu_{\star}^{-1}(t',u) 
					+ 
					1 
				\right\rbrace^2
				\totTanmax{[1,N]}(t',u)
			\, dt'
			+
			\int_{u'=0}^u
				\totTanmax{[1,N]}(t,u')
			\, du'
			\notag \\
		& \ \
			+
			\int_{t'=0}^t
				\slowtotTanmax{\leq N}(t',u)
			\, dt'
			+ 
			\mathring{\upepsilon}^2,
			\notag
			\\
	&
		\left|
			\int_{\mathcal{M}_{t,u}}
				(1 + 2 \upmu)
				(\Lunit \Tanset^N \Psi)	 
				\myarray[\GeoAngFlatRadComponent]
				{1}
				(\angdiffuparg{\#} \Psi)
				\cdot
				(\upmu \angdiff \GeoAng^{N-1} \mytr \upchi)
			\, d \vol
		\right|
			\label{E:SECONDLESSDEGENERATEENERGYESTIMATEINTEGRALS} \\
		& \lesssim
			\int_{t'=0}^t
				\left\lbrace 
					\ln \upmu_{\star}^{-1}(t',u) 
					+ 
					1 
				\right\rbrace^2
				\totTanmax{[1,N]}(t',u)
			\, dt'
			+
			\int_{u'=0}^u
				\totTanmax{[1,N]}(t,u')
			\, du'
			\notag \\
		& \ \
			+
			\int_{t'=0}^t
				\slowtotTanmax{\leq N}(t',u)
			\, dt'
			+ 
			\mathring{\upepsilon}^2.
			\notag
	\end{align}
	\end{subequations}
\end{lemma}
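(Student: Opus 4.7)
The strategy is to use Cauchy--Schwarz to split each spacetime integral into the product of an $L^2$ norm of a factor involving $\Psi$-derivatives and an $L^2$ norm of the ``geometric'' factor $\upmu\,\angdiff\GeoAng^{N-1}\mytr\upchi$. The presence of the explicit factor $\upmu$ on the latter quantity is what makes these error integrals ``less degenerate'' than the top-order terms treated in Lemma~\ref{L:DIFFICULTTERML2BOUND}; this $\upmu$ factor saves powers of $\upmu_\star^{-1}$ in the relevant $L^2$ bound. Preliminarily, the pointwise factor $\angdiffuparg{\#}\Psi$ contributes an $L^\infty$ smallness factor of order $\varepsilon$ via Prop.~\ref{P:IMPROVEMENTOFAUX}, while $\GeoAngFlatRadComponent$ (from Lemma~\ref{L:GEOANGDECOMPOSITION}) and $1+2\upmu$ are $L^\infty$-bounded by $\lesssim 1$ using Props.~\ref{P:IMPROVEMENTOFAUX} and~\ref{P:IMPROVEMENTOFHIGHERTRANSVERSALBOOTSTRAP}. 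Moreover, the comparison estimate \eqref{E:ANGDERIVATIVESINTERMSOFTANGENTIALCOMMUTATOR} together with Lemma~\ref{L:LANDRADCOMMUTEWITHANGDIFF} reduces $|\upmu\,\angdiff\GeoAng^{N-1}\mytr\upchi|$ to $\lesssim |\upmu\,\GeoAng^N\mytr\upchi|$ plus lower-order terms absorbed by the $Harmless^{[1,N]}$ machinery.

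For the first integral \eqref{E:FIRSTLESSDEGENERATEENERGYESTIMATEINTEGRALS}, I would split $d\vol = d\tvol\,dt'$ and apply Cauchy--Schwarz on each $\Sigma_{t'}^u$ to obtain
\begin{align*}
|\eqref{E:FIRSTLESSDEGENERATEENERGYESTIMATEINTEGRALS}|
& \lesssim \varepsilon \int_0^t \|\Rad\Tanset^N\Psi\|_{L^2(\Sigma_{t'}^u)}\, \|\upmu\,\GeoAng^N\mytr\upchi\|_{L^2(\Sigma_{t'}^u)}\, dt' + \text{easier terms},
\end{align*}
use Lemma~\ref{L:COERCIVENESSOFCONTROLLING} to control $\|\Rad\Tanset^N\Psi\|_{L^2(\Sigma_{t'}^u)} \leq \totTanmax{[1,N]}^{1/2}(t',u)$, and bound the second factor via \eqref{E:LESSPRECISEDIFFICULTTERML2BOUND} from Lemma~\ref{L:DIFFICULTTERML2BOUND}. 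Applying weighted Young's inequality and then using Cauchy--Schwarz on any inner time integral $\int_0^{t'} \totTanmax{[1,N]}^{1/2}(s,u)/\upmu_\star(s,u)\,ds$ against the measure $ds/\upmu_\star(s,u)$, followed by \eqref{E:LOGLOSSMUINVERSEINTEGRALBOUND} and Fubini-based rearrangement, converts such terms into the desired logarithmically-weighted form $\int_0^t \{\ln\upmu_\star^{-1}(t',u)+1\}^2\totTanmax{[1,N]}(t',u)\,dt'$. The data contribution $\mathring{\upepsilon}\{\ln\upmu_\star^{-1}+1\}$ from \eqref{E:LESSPRECISEDIFFICULTTERML2BOUND} yields, after Cauchy--Schwarz against $\totTanmax{[1,N]}^{1/2}$ and the finiteness of $\int_0^t\{\ln\upmu_\star^{-1}+1\}^2 dt' \lesssim 1$, contributions bounded by $\mathring{\upepsilon}^2$ plus an acceptable $\int_0^t\{\ln\upmu_\star^{-1}+1\}^2\totTanmax{[1,N]} dt'$ term. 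The $\slowtotTanmax{\leq N}^{1/2}$ term from \eqref{E:LESSPRECISEDIFFICULTTERML2BOUND} is similarly absorbed into $\int_0^t\slowtotTanmax{\leq N}(t',u)\,dt'$ via AM--GM.

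For the second integral \eqref{E:SECONDLESSDEGENERATEENERGYESTIMATEINTEGRALS}, a direct Cauchy--Schwarz on $\Sigma_{t'}^u$ would be fatal because $\Lunit\Tanset^N\Psi$ is only $L^2(\Sigma_{t'}^u)$-controlled with a $\sqrt{\upmu}$ weight. Instead, I would exploit the non-degenerate null-flux bound $\|\Lunit\Tanset^N\Psi\|_{L^2(\mathcal{P}_{u'}^t)} \leq \totTanmax{[1,N]}^{1/2}(t,u')$ from \eqref{E:COERCIVENESSOFCONTROLLING}. Rewriting $\int_{\mathcal{M}_{t,u}}\cdots d\vol = \int_0^u \int_{\mathcal{P}_{u'}^t}\cdots d\conevol\,du'$ and applying Cauchy--Schwarz on each $\mathcal{P}_{u'}^t$ gives
\begin{align*}
|\eqref{E:SECONDLESSDEGENERATEENERGYESTIMATEINTEGRALS}|
& \lesssim \varepsilon \int_0^u \totTanmax{[1,N]}^{1/2}(t,u')\, \|\upmu\,\GeoAng^N\mytr\upchi\|_{L^2(\mathcal{P}_{u'}^t)}\,du',
\end{align*}
which, after an AM--GM split on the $u'$-integral, directly produces the $\int_0^u \totTanmax{[1,N]}(t,u')\,du'$ term on the RHS. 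The accompanying $\|\upmu\GeoAng^N\mytr\upchi\|_{L^2(\mathcal{P}_{u'}^t)}^2$ is controlled by squaring the pointwise estimate \eqref{E:LESSPRECISEKEYPOINTWISEESTIMATE} and integrating over $\mathcal{P}_{u'}^t$, using Minkowski's inequality in time followed by the same bookkeeping as in the first integral to produce only the allowed terms on the RHS.

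The hard part is the careful conversion of the inner $1/\upmu_\star$-weighted time integrals arising from \eqref{E:LESSPRECISEDIFFICULTTERML2BOUND} into the logarithmically-weighted spacetime integral $\int_0^t\{\ln\upmu_\star^{-1}+1\}^2\totTanmax{[1,N]}\,dt'$. This hinges on the delicate interplay between weighted Young's inequality, Cauchy--Schwarz with the measure $ds/\upmu_\star(s,u)$, the sharp estimate \eqref{E:LOGLOSSMUINVERSEINTEGRALBOUND}, and Fubini-based rearrangement; the $\varepsilon$-smallness of $\angdiff\Psi$ provided by Prop.~\ref{P:IMPROVEMENTOFAUX} is essential for absorbing any residual $\upmu_\star$-degeneracy into the acceptable RHS.
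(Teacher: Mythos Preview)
Your overall strategy is correct and closely mirrors the paper's: split via Cauchy--Schwarz into a $\Psi$-derivative factor and the geometric factor $\upmu\,\GeoAng^N\mytr\upchi$, control the former by Lemma~\ref{L:COERCIVENESSOFCONTROLLING} (using the null flux $\|\Lunit\Tanset^N\Psi\|_{L^2(\mathcal{P}_{u'}^t)}$ for \eqref{E:SECONDLESSDEGENERATEENERGYESTIMATEINTEGRALS} and $\|\Rad\Tanset^N\Psi\|_{L^2(\Sigma_{t'}^u)}$ for \eqref{E:FIRSTLESSDEGENERATEENERGYESTIMATEINTEGRALS}), and bound the latter via \eqref{E:LESSPRECISEDIFFICULTTERML2BOUND}. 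The paper in fact applies pointwise Young's inequality $|ab|\lesssim a^2+b^2$ directly on the spacetime integrand and then uses Fubini differently on the two resulting pieces, which avoids your detour through $\|\upmu\GeoAng^N\mytr\upchi\|_{L^2(\mathcal{P}_{u'}^t)}$; after noting that $\int_0^u\|\cdot\|_{L^2(\mathcal{P}_{u'}^t)}^2du'=\int_0^t\|\cdot\|_{L^2(\Sigma_{t'}^u)}^2dt'$, your route collapses to the same thing.

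There is one genuine gap in your proposed bookkeeping for the inner time integral. Your Cauchy--Schwarz against the measure $ds/\upmu_\star$ followed by Fubini yields, after squaring, a bound of the form $\int_0^t\totTanmax{[1,N]}(s,u)/\upmu_\star(s,u)\,ds$, which is \emph{strictly more singular} than the stated $\int_0^t\{\ln\upmu_\star^{-1}+1\}^2\totTanmax{[1,N]}\,dt'$ and hence does not prove the lemma as written; the $\varepsilon$ factor from $\angdiff\Psi$ cannot compensate for a full $\upmu_\star^{-1}$. The paper instead exploits the fact that $\totTanmax{[1,N]}$ and $\slowtotTanmax{\leq N}$ are, by definition, \emph{increasing} in $t$ (they are suprema over $[0,t]$), so one simply pulls them out of the inner integral:
\[
\int_0^{t'}\frac{\totTanmax{[1,N]}^{1/2}(s,u)}{\upmu_\star(s,u)}\,ds
\leq \totTanmax{[1,N]}^{1/2}(t',u)\int_0^{t'}\frac{ds}{\upmu_\star(s,u)}
\lesssim \{\ln\upmu_\star^{-1}(t',u)+1\}\,\totTanmax{[1,N]}^{1/2}(t',u),
\]
and similarly $\int_0^{t'}\upmu_\star^{-1/2}\slowtotTanmax{\leq N}^{1/2}\,ds\lesssim\slowtotTanmax{\leq N}^{1/2}(t',u)$ via \eqref{E:LESSSINGULARTERMSMPOINTNINEINTEGRALBOUND}. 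Squaring and integrating in $t'$ then gives exactly the claimed form. With this one change your argument goes through; the $\varepsilon$-smallness of $\angdiff\Psi$ is not actually needed anywhere.
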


\begin{proof}
See Subsect.\ \ref{SS:OFTENUSEDESTIMATES} for some comments on the analysis.
To prove \eqref{E:SECONDLESSDEGENERATEENERGYESTIMATEINTEGRALS},
we use the fact that $\GeoAngFlatRadComponent = \smoothfunction(\GdVar) \GdVar$
 (see \eqref{E:LINEARLYSMALLSCALARSDEPENDINGONGOODVARIABLES}),
	the $L^{\infty}$ estimates of Prop.~\ref{P:IMPROVEMENTOFAUX},
	Cauchy--Schwarz, 
	and \eqref{E:COERCIVENESSOFCONTROLLING}
	to deduce
\begin{align} \label{E:FIRSTBOUNDFORSECONDLESSDEGENERATEENERGYESTIMATEINTEGRALS}
	\mbox{LHS~\eqref{E:SECONDLESSDEGENERATEENERGYESTIMATEINTEGRALS}}
	& \lesssim
		\int_{\mathcal{M}_{t,u}}
			\left|
				\Lunit \Tanset^N \Psi
			\right|^2 
		\, d \vol
		+
		\int_{\mathcal{M}_{t,u}}
			\left|
				\upmu \GeoAng^N \mytr \upchi
			\right|^2
		\, d \vol
		\\
		& 
		\lesssim
		\int_{u'=0}^u
			\left\|
				\Lunit \Tanset^N \Psi
			\right\|_{L^2(\mathcal{P}_{u'}^t)}^2
		\, du'
		+
			\int_{t' = 0}^t
				\left\|
					\upmu \GeoAng^N \mytr \upchi 
				\right\|_{L^2(\Sigma_{t'}^u)}^2
			\, dt'
			\notag \\
		&
		\lesssim
			\int_{u'=0}^u
				\totTanmax{[1,N]}(t,u')
			\, du'
			+
			\int_{t' = 0}^t
				\left\|
					\upmu \GeoAng^N \mytr \upchi 
				\right\|_{L^2(\Sigma_{t'}^u)}^2
			\, dt'.
		\notag
\end{align}
To complete the proof of \eqref{E:SECONDLESSDEGENERATEENERGYESTIMATEINTEGRALS},
we must handle the final integral on RHS~\eqref{E:FIRSTBOUNDFORSECONDLESSDEGENERATEENERGYESTIMATEINTEGRALS}.
To bound the integral by $\leq$ RHS~\eqref{E:SECONDLESSDEGENERATEENERGYESTIMATEINTEGRALS},
we bound the integrand
$
\displaystyle
\left\|
					\upmu \GeoAng^N \mytr \upchi 
				\right\|_{L^2(\Sigma_{t'}^u)}^2
$
by using inequality \eqref{E:LESSPRECISEDIFFICULTTERML2BOUND} 
(with $t'$ in place of $t$),
simple estimates of the form $ab \lesssim a^2 + b^2$,
and the following bounds for the two time integrals on RHS~\eqref{E:LESSPRECISEDIFFICULTTERML2BOUND}:
\[
	\int_{s=0}^{t'}
				\frac{1} 
				{\upmu_{\star}(s,u)} 
				\totTanmax{[1,N]}^{1/2}(s,u)
	\, ds
	\lesssim
	\left\lbrace 
		\ln \upmu_{\star}^{-1}(t',u) 
		+ 
		1 
	\right\rbrace
	\totTanmax{[1,N]}^{1/2}(t',u),
\]
\[
	\int_{s=0}^{t'}
				\frac{1} 
				{\upmu_{\star}^{1/2}(s,u)} 
				\slowtotTanmax{\leq N}^{1/2}(s,u)
	\, ds
	\lesssim
	\slowtotTanmax{\leq N}^{1/2}(t',u).
\]
The above two bounds follow from \eqref{E:LOGLOSSMUINVERSEINTEGRALBOUND} 
and the fact that $\totTanmax{[1,N]}$ and $\slowtotTanmax{\leq N}$ are increasing in their arguments.
These steps yield that all terms 
generated by
$
\displaystyle
\int_{t' = 0}^t
				\left\|
					\upmu \GeoAng^N \mytr \upchi 
				\right\|_{L^2(\Sigma_{t'}^u)}^2
			\, dt'
$
are $\lesssim \mbox{RHS~\eqref{E:SECONDLESSDEGENERATEENERGYESTIMATEINTEGRALS}}$,
except for the following integral generated by the last term 
on RHS~\eqref{E:LESSPRECISEDIFFICULTTERML2BOUND}:
\[
	\mathring{\upepsilon}^2
	\int_{t'=0}^t
		\left\lbrace 
					\ln \upmu_{\star}^{-1}(t',u) 
					+ 
					1 
				\right\rbrace^2
	\, dt'.
\]
Using \eqref{E:LESSSINGULARTERMSMPOINTNINEINTEGRALBOUND},
we deduce that the above term is $\lesssim \mathring{\upepsilon}^2$ as desired.
We have thus proved \eqref{E:SECONDLESSDEGENERATEENERGYESTIMATEINTEGRALS}.

The proof of \eqref{E:FIRSTLESSDEGENERATEENERGYESTIMATEINTEGRALS}
starts with the following analog of \eqref{E:FIRSTBOUNDFORSECONDLESSDEGENERATEENERGYESTIMATEINTEGRALS},
which can be proved in the same way:
\begin{align}
\mbox{LHS~\eqref{E:FIRSTLESSDEGENERATEENERGYESTIMATEINTEGRALS}}
& 
\lesssim
\int_{t' = 0}^t 
			\left\| 
				\Rad \Tanset^N \Psi 
			\right\|_{L^2(\Sigma_{t'}^u)}^2 
		\, dt'
+
\int_{t' = 0}^t 
	\left\|
		\upmu \GeoAng^N \mytr \upchi 
	\right\|_{L^2(\Sigma_{t'}^u)}^2 
		\, dt'
			\\
& 
\lesssim
\int_{t'=0}^t
	\totTanmax{[1,N]}(t',u)
\, dt'
+
\int_{t' = 0}^t 
	\left\|
		\upmu \GeoAng^N \mytr \upchi 
	\right\|_{L^2(\Sigma_{t'}^u)}^2 
		\, dt'.
	\notag		
\end{align}
The remaining details are similar to the ones that we gave above in our proof
of \eqref{E:SECONDLESSDEGENERATEENERGYESTIMATEINTEGRALS}; we therefore omit them.
\end{proof}

\subsubsection{Estimates involving simple error terms}
\label{SSS:ENERGYESTMATESHARMLESS}
In this subsubsection, we derive $L^2$ bounds for some simple error terms that we encounter in the energy estimates.
We start with a lemma in which we control the $L^2$ norms
of the ``easy derivatives'' of the eikonal function quantities.
By easy derivatives, we mean ones that we can control without using
the modified quantities described in Subsubsect.\ \ref{SSS:ENERGYESTIMATES}.

\begin{lemma}[$L^2$ \textbf{bounds for the eikonal function quantities that do not require modified quantities}]
	\label{L:EASYL2BOUNDSFOREIKONALFUNCTIONQUANTITIES}
	The following estimates hold for $1 \leq N \leq 18$
	(see Subsect.\ \ref{SS:STRINGSOFCOMMUTATIONVECTORFIELDS} regarding the vectorfield operator notation):
	\begin{subequations}
	\begin{align}
		\left\|
			\Lunit \Tanset_*^{[1,N]} \upmu
		\right\|_{L^2(\Sigma_t^u)},
			\,
		\left\|
			\Lunit \Tanset^{\leq N} \Lunit_{(Small)}^i
		\right\|_{L^2(\Sigma_t^u)},
			\,
		\left\|
			\Lunit \Tanset^{\leq N-1} 
			\mytr \upchi
		\right\|_{L^2(\Sigma_t^u)}
		& \lesssim 
				\mathring{\upepsilon}
				+
				\frac{\totTanmax{[1,N]}^{1/2}(t,u)}{\upmu_{\star}^{1/2}(t,u)},
			\label{E:LUNITTANGENGITALEIKONALINTERMSOFCONTROLLING}
				 \\
		\left\|
			\Lunit \Fullset^{\leq N;1} \Lunit_{(Small)}^i
		\right\|_{L^2(\Sigma_t^u)},
			\,
		\left\|
			\Lunit \Fullset^{\leq N-1;1} \mytr \upchi
		\right\|_{L^2(\Sigma_t^u)}
		& \lesssim 
				\mathring{\upepsilon}
				+
				\frac{\totTanmax{[1,N]}^{1/2}(t,u)}{\upmu_{\star}^{1/2}(t,u)},
			\label{E:LUNITONERADIALEIKONALINTERMSOFCONTROLLING}
				 \\
		\left\|
			\Tanset_*^{[1,N]} \upmu
		\right\|_{L^2(\Sigma_t^u)},
			\,
		\left\|
			\Tanset^{[1,N]} \Lunit_{(Small)}^i
		\right\|_{L^2(\Sigma_t^u)},
			\,
		\left\|
			\Tanset^{\leq N-1} \mytr \upchi
		\right\|_{L^2(\Sigma_t^u)}
		& \lesssim 
			\mathring{\upepsilon}
			+ 
			\int_{s=0}^t
				\frac{\totTanmax{[1,N]}^{1/2}(s,u)}{\upmu_{\star}^{1/2}(s,u)}
			\, ds,
				 \label{E:TANGENGITALEIKONALINTERMSOFCONTROLLING}
				 \\
		\left\|
			\Fullset_*^{[1,N];1} \Lunit_{(Small)}^i
		\right\|_{L^2(\Sigma_t^u)},
			\,
		\left\|
			\Fullset^{\leq N-1;1} \mytr \upchi
		\right\|_{L^2(\Sigma_t^u)}
		& \lesssim 
				\mathring{\upepsilon}
				+
				\int_{s=0}^t
				\frac{\totTanmax{[1,N]}^{1/2}(s,u)}{\upmu_{\star}^{1/2}(s,u)}
			\, ds.
			\label{E:ONERADIALEIKONALINTERMSOFCONTROLLING}
	\end{align}
	\end{subequations}

\end{lemma}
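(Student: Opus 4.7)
The plan is to derive all four estimates simultaneously by a short induction/bootstrap that exploits the transport structure of the eikonal function quantities. The starting point is the pointwise transport inequalities already established in Proposition~\ref{P:IMPROVEMENTOFAUX}, namely \eqref{E:PURETANGENTIALLUNITUPMUCOMMUTEDESTIMATE}, \eqref{E:LUNITTANGENTDIFFERENTIATEDLUNITSMALLIMPROVEDPOINTWISE}, and \eqref{E:LUNITONERADIALTANGENTDIFFERENTIATEDLUNITSMALLIMPROVEDPOINTWISE}, which bound the $\Lunit$-differentiated eikonal quantities on the LHSs of \eqref{E:LUNITTANGENGITALEIKONALINTERMSOFCONTROLLING}--\eqref{E:LUNITONERADIALEIKONALINTERMSOFCONTROLLING} schematically by $\Fullset_*^{[1,N+1];1}\Psi$ together with lower-order factors of the form $\Tanset^{[1,N]} \GdVar$ and $\varepsilon\Tanset_*^{[1,N]}\BadVar$. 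Taking $\|\cdot\|_{L^2(\Sigma_t^u)}$ of these pointwise bounds, the principal contribution $\|\Fullset_*^{[1,N+1];1}\Psi\|_{L^2(\Sigma_t^u)}$ is controlled by $\upmu_\star^{-1/2}\totTanmax{[1,N]}^{1/2}$ via the coerciveness estimate \eqref{E:COERCIVENESSOFCONTROLLING} (the loss of $\upmu_\star^{-1/2}$ comes from the $\sqrt{\upmu}$ weight on $\Rad$- and $\angdiff$-derivatives inside $\enzero$); this yields precisely the RHSs of \eqref{E:LUNITTANGENGITALEIKONALINTERMSOFCONTROLLING}--\eqref{E:LUNITONERADIALEIKONALINTERMSOFCONTROLLING} up to the remaining pieces.

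Next, to pass from the $\Lunit$-differentiated estimates to the non-$\Lunit$-differentiated estimates \eqref{E:TANGENGITALEIKONALINTERMSOFCONTROLLING}--\eqref{E:ONERADIALEIKONALINTERMSOFCONTROLLING}, I would use $\Lunit = \partial/\partial t$ and the fundamental theorem of calculus to write, for each quantity $Q$ appearing on those LHSs,
\[
Q(t,u,\vartheta) = Q(0,u,\vartheta) + \int_{s=0}^t (\Lunit Q)(s,u,\vartheta)\, ds.
\]
Applying Minkowski's inequality for integrals (Lemma~\ref{L:L2NORMSOFTIMEINTEGRATEDFUNCTIONS}) to the time integral, passing to $\|\cdot\|_{L^2(\Sigma_s^u)}$, and inserting the already-obtained bounds \eqref{E:LUNITTANGENGITALEIKONALINTERMSOFCONTROLLING}--\eqref{E:LUNITONERADIALEIKONALINTERMSOFCONTROLLING} for the integrand produces the time integral on the right-hand sides of \eqref{E:TANGENGITALEIKONALINTERMSOFCONTROLLING}--\eqref{E:ONERADIALEIKONALINTERMSOFCONTROLLING}. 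The initial-data contribution $\|Q(0,\cdot)\|_{L^2(\Sigma_t^u)}$ is handled by the comparison estimate \eqref{E:SIGMATVOLUMEFORMCOMPARISON} applied to a function of $(u,\vartheta)$ alone, reducing it to $\|Q|_{\Sigma_0^u}\|_{L^2(\Sigma_0^u)}$, which is $\lesssim \mathring{\upepsilon}$ by the data estimates \eqref{E:LUNITIDATAL2CONSEQUENCES}--\eqref{E:UPMUDATATANGENTIALL2CONSEQUENCES} of Lemma~\ref{L:BEHAVIOROFEIKONALFUNCTIONQUANTITIESALONGSIGMA0}; the analogous bound on $\mytr\upchi|_{\Sigma_0^u}$ follows from Lemmas~\ref{L:IDFORCHI} and \ref{L:SCHEMATICDEPENDENCEOFMANYTENSORFIELDS} together with the bounds on $\Lunit_{(Small)}^i$ and $\Psi$.

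The one remaining issue, which is the main (mild) obstacle, is that the pointwise bounds from Proposition~\ref{P:IMPROVEMENTOFAUX} still feature terms of the form $\Tanset^{[1,N]}\GdVar$ and $\varepsilon\Tanset_*^{[1,N]}\BadVar$ on their right-hand sides, i.e.\ they involve the very quantities $\Tanset^{[1,N]}\Lunit_{(Small)}^i$ and $\Tanset_*^{[1,N]}\upmu$ that we are trying to estimate. I would handle this by a straightforward induction on $N$: the $\Tanset^{[1,N]}\Psi$ piece of $\Tanset^{[1,N]}\GdVar$ is directly controlled by \eqref{E:PSIHIGHERORDERL2ESTIMATELOSSOFONEDERIVATIVE} in terms of $\mathring{\upepsilon}+\totTanmax{[1,N]}^{1/2}$, while the $\Tanset^{[1,N]}\Lunit_{(Small)}^i$ piece and the $\varepsilon\Tanset_*^{[1,N]}\BadVar$ piece are bounded by the inductive hypothesis (and, at the base case $N=1$, they are absorbed using the $L^\infty$ estimates of Proposition~\ref{P:IMPROVEMENTOFAUX} plus an $\varepsilon$-smallness absorption). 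The $\varepsilon$ factor in front of $\Tanset_*^{[1,N]}\BadVar$ is crucial: it permits the resulting contribution to be absorbed into the left-hand side (up to a universal constant) after the inductive estimate is plugged in, producing the claimed time-integral form on the right.

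Finally, the same scheme applies verbatim to the $\mytr\upchi$ estimates once one uses the schematic identity \eqref{E:TENSORSDEPENDINGONGOODVARIABLESGOODDERIVATIVESANDGINVERSESPHERE}, which expresses $\mytr\upchi$ as a smooth function of $\GdVar$, $\ginversesphere$, $\angdiff x^1, \angdiff x^2$ multiplied by $\Singletan\GdVar$; commuting by $\Tanset^{\leq N-1}$ or $\Fullset^{\leq N-1;1}$ and invoking Lemmas~\ref{L:POINTWISEFORRECTANGULARCOMPONENTSOFVECTORFIELDS}--\ref{L:POINTWISEESTIMATESFORGSPHEREANDITSDERIVATIVES} together with the $L^\infty$ bounds of Proposition~\ref{P:IMPROVEMENTOFAUX} reduces the $\mytr\upchi$ bound to the $\Lunit_{(Small)}^i$ bound (with one extra derivative accounted for by the relation $\mytr\upchi \sim \Singletan\Lunit_{(Small)}^i + \cdots$), completing the induction.
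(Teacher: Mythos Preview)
Your approach is essentially the same as the paper's (which cites \cite{jSgHjLwW2016}*{Lemma~14.3} and lists the same ingredients: the transport inequalities of Prop.~\ref{P:IMPROVEMENTOFAUX}, the data bounds of Lemma~\ref{L:BEHAVIOROFEIKONALFUNCTIONQUANTITIESALONGSIGMA0}, Lemma~\ref{L:L2NORMSOFTIMEINTEGRATEDFUNCTIONS}, the commutator lemmas, and the coerciveness of $\totTanmax{}$). Two small corrections are worth flagging.

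First, ``induction on $N$'' does not close the circularity as you describe it. In \eqref{E:PURETANGENTIALLUNITUPMUCOMMUTEDESTIMATE} the term $\left|\Tanset^{[1,N]}\GdVar\right|$ contains $\Tanset^{N}\Lunit_{(Small)}^i$ at the \emph{same} order $N$, with no $\varepsilon$ prefactor, so it is not furnished by an inductive hypothesis at level $N-1$. The correct mechanism (at each fixed $N$) is a two-step hierarchy: first close the $\Lunit_{(Small)}^i$ estimate via Gronwall in $t$, which works because in \eqref{E:LUNITTANGENTDIFFERENTIATEDLUNITSMALLIMPROVEDPOINTWISE} the self-referencing piece $\Tanset^{[1,N]}\GdVar$ \emph{does} carry the $\varepsilon$ factor; then feed the already-obtained $\Lunit_{(Small)}^i$ bound into the $\upmu$ estimate, where the remaining self-term $\varepsilon\Tanset_*^{[1,N]}\BadVar$ is again absorbed by Gronwall. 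Your $\varepsilon$-absorption remark is the right idea; it just needs to be combined with this $\Lunit_{(Small)}^i$-then-$\upmu$ ordering rather than induction in $N$.

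Second, a minor point: in \eqref{E:COERCIVENESSOFCONTROLLING} the $\Rad\Tanset^{M}\Psi$ term along $\Sigma_t^u$ carries \emph{no} $\sqrt{\upmu}$ weight; the $\upmu_\star^{-1/2}$ loss you need when bounding $\|\Fullset_*^{[1,N+1];1}\Psi\|_{L^2(\Sigma_t^u)}$ comes from the $\Lunit$- and $\angdiff$-pieces (after using the commutator Lemmas~\ref{L:COMMUTATORESTIMATES}--\ref{L:TRANSVERALTANGENTIALCOMMUTATOR} to reorder $\Rad$ to act last), not from $\Rad$.
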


\begin{proof}
	Thanks in part to the estimates of 
	Lemmas~\ref{L:BEHAVIOROFEIKONALFUNCTIONQUANTITIESALONGSIGMA0},
	\ref{L:COMMUTATORESTIMATES},
	and \ref{L:TRANSVERALTANGENTIALCOMMUTATOR},
	Props.~\ref{P:IMPROVEMENTOFAUX}
	and \ref{P:MUINVERSEINTEGRALESTIMATES},
	and Lemma~\ref{L:L2NORMSOFTIMEINTEGRATEDFUNCTIONS},
	the proof of \cite{jSgHjLwW2016}*{Lemma~14.3}
	goes through nearly verbatim.
	In particular, these estimates do not involve the slow wave variable $\bigslow$.
	One small difference compared to
	\cite{jSgHjLwW2016}*{Lemma~14.3}
	is that we do not state estimates for
	$\left\|
		\Lunit_{(Small)}^i
	\right\|_{L^2(\Sigma_t^u)}
	$
	in Lemma~\ref{L:EASYL2BOUNDSFOREIKONALFUNCTIONQUANTITIES} since
	in the present paper,
	these quantities are
	not controlled by the data-size parameter
	$\mathring{\upepsilon}$,
	but rather by $\Psiep$
	(much like in the $L^{\infty}$ estimate \eqref{E:LUNITIITSEFLSMALLDATALINFINITYCONSEQUENCES}).
\end{proof}

We now estimate the error integrals
generated by the
terms 
$Harmless^{[1,N]}$ and $Harmless_{(Slow)}^{\leq N}$
on the RHSs of the equations of Prop.~\ref{P:IDOFKEYDIFFICULTENREGYERRORTERMS}.

\begin{lemma}[$L^2$ \textbf{bounds for error integrals involving} $Harmless^{[1,N]}$ \textbf{and} 
$Harmless_{(Slow)}^{\leq N}$ \textbf{terms}]
	\label{L:STANDARDPSISPACETIMEINTEGRALS}
	Assume that $1 \leq N \leq 18$ and $\varsigma > 0$.
	Recall that we defined terms of the form
	$Harmless^{[1,N]}$ and $Harmless_{(Slow)}^{\leq N}$
	in Def.~\ref{D:HARMLESSTERMS}.
	We have the following estimates,
	where the implicit constants are independent of $\varsigma$:
	\begin{align}  \label{E:STANDARDPSISPACETIMEINTEGRALS}
		\int_{\mathcal{M}_{t,u}}
		 	& \left|
					\threemyarray[(1 + \upmu) \Lunit \Tanset^N \Psi]
						{\Rad \Tanset^N \Psi}
						{\Tanset^{\leq N} \bigslow}
				\right|
				\left|
					\myarray[Harmless^{[1,N]}]
						{Harmless_{(Slow)}^{\leq N}}
				\right|
		 \, d \vol
		   \\
		& \lesssim
		 	(1 + \varsigma^{-1})
		 	\int_{t'=0}^t
				\totTanmax{[1,N]}(t',u)
			\, dt'
			+
			(1 + \varsigma^{-1})
			\int_{u'=0}^u
				\totTanmax{[1,N]}(t,u')
			\, du'
				\notag \\
		& \ \
			+ \varsigma
			  \coerciveTanspacetimemax{[1,N]}(t,u)
			+ 
			(1 + \varsigma^{-1})
			\int_{u'=0}^u
				\slowtotTanmax{\leq N}(t,u')
			\, du'
			+ \mathring{\upepsilon}^2.
			\notag 	
		\end{align}
\end{lemma}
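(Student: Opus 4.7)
My plan is to dominate the integrand pointwise using the defining estimates \eqref{E:HARMESSTERMPOINTWISEESTIMATE} and \eqref{E:SLOWHARMESSTERMPOINTWISEESTIMATE} for the harmless terms, apply Cauchy--Schwarz or Young's inequality to separate the first-array and second-array factors, and then bound each of the resulting squared $L^2$ integrals using the coerciveness estimates from Lemmas~\ref{L:COERCIVENESSOFCONTROLLING}, \ref{L:COERCIVENESSOFSLOWWAVEENERGIESANDFLUXES}, and \ref{L:EASYL2BOUNDSFOREIKONALFUNCTIONQUANTITIES}. The crucial choice at the outset is whether to express $\int_{\mathcal{M}_{t,u}} \cdot \, d\vol$ as $\int_0^t dt' \int_{\Sigma_{t'}^u}$ or as $\int_0^u du' \int_{\mathcal{P}_{u'}^t}$: I will pair $\Rad \Tanset^N \Psi$ with the $\int_0^t$ decomposition (its $L^2(\Sigma_t^u)$ norm has \emph{no} degenerate $\upmu$-weight, see \eqref{E:COERCIVENESSOFCONTROLLING}), and pair both $(1+\upmu)\Lunit \Tanset^N \Psi$ and $\Tanset^{\leq N} \bigslow$ with the $\int_0^u$ decomposition (their null fluxes along $\mathcal{P}_u^t$ are non-degenerate). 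These choices produce, respectively, the $\int_0^t \totTanmax{[1,N]}(t',u) \, dt'$, $\int_0^u \totTanmax{[1,N]}(t,u') \, du'$, and $\int_0^u \slowtotTanmax{\leq N}(t,u') \, du'$ contributions on the RHS of the target inequality.

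The most delicate harmless piece is $|\Fullset_*^{[1,N+1];1} \Psi|$, since commuting the vectorfields via Lemmas~\ref{L:COMMUTATORESTIMATES} and \ref{L:TRANSVERALTANGENTIALCOMMUTATOR} reduces it, modulo lower-order corrections, to terms of the form $|\Lunit \Tanset^N \Psi|$, $|\Rad \Tanset^N \Psi|$, and $|\angdiff \Tanset^N \Psi|$. Only the $\angdiff$ term is problematic, because the energy controls only its $\sqrt{\upmu}$-weighted $L^2$ norm. I will handle it by splitting the spacetime domain: on $\{\upmu > 1/4\}$ I bound $|\angdiff \Tanset^N \Psi|^2 \leq 4 \upmu |\angdiff \Tanset^N \Psi|^2$ and invoke \eqref{E:COERCIVENESSOFCONTROLLING} to get a contribution $\lesssim \int_0^t \totTanmax{[1,N]}(t',u) \, dt'$; on $\{\upmu \leq 1/4\}$ I invoke the coercive spacetime bound \eqref{E:KEYSPACETIMECOERCIVITY} to get a contribution $\lesssim \coerciveTanspacetimemax{[1,N]}(t,u)$. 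Weighting by $\varsigma$ in Young's inequality then delivers the $\varsigma \coerciveTanspacetimemax{[1,N]}(t,u)$ term on the RHS.

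For the eikonal function pieces $|\Fullset_*^{[1,N];1} \GdVar|$ and $|\Tanset_*^{[1,N]} \BadVar|$, I will invoke Lemma~\ref{L:EASYL2BOUNDSFOREIKONALFUNCTIONQUANTITIES}, which delivers $\|\cdot\|_{L^2(\Sigma_{t'}^u)} \lesssim \mathring{\upepsilon} + \int_0^{t'} \upmu_\star^{-1/2}(s,u) \totTanmax{[1,N]}^{1/2}(s,u) \, ds$, and pair against $|\Rad \Tanset^N \Psi|$ via slicewise Cauchy--Schwarz on each $\Sigma_{t'}^u$. Exploiting the monotonicity of $\totTanmax{[1,N]}(\cdot,u)$ to replace $\totTanmax{[1,N]}^{1/2}(s,u)$ by $\totTanmax{[1,N]}^{1/2}(t',u)$ under the inner integral, and then using the uniform bound $\int_0^{t'} \upmu_\star^{-1/2}(s,u) \, ds \leq \int_0^{t'} \upmu_\star^{-9/10}(s,u) \, ds \lesssim 1$ from \eqref{E:LESSSINGULARTERMSMPOINTNINEINTEGRALBOUND}, I recover exactly $\int_0^t \totTanmax{[1,N]}(t',u) \, dt'$ with no residual $\upmu_\star^{-1}$ factor. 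For pairings involving $\Lunit \Tanset^N \Psi$, I will instead use the Fubini identity $\int_0^u \|\cdot\|_{L^2(\mathcal{P}_{u'}^t)}^2 \, du' = \int_0^t \|\cdot\|_{L^2(\Sigma_{t'}^u)}^2 \, dt'$ to reduce to the same situation. The harmless slow-wave term is handled directly via $|Harmless_{(Slow)}^{\leq N}| \lesssim |\Tanset^{\leq N} \bigslow|$ combined with the slow-wave null flux coerciveness \eqref{E:SLOWNULLFLUXCOERCIVENESS}.

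The main obstacle is the careful management of the $\upmu_\star^{-1/2}$ factors inherited from the eikonal $L^2$ estimates: without the monotonicity trick and the sub-$\upmu_\star^{-1}$ integrability estimate from Prop.~\ref{P:MUINVERSEINTEGRALESTIMATES}, the final bound would pick up a degenerate $\upmu_\star^{-1}$ factor that is absent from the RHS of the target inequality; verifying that all such degenerate powers cancel across every possible pairing of first-array and second-array factors (particularly those involving top-order eikonal derivatives such as $\Tanset_*^{[1,N]} \upmu$) is the principal technical bookkeeping.
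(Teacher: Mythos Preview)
Your proposal is correct and follows essentially the same approach as the paper's proof: the paper likewise pairs the first-array factors with the appropriate $\int_0^t$ or $\int_0^u$ decomposition based on which coerciveness is non-degenerate, commutes the $\Rad$ factor in $\Fullset_*^{[1,N+1];1}\Psi$ to act last (yielding $|\Rad\Tanset^{[1,N]}\Psi| + |\Tanset^{[1,N+1]}\Psi| + |\Fullset_*^{[1,N];1}\GdVar| + |\Tanset_*^{[1,N]}\BadVar|$), handles the $\angdiff$-piece via the $\{\upmu\leq 1/4\}$ versus $\{\upmu>1/4\}$ split with the coercive spacetime integral \eqref{E:KEYSPACETIMECOERCIVITY}, and disposes of the eikonal pieces using Lemma~\ref{L:EASYL2BOUNDSFOREIKONALFUNCTIONQUANTITIES} together with the monotonicity of $\totTanmax{[1,N]}$ and \eqref{E:LESSSINGULARTERMSMPOINTNINEINTEGRALBOUND}. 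The paper in fact cites \cite{jSgHjLwW2016}*{Lemma~14.7} for the non-$\bigslow$ cases and works out only the representative $\bigslow$-involving pairing $|\Tanset^{\leq N}\bigslow|\,|\Fullset_*^{[1,N+1];1}\Psi|$ in detail, but your outline matches that treatment step-for-step.
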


\begin{proof}
See Subsect.\ \ref{SS:OFTENUSEDESTIMATES} for some comments on the analysis.
The bounds for error integrals that do not involve
$\bigslow$
can be derived by using arguments nearly identical to the ones given in the proof of
\cite{jSgHjLwW2016}*{Lemma~14.7}; we therefore omit those details.

It remains for us to prove the desired bounds involving $\bigslow$,
which means that we must estimate the spacetime integrals of various quadratic terms.
In this proof, we derive the desired estimates 
only for some representative
quadratic terms. The remaining terms can be similarly bounded
and we omit those details.
Specifically, we show how bound the integral of the product
	$\left|
			\Tanset^{\leq N} \bigslow
	 \right|
	 \left|
	 		\Fullset_*^{[1,N+1];1} \Psi
	 \right|
	$.
	We note that our proof of the desired bound
	relies on all of the main ideas needed to prove all of the estimates stated in the lemma.
	To proceed, we repeatedly use the commutator estimate 
	\eqref{E:ONERADIALTANGENTIALFUNCTIONCOMMUTATORESTIMATE}
	and the $L^{\infty}$ estimates of Prop.~\ref{P:IMPROVEMENTOFAUX}
	to commute the (at most one) factor of $\Rad$
	in the operator $\Fullset_*^{[1,N+1];1}$
	so that $\Rad$ acts last,
	which yields
	\[
	|\Fullset_*^{[1,N+1];1} \Psi|
	\lesssim
	|\Rad \Tanset^{[1,N]} \Psi|
	+ 
	|\Tanset^{[1,N+1]} \Psi|
	+
	|\Fullset_*^{[1,N];1} \GdVar|
	+ 
	|\Tanset_*^{[1,N]} \BadVar|.
	\]
	Thus, we must bound the integral of the four corresponding products
	generated by the RHS of the previous inequality.
	To bound the integral of the first product,
	we use Young's inequality 
	and Lemma~\ref{L:COERCIVENESSOFCONTROLLING}
	to obtain 
	\begin{align}
		& \int_{\mathcal{M}_{t,u}}
		 		\left|
					\Tanset^{\leq N} \bigslow
				\right|
				\left|
					\Rad \Tanset^{[1,N]} \Psi
				\right|
		 \, d \vol
		 	\label{E:FINALHARMLESSEXAMPLEINTEGRAL} \\
		& \lesssim
			\int_{u'=0}^u
					\int_{\mathcal{P}_{u'}^t}
						\left|
							\Tanset^{\leq N} \bigslow
						\right|^2
					\, d \conevol
			\, du'
			+
			\int_{t'=0}^t
		 			\int_{\Sigma_{t'}^u}
		 				\left|
							\Rad \Tanset^{[1,N]} \Psi
						\right|^2
					\, d \tvol
			\, dt'
				\notag
					\\ 
		& \lesssim
				\int_{u'=0}^u
					\slowtotTanmax{\leq N}(t,u')
				\, du'
				+
				\int_{t'=0}^t
					\totTanmax{[1,N]}(t',u)
				\, dt',
				\notag
	\end{align}
	which is $\lesssim$ RHS~\eqref{E:STANDARDPSISPACETIMEINTEGRALS} as desired.
	
	To handle the second product
	$|\Tanset^{\leq N} \bigslow|
	 |\Tanset^{[1,N+1]} \Psi|$,
	we first consider terms of the form
	$|\Tanset^{\leq N} \bigslow|
	 |\GeoAng \Tanset^{\leq N} \Psi|
	$.
	To bound the corresponding integrals, we use Young's inequality 
	and Lemma~\ref{L:COERCIVENESSOFCONTROLLING}
	and, for terms with more than one derivative on $\Psi$, 
	we consider separately the regions in which $\upmu < 1/4$
	and $\upmu > 1/4$,
	which in total leads to the following bound:
	\begin{align}
		& \int_{\mathcal{M}_{t,u}}
		 		\left|
					\Tanset^{\leq N} \bigslow
				\right|
				\left|
					\GeoAng \Tanset^{\leq N} \Psi
				\right|
		 \, d \vol
		 	\label{E:ANOTHERFINALHARMLESSEXAMPLEINTEGRAL} \\
		& \lesssim
			(1 + \varsigma^{-1})
		 	\int_{u'=0}^u
					\int_{\mathcal{P}_{u'}^t}
						\left|
							\Tanset^{\leq N} \bigslow
						\right|^2
					\, d \conevol
			\, du'
			+
			\varsigma
			\int_{\mathcal{M}_{t,u}}
				\mathbf{1}_{\lbrace \upmu \leq 1/4 \rbrace}
				\left|
					\angdiff \Tanset^{[1,N]} \Psi
				\right|^2
			\, d \vol
				\notag \\
		& \ \
				+
				\int_{t'=0}^t
		 			\int_{\Sigma_{t'}^u}
		 				\upmu
		 				\left|
							 \angdiff \Tanset^{[1,N]} \Psi
						\right|^2
					\, d \tvol
			\, dt'
			+
			\int_{t'=0}^t
		 			\int_{\Sigma_{t'}^u}
		 				\left|
							 \GeoAng \Psi
						\right|^2
					\, d \tvol
			\, dt'
				\notag
					\\ 
		& \lesssim
				(1 + \varsigma^{-1})
				\int_{u'=0}^u
					\slowtotTanmax{\leq N}(t,u')
				\, du'
				+ 
				\varsigma \coerciveTanspacetimemax{[1,N]}(t,u)
				+
				\int_{t'=0}^t
					\totTanmax{[1,N]}(t',u)
				\, dt'
			+
			\mathring{\upepsilon}^2,
				\notag
	\end{align}
	which is $\lesssim$ RHS~\eqref{E:STANDARDPSISPACETIMEINTEGRALS} as desired.
	To finish the proof of the desired bounds
	for the second product
	$|\Tanset^{\leq N} \bigslow|
	 |\Tanset^{[1,N+1]} \Psi|$,
	it remains for us to bound the spacetime integral of the product
	$|\Tanset^{\leq N} \bigslow|
	 |\Lunit \Tanset^{\leq N} \Psi|
	$.
	We can obtain the desired bound by using arguments similar to the ones
	we used in proving \eqref{E:ANOTHERFINALHARMLESSEXAMPLEINTEGRAL}, 
	except that we also rely on the bounds
	$
	\displaystyle
	\int_{\mathcal{M}_{t,u}}
		|\Lunit \Tanset^{[1,N]} \Psi|^2
	\, d \vol
	\lesssim
	\int_{u'=0}^u
		\totTanmax{[1,N]}(t,u')
	\, du'
	$
	and
	$
	\displaystyle
	\int_{\mathcal{M}_{t,u}}
		|\Lunit \Psi|^2
	\, d \vol
	\lesssim
	\mathring{\upepsilon}^2
	+
	\int_{t'=0}^t
		\totTanmax{[1,N]}(t',u)
	\, du'
	$,
	which are simple consequences of Lemma~\ref{L:COERCIVENESSOFCONTROLLING}.
	
	To bound the integral of the third product
	$	|\Tanset^{\leq N} \bigslow|
		|\Fullset_*^{[1,N];1} \GdVar|
	$
	and the 
	fourth product
	$	|\Tanset^{\leq N} \bigslow|
		|\Tanset_*^{[1,N]} \BadVar|
	$,
	we use arguments similar to the ones we used above
	as well as the estimates
	\eqref{E:TANGENGITALEIKONALINTERMSOFCONTROLLING}
	and
	\eqref{E:ONERADIALEIKONALINTERMSOFCONTROLLING}
	which, when combined
	with the estimate \eqref{E:LESSSINGULARTERMSMPOINTNINEINTEGRALBOUND}
	and the fact that $\totTanmax{[1,N]}$ is increasing in its arguments,
	yield the following spacetime integral bounds:
	\begin{align} \label{E:LASTREPHARMLESSERRORINTEGRAL}
		&
		\int_{\mathcal{M}_{t,u}}
		 		\left|
					\Fullset_*^{[1,N];1} \Lunit_{(Small)}^i
				\right|^2
		 \, d \vol
		 +
		 \int_{\mathcal{M}_{t,u}}
		 		\left|
					\Tanset_*^{[1,N]} \upmu
				\right|^2
		 \, d \vol
		 	\\
		 & \lesssim
		 	\int_{t'=0}^t
					\left\lbrace
						\int_{s=0}^{t'}
							\frac{\totTanmax{[1,N]}^{1/2}(s,u)}{\upmu_{\star}^{1/2}(s,u)}
						\, ds
					\right\rbrace^2
				\, dt'
				+ 
				\mathring{\upepsilon}^2
				\notag \\
		 & \lesssim
		 	\int_{t'=0}^t
					\totTanmax{[1,N]}(t',u)
				\, dt'
				+ 
				\mathring{\upepsilon}^2.
				\notag
	\end{align}
	Finally, we observe that
	$\mbox{\upshape RHS~\eqref{E:LASTREPHARMLESSERRORINTEGRAL}} 
	\lesssim
	\mbox{\upshape RHS~\eqref{E:STANDARDPSISPACETIMEINTEGRALS}} 
	$ 
	as desired.
	This completes our proof of the representative estimates.
\end{proof}

\subsubsection{Estimates for the error integrals \underline{not} depending on the semilinear inhomogeneous terms}
\label{SSS:ENERGYESTIMATESFORERRORTERMSNOTDEPENDINGONINHOMOGENEOUS}
In the next lemma, we derive bounds for the error integrals
whose integrands we pointwise bounded in Lemma~\ref{L:MULTIPLIERVECTORFIEDERRORTERMPOINTWISEBOUND}.

\begin{lemma}[\textbf{Estimates for the error integrals not depending on the semilinear inhomogeneous terms}]
\label{L:ENERGYESTIMATESFORSLOWWAVEERRORTERMSNOINHOMOGENEOUS}
	Assume that $1 \leq N \leq 18$. 
	Let $\varsigma > 0$ be a real number.
	We have the following estimate
	for the last term on RHS~\eqref{E:E0DIVID}
	(with $\Tanset^N \Psi$ in the role of $f$ and
	without any absolute value taken on the left),
	where the implicit constants are independent of $\varsigma$:
	\begin{align} \label{E:MULTIPLIERVECTORFIELDERRORINTEGRALS}
		\sum_{i=1}^5 
		\int_{\mathcal{M}_{t,u}}
			\basicenergyerrorarg{\Mult}{i}[\Tanset^N \Psi]
		\, d \vol
		& \lesssim
		 	\int_{t'=0}^t
				\frac{1}{\sqrt{\Tboot - t'}} \totTanmax{[1,N]}(t',u)
			 \, dt'
			 \\
		& \ \
			+
		 	(1 + \varsigma^{-1})
		 		\int_{t'=0}^t
					\totTanmax{[1,N]}(t',u)
				\, dt'
			 \notag	\\
		 & \ \ 
		 	+
			(1 + \varsigma^{-1})
			\int_{u'=0}^u
				\totTanmax{[1,N]}(t,u')
			\, du'
			+ \varsigma 
				\coerciveTanspacetimemax{[1,N]}(t,u).
				\notag
	\end{align}
	
	Moreover, for $N \leq 18$,
	we have the following estimate for the term on the next-to-last 
	line of RHS~\eqref{E:SLOWENERGYID}
	(with $\Tanset^N \bigslow$ in the role of $\altbigslow$):
	\begin{align} \label{E:ENERGYESTIMATESFORSLOWWAVEERRORTERMSNOINHOMOGENEOUS}
			\int_{\mathcal{M}_{t,u}}
				\left|
					\left\lbrace 
						1 + \upgamma \smoothfunction(\upgamma)
					\right\rbrace 
					\slowbasicenergyerror[\Tanset^N \bigslow]
				\right|
			 \, d \vol
			 & \leq
			 C
			 \int_{u'=0}^u
				\slowtotTanmax{\leq N}(t,u')
			\, du'.
	\end{align}
\end{lemma}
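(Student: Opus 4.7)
The plan is to prove both estimates by combining the pointwise bounds already established in Lemma \ref{L:MULTIPLIERVECTORFIEDERRORTERMPOINTWISEBOUND} with the coerciveness of the fundamental $L^2$-controlling quantities from Lemma \ref{L:COERCIVENESSOFCONTROLLING}, foliating the spacetime region $\mathcal{M}_{t,u}$ appropriately in each case.

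For the first estimate \eqref{E:MULTIPLIERVECTORFIELDERRORINTEGRALS}, I would substitute $f = \Tanset^N \Psi$ into the pointwise bound \eqref{E:MULTIPLIERVECTORFIEDERRORTERMPOINTWISEBOUND} and integrate each of the five resulting terms over $\mathcal{M}_{t,u}$. The term $(1+\varsigma^{-1})(\Lunit \Tanset^N \Psi)^2$ is best controlled by foliating $\mathcal{M}_{t,u}$ by the null hypersurfaces $\mathcal{P}_{u'}^t$; then the null flux coerciveness piece of \eqref{E:COERCIVENESSOFCONTROLLING} yields $\|\Lunit \Tanset^N \Psi\|_{L^2(\mathcal{P}_{u'}^t)}^2 \lesssim \totTanmax{[1,N]}(t,u')$, producing the $(1+\varsigma^{-1})\int_{u'=0}^u \totTanmax{[1,N]}(t,u')\, du'$ term. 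The terms $(1+\varsigma^{-1})(\Rad \Tanset^N \Psi)^2$ and $\upmu|\angdiff \Tanset^N \Psi|^2$ are controlled by foliating by constant-time hypersurfaces $\Sigma_{t'}^u$ and using the energy coerciveness $\|\Rad \Tanset^N \Psi\|_{L^2(\Sigma_{t'}^u)}^2 + \|\sqrt{\upmu}\angdiff \Tanset^N \Psi\|_{L^2(\Sigma_{t'}^u)}^2 \lesssim \totTanmax{[1,N]}(t',u)$ from \eqref{E:COERCIVENESSOFCONTROLLING}, giving the $(1+\varsigma^{-1})\int_{t'=0}^t \totTanmax{[1,N]}(t',u)\, dt'$ contribution. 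The last term $\frac{1}{\sqrt{\Tboot-t}}\upmu|\angdiff \Tanset^N \Psi|^2$ similarly yields the $\int_{t'=0}^t \frac{1}{\sqrt{\Tboot-t'}}\totTanmax{[1,N]}(t',u)\, dt'$ contribution.

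The only term requiring care is the non-$\upmu$-weighted quantity $\varsigma \TranminusdatasizeWithFactor|\angdiff \Tanset^N \Psi|^2$, which is not absorbed by any of the standard energy pieces. My plan is to split $\mathcal{M}_{t,u}$ into the subregion where $\upmu \leq 1/4$ and its complement. On the complement, $|\angdiff \Tanset^N \Psi|^2 \leq 4\upmu|\angdiff \Tanset^N \Psi|^2$, so (using that constants may depend on $\TranminusdatasizeWithFactor$) the contribution is absorbed into the $\upmu|\angdiff f|^2$ integral handled above. On the subregion $\upmu \leq 1/4$, the sharp pointwise estimate \eqref{E:SMALLMUIMPLIESLMUISNEGATIVE} guarantees $\TranminusdatasizeWithFactor \leq 4[\Lunit \upmu]_-$, so this contribution is bounded by $\varsigma \int_{\mathcal{M}_{t,u}} \mathbf{1}_{\{\upmu \leq 1/4\}}[\Lunit \upmu]_- |\angdiff \Tanset^N \Psi|^2 \, d\vol \lesssim \varsigma \coerciveTanspacetimemax{[1,N]}(t,u)$ by the coerciveness \eqref{E:KEYSPACETIMECOERCIVITY} of the spacetime integral. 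Summing all the contributions gives precisely RHS~\eqref{E:MULTIPLIERVECTORFIELDERRORINTEGRALS}.

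The second estimate \eqref{E:ENERGYESTIMATESFORSLOWWAVEERRORTERMSNOINHOMOGENEOUS} is considerably easier. The pointwise bound \eqref{E:SIMPLEPOINTWISEBOUNDSLOWWAVEBASICENERGYINTEGRAND} with $\altbigslow = \Tanset^N \bigslow$ reduces the task to estimating $\int_{\mathcal{M}_{t,u}} |\Tanset^N \bigslow|^2 \, d\vol$. Foliating $\mathcal{M}_{t,u}$ by the null hypersurfaces $\mathcal{P}_{u'}^t$ and using the non-degenerate null flux coerciveness \eqref{E:SLOWCOERCIVENESSOFCONTROLLING}, which critically lacks any degenerate $\upmu$ weight, yields $\int_{\mathcal{P}_{u'}^t} |\Tanset^N \bigslow|^2 \, d\conevol \lesssim \slowtotTanmax{\leq N}(t,u')$. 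Integrating in $u'$ from $0$ to $u$ produces exactly the right-hand side of \eqref{E:ENERGYESTIMATESFORSLOWWAVEERRORTERMSNOINHOMOGENEOUS}. The main conceptual point (though not a real obstacle, since all ingredients are in hand) is the proper accounting of the $\varsigma$-terms in the first estimate; the rest is a routine combination of the pointwise estimates and the coerciveness inequalities.
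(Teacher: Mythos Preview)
Your proposal is correct and follows essentially the same approach as the paper: integrate the pointwise bound \eqref{E:MULTIPLIERVECTORFIEDERRORTERMPOINTWISEBOUND} (respectively \eqref{E:SIMPLEPOINTWISEBOUNDSLOWWAVEBASICENERGYINTEGRAND}) over $\mathcal{M}_{t,u}$ and invoke the coerciveness statements of Lemma~\ref{L:COERCIVENESSOFCONTROLLING}. Your detailed handling of the $\varsigma \TranminusdatasizeWithFactor |\angdiff \Tanset^N\Psi|^2$ term via the $\upmu \leq 1/4$ versus $\upmu > 1/4$ split is the standard way to unpack what the paper leaves implicit; note that on the small-$\upmu$ region you may appeal directly to \eqref{E:KEYSPACETIMECOERCIVITY} rather than going through \eqref{E:SMALLMUIMPLIESLMUISNEGATIVE}, though the two routes are equivalent.
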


\begin{proof}
	See Subsect.\ \ref{SS:OFTENUSEDESTIMATES} for some comments on the analysis.
	To prove \eqref{E:MULTIPLIERVECTORFIELDERRORINTEGRALS},
	we integrate \eqref{E:MULTIPLIERVECTORFIEDERRORTERMPOINTWISEBOUND} 
	(with $\Tanset^N \Psi$ in the role of $f$)
	over $\mathcal{M}_{t,u}$ and use 
	Lemma~\ref{L:COERCIVENESSOFCONTROLLING}.
	
	To prove \eqref{E:ENERGYESTIMATESFORSLOWWAVEERRORTERMSNOINHOMOGENEOUS},
	we use the pointwise bound \eqref{E:SIMPLEPOINTWISEBOUNDSLOWWAVEBASICENERGYINTEGRAND}
	(with $\Tanset^N \bigslow$ in the role of $\altbigslow$)
	and the coerciveness estimate \eqref{E:SLOWCOERCIVENESSOFCONTROLLING}
	to deduce that 
	\begin{align}
		\mbox{{\upshape LHS}~\eqref{E:ENERGYESTIMATESFORSLOWWAVEERRORTERMSNOINHOMOGENEOUS}}
		\lesssim 
		\int_{u'=0}^u
			\| \Tanset^N \bigslow \|_{\mathcal{P}_{u'}^t}^2
		\, du'
		\lesssim
		\int_{u'=0}^u
			\slowtotTanmax{\leq N}(t,u')
		\, du'
	\end{align}
	as desired.
\end{proof}

\subsection{Proof of Prop.~\ref{P:TANGENTIALENERGYINTEGRALINEQUALITIES}}
\label{SS:PROOFOFPROPTANGENTIALENERGYINTEGRALINEQUALITIES}
Armed with the estimates of the previous subsections, we are now ready
to prove Prop.~\ref{P:TANGENTIALENERGYINTEGRALINEQUALITIES}.

\noindent \textbf{Proof of \eqref{E:TOPORDERTANGENTIALENERGYINTEGRALINEQUALITIES}}:
Assume that $1 \leq N \leq 18$
and let $\Tanset^N$ be an $N^{th}$-order 
$\mathcal{P}_u$-tangential vectorfield operator.
Using \eqref{E:E0DIVID} with $\Tanset^N \Psi$ in the role of $f$
and appealing to definition \eqref{E:COERCIVESPACETIMEDEF},
we deduce the following identity:
\begin{align} \label{E:E0DIVIDMAINESTIMATES}
	\enzero[\Tanset^N \Psi](t,u)
	& + 
	\flzero[\Tanset^N \Psi](t,u)
	+
	\coercivespacetime[\Tanset^N \Psi](t,u)
	\\
	& 
		=
		\enzero[\Tanset^N \Psi](0,u)
		+
		\enzero[\Tanset^N \Psi](t,0)
		\notag	\\
	& \ \
		- 
		\int_{\mathcal{M}_{t,u}}
			\left\lbrace
				(1 + 2 \upmu) (\Lunit \Tanset^N \Psi)
				+ 
				2 \Rad \Tanset^N \Psi 
			\right\rbrace
			\upmu \square_g(\Tanset^N \Psi) 
		\, d \vol
							\notag \\
				& \ \ 
						+
						\sum_{i=1}^5 
						\int_{\mathcal{M}_{t,u}}
							\basicenergyerrorarg{\Mult}{i}[\Tanset^N \Psi]
						\, d \vol.
							\notag
\end{align}
Similarly, if $N \leq 18$,
then by \eqref{E:SLOWENERGYID}
with $\Tanset^N \bigslow$ in the role of
$\altbigslow$, we have
\begin{align} \label{E:SLOWENERGYIDMAINESTIMATES}
	&
	\slowen[\Tanset^N \bigslow](t,u)
	+
	\slowfl[\Tanset^N \bigslow](t,u)
		\\
	& = 
		\slowen[\Tanset^N \bigslow](0,u)
		+
		\slowfl[\Tanset^N \bigslow](t,0)
			\notag \\
	& \ \
			+
		  \int_{\mathcal{M}_{t,u}}
				\left\lbrace 
					1 + \GdVar \smoothfunction(\GdVar)
				\right\rbrace 
				\slowbasicenergyerror[\Tanset^N \bigslow]
			 \, d \vol
			 \notag \\
		& \ \ 
			+ 
		\int_{\mathcal{M}_{t,u}}
				\left\lbrace 
					1 + \GdVar \smoothfunction(\GdVar)
				\right\rbrace 
				\left\lbrace
					4 (h^{-1})^{\alpha 0} (h^{-1})^{\beta 0} (\Tanset^N \slow_{\alpha}) F_{\beta}
					+
					2 (h^{-1})^{\alpha \beta} (\Tanset^N \slow_{\alpha}) F_{\beta}
				\right\rbrace
			\, d \vol
			\notag
				\\
		& \ \
		+ 
		\int_{\mathcal{M}_{t,u}}
				\left\lbrace 
					1 + \GdVar \smoothfunction(\GdVar)
				\right\rbrace 
				\left\lbrace
					2 (h^{-1})^{\alpha a} (h^{-1})^{b 0} (\Tanset^N \slow_{\alpha}) F_{ab}
					+ 
					2 (\Tanset^N \slow) F
				\right\rbrace
			\, d \vol,
			\notag
\end{align}
where $F_0$, $F_a$, $F$, and $F_{ab}$
respectively denote the terms
$Harmless^{[1,N]} + Harmless_{(Slow)}^{\leq N}$
on RHSs \eqref{E:SLOWTIMECOMMUTED}-\eqref{E:SYMMETRYOFMIXEDPARTIALSCOMMUTED}
and/or \eqref{E:SLOWTIMENOTCOMMUTED}.
We will show that 
\textbf{i)} when $1 \leq N \leq 18$,
RHS~\eqref{E:E0DIVIDMAINESTIMATES} $\leq$
RHS~\eqref{E:TOPORDERTANGENTIALENERGYINTEGRALINEQUALITIES};
\textbf{ii)} when $1 \leq N \leq 18$,
RHS~\eqref{E:SLOWENERGYIDMAINESTIMATES} 
$\leq$ RHS~\eqref{E:TOPORDERTANGENTIALENERGYINTEGRALINEQUALITIES};
and 
\textbf{iii)} when $N=0$,
RHS~\eqref{E:SLOWENERGYIDMAINESTIMATES} 
$\leq$
RHS~\eqref{E:TOPORDERTANGENTIALENERGYINTEGRALINEQUALITIES}
where $N=1$ in \eqref{E:TOPORDERTANGENTIALENERGYINTEGRALINEQUALITIES}
(that is, the order $0$ energies for $\bigslow$ are controlled by
RHS~\eqref{E:TOPORDERTANGENTIALENERGYINTEGRALINEQUALITIES} with $N=1$).
Then, in view of Defs.~\ref{D:MAINCOERCIVEQUANT} and \ref{D:COERCIVEINTEGRAL},
we take the appropriate maxes and supremums over these estimates,
thereby arriving at the desired estimate \eqref{E:TOPORDERTANGENTIALENERGYINTEGRALINEQUALITIES}.

We start by showing that 
when $1 \leq N \leq 18$, we have that
RHS~\eqref{E:E0DIVIDMAINESTIMATES}
$\leq$
RHS~\eqref{E:TOPORDERTANGENTIALENERGYINTEGRALINEQUALITIES}.
The vast majority of the terms on
RHS~\eqref{E:E0DIVIDMAINESTIMATES}
were suitably bounded 
by $\leq \mbox{\upshape RHS}~\eqref{E:TOPORDERTANGENTIALENERGYINTEGRALINEQUALITIES}$
in \cite{jSgHjLwW2016}*{Section 14.8}.
In particular, the terms on the first line of RHS~\eqref{E:E0DIVIDMAINESTIMATES} 
were bounded by $\lesssim \mathring{\upepsilon}^2$
in Lemma~\ref{L:INITIALSIZEOFL2CONTROLLING} while
the last integral on RHS~\eqref{E:E0DIVIDMAINESTIMATES} 
was bounded via the estimate \eqref{E:MULTIPLIERVECTORFIELDERRORINTEGRALS};
for these terms, these are precisely the same estimates proved in \cite{jSgHjLwW2016}.
We now explain the origin of the terms that are new compared to \cite{jSgHjLwW2016}
and how to bound them. We stress that 
\emph{all of the new terms are non-borderline} 
in the sense that they do not contribute to the 
``boxed-constant-involving'' terms on RHS~\eqref{E:TOPORDERTANGENTIALENERGYINTEGRALINEQUALITIES},
which drive the blowup-rate of the top-order energies.
The new terms are all found within the factor
$\upmu \square_g(\Tanset^N \Psi)$
in the first error integral 
$
- 
		\int_{\mathcal{M}_{t,u}}
			\left\lbrace
				(1 + 2 \upmu) (\Lunit \Tanset^N \Psi)
				+ 
				2 \Rad \Tanset^N \Psi 
			\right\rbrace
			\upmu \square_g(\Tanset^N \Psi) 
		\, d \vol
$
on RHS~\eqref{E:E0DIVIDMAINESTIMATES}.
To bound this error integral, we first use
(depending on the structure of $\Tanset^N$)
one of equations
\eqref{E:LISTHEFIRSTCOMMUTATORIMPORTANTTERMS}-\eqref{E:GEOANGANGISTHEFIRSTCOMMUTATORIMPORTANTTERMS}
or \eqref{E:HARMLESSORDERNCOMMUTATORS}
to algebraically substitute for
$\upmu \square_g(\Tanset^N \Psi)$.
The error integrals generated by the
$Harmless^{[1,N]}$ terms were 
suitably bounded in \cite{jSgHjLwW2016}*{Section 14.8}
(alternatively, see Lemma~\ref{L:STANDARDPSISPACETIMEINTEGRALS}).
To bound the error integrals generated by
the terms $Harmless_{(Slow)}^{\leq N}$, we
use Lemma~\ref{L:STANDARDPSISPACETIMEINTEGRALS}.
It remains for us to bound the error integrals
generated by the three products on RHSs
\eqref{E:LISTHEFIRSTCOMMUTATORIMPORTANTTERMS}-\eqref{E:GEOANGANGISTHEFIRSTCOMMUTATORIMPORTANTTERMS}
that are not of the form $Harmless^{[1,N]} + Harmless_{(Slow)}^{\leq N}$.
We proceed on a case by case basis, depending on the structure of the commutator
vectorfield string $\Tanset^N$.
In the case $\Tanset^N = \GeoAng^N$,
we must bound the difficult error integral
\begin{align} \label{E:RADDIFFICULTERRORINTEGRAL}
	-           2
							\int_{\mathcal{M}_{t,u}}
							(\Rad \GeoAng^N \Psi)	 
							(\Rad \Psi) \GeoAng^N \mytr \upchi
							\, d \vol,
\end{align}
which is generated by the term $(\Rad \Psi) \GeoAng^N \mytr \upchi$
on RHS~\eqref{E:GEOANGANGISTHEFIRSTCOMMUTATORIMPORTANTTERMS},
by $\leq$ RHS~\eqref{E:TOPORDERTANGENTIALENERGYINTEGRALINEQUALITIES}.
To this end, we first use Cauchy--Schwarz and \eqref{E:COERCIVENESSOFCONTROLLING} to bound 
the magnitude of \eqref{E:RADDIFFICULTERRORINTEGRAL} by
\begin{align} \label{E:FIRSTSTEPDIFFICULTINTEGRALBOUND}
	\leq 2 
		\int_{t' = 0}^t 
			\totTanmax{[1,N]}^{1/2}(t',u) 
			\left\| 
				(\Rad \Psi) \GeoAng^N \mytr \upchi 
			\right\|_{L^2(\Sigma_{t'}^u)} 
		\, dt'.
\end{align}
We now substitute the estimate \eqref{E:DIFFICULTTERML2BOUND}
(with $t$ replaced by $t'$) for the second integrand factor on RHS~\eqref{E:FIRSTSTEPDIFFICULTINTEGRALBOUND}.
In \cite{jSgHjLwW2016}*{Section 14.8},
all of the corresponding terms that arise from this substitution 
were bounded by $\leq \mbox{\upshape RHS}~\eqref{E:TOPORDERTANGENTIALENERGYINTEGRALINEQUALITIES}$
except for the term generated by the one
product on RHS~\eqref{E:DIFFICULTTERML2BOUND} involving $\slowtotTanmax{\leq N}$
(that is, the next-to-last product on RHS~\eqref{E:DIFFICULTTERML2BOUND}).
Upon substituting this remaining product into
\eqref{E:FIRSTSTEPDIFFICULTINTEGRALBOUND},
we generate the following error term, which is new compared to the terms appearing in \cite{jSgHjLwW2016}*{Section 14.8}:
\begin{align} \label{E:MAINDIFFICULTENERGYESTIMETERMSLOWWAVECONTRIBUTION}
C 
\int_{t' = 0}^t 
	\frac{1}{\upmu_{\star}(t',u)} 
	\totTanmax{[1,N]}^{1/2}(t',u) 
	\int_{s=0}^{t'}
		\frac{1} {\upmu_{\star}^{1/2}(s,u)} 
		\slowtotTanmax{\leq N}^{1/2}(s,u) 
	\, ds	
\, dt'.
\end{align}
We now observe that the term \eqref{E:MAINDIFFICULTENERGYESTIMETERMSLOWWAVECONTRIBUTION}
is $\leq$ the third-from-last product on RHS~\eqref{E:TOPORDERTANGENTIALENERGYINTEGRALINEQUALITIES}
as desired. We now clarify that in \cite{jSgHjLwW2016},
the coefficient of the analog of
the third product on RHS~\eqref{E:TOPORDERTANGENTIALENERGYINTEGRALINEQUALITIES}
was stated as $\boxed{8.1}$ rather than $\boxed{8}$. 
This is because the coefficient of the second product
on RHS~\eqref{E:DIFFICULTTERML2BOUND} is $\boxed{4}$, 
which is different than \cite{jSgHjLwW2016}*{Lemma~14.8},
where the analogous coefficient was stated as
$\boxed{4.05}$ (for reasons that we described in our proof outline of Lemma~\ref{L:DIFFICULTTERML2BOUND});
this is a minor remark that has no substantial bearing on our analysis.
Next, we note that the error integral
\[
		-         \int_{\mathcal{M}_{t,u}}
								(1 + 2 \upmu)
								(\Lunit \GeoAng^N \Psi)
								(\Rad \Psi) \GeoAng^N \mytr \upchi
							\, d \vol,
\]
which is also generated by the term $(\Rad \Psi) \GeoAng^N \mytr \upchi$
on RHS~\eqref{E:GEOANGANGISTHEFIRSTCOMMUTATORIMPORTANTTERMS},
can be bounded by $\leq$ RHS~\eqref{E:TOPORDERTANGENTIALENERGYINTEGRALINEQUALITIES}
by using arguments nearly identical to the ones
in \cite{jSgHjLwW2016}*{Section 14.8}. In particular, the 
arguments are independent of $\bigslow$ and therefore do not involve the
slow wave controlling quantities $\slowtotTanmax{M}$.
We do not repeat the proof here since it is
exactly the same and since it relies on
a lengthy integration by parts argument in which
one trades the factor of $\Lunit$ from $\Lunit \GeoAng^N \Psi$
for a factor of $\GeoAng$ from $\GeoAng^N \mytr \upchi$.
We note that the integration by parts generates some of the
``boxed-constant-involving'' terms on RHS~\eqref{E:TOPORDERTANGENTIALENERGYINTEGRALINEQUALITIES},
including the ``boundary term''
$
\displaystyle
\boxed{\left\lbrace 2 + \mathcal{O}_{\mydiam}(\Psiep) \right\rbrace}
			\frac{1}{\upmu_{\star}^{1/2}(t,u)}
			\totTanmax{[1,N]}^{1/2}(t,u)
			\left\| 
				\Lunit \upmu 
			\right\|_{L^{\infty}(\Sigmaminus{t}{t}{u})}
\cdots
$
on the fourth line of RHS~\eqref{E:TOPORDERTANGENTIALENERGYINTEGRALINEQUALITIES}
and a contribution to the term
$
\displaystyle
\boxed{\left\lbrace 6 + \mathcal{O}_{\mydiam}(\Psiep) \right\rbrace}
			\int_{t'=0}^t
					\frac{\left\| [\Lunit \upmu]_- \right\|_{L^{\infty}(\Sigma_{t'}^u)}} 
							 {\upmu_{\star}(t',u)} 
				  \totTanmax{[1,N]}(t',u)
				\, dt'
$
on the second line of RHS~\eqref{E:TOPORDERTANGENTIALENERGYINTEGRALINEQUALITIES}.
We clarify that the factors of
$
\mathcal{O}_{\mydiam}(\Psiep)
$
appearing in the above two terms
arise because in invoking the arguments given in 
\cite{jSgHjLwW2016}*{Section 14.8},
one must at one point derive pointwise control of
$|\angLap \GeoAng^{N-1} \Psi|$
in terms of $|\angdiff \GeoAng^N \Psi| + \cdots$,
where $\cdots$ denotes simpler error terms;
by \eqref{E:ANGDERIVATIVESINTERMSOFTANGENTIALCOMMUTATOR}
and Cor.\ \ref{C:SQRTEPSILONTOCEPSILON},
we in fact have the bound
$|\angLap \GeoAng^{N-1} \Psi| 
\leq 
\lbrace 1 + \mathcal{O}_{\mydiam}(\Psiep) + \mathcal{O}(\varepsilon) \rbrace |\angdiff \GeoAng^N \Psi| 
+ 
\cdots
$,
which is the source of the factor $ \mathcal{O}_{\mydiam}(\Psiep)$ in the above estimates.
We also clarify that the factor of $\mathcal{O}_{\mydiam}(\Psiep)$
does not appear in the arguments of \cite{jSgHjLwW2016} since the parameter $\Psiep$ was not
featured in that work (see also Remark~\ref{R:NEWPARAMETER}).
To bound the remaining two error integrals
generated by RHS~\eqref{E:GEOANGANGISTHEFIRSTCOMMUTATORIMPORTANTTERMS},
namely
\[
		-           2
							\int_{\mathcal{M}_{t,u}}
								(\Rad \GeoAng^N \Psi)	 
								\GeoAngFlatRadComponent
								(\angdiffuparg{\#} \Psi) 
								\cdot 
								(\upmu \angdiff \GeoAng^{N-1} \mytr \upchi)
							\, d \vol
\]
and
\[
		-          \int_{\mathcal{M}_{t,u}}
									(1 + 2 \upmu) (\Lunit \GeoAng^N \Psi) 
									\GeoAngFlatRadComponent
									(\angdiffuparg{\#} \Psi) 
									\cdot 
									(\upmu \angdiff \GeoAng^{N-1} \mytr \upchi)
							\, d \vol,
\]
by $\leq \mbox{\upshape RHS}~\eqref{E:TOPORDERTANGENTIALENERGYINTEGRALINEQUALITIES}$,
we simply use Lemma~\ref{L:LESSDEGENERATEENERGYESTIMATEINTEGRALS}.

To complete the proof that
RHS~\eqref{E:E0DIVIDMAINESTIMATES}
$\leq$
RHS~\eqref{E:TOPORDERTANGENTIALENERGYINTEGRALINEQUALITIES},
it remains only for us to bound the difficult error integrals that arise
in the case $\Tanset^N = \GeoAng^{N-1} \Lunit$,
which are generated by the terms on RHS~\eqref{E:LISTHEFIRSTCOMMUTATORIMPORTANTTERMS}
that are not of the form $Harmless^{[1,N]} + Harmless_{(Slow)}^{\leq N}$.
Specifically, we encounter the error integrals
\[
		-          2
							\int_{\mathcal{M}_{t,u}}
								(\Rad \GeoAng^{N-1} \Lunit \Psi)	 
								(\angdiffuparg{\#} \Psi) \cdot (\upmu \angdiff \GeoAng^{N-1} \mytr \upchi)
							\, d \vol,
\]
and
\[
		-         \int_{\mathcal{M}_{t,u}}
								(1 + 2 \upmu) 
								(\Lunit \GeoAng^{N-1} \Lunit \Psi)
								(\angdiffuparg{\#} \Psi) \cdot (\upmu \angdiff \GeoAng^{N-1} \mytr \upchi)
							\, d \vol,
\]
which we bounded in Lemma~\ref{L:LESSDEGENERATEENERGYESTIMATEINTEGRALS}.
We have thus shown that
RHS~\eqref{E:E0DIVIDMAINESTIMATES}
$\leq$
RHS~\eqref{E:TOPORDERTANGENTIALENERGYINTEGRALINEQUALITIES}.

To complete the proof of \eqref{E:TOPORDERTANGENTIALENERGYINTEGRALINEQUALITIES},
it remains only for us to show that 
\textbf{ii)} when $1 \leq N \leq 18$, 
RHS~\eqref{E:SLOWENERGYIDMAINESTIMATES} 
$\leq$ RHS~\eqref{E:TOPORDERTANGENTIALENERGYINTEGRALINEQUALITIES}
and \textbf{iii)} when $N=0$,
RHS~\eqref{E:SLOWENERGYIDMAINESTIMATES} $\leq$ RHS~\eqref{E:TOPORDERTANGENTIALENERGYINTEGRALINEQUALITIES}
but with $N=1$ on RHS~\eqref{E:TOPORDERTANGENTIALENERGYINTEGRALINEQUALITIES}
(that is, the order $0$ energies for $\bigslow$ are controlled by
RHS~\eqref{E:TOPORDERTANGENTIALENERGYINTEGRALINEQUALITIES} with $N=1$).
We start with case \textbf{ii)}.
To proceed, we first use Lemma~\ref{L:INITIALSIZEOFL2CONTROLLING}
to deduce that 
$	
\slowen[\Tanset^N \bigslow](0,u)
+
\slowfl[\Tanset^N \bigslow](t,0)
\lesssim
\mathring{\upepsilon}^2
$
as desired.
Next, we note that
we suitably bounded the first integral on RHS~\eqref{E:SLOWENERGYIDMAINESTIMATES}
in Lemma~\ref{L:ENERGYESTIMATESFORSLOWWAVEERRORTERMSNOINHOMOGENEOUS}.
To bound the integrals on the last two lines of RHS~\eqref{E:SLOWENERGYIDMAINESTIMATES},
we recall that, as we mentioned above,
the terms $F_0$, $F_a$, $F$, and $F_{ab}$ 
in the integrands
are of the form
$
Harmless^{[1,N]}
+ 
Harmless_{(Slow)}^{\leq N}
$.
Moreover, the $L^{\infty}$ estimates of Prop.~\ref{P:IMPROVEMENTOFAUX}
imply that $\left| 1 + \GdVar \smoothfunction(\GdVar) \right| \lesssim 1$ 
and, since $(h^{-1})^{\alpha \beta} = \smoothfunction(\GdVar,\bigslow)$,
that $\left| (h^{-1})^{\alpha \beta} \right| \lesssim 1$.
Thus, the desired bounds
follow from Lemma~\ref{L:STANDARDPSISPACETIMEINTEGRALS}.
Finally, we note that the argument is identical in case \textbf{iii)}, 
which completes the proof of \eqref{E:TOPORDERTANGENTIALENERGYINTEGRALINEQUALITIES}.
\medskip

\noindent \textbf{Proof of \eqref{E:BELOWTOPORDERTANGENTIALENERGYINTEGRALINEQUALITIES}}
The proof of \eqref{E:BELOWTOPORDERTANGENTIALENERGYINTEGRALINEQUALITIES} 
is similar to that of \eqref{E:TOPORDERTANGENTIALENERGYINTEGRALINEQUALITIES} 
but involves one key change.
To proceed, we let $\Tanset^{N-1}$ be an $(N-1)^{st}$-order 
$\mathcal{P}_u$-tangential vectorfield operator, where $2 \leq N \leq 18$. 
We then argue as above,
starting with the identity \eqref{E:E0DIVIDMAINESTIMATES} 
with $\Tanset^{N-1} \Psi$ in place of $\Tanset^N \Psi$
and the identity \eqref{E:SLOWENERGYIDMAINESTIMATES}
with $\Tanset^{N-1} \bigslow$ in place of $\Tanset^N \bigslow$.
We bound almost all error integrals in exactly the same way
as before, the key change being that we bound the two 
most difficult error integrals in a different way. 
Specifically, the two difficult integrals are
\[
	-           2
							\int_{\mathcal{M}_{t,u}}
							(\Rad \GeoAng^{N-1} \Psi)	 
							(\Rad \Psi) \GeoAng^{N-1} \mytr \upchi
							\, d \vol,
\]
which is an analog of \eqref{E:RADDIFFICULTERRORINTEGRAL},
and
\[
	-           \int_{\mathcal{M}_{t,u}}
								(1 + 2 \upmu)
								(\Lunit \GeoAng^{N-1} \Psi)	 
								(\Rad \Psi) \GeoAng^{N-1} \mytr \upchi
							\, d \vol,
\]
whose top-order analog we did not discuss in detail above
since it was treated
in \cite{jSgHjLwW2016}*{Section 14.8}
using arguments that are independent of the slow wave variable $\bigslow$.
Both of the above error integrals were
shown in \cite{jSgHjLwW2016}*{Section 14.8}
to be bounded
by $\leq$ RHS~\eqref{E:BELOWTOPORDERTANGENTIALENERGYINTEGRALINEQUALITIES}
using a simple argument
that does not involve the
slow wave controlling quantities $\slowtotTanmax{M}$;
the key point is to use the derivative-losing estimate \eqref{E:TANGENGITALEIKONALINTERMSOFCONTROLLING}
to control $\| \GeoAng^{N-1} \mytr \upchi \|_{L^2(\Sigma_t^u)}$.
We remark that the arguments in \cite{jSgHjLwW2016}*{Section 14.8}
for bounding these two error integrals
lead to the presence of the $\totTanmax{[1,N]}^{1/2}(s,u)$-involving
double time integral on the second line of RHS~\eqref{E:BELOWTOPORDERTANGENTIALENERGYINTEGRALINEQUALITIES}.
That is, these estimates lose one derivative 
(which is permissible below top order)
and are therefore coupled to the next-highest-order energy estimates;
the gain is that the resulting integrals
are less singular with respect to $\upmu_{\star}^{-1}$.
Finally, we note that the proofs of the energy estimates
for $\Tanset^{N-1} \bigslow$ in the below-top-order cases
$1 \leq N \leq 18$ are exactly the same as in the top-order case treated above.
We have therefore proved \eqref{E:BELOWTOPORDERTANGENTIALENERGYINTEGRALINEQUALITIES},
which finishes the proof of Prop.~\ref{P:TANGENTIALENERGYINTEGRALINEQUALITIES}.

$\hfill \qed$

\section{The main stable shock formation theorem}
\label{S:MAINTHEOREM}
\setcounter{equation}{0}
We now prove our main result.

\begin{theorem}[\textbf{Stable shock formation}]
\label{T:MAINTHEOREM}
Consider a solution
$\Psi,\bigslow$ to the system
\eqref{E:FASTWAVE}
+
\eqref{E:SLOW0EVOLUTION}-\eqref{E:SYMMETRYOFMIXEDPARTIALS}
with nonlinearities verifying the assumptions stated in
Subsubsects.\ \ref{SSS:STATEMENTOFEQUATIONS}-\ref{SSS:WAVESPEEDASSUMPTIONS}.
Let 
$\Psiep > 0$,
$\mathring{\upepsilon} \geq 0$,
$\mathring{\updelta} > 0$,
and $\TranminusdatasizeWithFactor > 0$
be the data-size parameters introduced in Sect.\ \ref{S:NORMSANDBOOTSTRAP}.
For each $U_0 \in [0,1]$, let $T_{(Lifespan);U_0}$
be the classical lifespan of $\Psi,\bigslow$
with respect to the \underline{Cartesian} coordinates $\lbrace x^{\alpha} \rbrace_{\alpha = 0,1,2}$
in the region that is completely determined by the non-trivial data lying in 
$\Sigma_0^{U_0}$ and the small data given along $\mathcal{P}_0^{2 \TranminusdatasizeWithFactor^{-1}}$
(see Figure~\ref{F:REGION} on pg.~\pageref{F:REGION}).
If 
$\Psiep$ is sufficiently small relative to $1$,
and if $\mathring{\upepsilon}$ is sufficiently small
relative to 
1,
small relative to $\mathring{\updelta}^{-1}$,
and small relative to
$\TranminusdatasizeWithFactor$
(in the sense explained in Subsect.\ \ref{SS:SMALLNESSASSUMPTIONS}),
then the following conclusions hold,
where all constants can be chosen to be independent of $U_0$.

\medskip

\noindent \underline{\textbf{Dichotomy of possibilities.}}
One of the following mutually disjoint possibilities must occur,
where $\upmu_{\star}(t,u) = \min \lbrace 1, \min_{\Sigma_t^u} \upmu \rbrace$.
\begin{enumerate}
	\renewcommand{\labelenumi}{\textbf{\Roman{enumi})}}
	\item $T_{(Lifespan);U_0} > 2 \TranminusdatasizeWithFactor^{-1}$. 
		In particular, the solution exists classically on the spacetime
		region $\mbox{\upshape cl} \mathcal{M}_{2 \TranminusdatasizeWithFactor^{-1},U_0}$,
		where $\mbox{\upshape cl}$ denotes closure.
		Furthermore, $\inf \lbrace \upmu_{\star}(s,U_0) \ | \ s \in [0,2 \TranminusdatasizeWithFactor^{-1}] \rbrace > 0$.
	\item $T_{(Lifespan);U_0} \leq 2 \TranminusdatasizeWithFactor^{-1}$,
		and 
		\begin{align} \label{E:MAINTHEOREMLIFESPANCRITERION}
		T_{(Lifespan);U_0} 
		= \sup 
			\left\lbrace 
			t \in [0, 2 \TranminusdatasizeWithFactor^{-1}) \ | \
				\inf \lbrace \upmu_{\star}(s,U_0) \ | \ s \in [0,t) \rbrace > 0
			\right\rbrace.
		\end{align}
\end{enumerate}
In addition, case $\textbf{II)}$ occurs when $U_0 = 1$. In this case, we have\footnote{See
Subsect.\ \ref{SS:NOTATIONANDINDEXCONVENTIONS} regarding our use of the notation $\mathcal{O}_{\mydiam}(\Psiep)$.}
\begin{align} \label{E:CLASSICALLIFESPANASYMPTOTICESTIMATE}
	T_{(Lifespan);1} 
	= 
	\left\lbrace 1 + \mathcal{O}_{\mydiam}(\Psiep) + \mathcal{O}(\mathring{\upepsilon}) \right\rbrace
	\TranminusdatasizeWithFactor^{-1}.
\end{align}

\medskip

\noindent \underline{\textbf{What happens in Case I).}}
In case $\textbf{I)}$, 
all of the bootstrap assumptions from Subsects.\ \ref{SS:SIZEOFTBOOT}-\ref{SS:PSIBOOTSTRAP},
the estimates of Props.~\ref{P:IMPROVEMENTOFAUX} and \ref{P:IMPROVEMENTOFHIGHERTRANSVERSALBOOTSTRAP},
and the energy estimates of Prop.~\ref{P:MAINAPRIORIENERGY}
hold on $\mbox{\upshape cl} \mathcal{M}_{2 \TranminusdatasizeWithFactor^{-1},U_0}$
with the factors of $\varepsilon$ on the RHSs replaced by $C \mathring{\upepsilon}$.
Moreover, for $0 \leq M \leq 5$, the following estimates hold
for $(t,u) \in [0,2 \TranminusdatasizeWithFactor^{-1}] \times [0,U_0]$
(see Subsect.\ \ref{SS:STRINGSOFCOMMUTATIONVECTORFIELDS} regarding the vectorfield operator notation):
\begin{subequations}
	\begin{align}
		\left\|
			\Tanset_*^{[1,12]} \upmu
		\right\|_{L^2(\Sigma_t^u)},
			\,
		\left\|
			\Tanset^{[1,12]} 
			\Lunit_{(Small)}^i
		\right\|_{L^2(\Sigma_t^u)},
			\,
		\left\|
			\Tanset^{\leq 11} 
			\mytr \upchi
		\right\|_{L^2(\Sigma_t^u)}
		& \leq 
			C \mathring{\upepsilon},
				 \label{E:LOWORDERTANGENTIALEIKONALL2MAINTHEOREM}
				 \\
		\left\|
			\Tanset_*^{13 + M} \upmu
		\right\|_{L^2(\Sigma_t^u)},
			\,
		\left\|
			\Tanset^{13 + M} 
			\Lunit_{(Small)}^i
		\right\|_{L^2(\Sigma_t^u)},
			\,
		\left\|
			\Tanset^{12 + M} 
			\mytr \upchi
		\right\|_{L^2(\Sigma_t^u)}
		& \leq 
			C \mathring{\upepsilon} \upmu_{\star}^{-(M+.4)}(t,u),
				 \label{E:MIDORDERTANGENTIALEIKONALL2MAINTHEOREM} 
				 	\\
		\left\|
			\Lunit \Tanset^{18} \upmu
		\right\|_{L^2(\Sigma_t^u)},
			\,
		\left\|
			\Lunit \Fullset^{18;1} \Lunit_{(Small)}^i
		\right\|_{L^2(\Sigma_t^u)},
			\,
		\left\|
			\Lunit \Fullset^{17;1} \mytr \upchi
		\right\|_{L^2(\Sigma_t^u)}
		& \leq 
			C \mathring{\upepsilon} \upmu_{\star}^{-6.4}(t,u),
				 \label{E:HIGHORDERLUNITTANGENTIALEIKONALL2MAINTHEOREM} 
		\\
		\left\|
			\upmu \GeoAng^{18} \mytr \upchi
		\right\|_{L^2(\Sigma_t^u)}
		& \leq 
			C \mathring{\upepsilon} \upmu_{\star}^{-5.9}(t,u).
				 \label{E:HIGHORDERANGULARTRCHIL2MAINTHEOREM} 
	\end{align}
\end{subequations}

\medskip

\noindent \underline{\textbf{What happens in Case II).}}
In case $\textbf{II)}$, 
all of the bootstrap assumptions from Subsects.\ \ref{SS:SIZEOFTBOOT}-\ref{SS:PSIBOOTSTRAP},
the estimates of Props.~\ref{P:IMPROVEMENTOFAUX} and \ref{P:IMPROVEMENTOFHIGHERTRANSVERSALBOOTSTRAP},
and the energy estimates of Prop.~\ref{P:MAINAPRIORIENERGY}
hold on $\mathcal{M}_{T_{(Lifespan);U_0},U_0}$
with the factors of $\varepsilon$ on the RHS replaced by $C \mathring{\upepsilon}$.
Moreover, for $0 \leq M \leq 5$, the estimates 
\eqref{E:LOWORDERTANGENTIALEIKONALL2MAINTHEOREM}-\eqref{E:HIGHORDERANGULARTRCHIL2MAINTHEOREM}
hold for $(t,u) \in [0,T_{(Lifespan);U_0}) \times [0,U_0]$.
In addition, the scalar functions
$\Fullset^{\leq 9;1} \Psi$,
$\Fullset^{\leq 3;2} \Psi$,
$\Rad \Rad \Rad \Psi$,
$\Fullset^{\leq 9;1} \bigslow$,
$\Rad \Rad \bigslow$,
$\Fullset^{\leq 9;1} \Lunit^i$,
$\Rad \Rad \Lunit^i$,
$\Tanset^{\leq 9} \upmu$,
$\Fullset^{\leq 3;1} \upmu$,
and
$\Rad \Rad \upmu$
extend to\footnote{In
\cite{jSgHjLwW2016}*{Theorem~15.1}, it was stated that
$\Fullset^{\leq 2;1} \upmu$
extend. In fact, the arguments given there imply that
$\Fullset^{\leq 3;1} \upmu$ extend, as we have stated here.
Moreover, we note that here we have stated that $\Fullset^{\leq 3;2} \Psi$
extend. This stands in contrast to \cite{jSgHjLwW2016}*{Theorem~15.1},
in which incorrect reasoning led to the conclusion that 
$\Fullset^{\leq 4;2} \Psi$ extend. The faulty reasoning started 
with the not-fully-justified estimate
$\| \Lunit \Fullset^{\leq 4;2} \Psi \|_{L^{\infty}(\Sigma_t^u)} \leq C \varepsilon$,
which was stated in \cite{jSgHjLwW2016}*{Proposition~9.2}.
The proof given there, however, works only for a subset of 
operators of type $\Lunit \Fullset^{\leq 4;2}$ in which a factor of $\Rad$ acts first,
for the same reasons that led to our proof of \eqref{E:WAVEEQNONCETRANSVERSALLYCOMMUTEDTRANSPORTPOINTOFVIEW}.
These are minor points that have no substantial 
bearing on the main results.} 
$\Sigma_{T_{(Lifespan);U_0}}^{U_0}$ 
as functions of 
the geometric coordinates $(t,u,\vartheta)$ 
belonging to the space 
$C\left([0,T_{(Lifespan);U_0}],L^{\infty}([0,U_0] \times \mathbb{T}) \right)$.
Furthermore, the Cartesian component functions
$g_{\alpha \beta}(\Psi)$ 
verify the estimate
$g_{\alpha \beta} 
= 
m_{\alpha \beta} 
+ \mathcal{O}_{\mydiam}(\Psiep)
+ \mathcal{O}(\mathring{\upepsilon})$ 
(where $m_{\alpha \beta} = \mbox{\upshape diag}(-1,1,1)$ is the standard Minkowski metric)
and have the same extension properties as $\Psi$
(in particular, the same $\Fullset$-derivatives of $g_{\alpha \beta}$ extend
as elements of $C\left([0,T_{(Lifespan);U_0}],L^{\infty}([0,U_0] \times \mathbb{T}) \right)$).

Moreover,	let $\Sigma_{T_{(Lifespan);U_0}}^{U_0;(Blowup)}$
be the subset of $\Sigma_{T_{(Lifespan);U_0}}^{U_0}$ 
defined by
\begin{align} \label{E:BLOWUPPOINTS}
	\Sigma_{T_{(Lifespan);U_0}}^{U_0;(Blowup)}
	:= \left\lbrace
			(T_{(Lifespan);U_0},u,\vartheta)
			\ | \
			\upmu(T_{(Lifespan);U_0},u,\vartheta)
			= 0
		\right\rbrace.
\end{align}
Then for each point $(T_{(Lifespan);U_0},u,\vartheta) \in \Sigma_{T_{(Lifespan);U_0}}^{U_0;(Blowup)}$,
there exists a past neighborhood\footnote{By a past neighborhood of $(T_{(Lifespan);U_0},u,\vartheta)$, 
we mean a set that is 
the intersection of the closed half-space 
$\lbrace (t,u',\vartheta') \in \mathbb{R} \times \mathbb{R} \times \mathbb{T} \ | \ t \leq T_{(Lifespan);U_0} \rbrace$
with a set containing $(T_{(Lifespan);U_0},u,\vartheta)$
that is open with respect to the standard topology corresponding to the geometric coordinates.} 
containing it such that the following lower bound holds in
the neighborhood:
\begin{align} \label{E:BLOWUPPOINTINFINITE}
	\left| \Radunit \Psi (t,u,\vartheta) \right|
	\geq \frac{\TranminusdatasizeWithFactor}{4 |G_{\Lunit_{(Flat)} \Lunit_{(Flat)}}(\Psi = 0)|} \frac{1}{\upmu(t,u,\vartheta)}.
\end{align}
In \eqref{E:BLOWUPPOINTINFINITE}, 
$
\displaystyle
\frac{\TranminusdatasizeWithFactor}{4 \left|G_{\Lunit_{(Flat)} \Lunit_{(Flat)}}(\Psi = 0) \right|}
$
is a \textbf{positive} data-dependent constant
(see \eqref{E:NONVANISHINGNONLINEARCOEFFICIENT}),
and the $\ell_{t,u}$-transversal vectorfield $\Radunit$ has near-Euclidean-unit length:
$\delta_{ab} \Radunit^a \Radunit^b = 1 + \mathcal{O}_{\mydiam}(\Psiep) + \mathcal{O}(\mathring{\upepsilon})$.
In particular, $\Radunit \Psi$ blows up like $1/\upmu$ at all points in $\Sigma_{T_{(Lifespan);U_0}}^{U_0;(Blowup)}$.
Conversely, at all points in
$(T_{(Lifespan);U_0},u,\vartheta) \in \Sigma_{T_{(Lifespan);U_0}}^{U_0} \backslash \Sigma_{T_{(Lifespan);U_0}}^{U_0;(Blowup)}$,
we have
\begin{align} \label{E:NONBLOWUPPOINTBOUND}
	\left| \Radunit \Psi (T_{(Lifespan);U_0},u,\vartheta) \right|
	< \infty.
\end{align}

\end{theorem}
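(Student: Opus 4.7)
The plan is to follow the blueprint of the analogous theorem in \cite{jSgHjLwW2016}*{Theorem~15.1}, using the estimates of Sects.\ \ref{S:PRELIMINARYPOINTWISE}--\ref{S:ENERGYESTIMATES} as the required technical inputs. The strategy rests on a standard bootstrap/continuation argument, sharp control of $\upmu_\star$, and a last-slice extension argument based on our $L^\infty$ and energy estimates. First, I would fix $U_0 \in (0,1]$ and, given data verifying the assumptions of Subsect.\ \ref{SS:DATAASSUMPTIONS} with the smallness hypotheses of Subsect.\ \ref{SS:SMALLNESSASSUMPTIONS}, invoke standard local well-posedness (together with the fact that $\upmu > 0$ initially, by \eqref{E:UPITSELFLINFINITYSIGMA0CONSEQUENCES}) to produce a nonempty set of bootstrap times $\Tboot \in (0, 2\TranminusdatasizeWithFactor^{-1}]$ for which all of the assumptions of Subsects.\ \ref{SS:SIZEOFTBOOT}--\ref{SS:PSIBOOTSTRAP} hold on $\mathcal{M}_{\Tboot,U_0}$ with the bootstrap parameter $\varepsilon$.

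The main step is to show that the supremum of such $\Tboot$, call it $\Tboot^{(\max)}$, obeys the dichotomy. The core ingredient is Corollary~\ref{C:IMPROVEDFUNDAMENTALLINFTYBOOTSTRAPASSUMPTIONS}, which, by choosing $\varepsilon := C'\mathring{\upepsilon}$ with $C'$ sufficiently large and $\mathring{\upepsilon}$ sufficiently small, strictly improves the fundamental $L^\infty$ bootstrap assumptions \eqref{E:PSIFUNDAMENTALC0BOUNDBOOTSTRAP}; the auxiliary assumptions of Subsects.\ \ref{SS:AUXILIARYBOOTSTRAP} and \ref{SS:BOOTSTRAPFORHIGHERTRANSVERSAL} are strictly improved by Prop.~\ref{P:IMPROVEMENTOFAUX} and Prop.~\ref{P:IMPROVEMENTOFHIGHERTRANSVERSALBOOTSTRAP} (cf.\ Remarks~\ref{R:AUXAREREDUNDANT} and \ref{R:HIGHERTRANSAUXBOOTSTRAPIMPROVED}). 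A standard continuation argument (using in addition the $L^\infty$ control of $\Rad\Psi$, $\Rad\Rad\Psi$, $\Rad\Rad\Rad\Psi$, and the Cartesian derivatives of $g$ afforded by Prop.~\ref{P:IMPROVEMENTOFHIGHERTRANSVERSALBOOTSTRAP}, plus the comparison \eqref{E:PARTIALTINTERMSOFLUNITRADUNITANDGEOANG}--\eqref{E:PARTIALIINTERMSOFRADUNITANDGEOANG} to translate geometric bounds into Cartesian bounds) then shows that the only way the bootstrap interval can fail to extend is via the breakdown of \eqref{E:BOOTSTRAPMUPOSITIVITY} or \eqref{E:BOOTSTRAPCHOVISDIFFEO}. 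The estimate \eqref{E:RADUPMULINFTY} and the fact that $\upmu \approx 1$ initially imply that on any interval where $\upmu_\star > 0$, the Jacobian determinant of $\Upsilon$ from \eqref{E:JACOBIAN} stays bounded away from zero (via \eqref{E:MOREPRECISEPOINTWISEESTIMATEFORGTANCOMP}), so \eqref{E:BOOTSTRAPCHOVISDIFFEO} is automatic; hence the sole obstruction is the vanishing of $\upmu_\star$, yielding the dichotomy \textbf{I)}/\textbf{II)} and the characterization \eqref{E:MAINTHEOREMLIFESPANCRITERION}. The lifespan asymptotics \eqref{E:CLASSICALLIFESPANASYMPTOTICESTIMATE} when $U_0 = 1$ then follow directly from \eqref{E:MUSTARBOUNDSUISONE}, which pinpoints the time at which $\upmu_\star$ vanishes.

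The energy-level consequences in both cases, namely the replacement $\varepsilon \to C\mathring{\upepsilon}$ and the estimates \eqref{E:LOWORDERTANGENTIALEIKONALL2MAINTHEOREM}--\eqref{E:HIGHORDERANGULARTRCHIL2MAINTHEOREM}, follow by combining Prop.~\ref{P:MAINAPRIORIENERGY} with the $L^2$ bounds for eikonal function quantities provided by Lemma~\ref{L:EASYL2BOUNDSFOREIKONALFUNCTIONQUANTITIES} and Lemma~\ref{L:DIFFICULTTERML2BOUND} (the latter handling the top-order $\upmu\GeoAng^{18}\mytr\upchi$ bound via the modified quantity $\upchifullmodarg{\GeoAng^{18}}$). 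For Case \textbf{II)}, the extension of $\Fullset^{\leq 9;1}\Psi$, $\Fullset^{\leq 3;2}\Psi$, $\Rad\Rad\Rad\Psi$, $\Fullset^{\leq 9;1}\bigslow$, $\Rad\Rad\bigslow$, the low-order derivatives of $\Lunit^i$ and $\upmu$, and the Cartesian components of $g$ to $\Sigma_{T_{(Lifespan);U_0}}^{U_0}$ as elements of $C([0,T_{(Lifespan);U_0}],L^\infty)$ is carried out via a Cauchy-in-time argument: the $L^\infty$ bounds of Props.~\ref{P:IMPROVEMENTOFAUX} and \ref{P:IMPROVEMENTOFHIGHERTRANSVERSALBOOTSTRAP} show that the $\Lunit$-derivatives (i.e., $\partial/\partial t$-derivatives in geometric coordinates) of each listed quantity are uniformly bounded as $t \uparrow T_{(Lifespan);U_0}$, so each is uniformly Lipschitz in $t$ and the limit exists in $L^\infty([0,U_0]\times\mathbb{T})$.

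The final and most geometric step is the blowup assertion \eqref{E:BLOWUPPOINTINFINITE}. Here I would use the sharp relation \eqref{E:LUNITUPMUDOESNOTDEVIATEMUCHFROMTHEDATA}, which shows that at any point where $\upmu \to 0$ we must have $\Lunit\upmu$ quantitatively negative and, combined with \eqref{E:LUNITUPMUATTIMETMINUSLUNITUPMUATTIMESPOINTWISEESTIMATE} and the identity \eqref{E:UPMUFIRSTTRANSPORT}, forces $G_{\Lunit\Lunit}\Rad\Psi$ to be quantitatively negative and of order $\TranminusdatasizeWithFactor$ in a past neighborhood of the blowup point; dividing by $\upmu$ and using $\Radunit = \upmu^{-1}\Rad$ together with $G_{\Lunit\Lunit}(\Psi=0) \ne 0$ and the smallness of $G_{\Lunit\Lunit} - G_{\Lunit_{(Flat)}\Lunit_{(Flat)}}(\Psi=0)$ (by \eqref{E:LITTLEGDECOMPOSED}--\eqref{E:METRICPERTURBATIONFUNCTION} and \eqref{E:PSIITSELFBOOTSTRAPIMPROVED}) yields the desired lower bound for $|\Radunit\Psi|$. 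The near-Euclidean-unit-length statement for $\Radunit$ follows from \eqref{E:DOWNSTAIRSUPSTAIRSSRADUNITPLUSLUNITISAFUNCTIONOFPSI} and the bounds on $\Lunit_{(Small)}^i$. The complementary bound \eqref{E:NONBLOWUPPOINTBOUND} at non-vanishing-$\upmu$ points is immediate from the extension of $\Rad\Psi \in L^\infty$ and $\upmu > 0$ there. The main obstacle in the whole argument is the continuation/extension step: one must ensure that the degeneracy of $\upmu$ at the top of the bootstrap region never invalidates the $L^\infty$ bootstrap improvements, which is precisely why the sharp estimates of Prop.~\ref{P:SHARPMU} (especially the control of $[\Rad\upmu]_+/\upmu$ and the time-integral bounds of Prop.~\ref{P:MUINVERSEINTEGRALESTIMATES}) enter crucially through Prop.~\ref{P:MAINAPRIORIENERGY} and thence through Cor.~\ref{C:IMPROVEDFUNDAMENTALLINFTYBOOTSTRAPASSUMPTIONS}.
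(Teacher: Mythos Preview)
Your proposal is correct and follows essentially the same approach as the paper. The paper's own proof is in fact a brief ``Discussion of proof'' that points to \cite{jSgHjLwW2016}*{Theorem~15.1} and identifies exactly the same main ingredients you list (Props.~\ref{P:IMPROVEMENTOFAUX}, \ref{P:IMPROVEMENTOFHIGHERTRANSVERSALBOOTSTRAP}, \ref{P:MAINAPRIORIENERGY}, Cor.~\ref{C:IMPROVEDFUNDAMENTALLINFTYBOOTSTRAPASSUMPTIONS}, and \eqref{E:MUSTARBOUNDSUISONE}), with the only supplementary detail being the extension argument for the $\bigslow$ quantities via the uniform $L^\infty$ bounds on $\Lunit\Fullset^{\leq 9;1}\bigslow$ and $\Lunit\Rad\Rad\bigslow$, which you also correctly describe.
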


\begin{proof}[Discussion of proof]
The proof of \cite{jSgHjLwW2016}*{Theorem~15.1}
applies nearly verbatim, 
except for a few of the statements regarding $\bigslow$.
The main ingredients in the proof are 
the estimates of Props.~\ref{P:IMPROVEMENTOFAUX} and \ref{P:IMPROVEMENTOFHIGHERTRANSVERSALBOOTSTRAP},
the energy estimates of Prop.~\ref{P:MAINAPRIORIENERGY},
Cor.\ \ref{C:IMPROVEDFUNDAMENTALLINFTYBOOTSTRAPASSUMPTIONS},
and \eqref{E:MUSTARBOUNDSUISONE}. We clarify that \eqref{E:MUSTARBOUNDSUISONE} easily yields
that $\upmu_{\star}(t,1)$ vanishes for the first time when $t$ is equal to
RHS~\eqref{E:CLASSICALLIFESPANASYMPTOTICESTIMATE}, that is, that
when $U_0 = 1$, a shock forms at a time $T_{(Lifespan);1}$
verifying the estimate \eqref{E:CLASSICALLIFESPANASYMPTOTICESTIMATE}.
Strictly speaking, in the proof of \cite{jSgHjLwW2016}*{Theorem~15.1}, 
a few additional estimates, beyond the main ingredients we just mentioned, 
were needed to complete the proof. For example, one needs estimates
for all of the components of the change of variables map $\Upsilon$, including
the scalar-valued functions $\NonRadialRad^i$ on RHS~\eqref{E:CHOV}.
However, in \cite{jSgHjLwW2016},
the needed estimates followed as straightforward consequences
of the main ingredients mentioned above, and we do not even bother to state them
here since their proofs are exactly the same as in \cite{jSgHjLwW2016}; in particular
the proofs of the omitted estimates do not involve $\bigslow$.

For the sake of thoroughness, 
we now prove some facts concerning $\bigslow$ that are not part of
the statement or proof of \cite{jSgHjLwW2016}*{Theorem~15.1}.
Specifically, the facts that
$\Fullset^{\leq 9;1} \bigslow$
and
$\Rad \Rad \bigslow$
extend to $\Sigma_{T_{(Lifespan);U_0}}^{U_0}$
as elements of the function space
$C\left([0,T_{(Lifespan);U_0}],L^{\infty}([0,U_0] \times \mathbb{T}) \right)$
are simple consequences of the fundamental theorem of calculus,
the completeness of the space $L^{\infty}([0,U_0] \times \mathbb{T})$,
and the uniform bounds
$
\displaystyle
\left\| 
	\Lunit \Fullset^{\leq 9;1} \bigslow
\right\|_{L^{\infty}(\Sigma_t^u)}
\lesssim
\mathring{\upepsilon}
$
and
$
\left\| 
	\Lunit \Rad \Rad \bigslow
\right\|_{L^{\infty}(\Sigma_t^u)}
\lesssim
\mathring{\upepsilon}
$,
which are special cases of the estimates
\eqref{E:SLOWWAVETRANSVERSALTANGENT} and \eqref{E:UPTOTWOTRANSVERSALDERIVATIVESOFSLOWINLINFINITY}
and Cor.\ \ref{C:IMPROVEDFUNDAMENTALLINFTYBOOTSTRAPASSUMPTIONS}
(recall that
$
\displaystyle
\Lunit 
= \frac{\partial}{\partial t}
$).

\end{proof}

\section*{Acknowledgments}
The author would like to thank the anonymous referee for their useful comments,
which helped him to clarify the exposition.

\bibliographystyle{amsalpha}
\bibliography{JBib}

\end{document}